\newtheorem{theorem}{Theorem}[section]
\newtheorem{lemma}[theorem]{Lemma}
\newtheorem{proposition}[theorem]{Proposition}
\newtheorem{corollary}[theorem]{Corollary}
\newtheorem{conjecture}[theorem]{Conjecture}
\newtheorem{definition}[theorem]{Definition}
\theoremstyle{remark}
\newtheorem{remark}[theorem]{Remark}
\newtheorem{partremark}{Remark}[part]
\numberwithin{equation}{section}
\newcommand{\Aff}{\mathrm{Aff}}
\newcommand{\Aut}{\mathrm{Aut}}
\newcommand{\id}{\mathrm{Id}}
\newcommand{\Z}{\mathbb{Z}}
\newcommand{\Q}{\mathbb{Q}}
\newcommand{\R}{\mathbb{R}}
\newcommand{\N}{\mathbb{N}}
\newcommand{\mc}{\mathcal}
\newcommand{\set}[1]{\left\lbrace #1 \right\rbrace}
\newcommand{\mf}{\mathfrak}
\newcommand{\abs}[1]{\left| #1 \right|}
\newcommand{\norm}[1]{\left|\left| #1 \right|\right|}
\newcommand{\ve}{\varepsilon}
\newcommand{\of}{\circ}
\newcommand{\tth}{^{\mbox{\normalfont\tiny th}}}
\newcommand{\mbf}{\mathbf}
\newcommand{\smooth}{ C^{\infty}}
\definecolor{darkcyan}{rgb}{0. 0.65, 0.65}
\def\h-rk{rk ^{h}}
\newtheorem{Structural Stability Theorem}[theorem]{Structural Stability Theorem}
\def\bt{\begin{theorem}}
\def\et{\end{theorem}}
\def\bd{\begin{definition}}
\def\ed{\end{definition}}
\def\bl{\begin{lemma}}
\def\el{\end{lemma}}
\def\Sym{\operatorname{Sym}}
\def\Diff{\operatorname{Diff}}
\def\Homeo{\operatorname{Homeo}}
\def\Lie{\operatorname{Lie}}
\def\Stab{\operatorname{Stab}}
\def\loc{\operatorname{loc}}
\def\Heis{\operatorname{Heis}}
\def\rank{\operatorname{rank}}
\def\ad{\operatorname{ad}}
\def\Ad{\operatorname{Ad}}
\def\diag{\operatorname{diag}}
\def\Isom{\operatorname{Isom}}
\def\Fix{\operatorname{Fix}}
\def\Per{\operatorname{Per}}
\def\Hom{\operatorname{Hom}}
\def\Rig{\operatorname{Rig}}
\begin{document}

\title{Cartan Actions of Higher Rank Abelian Groups and their Classification}

\author{RALF SPATZIER $^\ast$}
\address{DEPARTMENT OF MATHEMATICS, UNIVERSITY OF MICHIGAN,
    ANN ARBOR, MI, 48109.}
\email{spatzier@umich.edu}

\author{KURT VINHAGE $^\dagger$}
\address{DEPARTMENT OF MATHEMATICS, UNIVERSITY OF UTAH,
 SALT LAKE CITY, UT 84112}
\email{vinhage@math.utah.edu}

\thanks{$^\ast$ Supported in part by NSF grants DMS 1607260 and DMS 2003712}

\thanks{$^{\dagger}$ Supported in part by NSF grant DMS  1604796}

\maketitle

\begin{abstract}
 We study $\R ^k \times \Z^\ell$ actions on arbitrary compact manifolds with a projectively dense set of Anosov elements and 1-dimensional coarse Lyapunov foliations. Such actions are called totally Cartan actions. We completely classify such actions as built from low-dimensional Anosov flows and diffeomorphisms and affine actions, verifying the Katok-Spatzier conjecture for this class. This is achieved by introducing a new tool, the action of a dynamically defined topological group describing paths in coarse Lyapunov foliations, and understanding its generators and relations. We obtain applications to the Zimmer program.
 \end{abstract}
   
\tableofcontents


\section{Introduction}
 Hyperbolic actions of higher rank abelian groups are markedly different from single hyperbolic diffeomorphisms and  flows and display a multitude of rigidity properties such as measure rigidity and cocycle rigidity.  We refer to the survey \cite{spatzier-invitation2004} as a starting point for these topics, and \cite{venkatesh08,einsiedler09,EinLin2015,brown19} and \cite{KS1,damjanovic-katok10,Vinhage:2015aa} for more recent developments concerning measure and cocycle rigidity, respectively. In this paper, we concentrate on the third major rigidity property:   classification and global differential rigidity of such actions.

 Smale already conjectured in 1967 that generic diffeomorphisms only commute with their  iterates \cite{Smale}.  In the hyperbolic case 
 this was proved by Palis and Yoccoz in 1989 \cite{Palis-Yoccoz1,Palis-Yoccoz2}.   Recently,  Bonatti, Crovisier and Wilkinson  proved this in full generality \cite{BCW2}.
 Much refined  local rigidity properties were found 
  in the works of  Hurder and then  Katok and Lewis on deformation and local  rigidity of the standard action of $SL(n,\Z)$ on the $n$-torus in which  they used properties of the action of higher rank abelian subgroups \cite{Hurder,KL1,KL}.  
   This led to investigating such actions more systematically.  
   
   Let us first coin some terminology.  Given a foliation $\mc{F}$, a diffeomorphism $a$ acts {\em normally hyperbolically} w.r.t. $\mc{F}$ if $\mc{F}$ is invariant under $a$ and if there exists a $C^0$ splitting of the tangent space $TM = E^s  _a\oplus T \mc{F} \oplus  E^u _a$ into  stable and unstable subspaces for $a$ and the tangent space of the leaves of $\mc{F}$.  More precisely, we assume that $E^s _a$ and $E^u _a$  are contracted uniformly in either forward or backward time by $a$ (see Definition \ref{def:normally hyperbolic} for  precise definitions).  
  
All manifolds discussed will be assumed to be connected unless otherwise stated. This will often be superfluous, since we often assume the existence of  a continuous action of a connected group with a dense orbit.
  
\begin{definition}   \label{Anosov action}
Let  $k,l \in \Z_{\ge 0}$, $k+l \ge 1$, and $\R^k \times \Z^\ell \curvearrowright X$ be a locally free $C^1$ action  on a compact manifold $X$.   If $a \in \R^k \times \Z^\ell$ acts normally hyperbolic to the orbit foliation of $\R^k$, we call $a$ an  {\em Anosov element}, and $\alpha$ an {\em Anosov action}. If $k + \ell \ge 2$, the action is called {\normalfont higher rank}.

Let $\mc A \subset \R^k \times \Z^\ell$ denote the set of Anosov elements, and $p : \R^{k+\ell} \setminus \set{0} \to \R\mathbb{P}^{k+\ell-1}$ denote the projection onto real projective space. We say that an action is {\normalfont totally Anosov} if $p(\mc A)$ is dense in $\R\mathbb{P}^{k+\ell-1}$.

An action $\R^k \times \Z^\ell \curvearrowright X$ if called {\em transitive} is there exists a point $x \in X$ such that $(\R^k \times \Z^\ell) \cdot x$ is dense in $X$.  An Anosov action is {\em cone transitive} if there exists an open cone $C \subset \R^{k+\ell}$ such that $\overline{C} \cap \R^k \times \Z^\ell$ consists of Anosov element, and there exists some $x \in X$ such that $(C \cap (\R^k \times \Z^\ell)) \cdot x$ is dense in $X$.
\end{definition}

\begin{remark}
Note that  higher rank Anosov $\Z^\ell$-actions correspond to $\ell$ commuting diffeomorphisms with at least one being Anosov. 
However, when $k \ge 1$, Anosov elements of the action are not Anosov diffeomorphisms, as they necessarily act isometrically along the $\R^k$-orbit foliation. For example, an $\R^1$ Anosov action is an Anosov flow, and in this case every nonzero element will be Anosov. In general, the set of Anosov elements is a union of open cones which we will explain later.
\end{remark}

The following definition is critical to the structure theory of higher rank abelian actions.

\begin{definition}
\label{def:rank-one-factor}
Let $k+\ell \ge 1$ and $\R^k\times \Z^\ell \curvearrowright X$ be a $C^r$ action on a $C^\infty$ manifold $X$, $r \ge 1$. A {\normalfont $C^s$-rank one factor}, $s \le r$ is 

\begin{itemize}
\item a $C^\infty$ manifold $Y$ and a $C^s$ submersion $\pi : X \to Y$, 
\item a  surjective homomorphism $\sigma : \R^k \times \Z^\ell \to A$, where $A$ is a compact extension of $\R$ or $\Z$, and 
\item a locally free, faithful $C^s$ action $A \curvearrowright Y$ such that $\pi(a\cdot x) = \sigma(a) \cdot \pi(x)$.
\end{itemize}

The factor is called a {\normalfont Kronecker factor} if  $Y = \mathbb{T}^d$ for some $d \in \Z_+$, and the action of $A$ is by translations and transitive (in the group-theoretic sense).

\end{definition}

\begin{remark}
If  $X$ is a compact manifold, $\R^k \times \Z^\ell \curvearrowright X$ is an Anosov action, and $\pi : X \to Y$ is a Kronecker factor, it is not difficult to see that any two elements related by a stable or unstable manifold must be sent to the same point. In particular, there are no Kronecker factors of an Anosov $\Z^\ell$ action, and the only Kronecker factors of an Anosov $\R^k$ action correspond to projecting onto the coordinates of the acting group (and any such action is transitive on $\mathbb{T}^d$).
\end{remark}

Our definition coincides with another one that is often used: one may alternatively define a $C^s$-rank one factor as a factor for which a cocompact subgroup $A' \subset \R^k \times \Z^l$ acts by a single flow or diffeomorphism after projecting ($(\R^k \times \Z^\ell) / A'$ corresponds to compact extension of Definition \ref{def:rank-one-factor}).

The lack of a rank one factor can be interpreted as an irreducibility condition: many actions with rank one factors are represented as skew-products in the corresponding category where the base of the skew product is a flow. 
One may similarly define measurable and continuous ($C^0$) factors of the action, in which the regularity conditions are relaxed. It is clear that every smooth rank one factor is topological, and every topological rank one factor is measurable. Forbidding smooth rank one factors is therefore weakest of the possible no rank-one factor assumptions. 



There are natural examples of higher rank actions coming from homogeneous actions, e.g. by the diagonal subgroup of $SL(n,\R)$ on a compact quotient  $SL(n,\R)/\Gamma$ or actions by automorphisms of Lie groups, e.g. by commuting toral automorphisms. More generally, genuinely higher rank actions should be algebraic in the following sense (see Section \ref{subsec:examples} for examples of such Anosov actions):

\begin{definition}
An {\normalfont algebraic action} is an action of $\R^k \times \Z^l$, $k + l \ge 1$, by compositions of translations and automorphisms on a compact homogeneous space $G / \Gamma$. The action is {\normalfont homogeneous} if it consists only of translations.
\end{definition}

\begin{remark}
In other settings, algebraic actions allow for actions by translations and automorphisms on a {\it double} homogeneous space $K \backslash G / \Gamma$, where $K$ is compact. In particular, this often appears for Anosov actions, even in rank one. However, in the Cartan setting, we will always have that $K = \set{e}$.
\end{remark}
    
Katok and Spatzier proved local $\smooth$ rigidity of so-called standard actions without rank one factors in \cite{KS97}, generalizing the  earlier works by Hurder \cite[Theorem 2.19]{Hurder} respectively Katok and Lewis \cite[Theorem 4.2]{KL1}.    This gives evidence for  the following conjecture of Katok and Spatzier (posed as a question  in \cite{Burns-Katok1985} and as a conjecture in \cite[Conjecture 16.8]{problemsMSRI2007}).

\begin{conjecture}[Katok-Spatzier] \label{conjecture:Katok-Spatzier}
All  higher rank $C^\infty$ Anosov actions on any compact manifold without $C^\infty$ rank one factors 
are $C^\infty$ conjugate to an algebraic action after passing to finite covers.
\end{conjecture}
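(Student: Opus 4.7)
The plan is to work in the totally Cartan setting mentioned in the abstract, where coarse Lyapunov foliations are one-dimensional, and to construct an algebraic structure from the dynamics directly: produce a transitive action of a finite-dimensional Lie group $G$ extending the $\R^k \times \Z^\ell$-action, identify $X$ with $G/\Gamma$ for a cocompact lattice $\Gamma$, and then bootstrap the resulting conjugacy to smoothness using hyperbolicity.

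First I would set up the coarse Lyapunov foliation theory. For an Anosov element $a$, decompose its stable bundle into Lyapunov subspaces, and intersect with the analogous decompositions for commuting Anosov elements to obtain a common refinement; coarse Lyapunov subbundles correspond to weights up to positive scalar. Projective density of Anosov elements together with stable manifold theory yields continuous $(\R^k \times \Z^\ell)$-invariant foliations tangent to these subbundles. In the totally Cartan case, leaves are one-dimensional, and each carries a natural affine parameter, well-defined up to scale, determined by the expansion rate of any Anosov element that contracts it.

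The central construction is a path group $G$ generated abstractly by the one-parameter flows along each coarse Lyapunov foliation. Elements are words in these generators, modulo relations coming from (i) commutators of flows from pairs of coarse Lyapunov directions whose weights span a rank-two subsystem, and (ii) closed paths in leaves that return to the basepoint. One must equip $G$ with a topology under which it becomes a topological group acting continuously on $X$, and then show that it is actually a finite-dimensional Lie group by producing enough relations locally, essentially via analysis in Lyapunov charts of each rank-two subsystem, analogous to Steinberg-type presentations for semisimple groups.

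The main obstacle, I expect, is the verification of these commutator relations in full generality: showing that the commutator of flows along distinct coarse Lyapunov foliations lies again in the product of flows along the intermediate coarse Lyapunov foliations, with root-system-like structure constants, and ruling out exotic holonomies or non-closable cycles. Once this is in hand, transitivity of $G$ on $X$ follows by combining density of an $\R^k \times \Z^\ell$-orbit with coarse Lyapunov flows, the stabilizer of a basepoint is discrete, and $X = G/\Gamma$ becomes homogeneous, with the original action factoring through a homomorphism $\R^k \times \Z^\ell \to G$. Finally, Journé-type regularity arguments combined with normal hyperbolicity along leaves upgrade the conjugacy from continuous to $C^\infty$, and passing to finite covers handles connectedness and orientability issues in the statement of the conjecture.
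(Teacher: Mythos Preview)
The statement is a conjecture, and the paper does \emph{not} prove it; it remains open in general. What the paper proves is the special case of totally Cartan actions (Theorem~\ref{thm:big-main}). Since you explicitly restrict to the totally Cartan setting in your first sentence, your proposal cannot establish the conjecture as stated: higher-dimensional coarse Lyapunov foliations are simply outside the scope of your argument, and no reduction from the general Anosov case to the totally Cartan case is known.

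For the totally Cartan case itself, your broad strategy does match the paper's---the path group, geometric commutators, quotient to a Lie group, and Journ\'e bootstrapping are all there. But two essential pieces are missing from your outline. First, you never explain how the hypothesis ``no rank one factor'' enters. In the paper this is the entire content of Part~I: one shows that the absence of rank-one factors forces each Lyapunov hyperplane $\ker\alpha$ to have a dense orbit (Theorem~\ref{thm:main-anosov}), which is what yields the canonical H\"older metric on each coarse Lyapunov leaf satisfying $\|a_*v\| = e^{\alpha(a)}\|v\|$ (Theorem~\ref{thm:main-cartan}). Without that metric the leafwise flows have no preferred parametrization, the renormalization \eqref{eq:renormalization} fails, and your path group does not interact with the $\R^k$-action in a usable way. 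Second, your treatment of the commutator relations is too optimistic. The relations are not verified ``in Lyapunov charts of each rank-two subsystem'' in any direct way; the paper must first build a chain of ideal factors (Section~\ref{sec:ideals}), prove constant pairwise cycle structures inductively through base-base, base-fiber, and fiber-fiber cases (Section~\ref{sec:fibers}), and then invoke a no-small-subgroups argument via Gleason--Yamabe (Section~\ref{subsec:nss}) because a direct Gleason--Palais application is obstructed. The ``Steinberg-type presentation'' picture you sketch is the endpoint, not the method.
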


 Due to an example recently discovered by the second author, the conjecture as stated here is false. However, a weakened version of the conjecture may hold, in particular, by strengthening the Anosov assumption to totally Anosov. Furthermore, the counterexample can be constructed only for actions of $\R^k \times \Z^\ell$, $k \ge 2$. When $k \le 1$, the conjecture may still hold. This paper also contains several new examples of totally Cartan actions, which illustrate that each of our assumptions is necessary, which are described in Section \ref{sec:exotic}.

Weakened versions may also hold in the $C^r$ setting, possibly with loss of regularity in the conjugacy. This loss of regularity occurs in many rigidity theorems for partially hyperbolic actions, see, e.g.  \cite{damjanovic-katok10}. The minimal value of $r$ for which rigidity holds is still mysterious.

Conjecture \ref{conjecture:Katok-Spatzier} is  reminiscent  of the longstanding conjecture by Anosov and Smale  that    Anosov diffeomorphisms are topologically conjugate to an automorphism of an infra-nilmanifold \cite{Smale}.  The higher rank and irreducibility assumptions allow for much more dramatic conclusions:  First, the  conjugacy is claimed to be smooth.  For single Anosov diffeomorphisms, smooth rigidity results are not  possible as one can change derivatives at fixed points 
by local changes. Furthermore, Farrell and Jones and later Farrell and Gogolev constructed Anosov diffeomorphisms on exotic tori \cite{FarrellJonesExoticExpanding,FarrellJones,FarrellGogolev} (although after passing to a finite cover, the exotic structure becomes standard).  Thus not even the differentiable structure of the underlying manifold is determined. Secondly, the conjecture applies equally well  to    $\Z^k$ and $\R ^k$-actions. There is no version of the Anosov-Smale conjecture for Anosov flows. Even topological rigidity is out of reach as there are many flows which are not even orbit-equivalent to Anosov flows, see for instance the Handel-Thurston examples \cite{handel-thurston1980}. In addition, Anosov flows may not be transitive, such as the Franks-Williams examples \cite{Franks-Williams1980}. 

Significant  progress has been made on this conjecture in the last decade.  For higher rank Anosov $\Z ^k$ actions on tori and nilmanifolds, Rodriguez Hertz and Wang \cite{RH-W} have proved the ultimate result: global rigidity assuming only one Anosov element, and that its linearization does not have affine rank one factors. This improves on the results of \cite{Hertz}, and together they are the only known results that assume the existence of only one Anosov element in the action.    In fact it followed  earlier work by Fisher, Kalinin and Spatzier \cite{FKS} for totally Anosov actions, i.e. Anosov actions with a dense set of Anosov elements.  
As knowledge of the underlying manifold is required, this is really a global rigidity theorem.   Finally, in the same vein, Spatzier and Yang classified nontrivially commuting expanding maps in \cite{Spatzier-Yang2017}.  This was possible as expanding maps were known to be $C^0$ conjugate to endomorphisms of nilmanifolds (up to finite cover) thanks to work of Gromov and Shub \cite{GromovPolyGrowth,ShubExpandingMaps1970}.  Of course, a positive resolution of the Anosov-Smale conjecture, combined with the results above, would automatically prove the Katok-Spatzier conjecture for higher rank $\Z ^k$ actions.

Much less is known for $\R ^k$ actions or when the underlying manifold is not a torus or nilmanifold.   
We will concentrate on the so-called totally Cartan actions. 
For a  totally Anosov action $\alpha$,  a {\em coarse Lyapunov foliation} is a foliation whose leaf at $x$ is the path component containing $x$ of the maximal (nontrivial) intersections of stable manifolds $\cap_i \mc{W} ^s _{a_i}(x)$ for some fixed Anosov elements $a_i$ (see Section \ref{sec:normal-hyp} for a thorough discussion and definition).  If $\alpha$ preserves a measure $\mu$ of full support, this can be phrased in terms of Lyapunov exponents (see Section \ref{subsec:lyapunov-prelim}), thus the name. 

\begin{definition} \label{Cartan action}
A (totally) Anosov action of $A = \R^k \times \Z^l$, $k + l \ge 1$, $A \curvearrowright X$, is called {\em (totally) Cartan} if all coarse Lyapunov foliations are one dimensional. 
 \end{definition}

We remark that while our theorems hold when $k + l =1$, their conclusions reduce to well-established theorems about Anosov diffeomorphisms of surfaces and Anosov flows on 3-manifolds. Indeed, when $k + l = 1$, the Cartan condition will force the stable and unstable manifolds to have dimension one.

The notion of a Cartan action has had varying incarnations: one may insist that each coarse Lyapunov foliation is 1-dimensional {\it and} that for each coarse Lyapunov foliations $\mc W$, there exists $ a \in \R^k \times \Z^\ell$ such that $\mc W = W^u_a$ (as defined in \cite{Hurder}); or one may define a Cartan action to be an action of $\Z^{n-1}$ on an $n$-manifold (as defined in \cite{KL1} and \cite{KalKatRod09}); or we may use our definition, which coincides with the one given in \cite{KaSp04}. It by far is the weakest of the possible definitions, is implied by all the others and there are many examples which satisfy Definition \ref{Cartan action} which do not satisfy the alternatives.

Kalinin and Spatzier  classified $C^\infty$ totally Cartan actions of $\R ^k$ for $k \geq 3$  in \cite{KaSp04} on arbitrary manifolds under the additional hypothesis that every  one-parameter subgroup of $\R ^k$ acts transitively and $\alpha$  preserves an ergodic probability  measure $\mu$ of full support.  Later, 
Kalinin and Sadovskaya proved several strong results in this direction for totally nonsymplectic (TNS) actions of $\Z^\ell$, i.e. actions for which no two Lyapunov exponents (thought of as linear functionals on $\Z^\ell$) are negatively proportional.
 They also treated higher dimensional coarse Lyapunov spaces but required additional conditions such as joint integrability of coarse Lyapunov foliations or non-resonance conditions, as well as quasi-conformality of the action on the coarse Lyapunov foliations \cite{KalSad07, KalSad06}.  Recently, Damjanovic and Xu  \cite{Damjanovic:2018aa} generalized their results relaxing the  quasi-conformality conditions but still requiring joint integrability or non-resonance.  
 The joint integrability or non-resonance conditions are useful to force the coarse Lyapunov subspaces to ``commute,'' which guarantees that the action is on a torus.

We improve on the assumptions appearing in these works in the following ways:

\begin{itemize}
\item We establish a new method of building homogeneous structures, which allows for resonances and non-integrability, allowing for complete classification.
\item We do not require the existence of an ergodic invariant measure of full support (we only require cone transitivity of the $\R^k \times \Z^\ell$ action).
\item We do not directly require (cone) transitivity or ergodicity for subactions (e.g. the ergodicity of one-parameter subgroups), instead deducing their transitivity by understanding their relationship to rank one factors.
\item Our conditions are purely dynamical and do not require topological assumptions or an underlying model.
\item We build rank one factors of the action using only dynamical input, and still obtain a classification in their presence.
\end{itemize}


We begin by establishing the main result, where the action has no rank one factors. We actually prove more, allowing Kronecker factors of the action, ie factors which are translations of a torus. This allows us to discuss the case of $\Z^k$ actions by passing to a their suspension (the suspension will always have a transitive $\mathbb{T}^k$ factor). We discuss suspensions and their relationship to Kronecker factors in Section \ref{sec:susp}.

\begin{theorem}[Main Theorem]
\label{thm:big-main}
Let  $\R^k \times \Z^\ell \curvearrowright X$ be a 
{  $C^2$}
cone transitive, totally Cartan action, and assume no finite cover of the action has a non-Kronecker $C^{1,\theta}$ rank 1 factor {  for some $\theta \in (0,1)$.}
Then the action is 
{  $C^2$}-conjugate to an affine action (up to finite cover). 
Furthermore, if the action is $C^\infty$, and one instead assumes that there are no non-Kronecker $C^\infty$ rank one factors of finite covers of the action, then a $C^\infty$ conjugacy exists.
\end{theorem}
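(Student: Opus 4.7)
The plan is to construct a Lie group structure directly from the hyperbolic dynamics and then identify $M$ with a homogeneous space on which the action becomes affine. After passing to a finite cover to arrange Lyapunov orientability, I would first use smooth normal forms for one-dimensional contractions along each coarse Lyapunov foliation $\mc{W}^\chi$ to produce canonical parameterizations $\phi^\chi_t : M \to M$ satisfying $a \circ \phi^\chi_t \circ a^{-1} = \phi^\chi_{e^{\chi(a)} t}$, where $\chi$ is the associated Lyapunov functional. In dimension one the non-resonance issues that normally obstruct smooth linearization trivialize, and since the action is totally Cartan, the collection $\{\phi^\chi\}_{\chi \in \Delta}$ indexed by coarse Lyapunov classes $\Delta$ fills out every direction transverse to the abelian orbit.

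The central construction is the \emph{path group} $P$ referred to in the abstract: the topological group abstractly generated by the symbols $\phi^\chi_t$ together with the acting group, modded out by all relations witnessed on $M$. To identify $P$ as a finite-dimensional Lie group, I would systematically compute commutators $[\phi^\chi_s, \phi^\psi_t]$ using hyperbolicity and the linearized structure. When $\chi + \psi$ is not a Lyapunov exponent, the commutator should trivialize via a Livsic-type or joint-integrability argument applied to the unique Anosov direction containing both; when $\chi + \psi$ is a Lyapunov exponent, the commutator should lie in the image of $\phi^{\chi+\psi}$, giving Heisenberg-type relations. Iterating and grading by the positive cone of Lyapunov functionals produces a filtration, from which $P$ acquires a Lie algebra on which $\R^k \times \Z^\ell$ acts as a Cartan subalgebra.

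The no-non-Kronecker-rank-one-factor hypothesis enters to force transitivity of $P$ on $M$: a proper closed $P$-invariant subset would, by the coarse Lyapunov geometry, project to a rank-one quotient that cannot be Kronecker because the coarse Lyapunov dynamics survive in it, contradicting the assumption. Transitivity together with the discrete stabilizer coming from the totally Cartan property identifies $M$ with $G/\Gamma$ for a Lie group $G \supset P$ and a lattice $\Gamma$; the original acting group then embeds into the affine group of $G/\Gamma$, yielding the desired affine model.

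Regularity of the conjugacy is controlled by the regularity of each canonical parameterization: they are smooth leafwise, and the transverse regularity is dictated by the holonomies of the coarse Lyapunov foliations, which are $C^{1,\theta}$ in general and upgrade to $C^\infty$ under the usual bootstrap in the smooth category. The most delicate step, and the likely main obstacle, is the structural analysis of $P$ in the second paragraph: establishing the precise generators and relations without the joint integrability or non-resonance hypotheses used in prior work of Kalinin--Sadovskaya and Damjanovic--Xu. This is exactly where the ``dynamically defined topological group'' announced in the abstract does the real work, and it requires carefully handling the potentially tangled interactions between non-commuting coarse Lyapunov directions, along with a simultaneous construction of the lattice $\Gamma$ and treatment of the Kronecker factors that are permitted to survive.
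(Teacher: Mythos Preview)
Your broad architecture matches the paper: canonical flows $\eta^\chi_t$ along coarse Lyapunov leaves satisfying the scaling relation, a path group $\hat{\mc P}$ generated by these flows and the acting group, geometric commutators replacing Lie brackets, and Journ\'e to upgrade regularity. But the heart of the argument is precisely the step you pass over in one sentence, and your sketch of it is where the proposal breaks down.

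The commutator $[\phi^\chi_s,\phi^\psi_t]$ is not simply in $\phi^{\chi+\psi}$ or trivial; it is a word in the flows for all weights in the cone $D(\chi,\psi)$, with coefficients $\rho^{\chi,\psi}_\gamma(s,t,x)$ that a priori depend on the basepoint $x$. The entire difficulty is proving these functions are constant in $x$. Your proposed ``Livsic-type or joint-integrability argument'' is exactly the method of Kalinin--Spatzier that fails here: it requires transitivity of $\ker\chi\cap\ker\psi$, which is $\{0\}$ when $k=2$. The paper instead introduces \emph{ideals of weights} $E\subset\Delta$ and their factor spaces $X^E$, proves that on a maximal ideal factor the $\rho$-functions become constant by an inductive commutation argument (since some fiber weight commutes with everything in the cone), obtains a homogeneous structure there via a Gleason--Yamabe no-small-subgroups argument, and then climbs back up through a chain of ideals by separately analyzing fiber-fiber, base-fiber, and base-base commutators. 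None of this apparatus appears in your proposal.

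Two further points. First, the no-rank-one-factor hypothesis is not used to make $\hat{\mc P}$ transitive (that is automatic from local product structure); it is used via the rank-one dichotomy to guarantee that each Lyapunov hyperplane $\ker\chi$ has a dense orbit, which in turn yields the H\"older metrics needed to define the flows $\eta^\chi$ and underpins all the constancy arguments. Second, symplectic pairs $\chi,-c\chi$ need separate treatment: they generate $\R^2$, Heisenberg, or a cover of $PSL(2,\R)$, and combining these with the nilpotent stable-subgroup relations to get a locally compact quotient of $\hat{\mc P}$ requires explicit $PSL(2,\R)$ computations and a stable-unstable-neutral decomposition that your grading-by-positive-cone picture does not capture.
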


\begin{remark}
The assumption of cone transitivity follows from transitivity with some additional assumptions (such as the existence of a measure of full support, or the existence of a transitive Anosov element). For more on the relationship between these transitivity conditions for Anosov actions, see Lemma \ref{lem:cone-transitive}.
\end{remark}

\begin{remark}
Due to the examples constructed in \cite{vinhage22}, the totally Cartan assumption may not be relaxed to the Cartan assumption. This shows that the assumption of totally Cartan is optimal.
\end{remark}

We will use Theorem \ref{thm:big-main} in the proof of the full classification result. In addition, we describe all homogeneous (in fact, affine) totally Cartan actions, see Section \ref{sec:affine-classification}. If one assumes that the action is Lyapunov orientable (Definition \ref{Cartan action}), then one may replace the assumption that every finite cover of the action has no non-Kronecker rank one factors with the assumption that the action itself has no non-Kronecker rank one factors. Therefore, in general, one only needs to check up to $2^{\dim(X) - k}$-fold covers of $X$. Alternatively, one may assume that there are no orbifold factors of a given action.


To describe the structure of a general totally Cartan $\R^k \times \Z^\ell$ action, we first need a few additional structural features, which we now describe. Let $\Delta$ be the set of coarse Lyapunov exponents of the totally Cartan action (see Section \ref{subsec:lyapunov-prelim}). Then let $S \subset \R^k$ be defined by $S = \bigcap_{\beta \in \Delta} \ker \beta$. We call $S$ the {\it Starkov component} of the $\R^k$-action.

\begin{remark}
\label{rem:starkov}
One may alternatively describe the Starkov component as the set of $a \in \R^k$ satisfying any (and hence all) of the following conditions:

\begin{enumerate}
\item $a$ has zero topological entropy.
\item $\set{a^k : k \in \Z}$ is an equicontinuous family.
\item $a$ does not have sensitive dependence on initial conditions.
\item $a$ is not partially hyperbolic (with nontrivial stable and unstable bundles).
\end{enumerate}

{  See Lemma \ref{lem:starkov-conditions}.} Therefore, the condition that an action has {\it trivial} Starkov component is equivalent to 0 being the only element satisfying any of the above conditions.
\end{remark}

The Starkov component can be considered the ``trivial'' part of the $\R^k$ action, and we can safely ignore it after passing to a factor. Lemma \ref{lem:starkov-factor} constructs the {\it Starkov factor} of $\R^k \curvearrowright X$, which we denote by $\R^{k-\ell} \curvearrowright \bar{X} = X / S$.
 The Starkov factor carries all of the revelant hyperbolic behavior on $X$.

 In addition to Theorem \ref{thm:big-main}, we can completely describe the structure of totally Cartan actions (even with rank one factors). 
For a more precise statement, see Theorem \ref{thm:big-headache}.

\begin{theorem}[Structure Theorem]
\label{thm:mini-headache}
If $\R^k \curvearrowright X$ is a 
{ $C^\infty$ (or $C^2$)}, cone transitive, totally Cartan action with trivial Starkov component, then $X$ is a homogeneous { $C^\infty$ (resp. $C^{1,\theta}$ for any $1>\theta > 0$)} fiber bundle over a closed submanifold of product of 3-manifolds. The action covers a (rescriction of a) product of Anosov 3-flows, preserves the fiber bundle, and both the action and holonomies are algebraic maps when restricted to fibers.
\end{theorem}

We cannot dispose of the condition that the Starkov factor is trivial, or that the actions covers a {\it restriction} of the product of Anosov flows, as shown in Sections \ref{sec:exotic} and \ref{sec:embedded-ex}.
The case of $\Z^\ell$ actions has a simpler statement, due to the absence of a Starkov component.

\begin{corollary}[Rigidity of $\Z^\ell$ Actions]
\label{cor:Zk-actions}
Let $\alpha : \Z^\ell \curvearrowright X$ be a { $C^2$}  cone transitive, totally Cartan action. Then, up to finite cover, $X$ is a $C^{1,\theta}$ fiber bundle {  (for any $1>\theta > 0$)} over $(\mathbb{T}^2)^m$ for some $m$, whose fibers have a canonical $C^{1,\theta}$ nilmanifold structure (ie, are parameterized by a transitive $C^{1,\theta}$ nilpotent group action). The $\Z^\ell$ action maps fibers to fibers, and descends to (a restriction of) the product of $m$ $C^{1,\theta}$ Anosov diffeomorphisms on $(\mathbb{T}^2)^m$.

 Furthermore, each $\alpha(a)$ and the stable and unstable holonomies are induced by automorphisms of the nilmanifold structure when restricted to fibers. Finally, if the action is $C^\infty$, so are the Anosov diffeomorphisms, the submersion and the nilmanifold structure. 
\end{corollary}

We can also apply Theorem \ref{thm:big-main} to obtain a global rigidity result for totally Cartan actions on spaces of the form $X = M \backslash G / \Gamma$, with $G$ a semisimple Lie group and $M$ a compact subgroup. This result is complementary to the global rigidity results for Anosov $\Z^k$ actions (for which the best results are given in \cite{RH-W}, see the discussion above).

\begin{corollary}[Global Rigidity for Weyl Chamber Flows]
\label{cor:global}
Let $G$ be a semisimple linear Lie group of noncompact type without compact factors, $\Gamma \subset G$ be a cocompact lattice such that $\Gamma$ projects densely onto any rank one factor  of $G$,  and $M$ be a connected compact subgroup of $G$ such that $M \cap \Gamma = \set{e}$. Define $X = M \backslash G / \Gamma$ to be the corresponding double-homogeneous space.  Fix any $k \ge 1$. Then there is at most one $C^\infty$ ({or  $C^2$}),  transitive, totally Cartan $\R^k$-action on $X$, up to $C^\infty$ ({  resp. $C^{1,\theta}$ for every $\theta > 0$}) conjugacy and linear time change. Such an action exists if and only if $G$ is $\R$-split, and in this case,  $M = \set{e}$, $k = \rank_\R(G)$ and the action is the Weyl chamber flow on $G / \Gamma$. 
\end{corollary}



Theorem \ref{thm:big-main} also has an immediate application to the Zimmer program of classifying actions of higher rank semisimple Lie groups and their lattices on compact manifolds.  We refer to Fisher's recent surveys \cite{Fisher-survey2011,Fisher:2017aa} for a more extensive discussion.  This program has been a main impetus for seeking rigidity results for hyperbolic actions of higher rank abelian groups. We consider the following class of actions

\begin{definition}
Let $G$ be a semisimple Lie group. We say that a $C^r$ action $G \curvearrowright X$ is {\normalfont totally Cartan} if it is locally free, and there exists a connected abelian subgroup $A \subset G$ such that the restriction of the $G$-action to an $A$-action is totally Cartan. We say that the action is {\normalfont strongly transitive} if for every one-parameter subgroup $L \subset G$ which is not in the kernel of the projection onto a simple factor of $G$, $L$ has a dense orbit.
\end{definition}

\begin{remark}
If $G$ preserves a ergodic fully supported measure, then any action of $G$ is strongly transitive by considering the Howe-Moore theorem. Furthermore, if $G$ has a totally Cartan action, then $G$ is $\R$-split, and strong transitivity of the $G$-action implies cone transitivity of the restriction.
\end{remark}

\begin{corollary}[Classification of totally Cartan $G$-actions]
\label{cor:semisimple-actions}
Let $G$ be a connected semisimple Lie group without factors locally isomorphic to $PSL(2,\R)$.  Suppose that $G \curvearrowright X$ is a ${ C^2}$ strongly transitive, totally Cartan action of $G$. 
Then a finite cover of the action is $C^{1,\theta}$ {  (for any $1>\theta >0$)} conjugate to a
homogeneous action of $G$ on a homogeneous space $H/\Lambda$ via an embedding $G \rightarrow H$. If the action is $C^\infty$, so is the conjugacy.
\end{corollary}

\begin{remark}
We still get rigidity when $\rank_\R(G) = 1$ here, but for uninteresting reasons. Indeed, the Cartan condition will require that such a $G$ is locally isomorphic to $PSL(2,\R)$, which we assume does not occur.
\end{remark}

 {\sl Acknowledgements:} The authors would like to first and foremost thank the referee for a very detailed reading of the paper and extensive comments. We are also grateful to Mladen Bestvina, Aaron Brown, Danijela Damjanovic, Boris Hasselblatt, Boris Kalinin, Karin Melnick, Victoria Sadovskaya, Amie Wilkinson and Disheng Xu, for their interest and various conversations, especially about normal forms and cohomological dimension. { We would also like to thank Ekaterina Shchetka for identifying a gap in an argument.} Finally, we would like to thank David Fisher and Federico Rodriguez Hertz for comments on the paper and suggestions for improvements. We also  thank University of Chicago, Pennsylvania State University and University of Michigan for support and  hospitality  during various trips.

\section{Summary and structure of the paper}
\label{sec:summary}

The paper is organized into 5 parts, with the proofs in the last three. In Part I, we review several important preliminary topics, as well as prove some ``folklore'' theorems (for instance, the generalization of the Anosov closing lemma to $\R^k$ actions, Theorem \ref{thm:anosov-closing}). Many such results had been proven in specialized cases or stated without proof in other works. In Part II, we recall some standard constructions to build totally Cartan $\R^k$-actions, as well as some new constructions of exotic examples. In Part III, we discuss the relationship between transitivity conditions for subactions and the existence of rank one factors. In Part IV, we show that if an action satisfies a transitivity condition equivalent to the non-existence of rank one factors, then the action is homogeneous. In Part V, we use the structural results of the first two parts to prove the full structure theorem, as well as establish applications of the main theorems.

\subsection{Parts I and II: Developing the Toolbox}

Parts I and II appear before the main arguments and results of the paper. Part I begins by summarizing necessary algebraic tools in Section \ref{subsec:free-prods}, which recalls results in topological groups which we will use in Part IV.  


Section \ref{subsec:dynamical-prelim} contains many results which are folklore theorems and/or adaptations of classical theorems in Anosov flows to Anosov $\R^k$-actions. This includes the Anosov closing lemma (Theorem \ref{thm:anosov-closing}), which appeared in \cite{KS1} without proof and in a more specialized setting in \cite[Lemma 4.5 and Theorem 4.8]{MR808219}. We also prove a spectral decomposition (Theorem \ref{thm:spectral}). Unlike its counterpart in Anosov flows, the proof that $\R^k$-periodic orbits are dense (Lemma \ref{lem:cone-transitive}) is not immediate from transitivity, even with the closing lemma. This is the only place where the condition of cone transitivity over transitivity is required.

We also generalize notions of stable manifolds called {\it coarse Lyapunov foliations}, with associated linear functionals called {\it weights}. In the presence of an invariant measure, the weights are related to Lyapunov exponents coming from higher-rank adaptations of Pesin theory, see \cite{Brown:2016aa} for a detailed presentation.

 We also recall recent developments on normal forms, prove some folklore theorems related to the construction of Lyapunov hyperplanes and show that Anosov flows on 3-manifolds are virtually self-centralizing (Theorem \ref{thm:3flow-centralizer}). 

Section \ref{sec:closing} contains several of the crucial new tools we develop and use in Parts III-V. The core of the arguments throughout the paper is isometric-like behavior along coarse Lyapunov foliations and moving by their associated hyperplanes. This adapts arguments from \cite{KaSp04} which produced a family of metrics on the coarse Lyapunov distributions such that the derivative along each such distribution is given by a functional independent of $x$. This is a feature of algebraic systems (indeed, our arguments show that it is a characterizing feature!). Following \cite{KaSp04}, such a metric is constructed in Section \ref{subsec:metrics-prelim} and used extensively in Part IV. When the action has rank one factors, one does not expect such a metric, since the derivative cocycle may not be cohomologous to a constant. However, the Lyapunov hyperplanes still have a weaker property: uniformly bounded derivatives along hyperplanes. This is proved in Lemma \ref{lem:uniformly bounded derivative}, and serves as an essential tool in Parts III and V.

The last tool we develop is the {\em   geometric commutator}, which is essential in understanding the way in which the coarse Lyapunov foliations interact. When the coarse Lyapunov distributions are smooth, one can take Lie brackets to answer such questions. 
In our case, and most dynamical cases, the vector fields corresponding to dynamical foliations are only H\"older continuous. This motivates another approach. 
We replace the Lie bracket with a coarser geometric version of the commutator of two foliations as $W ^{\alpha}$ and $W ^{\beta}$ as follows: create a path by  following the $\alpha$ and $\beta$ coarse Lyapunov spaces to create a ``rectangle.'' We produce a canonical way to close this path up  using legs from other coarse Lyapunov spaces. Those combined paths define the {\em geometric commutator}. We will often call the individual segments in a coarse Lyapunov space {\em legs} of the geometric commutator.

Part \ref{subsec:examples} has several examples of actions which exhibit the features of the theorems as well as illustrate their optimality. We also prove a classification of affine actions. In particular, we begin by recalling several classically studied examples in Section \ref{sec:algebraic-exs}. We then prove a structure theorem for homogeneous Cartan actions in Section \ref{sec:affine-classification}, which will be important in applications and shows that the classical examples are (nearly) exhaustive.

Finally, we discuss several exotic constructions of Cartan actions in Section \ref{sec:exotic}. Many of these examples, such as the one due to A. Starkov discussed in Section \ref{subsec:starkov}, were known previously. There are also several new examples presented which show that some assumptions cannot be relaxed, such as that of being {\it totally} Cartan (Section \ref{app:anosov-not-totally}). This example was further developed by the second author to produce a non-homogeneous totally Cartan action without rank one factors. Furthermore, some conclusions cannot be strengthened, such as the need for a skew product structure described in Theorem \ref{thm:mini-headache}, rather than a direct product structure (Section \ref{sec:nonproduct-ex}). A similar example was recently constructed by Damjanovic, Wilkinson and Xu, which we summarize in Section \ref{sec:DWX}.

\subsection{Part III: Rank One Factors and Transitivity of Hyperplane Actions}

One of the key novelties, and difficulties, of our results is that we obtain classification results on arbitrary manifolds, without requiring irreducibility conditions, such as ergodicity of one parameter subgroups. This allows for products and skew products to appear (as described in the statement of Theorem \ref{thm:mini-headache}, and later Theorem \ref{thm:big-headache}). Indeed, since not every Anosov flow on a 3-manifold is homogeneous, we cannot hope to prove Theorem \ref{thm:big-main} without assumptions to rule them out. If such a rank one factor exists, by Definition \ref{def:rank-one-factor}, there is an associated codimension one subgroup $H = \ker \sigma$ which acts trivially on the factor, and therefore preserves each fiber. Therefore, if the action has a rank one factor, there exists a codimension one hyperplane which does not have a dense orbit. In fact, generically, the $H$-orbit closures are exactly the fibers of the factor. 

The converse of this observation is much more difficult to prove: one must build a factor under the assumption that certain hyperplane orbits are never dense. The aim of Part III is exactly this undertaking, culminating in Theorem \ref{thm:main-anosov}. We restrict our attention to $\R^k$-actions, and treat $\R^k \times \Z^\ell$-actions by considering their suspensions. Indeed, the relationship between rank one factors of the $\R^k \times \Z^\ell$ actions and factors of the suspension is established in Lemma \ref{lem:susp}.

\begin{theorem}
\label{thm:main-anosov}
Let $\R^k \curvearrowright X$ be a $C^2$ ($C^\infty$) cone transitive totally Cartan action on a compact manifold $X$, and fix a Lyapunov hyperplane $H \subset \R^k$. Then {  for some $1>\theta >0$} either:

\begin{itemize}
\item the action lifts to some finite cover $\R^k \curvearrowright \tilde{X}$, and there is a $C^{1,\theta}$ ($C^\infty$) non-Kronecker rank one factor of the lifted action, or
\item there is a {  $C^{1,\theta}$}
($C^\infty$) 
Kronecker factor onto $\mathbb{T}^1$, and there exists $x \in \pi^{-1}(0)$ such that $\overline{H \cdot x} = \pi^{-1}(0)$, or 
\item there exists a point $x$ such that $\overline{H \cdot x} = X$.
\end{itemize}

\end{theorem}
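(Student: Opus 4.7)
The plan is, assuming we are not in the third case, to produce a closed $H$-invariant equivalence relation on $X$ of codimension one (or more) whose quotient carries the required rank-one factor structure. Write $\chi$ for the Lyapunov functional with $H = \ker \chi$, and fix a one-dimensional complement $L \subset \R^k$ so that $\R^k = H \oplus L$. Partition the coarse Lyapunov exponents into those with $\alpha|_H \ne 0$, for which $H$ contains both a contracting and an expanding element on $\mc W^\alpha$, and the (at most two) exponents $\alpha \in \{\chi,-\chi\}$, for which Lemma \ref{lem:uniformly bounded derivative} gives that $H$ acts with uniformly bounded derivatives along $\mc W^\alpha$. Thus $H$ behaves as a partially hyperbolic subgroup whose center distribution is tangent to $T(H\cdot x) \oplus E^\chi \oplus E^{-\chi}$.

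Next, for each exponent $\alpha$ with $\alpha|_H \ne 0$, I argue that every orbit closure $\mc O_x := \overline{H \cdot x}$ is saturated by leaves of $\mc W^\alpha$. This is the higher-rank analogue of the Hopf/Anosov closing argument: choose $a \in H$ with $\alpha(a) > 0$, so $a$ contracts $\mc W^\alpha$; use density of $\R^k$-periodic orbits (Theorem \ref{thm:dense-periodic}), the Anosov closing lemma (Theorem \ref{thm:anosov-closing}), and spectral decomposition (Theorem \ref{thm:spectral}) to show that the local stable manifold $W^s_a(y)$ of any forward-recurrent $y \in \mc O_x$ is contained in $\mc O_x$, then propagate this by $H$-motion and repeat with an unstable element. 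After this step, $\mc O_x$ already contains every stable and unstable leaf of the Anosov elements of $H$, so the only directions transverse to $\mc O_x$ in $X$ are $L$ and possibly $\mc W^{\pm \chi}$.

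The hardest step, and the main obstacle, is to analyze $\mc O_x$ in the center directions $\mc W^{\pm \chi}$, where $H$ acts with bounded distortion rather than hyperbolically, so that classical stable-manifold arguments are unavailable. To get around this, I would upgrade the bounded derivative estimate of Lemma \ref{lem:uniformly bounded derivative} to an $H$-invariant Riemannian metric on each $\mc W^{\pm \chi}$-leaf, making $H$ isometric there. Then, using the geometric commutator of Section \ref{sec:closing}, I would examine closed loops formed by alternating $H$-segments, hyperbolic $\mc W^\alpha$-segments, and $\mc W^{\pm\chi}$-segments in order to identify whether motion along $\mc W^{\pm\chi}$ is already generated inside $\mc O_x$ by these commutators. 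The commutator analysis produces the dichotomy: either $\mc O_x$ is saturated by $\mc W^{\pm\chi}$ too, or it is not and the obstruction is rigid enough that the relation $x \sim y \iff \mc O_x = \mc O_y$ is closed, with equivalence classes transverse only to $L$ (and possibly $\mc W^{\pm\chi}$). Ruling out that some nontrivial $\mc W^{\pm\chi}$-motion can be generated inside $\mc O_x$ exactly when no $H$-orbit is dense is the subtle part; the isometric behavior, as opposed to hyperbolic, means that any such motion is not forced by shadowing and must be ruled out by commutator identities.

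The quotient $Y := X / {\sim}$ then inherits a transitive $L \cong \R$-action, giving a rank-one factor $\pi : X \to Y$ with $\dim Y$ equal to $1$, $2$, or $3$ depending on which of $\mc W^{\pm\chi}$ is saturated. In all three cases, $Y$ is a compact manifold with a transitive $\R$-action, and the classification of such actions gives the remaining dichotomy: if the induced $L$-action is equicontinuous, then $Y \cong \mathbb T^1$ with translation action, and transitivity of the $\R^k$-action on $X$ forces some $H$-orbit in $\pi^{-1}(0)$ to be dense in the fiber, producing the second case; otherwise $Y$ is a non-Kronecker rank-one factor (an Anosov flow on a 3-manifold in the generic $\dim Y = 3$ situation), giving the first case. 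A finite cover is taken at this last stage to orient $\mc W^\chi$ and to promote the quotient from an orbifold to a manifold when the isotropy in the $\mc W^{\pm\chi}$-directions forces the factor to be only an orbifold in the original action.
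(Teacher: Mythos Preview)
Your overall strategy has the right shape—assume no dense $H$-orbit and build a factor—but there is a genuine gap at the step you flag as hardest, and it propagates.

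The upgrade from Lemma~\ref{lem:uniformly bounded derivative} (uniformly bounded derivative) to an $H$-invariant metric on $\mc W^{\pm\chi}$ is not available in this regime. That upgrade is exactly Theorem~\ref{thm:cocycle rigidity}, whose hypothesis is that $H$ has a dense orbit; you are working precisely under the negation of that hypothesis. The paper is explicit on this point: in Part~I one has only the bounded-derivative estimate, and the H\"older metric appears only under the higher-rank assumptions of Part~II. The paper's substitute for isometric behavior is the affine structure coming from normal forms: $W^H$-holonomies between $\mc W^{\pm\chi}$-leaves are shown to be affine in normal-forms coordinates with slope in $[A^{-2},A^2]$ (Lemma~\ref{lem:linear-holonomy}), and compositions of such maps are controlled via the $a$-switching number (Lemma~\ref{lem:holonomy-bounded-der}). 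This is genuinely weaker than an isometry, and the analysis of returns uses rigidity of subgroups of $\Aff(\R)$ rather than isometric rigidity.

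The second issue is the passage from ``orbit closures $\mc O_x$ are saturated by $\mc W^\alpha$ for $\alpha|_H\ne 0$'' to a smooth quotient. Even granting saturation, the equivalence relation $x\sim y\iff \mc O_x=\mc O_y$ is not obviously a $C^{1,\theta}$ foliation; orbit-closure partitions in partially hyperbolic settings can be wild. The paper does not attempt this. Instead it constructs a concrete model for the factor: the fixed-point set $M^\alpha(p)$ of a generic $a\in H$ at an $H$-periodic orbit $p$ is a $(k{+}2)$-dimensional $C^r$ submanifold (Proposition~\ref{prop:fix-point-set}), and $M^\alpha(p)/H$ is a $3$-manifold carrying an Anosov flow (Proposition~\ref{prop:factor-by-H}, Lemma~\ref{lem:anosov-factor}). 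The factor map is then defined by sending $x$ to its $W^H$-intersection with $M^\alpha(p)$, where $W^H(x)$ is the accessibility class through $H$-orbits and the foliations $\mc W^\beta$, $\beta\ne\pm\alpha$. The core of Part~I is the Return Dichotomy: either these intersections are finite for all $N$ (Section~\ref{sec:case1}, yielding a $C^{r'}$ foliation by $W^H$-leaves and hence the rank-one factor), or some intersection is infinite (Section~\ref{sec:case2}, where the affine holonomies force $\overline{W^H(x)}\supset M^\alpha(p)$ and ultimately a dense $H$-orbit or the circle factor). Your commutator dichotomy in step~7 is gesturing at this, but the actual mechanism—finite versus infinite holonomy returns to a fixed transversal, analyzed via $\Aff(\R)$—is what makes it work, and it is absent from your outline.

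Finally, the $\dim Y=2$ case does not occur: Lemma~\ref{lem:TNS} shows that if $\chi\in\Delta$ but $-c\chi\notin\Delta$ then $H$ already has a dense orbit, so one always has both $\pm\chi$ present when building the factor, and the non-Kronecker factor is $3$-dimensional.
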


There are several important ingredients in proving Theorem \ref{thm:main-anosov}. We assume throughout that $H$ does not have a dense orbit, and aim to obtain one of the first two cases of the theorem. First, we have to build a model for the factor. We do so by first showing that if the action $H$ is not transitive, then among the set of $\R^k$-periodic orbits, many of them are also $H$-periodic. Using normal forms, we are able to show that saturations of $H$-periodic orbits by the corresponding coarse Lyapunov foliations are actually fixed point sets for regular elements of $H$ fixing the periodic orbit itself. We call this set $M^\alpha(p)$. They therefore carry a canonical smooth structure, and are $(k+2)$-dimensional. We further show that the restriction of the $\R^k$-action to this invariant set is Cartan, and that the $H$-action is the Starkov component for the restricted action. Therefore, after factoring by the $H$-action, which is through a $(k-1)$-dimensional torus, one obtains a 3-manifold onto which we factor.

With the model in hand, we must define the projection. Let $\alpha$ denote a weight linked to $H$. Intuitively, in the case of a direct product, moving by the $\beta$-foliations, $\beta \not= \pm c\alpha$ and $H$-orbits does not change the position on the factor. Therefore, it is natural to consider the sets $W^H(x)$, which are the saturations of points $x$ by their $H$-orbits and $\beta$-foliations, $\beta \not= \pm c\alpha$. The main idea is then to show that $W^H(x)$ intersects the set $M^\alpha(p)$ in a single point (or more generally, in a finite set) for every $x$, and defining the projection by $x \mapsto W^H(x) \cap M^\alpha(p)$.

Building structure on such intersections is the main technical work of Part I. Crucially, the control of derivatives for elements of Lyapunov hyperplanes translates into control of derivatives for holonomies along coarse Lyapunov foliations, so that given a path in $W^H$ which begins and ends on the same $M^\alpha(p)$, one can construct a holonomy map and bound its derivative based on the number of times the path ``switches'' between exponents. One can use this control to show that the holonomies must converge, and using normal forms that any sequence of holonomies limiting to the same coarse Lyapunov leaf induces an affine map. Rigidity properties for subgroups of the affine groups allow us to conclude that $\abs{W^H(x) \cap M^\alpha(p)} = \infty$, and with more work, that $\overline{W^H(x)} \supset M^\alpha(p)$. In this case, we are able to show that $\overline{W^H(x)} = X$, and hence that the action of $H$ is not transitive. Furthermore, if the action of $H$ is not transitive, we show that $\abs{W^H(x) \cap M^\alpha(p)}$ is independent of $x$. This implies that there is a well-defined factor of the $\R^k$-action, which is a finite-to-one factor of the induced Anosov flow on $M^\alpha(p)$.




\subsection{Part IV: Constructing a Homogeneous Structure}
\label{sec:partii-sum}

In Part IV, we prove Theorem \ref{thm:big-main}. The main difficulty in producing a conjugacy with a homogeneous action is building a homogeneous structure from purely dynamical, and not geometric or topological, input. Indeed, we do not assume anything about the underlying manifold. In the case of $\R^k$, $k \ge 3$ actions, the construction was carried out in \cite{KaSp04} using a Lie algebra of vector fields, under strong assumptions. The case of $\R ^2$ actions is significantly more difficult than that of actions of  rank 3 and higher groups, and previous methods remain inadequate in our generality.  Indeed, we need to develop several completely new tools. They allow us to remove several restrictive assumptions, such as ergodicity of one-parameter subgroups with respect to a fully supported invariant measure, and even the existence of such a measure at all. 
We expect these new ideas to be useful in other situations as well. 


The starting point of our argument is  the  main technical result about the H\"{o}lder cohomology of the derivative cocycle, which was first shown in \cite[Theorem 1.2]{KaSp04} under much stronger assumptions. We show that it also holds for totally Cartan actions of any higher rank abelian group without rank one factors.  It implies immediately the following theorem, which gives a canonical homogeneous structure on each coarse Lyapunov leaf:

\begin{theorem}
\label{thm:main-cartan}
Let $\R^k \times \Z^\ell \curvearrowright X$ be a $C^2$ ($C^\infty$) cone transitive totally Cartan action on a compact manifold $X$. Fix a coarse Lyapunov foliation $\mc W$. Then either the action lifts to a finite cover $\R^k \times \Z^l \curvearrowright \tilde{X}$ and there is a {  $C^{1,\theta}$ (for some $1>\theta>0$)} ($C^\infty$) non-Kronecker rank one factor of the lifted action such that $\mc W$ projects to the stable foliation, or there exists a linear functional $\alpha : \R^k \to \R$ and a H\"older metric $\norm{\cdot}_\mc W$ on $T\mc W$ such that

\begin{equation}\label{eq:equivariance}
 \norm{a_* v}_{\mc W} = e^{\alpha(a)} \norm{v}_{\mc W} \mbox{ for all }a \in \R^k, v \in T\mc W.
\end{equation}

\noindent In the latter case, the norm $\norm{\cdot}_{\mc W}$ is unique up to global scalar.
\end{theorem}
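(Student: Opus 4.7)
The plan is to show that, assuming no finite cover of the action has a non-Kronecker $C^{1,\theta}$ rank one factor, the log-derivative cocycle along the one-dimensional tangent bundle $T\mc W$ is cohomologous to a linear functional. Suspending the $\Z^\ell$ part, we reduce to the $\R^k$ case. After passing to at most a double cover orienting $T\mc W$, fix a continuous H\"older nonvanishing section $v$ of $T\mc W$ and any H\"older reference metric $\abs{\cdot}$. The derivative along $\mc W$ gives a real-valued cocycle $c : \R^k \times X \to \R$ defined by $c(a,x) = \log \abs{a_* v(x)} - \log \abs{v(ax)}$. Producing the desired H\"older metric $\norm{\cdot}_\mc W = e^{\phi(x)}\abs{\cdot}$ with equivariance \eqref{eq:equivariance} is equivalent to writing $c(a,x) = \alpha(a) + \phi(ax) - \phi(x)$ for a linear functional $\alpha \in (\R^k)^*$ and a H\"older $\phi : X \to \R$.

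The strategy is a Liv\v{s}ic-type reduction to periodic data, using the Anosov closing lemma (Theorem \ref{thm:anosov-closing}) and the density of $\R^k$-periodic orbits (Theorem \ref{thm:dense-periodic}). On each $\R^k$-periodic orbit $O_p$ with period lattice $\Lambda_p$, the map $\Lambda_p \ni a \mapsto c(a,p)$ is a homomorphism into $\R$, so it extends to a linear functional $\alpha_p \in (\R^k)^*$, the weight at $p$. The core task is to show $\alpha_p$ is independent of $p$; given this, the cocycle $c - \alpha$ vanishes on all periodic data, so a H\"older Liv\v{s}ic theorem (applied to any transitive Anosov element $a \in \R^k$ and bootstrapped to $\R^k$ by the cocycle identity) produces the transfer function $\phi$. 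Uniqueness is then routine: the ratio of two such metrics is an $\R^k$-invariant positive H\"older function, hence constant by transitivity.

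The comparison of weights $\alpha_p$ and $\alpha_q$ at different periodic orbits is the main obstacle and the main new input. Fix a candidate hyperplane $H = \ker \alpha_p$. By Theorem \ref{thm:main-anosov}, either a finite cover of the action has a non-Kronecker $C^{1,\theta}$ rank one factor, in which case we are in the first alternative of the theorem, or there is an $x \in X$ (possibly on a Kronecker fiber) whose $H$-orbit is dense in $X$ (resp.\ in the fiber). In the remaining cases, Lemma \ref{lem:uniformly bounded derivative} tells us that elements of $H$ act with uniformly bounded log-derivative on $T\mc W$: the cocycle $c(h,\cdot)$ is bounded uniformly for $h \in H$. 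This uniform bound is the quantitative substitute for the rank $\geq 3$ hypothesis in \cite{KaSp04}.

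To combine this with the periodic data, we use the geometric commutator construction of Section \ref{sec:closing}. Given two close periodic points $p$ and $q$ connected by a short coarse-Lyapunov path, one builds a closed loop whose legs lie in $\mc W$-directions and in other coarse Lyapunov foliations; the total log-derivative along the loop is zero, and the contributions of legs transverse to $\mc W$ are controlled by Lemma \ref{lem:uniformly bounded derivative}. Applied along a chain of periodic orbits obtained from density of $H$-orbits and density of periodic orbits, this forces $\alpha_p(a) = \alpha_q(a)$ for all $a$ in a dense subset of $\R^k$, hence for all $a$ by linearity. This is the delicate step, particularly in the rank two case, where one must carefully choose geometric commutators whose non-$\mc W$ legs sit in directions on which the hyperplane action is controlled; the combination of uniform boundedness along $H$ with normal forms on individual coarse Lyapunov leaves is what makes this closing argument succeed.
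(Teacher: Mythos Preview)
Your reduction to the suspension and the Liv\v{s}ic reformulation are fine, and the observation that all the periodic functionals $\alpha_p$ share the same kernel $H$ (this is Lemma~\ref{lem:lyap-hyp}) is the right first step. The gap is in the final paragraph: the ``geometric commutator'' machinery in this paper (Lemma~\ref{lem:geom-commutator1}) produces closed paths in coarse Lyapunov leaves and records how \emph{foliations} fail to commute; it does not compare $\R^k$-derivative data between periodic orbits. The sentence ``the total log-derivative along the loop is zero'' conflates the $\R^k$-cocycle $c(a,x)$ with some holonomy along an $su$-path, and Lemma~\ref{lem:uniformly bounded derivative} only controls $c(h,\cdot)$ for $h\in H$, not the holonomy of legs in other coarse Lyapunov foliations. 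As written, nothing forces $\alpha_p=\alpha_q$, so the periodic-data hypothesis of the Liv\v{s}ic theorem is not verified.

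The paper's route is both simpler and avoids this obstacle. Once Theorem~\ref{thm:main-anosov} hands you a point $x_0$ with $\overline{H\cdot x_0}=X$ (or dense in a fiber of a circle factor), you run the Liv\v{s}ic argument \emph{along the $H$-orbit}, not along a single Anosov element: push any metric on $T\mc W_{x_0}$ forward by $h\in H$ and use Proposition~\ref{lem:kalinin-lem} (the H\"older estimate $\big|\norm{h_*|_{T\mc W}(x)}-1\big|<d(x,h x)^\beta$ for close returns) to show this pushed metric is uniformly H\"older along the orbit, hence extends continuously to $X$. This is exactly the content of Theorem~\ref{thm:cocycle rigidity}. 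The resulting metric is $H$-invariant and unique up to scalar, so any $a\notin H$ rescales it by $e^{\alpha(a)}$, which defines $\alpha$. In the circle-factor case (which you do not address) the same construction is carried out on the fiber $\pi^{-1}(0)$ and then extended over the circle by choosing a first-return element $a$; see the proof in Section~\ref{sec:part1-proofs}. Note that Proposition~\ref{lem:kalinin-lem} is the crucial quantitative input---the uniform bound of Lemma~\ref{lem:uniformly bounded derivative} alone is too weak to produce a H\"older transfer function.
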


\vspace{.2em}

The H\"older metrics make each coarse Lyapunov manifold isometric to $\R$. After passing to a cover, we may define H\"older flows along each such manifold which act by translations in each leaf. We now discuss the main novelty of this paper: how to ``glue'' groups actions which parameterize only {\it topological} foliations with smooth leaves into a larger Lie group action that actually gives the total space a homogeneous structure. The method is as follows:

The $\R$-actions parameterizing coarse Lyapunov foliations allow us to define an action of the free product $\mc{P} :=\R ^{*d}$ on $X$ 
where $d = \dim M - k$.  Elements of $\mc P$ are formal products of elements coming from each copy of $\R$, which we call {\it legs}. Each copy of $\R$ will correspond to a flow along a corresponding coarse Lyapunov foliation. We call $\mc P$ the {\em path group}. With the free product topology, $\mc{P}$ becomes a connected and path connected topological group which, when combined with the $\R^k$ action in a precise way, gives a transitive topological group action 
on $X$.  This group $\mc{P}$ is enormous, infinite dimensional for sure, and not a Lie group (see Section \ref{subsec:free-prods}).    Our main achievement   is to show that this action  factors through  the action of a Lie group.  The rough idea is to show that  the {\em cycle subgroups}, the stabilizers of  $\mc{P}$ at $x$, are normal.  Such an action of $\mc P$ is said to have {\it constant cycle structures}, see Corollary \ref{cor:lie-from-const}. We will not be able to show this exactly, but instead show that every stabilizer contains a fixed normal subgroup $\mc C$ independent of $x$, for which $\mc P / \mc C$ has has canonical local Euclidean coordinates (see Corollary \ref{cor:GhatLie}). Since $\mc{P}$ acts transitively on $M$ by construction, $M$ will be a $G$-homogeneous space. Moreover, the original action of $\R ^k$ naturally relates to $\mc{P}$, and becomes part of the homogeneous action.

The idea of using free products to build a homogeneus structures was first explored by the second author in \cite{vinhageJMD2015} when proving local rigidity of certain algebraic actions.  Basically it is a new tool to build global homogeneous structures from partial ones on complementary subfoliations.   
The main insight of Part IV is that these technical and seemingly narrow algebraic techniques can actually be applied to actions on {\it any} manifold, only using dynamical structures.


From this perspective, the main goal becomes proving constancy of the cycle relations.  There are two particularly important cases: cycles consisting only of two proportional (``symplectic'') weights, and commutator cycles of non-proportional weights. We call these  types of cycles  {\em pairwise cycle structures} (since they involve commutators of negatively proportional weights $\alpha$ and $-c\alpha$ or linearly independent weights $\alpha$ and $\beta$). From commutator cycles of linearly independent weights, we are able to obtain constancy of  
 cycles whose legs belong to a stable set of weights $E$, where we call $E$ {\em stable}  if for some  $ a \in \R ^k$, $\lambda (a) <0$ for all $\lambda \in E$. 

Once  constancy of symplectic and stable cycles  is accomplished, we combine this information  to prove constancy of all cycles in Section \ref{sec:fibers}.  In other related works on local rigidity problems, the latter was  achieved via K-theoretic arguments (this first appeared in \cite{DamjanovicKatok2005}). We found a new way to do this, avoiding the intricate K-theory arguments, by showing constancy of an open dense subset of relations using explicit relations between stable and unstable horocycle flows in $PSL(2, \R)$.  The K-theory argument was used in the past to treat the remaining potential relations.  However, density of the good relations makes this unnecessary.  


  We first introduce a cyclic ordering of the Lyapunov hyperplanes (kernels of weights) to handle stable cycles (see Definition \ref{def:circular-ordering}).  
Then we simplify 
products $\eta ^{\lambda _{i_1} } _{t_{i_1}} * \ldots * \eta ^{\lambda _{i_l}}_{t_{i_l}} $  with all $\lambda_{i_j}$ in a stable subset $E$ by putting the $\lambda_{i_j}$ into cyclic order by commuting them with other $\lambda_{i_k}$.  Assuming commutator cycles are constant we can then easily show that stable cycles are constant using the dynamics of the action, cf. Section \ref{subsec:sun-presentations}. Combining this with the uniqueness of presentation of elements of the stable set in the order determined by a circular ordering, we get an injective continuous map from $\mc P / \mc C$. We can then apply another Lie criterion developed by Gleason and Palais (Theorem \ref{lem:gleason-palais}) to get a group structure on stable manifolds.

To prove constancy of pairwise cycle structures coming from negatively proportional weights $\alpha, - c \,\alpha$ for $c>0$, we imitate the procedure in \cite{GS1,KaSp04}, replacing their use of isometric returns with the tools of path and cycle groups. Using a point $x_0$ with a dense $\ker \alpha$-orbit, we are able to take any cycle with legs in $\alpha$ and $- c \, \alpha$, and obtain it as a cycle at every point in $M$. We can then apply the Gleason-Palais Lie criterion theorem (again) to get a Lie group structure.  We use this homogeneous structure, to understand the cycles coming from $\alpha, - c \alpha$ (see Lemma \ref{lem:symplectic}). 
\vspace{.2em}

 


 The other important relation to establish is that the geometric commutators are independent of $x$. For simplicity suppose that for some $\alpha$ and $\beta$, there exists a unique weight $\gamma$ which is a positive linear combination of $\alpha$ and $\beta$.  Then the geometric commutators between $\alpha$ and $\beta$ satisfy an intertwining property (see \eqref{eq:rho-equivariance}), as well as a cocycle relation. By using these, we are able to establish regularity properties of the geometric commutator. In fact, we deduce that they are always polynomials in the lengths of the legs and independent of the basepoint. A more complicated argument always works by induction, replacing the cocycle property by a cocycle ``up to polynomial'' property (c.f. Lemma \ref{eq:cocycle-like-bb}). 

 \vspace{.2em}

\subsection{Part V: Structure of General Totally Cartan Actions}

In Part V, we prove the most general structural result of the paper, Theorem \ref{thm:big-headache}, which describes the general structure of a totally Cartan action in terms of its rank one factors. We also classify when a rank one factor ``commutes'' with the rest of the action (ie, when it is a direct product of a subaction of rank $k-1$). Definition \ref{def:splitting}, Lemma \ref{lem:splitting-equivalence} and Theorem \ref{thm:splitting-implies-direct} give checkable criteria for detecting when a rank one factor splits as a direct product. 

 We begin in Section \ref{sec:starkov}, by describing how the Starkov component acts. The uniform boundedness lemma (Lemma \ref{lem:uniformly bounded derivative}) will imply that the action of the Starkov component is equicontinuous, and factors through a torus action. When combining Corollary \ref{cor:factor-by-starkov} with Theorem \ref{thm:big-headache}, one can obtain a general structure theorem of actions with nontrivial Starkov component, by describing their Starkov factor, then claiming the original action is an extension by a transitive torus action.
 
 To prove Theorem \ref{thm:big-headache}, we consider all rank one factors of the action simultaneously. While the map to all such factors may fail to be onto, it fails in a controlled way by being a submersion onto a closed submanifold (for an example of this phenomenon, see Section \ref{sec:embedded-ex}). By Theorem \ref{thm:main-cartan}, any coarse Lyapunov foliation whose leaves are contained in the fibers of the submersion has an associated metric. We call these the ``homogeneous weights,'' and show that the collection is invariant under {\it all} geometric  commutators. We are then able to apply the arguments of Part IV again to build a homogeneous structure on the fibers, as well as prove that holonomies along the stable and unstable manifolds of the rank one factors are affine in this homogeneous structure.

We then turn to the centralizers of cone transitive, totally Cartan actions in Section \ref{sec:centralizer}. Understanding the structure of the centralizer in this setting will be important when considering holonomies induced by motion along the stable and unstable manifolds of a rank one factor. 
We consider the action of the centralizer on periodic orbits, and show that an element of the centralizer is determined by its multipliers along finitely many such orbits (Lemmas \ref{lem:psi-injective} and \ref{lem:injective-derivative}). An important consequence will be that there are only finitely many elements of the centralizer whose iterates have uniformly bounded derivatives (Corollary \ref{cor:q-finite}).

The main application of the ideas of Section \ref{sec:centralizer} is achieved in Section \ref{sec:prod-struct}, where the main difficulty is the analysis of compositions of stable and unstable holonomies coming from a rank one factor. We show that a direct product structure must come from the action of such holonomies, and classify when they do in Lemma \ref{lem:splitting-equivalence} and Theorem \ref{thm:splitting-implies-direct}.

The end of Part V also includes proofs of the applications of the main theorem (Section \ref{sec:corollaries}). The application to discrete group actions is an easy consequence of the main theorems and the preliminary work done in Section \ref{sec:susp}. The application to global rigidity of Weyl chamber flows (Corollary \ref{cor:global}) is established by first excluding rank one factors by carefully considering how a rank one factor interacts with the fundamental group and using the Margulis normal subgroup theorem. Once the group $G$ giving $X$ its homogeneous structure coincides with the group produced by Theorem \ref{thm:big-main}, we may apply Mostow rigidity to see that the two groups are the same, and conclude global rigidity. The final application to totally Cartan $G$-actions is basically immediate.

\subsection{Proof Locations}
For convenience, the following table contains the locations of the proofs of each of the main results stated in these introductory sections:

\begin{center}
\begin{tabular}{|l|l|}
\hline
Result & Location \\
\hline
\hline
Theorem \ref{thm:big-main} & Section \ref{sec:proofs} \\
\hline
Theorem \ref{thm:mini-headache} & Section \ref{sec:main-structure} \\
\hline
Corollaries \ref{cor:Zk-actions}-\ref{cor:semisimple-actions} & Section \ref{sec:corollaries} \\
\hline
Theorem \ref{thm:main-anosov}, \ref{thm:main-cartan} & Section \ref{sec:part1-proofs} \\
\hline
\end{tabular}
\end{center}

\part[Preliminaries and Standard Structure Theory]{\Large Preliminaries and Standard Structure Theory}

\section{Topological group and Lie group foundations}
\label{subsec:free-prods}

\subsection{Free products of topological groups}
Let $U_1,\dots,U_r$ be topological groups. The {\it topological free product} of the $U_i$, denoted $\mc P = U_1 * \dots * U_r$ is a topological group whose underlying group structure is exactly the usual free product of groups. That is, elements of $\mc P$ are given by

\[ u_1^{(i_1)} * \dots * u_N^{(i_N)} \]

\noindent where each $i_k \in \set{1,\dots,r}$ and each $u_k \in U_{i_k}$. We call the sequence $(i_1,\dots,i_k)$ the {\it combinatorial pattern} of the word. Each term $u_k^{(i_k)}$ is also called a {\it leg} and each word is also called a {\it path}. This is because in the case of a free product of connected Lie groups, the word can be represented by a path beginning at $e$, moving to $u_N^{(i_N)}$, then to $u_{N-1}^{(i_{N-1})} * u_N^{(i_N)}$, and so on through the truncations of the word. The multiplication is given by concatenation of words, and the only group relations are given by

\begin{eqnarray}
\label{eq:freep-rel1} u^{(i)} * v^{(i)} & = & (uv)^{(i)} \\
\label{eq:freep-rel2} e^{(i)} & = & e \in \mc P
\end{eqnarray}

Notice that the relations \eqref{eq:freep-rel1} and \eqref{eq:freep-rel2} give rise to canonical  injective homomorphisms of each $U_i$ into $\mc P$. We therefore identify each $U_i$ with its image in $\mc P$. The usual free product is characterized by a universal property: given a group $H$ and any collection of homomorphisms $\varphi_i : U_i \to H$, there exists a unique homomorphism $\Phi : \mc P \to H$ such that $\Phi|_{U_i} = \varphi_i$. The group topology on $\mc P$ may be similarly defined by a universal property, as first proved by Graev \cite{graev1950}:

\begin{proposition}
There exists a unique topology $\tau$ on $\mc P$ (called the {\normalfont  free product topology}) such that

\begin{enumerate}
\item each inclusion $U_i \hookrightarrow \mc P$ is a homeomorphism onto its image, and
\item if $\varphi_i : U_i \to H$ are continuous group homomorphisms to a topological group $H$, then the unique extension $\Phi$ is continuous with respect to $\tau$.
\end{enumerate}
\end{proposition}

In the case when each $U_i$ is a Lie group (or more generally, a CW-complex), Ordman found a more constructive description of the topology \cite{ordman1974}. Indeed, the free product of Lie groups is covered by a disjoint union of {\it combinatorial cells}. 

\begin{definition}
\label{def:comb-cells}
Let $\mc P$ be the free products of groups $U_{\beta}$, where the $\beta$ range over some indexing set $\Delta$. A {\normalfont combinatorial pattern} in $\Delta$ is a finite sequence $\bar{\beta} = (\beta_1,\dots,\beta_n)$ such that $\beta_i \in \Delta$ for $i=1,\dots,n$. For each combinatorial pattern $\bar{\beta}$, there is an associated {\normalfont combinatorial cell} $C_{\bar{\beta}} = U_{\beta_1} \times \dots \times U_{\beta_n}$. If each $U_\beta$ is a topological group, $C_{\bar{\beta}}$ carries the product topology from the topologies on $U_{\beta_i}$. Notice that each $C_{\bar{\beta}}$ has a map $\pi_{\bar{\beta}} : C_{\bar{\beta}} \to \mc P$ given by $(u_1,\dots,u_N) \mapsto u_1^{(\beta_1)} * \dots * u_N^{(\beta_N)}$. Furthermore, if $C = \bigsqcup_{\bar{\beta}} C_{\bar{\beta}}$ and $\pi : C \to \mc P$ is defined by setting $\pi(x) = \pi_{\bar{\beta}}(x)$ when $x \in C_{\bar{\beta}}$, then $\pi$ is onto.
\end{definition}

\begin{lemma}[\cite{vinhageJMD2015} Proposition 4.2]
\label{lem:continuity-criterion}
If each $U_i$ is a Lie group, $\tau$ is the quotient topology on $\mc P$ induced by $\pi$. In particular, $f : \mc P \to Z$ is a continuous function to a topological space $Z$ if and only if its pullback $f \of \pi_{\bar{\beta}}$ to $C_{\bar{\beta}}$ is continuous for every combinatorial pattern $\bar{\beta}$.
\end{lemma}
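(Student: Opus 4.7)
The plan is to identify the free product topology $\tau$ (given by Graev's theorem and characterized by the universal property above) with the quotient topology $\tau_q$ on $\mc P$ induced by $\pi$. Once this identification is established, the ``in particular'' statement is immediate, since continuity through a quotient map is detected on the source: $f$ is $\tau_q$-continuous iff $f \circ \pi$ is continuous on the disjoint union $C$, iff its restriction to each $C_{\bar\beta}$ is continuous.

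First I would show $\tau \subseteq \tau_q$. Each restriction $\pi_{\bar\beta}$ is a finite composition of the canonical inclusions $U_{\beta_i} \hookrightarrow \mc P$ with the group multiplication of $(\mc P,\tau)$; all of these are $\tau$-continuous, since $(\mc P,\tau)$ is a topological group in which each $U_i$ embeds. Hence $\pi$ itself is $\tau$-continuous, and by definition $\tau_q$ is the finest topology on $\mc P$ with this property, so $\tau \subseteq \tau_q$.

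The reverse inclusion is the substantive part. I would use Graev's characterization of $\tau$ as the finest topological group topology on $\mc P$ extending the given topologies on the $U_i$. So it suffices to show that $(\mc P,\tau_q)$ is itself a Hausdorff topological group in which each $U_i$ is continuously embedded. The embedding of $U_i$ is straightforward: $U_i$ equals the cell $C_{(i)}$ and $\pi_{(i)}$ is continuous by definition of $\tau_q$; to see the inclusion is moreover a topological embedding, one introduces a retraction $\rho_i \colon \mc P \to U_i$, namely the algebraic extension of $\mathrm{id}_{U_i}$ and the trivial homomorphisms $U_j \to U_i$ for $j \ne i$. This $\rho_i$ is $\tau_q$-continuous since its pullback by each $\pi_{\bar\beta}$ is a product of inclusions into $U_i$ interspersed with constants at $e$, so $\tau_q$-open subsets of $U_i$ are restrictions of $\tau_q$-open subsets of $\mc P$. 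Continuity of inversion in $\tau_q$ is likewise handled cell-by-cell: on $C_{\bar\beta}$ it coincides with pointwise inversion combined with reversal of the combinatorial pattern, followed by $\pi_{\bar\beta^{\mathrm{op}}}$.

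The main obstacle is $\tau_q$-continuity of multiplication $m \colon \mc P \times \mc P \to \mc P$, because quotients do not in general commute with products. Here the hypothesis that each $U_i$ is a Lie group is crucial: each cell $C_{\bar\beta}$ is a finite product of Lie groups, hence locally compact Hausdorff, and the (countable) disjoint union $C$ remains locally compact Hausdorff, so by Whitehead's theorem $\pi \times \pi \colon C \times C \to \mc P \times \mc P$ is again a quotient map. It then suffices to verify continuity of $m \circ (\pi \times \pi)$ on each product cell $C_{\bar\beta_1} \times C_{\bar\beta_2}$, where the map is just the concatenation $C_{\bar\beta_1} \times C_{\bar\beta_2} \to C_{(\bar\beta_1,\bar\beta_2)}$ composed with $\pi_{(\bar\beta_1,\bar\beta_2)}$, visibly continuous. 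With $(\mc P,\tau_q)$ thereby established as a topological group with continuously embedded $U_i$, Graev's maximality gives $\tau_q \subseteq \tau$, completing the identification $\tau = \tau_q$ and the lemma.
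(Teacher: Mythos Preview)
The paper does not supply its own proof; the lemma is quoted from \cite{vinhageJMD2015}, which in turn rests on Ordman's description of free-product topologies for $k_\omega$-spaces. Your outline is the standard one and is largely sound: $\tau\subseteq\tau_q$ is immediate, and the strategy of showing that $(\mc P,\tau_q)$ is itself a topological group so as to invoke Graev's maximality is the right idea.

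The one genuine gap is the appeal to Whitehead's theorem. That theorem asserts that $q\times\mathrm{id}_Z$ is a quotient map when $Z$ is locally compact Hausdorff; it does \emph{not} say that $q\times q$ is a quotient map whenever the common source is locally compact. If you factor $\pi\times\pi$ as $(\mathrm{id}_{\mc P}\times\pi)\circ(\pi\times\mathrm{id}_C)$, the first factor is a quotient map by Whitehead (since $C$ is LCH), but the second would require $\mc P$ to be LCH, and it is not: the free product of nontrivial Lie groups is an infinite-dimensional CW complex. So the step ``$C$ is LCH, hence by Whitehead $\pi\times\pi$ is a quotient map'' does not close as written.

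The conclusion is nonetheless true, and the repair is exactly Ordman's point: work in the $k_\omega$ category. Each $U_i$, being a (second-countable) Lie group, is $\sigma$-compact and locally compact, hence $k_\omega$; each cell $C_{\bar\beta}$ and the countable disjoint union $C$ are therefore $k_\omega$; Hausdorff quotients of $k_\omega$-spaces are $k_\omega$; and for $k_\omega$-spaces the product of two quotient maps is a quotient map onto the ordinary product. With this in hand your verification that $m\circ(\pi\times\pi)$ is continuous cell-by-cell yields continuity of $m$, and the rest of your argument (inversion, embedding of the $U_i$ via the retraction $\rho_i$) goes through unchanged.
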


\begin{corollary}
\label{cor:connected}
If each $U_i$ is a connected Lie group, $\mc P$ is path-connected and locally path-connected.
\end{corollary}

\begin{proof}
To see that $\mc P$ is path-connected, notice that if $\rho \in \mc P$, it must belong to a combinatorial cell $C_{\bar{\beta}}$ for some $\bar{\beta}$. Since each such cell is a product of finitely many connected Lie groups, it is path-connected, so there is a path from the identity to $\rho$. Similarly, any open neighborhood of $e \in \mc P$ contains a product of open neighborhoods in each cell. Since each cell is locally path-connected, one may consider the product of such neighborhoods in $\mc P$ to build arbitrarily small path-connected neighborhoods of $e$. Since $\mc P$ is a topological group, this is sufficient.
\end{proof}

Let $\mc P = \R^{*r}$ be the $r$-fold free product of $\R$. Given continuous flows $\eta^1,\dots,\eta^r$ on a space $X$, we may induce a continuous action $\tilde{\eta}$ of $\mc P$ on $X$ by setting:

\[ \tilde{\eta}(t_1^{(i_1)} * \dots t_n^{(i_n)})(x) = \eta^{i_1}_{t_1} \of \dots \of \eta^{i_n}_{t_n}(x) .\]

This can be observed to be an action of $\mc P$ immediately, and continuity can be checked with either the universal property (considering each $\eta^i$ as a continuous function from $\R$ to $\Homeo(X)$) or directly using the criterion of Lemma \ref{lem:continuity-criterion}. Given a word $t_1^{(i_1)} * \dots * t_N^{(i_N)}$ (which we often call a path as discussed above), we may associate a path in $X$ defined by:

\[ \gamma\left(\frac{s+k-1}{m}\right) = \eta^{i_k}_{st_k}(x_{k-1}), \; s \in [0,1], \; k = 1,\dots,N, \]

\noindent where $x_0$ is a base point and $x_k = \eta^{i_k}_{t_k}(x_{k-1})$. This gives more justification for calling each term $t_k^{(i_k)}$ a leg. The points $x_k$ are called the {\it break points} or {\it switches} of the path. 

\begin{definition}
\label{def:cycles}
Given an action $\mc P \curvearrowright X$, the {\normalfont cycle subgroup at $x$}, or just {\normalfont cycles at $x$}, are defined as $\mc C_x := \Stab(x)$.
\end{definition}

Notice that cycles are exactly the paths whose endpoints and start points coincide.

\begin{remark}
\label{rem:paths-no-action}
Given a collection of oriented one-dimensional foliations $\mc W^{\beta_1},\dots,W^{\beta_N}$ and some Riemannian metric on $X$, one can define flows $\eta^i_t$ according to unit speed flow along $\mc W^{\beta_i}$. Even if there is no distinguished Riemannian metric to use, we will often use the terms paths, legs, break points and cycles in the foliations $\mc W^{\beta_i}$ (especially in Parts III and V). The combinatorial patterns of such paths still make sense even without reference to a fixed parameterization.
\end{remark}


Given $\lambda = (\lambda_1,\dots,\lambda_r) \in \R^r$, let $\psi_\lambda$ denote the automorphism of $\mc P$ defined in the following way. Suppose that $t_1^{(i_1)} * \dots * t_m^{(i_m)} \in \mc P$ is an element of combinatorial length $m$. Then define:

\[ \psi_\lambda : t_1^{(i_1)} * t_2^{(i_2)} * \dots * t_m^{(i_m)} \mapsto  (\lambda_{i_1}t_1)^{(i_1)} * (\lambda_{i_2}t_2)^{(i_2)} * \dots * (\lambda_{i_m}t_m)^{(i_m)}. \]

\begin{definition}
\label{def:P-groups}
Fix a finite collection $\Delta = \set{\alpha_1,\dots,\alpha_r} \subset (\R^k)^*$, and let $a \in \R^k$. Define $\psi_a = \psi_{\big(e^{\alpha_1(a)},\dots,e^{\alpha_r(a)}\big)}$, and $\hat{\mc P} = \R^k \ltimes \mc P = \R^k \ltimes \R^{*r}$, with the semidirect product structure given by

\[ (a_1,\rho_1) \cdot (a_2,\rho_2) = (a_1a_2,\psi_{a_2}^{-1}(\rho_1)* \rho_2) \]

\noindent where the group operation of $\R^k$ is written multiplicatively.
\end{definition}

\begin{proposition}
\label{prop:integrality}
Let $\mc C$ be a closed, normal subgroup of $\mc P$, and $H = \mc P / \mc C$ be the corresponding topological group factor of $\mc P$. If $\psi_a(\mc C) = \mc C$ for all $a \in \R^k$, then $\psi_a$ descends to a continuous homomorphism $\bar{\psi}_a$ of $H$. Furthermore, if $H$ is a Lie group with Lie algebra $\mf h$, then

\begin{enumerate}
\item each generating copy of $\R$ in $\mc P$ projects to a one parameter subgroup whose generating element of $\mf h$ is an eigenvector for $d\bar{\psi}_a$ on $H$,
\item if the $i\tth$ and $j\tth$ copies of $\R$ do not commute, the Lie algebra commutator of their generators, $[X_i,X_j]$ in $H$ is an eigenspace of $\bar{\psi}_a$ with eigenvalue $e^{\alpha_i(a) + \alpha_j(a)}$, and
\item if $Y = [Z_1,[Z_2,\dots,[Z_N,Z_0]\dots]]$, with $Z_k = X_i$ or $X_j$ for every $k$, then $Y$ is an eigenvector of $\bar{\psi}_a$ with eigenvalue of the form $e^{u\alpha_i(a)+v\alpha_j(a)}$ with $u,v \in \Z_+$.
\end{enumerate}
\end{proposition}

\begin{proof}
If $\rho \in \mc P$ and $h = \rho \mc C$ is the corresponding element of the quotient group $H$, define $\bar{\psi}_a(h) = \psi_a(\rho \mc C) = \psi_a(\rho) \mc C$. Let $\pi : \mc P \to H$ denote the projection from $\mc P$ to $H$. For each $i$, let $f_i : \R \to \mc P$ denote the inclusion of $\R$ into the $i\tth$ copy of $\R$ generating $\mc P$. Then $\pi \of f_i : \R \to H$ is a one-parameter subgroup of $H$, and we denote its corresponding Lie algebra element by $X_i$. Observe that since $\bar{\psi}_a \of \pi \of f_i(t) = \pi \of\psi_a \of f_i =\pi \of f_i(e^{\alpha_i(a)}t)$,  and hence $d\bar{\psi}_a(X_i) = e^{\alpha_i(a)}X_i$, proving (1).

To see (2), observe that 

\[ d\bar{\psi}_a[X_i,X_j] = [d\bar{\psi}_aX_i,d\bar{\psi}_aX_j] = [e^{\alpha_i(a)}X_i,e^{\alpha_j(a)}X_j] = e^{\alpha_i(a)+\alpha_j(a)}[X_i,X_j] .\]

A similar argument shows (3).
\end{proof}

\subsection{Lie Criteria}

In this subsection, we recall a deep result for Lie criteria of topological groups. 
 The main criterion we use was obtained by Gleason and Palais:

\begin{theorem}[Gleason-Palais]
\label{lem:gleason-palais}
If $G$ is a locally path-connected topological group which admits an injective continuous map from a neighborhood of $ e \in G$ into a finite-dimensional topological space, then $G$ is a Lie group.
\end{theorem}

Theorem \ref{lem:gleason-palais} has an immediate corollary for actions of the path group $\mc P =  \R^{*r}$:

\begin{corollary}
\label{cor:lie-from-const0}
If $\eta : \mc P \curvearrowright X$ is a continuous group action on a topological space $X$, and there exists $x_0 \in X$ such that $\mc C_{x_0}  \subset \mc C_x$ for every $x \in \mc P \cdot x_0$, then $\mc C_{x_0}$ is normal and the $\mc P$-action $\eta$ descends to an action of $\mc P / \mc C_{x_0}$ on $\mc P \cdot x_0$.  If there is an injective continuous map from $\mc P /\mc C$ to a finite-dimensional space $Y$, then $\mc P / \mc C_{x_0}$ is a Lie group.
\end{corollary}

\begin{proof}
We first show that $\mc C$ is normal. Let $\sigma \in \mc C$ and $\rho \in \mc P$. Then $\sigma\cdot(\rho \cdot x_0) = \rho \cdot x_0$, since $\sigma$ stabilizes every point of $X$. Therefore, $\rho^{-1}\sigma\rho \cdot x_0 = x_0$, and $\rho^{-1}\sigma \rho \in \mc C$, and $\mc C$ is a closed normal subgroup. By Corollary \ref{cor:connected}, $\mc P$, and hence all of its factors, are locally path-connected. Therefore, by Theorem \ref{lem:gleason-palais}, $\mc P / \mc C$ is a Lie group if it admits an injective continuous map to a finite-dimensional space.
\end{proof}

\begin{definition}
Let $X$ be a metric space and $U_1,\dots,U_m$ be Lie groups with faithful continuous actions $U_i \curvearrowright X$. We say that these actions {\normalfont generate the action of a Lie group on $X$} or that $U_1 * \dots * U_m$ {\normalfont factors through the action of a Lie group on $X$} if

\begin{enumerate}
\item there exists a Lie group $G$ with a faithful continuous action $G \curvearrowright X$,
\item there exist continuous embeddings $f_i : U_i \hookrightarrow G$ such that $f_i(u) \cdot x = u \cdot x$ for all $u \in U_i$ and $x \in X$, and
\item $\bigcup_{i = 1}^m f_i(U_i)$ generates $G$.
\end{enumerate}
\end{definition}

The following is an immediate reformulation of Corollary \ref{cor:lie-from-const0} when each $U_i \cong \R$ and $X$ is finite-dimensional:

\begin{corollary}
\label{cor:lie-from-const}
Let $\eta^{\alpha_1}, \dots, \eta^{\alpha_m}$ be continuous, fixed-point free flows on a finite-dimensional space $X$ and $\mc P \curvearrowright X$ be the corresponding action of the free product. If $\mc C_{x_0} \subset \mc C_x$ for all $x \in \mc P \cdot x_0$, the flows $\eta^{\alpha_i}$ generate the action of a Lie group on $\mc P \cdot x_0$. 
\end{corollary}

\subsection{H\"older transitivity of generating subgroups}
\label{app:holder-transitivity}

In this section, we prove a folklore theorem about generating subgroups of a Lie group $G$. This property was used in \cite{DamjanovicKatok2005}, and follows from arguments of transitivity of foliations. A similar statement also holds for a family of foliations such that the sum of their distributions is everywhere transverse and totally non-integrable.

\begin{definition}
A family $\mc F_1,\dots,\mc F_n$ of foliations on a manifold $X$ with metric $d_X$ are called {\it locally $\theta$-H\"older transitive} if there exists an $\epsilon > 0$ and $C > 0$ such that if $x,y \in X$ satisfy $d(y,x) < \epsilon$, there are points $x=x_0,x_1,\dots,x_k = y$ such that:

\begin{enumerate}
\item $x_{i+1} \in \mc F_{m_i}(x_i)$ for some $1 \le m_i \le n$
\item $\sum d(x_i,x_{i+1}) < C \cdot d(x,y)^\theta$.
\end{enumerate}
\end{definition}

\begin{lemma}
\label{lem:coset-theta-holder}
If $U_1,\dots,U_n \subset G$ are generating subgroups of a Lie group $G$, then the coset foliations of $U_i$ are $\theta$-H\"older transitive for some $\theta > 0$.
\end{lemma}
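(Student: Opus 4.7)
The plan is to reduce to verifying H\"older transitivity locally at $e \in G$ using left-invariance of both the foliations and the metric, and then to invoke the quantitative sub-Riemannian ball--box theorem for the left-invariant bracket-generating distributions coming from $\mf u_i = \Lie(U_i)$. Since each coset foliation $\mc F_i(g) = gU_i$ and a chosen left-invariant Riemannian metric on $G$ are both $L_g$-invariant, it suffices to produce $\epsilon, \theta > 0$ such that every $y$ with $d(e,y) < \epsilon$ can be written as $y = u_1 u_2 \cdots u_k$ with $u_j \in U_{m_j}$ and $\sum_j d(e, u_j) \le d(e,y)^\theta$; left-translating by the partial products then yields the broken polygonal path $x_j = u_1 \cdots u_j$ of the definition, with $d(x_j, x_{j+1}) = d(e, u_{j+1})$ by left-invariance.

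The key Lie-theoretic input is that $\bigcup_i \mf u_i$ generates $\mf g$ as a Lie algebra. Indeed, working near $e$ we may replace each $U_i$ by its identity component $U_i^\circ$, and the correspondence between connected Lie subgroups of $G^\circ$ and Lie subalgebras of $\mf g$ forces the Lie hull of $\bigcup_i \mf u_i$ to equal $\mf g$; otherwise the $U_i$ would lie in a proper closed Lie subgroup of $G^\circ$, contradicting the hypothesis that they generate $G$. Let $s \in \N$ denote the step of the resulting bracket-length filtration, so iterated brackets of length at most $s$ from $\bigcup_i \mf u_i$ already span $\mf g$. The left-invariant distributions $D_i(g) = dL_g(\mf u_i) \subset T_g G$ are then collectively bracket-generating at every point of $G$ with step $s$.

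Finally, the ball--box theorem provides a constant $C > 0$ so that every $y$ with $d(e,y) < \epsilon$ is reachable from $e$ by a piecewise horizontal path whose pieces are arcs of one-parameter subgroups $t \mapsto \exp(tX)$ with $X \in \mf u_{m_j}$, of total horizontal length at most $C\, d(e,y)^{1/s}$. Each such arc lies in a single coset of $U_{m_j}$ and has length comparable to the ambient distance between its endpoints, so this realizes $y$ as the required product and gives $\sum_j d(e, u_j) \le C\, d(e,y)^{1/s}$; choosing $\theta \in (0,1/s)$ and shrinking $\epsilon$ to absorb $C$ concludes the argument. The main obstacle is the ball--box estimate itself; in the left-invariant setting I would prove it directly by iterating the Baker--Campbell--Hausdorff commutator identity
\[
\exp(tX)\exp(tY)\exp(-tX)\exp(-tY) = \exp\bigl(t^2[X,Y] + O(t^3)\bigr),
\]
which realizes a displacement of size $\delta$ along a bracket direction of length $m$ by group commutators of size $O(\delta^{1/m})$; summing over a basis of $\mf g$ adapted to the bracket-length filtration then yields the desired H\"older exponent $\theta = 1/s$.
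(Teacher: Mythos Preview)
Your proposal is correct and takes essentially the same approach as the paper: reduce to $e$ by left-invariance, use that the $\mf u_i$ bracket-generate $\mf g$, and realize displacements in bracket directions via iterated group commutators coming from the BCH formula. The paper writes down the commutator curves explicitly---defining $\varphi_j(t)$ as a nested group commutator with entries $\exp(t^{1/2^k}V_i)$ so that $\varphi_j'(0) = W_j$, and then applying the inverse function theorem to the product map---rather than invoking the ball--box theorem as a black box, and its parametrization yields the cruder exponent $2^{-q}$ instead of your $1/s$; but since the lemma only asks for some $\theta > 0$, the mechanism and conclusion are the same.
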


\begin{proof}
Pick vectors $V_1,\dots,V_m \in \Lie(G)$ such that each $V_i \in \Lie(U_{k_i})$ for some $k_i$ and the $V_i$ generate $\Lie(G)$ as a Lie algebra. Then there are finitely many $W_1,\dots,W_\ell$ such that each $W_j$ is an iterated bracket of the elements of $\set{V_i}$ and such that $\set{V_i,W_j}$ generate $\Lie(G)$ as a vector space. If $W_j = [V_{i_0},[V_{i_1},\dots,[V_{i_{q-1}},V_{i_q}],\dots,]]$, let 

\[\varphi_j(t) = [\exp(t^{1/2}V_{i_0}),[\exp(t^{1/4}V_{i_1}),\dots,[\exp(t^{1/2^{q}}V_{i_{q-1}}),\exp(t^{1/2^{q}}V_{i_q})],\dots,]].\]

The brackets in the definition of $\varphi_j$ are commutators in the Lie group, not the Lie algebra. Notice that $\varphi_j(t)$ satisfies $\varphi_j'(0) = W_j$. Therefore, the map:

\[ (s_1,\dots,s_m,t_1,\dots,t_\ell) \mapsto \exp(s_1V_1)\dots\exp(s_mV_m)\varphi_1(t_1)\dots \varphi_n(t_\ell) \]

\noindent has full-rank derivative at 0 and is onto a neighborhood of $e \in G$. If the number of required brackets to express each $Q_j$ is $\le q$, this gives $2^{-q}$-H\"older transitivity.
\end{proof}

Lemma \ref{lem:coset-theta-holder} immediately implies the following useful result. We say that an action $G \curvearrowright X$ is locally Lipschitz (H\"older) if there exists a neighborhood $U \subset G$ of the identity such that the action map $U \times X \to X$ is Lipschitz (H\"older).

\begin{lemma}
\label{lem:lip-to-holder}
Suppose that a Lie group $G$ is generated by subgroups $U_1,\dots,U_n$, and that $\eta : G \curvearrowright X$ is an action of $G$ on a compact metric space. If the restriction of $\eta$ to each subgroup $U_i$ is locally Lipschitz, then $\eta$ is a locally H\"older action.
\end{lemma}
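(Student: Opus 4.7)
The plan is to combine Lemma \ref{lem:coset-theta-holder} with uniform Lipschitz estimates extracted from compactness of $X$. Fix a right-invariant metric $d_G$ on $G$ compatible with its smooth structure. By Lemma \ref{lem:coset-theta-holder}, there exist $\theta > 0$ and constants $C, \epsilon_0 > 0$ such that for any $v \in G$ with $d_G(e,v) < \epsilon_0$ we can write $v = u_1 u_2 \cdots u_k$ with each $u_j \in U_{m_j}$ and $\sum_j d_G(e, u_j) \le C \, d_G(e, v)^\theta$.

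Next, I would promote the local Lipschitz hypothesis on each $\eta|_{U_i}$ to a Lipschitz estimate that is uniform in $x$. Fix a compact neighborhood $\overline{V}_i$ of $e$ in $U_i$. Since $\eta : U_i \times X \to X$ is locally Lipschitz and $\overline{V}_i \times X$ is compact, we may cover it by finitely many neighborhoods on which the Lipschitz constant is bounded, yielding a single constant $L$ (valid for all $i$) such that
\begin{equation*}
d_X(\eta(u)y, y) \le L \, d_G(e, u) \qquad \text{for all } y \in X, \; u \in \overline{V}_i, \; i = 1, \dots, n.
\end{equation*}
After possibly shrinking $\epsilon_0$, the decomposition above can be taken to lie inside $\bigcup_i \overline{V}_i$.

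Now fix $g_0 \in G$ and let $g \in G$ satisfy $d_G(g, g_0) < \epsilon_0$. Set $v = g g_0^{-1}$, so $d_G(e, v) = d_G(g, g_0)$ by right-invariance, and decompose $v = u_1 \cdots u_k$ as above. For any $x \in X$, let $y = \eta(g_0)x$ and $z_0 = y$, $z_j = \eta(u_j) z_{j-1}$, so that $\eta(g)x = \eta(v)y = z_k$. Applying the triangle inequality and the uniform bound,
\begin{equation*}
d_X(\eta(g)x, \eta(g_0)x) \le \sum_{j=1}^{k} d_X(z_j, z_{j-1}) \le L \sum_{j=1}^{k} d_G(e, u_j) \le LC \, d_G(g, g_0)^\theta.
\end{equation*}
This gives H\"older continuity in $g$ that is uniform in $x$. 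To upgrade to joint H\"older continuity of $(g,x) \mapsto \eta(g)x$ near $(g_0, x_0)$, observe that $\eta(g_0)$ is Lipschitz on $X$: writing $g_0$ as a finite product of elements from a bounded compact set in $\bigcup_i U_i$ and applying the uniform Lipschitz constant to each factor produces a constant $L_{g_0}$ with $d_X(\eta(g_0)x, \eta(g_0)x') \le L_{g_0} d_X(x, x')$. Combining with the previous estimate via the triangle inequality yields the desired local H\"older bound on $\eta$.

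The main technical point is the extraction of a uniform Lipschitz constant $L$ from the local Lipschitz hypothesis; this is standard by compactness of $X$ and a finite subcover argument, but depends on the precise reading of "locally Lipschitz" (which should give Lipschitz in both variables on a neighborhood in $U_i \times X$). Once $L$ is in hand, the path construction from Lemma \ref{lem:coset-theta-holder} does the rest, and the H\"older exponent $\theta$ inherited from that lemma is precisely the one appearing in the conclusion.
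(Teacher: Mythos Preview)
Your proof is correct and is precisely the argument the paper has in mind: the paper gives no separate proof of this lemma, stating only that it follows immediately from Lemma \ref{lem:coset-theta-holder}. Your write-up spells out exactly that implication, including the compactness argument for a uniform Lipschitz constant and the telescoping estimate along the $\theta$-H\"older path.
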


\section{Dynamical foundations of hyperbolic abelian group actions}
\label{subsec:dynamical-prelim}

\subsection{Normal Hyperbolicity}
\label{sec:normal-hyp}

 Let us first recall the definition of normally hyperbolic transformations, which was used in Definition \ref{Anosov action} of an Anosov action. If $X$ is a compact manifold without boundary, $G \curvearrowright X$ is a group action and $\mc F$ is a foliation of $X$, we say that $\mc F$ is {\it invariant under $G$} if for every $x \in X$, $\mc F(g\cdot x) = g \cdot \mc F(x)$. We say that the foliation is invariant under a transformation $f$ it is invariant under the group generated by $f$.

\begin{definition}			 \label{def:normally hyperbolic}
Let $X$ be a compact manifold without boundary, equipped with a Riemannian metric $\langle\:, \: \rangle$. 
Let $\mc{F}$ be a continuous foliation of $X$ with $C^1$ leaves.  We call a $C^1$ diffeomorphism $a: X \to X$ {\em normally hyperbolic} with respect to $\mc{F}$ if $\mc{F}$ is an $a$-invariant foliation and there exist constants $C>0$, and $\lambda_1 > \lambda_2 >0$ and a splitting of the tangent bundle into continuous subbundles $T_xM = E^s _a(x) \oplus T_x \mc{F}(x) \oplus E^u _a(x)$ such that for all $n \geq 0$ and $v^s \in E^s _a(x)$, $v^u \in E^u _a(x)$ and $v^0 \in T\mc{F}(x)$:
\begin{eqnarray} 
\label{eq:NH1}\| da ^n_x (v^s)  \| \leq C e^{- \lambda_1 n} \|v^s\|, \phantom{\mbox{ and}}   \\
  \| da ^{-n}_x (v^u)  \| \leq C e^{- \lambda_1 n} \|v^u\|, \mbox{ and}  \\
\label{eq:NH3}  \| da ^{\pm n}_x (v^0)  \| \geq C e^{- \lambda_2 n} \|v^0\|.\phantom{\mbox{ and}}
\end{eqnarray}
\end{definition}

\begin{remark}
In fact, continuity of the bundles $E^s_a$ and $E^u_a$ follows from \eqref{eq:NH1}-\eqref{eq:NH3}, so one may drop this assumption from the definition. The proof is identical to the hyperbolic case, see for instance \cite[Lemma 6.2.15]{KatokHasselBook} or \cite[Proposition 5.2.1]{Brin-Stuck}.
\end{remark}

For the trivial foliation $\mc{F}(x) =\{x\}$, a normally hyperbolic diffeomorphism w.r.t. $\mc{F}$ is just an Anosov diffeomorphism.  
We refer to \cite{HPS1970,HertzHertzUres2007} for the standard facts about normally hyperbolic transformations. Most importantly, we obtain the existence of foliations $W^s_a$ and $W^u_a$ of $X$ tangent to  $E^s_a$ and $E^u_a$, respectively. The foliations are H\"older with $C^r$ leaves if $a$ is $C^r$. We prove a technical lemma:

\begin{lemma}
\label{lem:coarse-refinement}
Suppose that $f$ acts normally hyperbolically with respect to a foliation $\mc F$ of a compact manifold $X$ and that $V_i \subset TX$ are continuous $f$-invariant subbundles such that $TX = T \mc F \oplus \bigoplus_i V_i$. Then $E^* = \bigoplus_i (V_i \cap E^*)$ for $* = s,u$, and $TX = T\mc F \oplus \bigoplus_i \big((V_i \cap E^s) \oplus  (V_i \cap E^u)\big)$.
\end{lemma}

\begin{remark}
Each intersection $V_i \cap E^u$ may be trivial, and this will happen in many examples. However, the stable and unstable bundles will be refined by taking their intersections with any $V_i$ which is nontrivial, which is the main tool in producing coarse Lyapunov foliations.
\end{remark}

\begin{proof}
We claim that if $\pi_x^i : T_xX \to V_i$ and $p : T_xX \to T\mc F(x)$ are the projections induced by the splitting $T_xX = T\mc F(x) \oplus \bigoplus_i V_i(x)$, 
then $\pi_x^i(E^s) \subset E^s$ and $p(E^s) = 0$. Indeed, if $v \in E^s(x)$, then $v = w + \sum_i v_i$, where $w \in T\mc F(x)$ and $v_i \in V_i(x)$. Since $(f^n)_*v = (f^n)_*w + \sum_i (f^n)_*v_i$ must converge to 0 as $n \to \infty$, and the subspaces remain uniformly transverse by compactness of $X$ and continuity of the splitting, we conclude that $\pi_x^i(E^s) \subset E^s$ and $p(E^s) = 0$. Symmetric arguments show the same properties for $E^u$, and this immediately implies that $E^s \oplus E^u = \bigoplus_i V_i$. Now, if $v \in E^u(x)$, then according to the decomposition $T_xX = T\mc F(x) \oplus \bigoplus_i V_i(x)$,

\[ v = p_x(v) + \sum_i \pi_x^i(v) = p_x(v) + \sum_i \pi_x^i(v) = \sum_i \pi_x^i(v). \]

Since each $\pi_x^i(v) \in V_i$ by definition of $\pi_x^i$, and $\pi_x^i(v) \in E^u(x)$ by the remarks above, $\pi_x^i(v) \in E^u(x) \cap V_i(x)$, and $E^u(x) = \sum E^u(x) \cap V_i(x)$. The sum is clearly direct, since $V_i \cap V_j = \set{0}$ for every $i \not= j$. A symmetric argument follows for $E^s(x)$, and the lemma follows.
\end{proof}

 The following has been claimed for Anosov $\R^k$ actions. We provide a careful proof for completeness.  Given  a collection of Anosov elements $\set{a_1,\dots,a_n}$, define their {\it common stable manifold} to be $W^s_{a_1,\dots,a_n}(x)$, the path component containing $x$ of the intersection $\bigcap_i W^s_{a_i}(x)$.

\begin{lemma}
\label{lem:coarse-lyapunov}
If $r \ge 1$, $\R^k \curvearrowright X$ is a $C^r$ Anosov action, and $\set{a_i}$ be a collection of Anosov elements. Then the common stable manifolds of $\set{a_i}$ define a H\"older foliation with  $C^r$ leaves.
\end{lemma}

 As in the introduction, we define a {\it coarse Lyapunov foliation} to be a common stable foliation of smallest possible dimension. As an immediate corollary, we obtain:

\begin{corollary}
\label{cor:coarse-lyapunov}
If $r \ge 1$, $\R^k \curvearrowright X$ is a $C^r$ Anosov action, each coarse Lyapunov foliation is a H\"older foliation with $C^r$ leaves. The coarse Lyapunov foliations are all uniformly transverse, and $T_xX$ is the direct sum of their tangent bundles, and the tangent bundle to the $\R^k$-orbit foliations.
\end{corollary}

\begin{proof}[Proof of Lemma \ref{lem:coarse-lyapunov}]
We prove the result by induction on the cardinality of $\set{a_i}$. Let $\mc O$ denote the $\R^k$-orbit foliation, so that the Anosov elements of the action are those which act normally hyperbolically with respect to $\mc O$. Choose an Anosov element $a_1$, and let $V_1^{(1)}(x) = E^s_{a_1}(x)$ and $V_2^{(1)}(x) = E^u_{a_1}(x)$, so that $T_xX = T\mc O(x) \oplus E^s_{a_1}(x) \oplus E^u_{a_1}(x) = T\mc O(x) \oplus V_1^{(1)}(x) \oplus V_2^{(1)}(x)$.

We will proceed by induction on $q \in \N$ based on the following hypotheses:

\begin{itemize}
\item There is a chosen collection $S_q = \set{a_1,\dots,a_q} \subset \R^k$ of Anosov elements.
\item There is a splitting $T_xX = T\mc O(x) \oplus \bigoplus_{i=1}^{n(q)} V_i^{(q)}(x)$ such that $V_i^{(q)}(x) = \bigcap_j E^*_{a_{c_j}}$ for some subcollection $\set{a_{c_j}} \subset S_q$, {   (here, $* =$ $s$ or $u$ is allowed to depend on $i$ and $j$ and the choice of indices $c_j$ depends on $i$)}.
\item There is a H\"older foliation $W_i^{(q)}$ with $C^r$ leaves such that $T_xW_i^{(q)}(x) = V_i^{(q)}(x)$ for every $i = 1,\dots,n(q)$.
\end{itemize}

We have established the base of the induction. The induction terminates when, for every Anosov element $a\in \R^k$, $E^s_a(x)$ and $E^u_a(x)$ can be written as a sum of some collection of subbundles $V_i(x)$. If the process has not terminated after $q$ steps, there exists $a_{q+1} \in \R^k$ such $E^s(x)$ cannot be written as a sum of some subcollection of bundles $V_i^{(q)}$. Then according to Lemma \ref{lem:coarse-refinement}, one may refine the splitting of $T_xX$ into:

\[ T_xX = T \mc O(x) \oplus \bigoplus_{i=1}^{n(q)} \Big(\big(V_i^{(q)}(x) \cap E^s_{a_{q+1}}(x)\big) \oplus \big(V_i^{(q)}(x) \cap E^u_{a_{q+1}}(x)\big)\Big). \]

Let $V_1^{(q+1)},\dots,V_{n(q+1)}^{(q+1)}$ enumerate the bundles $V_i^{(q)}(x) \cap E^s_{a_{q+1}}$ and $V_i^{(q)}(x) \cap E^u_{a_{q+1}}$ which are nontrivial. Since by induction, each $V_i^{(q)}$ is an intersection of stable  distributions, so is $V_i^{(q+1)}$. Finally, we prove the foliation property. Notice that the leaves $W_i^{(q)}(x)$ and $W^s_{a_{q+1}}(x)$ are both H\"older foliations with $C^r$ leaves, and that the intersection of their bundles is exactly one of the $V_j^{(q+1)}(x)$. We claim that if $W_j^{(q+1)}(x)$ is the path component containing $x$ of the intersection $W_i^{(q)}(x) \cap W^s_{a_{q+1}}(x)$, then $W_j^{(q+1)}$ defines a foliation of $X$ with the same properties. Indeed, this follows from Theorem 2.6 of \cite{bonatti-gomez-martinez20} for $\R^k$-actions (one may still directly apply the result by suspending $\R^k \times \Z^l$-actions). We give a brief sketch of a proof for completeness. The main idea is to build the local manifold at $x$ by applying the Hadamard-Perron Theorem (Theorem 5.6.1 of \cite{Brin-Stuck} or Theorem 6.2.8 of \cite{KatokHasselBook}) to the sequence of maps $a_{q+1} : W_i^{(q)}({a_{q+1}}^n\cdot x) \to W_i^{(q)}({a_{q+1}}^{n+1}\cdot x)$. This yields $C^r$ local $W_i^{(q)}(x)$-stable manifolds, which we denote by $W^{(q+1)}_{j,\operatorname{loc}}(x)$, the local leaf at $x$. Note that since they are tangent to H\"older distributions, they vary H\"older continuously.

 To get the global leaves, suppose that $y$ is in the path component of $W^s_{a_{q+1}}(x) \cap W^{(q)}_i(x)$ containing $x$, so that there exists $\gamma : [0,1] \to W^s_{a_{q+1}}(x) \cap W^{(q)}_i(x)$ such that $\gamma(0) = x$ and $\gamma(1) = y$. Then since $a_{q+1}$ contracts $W^s_{a_{q+1}}$, there exists $n$ such that the image of $a_{q+1}^n \of \gamma$ is contained in $B(a_{q+1}^n\cdot x, \ve)$. In particuarly, it must be in the local leaves we have constructed. Therefore the leaf of $W^{(q+1)}_j(x)$ is given by $\bigcup_{n \ge 1} a_{q+1}^{-n} W^{(q+1)}_{j,\operatorname{loc}}(a_{q+1}^n\cdot x)$.
\end{proof}


\subsection{Lyapunov Functionals and Coarse Lyapunov Spaces}
\label{subsec:lyapunov-prelim}

Let $\R^k \curvearrowright X$ be a $C^1$ action. In the study of $\R^k$ actions, one often has a preferred invariant measure, such as an invariant volume. While we make no such assumption, invariant measures for actions of $\R^k$ always exist, and we discuss their properties here. 
If $\R^k$ preserves an ergodic invariant measure $\mu$, then  there are linear functionals $\lambda: \R^k \to \R$ and  a measurable $\R^k$-invariant splitting of the tangent bundle $TM = \oplus E^{\lambda}$  such that for all $0 \neq v \in E^{\lambda}$  and $a \in \R^k$, the Lyapunov exponent of $v$ is $\lambda (a)$.  These objects may depend on the measure $\mu$, and exist only on a set of full $\mu$-measure.
We call this the {\em Oseledets} or {\it Lyapunov splitting} of $TM$ for $\R^k$, and each $\lambda$ a {\em Lyapunov functional} or simply {\em weight} of the action. Each $E^\lambda$ is called a {\it Lyapunov distribution}. We let $\Delta$ denote the collection of Lyapunov functionals. The Lyapunov splitting is a refinement of the Oseledets splitting for any single $a \in \R^k$. For each $\lambda \in \Delta$, we let $\bar{E}^\lambda = \bigoplus_{t > 0} E^{t\lambda}$ be the {\it coarse Lyapunov distributions}, which in general still depend on the measure $\mu$ and exist only on a set of full $\mu$-measure. We will see that they coincide with the coarse Lyapunov distributions defined after Lemma \ref{lem:coarse-lyapunov} when the action is totally Anosov. We refer to \cite{Brown:2016aa} for an extensive discussion of all these topics. 

 The dependence of the Lyapunov functionals and distributions on the measure $\mu$ is typical for Anosov flows and diffeomorphisms, hence also for products of such flows. In the totally  Anosov setting, we also have some additional structures, which do not assume any properties of invariant measures. Recall that we defined a coarse Lyapunov foliation of a totally Anosov action to be a H\"older foliation with smooth leaves whose leaves are locally defined as intersections of local stable manifolds, and has associated H\"older distributions $T\mc W$ on $M$, which do {\it not} depend on the choice of measure $\mu$ (Corollary \ref{cor:coarse-lyapunov}). Therefore, the splitting of the tangent bundle according into coarse Lyapunov distributions with respect to any invariant measure $\mu$ must coincide with the splitting into the coarse Lyapunov distributions coming from Corollary \ref{cor:coarse-lyapunov}. Indeed, when the set of Anosov elements is dense, any pair of linearly independent functionals can be made to have the opposite sign with an Anosov element. We assume that we have a fixed Riemannian metric $\norm{\cdot}$ on $M$.

\begin{lemma}
\label{lem:lyap-hyp}
If $\R^k \curvearrowright X$ is a totally Anosov action, then for each coarse Lyapunov foliation $\mc W$, there exists a unique hyperplane $H \subset \R^k$ such that:

\begin{enumerate}
\item  for every $x \in X$, $H = \set{ a \in \R^k : \lim_{t\to\infty} \frac{1}{t}\log \norm{(ta)_*|_{T\mc W}(x)} = 0}$, and
\item if $\mu$ is an $\R^k$-invariant measure, and $\lambda$ is the Lyapunov functional associated to $\mc W$, then $\ker \lambda = H$.
\end{enumerate}
\end{lemma}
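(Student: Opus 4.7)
The strategy is first to extract $H$ via higher-rank Pesin theory for any invariant measure, prove it is intrinsic using projective density, and then establish (1) by sandwiching elements of $H$ between Anosov approximants. Since $\R^k$ is amenable and $X$ is compact, fix an ergodic $\R^k$-invariant Borel probability measure $\mu$. The higher-rank Oseledets theorem produces Lyapunov functionals $\Delta_\mu \subset (\R^k)^*$ and a measurable splitting $T_xX = \bigoplus E^\lambda(x)$ valid $\mu$-a.e. Since the continuous $\R^k$-invariant distribution $T\mc W$ refines into a sum of Oseledets subspaces $T\mc W = \bigoplus_{\lambda \in S} E^\lambda$ and $\mc W$ is coarse Lyapunov (a minimal intersection of stable manifolds of Anosov elements), all functionals in $S$ must be positive multiples of a single $\lambda$; set $H := \ker \lambda$. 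For independence from $\mu$, Lemma \ref{lem:coarse-refinement} forces $T\mc W \subset E^s_a$ or $T\mc W \subset E^u_a$ for every Anosov $a$, so the sets $\mc A^\pm := \set{a \in \mc A : T\mc W \subset E^{u/s}_a}$ are open in $\R^k$ (by continuity of the stable/unstable bundles in the Anosov element), disjoint, cover $\mc A$, satisfy $\mc A^+ = -\mc A^-$, and each is contained in an open halfspace of $\lambda$. Projective density of $\mc A$ in $\R\mathbb{P}^{k-1}$ combined with this sign pattern uniquely determines the hyperplane $\ker\lambda$, proving (2).

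For $a \in \mc A$, uniform hyperbolicity gives $\norm{(ta)_*|_{T\mc W}(x)}$ growing or decaying at a uniform nonzero exponential rate, so the limit in (1) is nonzero; consistent with $\mc A \cap H = \emptyset$. For $a \in H$, openness of $\mc A^\pm$ together with density of $p(\mc A^\pm)$ near $p(H)$ yields $a \in \overline{\mc A^+} \cap \overline{\mc A^-}$, so given $\ve > 0$ we can pick $c_+ \in \mc A^+$ close to $a$ with $\lambda(c_+) < \ve$; for $s > 0$ sufficiently large, $c_- := (1+s)a - sc_+$ is a large perturbation of $-sc_+ \in \mc A^-$ and hence itself lies in $\mc A^-$. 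The decomposition $a = \frac{1}{1+s} c_- + \frac{s}{1+s} c_+$ has $\alpha \lambda(c_-) + \beta \lambda(c_+) = \lambda(a) = 0$. Commutativity of $\R^k$ gives $(ta)_* = (t\beta c_+)_* \circ (t\alpha c_-)_*$, and the coarse Lyapunov structure yields uniform bounds $\norm{(tc_+)_*|_{T\mc W}(x)} \leq C e^{t c_{\max} \lambda(c_+)}$ and $\norm{(tc_-)_*|_{T\mc W}(x)} \leq C e^{t c_{\min} \lambda(c_-)}$, where $c_{\max}, c_{\min}$ are the extreme positive proportionality constants of the Lyapunov functionals appearing in $S$. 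Submultiplicativity produces $\norm{(ta)_*|_{T\mc W}(x)} \leq C^2 e^{t(c_{\max}-c_{\min})\beta\lambda(c_+)} \leq C^2 e^{t(c_{\max}-c_{\min})\ve}$ uniformly in $x$. Letting $\ve \to 0$ yields $\limsup_t \frac{1}{t}\log \norm{(ta)_*|_{T\mc W}(x)} \leq 0$; applying the same argument to $-a \in H$ together with $\norm{(ta)_*|_{T\mc W}} \cdot \norm{(-ta)_*|_{T\mc W}} \geq 1$ yields $\liminf \geq 0$, so the limit vanishes for every $x$.

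For $a \notin H \cup \mc A$, fix $c_0 \in \mc A^+$ and set $c := (\lambda(a)/\lambda(c_0)) c_0$, so $c \in \mc A^{\mathrm{sign}(\lambda(a))}$ with $\lambda(c) = \lambda(a)$, and $a' := a - c \in H$. Commutativity gives $(ta)_* = (tc)_* \circ (ta')_*$. When $\lambda(a) > 0$, uniform hyperbolicity of $c \in \mc A^+$ gives a lower bound $C^{-1}e^{t c_{\min} \lambda(a)}$ on the minimum singular value of $(tc)_*|_{T\mc W}$, while the previous paragraph applied to $-a' \in H$ yields $e^{o(t)}$ control on $\norm{(-ta')_*|_{T\mc W}}$, so the minimum singular value of $(ta')_*|_{T\mc W}$ is $\geq e^{-o(t)}$. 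Composing (using $m(AB) \geq m(A)m(B)$) gives $\norm{(ta)_*|_{T\mc W}(x)} \geq C^{-1} e^{t(c_{\min} \lambda(a) - o(1))}$, so $\liminf_t \frac{1}{t}\log \norm{(ta)_*|_{T\mc W}(x)} \geq c_{\min} \lambda(a) > 0$. A symmetric upper-bound argument handles $\lambda(a) < 0$. In either case $a$ fails the condition of (1), completing the characterization.

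The main obstacle is the sandwich construction of the second paragraph: one must simultaneously ensure $c_\pm \in \mc A^\pm$ (genuinely Anosov, not just limits), $a = \beta c_+ + \alpha c_-$, and $\alpha\lambda(c_-), \beta\lambda(c_+) < \ve$. The trick of setting $c_- := (1+s)a - sc_+$ for $s$ large automatically lands $c_-$ in the open cone $\mc A^-$ (since $-sc_+ \in \mc A^-$ and $\mc A^-$ is open), while keeping the $\lambda$-value of the combination forced to $0$. The residual error $(c_{\max}-c_{\min})\ve$ reflects the possible spread of positively-proportional Lyapunov constants realized on a coarse Lyapunov bundle; in the totally Cartan case $T\mc W$ is one-dimensional so $c_{\max} = c_{\min}$ and the estimate collapses, but in the general totally Anosov setting this spread must be absorbed by the limit $\ve \to 0$.
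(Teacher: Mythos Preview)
Your overall architecture—extract $H$ from Oseledets for one ergodic measure, show it is measure-independent via the sign pattern on $\mc A^\pm$, then verify (1) by sandwiching—is reasonable, but two steps in the sandwich argument do not go through as written.

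\textbf{The construction of $c_-$ fails.} You set $c_- := (1+s)a - sc_+$ and claim it lies in $\mc A^-$ for large $s$ because it is ``a large perturbation of $-sc_+ \in \mc A^-$.'' But $c_- - (-sc_+) = (1+s)a$, which has the same order of magnitude as $-sc_+$; openness of $\mc A^-$ gives nothing here. Normalizing, $c_-/s \to a - c_+$ as $s \to \infty$, so the direction of $c_-$ tends to that of $a - c_+$, a small vector in an essentially arbitrary direction (since $c_+$ was chosen close to $a$). There is no reason this direction is in the Anosov cone $\mc A^-$.

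\textbf{The uniform bounds via $c_{\max}, c_{\min}$ are not justified.} The constants $c_{\max}, c_{\min}$ are the extreme proportionality factors among the Oseledets functionals for the \emph{single} measure $\mu$. Oseledets gives $\frac{1}{t}\log\norm{(tc_+)_*|_{T\mc W}(x)} \to c_{\max}\lambda(c_+)$ only $\mu$-a.e., with no uniform constant $C$; the uniform hyperbolicity constants for the Anosov element $c_+$ come from a different source and need not equal $c_{\max}\lambda(c_+)$. A different ergodic measure could realize a larger top exponent. So the inequality $\norm{(tc_+)_*|_{T\mc W}(x)} \le C e^{t c_{\max}\lambda(c_+)}$ uniformly in $x$ is unproven.

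The paper takes a shorter, more structural route that sidesteps both problems. Rather than fixing $H$ via a measure and then estimating, it defines $U$ and $V$ directly as the sets where the growth rate along $T\mc W$ is positive (resp.\ negative), observes that these are \emph{convex} open cones (commutativity plus submultiplicativity), notes $\mc A \subset U \cup V$ so projective density forces $\overline{U \cup V} = \R^k$, and then invokes the hyperplane separation theorem to conclude $H := \R^k \setminus (U \cup V)$ is a hyperplane. Convexity is the key ingredient you are missing: once one knows $U$ and $V$ are genuine open halfspaces, any $a \in H$ is automatically in $\overline U \cap \overline V$, and the sandwich becomes trivial (pick $c_\pm$ in $U, V$—not necessarily Anosov—within $\ve$ of $a$, and bound $\norm{(t(a-c_\pm))_*} \le e^{M\ve t}$ crudely). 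Your approach can be repaired along these lines, but at that point it essentially reduces to the paper's argument.
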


The hyperplane $H$ is called the {\it Lyapunov hyperplane} corresponding to $\mc W$. 
 An example of the failure of Lemma \ref{lem:lyap-hyp} for Cartan, but not totally Cartan actions, can be found in Section \ref{app:anosov-not-totally}.

\begin{proof}
Notice that if $a \in \R^k$ is Anosov, $\lim_{t \to\infty} \frac{1}{t}\log \norm{d(ta)|_{T\mc W}} \not= 0$, since $T\mc W \subset TW^s$ or $TW^u$. Let $U$ denote the set of elements for which this limit is positive and $V$ denote the set of elements for which the limit is negative. Then $U \cup V$ contains the set of Anosov elements, and since such elements are assumed to be dense ($\R^k \curvearrowright X$ is totally Anosov), $\overline{U \cup V} = \R^k$. The set $H = \R^k \setminus (U \cup V)$ separates $\R^k$ into two components. Since $U$ and $V$ are convex, it follows that $H$ is a hyperplane (by the hyperplane separation theorem, see \cite[Section 14.5]{royden10}). 
Assertion (2) follows immediately.
\end{proof}

\begin{remark}
After removing all  Lyapunov hyperplanes from  $\R ^k $, we call the connected components of the remainder the {\em Weyl chambers} of the action  (this terminology comes from the case of Weyl chamber flows on semisimple Lie groups, see Section \ref{app:weyl-ch-flows}).  For actions which are Anosov but not totally Anosov, we still call the connected components of the set of Anosov elements Weyl chambers, even though their complement may not be a union of hyperplanes. 
In general, each Weyl chamber is a convex set of Anosov elements, and the Anosov elements are the union of the Weyl chambers.
\end{remark}

\begin{definition}
\label{def:regular}
Let $\Delta$ denote a set of functionals such that for every coarse Lyapunov foliation $\mc W$, there exists a unique functional $\alpha \in \Delta$ such that $H = \ker \alpha$ and any $a$ for which $\alpha(a) > 0$ expands $\mc W$. An element $a \in \R^k$ will be called {\normalfont regular} if 
$\alpha(a) \not= 0$ for all $\alpha \in \Delta$.
\end{definition}

It is clear that the set of regular elements of $\R^k$ is open and dense.

\begin{remark}
\label{rem:lyap-coefficients1}
The functionals $\alpha$ are defined only up to positive scalar multiple in this context. Therefore, without loss of generality, we may assume that if there are Lyapunov foliations whose corresponding half-spaces are opposites, that their corresponding functionals are exact opposites, $\alpha$ and $-\alpha$ (this simplifies notation). In Part IV, the precise exponents will exist and we will need them in our analysis, so we will not be able to make this assumption, see Remark \ref{rem:lyap-coefficients2}.
\end{remark}

\subsection{Complementary distributions to slow exponents}
\label{sec:slow-foliations}

Let $E \subset TX$ be a subbundle. We say that $E$ is {\it integrable} if there exists a unique H\"older foliation $\mc F$ of $X$ with smooth leaves such that $T\mc F = E$. {  Notice that if $E$ is integrable and invariant under a totally Anosov action $\R^k \curvearrowright X$, then so is the corresponding foliation by uniqueness.} 
 Fix a Lyapunov hyperplane $H \subset \R^k$, with corresponding foliations $W^{\pm \alpha}$. 
Let $E^H$ be the subbundle of $TX$ which is the sum of the tangent bundle to the $H$-orbits, and the tangent bundles $E^\beta$ to the foliations $W^\beta$, $\beta \not= \pm \alpha$. That is,

\[ E^H_x = T_x(H \cdot x) \oplus \bigoplus_{\beta\not=\pm \alpha} E^\beta_x. \]

If $a \in \R^k$ is regular, let $E^+_a$ denote the unstable bundle of $a$ and $\Delta^+(a) := \set{\beta \in \Delta : W^\beta \subset W^u_a}$, so that $E^+_a = T\mc W^u_a = \bigoplus_{\beta \in \Delta^+(a)} E^\beta$. Fix a finite collection of weights $\Omega \subset \Delta^+(a)$ such that $\bigoplus_{\beta \in \Omega} E^{\beta}$ is integrable to a foliation $\mc F$ {  (we call such a collection of weights integrable)}. 

\begin{definition}
\label{def:slow-foliation}
If $\alpha \in \Omega$, let $\widehat{E^\alpha} := \widehat{E^\alpha}(\Omega) = \bigoplus_{\beta \in {  \Omega}\setminus \alpha} E^\beta$ be the dynamically defined complementary bundle. When $\Omega$ is fixed, we will suppress the dependence on $\Omega$. Notice that while each $E^\beta$ is integrable, $\widehat{E^\alpha}$ may, and often does, fail to be integrable. We say that $W^\alpha$ is the {\normalfont slow foliation for $a$ in $\mc F$} if $\alpha \in \Omega$ and there exists $C > 0$, $\lambda < 1$ such that 

\begin{equation}
\label{eq:dominated}
\dfrac{\norm{(ta)_*|_{E^\alpha}}}{m((ta)_*|_{\widehat{E^\alpha}})} \le C\lambda^t
\end{equation}

 for all $t \ge 0$, where $m(A) = \norm{A^{-1}}^{-1}$ is the conorm of $A$ . 
\end{definition}

\begin{remark}
\label{rem:fast-distribution}
The notion of a slow foliation makes sense whenever there are two invariant complementary distributions $E_1$ and $E_2$ such that $T\mc F = E_1 \oplus E_2$, and the dynamics of $a$ on $E_1$ is strictly slower that the dynamics on $E_2$ as in \eqref{eq:dominated}. Any splitting $T\mc F = E_1 \oplus E_2$ which satisfies \eqref{eq:dominated} with $E_1$ and $E_2$ playing the roles of $E^\alpha$ and $\widehat{E}^\alpha$ is called a dominated splitting (see, eg, \cite[p. 905]{psw04}).
\end{remark}

Notice that a priori, there may not exist a slow coarse Lyapunov foliation at all (this is the case when the action is Anosov, but not totally Anosov, as in Section \ref{app:anosov-not-totally}, or the example constructed in \cite{vinhage22}). The following lemma shows that they are guaranteed to exist for elements $a$ sufficiently close to a Lyapunov hyperplane. The smoothness and integrability conclusions of the complementary ``fast'' foliation are true generally for a dominated splitting.

\begin{lemma}
\label{lem:slow-foliations}
Let $\R^k \curvearrowright X$ be a totally Anosov action. Fix any {  integrable} collection $\Omega \subset \Delta^+(a)$ for some Anosov $a \in \R^k$ with { foliation $\mc F$ associated to $\Omega$} as described above. If $W^\alpha$ is the slow foliation of $a$ in $\mc F$, then $\widehat{E^\alpha}$ is integrable to a foliation $\widehat{\mc W^\alpha}$. Furthermore, if $y \in \mc F(x)$, then $W^\alpha(x)$ and $\widehat{\mc W^\alpha}(y)$ has a unique intersection point, varying continuously with $y$. Finally, given any pair of linearly independent weights $\alpha,\beta \in \Delta$ there exists a regular $a \in \R^k$ such that $\alpha,\beta \in \Delta^+(a)$ and $W^\alpha$ is the slow foliation {  for $a$} in any subfoliation $\mc F \subset W^s_a$ as described above such that $E^\alpha,E^\beta \subset T \mc F$.
\end{lemma}

\begin{proof}
We first prove existence of the element $a$. Begin by choosing $a_0$ such that $a_0$ belongs to the Lyapunov hyperplane for $\alpha$, and uniformly expands $\beta$. Let $a$ be a small perturbation of $a_0$ such that $a$ now expands $W^\alpha$. By Lemma \ref{lem:lyap-hyp}(1), if the perturbation is small enough, $W^\alpha$ is the slow foliation for $a$.

The existence and properties of the foliation $\widehat{\mc W^\alpha}$ come from the standard works in partially hyperbolic dynamics. The definition of slow foliation immediately implies that the splitting ${  T\mc F} = E^\alpha \oplus \widehat{E^\alpha}$ is a dominated splitting for the diffeomorphism $a$. The existence of the foliation $\widehat{\mc W^\alpha}$ can then be deduced from \cite{psw04}.

The unique intersection property follows locally from the transversality of the foliations. By applying a sufficiently large multiple of $-a \in \R^k$, any intersection can be made local, so there is only one global intersection.
\end{proof}

\subsection{Closing lemmas}

The main result of this section will be familiar to those fluent in Anosov flows. We prove a higher-rank version of the Anosov closing lemma, which has been used in \cite{KS1} without proof. A proof appeared in \cite[Lemma 4.5 and Theorem 4.8]{MR808219} in a more geometric setting.  We also prove this more generally by closing pseudo-orbits, which is used in the proof of Theorem \ref{thm:spectral}.

\begin{definition}
\label{def:periodic}
If $\R^k \curvearrowright X$ is a locally free action, we say that $x \in X$ is $\R^k$-periodic if $\Stab_{\R^k}(x)$ is a lattice in $\R^k$. We call $\Stab_{\R^k}(x)$ the {\it period} of $x$. If $a \in \R^k$, we say that a sequence of points $x_i$ and times $t_i$ are an $(a,\ve)$-pseudo-orbit if $d((t_ia)\cdot x_i, x_{i+1}) < \ve$.
\end{definition}

The Anosov closing lemma will require one additional assumption compared to the rank one setting. Indeed, consider the case of a product of two Anosov flows: $Y = X_1 \times X_2$. Then there are two Lyapunov exponents, which are dual to the standard basis, call them $\alpha$ and $\beta$. If $(p,x) \in Y$ is a point such that $p$ is periodic and $x$ is not, then $\Stab_{\R^2}(p,x) \cong \Z (t_0,0)$ for some $t_0 \in \R$. That is, the returns to $(p,x)$ occur along the Weyl chamber wall $\ker \beta$. While the first factor of $Y$ can be made periodic, we have no control over the behavior of the second coordinate. We show that returning near the Weyl chamber walls is the only potential obstacle to finding a nearby $\R^2$-periodic orbit.

\begin{theorem}[Anosov Closing Lemma]
\label{thm:anosov-closing}
Let $\R^k \curvearrowright X$ be an Anosov action, $\lambda > 0$, and $a \in \R^k$ be an Anosov element satisfying:

\begin{equation}
\label{eq:uniform-expansion}
\norm{da|_{E^u_a}(x)} > e^{\lambda\norm{a}} \qquad \mbox{and} \qquad \norm{da|_{E^s_a}(x)} < e^{-\lambda\norm{a}}.
\end{equation}

 Let $C \subset \R^k$ be the open cone 

\begin{multline}
 C = \set{ b \in \R^k : nb \mbox{ satisfies } \eqref{eq:uniform-expansion} \mbox{ for some }n \in \N} \\
\cap \set{b \in \R^k : d(b,t{ a}) < t/(10\lambda) \mbox{ for some }t \in \R}
\end{multline}

\noindent so that $\overline{C} \setminus \set{0}$ consists of Anosov elements. Then for every $\delta > 0$, there exists $\ve > 0$, $T > 0$ such that if $\set{(x_i,t_i)}_{i=1}^n$ is an $(a,\ve)$-pseudo-orbit, $t_i \ge T$ for all such $t_i$, and $d(x_1,(t_na)\cdot x_n) < \ve$, then there exists $x' \in X$ such that $x'$ is $\R^k$-periodic and for every $i =1,\dots,n$, there exists $b_i \in \R^k$ such that $d(b_i \cdot x',x_i) < \delta$ and $\norm{(b_i - b_{i-1}) - t_ia} < 2\delta$ (where we set $b_0 = 0$). If there is only one orbit segment of the pseudo-orbit (ie, $n = 1$), then $d(b\cdot x, b \cdot x') < \delta$ for all $b \in C \cap B(0,t/2)$.
\end{theorem}

\begin{proof}
The proof is very similar to the case of Anosov flows (see, eg, \cite[Theorem 6.4.15]{KatokHasselBook}), with an additional observation. For simplicity of exposition, we first treat the case of a single orbit segment, the case of a pseudo-orbit follows similarly by setting up a sequence of contractions (rather than a single contraction, which we describe below). By local product structure, one may take $W^s_{a,\loc}(x)$ then saturating each $y \in W^s_{a,\loc}(x)$ with each of their local center-unstable manifolds, $W^{cu}_{a,\loc}(y)$ to obtain a neighborhood of $x$. If $ta \cdot x$ is sufficiently close to $x$, consider the following induced map on $W^s_{a,\loc}(x)$: if $y \in W^s_{a,\loc}(x)$, then $ta \cdot y$ is also within a neighborhood of $x$. Define $f(y) = W^{cu}_{a,\loc}(ta \cdot y) \cap W^s_{a,\loc}(x)$. By the usual arguments, if $t$ is sufficiently large (which determines $T$), the contraction on the stable manifolds is such that $f$ is a contraction on $W^s_{a,\loc}(x)$. Therefore, $f$ has some fixed point $x_1 \in W^s_{a,\loc}(x)$. Then $x_1$ satisfies $ta\cdot x_1 \in W^{cu}_{a,\loc}(x_1)$. Looking at $-ta$ on $W^u_{a,\loc}(x_1)$, we may again find a fixed point to get $x'$ such that $ta \cdot x' = b \cdot x'$ for some $b \in B(0,\ve) \subset \R^k$.

If $\ve$ is sufficiently small and $t$ is sufficiently large, we claim that $a' = ta-b$ is still an Anosov element of $\R^k$. Indeed, once an element of $\R^k$ is Anosov, so is the line passing through it (minus 0). Furthermore, the set of Anosov elements is open, so if $\delta_1 > 0$ is such that $B(a,\delta_1)$ are all Anosov elements, so are $B(ta,t\delta_1)$.  By choosing $T$ such that $T\delta_1 > \ve$ (and hence $t\delta_1 > \ve$), we obtain that  that $a'$ is Anosov. Consider $\Fix(a')$, the fixed point set of $a'$. This set is compact, and $\R^k \cdot x' \subset \Fix(a')$. Furthermore, since $a'$ is Anosov, $\Fix(a')$ is a finite union of $\R^k$-orbits (since if two points are both in $\Fix(a')$ and are sufficiently close, they must be related by an element of the central direction, ie $\R^k$). This immediately implies that $x'$ is an $\R^k$-periodic point.

 We sketch the proof for the case of a general pseudo-orbit. One proceeds as above, except that one replaces the map $f$ by a sequence of maps $f_i : W^s_{a,\loc}(x_i) \to W^s_{a,\loc}(x_{i+1})$, defined by $f_i(y) = W^{cu}_{a,\loc}(t_ia \cdot y) \cap W^s_{a,\loc}(x_{i+1})$. These maps are again contractions. As $d((t_ia)\cdot x_i,x_{i+1})$ is very small, each $f_i$ is well-defined on some neighborhood $U_i \subset W^s_{a,\loc}(x_i)$ with $f_i(U_i) \subset U_{i+1}$. Analyzing their composition yields a map from $U_0$ to $U_0$, and one again arrives at a fixed point $y^s$. We then symmetrize the construction to find a point of $W^u_{a,\loc}(x_0)$ which is fixed by the analogously defined composition of the inverse maps, which we denote by $y^u$. Then $W^{cu}_{a,\loc}(y^s) \cap W^{cs}_{a,\loc}(y^u)$ is again a piece of an $\R^k$ orbit, which has a point $x'$ that is fixed by an Anosov element. Now, any point $x_i$ could have served as the reference to find the fixed point, and $t_ia$ takes the local central manifolds near $x_{i-1}$ to those near $x_i$. Let $x_i'$ denote a point on this local central manifold near $x_i$. Now, $t_ia \cdot W^c_{\loc}(x_{i-1}')$ is an orbit piece of radius $\delta$ that intersects the distinguished local central manifold near $x_i$. In particular, there exists $c_i \in \R^k$ with $\norm{c_i} < 2\delta$ such that $t_ia \cdot x_{i-1}' = c_i x_i'$, and we set $b_i = t_ia - c_i + b_{i-1}$. By induction, this yields the desired sequence of elements $b_i$.

Let us now obtain the estimates on $d(b\cdot x,b \cdot x')$  in the case of only one leg. Notice that by the construction above, there exists $a' \in \R^k$ such that $d(ta,a') < \ve$ and $a' \cdot x' = x'$. Notice that $x$ and $x'$ are connected by a short piece of stable manifold, and a short piece of unstable manifold, both of which have length at most $\ve$. Similarly, $a' \cdot x$ and $x' = a' \cdot x'$ are connected by a short piece of stable manifold and short piece of unstable manifold, both of which have length at most $\ve$. Hence, $d((sa') \cdot x, (sa') \cdot x') < \max\set{e^{-\lambda \norm{a'} s}\ve,e^{-\lambda(1-s)\norm{a'}}\ve}$, which is at most $e^{-\lambda s\norm{a'}}\ve$ if $s < 1/2$. Notice that $C' =  \set{b \in \R^k : d(b,sa') < t/(2\lambda) \mbox{ for some }s \in \R_+}$ contains $C$ if $a'$ is sufficiently close to $a$, which can be achieved by choosing $\ve$ sufficiently small. But if $b \in C'$, and $d(b,sa') < s/(2\lambda)$, then if $b' = b - ta'$:

\[ d(b\cdot x,b \cdot x') \le e^{\lambda\norm{b'}} d(sa'\cdot x, sa' \cdot x') < e^{s/2} \cdot e^{-\lambda s\norm{a'}}\ve < \ve. \]
\end{proof}



\subsection{The spectral decomposition and transitivity properties of $\R^k$ actions}

This section provides a spectral decomposition, which we will need in the subsequent section. It may also be of independent interest as it provides some structure to $\R^k$-actions when transitivity assumptions are dropped.  Non-transitive actions can be constructed by taking the direct product of a non-transitive Anosov flow with any Anosov $\R^{k-1}$-action.


Recall that if $\varphi_t : X \to X$ is a continuous flow on a compact metric space $X$, then the {\it nonwandering set}, denoted $NW(\set{\varphi_t})$, is the set of points $x$ satisfying: for every $\ve >0$, there exist arbitrarily large $t \in \R_+$ such that $\varphi_t(B(x,\ve)) \cap B(x,\ve) \not= \emptyset$.

\begin{theorem}
\label{thm:spectral}
Let $\R^k \curvearrowright X$ be an Anosov $\R^k$ action. For any Anosov element $a$, there exist finitely many closed $\R^k$-invariant sets $\Lambda_i = \Lambda_i(a) \subset X$, $i = 1,\dots,n$ such that if $\Per(\R^k)$ denotes the set of $\R^k$-periodic points:

\[ NW(\set{ta}) = \overline{\Per(\R^k)} = \bigcup_{i=1}^n \Lambda_i. \]

Furthermore, for a generic set of Anosov elements $a' \in \R^k$ in the Weyl chamber for $a$, $a'$ is transitive on each $\Lambda_i$.
\end{theorem}

Each $\Lambda_i$ is called an {\it $a$-basic set}.

\begin{proof}
First, note that from Theorem \ref{thm:anosov-closing}, we immediately get that $NW(\set{ta}) = \overline{\Per(\R^k)}$. To pick the elements $a'$, notice that there are at most countably many $\R^k$-periodic orbits. Therefore, the set of elements which have dense orbits in every $\R^k$-periodic orbit is generic, since these are exactly the complement of a countable union of hyperplanes (perhaps not through 0). In particular, we may choose such an $a'$ to have this property  be in the same Weyl chamber as $a$.

Define a  relation on periodic points. Say that $p \sim q$ if and only if $W^{cu}_a(p) \cap W^s_a(q) \not= \emptyset$ and $W^{cu}_a(q) \cap W^s_a(p) \not= \emptyset$. These intersections are transverse since $a$ is Anosov, and we claim that $\sim$ is an equivalence relation. Indeed, symmetry and reflexivity are trivial by construction. To see transitivity, suppose that $p \sim q$ and $q \sim r$, where $p,q,r \in \Per(\R^k)$. Since $p \sim q$, the weak unstable manifold of $p$ intersects the stable manifold of $q$. Therefore, under application of the flow, this intersection point approaches the corresponding orbit of $q$. Hence its weak unstable manifolds converge to the weak unstable manifold of $q$ on arbitrarily large compact subsets. Since this weak unstable intersects the stable manifold of $r$, by local product structure at the intersection point, we get that $p \sim r$.

Notice that by local product structure, there exists $\ve > 0$ such that if $p,q \in \Per(\R^k)$ and $d(p,q) < \ve$, then $p \sim q$. In particular, we know that there can be only finitely many equivalence classes of $\sim$. 

Let $E_i$ denote the union of the periodic orbits in each equivalence class, and $\Lambda_i = \overline{E_i}$. Let $p,q \in E_i$. Notice that since $W^{cu}_a(p) \cap W^s_a(q) \not= \emptyset$, there exists $p' = b \cdot p$ such that $W^u_a(p') \cap W^s_a(q) \not= \emptyset$ for some $b \in \R^k$. Since $\set{na' \cdot p'}$ is dense in $\R^k \cdot p$ by choice of $a'$, for any $\ve > 0$, there exists arbitrarily negative $s_1$ such that $d(s_1a' \cdot p',p) < \ve$ and arbitrarily positive $s_2$ such that $d(s_2a' \cdot p',p') < \ve$. Fix $z \in W^{u}_a(p') \cap W^s_a(q)$, and let $x_1 = s_1a' \cdot z$ and $t_1 = s_2-s_1$. Notice that since $W^u_a(p')$ is contracted by flowing backwards under $a'$, the point $z$ can be moved arbitrarily close to $p'$, and hence $p$. Similarly, $(t_1a') \cdot x_1$ can be moved arbitrarily close to $q$.

By symmetrizing the above argument, we may find $x_2$ and $t_2$ such that $x_2$ is arbitrarily close to $q$ and $(t_2a') \cdot x_2$ is arbitrarily close to $p$. Hence for every $\ve > 0$, there exists a closed $(a',\ve)$-pseudo orbit $\set{(x_1,t_1),(x_2,t_2)}$. By Theorem \ref{thm:anosov-closing}, there exists an $\R^k$-periodic orbit shadowing it.

Now, we recall the following criterion for topological transitivity of a flow on a compact metric space $\Lambda$: if $U$ and $V$ are arbitrary open subsets of $\Lambda$, then there exists $n > 0$ such that $(na') \cdot U \cap V \not= \emptyset$. In our case, since $\Lambda_i$ is the closure of a set of periodic orbits, there exists $p \in U$ and $q \in V$ such that $p$ and $q$ are $\R^k$-periodic. Therefore, by the arguments above, we may find another periodic orbit arbitrarily close to each, in particular one which intersects $U$ and $V$. Since by assumption, $a'$ is transitive on every periodic orbit, we conclude the criterion, and hence transitivity.
\end{proof}

\subsection{  Cone Transitivity, Periodic Orbits, and Measures}

Unlike its rank one counterpart, it is not clear from transitivity of the group action $\R^k \curvearrowright X$ that there exist forward orbits $L_a\cdot x = \set{ta : t > 0} \cdot x$ which are dense. In fact, when the group action splits as a product, there are singular directions for which this does not hold. There are examples of $\R^k$ actions for which there is a dense $\R^k$-orbit, but for which {\it no} direction $L_a$ has a dense orbit. One may build this by taking any transitive flow $\psi_t$, and the direct product with a parabolic flow on a circle with a unique fixed point. When $k = 1$ this is avoided by assuming that there are no open orbits of $\R$.

We therefore use the stronger condition of {\it cone transitivity} (recall Definition \ref{Anosov action}), which combined with the Anosov closing lemma (Theorem \ref{thm:anosov-closing}), will allow us to deduce several nice properties. We first establish a lemma which shows that the cone transitivity assumption is natural. We expect the following to hold when replacing cone transitive by transitive. One may compare its statement with the fact that the time-$t$ map of an Anosov flow is transitive for generic $t \in \R$ if the flow itself is transitive, or that, for measure-perserving transformations, the set of directions elements which are ergodic as transformations are the complement of a countable union of affine hyperplanes \cite{pugh-shub1971}.

\begin{lemma}
\label{lem:cone-transitive}
Let $\R^k \curvearrowright X$ be a $C^{1,\theta}$, totally Anosov action of $\R^k$. Then the following are equivalent:

\begin{enumerate}
\item The action is cone transitive.
\item The action has a dense set of periodic orbits. 
\item The set of elements $a \in \R^k$ with dense forward orbits is residual.
\item For every open cone $C \subset \R^k$, there exists $x \in X$ such that $C \cdot x$ is dense in $X$.
\item There exists an Anosov element $a$ with a dense forward orbit.
\item The action preserves a fully supported ergodic invariant measure.
\item For every $a \in \R^k$, $NW(a) = X$.
\end{enumerate}
\end{lemma}

We prove the following technical lemma which will be useful:

\begin{lemma}
\label{lem:cone-differences}
Let $C \subset \R^k$ be an open cone and $C'$ be any open cone containing $C$. Let $S \subset C$ be an unbounded set. Then for every $R > 0$, there exists $a_1,a_2 \in S$ such that $a_2-a_1 \in C'$ and $\norm{a_2-a_1} > R$.
\end{lemma}

\begin{proof}
Since a cone is invariant under positive scaling, it is determined by its intersection with $S^{k-1} \subset \R^k$, which is open and convex.
Fix $a_1 \in S$, and choose a sequence $b_n \in S$ such that $\norm{b_n} \to \infty$. Then $\norm{b_n - a_1} \to \infty$. Since $b_n \in S \subset C$, the normalized vector $b_n / \norm{b_n} \in S^{k-1} \cap C$. Without loss of generality, we may assume that the sequence $b_n / \norm{b_n}$ converges to some $b \in \overline{C} \subset C'$. Now, $(b_n-a_1)/\norm{b_n-a_1} \to b_n/\norm{b_n}$, since $a_1$ is fixed. Therefore, since $C'$ is open and $b_n / \norm{b_n} \to b \in C'$, there exists $n_0$ such that $(b_{n_0}-a_1)/\norm{b_{n_0}-a_1} \in C'$ and $\norm{b_{n_0} - a_1} > R$. Setting $a_2 = b_{n_0}$ yields the desired pair of elements.
\end{proof}

\begin{proof}[Proof of Lemma \ref{lem:cone-transitive}]
We will show that (1) $\implies$ (2) $\implies$ (3) $\implies$ (4)  $\implies$ (1), and that (3) $\implies$ (5) $\implies$ (6) $\implies$ (7) $\implies$ (2). This is sufficient, since the first chain of implications is a loop, and the second begins and ends on that loop.

To see that (1) $\implies$ (2), assume that $\R^k \curvearrowright X$ is cone transitive. Choose a cone $C$ whose closure consists of Anosov elements and associated point $x$ such that $C \cdot x$ is dense. Since $\overline{C}$ consists of Anosov elements, and the set of Anosov elements is open, we may choose an open cone $C'$ consisting of Anosov elements containing $\overline{C}$, and such that $\overline{C'}$ also consists of Anosov elements. We will show that if $y \in X$ and $\ve > 0$ is sufficiently small, then there exists an $\R^k$-periodic point $p$ such that $d(y,p) < \ve$. Notice that since $C \cdot x$ is dense, the orbit must visit $B(y,\ve/2)$ in an unbounded set (otherwise, there is an open orbit of $\R^k$). Therefore, by Lemma \ref{lem:cone-differences}, there exists some $a_1,a_2 \in C$ such that $a_2 - a_1 \in C'$ can be made arbitrarily large, and $a_1\cdot x,a_2 \cdot x \in B(y,\ve/2)$. Since $\overline{C'}$ consists of Anosov elements, we may assume $\ve > 0$ is small enough to be able to apply Theorem \ref{thm:anosov-closing}, and arrive at $\R^k$ periodic orbits arbitrarily close to $y$.

To see that (2) $\implies$ (3), assume that there are a dense set of $\R^k$-periodic orbits. Then according to the spectral decomposition (Theorem \ref{thm:spectral}) for any Anosov $a$, there is only one $a$-basic set. Therefore, in each Weyl chamber of Anosov elements, there is a residual set of transitive elements.

That (3) $\implies$ (4) is clear, since if there is a residual set of transitive elements, any open cone will contain positive multiples of some transitive element, and hence itself have a dense orbit. That (4) $\implies$ (1) is also clear, since we may chooose any cone of Anosov elements whose closure still consists of Anosov elements.

Now, we prove the second loop of implications. That (3) $\implies$ (5) is clear: any residual subset of $\R^k$ must contain an Anosov element. That (5) $\implies $ (6) follows from \cite[Theorem 4.7(2)]{CPZ-PH}. Such a measure was also constructed in \cite{BonGuiWei2021} for $C^\infty$ actions. Another construction was carried out in \cite{CRH2021} under additional assumptions.

 That (6) $\implies$ (7) follows from the Poincar\'{e} recurrence theorem applied to the fully supported invariant measure. Finally, that (7) $\implies$ (2) again follows from the Anosov closing lemma.
\end{proof}

We now establish several important applications of Lemma \ref{lem:cone-transitive}.

\begin{corollary}
\label{lem:residual-recurrence}
Let $\R^k \curvearrowright X$ be a cone transitive, totally Anosov action. Fix $a \in \R^k$. Then the set of points $x \in X$ such that there exists a sequence $n_k \to +\infty$ such that $a^{n_k} \cdot x \to x$ is a residual subset of $X$.
\end{corollary}

\begin{proof}
Every $\R^k$-periodic point is recurrent for every $a \in \R^k$, so this is immediate from the fact that the set of recurrent points is a $G_\delta$ set.
\end{proof}



Recall that a set is $\ve$-dense if its $\ve$-neighborhood is $X$.

\begin{corollary}
\label{cor:eps-dense}
Let $\R^k \curvearrowright X$ be a cone transitive, totally Anosov action. Then for every $\delta > 0$, there exists an $\R^k$-periodic point $p$ whose orbit is $\delta$-dense.
\end{corollary}

\begin{proof}
Choose an Anosov element $a$ with a dense orbit (which is possible by Lemma \ref{lem:cone-transitive}), some $\delta > 0$ and choose $\ve$ as in Theorem \ref{thm:anosov-closing} for $\delta/2$. Then pick a point $x$ such that $\set{(ta)\cdot x : t > 0}$ is dense in $X$. For some $t_0 > 0$, $\set{(ta)\cdot x : t \in [0,t_0]}$ is $\delta/2$-dense in $X$. Furthermore, we may choose some $T > 2t_0$ such that $d((ta)\cdot x,x) < \ve$, so that there exists an $\R^k$ periodic orbit $x'$ with $d((ta)\cdot x',(ta)\cdot x) < \delta/2$. Then by construction, the $\R^k$-periodic orbit of $x'$ is $\delta$-dense.
\end{proof}

{ 
\subsubsection{SRB Measures and disintegrations}
\label{sec:SRB}
In the proof of Lemma \ref{lem:cone-transitive}, the fully supported ergodic invariant measure constructed from an Anosov element is actually the {\it SRB measure} for some Anosov element $a\in \R^k$. 

\begin{remark}
    By uniqueness coming from \cite[Theorem 4.7]{CPZ-PH} (see also \cite{BonGuiWei2021} in the $C^\infty$ setting), this measure is invariant for any $b \in \R^k$.  If $b$ also belongs to the same Weyl chamber as $a$, then the SRB measure for $a$ is also the SRB measure for $b$ as the stable foliations are identical.  Hence this measure only depends on the Weyl chamber. 
\end{remark}

Let $\mc W \subset \R^k$ be a Weyl chamber, $\mu_{\mc W}$ denote the corresponding SRB measure and $\beta$ be some weight such that $\beta(\mc W) > 0$ and $\ker \beta$ bounds $\mc W$. 

Through standard constructions via expanding partitions subordinate to the leaves of a coarse Lyapunov foliation $W^\alpha$, one may build a family of measures $\mu^\alpha_{\mc W,x}$ on $W^\alpha(x)$, defined for $\mu$-almost every $x$, such that

\begin{itemize}
    \item $x \mapsto \mu^\alpha_{\mc W,x}$ is measurable,
    \item if $\Xi$ is a measurable partition subordinate to $W^\alpha$, $\mu_{\mc W,x,\xi}$ is a constant multiple of $\mu^\alpha_{\mc W,x}$ on each atom of $\Xi$,
    \item $\mu^\alpha_{\mc W,x}(B_{W^\alpha}(1,x)) = 1$ for $\mu$-almost every $x$, and
    \item $a_*\mu^\alpha_{\mc W,x}$ is proportional to $\mu^\alpha_{a\cdot x}$ for $\mu$-almost every $x$.
\end{itemize}

For more on constructing the conditional measures $\mu^\alpha_{\mc W,x}$ from conditional measures defined on subordinate partitions, we refer the reader to \cite[Section 3.2]{AVW} (and \cite[Section 2.2.3, Appendix B.5]{brown19}, where only algebraic actions are considered).

The action of $\ker \beta$ on $(X,\mu_{\mc W})$ may fail to be ergodic. Let $\mu_{\mc W} = \displaystyle\int_\Omega \nu_{\mc W,\beta,\omega} \, dm(\omega)$ denote the ergodic decomposition of $\mu_{\mc W}$ with respect to the action of $\ker \beta$ (here $\Omega$ an indexing set for the space of $\ker \beta$-ergodic measures and $m$ is a measure on it).  We have the following structures for ergodic components:

\begin{proposition} \label{prop:SRB}
Let $\R^k \curvearrowright X$ be a $C^{1,\theta}$  cone transitive, totally Anosov action and $\alpha$, $\beta$ and $\mc W$ be as above.  For every $\alpha$ such that $\alpha(\mc W) > 0$, $\alpha \not= \beta$, $\nu_{\mc W,\beta,\omega,x}^\alpha$ is absolutely continuous with respect to Lebesgue measure for $m$-almost every $\omega \in \Omega$ and $\nu_{\mc W,\beta,\omega}$-almost every $x \in X$. In particular, the absolute continuity holds for $\mu_{\mc W}$-almost every $x \in X$.
\end{proposition}

\begin{proof}
Fix some generic measure $\nu$ in the ergodic decomposition.  Let
$a \in \ker \beta$ on the boundary of $\mc W$ which does not belong to any other Lyapunov hyperplane. Since the set of ergodic elements for $\ker \beta$ acting on $(X,\nu)$ is the complement of countably many hyperplanes \cite[Theorem 1.2]{pugh-shub1971}, we may without loss of generality assume that $\nu$ is ergodic for $a$. Let $a_0$ be an element of $\mc W$ very close to $a$, so that $W^u_a$ is a fast foliation in $W^u_{a_0}$. 

Let $r = 2$ or $\infty$ based on the regularity of the action. We claim that $W^\alpha$ is a $C^r$ subfoliation of each leaf of $W^u_{a_0}$ for any $\alpha$ as described in the statement proposition.

Indeed, we show by induction that $W^\alpha$ is {  absolutely continuous in} common unstable manifolds of increasing dimension contained in $W^u_{a_0}$. {  By absolutely continuous, we mean that the disintegration of the Lebesgue measure class in the leaf of the common unstable manifold along leaves of $W^\alpha$ is again of Lebesgue measure class.} We use the following very useful fact: given two foliations $\mc F'$ and $\mc F$ such that $\mc F'$ refines $\mc F$, $\mc F'$ is fast in $\mc F$ (as in Section \ref{sec:slow-foliations}), and $W^\gamma$ is slow and complementary in $\mc F$ (with arbitrarily slow growth), then $\mc F'$ is $C^r$ in each leaf of $\mc F$. This follows from the $C^r$-section theorem of \cite{HPS1970} and the fact that the growth of the slow foliation can be made arbitrarily slow. We will use this argument again in Lemma \ref{lem:smooth-holonomy}.

Assume that $W^\alpha$ is an absolutely continuous subfoliation of some common stable foliation $\mc F$. We may find a weight $\gamma$ and common stable foliation $\mc F'$ such that $T\mc F' = T\mc F \oplus E^\gamma$. By choosing an element sufficiently close to $\ker \gamma$, we may make $\mc F$ be the fast foliation inside $\mc F'$. Since we may become arbitrarily close to $\ker \gamma$, it follows that $\mc F$ is $C^r$ in $\mc F'$. Since $W^\alpha$ is {  absolutely continuous} in $\mc F$, it follows that $W^\alpha$ is {  absolutely continuous} in $\mc F'$. By induction, we get that $W^\alpha$ is {  absolutely} in $W^u_{a_0}$.

{ 
\begin{remark}
    It is not true from the general theory that $W^\alpha$ is $C^r$ in the common stable manifolds. Indeed, it is easy to construct a one-parameter family of smooth foliations of $\R^2$ such which do not form a smooth foliation of $\R^3$.
\end{remark}
}
By definition of SRB measure, the conditional measures of $\mu_{\mc W}$ along $W^u_{a_0}$ are absolutely continuous with respect to Lebesgue. Since $W^\alpha$ is an {  absolutely continuous} subfoliation, it follows that $\mu^\alpha_{\mc W,x}$ is absolutely continuous with respect to Lebesgue for $\mu^\alpha_{\mc W,x}$-almost every $x$.

Finally, when considering the ergodic decomposition, $m$-almost every ergodic component $\nu$ must have $\nu^\alpha_x$ absolutely continuous with respect to Lebesgue, following the Hopf argument. Indeed, the forward Birkhoff averages of $a$ are constant along $W^\alpha$-leaves for continuous observables. Then using density of $C^0$ functions in $L^2$ and the description of ergodic components via this construction, one gets that the ergodic decomposition is refined by the partition into stable leaves. See  \cite[Theorem C.2.1]{brown19} for a more thorough discussion. 
\end{proof}
}
\subsection{Normal Forms Coordinates}
\label{sec:normal-forms}

Let $f : X \to X$ be a H\"older continuous transformation of a compact metric space $X$, $\mc L = X \times \R$ be the trivial line bundle over $f$.  A map $F : \mc L \to \mc L$ is called a (one-dimensional) {\it $C^r$-extension of $f$} if $F(x,t) = (f(x),F_x(t))$, where $F_x \in C^r(\R,\R)$. We say that $F$ is {\it uniformly contracting} if there exists $\mu \in (0,1)$ such that $\abs{F_x'(t)} < \mu$ for all $(x,t) \in \mc L$. We say that $F$ is {\it transversally H\"older} if $x \mapsto F_x \in C^r(\R,\R)$ is  H\"older continuous in the $C^r$ topology. A {\it $C^r$-normal form coordinate system} for $F$ is a map $H : \mc L \to \mc L$ of the form $H(x,t) = (x,\psi_x(t))$ such that:

\begin{enumerate}
\item ${ \psi}_x \in C^r(\R,\R)$ for every $x \in X$,
\item ${ \psi}_x(0) = 0$ and $\psi_x'(0) = 1$ for every $x \in X$,
\item $x \mapsto \psi_x$ is continuous from $X$ to $C^r(\R,\R)$, and
\item $HF(x,t) = \hat{F}H(x,t)$,
\end{enumerate}

\noindent where $\hat{F}(x,t) = (f(x),F_x'(0)t)$. Katok and Lewis were the first to construct normal forms on contracting foliations in the 1-dimensional setting for $C^\infty$ transformations \cite{KL1}. Guysinsky extended this to the $C^r$ setting \cite{Guys}. The theory of normal forms has since been extended to a much more general setting. The broadest results in the uniformly hyperbolic setting have been written by Kalinin in \cite{kal20} and even extended to the nonuniformly hyperbolic setting by Melnick \cite{melnick} and Kalinin and Sadovskaya \cite{Kalinin2017341}.

\begin{theorem}
\label{thm:normal-forms}
Let $F$ be a transversally H\"older, uniformly contracting one-dimensional $C^r$ extension of a $C^r$ diffeomorphism $f$, $r > 1$. Then $F$ has a unique $C^r$-normal form coordinate system, varying H\"older continuously in the $C^r$-topology.
\end{theorem}

Theorem \ref{thm:normal-forms} is an immediate corollary of \cite[Theorem 4.3]{kal20}. Indeed, in the 1-dimensional case, all narrow band spectrum assumptions become vacuous and the identity map is the unique subresonance polynomial whose derivative is 1 (see \cite{kal20} for the definitions of these technical conditions).

\begin{lemma}
\label{lem:nf-commuting}
Let $F$ be as in Theorem \ref{thm:normal-forms}, and $G$ be any one-dimensional $C^1$-extension of a homeomorphism $g$ which commutes with $f$ such that $G$ commutes with $F$. If $H$ is the $C^r$-normal forms coordinate system for $F$, $\hat{F} = HFH^{-1}$ and $\hat{G} = HGH^{-1}$, then $\hat{G}$ is linear.
\end{lemma}

\begin{proof}
Let $\hat{F}$ and $\hat{G}$ be as in the statement of the lemma, and notice that since $F$ commutes with $G$, $\hat{F}$ commutes with $\hat{G}$. Furthermore, $\hat{G}$ is still a $C^1$-extension of $g$. Let $\lambda(x,n) = (F_x^{(n)})'(0)$ denote the derivative  of $F^n$ along the bundle direction, so that $\hat{F}^n(x,t) = (x,\lambda(x,n)t)$. Then since $\hat{G}$ commutes with $\hat{F}$,

\[ \hat{G}(x,t) = \hat{F}^{-n}\hat{G}\hat{F}^n(x,t) = \hat{F}^{-n}\hat{G}(f^n(x),\lambda(x,n)t). \]

Now, since $\hat{G}$ is $C^1$, by the mean value theorem, there exists $s_n \in [0,\lambda(x,n)t]$ such that $\hat{G}(f^n(x),\lambda(x,n)t) = (gf^n(x),\hat{G}_{f^n(x)}'(s_n)\lambda(x,n)t)$. Notice that $s_n \to 0$ since $\lambda(x,n) \to 0$ exponentially in $n$. Then:

\begin{equation}
\label{eq:hatG-conj}
\hat{G}(x,t) =  \hat{F}^{-n}(gf^n(x),\hat{G}_{f^n(x)}'(s_n)\lambda(x,n)t) = (g(x),\lambda(g(x),n)^{-1}\hat{G}_{f^n(x)}'(s_n)\lambda(x,n)t) .
\end{equation}

Choose a subsequence $n_k$ such that $f^{n_k}(x)$ converges to some $y \in X$ (this is possible by compactness of $X$). Then

\[ \frac{\lambda(g(x),n_k)}{ \lambda(x,n_k)} = \frac{{\hat{F}}^{n_k\prime}_{g(x)}(0)}{{\hat{F}}^{n_k\prime}_x(0) } 
= \frac{(\hat{F}^{n_k} \of \hat{G})_x'(0)} { \hat{G}_x'(0)\cdot \hat{F}^{n_k\prime}_x(0) }
 = \frac{(\hat{G} \of \hat{F}^{n_k})_x'(0)}{ \hat{G}_x'(0)\cdot \hat{F}^{n_k\prime}_x(0) } 
 = \frac{\hat{G}_{f^n(x)}'(0)} { \hat{G}_x'(0)}
 \to \frac{\hat{G}_y'(0)}{\hat{G}_x'(0)}. \]
 
 Therefore, since \eqref{eq:hatG-conj} holds for every $n$ (and hence for every $n_k$), since $s_n \to 0$, and since all functions involved are continuous:
 
 \begin{equation}
 \label{eq:hatG-linear}
  \hat{G}(x,t) = \lim_{k \to \infty}   \left(g(x),\hat{G}_{f^{n_k}(x)}'(s_n)\frac{\lambda(x,n_k)}{\lambda(g(x),n_k)}t\right) = (g(x),G_x'(0)t).
  \end{equation}
 Therefore, $\hat{G}$ is linear at every point, as claimed.
\end{proof}

Theorem \ref{thm:normal-forms} establishes uniqueness of a $C^r$-normal form, but a different $C^1$ normal form could exist, since we require $r > 1$. The following establishes uniqueness of normal forms, even for $C^1$ extensions.

\begin{corollary}
Let $F$ be as described in Theorem \ref{thm:normal-forms} and  $H$ be the unique $C^r$-normal forms coordinate system for $F$. Then if $\bar{H}$ is a $C^1$-normal forms coordinate system for $F$, $\bar{H} = H$.
\end{corollary}

\begin{proof}
Let $G = \bar{H}^{-1} \of H$, so that $G : \mc L \to \mc L$ is a $C^1$ extension of the identity on $X$. Then

\[ G \of F = \bar{H}^{-1} \of H \of \hat{F} = \bar{H}^{-1} \of \hat{F} \of H= F \of G\]
since both $\bar{H}$ and $H$ conjugate $F$ to $\hat{F}$ by definition of normal forms charts. Since $G$ commutes with $F$, by Lemma \ref{lem:nf-commuting}, $G$ is linear in the normal forms coordinates provided by $H$. But since $G_x'(0) = 1$ for every $x$, $G = \id$ and $\bar{H} = H$.
\end{proof}

Our main application of normal forms will be to contracting one-dimensional oriented foliations of a smooth manifold $X$. Our notation of orientability here is sometimes called {\it tangential orientability}, and the definition is very easy for 1-dimensional foliations.

\begin{definition}
We say that a one-dimensional foliation $\mc F$ on a $C^\infty$ manifold $X$ is {\rm orientable} if there exists a continuous, nonvanishing vector field on $X$ which is everywhere tangent to the foliation $\mc F$. We call such a vector field an {\rm orienting field} and say that the foliation is {\rm oriented} if such a vector field has been fixed.
\end{definition}


  If $f : X \to X$ is a $C^r$ diffeomorphism ($r > 1$) preserving an oriented, H\"older foliation $\mc F$ with $C^r$ leaves, one may construct a $C^r$ extension in the following way. Fix a $C^r$ Riemannian metric on $X$, and normalize the orienting field to have unit length. Then let $x_t$ denote the point of $\mc F(x)$ which is the image of $x$ under the flow induced by the normalized orienting field at time $t$. Notice that pushforward of the constant vector field $\partial /\partial t$ on $\R$ under the map $t \mapsto x_t$ is exactly the normalized orienting field. Since $\mc F$ is $f$-invariant, the pushforward of $\partial / \partial t$ under the map $t \mapsto f(x_t)$ is a multiple of the normalizing orienting vector field at every point. Let $\lambda_{t,x}$ denote this ratio, and $\int_0^t \lambda_{\tau,x} \, d\tau$ be the {\it signed length }of the curve $\tau \mapsto f(x_\tau)$, $\tau \in [0,t]$.

  Define $F(x,t) = (f(x),s)$, where $s$ is the (signed) length of the curve $\tau \mapsto f(x_\tau)$, $\tau \in [0,t]$ (if the leaves are all noncompact, then $s \in \R$ uniquely satisfies $f(x)_s = f(x_t)$). Then $F$ is a contracting $C^r$-extension of $f$ if $\mc F$ is contracting under $f$. In this case, we have the following additional structures, which are easy corollaries of the preceding results with the exception of \ref{enum:nf-consistency}, which follows from, eg, \cite[Theorem 4.6]{kal20}:

\begin{theorem}
\label{thm:normal-forms-foliation}
If $f : X \to X$ is $C^r$ diffeomorphism ($r > 1$) of a $C^\infty$ Riemannian manifold $X$ and $\mc F$ is a {  one-dimensional}, contracting, $f$-invariant, oriented, H\"older foliation  with $C^r$ leaves, then there exists a family of $C^r$-maps $\psi_x^{\mc F} : \R \to \mc F(x)$, varying H\"older continuously in the $C^r$-topology, such that:

\begin{enumerate}[label=(\alph*)]
\item \label{enum:nf-immersion} for every $x$, $\psi_x^{\mc F}$ is a immersion whose image is $\mc F(x)$,
\item $(\psi_x^{\mc F})'$ is the normalized orienting vector field of $\mc F$,
\item if $g :X \to X$ is a $C^1$ diffeomorphism of $X$ commuting with $f$ (including $g = f$), $g(\psi_x^{\mc F}(t)) = \psi_{g(x)}^{\mc F}(\norm{d_{\mc F}g{ (x)}}t)$, where $d_{\mc F}$ denotes the derivative restricted to $T\mc F$, and
\item \label{enum:nf-consistency} if $y \in \mc F(x)$, then $\psi_x \of {\psi_y}^{-1}$ is {  an affine transformation of} $\R$.
\end{enumerate}

Furthermore, any collection of $C^1$ maps varying continuously in the $C^1$-topology satisfying \ref{enum:nf-immersion}-\ref{enum:nf-consistency} must coincide with $\psi_x^{\mc F}$.
\end{theorem}

\subsection{Centralizers of transitive Anosov flows in dimension 3}

We will build rank-one factors that are Anosov flows on 3-manifolds. Understanding their properties will be important to our analysis. The following is classical in the study of Anosov flows, and the existence of the family is proven in the work of Margulis, see  Theorem 1 of Section 3 and Theorem 11 of Section 8 in \cite{margulisbook04}, or \cite{hasselblatt89}. The family satisfying these properties is used to construct the measure of maximal entropy, which has by now many alternate constructions.  While for Anosov flows which are not transitive, the family is not unique, it is unique for transitive flows. We could not find a proof of this uniqueness, which we offer here for completeness.

Recall that an Anosov flow is a flow $(g_t)$ such that $g_1$ is normally hyperbolic with respect to the orbit foliation of the flow, and that there are foliations associated with $(g_t)$, the stable and unstable foliations $W^s$, $W^u$, as well as the center-stable and center-unstable foliations, $W^{cs}$ and $W^{cu}$, which are the saturations of the orbit foliations by $W^s$ and $W^u$, respectively.

\begin{theorem}[Margulis]
\label{lem:margulis-conditionals}
Let $(g_t)$ be a transitive Anosov flow on a compact manifold $Y$, and $\set{W^u(y) : y \in Y}$ be the unstable foliation of $(g_t)$. Then there exists a 
family of $\sigma$-finite, Borel measures $\mu^u_y$ defined on $W^u(y)$ such that

\begin{enumerate}
\item  for every $y \in Y$, $\mu^u_y$ is finite on compact subsets in the leaf topology,

\item for every $y \in Y$, $\mu^u_y$ is fully supported on $W^u(y)$ in the leaf topology,

\item the measures $\mu^{cu}_y$ on $W^{cu}(y)$ defined by 

\begin{equation}
\label{eq:weak-disintegration}
\int f \, d\mu^{cu}_y := \int \int f\big|_{W^u_{g_t(y)}} \, d\mu^u_{g_t(y)} \, dt
\end{equation}
 also have property (1),

\item if $z \in W^s(y)$, and $\pi : W^{cu}(y) \to W^{cu}(z)$ is the stable holonomy map, then $\pi_* \mu_y^{cu} = \mu_z^{cu}$,

\item the family $\set{\mu^u_y}$ satisfies

\begin{equation}
\label{eq:margulis-cocycle}
 (g_t)_* \mu_y^u = e^{th} \mu_{g_t(y)}^u,
\end{equation}
\noindent where $h$ is the topological entropy of $(g_1)$,

\item the families $\set{\mu^u_y}$ and $\set{\mu^{cu}_y}$ have the following continuity property: if $y_n \to y$ and $f_n$ is a sequence of compactly supported continuous functions on $W^*(y_n)$ such that $f_n \to f$, where $f$ is continuous and compactly supported on $W^*(y)$, then $\int f_n \, d\mu^*_{y_n} \to \int f \, d \mu^*_y$, where $* = u$ or $cu$.
\end{enumerate}

\noindent Furthermore, the family $\set{\mu_y^u}$ is determined up to global scalar by these properties.
\end{theorem}

\begin{remark}
The continuity property (5) follows from the other properties. Indeed, it follows for $\mu^{cu}_y$ from the holonomy invariance property (3), and it follows for $\mu^u_y$ by the continuity property for $\mu^{cu}_y$ and \eqref{eq:weak-disintegration}.
\end{remark}

\begin{remark}
A more general construction due to Rokhlin  also gives a similar family of measures by using measurable partitions subordinate to the unstable foliation (see, eg, Appendix B.5 of \cite{brown19}). However, this construction does not give a family of measures up to global scalar, but instead measures which are defined uniquely up to a scalar which may depend on the point $y$. Such measures are often normalized to give the unit ball in $W^u(y)$ measure 1, and therefore do not usually coincide with the measures above.
\end{remark}

We summarize the construction of the unique measure of the maximal entropy $\mu$ from the family $\mu_y^u$. First, notice that there exist analogous families $\mu_y^s$ and $\mu_y^{cs}$ for the stable and center-stable foliations, respectively, which can be obtained by considering the flow $g_{-t}$. We define the measure $\mu$ on subsets of $Y$ which are obtained by saturating a small open subset of a  center-stable leaf (centered at some point $y_0$) with small pieces of unstable manifolds. By the local product structure of these foliations, such sets are open in $Y$. For a continuous function $\varphi$ supported on such a neighborhood, we define:

\begin{equation}
\label{eq:integrated-margulis} \int \varphi \, d\mu := \int \int \varphi \big|_{W^u(y)}(z) \, d\mu^u_y(z) \, d\mu^{cs}_{y_0}(y).
\end{equation}

We call $\mu$ the {\it Margulis measure} or {\it measure of maximal entropy} for $f$. After normalizing to a probability measure, $\mu$ is the unique entropy maximizing measure \cite{bowen74}.

\begin{proof}[Proof of Uniqueness]
We wish to show that properties (1)-(5) characterize the family $\set{\mu_y^u}$ up to a global scalar. Suppose that $\set{\nu_y^u}$ is another such family. Then following the Margulis construction outlined above, one can construct another probability measure  $\nu$ on $Y$, whose local disintegrations along measurable partitions subordinate to the unstable foliation are absolutely continuous with respect to the respective families. Notice also that in both cases, $h_\mu(g_t) = h_\nu(g_t) = h_{\operatorname{top}}(g_t)$. Therefore, both $\mu$ and $\nu$ are measures of maximal entropy for $(g_t)$. Since the measure of maximal entropy is unique, the measure classes of the disintegrations along measurable partitions subordinate to $W^u$ coincide  for $\mu$-almost every atom of the measurable partition, which in particular implies that $\mu_y^u$ and $\nu_y^u$ are absolutely continuous with respect to each other. Let $f : Y \to \R$ be the Radon-Nikodym derivative, so that $\mu_y^u = e^f \cdot \nu_y^u$ and $f$ is defined $\mu$-almost everywhere. Then one can easily check that $f$ is invariant under $g_t$ by \eqref{eq:margulis-cocycle} for both $\mu_y^u$ and $\nu_y^u$. Therefore, $\mu_y^u = \lambda \nu_y^u$ for a fixed constant $\lambda$ on a set of full $\mu$-measure by ergodicity with respect to $\mu$. Therefore, since the measure of maximal entropy is fully supported for any transitive Anosov flow and the families $\set{\mu_y^u}$ and $\set{\nu_y^u}$ both have the continuity property (5), they agree everywhere up to a global constant.
\end{proof}

\begin{lemma}
\label{lem:margulis-rescaling}
If $(g_t)$ is a transitive Anosov flow on a compact manifold $Y$, $\set{\mu^u_x}$ and $\set{\mu^s_y}$ are the unstable and stable families of measures for $(g_t)$ provided by Theorem \ref{lem:margulis-conditionals}, and $f : Y \to Y$ is a homeomorphism commuting with $(g_t)$, then $f_*\mu = \mu$ and there exists $\lambda \in \R_+$ such that for every $y \in Y$, $f_*\mu^u_y = \lambda \mu^u_{f(y)}$ and $f_*\mu^s_y = \lambda^{-1} \mu^s_{f(y)}$.
\end{lemma}

\begin{proof}
Notice that since $f$ commutes with $(g_t)$, $\set{f_* \mu^u_{f^{-1}(x)}}$ also satisfies properties (1)-(5) of Theorem \ref{lem:margulis-conditionals}. Since the family is unique up to global scalar, it follows that $f_*\mu_x^u = \lambda_1\mu_x^u$ and $f_*\mu_x^s = \lambda_2 \mu_x^s$ for some constants $\lambda_1,\lambda_2 \in \R_+$ independent of $x$.  We wish to show that $\lambda_1\lambda_2 = 1$. Assume without loss of generality that the families $\set{\mu_x^s}$ and $\set{\mu_x^u}$ have already been chosen so that the measure defined by \eqref{eq:integrated-margulis} is a probability measure.

Since $f$ commutes with $(g_t)$, $f_*\mu$ is also an entropy maximizing measure for $(g_t)$, so $f_*\mu = \mu$ by the uniqueness of the measure of maximal entropy \cite{bowen74}. But \eqref{eq:integrated-margulis} implies that

\[ \int \varphi \, d\mu = \int \varphi \, d(f_*\mu) =  \int \int \varphi \big|_{W^u(y)}(z) \, d(f_*\mu^u_y)(z) \, d(f_*\mu^{cs}_{y_0})(y) = \lambda_1\lambda_2 \int \varphi \, d\mu. \]
Therefore, $\lambda_1\lambda_2 = 1$.
\end{proof}

We will use the following result, which may be of independent interest. It requires low dimension, since it is easy to construct examples of Anosov flows on manifolds of dimension greater than 3 which have infinite index in their centralizers (for instance, by suspending certain Anosov automorphisms of $\mathbb{T}^d$, $d \ge 3$).

\begin{theorem}
\label{thm:3flow-centralizer}
Let $(g_t)$ be a transitive Anosov flow on a compact 3-dimensional manifold $Y$, and $C_{\Homeo(Y)}(g_t)$ denote the group of homeomorphisms that commute with $(g_t)$. Then $C_{\Homeo(Y)}(g_t) = \set{g_t \of f : t\in \R, f \in F}$ for some finite group $F \subset \Homeo(Y)$.
\end{theorem}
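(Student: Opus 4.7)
My plan is to build a continuous homomorphism $\lambda : C_{\Homeo(Y)}(g_t) \to \R_{>0}$ whose restriction to the flow subgroup $g_\R$ is a topological group isomorphism, and then to show its kernel $K$ is finite by establishing both discreteness and compactness. Since $g_\R$ is central in $C_{\Homeo(Y)}(g_t)$, this will yield a direct product decomposition $C_{\Homeo(Y)}(g_t) = g_\R \times K$, proving the theorem with $F = K$.

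To construct $\lambda$, for any $h \in C_{\Homeo(Y)}(g_t)$ I would observe that the family $\{h_*\mu_y^u\}_{y \in Y}$ of measures on the unstable leaves satisfies properties (1)--(3) of Lemma \ref{lem:margulis-conditionals}, since $h$ commutes with $g_t$ and preserves the dynamically defined stable and unstable foliations. Uniqueness then gives $h_*\mu_y^u = \lambda(h) \mu_{h(y)}^u$ for a global constant $\lambda(h) > 0$. A direct calculation confirms $\lambda$ is a continuous group homomorphism, and the Margulis transformation rule gives $\lambda(g_t) = e^{th}$, so $\lambda$ restricts to an isomorphism on $g_\R$ and the proof reduces to showing $K := \ker \lambda$ is finite.

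The core of the argument is discreteness of $K$. Given $h \in K$ sufficiently $C^0$-close to the identity, isolation of periodic orbits forces $h(\gamma) = \gamma$ for every closed orbit $\gamma$ of period below any chosen threshold, and $h|_\gamma$ commutes with $g_t|_\gamma$, so $h|_\gamma = g_{s(\gamma)}|_\gamma$ for some small shift $s(\gamma)$. For any two closed orbits $\gamma_1, \gamma_2$, transitivity and density of unstable manifolds produce a heteroclinic orbit $x \in W^u(\gamma_1) \cap W^s(\gamma_2)$; in dimension three this intersection is a disjoint union of $g_t$-orbits, so $h(x) = g_\tau(x)$ for a small $\tau$. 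Letting $p \in \gamma_2$ be the asymptotic phase of $x$, the asymptotic phase of $h(x)$ along $W^s(\gamma_2)$ equals both $h(p) = g_{s(\gamma_2)}(p)$ and $g_\tau(p)$, forcing $\tau = s(\gamma_2)$; the symmetric computation along $W^u(\gamma_1)$ gives $\tau = s(\gamma_1)$. So $s(\gamma)$ is a common constant $s$ across all closed orbits, and density of periodic orbits yields $h = g_s$ globally. The constraint $\lambda(h) = 1$ then forces $s = 0$, so $h = \id$, proving $K$ is discrete.

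For compactness of $K$, I would note that $h \in K$ preserves the unique measure of maximal entropy (since $h_*\mu$ is again a $g_t$-invariant measure of maximal entropy), so combined with the analogous uniqueness for the Margulis stable family, $h$ preserves both stable and unstable Margulis measures. In local flow boxes this exhibits elements of $K$ as Margulis isometries on each coarse leaf, yielding equicontinuity; Arzel\`a--Ascoli together with closedness (via continuity of $\lambda$) gives compactness of $K$ in the compact-open topology on $\Homeo(Y)$. A compact discrete group is finite, completing the proof. The main obstacle I anticipate is the asymptotic phase step, which requires careful justification of continuity and centralizer-equivariance of the strong stable/unstable asymptotic phase, together with the transversality argument that genuinely uses three-dimensionality to make $W^u(\gamma_1) \cap W^s(\gamma_2)$ a union of orbits; this is where the dimension hypothesis is essential, as the corresponding statement fails in higher dimensions (e.g.\ for suspensions of commuting hyperbolic toral automorphisms).
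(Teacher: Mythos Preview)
Your overall architecture---construct $\lambda$ via uniqueness of the Margulis family, then show $K=\ker\lambda$ is compact and discrete---is exactly the paper's. The compactness argument (equicontinuity from preserving the leafwise Margulis measures, then Arzel\`a--Ascoli) is also essentially the same; the paper packages it as $K\subset\Isom(Y,\delta)$ for an explicit length metric $\delta$ built from the leafwise Margulis measures together with arclength along orbits.

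The genuine difference is in discreteness. Your route via isolated periodic orbits, heteroclinic intersections, and asymptotic phase is correct in spirit but more delicate than needed, and the sentence ``density of periodic orbits yields $h=g_s$ globally'' is not quite right as written: you have only established $h=g_s$ on closed orbits of period below your threshold and on the heteroclinic orbits between them; the former are finitely many, not dense. The fix is to observe that you have in fact shown $h=g_s$ on the \emph{homoclinic} orbits of a single $\gamma$ (take $\gamma_1=\gamma_2$), and these are dense by transitivity. Even so, the step ``$h(x)=g_\tau(x)$ for small $\tau$'' needs care when distinct homoclinic orbits accumulate, since the ``close to identity'' threshold guaranteeing $h$ does not jump components a priori depends on $x$.

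The paper bypasses all of this with a single observation: if $\phi\in K$ is $C^0$-close to the identity, then for every $x$ and every $t$,
\[
d\big(g_t\phi(x),\,g_t x\big)=d\big(\phi(g_t x),\,g_t x\big)\le \sup_{y}d(\phi(y),y),
\]
which is uniformly small. Thus the full orbit of $\phi(x)$ shadows that of $x$ for all time, forcing $\phi(x)\in W^{cs}_{\loc}(x)\cap W^{cu}_{\loc}(x)$, i.e.\ $\phi$ fixes every $g_t$-orbit. Writing $\phi(x)=g_{\tau(x)}(x)$ with $\tau$ continuous, commutation with the flow gives $\tau(g_t x)=\tau(x)$, so $\tau$ is constant by transitivity; then $\lambda(\phi)=1$ forces $\tau=0$. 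This uses no periodic orbits, no heteroclinics, no asymptotic phase, and the required closeness to the identity is uniform (just the scale of local product structure). Your argument can be made to work, but this is the place where the paper's proof is substantially cleaner.
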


\begin{proof}
Fix some $\phi \in \Homeo(Y)$ that commutes with $(g_t)$. Since $(g_t)$ is a transitive Anosov flow, it has a unique measure of maximal entropy, $\mu$. 
Let $\set{\mu_x^u}$ and $\set{\mu_x^s}$ denote the families of leaf-wise Margulis measures in Theorem \ref{lem:margulis-conditionals}.
 By Lemma \ref{lem:margulis-rescaling} and \eqref{eq:margulis-cocycle}, we may find a time $t$ of the flow such that $g_{-t} \of \phi$ preserves the Margulis measure, and the leaf-wise Margulis measures.

We claim that $F = \set{ f \in \Homeo(Y) : f_*\mu = \mu, f_*\mu^u_y = \mu_y^u, \mbox{ and }f_*\mu^s_y = \mu_y^s \mbox{ for every } y \in Y}$ is a finite group. Notice that one may use the leafwise measures to define a length on $W^u_y$ and $W^s_y$ for every $y \in Y$. Then define a length metric $\delta$ on $Y$ by declaring the length of a broken path in the $(g_t)$-orbit, $W^u$ and $W^s$-foliations to be the sum of the leaf-wise measures of each leg. Since the leaf-wise Margulis measures are fully supported, the new metrics defined on the stable and unstable foliations induce the same topology on them, and $\delta$ induces the same topology on $Y$. Hence, $(Y,\delta)$ is a connected, compact metric space, so $\Isom(Y,\delta)$ is a compact group \cite{dantzig-vdw}.

Finally, we show that $F := \Isom(Y,\delta) \cap C_{\Homeo(Y)}(g_t)$ is discrete, which is sufficient since any discrete, compact group is finite. Let $\phi \in F$ be $C^0$-close to the identity, so that  for every $x \in X$, $\phi(x)$ to a neighborhood  of $x$ with local product structure. Notice that since $\phi$ commutes with $(g_t)$, $d(g_t(\phi(x)),g_t(x)) = d(\phi(g_t(x)),g_t(x))$ is uniformly bounded above and below for all $t \in \R$.  Since $\phi(g_t(x))$ always lies in a neighborhood of $g_t(x)$ with local product structure, we conclude that that $\phi(x) \in W^{cs}(x) \cap W^{cu}(x)$. Therefore, $\phi$ fixes every orbit of $g_t$. Hence, there exists a continuous function $h : Y \to \R$ such that $\phi(x) = g_{h(x)}(x)$. Since $\phi$ commutes with $g_t$, $h(g_t(x)) = h(x)$. Therefore $h$ is constant since $g_t$ is transitive. It follows that $\phi$ is a time-$t$ map of the flow. By \eqref{eq:margulis-cocycle}, since $\phi \in F$, we conclude that $\phi$ is the identity.
\end{proof}

\subsection{Rank one factors of $\Z^k$-actions and their suspensions}
\label{sec:susp}
We recall the suspension construction of a $\Z^k$ action, or more generally an $\R^k \times \Z^\ell$ action. Let $\R^k \times \Z^\ell \curvearrowright X$ be a smooth or continuous action, and let $\bar{X} = X \times \R^\ell$. Introduce an equivalence relation $\sim$ on $\bar{X}$, where $(x_1,a_1) \sim (x_2,a_2)$ if and only if there exists $a \in \Z^\ell$ such that $a \cdot x_2 = x_1$ and $a_2 = a_1 + a$. The space $\tilde{X} = \bar{X} / \sim$ is called the {\it suspension space} of $\R^k \times \Z^\ell \curvearrowright X$. A fundamental domain for this equivalence relation is $X \times [0,1]^\ell$. Notice that if $k = 0$ and $\ell = 1$, this is the standard suspension construction for the diffeomorphism generating $\Z$. The general construction is a higher-rank version of this. Notice that the map $x \mapsto (x,0)$ is an embedding of $X$ into $\tilde{X}$, and  the restriction of the $\R^{k+\ell}$ action to $\R^k \times \Z^\ell$ preserves the embedded image and is canonically conjugated to the action on $X$ by construction. Furthermore, the space $\tilde{X}$ carries a canonical $C^r$ structure if the $\R^k \times \Z^\ell$-action is $C^r$, since the relation $\sim$ will be given by $C^r$ diffeomorphisms. Furthermore, the $\R^{k+\ell}$-action on $\tilde{X}$ is $C^r$.

$\mathbb{T}^\ell$ is always a factor of the suspension action, since one may project onto the $\R^\ell / \Z^\ell$ component, and one may therefore factor onto $\R / \Z$ and obtain a rank one factor of the action, which is transitive (in the group theoretic sense). It therefore makes sense to allow Kronecker factors. 

\begin{lemma}
\label{lem:susp}
Let $\R^{k+\ell} \curvearrowright \tilde{X}$ be the suspension of a cone transitive action $\R^k \times \Z^\ell \curvearrowright X$ by $C^r$ diffeomorphisms, $r \ge 1$.  If $\R^k \times \Z^\ell \curvearrowright X$ has a $C^r$ non-Kronecker rank one factor, then so does $\R^{k+\ell} \curvearrowright \tilde{X}$. If $\R^k \times \Z^\ell \curvearrowright X$ is Anosov and $\R^{k+\ell} \curvearrowright \tilde{X}$ has a $C^r$ non-Kronecker rank one factor, then so does $\R^k \times \Z^\ell \curvearrowright X$.
\end{lemma}

\begin{proof}

First, we show that if $\R^k \times \Z^\ell \curvearrowright X$ has a non-Kronecker rank one factor, then $\R^{k+\ell} \curvearrowright \tilde{X}$ also has a non-Kronecker rank one factor. Indeed, let $\pi : X \to Y$ denote the factor map, and $p : \R^k \times \Z^\ell \to A$ denote the corresponding surjective homomorphism (see Definition \ref{def:rank-one-factor}). There are two cases: $A \cong \R \times C$ and $A \cong \Z \times C$. In the latter case, we think of $\Z$ as sitting inside $\R$. Since $C$ is a compact abelian group, we may write $C$ as a product of a torus and groups $\Z / n_i \Z$. Let $C'$ be the product of the torus component of $C$ with a copy of $\mathbb{T}$ for each $\Z / n_i \Z$. Notice that $C$ canonically embeds in $C'$ since $\Z / n_i\Z$ is isomorphic to $\set{k/ni : k \in \Z} \subset \mathbb{T}$.

Since $C'$ is a torus, its universal cover is a vector space, call it $V$ (so that $C' = W / \Lambda$ for some $\Lambda \cong \Z^{\dim W}$). For each generator $e_i \in \Z^\ell$, choose some element $w_i \in \R \times W$ which projects to $p(e_i)$. Then there exists a unique homomorphism from $\R^{k+\ell}$ to $\R \times W$ which sends $e_i$ to $w_i$. Let $\hat{p}$ denote the composition of this homomorphism with the projection to $\R \times C'$, so that $\hat{p} : \R^{k+\ell} \to \R \times C'$ is an extension of $p : \R^k \times \Z^\ell \to A \subset \R \times C'$.

Recall that $C$ was the product of a torus with finitely many groups of the form $\Z / n_i\Z$, $ i = 1,\dots,N$. Let $\tilde{Y}$ be the suspension of the $\Z \times \Z^N$-action on $Y$ in the case when $A = \Z \times C$, or the $\Z^N$-action in the case when $A = \R \times C$, where the $i\tth$ generator of $\Z^N$ acts by the generator of $\Z / n_i\Z$. Notice that after doing so, $\tilde{Y}$ carries a canonical $\R \times C'$-action, into which $A$ embeds. Recall that $\tilde{X}$ is the factor of $X \times \R^\ell$ by the action of $\Z^\ell$ on $X$ in the first coordinates and by translations in the second coordinate. Then define $\tilde{\pi} : \tilde{X} \to \tilde{Y}$ by $\tilde{\pi}(x,a) = (\pi(x),\hat{p}(a))$. We claim that $\tilde{\pi}$ is well-defined. Indeed, if $(x,a)$ and $(y,b)$ represent the same point of $\tilde{X}$, then there exists $c\in \Z^\ell$ such that $y = c \cdot x$ and $a = b + c$. But then $\hat{p}(c) = p(c)$ (since $p$ is an extension of $\hat{p}$), so $\pi(y) = \hat{p}(c) \cdot \pi(x)$ and $\hat{p}(a) = \hat{p}(b) + \hat{p}(c)$ since $\hat{p}$ is a homomorphism. It now follows easily that $\tilde{Y}$ is a  rank one factor of $\tilde{X}$. If $\tilde{Y}$ were a Kronecker factor of $\tilde{X}$, then the $\R \times C'$-action is transitive (in the group-theoretic sense). But then the $A$-action on $Y$ is transitive (in the group-theoretic sense), so $Y$ is Kronecker. This is a contradiction, so $\tilde{Y}$ is a non-Kronecker rank one factor of $\tilde{X}$.

Now we show the converse: if the action is Anosov and the suspension action $\R^{k+\ell} \curvearrowright \tilde{X}$ has a non-Kronecker rank-one factor, then so does the original action $\R^k \times \Z^\ell \curvearrowright X$. Indeed, suppose that $\pi : \tilde{X} \to \tilde{Y}$ is such a factor, and that $A \curvearrowright \tilde{Y}$ is the associated action with projection $p : \R^{k+\ell} \to A$ (see Definition \ref{def:rank-one-factor}). For $t \in \mathbb{T}^\ell$, let $X_t \subset \tilde{X}$ denote the fiber above $t$ of the suspension, so that $X_t$ is canonically diffeomorphic to $X$, and preserved by $\R^k \times \Z^\ell$. Then define $Y_t \subset \tilde{Y}$ to be the image of $X_t$ under $\pi$.

We claim that $Y_0$ is a $C^\infty$ submanifold. Indeed, let $B = \set{ t \in \R^\ell : t \cdot Y_0 = Y_0}$, and $B_0 = \set{ t \in \R^{k+\ell} : t \cdot Y_0 = Y_0}$, so that $B = B_0 \cap \R^\ell$. Since the $\R^k$ action descends to the factor $\tilde{Y}$, it follows that $Y_{t+s} = s\cdot Y_t$, and for any $s \in \mathbb{T}^\ell$, $B = \set{t \in \R^\ell : t \cdot Y_s = Y_s}$. Then $B$ is a closed subgroup of $\R^\ell$ containing $\Z^\ell$, and $\R^\ell / B$ is a factor of $Y$. The factor map is given by sending $Y_s$ to $s \pmod B$. Now, fix a subgroup $V$ transverse to $B^\circ$, the connected component of $B$ in $\R^\ell$.  Then by construction, $Y = \R^\ell \cdot Y_0 = V \cdot Y_0$, and the action of $V$ on $Y$ is locally free, since it must locally permute the fibers. Therefore, the projection map $Y \to \R^\ell / B$  is a submersion, since it takes the tangent space of the $V$-orbits onto the tangent space to $\R^\ell / B$. This implies that $Y_0$, which is the inverse image of 0 under $Y \to \R^\ell / B$, is a smooth submanifold, and $\pi|_{X_0} :  X_0 \to Y_0$ is a $C^r$ submersion.

Now, fix $a \in \R^k\times \Z^\ell$ which is Anosov, so that $TX = E^s_a \oplus \mc O \oplus E^u_a$, where $\mc O$ is the orbit foliation of the $\R^k$-action.  Since $\pi|_{X_0}$ is a submersion, 
\begin{equation}\label{eq:factor-splitting}
TY_0 = \pi_*(E^s_a) + \pi_*(\mc O) + \pi_*(E^u_a).\end{equation}
 Since $a$ desends to a map on $Y_0$, under the corresponding induced map, it follows that nonzero vectors of $\pi_*(E^s_a)$ contract exponentially in the future, nonzero vectors of $\pi_*(\mc O)$ stay bounded and nonzero vectors of $\pi_*(E^u_a)$ contract exponentially in the past. In particular, the sum on $TY_0$ is direct.

Consider $H_0 = p(\R^k \times \Z^\ell) \subset \R \times C$. We now show that  $H_0$ is closed. Indeed, notice that $H_0$ preserves $Y_0$, so it follows that $H := \overline{H_0}$ preserves $Y_0$. Let $\mc H$ be the subbundle   of $TY_0$ tangent to the $H^\circ$-orbits. Since $H$ commutes with the $A$-action, $\mc H$ is invariant under the induced action by $a$ and its vectors stay bounded under its iterates. Since the decomposition \eqref{eq:factor-splitting} is direct, this implies that $\mc H \subset \pi_*(\mc O)$, and hence that the $H^\circ$-orbits are contained in the $p(\R^k)$-orbits. Since $p(\R^k) \subset H_0$, the $H^\circ$-orbits are contained in $H_0$ orbits, so $H_0$ is closed in $\R \times C$.

Finally, we claim that $H_0$ is either $\Z \times C'$ or $\R \times C'$ for some compact group $C'$. Indeed, the only other possibility for a closed subgroup of $\R \times C$ is that $H_0 = C'$ for some compact group $C'$. Since the action of $H_0$ must have a dense orbit, it follows that it must be transitive (in the group action sense). This implies that $\tilde{Y}$ is a Kronecker factor as well, which is a contradiction. Therefore, $Y_0$ is a non-Kronecker factor of $X_0$.

\end{proof}

The following can be thought of as a converse of Lemma \ref{lem:susp}, allowing us to claim that the existence of Kronecker rank one factors is equivalent to being conjugate to the suspension of an action with larger discrete component of the acting group. We do not use this result directly, but provide it to add additional structure to actions with Kronecker factors.

\begin{lemma}
Let $\R^k \times \Z^\ell \curvearrowright X$ be a $C^r$ cone transitive, totally Cartan action for $r = (1,\theta)$ or $\infty$, and $\pi : X \to \mathbb{T}$ be a $C^r$ Kronecker rank one factor. Then there exists a $C^r$ transitive, totally Cartan action $\R^{k-1} \times \Z^{\ell+1} \curvearrowright Y$ which is $C^r$-conjugate to the action of $A = \set{a \in \R^k \times \Z^\ell : a\cdot \pi^{-1}(0) = \pi^{-1}(0)}$ on $\pi^{-1}(0)$.
\end{lemma}

\begin{proof}
Consider the set $Y = \pi^{-1}(0) \subset X$. Since $\pi : X \to \mathbb{T}$ is a factor of the action, there is an associated homomorphism $\sigma : \R^k \to \R$ such that $\pi(a \cdot x) = \sigma(a) + \pi(x)$. If $X$ is the generator of a one-parameter subgroup transverse to $\ker \sigma$, we may conclude that $\pi_*X$ is nowhere vanishing, so $\pi$ is a submersion. Hence, $\pi^{-1}(0)$ is a $C^r$ submanifold. Furthermore, there exists a periodic $\R^k \times \Z^\ell$ orbit by the Anosov Closing Lemma (one must first pass to a suspension, see Theorem \ref{thm:anosov-closing}), so there exists a point $p \in X$ such that a finite index subgroup of $\Z^\ell$ fixes $p$. This implies that a finite index subgroup of $\Z^\ell$ fixes $\pi^{-1}(0)$, and hence $A$ is isomorphic to $\R^{k-1} \times \Z^{\ell+1}$.
Finally, it is clear the the action of $A$ on $\pi^{-1}(0)$ must be cone transitive, and totally Cartan. 
\end{proof}

\
\subsection{Journ\'{e}'s Theorem}

High regularity of conjugacies is a key feature of the rigidity for higher rank hyperbolic actions. We will later see that certain continuous rank one factors of such actions also enjoy high regularity. One of the key ingredients in proving the upgraded regularity is the following theorem of Journ\'{e}:

\begin{theorem}[\cite{journe88}]
\label{thm:journe}
Let $M$ be a smooth manifold, $q \ge 1$ be an integer, and $\theta \in (0,1)$. If $\mc F_1$ and $\mc F_2$ are two transverse foliations of $M$ with uniformly $C^{q,\theta}$ leaves, and $f : M \to \R$ is a function such that the restriction of $f$ to each leaf of $\mc F_1$ and $\mc F_2$ is uniformly $C^{q,\theta}$, then $f$ is $C^{q,\theta}$. 
\end{theorem}

\begin{remark}
While the main theorem of \cite{journe88} is stated for the case of $q = \infty$,  the intermediate regularity cases are also treated in the proof.
\end{remark}

\section{Rigidity of the derivative cocycle}
\label{sec:closing}


Throughout this section, we fix some arbitrary Riemannian metric on $X$, $\abs{\cdot}$.

\subsection{Equicontinuity of Lyapunov hyperplane actions}

The following is not stated explicitly in \cite{KaSp04}, but follows from equation (1) and the related discussion in the proof of Proposition 3.1 of that paper. The proof of the proposition does {\it not} require the assumptions of the main theorem of that paper, but only the totally Cartan assumption.  Notably, one {\it does} require one-dimensionality of the foliations at a crucial part of the argument.

\begin{proposition}[Kalinin-Spatzier, \cite{KaSp04}]
\label{lem:kalinin-lem}
 Fix a totally Cartan action $\R^k \curvearrowright X$.
Let $W^\alpha$ be a coarse Lyapunov foliation with Lyapunov hyperplane $H$. If $a \in H$ does not belong to any other Lyapunov hyperplane, then for every sufficiently small $\beta > 0$ (depending on the H\"older regularity of the coarse Lyapunov foliations), there exists $\ve_0 > 0$ and $T \ge 0$ such that if $t \ge T$ and $x \in X$ satisfy $d(x,(ta) \cdot x) < \ve_0$, then
\begin{equation}
\label{eq:kasp1}
\big|\norm{(ta)_*|_{TW^\alpha}(x)} - 1\big| < d(x,(ta) \cdot x)^\beta.
\end{equation}
\end{proposition}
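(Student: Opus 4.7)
\medskip

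\noindent\textbf{Proof proposal.} The plan is to reduce the desired estimate to an exact computation at a nearby periodic point, obtained via the Anosov closing lemma (Theorem \ref{thm:anosov-closing}), and then transfer the estimate back to $x$ using the H\"older regularity of the derivative cocycle on the one-dimensional coarse Lyapunov foliation $W^\alpha$. The hypothesis that $a \in H = \ker\alpha$ and $a$ is far from the other Lyapunov hyperplanes is used to ensure both that the closing lemma returns a point with a well-defined periodic element $a'$ close to $a$, and that the sign of the derivative on $W^\alpha$ remains constant across a neighborhood of $a$ in $\R^k$.

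First, I would apply the Anosov closing lemma to the near-return $d(x,a\cdot x) < \ve_0$: this produces $p \in X$ and $a' \in \R^k$ such that $a'\cdot p = p$, with
\[ d(x,p) \le C\, d(x,a\cdot x) \quad \text{and} \quad \|a'-a\| \le C\, d(x,a\cdot x). \]
Since $a$ is bounded away from every Lyapunov hyperplane other than $H$, so is $a'$ once $\ve_0$ is small, and the sign of the derivative of $a'$ on $T W^\alpha$ is the same as that of $a$ on $T W^\alpha$. Because $W^\alpha$ is one-dimensional and invariant under $a'$, the derivative $a'_*|_{T W^\alpha(p)}$ is a scalar $\lambda$; iterating $(a')^n$ at $p$, the defining property of the Lyapunov hyperplane from Lemma \ref{lem:lyap-hyp} forces
\[ \log|\lambda| \;=\; \lim_{n\to\infty}\frac{1}{n}\log\bigl\|((a')^n)_*|_{T W^\alpha}(p)\bigr\| \;=\; \alpha(a'). \]
Thus $|a'_*|_{T W^\alpha}(p)| = e^{\alpha(a')}$ exactly. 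Since $\alpha(a)=0$, linearity gives $|\alpha(a')| = |\alpha(a'-a)| \le \|\alpha\|\cdot C\, d(x,a\cdot x)$, whence $\bigl|\,|a'_*|_{T W^\alpha}(p)| - 1\,\bigr| \lesssim d(x,a\cdot x)$.

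To complete the proof, I would transfer from $(a',p)$ back to $(a,x)$ by two successive estimates. First, the derivative cocycle $(a,y)\mapsto a_*|_{T W^\alpha}(y)$ is smooth in $a$, so $\bigl|\,|a'_*|_{T W^\alpha}(p)| - |a_*|_{T W^\alpha}(p)|\,\bigr| \lesssim \|a-a'\| \lesssim d(x,a\cdot x)$. Second, $y \mapsto a_*|_{T W^\alpha}(y)$ is H\"older continuous in $y$ with some exponent depending on the H\"older regularity of the coarse Lyapunov distribution $T W^\alpha$: this uses that $T W^\alpha$ is a H\"older distribution (Corollary \ref{cor:coarse-lyapunov}) and that $a$ is $C^2$. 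Choosing $\beta$ less than this exponent gives
\[ \bigl|\,|a_*|_{T W^\alpha}(p)| - |a_*|_{T W^\alpha}(x)|\,\bigr| \;\lesssim\; d(x,p)^{\beta} \;\lesssim\; d(x,a\cdot x)^\beta. \]
Combining the three estimates yields $\bigl|\,|a_*|_{T W^\alpha}(x)| - 1\,\bigr| \lesssim d(x,a\cdot x)^\beta$, and shrinking $\ve_0$ absorbs the implicit constant.

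The hard part is the H\"older bound on $y\mapsto a_*|_{T W^\alpha}(y)$. The distribution $T W^\alpha$ is only H\"older transverse to the leaves, and the relevant H\"older exponent depends on the spectral gap between Lyapunov exponents, which is why $\beta$ must be taken sufficiently small depending on the H\"older regularity of the coarse Lyapunov distributions. A secondary subtlety is the sign of $a_*|_{T W^\alpha}$ (in the absence of Lyapunov orientability one would work with the absolute value of the derivative throughout, or pass to a finite cover as in Definition \ref{Cartan action}); the assumption that $a$ is far from the other Lyapunov hyperplanes guarantees that the sign is locally constant in $a$ so that the closing step produces a periodic element $a'$ with the same sign behavior as $a$ on $T W^\alpha$.
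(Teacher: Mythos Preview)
The paper does not prove this proposition in detail; it refers to the proof of Proposition~3.1 in \cite{KaSp04}, noting only that the closing lemma (Theorem~\ref{thm:anosov-closing}) and a low-regularity normal forms result (Theorem~\ref{thm:normal-forms}) replace the corresponding ingredients there. Your outline --- close up the orbit, compute exactly at the periodic point, transfer back --- is indeed the skeleton of that argument, and steps~1--3 are essentially right (with two small caveats: $a\in H$ is not Anosov, so the closing lemma is applied via a nearby regular element, available since $a$ is assumed far from the other hyperplanes; and the functional governing $\log|\lambda|$ at $p$ is the periodic-orbit Lyapunov functional $\lambda_p$, not a universal $\alpha$, but $\ker\lambda_p=H$ and $\|\lambda_p\|$ is uniformly bounded, so nothing changes).

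The genuine gap is step~4. The H\"older constant of $y\mapsto \|a_*|_{TW^\alpha}(y)\|$ is \emph{not} uniform in $a$: writing $a=Nb$ for a unit $b\in H$, one has $\log\|a_*|_{TW^\alpha}(y)\|=\sum_{i=0}^{N-1}\varphi\bigl((ib)\cdot y\bigr)$ with $\varphi=\log\|b_*|_{TW^\alpha}\|$, and the H\"older constant of this Birkhoff sum is at least of order $N$ and generically exponential in $N$, since iterating $b$ spreads nearby points. The proposition is invoked precisely for arbitrarily long $a$ (that is how near-returns arise, and how Lemma~\ref{lem:uniformly bounded derivative} uses it), so an $a$-dependent constant cannot be absorbed into $\ve_0$. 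What the argument in \cite{KaSp04} actually does is use the \emph{exponential shadowing} produced by the closing lemma --- $d\bigl((sa)\cdot x,(sa)\cdot p\bigr)\lesssim \ve_0\,e^{-\lambda\min(s,1-s)\|a\|}$, visible in the proof of Theorem~\ref{thm:anosov-closing} --- and compare the Birkhoff sums termwise:
\[
\Bigl|\sum_{i=0}^{N-1}\bigl[\varphi((ib)\cdot x)-\varphi((ib)\cdot p)\bigr]\Bigr|
\;\le\; C_\varphi \sum_{i=0}^{N-1} d\bigl((ib)\cdot x,(ib)\cdot p\bigr)^\beta
\;\lesssim\; C_\varphi\,\ve_0^\beta \sum_{i} e^{-\lambda\beta\min(i,N-i)}
\;\lesssim\; \ve_0^\beta,
\]
with $C_\varphi$ the H\"older constant of the \emph{unit-time} cocycle, which is fixed. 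This Liv\v{s}ic-type summation is the ``dynamical estimate involving H\"older regularity of distributions'' the paper alludes to, and it is what your transfer step is missing.
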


\begin{proof}[Outline of proof]
We consider a point $x$ which returns to a very small neighborhood of itself as in the statement of the proposition. One proceeds as in the proof of the Livsic theorem, by finding an associated periodic orbit where the derivative is controlled. Unfortunately, the standard higher-rank Anosov closing lemma (Theorem \ref{thm:anosov-closing}) requires hyperbolicity of the return, which is incompatible with the desired conclusion. However, in analyzing the proof of Theorem \ref{thm:anosov-closing}, one may find exactly how much hyperbolicity is needed. The main idea of the proof is to assume that \eqref{eq:kasp1} doesn't hold, and arrive at a contradiction.  Indeed, with sufficient contraction on the coarse Lyapunov distribution $E^\alpha$, one may close the orbit modulo possibly the manifold $W^{-\alpha}$ and some small $\delta \in \R^k$ whose smallness is controlled by $d(x,(ta)\cdot x)$ with H\"older estimates. If \eqref{eq:kasp1} fails, then by possibly looking at the inverse and reversing the roles of $x$ and $(ta) \cdot x$, we get that 

\begin{equation*}
\norm{(ta)_*|_{TW^\alpha(x)}} < 1 - d(x,(ta)\cdot x)^\beta,
\end{equation*}
 which for sufficiently small $\beta$ gives exactly the required hyperbolicity to build a more nuanced closing lemma. See Proposition 4.1 of \cite{KaSp04}.  Notice that this requires one-dimensionality of $W^\alpha$. 
In general, with multi-dimensional coarse Lyapunov foliations, the condition that $\norm{(ta)_*} > 1 + d(x,(ta)\cdot x)^\beta$ does not give any estimates on the opposite direction.

 Now, choose any $c \in \R^k$ which is Anosov and $\alpha(a) > 0$. Then pick a sequence of times $s_n$ such that $(s_nc) \cdot x$ converges to some point $y$. Since $x$ returns to its own $W^{-\alpha}$-leaf under $ta+\delta$, $(ta+\delta) \cdot y = y$. Thus $y$ is fixed by $ta +\delta$, and $(ta+\delta)_*|_{TW^\alpha(y)} =1$, so $(ta)_*|_{TW^\alpha(y)}$ is very close to 1 (where the closeness is determined by $\norm{\delta}$, which has H\"older estimates). By using Livsic-type arguments, one may further show that the derivative of $ta+\delta$ along $E^\alpha$ at the original point is also close to 1 with H\"older estimates (notice we also require one-dimensionality here, as we rely on the derivative of $a$ along $E^\alpha$ being an abelian cocycle). Thus we recover the original estimate after all.

The arguments of \cite{KaSp04} work verbatim, with the exception of a normal forms result (Lemma 4.4 of \cite{KaSp04}) which needs a low regularity version (Theorem \ref{thm:normal-forms}). One may also notice that a reference to a paper of Qian \cite{qian94} can be replaced with our proof of the closing lemma (Theorem \ref{thm:anosov-closing}).
\end{proof}





Proposition \ref{lem:kalinin-lem} combined with the following lemma are the main inputs for the Lemma \ref{lem:uniformly bounded derivative}, which plays a crucial role in Parts III and V of this paper. It establishes equicontinuity of hyperplane actions along their corresponding coarse Lyapunov foliations.

\begin{lemma}
\label{lem:bounded-cocycle}
Let $X$ be a compact metric space $X$, $f : X \to X$ be a homeomorphism, and $\varphi : X \to \R$ be continuous. 
Assume there exists some $\delta,\ve > 0$ such that  if $y \in X$ and $n \in \N$ satisfy $d(f^n(y),y) < \delta$, then

\begin{equation}
\label{eq:f-bdd-der}
\sum_{i=0}^{n-1} \varphi(f^i(y)) < \ve.
\end{equation}
 Then there exists $S  = S(\delta)$ such that  for every $x \in X$ and ${ N} \in \N$, $\sum_{i=0}^{N-1} \varphi(f^i(x)) \le S  \cdot(\ve + \sup \abs{\varphi})$.
\end{lemma}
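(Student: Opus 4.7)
My plan is to decompose an arbitrary orbit segment into a bounded number of pieces on which the Birkhoff sum is controlled, with the total bound depending only on $\delta$, $\ve$, and $M := \sup_X |\varphi|$.

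First, use compactness of $X$ to fix a finite cover of $X$ by open balls $B_1, \ldots, B_k$ of radius $\delta/2$; note any two points in a common $B_i$ have distance $< \delta$. Given $x \in X$ and $N \ge 1$, I will construct a greedy \emph{maximum-return} partition of $\{0, 1, \ldots, N-1\}$: set $s_0 = 0$, and at each stage having defined $s_l \le N-1$, let
\[ s_l' \;=\; \max\{t \in \{s_l, \ldots, N-1\} : d(f^t x, f^{s_l} x) < \delta\}, \]
which exists since $t = s_l$ is always in the set. If $s_l' > s_l$ (an \emph{arc step}) I set $s_{l+1} = s_l'$; the hypothesis applied to the point $f^{s_l}x$ and exponent $s_l' - s_l$ then yields $\sum_{i=s_l}^{s_l'-1} \varphi(f^i x) < \ve$. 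If $s_l' = s_l$ (a \emph{singleton step}) I set $s_{l+1} = s_l + 1$, incurring a contribution of at most $M$. The process terminates once $s_{l+1} \ge N$, producing a partition of $\{0, \ldots, N-1\}$ into arcs and singletons.

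The heart of the argument is bounding the total number $p$ of pieces. The defining maximality of $s_l'$ says that $d(f^t x, f^{s_l} x) \ge \delta$ for every $t$ with $s_l' < t \le N-1$, and one checks in both cases (arc or singleton) that $s_j > s_l'$ whenever $j \ge l + 2$. Hence non-adjacent return points satisfy $d(f^{s_i} x, f^{s_j} x) \ge \delta$, so both the even-indexed subsequence $s_0, s_2, s_4, \ldots$ and the odd-indexed one consist of pairwise $\delta$-separated orbit points. Since two points in a common ball $B_i$ have distance $< \delta$, each such subsequence contains at most $k$ elements, giving $p \le 2k$. Combining,
\[ \sum_{i=0}^{N-1} \varphi(f^i x) \;\le\; p \cdot \max(\ve, M) \;\le\; 2k(\ve + M) \;=:\; A, \]
uniformly in $x$ and $N$.

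I expect the main subtlety to lie in this greedy construction: the $\delta$-separation of non-adjacent $s_l$'s uses essentially that one jumps to the \emph{latest} close return at each step rather than, say, the nearest one. With the naive choice, one still partitions the orbit into arcs each of sum $< \ve$, but gets no control on how many arcs appear, so naive recursions blow up. The other technicality is handling singleton steps cleanly, but since each contributes only $M$ and the same pigeonhole bounds their count, they fold into the argument without difficulty.
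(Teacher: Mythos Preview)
Your proof is correct and follows essentially the same strategy as the paper's: a greedy maximum-return decomposition of the orbit segment, with the number of pieces bounded via a pigeonhole argument on $\delta$-separated orbit points. The paper's construction is slightly different in bookkeeping---it alternates between ``first time with a future close return'' ($n_i$) and ``last such return'' ($m_i$), bounding separately the total gap length and the number of arcs---while your single-step greedy construction with the even/odd separation trick is a clean streamlining of the same idea, yielding the same bound $A = 2k(\ve + M)$ up to the constant.
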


\begin{proof}
Fix  $N \in \N$. We define a sequence of times $0 = m_0 < n_1 < m_1 < n_2 < m_2 < \dots$ depending on $N$, and which will terminate when either some $n_k= N$ or $m_k = N$. Define:

\[ n_1 = \inf \left(\set{ n > 0 : \exists m \mbox{ such that } n < m \le N \mbox{ and } d(f^n(x),f^m(x)) < \delta } \cup \set{N}\right). \]

Notice that if $0 \le m < n_1$ and $n \in [0,N]\setminus \set{m}$, then $B(f^n(x),\delta/2) \cap B(f^m(x),\delta/2) = \emptyset$. If $n_1 = N$, we terminate the sequence, otherwise we set

\[ m_1 = \sup \set{ n_1 < m \le N : d(f^{n_1}(x),f^m(x)) < \delta }. \]

As $n_1 \not= N$, then $m_1$ is well defined by definition, and at most $N$. Inductively, we define sequences $n_i$ and $m_i$ using the following (terminating when either $m_i$ or $n_i = N$):

\[ n_{i+1} = \inf \left(\set{ m_i < n  : \exists m > n \mbox{ such that } d(f^n(x),f^m(x)) < \delta } \cup \set{N} \right), \mbox{ and} \]

\[ m_{i+1} = \sup \set{n_{i+1} < m \le N : d(f^{n_{i+1}}{ (x)},f^m(x)) < \delta }. \]

Notice that, similarly to the observation after the definition of $n_1$, if $\ell_1,\ell_2 \in \bigcup_i [m_i,n_{i+1})$, then $B(f^{\ell_1}(x),\delta/2) \cap B(f^{\ell_2}(x),\delta/2) = \emptyset$. Since $X$ is compact, the cardinality of a $\delta/2$-separated set is bounded by some $S$. Therefore, $\sum n_{i+1} - m_i < S$.

We also claim that $\set{f^{n_i}(x)}$ is $\delta/2$-separated. Indeed, if $B(f^{n_i}(x),\delta/2) \cap B(f^{n_j}(x),\delta/2) \not=\emptyset$, then $d(f^{n_i}(x),f^{n_j}(x)) < \delta$. Without loss of generality assume $i < j$. This contradicts the choice of $m_{i+1}$, since $n_i < m_{i+1} < n_j$. Therefore, there are at most $S$ intervals of the form $[n_i,m_i]$.

We break the Birkhoff sum over the first $N$ iterates into intervals $(m_i,n_{i+1})$ and $[n_i,m_i]$. Then we get that:

\[ \sum_{i=0}^{N-1} \varphi(f^i(x)) \le S(\ve +  \sup \abs{\varphi}) \]
where we bound each sum $\sum_{j = n_i}^{m_i} \varphi(f^j(x))$ using \eqref{eq:f-bdd-der}.
\end{proof}

\begin{remark}
One may notice some similarities between our proof and a proof of the Birkhoff ergodic theorem. This is perhaps not surprising, since one obtains that the Birkhoff average of $\varphi$ tends to 0 at least linearly as a consequence of the statement, proving the Birkhoff ergodic theorem for any invariant measure of $f$. One may also notice that the output of Lemma \ref{lem:bounded-cocycle} is one of the assumptions of the Gottschalk-Hedlund theorem, together with the assumption that the system is minimal. While in our application, the map $f$ will not be minimal, we note that this allows for a weakening of the uniform boundedness assumption  in the Gottschalk-Hedlund theorem.
\end{remark}

\bl \label{lem:uniformly bounded derivative}
Let $\R^k \curvearrowright X$ be a $C^{1,\theta}$,   totally Cartan action and $\mc W$ be  a coarse Lyapunov foliation with coarse Lyapunov hyperplane $H$. 
Then there exists $A>0$ such that for  all $h \in H$ the  derivatives $\|  {h_*} |_{T\mc W}\| <A$ are uniformly bounded on $X$  independent of the base point and $h \in H$.  
\el

\begin{proof}
It suffices to show this for one-parameter subgroups as finitely many one-parameter subgroups generate $H$. We may furthermore assume that the one-parameter subgroups are far from the other Weyl chamber walls, so that Proposition \ref{lem:kalinin-lem} applies.   Fix any $e \not= h \in H$ not belonging to any other Lyapunov hyperplane. Notice that by fixing $\ve > 0$, one obtains \eqref{eq:f-bdd-der} for $\log \norm{h_*|_{T\mc W}}$ from Proposition \ref{lem:kalinin-lem}, using $\delta = \ve_0$ and $\ve = {\ve_0}^\beta$. Therefore, we may conclude that $\log \norm{(nh)_*|_{T \mc W}}$ is uniformly bounded for all $n \in \Z$ by Lemma \ref{lem:bounded-cocycle}. Since $\log\norm{(th)_*|_{T\mc W}}$ is uniformly bounded for $t \in [0,1]$ and $\R = \Z + [0,1]$, we get that $\log \norm{(th)_*|_{T \mc W}}$ is uniformly bounded for all $t \in \R$, as claimed.
\end{proof}

 We assume that each coarse Lyapunov foliation $W^\alpha$ is oriented by first passing to double-covers as necessary, and fixing an orientation on each such foliation. Notice that since $\R^k$ is a connected group, every element must automatically perserve the orientation of $W^\alpha$. Recall the normal forms for contracting foliations described in Section \ref{sec:normal-forms}, and let $\psi_x^\alpha$ be the normal forms chart for $W^\alpha(x)$ at $x$ provided by Theorem \ref{thm:normal-forms-foliation}.

\begin{lemma}
\label{lem:identity-return}
 Let $\R^k \curvearrowright X$ be a $C^{1,\theta}$ totally Cartan action. Suppose that $x \in X$, $W^\alpha$ is a coarse Lyapunov foliation, and $H$ is the corresponding Lyapunov hyperplane. Fix any Riemannian metric on $X$. Then

\begin{enumerate}
\item[(a)] if $b_n$ is a sequence of elements in $H$ such that $b_n \cdot x \to x$, and $y \in W^\alpha(x)$, then $b_n \cdot y \to y$, and
\item[(b)] if $b_n$ is a sequence of elements in $H$ such that $b_n \cdot x \to x' \in W^\alpha(x)$, then after passing to a subsequence, $(\psi_x^\alpha)^{-1} \of \lim { b_n|_{W^\alpha(x)}} \of \psi_x^\alpha$ is a translation  (as a map from $\R$ to itself). 
\end{enumerate}
\end{lemma}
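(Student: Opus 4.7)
The plan is to use normal form coordinates (Theorem~\ref{thm:normal-forms}) together with the uniform derivative bound from Lemma~\ref{lem:uniformly bounded derivative}. Writing the normal form equation $b_n(\psi_x^\alpha(t)) = \psi_{b_n \cdot x}^\alpha(\lambda_n t)$ with $\lambda_n := b_n'(x)$, Lemma~\ref{lem:uniformly bounded derivative} gives $\lambda_n \in [A^{-1},A]$ uniformly in $n$. After passing to a subsequence I assume $\lambda_n \to \lambda \in [A^{-1}, A]$. H\"older continuity of normal forms in the $C^r$-topology (Theorem~\ref{thm:normal-forms}(a)) combined with the convergence of $b_n \cdot x$ yields $\psi_{b_n x}^\alpha \to \psi_{x_\infty}^\alpha$ in $C^r$ on compact sets (where $x_\infty = x$ in (a) and $x_\infty = x'$ in (b)), so $b_n \cdot \psi_x^\alpha(t) \to \psi_{x_\infty}^\alpha(\lambda t)$ uniformly on compacta.

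For part (a), where $x_\infty = x$, the goal is to show $\lambda = 1$. Each power $b_n^k$ remains in $H$, so $(b_n^k)'(x) \in [A^{-1}, A]$ uniformly in $n$ and $k \in \Z$. An induction on $k$, using the $C^r$-convergence of $b_n$ on $W^\alpha(x)$ promoted to uniform $C^0$-control in a tubular neighborhood via the local product structure of the coarse Lyapunov foliations, shows $b_n^i x \to x$ for each fixed $i$. The chain rule in normal form coordinates then gives $(b_n^k)'(x) = \prod_{i=0}^{k-1} b_n'(b_n^i x) \to \lambda^k$, and the uniform bound forces $\lambda^k \in [A^{-1}, A]$ for all $k \in \Z$, hence $\lambda = 1$. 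Therefore $b_n \cdot y \to y$ for every $y \in W^\alpha(x)$.

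For part (b), a direct verification via the defining functional equation of the normal form shows that for any $y \in W^\alpha(x)$ with $c := (\psi_x^\alpha)^{-1}(y)$ and $\sigma := (\psi_x^\alpha)'(c)$, the reparameterization $t \mapsto \psi_x^\alpha(t/\sigma + c)$ satisfies the normalization and linearization property at $y$; by uniqueness (Theorem~\ref{thm:normal-forms}(b)) it equals $\psi_y^\alpha$. Applied with $y = x'$, this yields $(\psi_x^\alpha)^{-1} \of b \of \psi_x^\alpha (t) = (\lambda/\sigma)\, t + c$, an affine map. Iterating this affine map and applying the normal form chain rule gives $(b^k)'(x) = (\lambda/\sigma)^k \cdot (\psi_x^\alpha)'(c_k)$, where $c_k$ is the source-chart coordinate of $b^k x$; combined with the uniform bound $(b_n^k)'(x) \in [A^{-1},A]$ carried into the limit, this forces $\lambda/\sigma = 1$, so the limit is the translation $t \mapsto t + c$.

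The main technical obstacle in both parts is carrying out the iteration rigorously: the diffeomorphisms $b_n$ are uniformly Lipschitz only along $W^\alpha$, not on all of $X$, so showing $b_n^k x \to x$ in (a) and controlling the factors $(\psi_x^\alpha)'(c_k)$ along the bounded trajectory in (b) requires careful use of the local product structure of the coarse Lyapunov foliations to propagate the $C^r$-convergence of normal forms into a full tubular neighborhood of each iterated point.
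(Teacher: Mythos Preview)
Your approach has a genuine gap in the iteration step. You want to conclude $b_n^i x \to x$ for each fixed $i$, but this does not follow from $b_n x \to x$. The elements $b_n \in H = \ker\alpha$ are bounded only along $E^\alpha$; along every other coarse Lyapunov direction $E^\beta$ with $\beta|_H \not\equiv 0$, the derivative of $b_n$ can be arbitrarily large or small. In particular the family $\{b_n\}$ is \emph{not} equicontinuous on any open subset of $X$, so the fact that $b_n x$ is close to $x$ gives no control over $b_n(b_n x)$. The appeal to local product structure does not help: that structure provides a transverse splitting of a neighborhood into $W^\alpha$ and complementary directions, but $b_n$ moves points along those complementary directions by uncontrolled amounts. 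The same issue blocks your chain-rule identity $(b_n^k)'(x) \to \lambda^k$: even if one somehow had $b_n^i x \to x$, you would still need $b_n'(b_n^i x) \to \lambda$, and no equicontinuity of the functions $z \mapsto b_n'(z)$ is available. For part (b) the obstruction is the same, since $b_n$ does not map $W^\alpha(x)$ into itself.

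The paper bypasses iteration entirely. For (a) it invokes Proposition~\ref{lem:kalinin-lem}, which gives the quantitative estimate $\big|\norm{(b_n)_*|_{TW^\alpha}(x)} - 1\big| < d(x, b_n x)^\beta$; since $d(x, b_n x) \to 0$ this forces $\lambda_n \to 1$ directly, and then continuity of the normal-form charts finishes the argument. For (b) the paper first observes (as you do) that the limit map $f$ is affine in the chart $\psi_x^\alpha$, say $f(t)=mt+c$; if $m\neq 1$ then $f$ has a unique fixed point $z = \psi_x^\alpha(t_0)\in W^\alpha(x)$, meaning $b_n z \to z$. Part (a) applied at the base point $z$ then forces the limit to be the identity on all of $W^\alpha(z)=W^\alpha(x)$, contradicting $m\neq 1$. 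So $m=1$ and $f$ is a translation. This fixed-point trick completely avoids the problematic iteration.
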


\begin{proof}
For (a), according to Proposition \ref{lem:kalinin-lem}, the derivative of $b_n$ restricted to the $W^\alpha$-foliation must converge to 1. Therefore, by Theorem \ref{thm:normal-forms}, the corresponding linear map describing the normal forms must converge to $\id$. Since the normal forms and leaves of $W^\alpha$ vary continuously, we get that $b_n \cdot y \to y$ for all $y \in W^\alpha(x)$.

For (b), notice that by Lemma \ref{lem:uniformly bounded derivative}, the maps $(\psi_{b_n\cdot x}^\alpha)^{-1} \of b_n \of \psi_x^\alpha$ converge to a linear map $f_0 : \R \to \R$ such that $\lim b_n\cdot \psi_x^{\alpha}(t) = \psi_{x'}^\alpha(f_0(t))$. Since the normal forms charts at $x$ and $x'$ are related by an affine transformation, we conclude that $f(t) =  (\psi_x^\alpha)^{-1} \of \lim b_n \of \psi_x^\alpha(t)$ is affine. Write $f(t) = mx + b$, and suppose that $m \not= 1$. Then $f$ has a unique fixed point on $\R$, call it $z$. By definition, { $b_n \cdot \psi_x^\alpha(z) \to \psi_x^\alpha(z)$} and by part (a), this implies that $f = \id$, so $m = 1$. Since we assumed that $m \not= 1$, this is a contradiction, and $f$ is a translation  since it preserves the orientation on $W^\alpha$.
\end{proof}

\begin{remark}
\label{rem:top-cart-ok}
Throughout the remainder of Section \ref{sec:closing}, we will prove several features of totally Cartan actions. When the action has no rank one factor, we will see that Lemma \ref{lem:uniformly bounded derivative} can be strengthened to yield isometric behavior. We axiomatize the features of such actions later in Definition \ref{def:top-cartan}, which gives a remarkable rigidity result in the $C^0$ category. The results below only use the axioms listed there, and we invite the reader to check this while reading to prevent the need to read twice.
\end{remark}

\subsection{Circular orderings}

 Let $\R^k \curvearrowright X$ be a totally Cartan, cone transitive $C^r$ action, with $r = (1,\theta)$ or $r = \infty$ (alternatively, a topological Cartan action as described in Definition \ref{def:top-cartan}). Fix an Anosov element $a \in \R^k$, then let $\Delta^-(a) = \set{ \beta \in \Delta : \beta(a) < 0}$, and $\Phi \subset \Delta^-(a)$ be a subset. 
We introduce  a (not necessarily unique) order on the set $\Phi$. Choose $\R^2 \cong V \subset \R^k$ which contains $a$ and for which $\gamma_1|_V$ is proportional to $\gamma_2|_V$ if and only if $\gamma_1$ is proportional to $\gamma_2$ for all $\gamma_1,\gamma_2 \in \Phi$ (such choices of $V$ are open and dense). Fix some nonzero $\chi \in V^*$ such that $\chi(a) = 0$ ($\chi$ is not necessarily a weight). Then $\beta|_V \in V^* \cong \R^2$ for every $\beta \in \Phi$ and $\Phi|_V = \set{\beta|_V : \beta \in \Phi}$ is contained completely on one side of the line spanned by $\chi$.

\begin{definition}
\label{def:circular-ordering}
The ordering $\beta < \gamma$ if and only if $\angle(\chi,\beta|_V) < \angle(\chi,\gamma|_V)$ is called the {\normalfont circular ordering of $\Phi$ (induced by $\chi$ and $V$)} and is a total order of $\Phi$. If the set $\Phi$ is understood, we let $|\alpha,\beta|$ denote the set of weights $\gamma \in \Phi$ such that $\alpha \le \gamma \le \beta$.
\end{definition}

\begin{remark}
We call this a circular ordering as we order the weights as they appear on the unit circle in $\R^2$, which is a circle. The order can be extended to all weights in $\Delta$, where it becomes a cyclic order. Because we restrict to a contracting submanifold, the circular order for the set of all weights  becomes a total order.
\end{remark}

 While each $\beta \in \Delta$ is only defined up to positive scalar multiple (see Remark \ref{rem:lyap-coefficients1}), this is still well-defined since the circular ordering on $\R^2$ is invariant under orientation-preserving linear maps.  Figure \ref{fig:circ-ordering} exhibits such a circular ordering, after intersecting each $\ker \chi_i$ with the generic 2-space $V$. The lines through 0 represent the kernels of the weights, and the arrows indicate the half space for which $\chi_i$ has positive evaluation.

\begin{figure}
\includegraphics[width=4in]{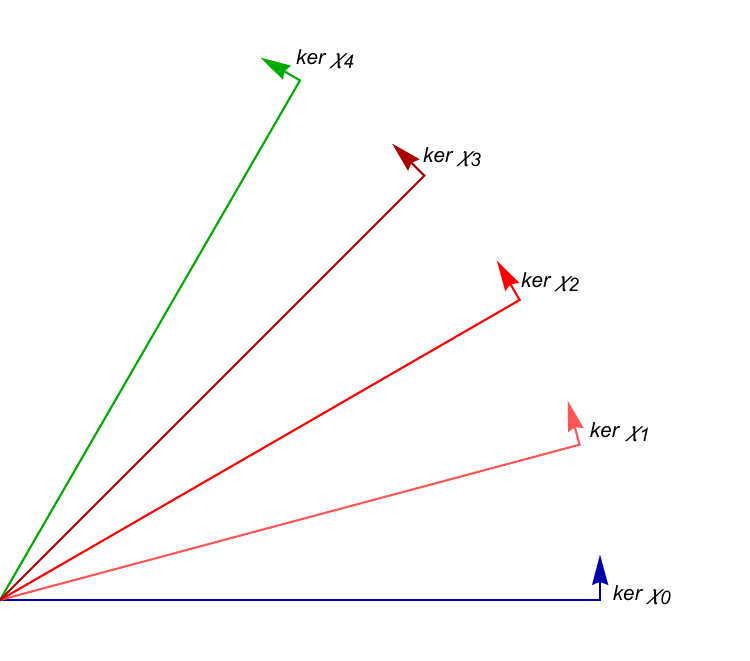}
\caption{Circular Ordering}
\label{fig:circ-ordering}
\end{figure}

\begin{definition}
\label{def:canonical-order}
If $\alpha,\beta \in \Delta$, let $D(\alpha,\beta) \subset \Delta$ (called the {\em $\alpha,\beta$-cone}) be the set of $\gamma \in \Delta$ such that $\gamma = t \alpha + s\beta$ for some $t,s \ge 0$. We may identify $D(\alpha,\beta)$ as a subset of the first quadrant of $\R^2$ by using the coordinates $(t,s)$. The {\em canonical circular ordering} on $D(\alpha,\beta)$ is the counterclockwise order in the first quadrant (where $t$ is horizontal and $s$ is vertical). 
\end{definition}

It is clear that the canonical circular ordering is itself a circular ordering by choosing $V$ in the following way: first choose $a$ and $b$ for which $\alpha(a) = -1$, $\alpha(b) = 0$, $\beta(a) = 0$ and $\beta(b) = -1$. Then perturb them to Anosov elements $a'$ and $b'$ which both contract $\alpha$ and $\beta$. Setting $V$ to be the span of $a'$ and $b'$ and $\chi$ to be any functional for which $\chi(a') = -1$ and $\chi(b') = 0$ obtains the canonical circular ordering on $D(\alpha,\beta)$. Also observe that the canonical circular ordering on $D(\alpha,\beta)$ is the opposite of the canonical circular ordering on $D(\beta,\alpha)$.

Let $a_1,\dots,a_n \in \R^k$ be Anosov elements, and $E^s_{\set{a_i}} = \bigcap_{i=1}^n E^s_{a_i}$ and $W^s_{\set{a_i}}(x) = \bigcap_{i=1}^n W^s_{a_i}(x)$. Each $W^s_{\set{a_i}}$ is a H\"older foliation with $C^r$ leaves by Lemma \ref{lem:coarse-lyapunov}. Let 

\[\Delta^-(\set{a_i}) = \set{\gamma \in \Delta : \gamma(a_i) < 0 \mbox{ for every }i}.\]

\begin{lemma}
\label{lem:root-comb}
Let $\alpha$ and $\beta$ be linearly independent weights and $W_1,\dots,W_m$ be the Weyl chambers of the $\R^k$ action such that $\alpha$ and $\beta$ are both negative on every $W_i$. Then if $a_j \in W_j$ are regular elements in each chamber, $D(\alpha,\beta) = \Delta^-(\set{a_i})$.
\end{lemma}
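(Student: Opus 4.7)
The plan is to identify $D(\alpha,\beta)$ with the dual/polar cone of the closed convex set $K = \{a \in \R^k : \alpha(a) \le 0,\ \beta(a) \le 0\}$, intersected with the discrete set $\Delta$ of weights, and to recognize $\Delta^-(\set{a_i})$ as the same object via the density of the Weyl chambers $W_j$ in $K$. The linear independence of $\alpha$ and $\beta$ ensures that $K$ has nonempty interior.

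The forward inclusion $D(\alpha,\beta) \subset \Delta^-(\set{a_i})$ is immediate: if $\gamma = t\alpha + s\beta \in \Delta$ with $t,s \ge 0$, then $\gamma \ne 0$ forces $t + s > 0$, and since $\alpha(a_j) < 0$ and $\beta(a_j) < 0$ by hypothesis on each $W_j$, we get $\gamma(a_j) = t\alpha(a_j) + s\beta(a_j) < 0$ for every $j$.

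For the reverse inclusion, suppose $\gamma \in \Delta$ satisfies $\gamma(a_j) < 0$ for all $j$. The first step will be to upgrade this pointwise inequality to $\gamma \le 0$ on all of $K$. Since the Weyl chambers are the connected components of the complement of finitely many Lyapunov hyperplanes and every weight (in particular $\alpha$, $\beta$, and $\gamma$) has constant nonzero sign on each chamber, the chambers $W_1,\dots,W_m$ are exactly the open Weyl chambers contained in $K$, and their union is dense in $K$ because the Lyapunov hyperplanes are nowhere dense in $\mathrm{int}(K)$. The assumption $\gamma(a_j) < 0$ together with constancy of signs of $\gamma$ on $W_j$ yields $\gamma < 0$ on $W_j$ for each $j$, and taking closures gives $\gamma \le 0$ on $K$.

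The second step will be to invoke Farkas' lemma: the polar cone of the polyhedral cone $K = \{a : (-\alpha)(a) \ge 0,\ (-\beta)(a) \ge 0\}$ in $(\R^k)^*$ is the nonnegative cone generated by $-\alpha$ and $-\beta$. The statement $\gamma \le 0$ on $K$ is precisely $-\gamma$ belonging to this polar cone, whence $\gamma = c_1 \alpha + c_2 \beta$ with $c_1,c_2 \ge 0$, i.e.\ $\gamma \in D(\alpha,\beta)$. The only nontrivial point is the density of $\bigcup W_j$ in $K$, which is a direct consequence of the hyperplane-arrangement description of Weyl chambers; everything else is linear algebra.
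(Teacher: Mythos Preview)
Your proof is correct. Both your argument and the paper's hinge on the same geometric fact---that the Weyl chambers on which $\alpha,\beta$ are negative fill out the interior of $K=\{a:\alpha(a)\le 0,\ \beta(a)\le 0\}$ densely---but you package it differently. You first upgrade $\gamma(a_j)<0$ to $\gamma\le 0$ on all of $K$ via density, then invoke Farkas' lemma to conclude $\gamma\in\mathrm{cone}(\alpha,\beta)$. The paper instead argues in two bare-hands steps: it first shows $\chi$ lies in the span of $\alpha,\beta$ by choosing $b$ with $\alpha(b)=\beta(b)=0$, $\chi(b)=1$ and sliding $a_j+tb$ through chambers in $\mathrm{int}(K)$ until $\chi$ changes sign; then it shows each coefficient is nonnegative by a similar ray construction with $b$ chosen so that $\alpha(b)=1$, $\beta(b)=0$. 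Your approach is cleaner and more conceptual, avoiding the case split; the paper's is more self-contained, avoiding any appeal to Farkas. The one point worth making explicit in your write-up is why the connected components of $\mathrm{int}(K)\setminus\bigcup\ker\lambda$ are genuine Weyl chambers rather than pieces of them: this is because $\ker\alpha$ and $\ker\beta$ are themselves Lyapunov hyperplanes, so no chamber can straddle $\partial K$.
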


\begin{proof}
That $D(\alpha,\beta)$ is contained in the right hand side is obvious. For the other, suppose that $\chi$ satisfies $\chi(a_j) < 0$ for every $j = 1,\dots,m$. We first show $\chi$ is a linear combination of $\alpha$ and $\beta$. Suppose that $\chi$ is linearly independent of $\alpha$ and $\beta$. Then there exists $b \in \R^k$ such that $\chi(b) = 1$ and $\alpha(b) = \beta(b) = 0$. Choose any $a_j$, and consider $a_j + tb$. Notice that any Weyl chamber this curve passes through must be one of the $W_i$, since the evaluations of $\alpha$ and $\beta$ will still be negative. But for sufficiently large $t$, $\chi(a_j + tb) > 0$. Hence there is a Weyl chamber such that $\alpha$ and $\beta$ are both negative, but $\chi$ is positive.  This contradicts the fact that $\chi$ is negative on each $W_i$, so by contradiction, $\chi$ must be linearly dependent with $\alpha$ and $\beta$.

So we may write $\chi = t\alpha + s\beta$. We must show that $t,s \ge 0$. Indeed, assume $t < 0$. Since $\alpha$ and $\beta$ are linearly independent, it follows that there exists $b \in \R^k$ such that $\alpha(b) = 1$ and $\beta(b) = 0$. Choosing any $a_j$ from the given elements, the curve $a_j - rb$, $r \in \R$ satisfies $\beta(a_j- rb) = \beta(a_j) < 0$, $\alpha(a_j - rb) = \alpha(a_j) - r < 0$ if $r \ge 0$, and $\chi(a_j - rb) = t\alpha(a_j) - tr + s\beta(a_j)$. Thus, if $r$ is sufficiently large, $\alpha$ and $\beta$ are negative but $\chi$ is positive. Hence there is a Weyl chamber on which $\alpha$ and $\beta$ are negative but $\chi$ is positive. This is a contradiction, so $t \ge 0$. The argument for $s$ is completely symmetric.
\end{proof}

\begin{lemma}
\label{lem:extending-charts}
Let $\alpha,\beta \in \Delta$ satisfy $\alpha \not= \pm \beta$ { (recall Remark \ref{rem:lyap-coefficients1})}, $\set{a_i}$ be Anosov elements such that $\alpha,\beta \in \Delta^-(\set{a_i})$ and $\gamma_1,\dots,\gamma_n$ be all weights of $\Delta^-(\set{a_i})$ strictly between $\alpha$ and $\beta$ in a circular ordering such that $\alpha < \beta$. Let $\delta \in\Delta^-(\set{a_i})$ be either the weight immediately preceding $\alpha$ or following $\beta$. Then 

\[ TW^{|\alpha,\beta|} := TW^\alpha \oplus TW^{\beta} \oplus \bigoplus_{i=1}^n TW^{\gamma_i} \qquad \mbox{and} \qquad TW^{|\alpha,\beta|} \oplus TW^\delta\]

\noindent integrate to foliations $W^{|\alpha,\beta|}$ and $W'$, respectively. Furthermore, if $\psi_x : \R^{n+2} \to W^{|\alpha,\beta|}(x)$ and $\varphi_x : \R \to W^\delta(x)$ are a continuous family of parameterizations, then the map $\Phi_x : \R^{n+2} \times \R \to W'(x)$ defined by letting $\Phi_x(u,t) = z$, where $y = \psi_x(u)$ and $z = \varphi_y(t)$, is a homeomorphism onto its image.
\end{lemma}

\begin{proof}
Notice that we may find regular elements $a_1,\dots,a_m \in \R^k$ such that $W = \bigcap_{i=1}^m W^s_{a_i}$ is a foliation such that $TW^{|\alpha,\beta|} = TW$. We may do the same thing when adding $\delta$, since by assumption it is adjacent to $\alpha$ or $\beta$ in a circular ordering and the entire collection can be placed in a stable manifold. To see that the map $\Phi_x$ is a homeomorphism, observe first that it is onto a neighborhood of $x$ in $W'$ since the foliation $W^\delta$ is transverse to $W^{|\alpha,\beta|}$. We wish to show it is injective. Since $\delta$ is the boundary of a cone (either $\abs{\alpha,\delta}$ or $\abs{\delta,\beta}$), we may find $b \in \ker\delta$ such that $\alpha(b),\beta(b),\gamma_1(b),\dots,\gamma_n(b) < 0$. Assume there exists $(u,t)$ and $(v,s)$ such that $\Phi_x(u,t) = \Phi_x(v,s)$. Then iterating $b$ forward, the $\alpha,\gamma_1,\dots,\gamma_n,\beta$-legs will all contract exponentially, but the $\delta$ leg have its length bounded away from $0$ and $\infty$ by Lemma \ref{lem:uniformly bounded derivative}. By choosing a convergent subsequence, we may conclude that $t = s$. Then since $\psi_x$ is a coordinate chart itself, we conclude $u = v$. Therefore, the map is a local homeomorphism near 0.
That it is a global homeomorphism follows by intertwining with the hyperbolic dynamics.
\end{proof}

\begin{lemma}[Weak geometric commutators]
\label{lem:geom-commutator1}
Let $\alpha,\beta \in \Delta$ be nonproportional weights and write $D(\alpha,\beta) = \set{\alpha,\gamma_1,\dots,\gamma_n,\beta}$ in the canonical circular ordering. Then given $x \in X$, $x' \in W^\alpha(x)$ and $x'' \in W^\beta(x')$, there exists

{ 
\begin{enumerate}[label=(\alph*)]
    \item a unique $y \in W^\beta(x)$ and $y' \in W^\alpha(y)$ such that $x''$ and $y'$ are connected by a broken path in the $W^{\gamma_i}$-foliations with combinatorial pattern $(\gamma_1,\dots,\gamma_n)$.
    \item a unique $z \in W^\alpha(x'')$, and $z' \in W^\beta(z)$ such that $x$ and $z'$  are connected by a broken path in the $W^{\gamma_i}$-foliations with combinatorial pattern $(\gamma_n,\dots,\gamma_1)$.
\end{enumerate} 
}

The break points constructed depend continuously on $x'$ and $x''$.
\end{lemma}

\begin{remark}
The structures here are reminiscent of a commutator, as given an $W^\alpha$ leg followed by a $W^\beta$ leg, it completes a cycle by adding the ``inverse'' $W^\alpha$-leg and $W^\beta$-leg which are then followed by things which appear in the ``commutator.'' We will see this is related to an actual commutator in the presence of more structure (see Lemma \ref{lem:comm-relation}).
\end{remark}

\begin{proof}
Notice that, by Lemma \ref{lem:root-comb}, $D(\alpha,\beta) = \Delta^-(\set{a_i})$ for some choice of Anosov elements $\set{a_i}$. We may iteratively apply Lemma \ref{lem:extending-charts} to obtain a chart for $W^{|\alpha,\beta|}$. Observe that we may begin with listing the $\gamma_i$-foliations listed in a circular ordering, adding them one at a time, to obtain a chart for $W^{|\gamma_1,\gamma_n|}$, which is obtained by moving along each of the foliations $\gamma_i$, one at a time. Since each $\alpha$ and $\beta$ bounds the collection, we may then move by $\beta$ and then by $\alpha$ to obtain a chart for $W^{|\alpha,\beta|}$ which first moves along the $\gamma_i$ foliations, then the $\alpha$-foliation and finally the $\beta$ foliation. {  That is, by iteratively applying Lemma \ref{lem:root-comb}, we may produce a map which parameterizes the foliation $W^{\abs{\alpha,\beta}}(x)$, $H : \R^n_\gamma \times \R_\beta \times \R_\alpha \to W^{\abs{\alpha,\beta}}$, where moving along a coordinate in the Euclidean model corresponds to moving along the corresponding coarse Lyapunov foliations as long as all coordinates appearing afterwards are 0. For (b), if ${H}^{-1}(x'') = (u,s_1,s_2)$, let $z' = H(u,0,0)$ and $y' = H(u,s_1,0)$. Continuity of the break points follows from the fact that $H$ is a homeomorphism.

For (a), use the coordinates based at $x''$ rather than $x$ and reverse the roles of $\alpha$ and $\beta$.}
\end{proof}

\subsection{Residual Properties of Foliations}

The following result is classical analog of Fubini's theorem, and usually stated for product spaces (see, e.g., \cite{oxtoby80}). Recall that a continuous foliation of a space $X$ is a collection of leaves which are locally homeomorphic a product of a leaf with the transversal. This immediately gives the following:

\begin{lemma}[Kuratowski-Ulam]
\label{lem:kur-ulam}
Let $X$ be a smooth manifold with continuous foliation $\mc F$ and $Y \subset X$ be a residual subset of $X$. If $B \subset X$ is a local transversal disc to $\mc F$, then the set of points $x \in B$ such that $\mc F(x) \cap Y$ is residual in the leaf topology of $\mc F(x)$ is residual in $B$.  
\end{lemma}

We also discuss accessibility properties and their relationship to paths in foliations (recall Remark \ref{rem:paths-no-action}), which will appear later in the discussion.

\begin{definition}
\label{def:sub-accessibility}
Let $\Omega$ index a set of continuous foliations $\set{\mc F_1,\dots, \mc F_n}$ of a smooth manifold $X$. Define $W^\Omega(x) = W^{\mc F_1,\dots,\mc F_n}$ to be the set of all points which can be reached using finitely many broken paths in the foliations $\set{\mc F_i}$.
\end{definition}

The proof of the following is identical to the proof that if a flow has a dense orbit, then the set of dense orbits is residual, so we omit it.

\begin{lemma}
\label{lem:dense-is-gdelta}
The set of points $x \in X$ such that $W^\Omega(x)$ is dense is a $G_\delta$ set. If it is nonempty, it is residual.
\end{lemma}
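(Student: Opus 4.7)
The plan is to mimic the standard argument showing that a topologically transitive flow has a residual set of points with dense orbits, the only new ingredient being that broken paths in the foliations depend continuously on the starting point.

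First I would fix a countable basis $\{U_n\}_{n \in \N}$ of nonempty open sets of $X$, and for each $n$ set
\[
 G_n := \{ x \in X : W^\Omega(x) \cap U_n \neq \emptyset \}.
\]
Then by definition $D := \{ x \in X : W^\Omega(x) \mbox{ is dense in }X\} = \bigcap_n G_n$, so it suffices to prove that each $G_n$ is open. This is where continuity of the foliations enters. If $x \in G_n$, there is a finite sequence of indices $i_1,\dots,i_m \in \{1,\dots,n\}$ and parameters $t_1,\dots,t_m$ so that the broken path $\gamma$ running along $\mc F_{i_j}$-leaves from $x$ ends at a point $y \in U_n$. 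Choose any local leafwise parameterizations of the foliations $\mc F_{i_j}$; these vary continuously in the base point by continuity of the foliations. Iterating $m$ times, the endpoint of a broken path with the same combinatorial pattern $(i_1,\dots,i_m)$ and the same parameters, started from $x'$ near $x$, depends continuously on $x'$, so it still lies in $U_n$ for $x'$ sufficiently close to $x$. Thus $G_n$ is open, and $D$ is $G_\delta$.

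Next I would prove that $D$ is saturated by the reachability relation: $W^\Omega$ is an equivalence relation on $X$ because each foliation is symmetric (reverse a leaf path) and concatenation handles transitivity, so $W^\Omega(y) = W^\Omega(x)$ whenever $y \in W^\Omega(x)$. In particular, if $x_0 \in D$, then for every $y \in W^\Omega(x_0)$ we have $W^\Omega(y) = W^\Omega(x_0)$, which is dense; hence $W^\Omega(x_0) \subseteq D$. Since $W^\Omega(x_0)$ is dense in $X$, so is $D$. As $D \subseteq G_n$, each $G_n$ is an open dense subset, and $D = \bigcap_n G_n$ is a dense $G_\delta$, i.e.\ residual.

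The only delicate step is the openness of $G_n$; everything else is soft. The main obstacle one has to check is that the continuity of the leafwise parameterizations really does give a joint continuity for broken paths of a fixed combinatorial pattern, but this is immediate by induction on the number of legs once the $\mc F_i$ are assumed to be continuous foliations.
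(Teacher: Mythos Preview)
Your proof is correct and is precisely the standard transitivity argument the paper has in mind; the paper in fact omits the proof entirely, remarking only that it is ``identical to the proof that if a flow has a dense orbit, then the set of dense orbits is residual.'' Your write-up supplies exactly those details.
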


\subsection{Exact H\"older Metrics}
\label{subsec:metrics-prelim}

 Kalinin and Spatzier proved the following cocycle rigidity theorem in \cite[Theorem 1.2]{KaSp04} which is fundamental to the developments in Part IV.  We remark that this is not a general cocycle result but rather applies specifically to the derivative  cocycle.  The starting point of the argument is Lemma \ref{lem:kalinin-lem}. One then uses a Livsic-like argument to solve the derivative cocycle along a dense $H$-orbit, and extend it continuously to the closure.

\begin{theorem} [Exact H\"older Metrics] \label{thm:cocycle rigidity}
 Let $\alpha$ be a $C^{1,\theta}$, totally Cartan action of $\R ^k$, $k \geq 2 $, on a compact smooth manifold $M$ and $H$ be a Lyapunov hyperplane for a coarse Lyapunov foliation $\mc W$. If $H$ has a dense orbit, then there exists a H\"older  continuous Riemannian metric $g$ on $T\mc W$ and a functional $\chi : \R^k \to \R$ such that for any $a \in \R ^k$
 $$||da(v)||=e^{\chi (a)} ||v|| \qquad \text{ for any } \; v \in  T\mc W.$$ 
\end{theorem}

\begin{remark}
One may notice that the assumptions of \cite[Theorem 1.2]{KaSp04} are stronger than those given here. However, an inspection of the proof reveals that one does {\it not} need an invariant measure or dense orbits of one parameter subgroups: for this theorem in their paper, the conditions listed here are sufficient. 
\end{remark}

\begin{remark}
Theorem \ref{thm:cocycle rigidity} can be regarded as a strengthening of Lemma \ref{lem:uniformly bounded derivative} under the assumption that a Lyapunov hyperplane has a dense orbit. In particular, it implies that the Lyapunov exponent corresponding to $\chi$ for every invariant measure is equal to $\chi$. 
\end{remark}


\begin{remark}
\label{def:hyper-reg}
If $\R^k \curvearrowright X$ is a totally Cartan action, and every Lyapunov hyperplane has a dense orbit, the functionals in Definition \ref{def:regular} can be chosen to be those of Theorem \ref{thm:cocycle rigidity}. See the discussion at the start of Part IV. 
\end{remark}

\part[Examples of Cartan Actions and Classification of Affine Actions]{\Large Examples of Cartan Actions and Classification of Affine Actions}
\label{subsec:examples}

In this part, we exhibit several interesting examples which we believe will be useful to the reader, especially by comparing their features with the structures in the arguments for the remainder of this paper. A reader only interested in understanding the rigidity and structural arguments could skip directly to Part III.

In Section \ref{sec:algebraic-exs}, we summarize some well-known classes of Cartan actions, and in Section \ref{sec:affine-classification}, we give a general description of affine Cartan actions. In Section \ref{sec:exotic}, we outline several exotic examples. Some are merely lesser-known example, some are original to this paper, and some are recent developments which exhibit unexpected behavior (see \cite{DWX22} and \cite{vinhage22}).   Theorem \ref{thm:big-headache} gives a structure theorem for Cartan actions with trivial Starkov component, and many of the examples in Section \ref{sec:exotic} show that this is optimal.

\section{The Algebraic Actions} \label{sec:algebraic-exs} We first recall several algebraic actions, which are often called the {\it standard actions}. These are the principle ``building blocks'': (suspensions of) affine $\Z^k$ actions on nilmanifolds, Weyl chamber flows and Anosov flows on 3-manifolds.

\subsection{$\Z^k$ affine actions and their suspensions}
\label{subsubsec:suspensions}

Let $A_1,A_2,\dots,A_k \in SL(n,\Z)$ be a collection of commuting matrices such that $A_1^{m_1}\dots A_k^{m_k} \not= \id$ unless $m_i = 0$ for every $i$. Then there is an associated action $\Z^k \curvearrowright \mathbb{T}^n$ defined by $\mbf m \cdot (v + \Z^n) = A_1^{m_1}A_2^{m_2}\dots A_k^{m_k}v + \Z^n$, which is an action by automorphisms.

Notice $\R^n$ splits as a sum of common generalized eigenspaces for each matrix $A_i$ (where a generalized eigenspace can contain Jordan blocks, and we identify eigenvalues of the same modulus), $\R^n = \bigoplus_{i=1}^\ell E_\ell$. There are $\ell$ different functionals on $\Z^k$, which associate to $m \in \Z^k$ the modulus of the eigenvalue of $A_1^{m_1}\dots A_k^{m_k}$ on $E_i$, $e^{\chi_i(a)}$. If the functionals $\chi_i$ are all nonvanishing, the action is Anosov, and in this case, $\chi_i$ are the Lyapunov functionals for every invariant measure.

If each generalized eigenspace $E_i$ is 1-dimensional and there are no positively proportional Lyapunov functionals, the action is totally Cartan. Notice that the totally Cartan condition implies that there are no Jordan blocks of the action. 
Analogous actions can be constructed when replacing $\mathbb{T}^n$ by a nilmanifold, see \cite{nantian95}. We will restrict our discussion to tori for simplicity.

We have already described a dynamical suspension construction for $\Z^k$ actions in Section \ref{sec:susp}. Here, we show that in the case of Cartan actions by automorphisms, this is also realized as a homogeneous flow on a solvable group $S = \R^k \ltimes \R^n$. We define the semidirect product structure of $S$ by fitting the $\Z^k$ subgroup of $SL(n,\Z)$ into an $\R^k$ subgroup.  We may assume, by passing to a finite index subgroup as needed, that the eigenvalues of each $A_i$ are all positive real numbers. Therefore, each $A_i$ fits into a one-parameter subgroup, $A_i = \exp(tX_i)$ with $X_i \in \mf{sl}(n,\R)$. Furthermore, since $[A_i,A_j] = e$, $[X_i,X_j] = 0$. 
Thus, there exists a homomorphism $f : \R^k \to SL(n,\R)$ such that $f(e_i) = A_i$ for every $i$ (so in particular, $f(\Z^k) \subset SL(n,\Z)$).

We are ready to define the semidirect product structure of $S$. Let $(a_i,x_i) \in \R^k \times \R^n$ for $i = 1,2$, and define

\[ (a_1,x_1) \cdot (a_2,x_2) = (a_1 + a_2,f(a_2)^{-1}x_1+x_2) .\]

Then $\Gamma = \Z^k \ltimes \Z^n$ is a cocompact subgroup of $\R^k \ltimes \R^n$ and $\R^k$ is an abelian subgroup. The translation action by $\R^k$ is a Cartan action, since the common eigenspaces in $\R^n$ are the coarse Lyapunov subspaces. Furthermore, the stabilizer of the subgroup $\mathbb{T}^n \subset S / \Gamma$ is exactly $\Z^k$, and by construction, if $v \in \R^n$ and $a \in \Z^k$:

\[ a \cdot v = f(a)v \cdot a \sim f(a)v .\]

Therefore, the translation action on $S / \Gamma$ is the suspension of the $\Z^k$ action on $\mathbb{T}^n$.

\subsection{Weyl Chamber Flows}
\label{app:weyl-ch-flows}

Let $\mf g$ be a semisimple Lie algebra and $\mf a \subset \mf g$ be an {\it $\R$-split Cartan subalgebra}, which unique up to automorphism of $\mf g$. Cartan subalgebras are characterized by the following: for every $X \in \mf a$, $\ad_X : \mf g \to \mf g$ diagonalizable over $\R$, and is the maximal abelian subalgebra satisfying this property. A semisimple algebra $\mf g$ is called ($\R$-)split if the centralizer of $\mf a$ (ie, the common zero eigenspace of $\ad_X$ for $X \in \mf a$) is $\mf a$ itself.

The semisimple split Lie algebras are well-classified, the most classical example being $\mf g = \mf{sl}(d,\R)$, with Cartan subalgebra $\mf a = \set{ \diag(t_1,t_2,\dots,t_d) : \sum t_i = 0} \cong \R^{d-1}$, which we will address directly now. Other examples include $\mf g = \mf{so}(m,n)$ with $\abs{m-n} \le 1$ and $\mf g = \mf{sp}(2n,\R)$, as well as split forms of the exotic algebras. In what follows, $SL(d,\R)$ may be replaced with an $\R$-split Lie group $G$, with its corresponding objects. The condition that all root spaces (which will be the Lyapunov subspaces) are 1-dimensional is equivalent to $G$ being $\R$-split.

Let $\Gamma \subset SL(d,\R)$ be a cocompact lattice, and define the {\it Weyl chamber flow} on $SL(d,\R)/\Gamma$ to be the translation action of $A = \set{\diag(e^{t_1},\dots,e^{t_d}) : \sum t_i = 0} \cong \R^{d-1}$. Notice that if $Y \in \mf{sl}(d,\R) = T_eSL(d,\R)$, and $a = \exp(X) \in A$ with $X \in \mf a$, then:

\[ da(Y) = \Ad_a(Y) = \exp(\ad_X)Y. \]

Therefore, if $Y$ is an eigenvector of $\ad_X$ with eigenvalue $\alpha(X)$, $da(Y) = e^{\alpha(X)}Y$. The eigenvectors are exactly the elementary matrices $Y_{ij}$, with all entries equal to 0 except for the $(i,j)\tth$ entry, which is 1. For a general split group, it is classical that the eigenspaces are 1-dimensional. By direct computation, if $X = \diag(t_1,\dots,t_d)$, then $\ad_X(Y) = XY-YX = (t_i-t_j)Y$. Therefore, the eigenvalue functionals $\alpha$ are exactly $\alpha(X) = t_i - t_j$. These functionals $\alpha$ are called the {\it roots} of $\mf g$, and are the weights of the Cartan action as considered above.

For a general semisimple Lie group, the translation action of an $\R$-split Cartan subgroup is not Cartan, or even Anosov. However, the centralizer of $\mf a$ is always isomorphic to $\mf a \oplus \mf m$, where $\mf m \subset \mf g$ is the Lie algebra of some compact Lie subgroup $M \subset G$. In this case, $\exp(\mf a)$ descends to a left translation action on the double quotient space $M \backslash G / \Gamma$, and this action is totally Anosov.  However, the subgroup $M$ is always finite if the group is $\R$-split, or equivalently if the Weyl chamber flow is Cartan.

\subsection{Twisted Weyl Chamber Flows}
\label{subsubsec:twisted-ex}

This example is a combination of the previous two examples. Let $G$ be an $\R$-split semisimple Lie group, and $\rho : G \to SL(N,\R)$ be a representation of $G$, which has an induced representation $\bar{\rho} : \mf{g} \to \mf{sl}(N,\R)$. Then $\bar{\rho}$ has a {\it weight} $\alpha : \mf a \to \R$ for each common eigenspace $E \subset \R^N$ for the transformations $\set{\rho(X) : X \in \mf a}$, which assigns to $X$ the eigenvalue of $\rho(X)$ on $E$. We call $\rho$ a {\it Cartan} representation if:

\begin{enumerate}
\item (No zero weights) $\mbf{0}$ is not a weight of $\bar{\rho}$
\item (Non-resonant) no weight $\alpha$ of $\bar{\rho}$ is proportional to a root of $\mf g$
\item (One-dimensional) the eigenspaces $E_\alpha$ for each weight $\alpha$ are one-dimensional.
\end{enumerate}

Given an $\R$-split semisimple group with Cartan representation $\rho$, we may define a semidirect product group $G_\rho = G \ltimes \R^N$ which is topologically given by $G \times \R^N$, with multiplication defined by:

\[ (g_1,v_1) \cdot (g_2,v_2) = (g_1g_2,\rho(g_2)^{-1}v_1 + v_2). \]

We now assume that $\Gamma$ is a (cocompact) lattice in $G$ such that $\rho(\Gamma) \subset SL(N,\Z)$ (this severely restricts the possible classes of $\rho$ and $\Gamma$ one may take). Then $\Gamma_\rho = \Gamma \ltimes \Z^N \subset G_\rho$ is a (cocompact) lattice in $G_\rho$, and the translation action of the Cartan subgroup $A \subset G$ on $G_\rho / \Gamma_\rho$ is the {\it twisted Weyl chamber flow}. The weights of the action are exactly the roots of $\mf g$ and weights of $d\bar{\rho}$, which by the non-resonance condition implies that the coarse Lyapunov distributions are all one-dimensional.

When $\R^N$ is replaced by a nilpotent Lie group, one may replace the toral fibers with certain nilmanifold fibers. See \cite{nantian95}.

\section{Classification of Affine Cartan Actions}
\label{sec:affine-classification}
 Here we consider  affine actions 
of $\R ^k$ by left translations  or more generally $\R^k \times \Z^l $ by left translations and automorphisms on homogeneous spaces $G/\Gamma$ where $G$ is a connected Lie group, and $\Gamma \subset G$ a uniform lattice. Unlike for the rest of the paper, we do not require $k+l \ge 2$ to obtain classification under the homogeneity assumption.
Below we describe the structure of the homogeneous space $G/\Gamma$ and the action $\alpha$  in terms of the Levi decomposition.  


\begin{theorem}
\label{thm:homo-classification}
 Suppose that $G$ is a connected Lie group,  and $\Gamma \subset G$ a cocompact lattice. 
Suppose $\R^k \times \Z^l 
\curvearrowright G / \Gamma$ is an affine 
 Cartan action. 
  Let $S$ and $N$ denote the solvradical and nilradical of $G$, respectively, and $G \cong L \ltimes S$ be a Levi decomposition of $G$ for some semisimple group $L$. Then

\begin{enumerate}
\item $L$ has no compact factors.%
\item If $\sigma : G \to L$ is the canonical projection, then $\sigma(\Gamma)$ is a lattice in $L$. 
\item $S \cap \Gamma$ is a lattice in $S$.
\item The restriction of the Cartan action to $\R^k$ covers a Weyl chamber flow on $L / \sigma(\Gamma)$, and the action of $\Z^l$ factors through a finite group action by automorphisms of $L$ which preserve $\sigma(\Gamma)$.
\item $N \cap \Gamma$ is a lattice in $N$, and if $\chi$ is a coarse Lyapunov exponent which does not restrict to a root of $L$, then $E^\chi \subset \Lie(N)$.
\end{enumerate}
\end{theorem}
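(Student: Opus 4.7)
The plan is to separate the assertions into purely structural-algebraic ones, which follow from classical theorems on lattices, and dynamical ones, where the Cartan hypothesis enters. Parts (1), (2) and the nilradical-lattice assertion in (4) are algebraic; parts (3) and the Lyapunov content of (4) require the dynamics.

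For (1), (2) and the assertion that $N \cap \Gamma$ is a lattice in $N$, I would invoke Mostow's structure theorem for cocompact lattices in connected Lie groups (Raghunathan, \emph{Discrete Subgroups of Lie Groups}, Chapter VIII). This immediately yields that $S\Gamma$ and $N\Gamma$ are closed in $G$, that $S \cap \Gamma$ and $N \cap \Gamma$ are cocompact lattices in $S$ and $N$ respectively, and that $\sigma(\Gamma)$ is a lattice in $L$. No dynamical input is needed.

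For (3), I would first observe that any connected group of affine automorphisms of $G/\Gamma$ must act purely by translations: the automorphism component takes values in the discrete subgroup of $\Aut(G)$ preserving $\Gamma$, so the connected $\R^k$-component is implemented by left translation by some abelian subgroup $T \cong \R^k$ of $G$. The image $A := \sigma(T)$ acts on $L/\sigma(\Gamma)$ by translations, and the Cartan hypothesis forces $\Ad(A)$ to have one-dimensional common eigenspaces on $\Lie(L)$, which is the defining feature of an $\R$-split Cartan subalgebra. This identifies the projected $\R^k$-action as covering a Weyl chamber flow on $L/\sigma(\Gamma)$. For the $\Z^\ell$-part, the automorphism components preserve $\Gamma$ and so descend to automorphisms of $L$ preserving $\sigma(\Gamma)$ and commuting with $A$; since $\Aut(L)/\Ad(L)$ is finite for semisimple $L$, and the inner automorphisms commuting with a Cartan subgroup form a finite extension of $A$ (essentially the normalizer modulo centralizer, i.e.~the Weyl group), the $\Z^\ell$-action descends to a finite group action.

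For the Lyapunov statement in (4), note that $\Lie(N) \subset \Lie(S) \subset \Lie(G)$ are $\Ad(T)$-invariant (being characteristic), so the coarse Lyapunov splitting refines this filtration. Weights with eigenspace mapping nontrivially into $\Lie(L)$ are precisely the roots of $A$ in $L$ by (3). For the intermediate quotient $\Lie(S)/\Lie(N)$, consider the equivariant factor map $G/\Gamma \to G/N\Gamma$, where $G/N\Gamma$ fibers over $L/\sigma(\Gamma)$ with fibers isomorphic to $S/N$ modulo the cocompact lattice $(S \cap \Gamma)N/N$, hence compact tori. Left translation by $T$ preserves this fiber structure and acts on each fiber by translation on a torus; such an action is isometric and has vanishing Lyapunov spectrum. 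Consequently all $\Ad(T)$-weights on $\Lie(S)/\Lie(N)$ are zero, so any nonzero coarse Lyapunov weight in $\Lie(S)$ has its eigenspace in $\Lie(N)$, and any coarse Lyapunov exponent $\chi$ not restricting to a root of $L$ satisfies $E^\chi \subset \Lie(N)$.

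The main obstacle I expect is the $\Z^\ell$ portion of (3): carefully identifying the group of automorphisms of $L$ preserving $\sigma(\Gamma)$ and commuting with $A$, and showing it is finite, requires combining the finiteness of $\Aut(L)/\Ad(L)$ with the Mostow-rigidity-style fact that inner automorphisms preserving a Zariski-dense lattice form a discrete finite-index extension of the Cartan subgroup. A secondary delicate point is ensuring that the connected $\R^k$-action has no automorphism component, which uses discreteness of $\Aut(G)$-stabilizers of $\Gamma$.
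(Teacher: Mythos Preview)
Your outline has one genuine gap. For parts (1), (2), and the lattice assertion in (4) you invoke Mostow's structure theorem (Raghunathan, Chapter VIII) and say ``no dynamical input is needed.'' But the relevant result (Corollary 8.28 in Raghunathan, which is exactly what the paper cites) requires that the Levi factor $L$ have no compact semisimple factors. This hypothesis is not automatic: for instance $G = SU(2) \times \R$ with $\Gamma$ the graph of an irrational rotation is a cocompact lattice for which neither $S\cap\Gamma$ nor $\sigma(\Gamma)$ is a lattice. The paper supplies exactly this missing step using the Cartan hypothesis: since the action is affine, $da$ is an automorphism of $\mf g$ whose modulus-one eigenspace must equal $\Lie(f(\R^k))$ by the Cartan condition, while any compact semisimple factor of $L$ has all its automorphism eigenvalues of modulus one and is not abelian; hence no compact factor can occur. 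So dynamical input \emph{is} needed precisely here.

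Your approach to the Lyapunov part of (4) is genuinely different from the paper's and, once patched, is arguably more elementary. The paper invokes Proposition 3.13 of \cite{GS1} to show $E^\chi \oplus \Lie(N)$ is a nilpotent ideal of $\Lie(S)$. You instead argue that $\Ad(T)$ acts trivially on $\Lie(S)/\Lie(N)$, forcing any nonzero non-root weight to have $E^\chi \subset \Lie(N)$. This is correct, but your phrasing ``acts on each fiber by translation on a torus'' is imprecise (if $\sigma(T)\neq 0$ the fibers are permuted, not individually preserved) and you assert it without proof. The clean justification is the standard Lie-algebra fact $[\mf g, \mf s] \subset \mf n$ (Bourbaki, \emph{Lie Groups and Lie Algebras}, Ch.~I, \S5), which forces $\Ad(G)$ to act as the identity on $\mf s/\mf n$; this gives the vanishing fiber exponents you want directly. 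For part (3) your sketch is essentially the same as the paper's.
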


\begin{proof}

Note that an $\R^k \times \Z^\ell$ action on $X$ is affine if and only if its suspension action is affine, and the homogeneous structure on the suspension space $\tilde{X}$ contains $X$ as a homogeneous submanifold. Thus  it suffices to argue the case of an affine action by $\R ^k$.

(1), (2),(3) and the first part of (5):  These follow from Corollary 8.28 of \cite{raghunathan1972}, provided $L$ has no compact factors. To see that there are no such factors, observe that since the Cartan action is affine, $a \cdot (g\Gamma) = (f(a)\varphi_a(g)\Gamma)$, where $f : \R^k \times \Z^l \to G$ is some homomorphism and $\varphi_a$ is an automorphism of $G$ preserving $\Gamma$. This implies that $da = \operatorname{Ad}(f(a)) \of d\varphi_a$, which is an automorphism of $\mf g = \Lie(G)$. The sum of the eigenspaces of modulus 1 for this automorphism must be $\Lie(f(\R^k))$, since the action is Cartan. Every automorphism of a compact semisimple Lie group has only eigenvalues of modulus 1. But semisimple groups are not abelian, so the Lie algebra cannot only consist of the directions tangent to the $\R^k$-orbit foliation. Therefore, we may apply the result.

 (4): We claim that $a \cdot Sx = S (a \cdot x)$. Since $S$ is a characteristic subgroup, it is invariant under the automorphism $\varphi_a$ as well as conjugation by an element of $G$. Since the action of $\R^k \times \Z^l$ is a composition of such transformations, we get the intertwining property. In particular, the Cartan action descends to an action on $L / \sigma(\Gamma)$. For any semisimple group, the identity component of $\Aut(L)$ is the inner automorphism group. Therefore, the $\R^k$ component of $\R^k \times \Z^l$ must act by translations, since there are no small $g \in L$ such that $g\sigma(\Gamma)g^{-1} = \sigma(\Gamma)$. Furthermore,  the induced action must have some continuous part if $L \not= \set{e}$. Indeed, since the outer automorphism group of $G$ is finite, if the action were by $\Z^l$, some finite index subgroup of the $\Z^l$ action would be a translation action. But the elements of $G$ may be written as $z \cdot a \cdot u$, where $z$ is in the center of $G$, $a$ is in some Cartan subgroup and $u$ is a nilpotent element commuting with $a$. In particular, the one-parameter subgroup generating $a$ would be a 1-eigenspace of the action which is not contained in the orbit, violating the Cartan condition. Therefore, the action must contain at least one one-parameter subgroup which acts by translations. By the Cartan assumption, $da$ is diagonalizable for every $a$ in this subgroup, so the translations must be by semisimple elements. Furthermore, the action must contain all elements that commute with $a$, and therefore an $\R$-split Cartan subgroup. That is, the action of the continuous part of $\R^k \times \Z^l$ is a Weyl chamber flow and the $\Z^l$ factors through a finite group action by automorphisms fixing the corresponding Cartan subgroup.

To see the second part of (4), we note that the arguments of Proposition 3.13 in \cite{GS1} can be applied to show that if $E^\chi \subset \Lie(S)$, then $E^\chi \oplus \Lie(N)$ is a nilpotent ideal of $\Lie(S)$. This is sufficient for our claim, since the nilradical is exactly the elements of the solvradical which are $\ad$-nilpotent. The setting there is that of an affine Anosov diffeomorphism, rather than an action which may have a subalgebra inside the group generating the action. This leads to some exotic examples (see Section \ref{sec:exotic}).
\end{proof}

\section{Exotic examples of Cartan actions}

\label{sec:exotic}

In this section, we review examples of Cartan actions which exhibit unusual behavior. Many of these examples exhibit features which explain the more technical constructions and assumptions needed in Theorem \ref{thm:big-headache}.

\subsection{Homogeneous actions with Starkov component}
\label{subsec:starkov}

The main examples of Cartan $\R^k \times \Z^l$ actions on solvable groups $S$ are Cartan actions by automorphisms and suspensions of such actions. Indeed, (4) of Theorem \ref{thm:homo-classification} implies that the subgroup transverse to $\mf n$, the nilradical of $S$, is a piece of the $\R^k$-orbit. However, the orbit may also intersect $\mf n$. This may happen in a trivial way, by taking any $\R^k \times \Z^l$ action and its direct product with a transitive translation action on a torus. There are other interesting examples as well, and in each such example, the component of the $\R^k$-action intersecting $\mf n$ is always part of the Starkov component. For instance, one may construct an $\R^2$ action in the following way: let $A : \mathbb{T}^2 \to \mathbb{T}^2$ by a hyperbolic toral automorphism.  Then $A$ also induces a diffeomorphism of the nilmanifold $H /\Lambda$, where $H$ is the Heisenberg group $H = \set{\begin{pmatrix}1 & x & z \\ 0 & 1 & y \\0 & 0 & 1 \end{pmatrix} : x,y,z \in \R}$ and $\Lambda$ are the integer points. The automorphism is given by $(x,y,z) \mapsto (A(x,y),z)$. While the automorphism is not Cartan, letting one copy of $\R$ act by translation along the center and the other by a suspension (as in Section \ref{subsubsec:suspensions}) gives an $\R^2$ Cartan action, since the action will still be normally hyperbolic with respect to the $\R^k$ orbits for a dense set of elements. This action has a rank one factor, but similar examples can be constructed without rank one factors by taking a torus extension of $\mathbb{T}^d$, $d \ge 4$, rather than $\mathbb{T}^2$.

In an analogous way, assume that a Cartan representation of a split Lie group $G$ preserves a symplectic form $\omega$ on $\R^N$. Then one may define a Heisenberg extension $1 \to \R \to H \to \R^N \to 1$ of $\R^N$ by letting $H = \R \times \R^N$ topologically, with multiplication $(t_1,v_1) \cdot (t_2,v_2) = (t_1+t_2+\omega(v_1,v_2), v_1+v_2)$. Then let $\bar{G}_\rho = G \ltimes_\rho H$, where $\rho(g)(t,v) = (t,\rho(g)v)$. Since $\rho$ preserves the symplectic form $\omega$, the extension of $\rho$ to $H$ acts by automorphisms. Then by construction, the Cartan subgroup of $G$ commutes with the center of $H$, and the action of $A \times \R$ is a Cartan action on any cocompact quotient of $\bar{G}_\rho$.

What makes these examples special is that they have nontrivial Starkov component, which is not a direct product with a torus action. The examples described here are extensions of totally Cartan actions by isometric actions, as described in Corollary \ref{cor:factor-by-starkov}. In the homogeneous case, the Starkov component is exactly the directions generated by any center of a Heisenberg group appearing in Proposition 
\ref{prop:symplectic-possibilities}, or splits as a direct product.


\subsection{Starkov component arising from mixing semisimple and solvable structures}
The following construction is due to Starkov, and first appeared in \cite{KS1}. Consider the geodesic flow of a compact surface, realized as a homogeneous flow on $SL(2,\R) / \Gamma$, and the suspension of a hyperbolic toral automorphism $A : \mathbb{T}^2 \to \mathbb{T}^2$ which lies in a one-parameter subgroup of matrices in $SL(2,\R)$, $t \mapsto A^t$. We first repeat the construction above: take the central extension of the suspension flow to obtain a flow on a homogeneous space of a group $\tilde{H} = \R \ltimes H$, where $H$ is the Heisenberg group, and where $\R$ acts on $H$ by a hyperbolic map ($t \mapsto A^t$) on a subspace transverse to $Z(H)$, and  trivially on $Z(H) = Z(\tilde{H})$. We assume the lattice takes the form $\Lambda = \Z \ltimes H(\Z)$, and notice that $Z_0 := Z(\tilde{H}) \cap H(\Z)$ is isomorphic to $\Z$. Since $\Gamma \subset SL(2,\R)$ is cocompact, it is the extension of some surface group. Therefore, there are many homomorphisms from $\Gamma$ to $Z(H) \cong \R$, fix such a nontrivial homomorphism $f$ such that the image of $f$ is not commensurable with $Z_0 \cong \Z$. We construct a quotient of the space $SL(2,\R) \times \tilde{H}$ by considering the subgroup which is the image of $\Gamma \times \Lambda$ under the map $(\gamma,\lambda) \mapsto (\gamma,\lambda f(\gamma))$. Call this subgroup $\tilde{\Gamma}$.

Notice that $\tilde{\Gamma}$ is still discrete, since if $(\gamma_n,\lambda_nf(\gamma_n)) \to (e,e)$, then $\gamma_n$ is eventually $e$ since $\Gamma$ is a lattice in $SL(2,\R)$. Hence $\lambda_n$ is eventually $e$ as well. Thus, there is a well-defined, homogeneous $\R^3$ action on $X = (SL(2,\R) \times \tilde{H}) / \tilde{\Gamma}$ which flows along the diagonal in $SL(2,\R)$, the $\R$ direction of $\tilde{H} = \R \ltimes H$ and $Z(H)$ as the three generators of the action. Notice also that the action has a rank one factor, since the projection onto the first coordinate is well-defined. However, the action on $\tilde{H} / \Lambda$ is {\it not} a factor, since the projection of the lattice is dense in the center of $H$ (by the non-commensurability assumption on $f$). 

This example highlights the necessity of assuming that the system has trivial Starkov component in Theorem \ref{thm:big-headache}. 
 Indeed, the action of the Starkov component $Z(H)$ factors through a circle action on $X$, and after quotienting by this circle action, one arrives at the direct product of the flow on $SL(2,\R) / \Gamma$ and the suspension of $A$.

\begin{remark}
One can make this example non-homogeneous by using the unit tangent bundle to a surface of negative, non-constant curvature, rather than $SL(2,\R)$ in this construction. This highlights further the need for quotienting by the Starkov component in the Theorem \ref{thm:big-headache}.
\end{remark}

\subsection{Suspensions of Diagonal Actions}
\label{sec:embedded-ex}

Consider a collection of totally Cartan actions by linear automorphisms $A_i : \Z \curvearrowright \mathbb{T}^{2}$, $i = 1,\dots,k$, which may or may not be related in any way. Then let $2 \le \ell < k$, and choose any homomorphism $f : \Z^\ell \to \Z^k$ such that if $\pi_i : \Z^k \to \Z$ is the projection onto the $i\tth$ coordinate, then $\pi_i \of f$ and $\pi_j \of f$ are both nonzero and nonproportional for all $i\not= j$. Such choices are always possible for $\ell \ge 2$.

Then construct the action of $\Z^\ell$ on $\mathbb{T}^{2k}$ by:

\[ a \cdot (x_1,\dots,x_k) = (A_1^{f(a)_1}(x_1),\dots,A_k^{f(a)_k}(x_k)). \]

Notice that we may suspend the $\Z^\ell$ action to an $\R^\ell$ action. Furthermore, the Lyapunov exponents of the action are exactly $\pi_i \of f$, so by construction the action is totally Cartan. The action has trivial Starkov component, but large centralizer. In fact, the centralizer of the $\R^\ell$ action is exactly $\R^\ell \times \Z^{k-\ell}$, since one may act by any of the automorphisms $A_i$ in its corresponding torus.

The key feature of the action we have constructed is that it has infinite index in its centralizer. This highlights the necessity for the technicalities of Theorem \ref{thm:big-headache}. 
The $\R^\ell$ action we have constructed cannot be described as homogeneous extension of a product of Anosov flows, but instead over some invariant subspace for the product of Anosov flows. If one suspends the full $\Z^k$-action, one obtains an $\R^k$-action which is the direct product of $k$ suspension flows.
 The suspension space for this action is locally given by $\R^k \times \mathbb{T}^{2k} = \set{(s_1,\dots,s_k; x_1,\dots,x_k) : s_i \in \R, x_i \in \mathbb{T}^2}$, and the flow is given by translations in the $s_i$ coordinates. The embedding of $\R^\ell$ is rational and therefore $f(\R^\ell)$ is the kernel of some rational homomorphism $g : \R^k \to \R^{k-\ell}$. Thus we see that while the $\R^\ell$ action is not itself a product action, it embeds in a product action. It is exactly the restriction of a product action to some rational subspace, which is precisely the content of Theorem \ref{thm:big-headache}(3), which says that $\pi$ is only a submersion {\it onto its image}.

 This construction generalizes to any totally Anosov $\Z^k$-action. By picking a generic $\Z^2$-subgroup and suspending only this subaction, one may obtain an action with large centralizer. However, if the action has no non-Kronecker rank-one factors, even if the action has a large centralizer, if it is still totally Cartan, conjugacy to a homogeneous example can be achieved by Theorem \ref{thm:big-main}.

\subsection{Non-Totally Anosov Actions}
\label{app:anosov-not-totally}

The classical Katok-Spatzier Conjecture \ref{conjecture:Katok-Spatzier} is stated for Anosov actions, and does not require them to be totally Anosov actions. For $\R^k$ actions, a construction carried out by the second author in \cite{vinhage22} shows that the conjecture is false as stated. There, the construction yields an action which is Cartan, but not totally Cartan, and has {\it no rank one factors}, which implies that the totally Cartan assumption is necessary in Theorem \ref{thm:big-main}. 

In most results in the theory (including the results of this paper), the totally Anosov condition assumed. In fact, the only results in which Anosov and not totally Anosov is assumed are \cite{Hertz,RH-W}, which are for $\Z^k$ actions where a topological conjugacy can be deduced. For rank one actions, it is easy to see that Anosov and totally Anosov are equivalent. Here we give a simple construction to show that for higher-rank actions, Anosov does not imply totally Anosov. 
We will produce an example below, assume it is totally Anosov, and get a contradiction.


Fix some cocompact $\Gamma \subset SL(2,\R)$, let $X = SL(2,\R) / \Gamma$ and $Y = X \times X$. Let $v_i$, $i = 1,2$ be the vector fields generated by $\begin{pmatrix} 1 & 0 \\ 0 & -1 \end{pmatrix} \in \mf{sl}(2,\R)$ in the first and second factor of $Y$, respectively. Let $\varphi : X \to \R$ be a function which has an average of 0, and not cohomologous to a constant. Let $w_1(x_1,x_2) = v_1 + \varphi(x_1)v_2$ and $w_2(x_1,x_2) = v_2$. Notice that:

\[ [w_1,w_2] = [v_1+\varphi(x_1)v_2,v_2] = -[v_2,\varphi(x_1)v_2] = D_{v_2}\varphi(x_1)v_2 = 0 .\]

Therefore $w_1$ and $w_2$ generate an $\R^2$ action. Let $a = w_1 + sw_2$, where $s$ is such that $\varphi(y) \le s+1$ for every $y \in Y$. Now, notice that the derivative of $a$ on $Y$ still preserves the same unstable bundle for $v_1 + v_2$, and uniformly expands it. Similarly, it contracts the corresponding stable bundle, so $a$ is an Anosov element of the $\R^2$ action.

Now, choose $s,t \in \R$ such that $b = tw_1 + sw_2 = tv_1 + (t\varphi(x_1) + s)v_2$ is an element such that $t\varphi(x_1) + s$ has positive integral on some periodic point $x_1$ of $v_1$ on $X$ and negative integral on another periodic point $x_2$. 
That is, $y_1 = (x_1,x_1)$ is periodic for the $w_1,w_2$-action and the integral of $x \mapsto s + t\varphi(x)$ over the base periodic orbit is $< -\ve$, and $x_2$ and $y_2$, respectively so that the integral of $s + t\varphi$ over the base is $> \ve$. Let $u = \begin{pmatrix} 0 & 1 \\ 0 & 0 \end{pmatrix}$ be the vector field generating the unstable bundle in the second (fiber) factor. Then at $y_1$, $u$ is contracted by, and invariant under $b$, and any sufficiently small perturbation of $b$ within $\R^2$ must contract $u$. Similarly, at $y_2$, $u$ is expanded by, and invariant under $b$, so any sufficiently small perturbation of $b$ within $\R^2$ must expand $u$. Therefore, $u$ cannot be contained in a coarse Lyapunov subspace, since the coarse subspaces are uniformly expanded or contracted for a dense set of elements, by the totally Anosov property. But since it is invariant under the $w_1,w_2$-action, it must be either part of the action or a coarse Lyapunov subspace if the action were totally Anosov. This is a contradiction, so that action is not totally Anosov.

\subsection{Actions with orbifold, but not manifold, rank one factors}
Throughout, we have assumed that each foliations $W^\alpha$ is orientable. Without this assumption, there are examples of $\R^k$ actions in which the factor $X / W^H$ is an orbifold, but not a manifold (for a definition of $X / W^H$, see Section \ref{sec:case1}). Let $S_1$ and $S_2$ be hyperbolic surfaces, each having an order 2 isometry. For $S_1$, we assume there is an isometry $\bar{f}_1 : S_1 \to S_1$ which is a reflection across a systole separating two punctured tori. Notice that $\Fix(\bar{f}_1)$ is exactly the systole. One can modify this construction to be on any surface as long as $\Fix(\bar{f}_1)$ is a union of closed geodesics.

Now let $\bar{f}_2$ be any involutive isometry of a surface $S_2$ which does {\it not} have fixed points. One such example can be obtained by gluing an even number of twice-punctured tori together in a cycle, then permuting them in the circle exactly halfway around.

Now, let $Y_i = T^1S_i$ be the corresponding unit tangent bundles, and $f_i : Y_i \to Y_i$ be the maps induced by the isometries. Notice that $\Fix(f_1)$ is still a union of circles (exactly the orbits of the geodesic flow tangent to the systole), and that while $\bar{f}_1$ may reverse orientation, $f_1$ always preserves it (since if $\bar{f}_1$ reverses the orientation on $S_1$, it reverses the orientation on the circle bundle as well). Finally, if $Z_0 = Y_1 \times Y_2$ and $f(y_1,y_2) = (f_1(y_1),f_2(y_2))$, then $f$ does not have fixed points and commutes with the $\R^2$ action $(t,s) \cdot (y_1,y_2) = (g_t^{(1)}(y_1),g_s^{(2)}(y_2))$, where $g_t^{(1)}$ and $g_s^{(2)}$ are the geodesic flows on $Y_1$ and $Y_2$, respectively. Therefore, there is a totally Cartan $\R^2$ action on $Z = Z_0 / (z \sim f(z))$.

The Lyapunov foliations of the $\R^2$ actions are exactly the horocyclic foliations in $Y_1$ and $Y_2$, respectively. Therefore, if $\alpha$ is the positive exponent for the flow on $Y_1$ and $\beta$ is the positive exponent for the flow on $Y_2$, $\Delta = \set{\alpha,-\alpha,\beta,-\beta}$. Notice that $W^H(y_1,y_2)$ lifts to $\set{y_1} \times Y_2 \cup \set{f_1(y_1)} \times Y_2 \subset Z_0$ (for a discussion on $W^H$, see Sections . Therefore, in this example $Z / W^H = Y_1 / (y \sim f_1(y))$, which is not a manifold since $f_1$ has fixed points.

\subsection{Some non-product examples}
\label{sec:nonproduct-ex}

In this section, we construct a family of $\R^k$-actions which admit rank one factors, but are not the direct product of such factors and a homogeneous action. Let $G \subset GL(N,\R)$ be an $\R$-split semisimple linear Lie group locally isomorphic to $SL(2,\R)$ such that $G(\Z)$ is a cocompact lattice in $G$, and the inclusion of $G$ into $GL(N,\R)$, which we denote by $\rho_0 : G \to GL(N,\R)$, is a representation of $G$ with one-dimensional weight spaces. The construction of such an algebraic group is nonstandard, see \cite[Section 6.1]{Morris-arithmetic}. Examples with non-uniform lattices are more easily constructed (for instance, the standard representation and its symmetric powers).

The most straightforward example of such a group where $G(\Z)$ is cocompact is the group of matrices preserving the indefinite bilinear form $ax^2 + by^2 - z^2$ on $\R^3$, where $a,b \in \Z$ are such that the only integer solution to $ax^2 + by^2 = z^2$ is $(0,0,0)$. The group of real matrices preserving this form is canonically isomorphic to $SO(2,1)$, and the integer points form a cocompact lattice in its identity component, which is isomorphic to $PSL(2,\R)$.  See \cite[Example 6.1.2]{Morris-arithmetic} for further details and discussion.

\subsubsection{Basic example}
\label{sec:basic-counterex}

 Fix some matrix $A \in SL(2,\Z)$ such that $A$ is hyperbolic and $A = \exp(W)$ for some $W \in \mf{sl}(2,\R)$. We will consider the group

\[ H = (G \times \R) \ltimes_\rho \R^{2N}, \]

where the representation $\rho$ determining the semidirect product structure is given as follows. We write a vector $v \in \R^{2N}$ as $\begin{pmatrix} v_1 \\ \vdots \\ v_N \end{pmatrix}$, where each $v_i \in \R^2$. Then let

\[ \rho(g,t)\begin{pmatrix} v_1 \\ \vdots \\ v_N \end{pmatrix} = \tilde{g}\begin{pmatrix} e^{tW}v_1 \\ \vdots \\ e^{tW}v_N \end{pmatrix} = \tilde{g} \begin{pmatrix} e^{tW} \\ & \ddots \\ & & e^{tW} \end{pmatrix} \begin{pmatrix} v_1 \\ \vdots \\ v_N \end{pmatrix} \]
where $\tilde{g} = \begin{pmatrix} g_{11}\id_2 & \dots & g_{1N}\id_2 \\
                                   g_{21}\id_2 & \ddots & \vdots \\
                                   \vdots & & \vdots \\
                                   g_{N1}\id_2 & \dots & g_{NN}\id_2 \end{pmatrix} \in GL(2N,\R)$ is the $2\times 2$-block form of $\rho_0(g) = \begin{pmatrix} g_{11} & \dots & g_{1N} \\
                                   g_{21} & \ddots & \vdots \\
                                   \vdots & & \vdots \\
                                   g_{N1} & \dots & g_{NN} \end{pmatrix}$. 
                                   
\vspace{.4cm}
Pick a Cartan subgroup $A \subset G$ and notice that by construction, $A$ commutes with $\R$, and that the weight spaces of $\rho|_G$ are all two dimensional, since by assumption, the weight spaces of $\rho_0$ are one-dimensional and each will appear twice since $g \mapsto \tilde{g}$ is by construction the direct sum of the inclusion representation. Furthermore, the set $\Gamma = (G(\Z) \times \Z) \ltimes_\rho \Z^{2N}$ is a discrete, cocompact subgroup, since $e^{nW} = A^n \in SL(2,\Z)$, and hence $\rho(g,t) \in SL(N,\Z)$ for every $(g,t) \in G(\Z) \times \Z$.

Now, the action of $\R$ commutes with $G$ and therefore preserves each of its weight spaces. Furthermore, since it is Anosov, it is hyperbolic on each. In particular, if the unstable eigenvalue of $A \in SL(2,\Z)$ is $e^\lambda$, then the Lyapunov exponents of the $A \times \R$ action are exactly:

\begin{eqnarray}
\bar{\alpha}(a,t) & = & \alpha(a), \mbox{ where $\alpha$ is a root of $G$,} \\
\label{eq:beta-up} \hat{\beta}(a,t) & = & \beta(a) + \lambda t, \mbox{ where $\beta$ is a weight of the representation $\rho_0$, and} \\
\label{eq:beta-down} \check{\beta}(a,t) & = & \beta(a) - \lambda t, \mbox{ where $\beta$ is a weight of the representation $\rho_0$.}
\end{eqnarray}

Notice that these functionals are all nonvanishing on $A \times \R$ (even if the representation $\rho_0$ has a zero weight), that no two of them are positively proportional (even if $\rho_0$ is resonant with the adjoint representation), and that for each such Lyapunov exponent, the corresponding joint eigenspace of the $A \times \R$ action is one-dimensional. Hence the left translation action of $A \times \R$ on $H / \Gamma$ is a totally Cartan action.

\subsubsection{Comparing with Theorem \ref{thm:big-headache}}
\label{sec:big-headache-ex}

Notice that $G$ is a factor of $H$, and the kernel of the projection is exactly $\R \ltimes \R^{2N}$. Furthermore, since the lattice also splits in this way, $G / G(\Z)$ is a factor of $\Gamma$, and the action of $A \times \R$ factors through the action of $A$. In the conclusions of Theorem \ref{thm:big-headache}, there is only one rank one factor, $Y_1 = G / G(\Z)$, which carries an Anosov flow which is the left $A$-action.

The complementary Lie subgroup $\R \ltimes \R^{2N}$ is the group $G_{\Rig}$, and notice that the map $\sigma : A \times \R \to A$ has $\R$ as its kernel exactly, which is contained in $G_{\Rig}$. Furthermore, as this example is homogeneous, the automorphisms $\Phi_a$ are exactly the conjugation action of $A \times \R$ on $\R \ltimes \R^{2N}$. Since the action is by conjugation, part (4) of Theorem \ref{thm:big-headache} follows immediately (in fact when the entire action is homogeneous, it follows immediately that $G_{\Rig}$ is a normal subgroup and that the automorphisms $\Phi_a$ are inner).

It is not true in this case that the action of $\ker \sigma = \R$ on $G_{\Rig}$ is totally Cartan (even though it is totally Anosov). Since it is an $\R$-action, there are only two coarse Lyapunov foliations, which are the stable and unstable foliations of the action. The action of $\R$ is by homotheties on each such foliation. They are each $N$-dimensional, and are invariant under the representation of $G$ on $\R^{2N}$. They are each irreducible subrepresentations, which have one dimensional weight spaces. This is why the action becomes totally Cartan when incorporating the Cartan subgroup of $G$.

Finally, it is clear that the homogeneous space $H/\Gamma$ is not a direct product, since the fundamental group of $G / G(\Z)$ acts nontrivially on $G_{\Rig}$. One may also observe this by noting that if the action were a direct product of the $\R$ action on $G_{\Rig}$ and the $A$ action on $G / G(\Z)$, then $A$ would lie in the kernel of every Lyapunov functional corresponding to a coarse Lyapunov subspace of $G_{\Rig}$. By \eqref{eq:beta-up} and \eqref{eq:beta-down}, this is not true, and in fact cannot be true for any subspace complementary to $\R$ in $\R \times A$.

\subsubsection{Arranging a trivial fiber action}
\label{sec:trivial-fiber-action}

In this section, we modify the construction in Section \ref{sec:basic-counterex} in two ways. First, one may notice that the example of Section \ref{sec:basic-counterex} has a Kronecker factor, since we may project onto the circle factor appearing in the suspension. We may alter the construction to have no Kronecker factors, and in particular, to have that the coarse Lyapunov subgroups generate the group $G$.

Second, in the example of Section \ref{sec:basic-counterex}, $\ker \sigma$ acts on each fiber as an Anosov, but not Cartan action. We will now further disrupt the hyperbolicity properties of the $\ker \sigma$-action on the fiber. In particular, we modify the construction in such a way that $\ker \sigma = \set{0}$. Of course, the action of such a group has no hyperbolicity!

Consider a quaternion algebra

\[ \mathbb{H}^{a,b} = \set{ p + qi + rj + sk : p,q,r,s \in \R}, \]
where multiplication of vectors is determined by the relations 

\[i^2 =a, \qquad j^2 = b, \qquad \mbox{and} \qquad ij = k = -ji. \]

Such an algebra comes equipped with a norm defined by $\norm{x} = x \cdot \bar{x} = p^2 - aq^2 - br^2 + abs^2$, where $\bar{p + qi + rj + sk} = p - qi - rj -sk$ is the conjugate. Let $G^{a,b}  = \set{x \in \mathbb{H}^{a,b} : \norm{x} = 1}$ denote the set of unit elements of $\mathbb{H}^{a,b}$. We have the following.

\begin{proposition}[\cite{Morris-arithmetic}, Proposition 6.2.4]
Assume that $a,b > 0$ satisfy that $(0,0,0,0)$ is the only integer solution of $p^2 - aq^2 - br^2 +abs^2 = 0$. Then:

\begin{enumerate}
\item $G^{a,b} \cong SL(2,\R)$, and
\item $G^{a,b}(\Z) = \set{p + qi + rj + sk : p,q,r,s \in \Z} \cap G^{a,b}$ is a cocompact lattice in $G^{a,b}$.
\end{enumerate}
\end{proposition}

We thank Dave Witte Morris for communicating the following construction to us. Consider the representation of $G^{a,b} \times G^{a,b}$ on $\mathbb{H}^{a,b} \cong \R^4$ by:

\[ \rho(g_1,g_2)(h) = g_1h{g_2}^{-1}. \]

Notice that $\rho$ is an embedding of $G^{a,b}$ into $SL(4,\R)$, and that in the obvious basis $\langle 1,i,j,k \rangle$ of $\mathbb{H}^{a,b}$, the preimage of $SL(4,\Z)$ is exactly $G^{a,b}(\Z) \times G^{a,b}(\Z)$. Furthermore, the algebra $\mathbb{H}^{a,b}$ is isomorphic to the algebra of $2 \times 2$ real matrices, and in this form, the representation is left and right multiplication of $2 \times 2$ matrices by elements of $SL(2,\R)$. Therefore, if $\pm \alpha$ and $\pm \beta$ are the roots of $G^{a,b} \times G^{a,b}$, the weights of the representation $\rho$ are exactly:

\[ \frac{1}{2}\set{\alpha + \beta,\alpha - \beta,-\alpha+\beta,-\alpha-\beta}. \]

In particular, no two roots and/or weights are positively proportional. Consider:

\[ H = (G^{a,b} \times G^{a,b}) \ltimes_\rho \mathbb{H}^{a,b} \]
with the cocompact lattice $\Lambda = (G^{a,b}(\Z) \times G^{a,b}(\Z)) \ltimes_\rho \mathbb{H}^{a,b}(\Z)$. Then choose an $\R$-split Cartan subalgebra $A \subset G^{a,b} \times G^{a,b} \cong SL(2,\R) \times SL(2,\R)$, and consider the left multiplication action on $H / \Lambda$. By construction, the action is totally Cartan.

Finally, we again remark on the features of this action. Notice that the action has two rank one factors corresponding to each of the two possible $G^{a,b} / G^{a,b}(\Z)$ factors of $H / \Lambda$. Accordingly, the space $Y$ appearing in Theorem \ref{thm:big-headache} is exactly $Y = (G^{a,b} \times G^{a,b}) / (G^{a,b}(\Z) \times G^{a,b}(\Z))$. Therefore, the map $\sigma$ is simply the identity, and $\ker \sigma$ is trivial. This is therefore as far from a product as possible. Not only is the action not a product of the action on the rank one factors and the homogeneous action, the restriction of the $\R^2$ action to the fibers (ie, the $\ker \sigma$ action) does not even exist! All hyperbolicity on the rigid fibers comes from the motion along the rank one factors.

\subsubsection{Other variations}

We begin in the same way, by constructing the cocompact lattice $G(\Z)$ in $G \cong SL(2,\R)$. Now, consider another $\R$-split semsimple group $G'$. We say that a representation $\rho_1 : G' \to SL(N',\R)$ is a {\it uniform Cartan representation} if it has the following properties:

\begin{itemize}
\item the weights of the $\rho_1$ are all non-zero (Anosov)
\item each weight space of $\rho_1$ is one-dimensional (Cartan)
\item $\Gamma' = \rho_1^{-1}(SL(N',\Z))$ is a cocompact lattice in $G'$ (Uniform)
\end{itemize}

We thank Dave Witte Morris for communicating the following lemma to us.

\begin{lemma}
\label{lem:sltimessl-cartanrep}
There exist uniform Cartan representations of $SL(d,\R) \times SL(d,\R)$ for every $d \ge 2$.
\end{lemma}

The construction of such a representation relies on division algebras. See \cite[Section 6.8]{Morris-arithmetic} for a summary of division algebras and their relationships to the construction of lattices and representations.

\begin{remark}
The fact that the group $G$ in Lemma \ref{lem:sltimessl-cartanrep} is a product of simple groups is no accident. In fact, as explained in personal communication by Dave Witte Morris, if $G$ is simple, there does not exist a uniform Cartan representation of $G$.
\end{remark}

\begin{proof}[Sketch of proof]
Choose a central division algebra $D$ of degree $d$ defined over $\Q$ which splits over $\R$. These are generalizations of the quaternion algebras that appeared in Section \ref{sec:trivial-fiber-action}. The central condition requires that $\langle 1 \rangle$ is the center of $D$. Such algebras exist, and by a theorem of Wedderburn are all isomorphic over $\R$ to $d \times d$ matrices with real entries (in fact, the $d$ appearing here is how the degree is defined).

By choosing an appropriate basis in $D$ (one which generates a subring called an {\it order}), one may express the algebra as $d \times d$ matrices for which the $\Z$-matrices are a subring. As in Section \ref{sec:trivial-fiber-action}, the group of unit-determinant elements of $D$ are isomorphic to $SL(d,\R)$, and the combined left-and-right multiplication actions determine a uniform Cartan represenation of $SL(d,\R) \times SL(d,\R)$ on $\R^{d^2}$.
\end{proof}

Fix a uniform Cartan representation of a semisimple Lie group $G'$, and proceed as in Section \ref{sec:trivial-fiber-action}: let $\bar{G} = G \times G'$, and consider the representation of $H$ which is $\tilde{\rho} = \rho_0 \otimes \rho_1$ on $\R^{N\cdot N'}$. Then $\tilde{\rho}$ has the three properties listed above, since the weights of a tensor product are sums of weights from $\rho_0$ and $\rho_1$. Therefore, since there are $N$ weights of $\rho_0$ and $N'$ weights of $\rho_1$, there are $N \cdot N'$ distinct weights of $\tilde{\rho}$, so they must be one dimensional. Furthermore, even though $\rho_0$ has a zero weight, $\rho_1$ does not, so there are no zero weights of the representation. The third property follows since it is true for each of $\rho_0$ and $\rho_1$.

Now let $H = \bar{G} \ltimes_{\tilde{\rho}} \R^{N \cdot N'}$, so that $(G(\Z) \times G'(\Z)) \ltimes_{\tilde{\rho}} \Z^{N \cdot N'}$ is a cocompact lattice in $H$. Consider the action of a $\R$-split Cartan subgroup of $\bar{G}$ on $H$. Notice that since the weights of $\tilde{\rho}$ are sums of weights of a representation of $G$ and a representation of $G'$, they cannot be proportional to a root of $G$ or $G'$. Therefore, this action is totally Cartan. Furthermore, the root subgroups of $G$ and $G'$ generate the $\R$-split Cartan subgroup, so $H$ is generated by the coarse Lyapunov subgroups. Finally, this action has a rank one factor again, by factoring onto $G / G(\Z)$. Like the example in Section \ref{sec:basic-counterex}, the action of the subgroup preserving the fibers of this rank one factors, namely the $\R$-split Cartan subgroup of $G'$, is not Cartan, even though the action is virtually self-centralizing.



\subsubsection{Making the example non-homogeneous}

In this section, we outline another construction of an action which is Cartan but not totally Cartan. Like the example of Section \ref{sec:DWX}, this example has a non-homogeneous rank one factor which is not a direct factor of the system. We will rely on the existence of nontrivial cocycles over Anosov flows, making this construction unique to the setting of rank one factors.

Consider a totally Cartan translation action $\R^k \curvearrowright G / \Gamma$. Assume that there exists a group $H$ which is locally isomorphic to $SL(2,\R)$, and that there exists a surjective homomorphism $\pi : G \to H$ such that $\Lambda = \pi(\Gamma)$ is a lattice in $H$. Further, assume $\sigma : \R^k \to \R$ is a homomorphism such that $\pi(a \cdot x) = \begin{pmatrix}e^{\sigma(a)} \\ & e^{-\sigma(a)}\end{pmatrix}\pi(x)$. Then $\pi$ and $\sigma$ determine a rank one factor of $G / \Gamma$. Notice that this is exactly the situation appearing in the examples of this section.

Since $H$ is locally isomorphic to $SL(2,\R)$, it is semisimple and there exists a splitting homomorphism $\tau : \Lie(H) \to \Lie(G)$ such that $v' = \tau(v) \in \R^k$, where $v' = \tau(v)$ and $v$ is the generating vector field of the diagonal subgroup of $H$. Then $\R^k$ decomposes as a direct sum of $\ker \sigma$ and $\R \cdot v'$.

Now let $K = \ker \pi$, so that $K$ is a normal subgroup of $G$. Assume further that $\ad(v')$ is nontrivial on $\Lie(K)$ (which is true for the examples constructed in this section). Construct a new family of commuting vector fields in the following way. $\ker \sigma$ remains unchanged and gives rise to a $(k-1)$-dimensional action on $G / \Gamma$. Then let $\varphi : SL(2,\R) / \Lambda \to \R$ be any function which is not cohomologous to a constant, considered as a cocycle over the flow generated by $v$, and set $v''(x) = \varphi(\pi(x))v'$.

We claim that $v''$ and $\ker \sigma$ still commute. Indeed, if $w \in \ker \sigma$, then:
\[ [v'',w] = [(\varphi \of \pi) v',w] = (\varphi \of \pi) [v',w] + (w \cdot (\varphi \of \pi))v' = 0, \]

where $w \cdot f$ denotes the directional derivative of $f$ along $w$. Notice that the first term vanishes since $v'$ and $w$ commute, and the second term vanishes since $w$ moves along $\ker \pi$, over which $\varphi \of \pi$ is constant. Therefore we still have an $\R^k$-action.

We claim that this action is no longer totally Cartan. Indeed, consider the original action, and a weight $\beta$ such that $E^\beta \subset \Lie(K)$ and $\beta(v') \not= 0$ (this was assumed to exist and is a feature of the previous examples). Then we may write $\beta(b,t) = \bar{\beta}(b) + \lambda_\beta t$, where $(b,t)$ is the element $b + tv'$, $b \in \ker \sigma$ and $t \in \R$, $\bar{\beta} = \beta|_{\ker \sigma}$ and $\lambda_\beta \in \R \setminus \set{0}$. Notice also that $\bar{\beta} \not \equiv 0$, since otherwise $\beta$ would be proportional to a weight whose corresponding subspace projects to a stable or unstable distribution of the rank one factor.

If the new action were totally Cartan, then there would be a corresponding weight $\beta'$. Note that since the action has a rank one factor, we no longer have that the weights expand and contract at an exact rate. However, we still have that there exists a unique hyperplane $\ker \beta'$ for which pushforwards of vectors in $E^{\beta'}$ have uniformly bounded derivatives by Lemma \ref{lem:uniformly bounded derivative}.

We may find such hyperplanes at each $\R^k$-periodic orbit. At each periodic orbit $p$, the exponent $\beta'_p$ becomes $\beta'_p(b,t) = \bar{\beta}(b) + \lambda_\beta c_p t$, where $c_p$ is ratio of the periods of the vector fields $v$ and $\varphi v$ at $p$. Since $\lambda_\beta \not= 0$, and $\bar{\beta} \not \equiv 0$, $\beta'_p$ has kernel independent of $p$ if and only if $c_p$ is independent of $p$. By the Livsic theorem, this holds if and only if $\varphi$ is a cohomologous to a constant, which we have assumed it is not. Therefore, since the kernel of $\beta'_p$ is not unique, the higher rank time change is not totally Cartan.

\begin{remark}
If the flow along the $SL(2,\R) / \Lambda$ factor could be perturbed in a way other than a time change, it would be possible to construct an example of this form which is still totally Cartan. A totally Cartan example with rank one factors which are not direct appears in the next section.
\end{remark}

\subsection{Constructing non-homogeneous, non-direct factors}
\label{sec:DWX} 
We briefly summarize a recent example of Damjanovic, Wilkinson and Xu, which appeared in \cite{DWX22}. In spirit, the example is similar to that of the previous section, but remains totally Cartan. They consider a 6-dimensional nilpotent Lie group which is isomorphic to $N = \Heis \times \Heis$, where $\Heis$ is the standard 3-dimensional Heisenberg group. Rather than choosing the usual lattice, which would be to take the $\Z$-points of the usual realization of $\Heis$ as a matrix group, they take a lattice $\Lambda$ which does not respect the product structure in  the description of $N$. This procedure was first introduced by Borel and communicated by Smale in \cite{Smale}, in the discussion following Theorem 3.7 of that paper.

In \cite{DWX22}, the authors consider a partially hyperbolic affine automorphism $F : N / \Lambda \to N /\Lambda$, and the centralizer of its perturbations. $F$ descends to an Anosov automorphism on the maximal torus factor of $N /\Lambda$. The induced map is given by $A \times A$ on $\mathbb{T}^4$, where $A \in SL(2,\Z)$. The fiber bundle structure of $N /\Lambda$ has $\mathbb{T}^2$ fibers, and the map from one fiber to another is isometric, and in particular affine. The centralizer of $F$ contains  a totally Cartan $\Z^2$-action.  Theorem \ref{thm:big-headache} applies to any totally Cartan perturbation of this $\Z^2$-action.  

Like the example of the previous section, the action on $N / \Lambda$ is by $\Z^2$, and the action on the $\mathbb{T}^4$ is also a $\Z^2$ action. Hence, the homomorphism $\sigma$ which appears in Theorem \ref{thm:big-headache} is an isomorphism and the $\ker \sigma$ action is trivial, even after passing to the suspension. Furthermore, among the cases they consider, they show the existence of non-homogeneous perturbations which still belong to totally Cartan $\Z^2$-actions. As promised by Theorem \ref{thm:big-headache}, they conclude using other methods that such actions always have an invariant smooth fibration, and are affine with respect to a smooth homogeneous structure along the fibers.

\vspace{1in}

\phantom{phantom}

\part[Rank One Factors and Transitivity of Hyperplane Actions]{\Large Rank One Factors and Transitivity of Hyperplane Actions}

 Recall that $\R^k \curvearrowright X$ is a cone transitive, $C^{1,\theta}$ or $C^\infty$ totally Cartan action with orientable coarse Lyapunov foliations. We also assume throughout this part that we fix some Riemannian metric with which we measure distances and lengths.

\section{Basic properties of Lyapunov central manifolds}
\label{sec:construction}
 Fix a Lyapunov half-space $\alpha$ with Lyapunov hyperplane $H$. In this section, we build a model for a non-Kronecker rank one factor of $\R^k \curvearrowright X$ assuming that the $H$ action is not transitive.

The plan to construct the factor is as follows: the non-existence of dense $H$-orbits provides the existence of $H$-periodic orbits. We then show that the fixed point sets for the elements of $H$ fixing the periodic point are smooth manifolds. Quotients of these manifolds will provide candidates for the rank one factor.

\subsection{Existence of $H$-periodic orbits}

In this section, we describe certain features of $H$-orbits when there does not exist a dense $H$-orbit. Call a subset $S \subset X$ $\ve$-dense if $\bigcup_{x \in S} B(x,\ve) = X$. Recall that a point $p$ is periodic for a group action $\mc H \curvearrowright X$ if the orbit $\mc H \cdot p$ is compact, and call $\Stab_{\mc H}(p) \subset \mc H$ the periods of $p$ (see Definition \ref{def:periodic}).

We first prove the following very basic topological lemma:

\begin{lemma}
\label{lem:dense-condition}
Let $B \curvearrowright X$ be a group action of a group $B$ by homeomorphisms of a compact metric space $X$. Assume that for every $\ve > 0$, the set $\set{x \in X : B \cdot x\mbox{ is }\ve\mbox{-dense}}$ is dense in $X$. Then there exists a dense $B$-orbit.
\end{lemma}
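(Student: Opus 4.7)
The plan is a standard application of the Baire category theorem. Fix $\varepsilon > 0$ and let
\[ D_\varepsilon = \{ x \in X : B \cdot x \text{ is } \varepsilon\text{-dense in } X \}. \]
The main point is to show that $D_\varepsilon$ is open; combined with the hypothesis that $D_\varepsilon$ is dense, one then applies Baire category to $\bigcap_{n \in \N} D_{1/n}$ to obtain a point whose orbit is $1/n$-dense for every $n$, hence dense.

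To see that $D_\varepsilon$ is open, fix $x \in D_\varepsilon$. Since $\{B(b \cdot x, \varepsilon) : b \in B\}$ is an open cover of the compact space $X$, extract a finite subcover corresponding to elements $b_1,\dots,b_n \in B$. The continuous function $f(z) = \min_i d(z,b_i \cdot x)$ satisfies $f(z) < \varepsilon$ on $X$, so by compactness there exists $\eta > 0$ with $f(z) < \varepsilon - \eta$ for all $z \in X$. Using that each $b_i$ acts as a homeomorphism, choose a neighborhood $U$ of $x$ such that $d(b_i \cdot y, b_i \cdot x) < \eta/2$ for all $y \in U$ and all $i = 1,\dots,n$. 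Then for any $y \in U$ and any $z \in X$, the triangle inequality gives
\[ \min_i d(z, b_i \cdot y) \le \min_i d(z, b_i \cdot x) + \max_i d(b_i \cdot x, b_i \cdot y) < (\varepsilon - \eta) + \eta/2 < \varepsilon, \]
so $B \cdot y$ is $\varepsilon$-dense and $U \subset D_\varepsilon$.

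With $D_\varepsilon$ open and dense for every $\varepsilon > 0$, the sets $D_{1/n}$ are open and dense, so by the Baire category theorem $\bigcap_{n=1}^\infty D_{1/n}$ is dense in $X$, and in particular nonempty. Any point $x$ in this intersection satisfies that $B \cdot x$ is $1/n$-dense for every $n \in \N$, which means $B \cdot x$ is dense in $X$.

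I do not expect any serious obstacles: the only nontrivial ingredient is the compactness argument ensuring openness of $D_\varepsilon$, which is routine. Everything else is the standard Baire category paradigm used throughout the paper (compare e.g.\ Lemma \ref{lem:dense-is-gdelta}).
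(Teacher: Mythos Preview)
Your proof is correct and takes essentially the same Baire-category approach as the paper. The only minor difference is that the paper, rather than proving $D_\varepsilon$ itself is open via your compactness argument, replaces $D_{1/n}$ by the manifestly open supersets $Y_n = \{y : d(b\cdot y, x_\ell) < 1/n \text{ for some } b\in B, \text{ all } \ell \le n\}$ built from a fixed countable dense set $\{x_\ell\}$, and then observes that any point in $\bigcap_n Y_n$ has dense orbit.
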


\begin{proof}
Pick a countable dense subset $\set{x_n : n \in \N} \subset X$. Then define 

\[ Y_{n} = \set{ y \in X : \mbox{for all }1\le \ell \le n,\mbox{ there exists }b \in B\mbox{ such that }d(b\cdot y,x_\ell) < 1/n}.\]

By definition, any $1/n$-dense orbit will be contained in $Y_{n}$, so $Y_{n}$ is dense by assumption. It is also open by construction. Therefore, $\bigcap_{n=1}^\infty Y_n$ is residual, and therefore nonempty by the Baire Category Theorem. It is easy to see that any element of this intersection has a dense orbit.
\end{proof}

\begin{corollary}
\label{cor:H-periodic}
Let $\R^k \curvearrowright X$ be a cone transitive, totally Cartan action. If there does not exist a dense $H$-orbit, then there exists $\ve >0$ such that every point whose $\R^k$-orbit is $\ve$-dense and periodic  is $H$-periodic. Furthermore, the set of $H$-periodic points are dense.

\end{corollary}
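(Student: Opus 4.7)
The plan is to establish (a) by contradiction via Lemma \ref{lem:dense-condition}, and then to deduce (b) from (a) together with Theorem \ref{thm:dense-periodic} and the standing transitivity assumption.

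For (a), suppose for contradiction that for every $\ve > 0$ there exists an $\ve$-dense $\R^k$-periodic point $p_\ve$ which is not $H$-periodic. The key observation is a dichotomy at each $\R^k$-periodic orbit $\R^k \cdot p \cong \R^k / \Lambda$ (a $k$-torus, with $\Lambda := \Stab_{\R^k}(p)$): since $H$ is connected of codimension one in $\R^k$, its image in this torus is a connected closed subgroup of dimension either $k-1$ or $k$. The first case corresponds exactly to $p$ being $H$-periodic, while the second forces $H \cdot p$ to be dense in $\R^k \cdot p$. Applied to each $p_\ve$, this gives that $H \cdot p_\ve$ is dense in $\R^k \cdot p_\ve$ and therefore $\ve$-dense in $X$. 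Since $\R^k$ is abelian, this minimality property is preserved under the $\R^k$-action, so the entire orbit $\R^k \cdot p_\ve$ lies in the set $S_\ve := \set{x \in X : H \cdot x \text{ is } \ve\text{-dense in } X}$. Taking the union over $\delta \in (0, \ve]$, the set $S_\ve$ contains $\R^k \cdot p_\delta$ for every $\delta$ and hence contains $\delta$-dense subsets for arbitrarily small $\delta$; it is therefore dense in $X$. Applying Lemma \ref{lem:dense-condition} with $B = H$ then produces a dense $H$-orbit, the desired contradiction.

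For (b), let $\ve_0$ be as provided by (a), $V \subset X$ any nonempty open set, and fix $x \in V$ and $r > 0$ with $B(x, r) \subset V$. By the standing transitivity assumption, pick $q \in X$ with dense $\R^k$-orbit and, by compactness, choose $a_1,\dots,a_N \in \R^k$ with $\set{a_i \cdot q}_{i=1}^N$ being $\delta/2$-dense in $X$ for any prescribed $\delta > 0$. Theorem \ref{thm:dense-periodic} and continuity of each $a_i$ then let us perturb $q$ to a nearby $\R^k$-periodic point $p$ so that $\set{a_i \cdot p}$ remains $\delta$-dense, producing a $\delta$-dense $\R^k$-periodic orbit. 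Choosing $\delta < \min(r, \ve_0)$, this orbit meets $B(x, r) \subset V$ and is $\ve_0$-dense, hence by (a) is $H$-periodic; in particular $V$ contains an $H$-periodic orbit.

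The main technical point is the upgrade from $\ve$-denseness of $S_\ve$ (which is immediate from a single orbit $\R^k \cdot p_\ve$) to full denseness, which is what allows Lemma \ref{lem:dense-condition} to be invoked. This is precisely where one must exploit that the contradiction hypothesis supplies non-$H$-periodic $p_\delta$ for arbitrarily small $\delta$, not merely a single value.
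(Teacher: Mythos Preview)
Your proof is correct and follows essentially the same approach as the paper: argue by contradiction using the dichotomy that a codimension-one hyperplane $H$ is either rational (giving $H$-periodicity) or has dense image in the periodic torus, then apply Lemma~\ref{lem:dense-condition}; for the second part, use that sufficiently $\ve$-dense periodic orbits are $H$-periodic and that $H$-periodicity propagates along the full $\R^k$-orbit. You spell out two points the paper leaves implicit---the torus dichotomy and the construction of $\delta$-dense periodic orbits from transitivity plus Theorem~\ref{thm:dense-periodic}---but the structure is identical.
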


\begin{proof}
Assume otherwise. Then for every $\ve > 0$, choose an $\R^k$-periodic point $p$  such that the $\R^k$-orbit of $p$ is $\ve$-dense, but $p$ is not $H$-periodic, which exists by assumption. Since the $\R^k$-orbit of $p$ is closed, there is a canonical rational structure of $\R^k$ coming from identifying $\Stab_{\R^k}(p)$ with $\Z^k$. Using this rational structure, we see that the $H$-orbit of $p$ is dense inside the $\R^k$-orbit of $p$, since $H$ will correspond to an irrational codimension 1 hyperplane. Since this holds for every $\ve$, the union of such orbits is dense. Therefore, the set of $\ve$-dense $H$-orbits are also dense. 
 Hence, by Lemma \ref{lem:dense-condition}, there exists a dense $H$-orbit, contradicting our assumption.

To see that the set of $H$-periodic orbits are dense, notice that once a point is $H$-periodic, so is every point on its $\R^k$-orbit. Since for every $\ve > 0$, there exists an $H$-periodic orbit whose $\R^k$-orbit is $\ve$-dense by  Corollary \ref{cor:eps-dense}, the set of $H$-periodic orbits are dense.
\end{proof}




\subsection{Fixed point sets as manifolds}

Recall that, in a weak sense, $H =$ ``$\ker \alpha$,'' even though $\alpha$ may itself not correspond to a Lyapunov exponent. That is, $H$ is the boundary of the half-space determined by $\alpha(a) < 0$ which contracts a foliation $W^\alpha$. In general, $-\alpha$ may not be an element of $\Delta$, if it is not, we omit it from the legs used in constructing paths throughout.

If $x \in X$, let $M^\alpha(x)$ be the set of points which can be reached from $x$ by a broken path in the $\alpha$ and $-\alpha$ (if $-\alpha \in \Delta$) coarse Lyapunov foliations and $\R^k$-orbits.

\begin{proposition}
\label{prop:fix-point-set}
Let $\R^k \curvearrowright X$ be a $C^r$ totally Cartan action with $r = (1,\theta)$ or $r = \infty$, and let $a \in H$ not belong to any other Lyapunov hyperplane. Then if $\Fix(a)$ is nonempty, it is a compact, embedded $C^r$ submanifold of $X$, which is $(k+2)$- or $(k+1)$-dimensional, depending on whether  or not $-\alpha \in \Delta$. Furthermore, $\Fix(a) = \bigsqcup_{i=1}^n M^\alpha(p_i)$ for some finite collection $\set{p_i} \subset X$.
\end{proposition}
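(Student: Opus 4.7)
The plan is two-fold: first identify $\Fix(a)$ set-theoretically as a union of the saturation sets $M^\alpha(y)$, then upgrade this description to a $C^r$ manifold structure via an implicit function theorem / center manifold argument at each fixed point of $a$.

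First I would show that $M^\alpha(y) \subset \Fix(a)$ whenever $y \in \Fix(a)$. Since $\R^k$ is abelian, $\R^k \cdot y \subset \Fix(a)$ trivially. For $z \in W^\alpha(y)$, I apply Lemma \ref{lem:identity-return}(a) to the constant sequence $b_n \equiv a$ in $H$: the hypothesis $b_n \cdot y \to y$ is tautological, and the conclusion gives $a \cdot z = z$, so $W^\alpha(y) \subset \Fix(a)$. If $-\alpha \in \Delta$, its coarse Lyapunov foliation shares the same Lyapunov hyperplane $H$ (by Lemma \ref{lem:lyap-hyp}, since the characterization of $H$ depends only on the equicontinuity of the derivative cocycle, independent of sign), so the same argument yields $W^{-\alpha}(y) \subset \Fix(a)$. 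Iterating these saturations together with $\R^k$-orbits produces $M^\alpha(y) \subset \Fix(a)$.

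Next I would analyze $da$ at a fixed point $y$. By the hypothesis that $a$ lies on no Lyapunov hyperplane other than $H$, $\beta(a) \neq 0$ for every $\beta \neq \pm\alpha$; hence $a$ is normally hyperbolic with respect to the Hölder distribution $E^c := T\mc O \oplus T\mc W^\alpha \oplus T\mc W^{-\alpha}$ (or the corresponding sum without $-\alpha$) of dimension $k+2$ (respectively $k+1$), while every other coarse Lyapunov bundle is uniformly expanded or contracted by $a$. The pointwise fixing established in the previous step forces $da(y)|_{E^c(y)} = \id$, and all other eigenvalues of $da(y)$ lie off the unit circle; in particular the transverse complement $E^h(y) := \bigoplus_{\beta \neq \pm\alpha} T\mc W^\beta(y)$ satisfies $(da(y) - I)|_{E^h(y)}$ invertible.

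Now I would apply the implicit function theorem in exponential coordinates at $y$: writing $a$ in these coordinates as a local $C^r$ map $A: T_yX \to T_yX$ with $A(0) = 0$, consider $G(v) = A(v) - v$. Projecting onto $E^h(y)$, the derivative $d(\pi^h G)(0)|_{E^h(y)}$ is invertible, so $(\pi^h G)^{-1}(0)$ is locally a $C^r$ graph over $E^c(y)$ of dimension $k+2$ (or $k+1$). Since $M^\alpha(y) \cap U \subset \Fix(a) \cap U \subset (\pi^h G)^{-1}(0) \cap U$ contains a topological neighborhood of $y$ tangent to $E^c(y)$ (built by the saturation-chart construction as in Lemma \ref{lem:extending-charts}), the three sets agree locally and $h \equiv 0$ on a neighborhood of $0$ in $E^c(y)$. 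Thus $\Fix(a) \cap U$ is a $C^r$ submanifold equal to $M^\alpha(y) \cap U$, so each $M^\alpha(y)$ is a $C^r$ embedded submanifold and is open in $\Fix(a)$. The distinct $M^\alpha$-classes partition $\Fix(a)$, so each is also closed, and compactness of $\Fix(a) \subset X$ yields the finite decomposition $\Fix(a) = \bigsqcup_{i=1}^n M^\alpha(y_i)$.

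The main obstacle will be justifying the $C^r$ conclusion despite $E^c$ being only Hölder as a global distribution. This is resolved because the implicit function theorem argument above is carried out at a single fixed point $y$, where $E^c(y)$ and $E^h(y)$ are honest invariant subspaces of $T_yX$ with the correct spectral gap at $1$; only the $C^r$ smoothness of $a$ itself — not of the splitting transverse to $y$ — enters the argument.
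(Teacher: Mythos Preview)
Your proposal is correct and follows essentially the same route as the paper: normal forms (packaged via Lemma~\ref{lem:identity-return}(a)) to get $M^\alpha(y)\subset\Fix(a)$, then an implicit function/submersion argument in exponential coordinates at a fixed point, then invariance of domain to identify $M^\alpha(y)$, $\Fix(a)$, and the implicitly defined manifold locally, and finally compactness for the finite decomposition. Two small cleanups: Lemma~\ref{lem:extending-charts} treats charts inside a single stable manifold and does not directly produce the $(\alpha,-\alpha,\R^k)$ chart you invoke---the paper builds that map by hand (it reappears later as Lemma~\ref{lem:psi-coords})---and the stray ``$h\equiv 0$'' refers to a graph function you never defined.
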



\begin{proof}
Suppose that $p \in \Fix(a)$. If $-\alpha \not\in \Delta$, we let $W^{-\alpha}(p) = \set{p}$ and $E^{-\alpha} = \set{0}$ be the 0-dimensional foliation into points and trivial bundle, respectively. We claim that $W^\alpha(p),W^{-\alpha}(p) \subset \Fix(a)$. Indeed, $W^\alpha$ and $W^{-\alpha}$ are each contracting foliations for some elements $b,-b\in \R^k$. Therefore, there exist normal forms coordinates $\phi_y^\pm : \R \to W^{\pm \alpha}$ which intertwine the dynamics with linear dynamics (see Section \ref{sec:normal-forms}). Since $a \in H$, $\norm{da|_{W^{\pm \alpha}}(p)} = 1$.  Indeed, if not, either $a$ or $-a$ would exponentially contract $W^{\pm \alpha}$, and since $a$ fixes $p$, this would violate Lemma \ref{lem:uniformly bounded derivative}. Therefore, since the dynamics is intertwined with linear dynamics and has derivative equal to 1, it must be the identity. That is, $W^{\pm \alpha}(p) \subset \Fix(a)$. One may see this as a simple case of Lemma \ref{lem:identity-return}, with an exact return rather than a sequence of returns. Therefore, saturating $p$ with $\alpha$, $-\alpha$ and $\R^k$ orbits preserves the property of being fixed by $a$, and $M^\alpha(p) \subset \Fix(a)$.

Use the exponential map and coordinates for $T_pM$ subordinate to the splitting $T_pM = E^u_a \oplus E^s_a \oplus E^0_a$ (where $E^0_a = T\R^k \oplus E^\alpha \oplus E^{-\alpha}$) to conjugate the dynamics of $a$ to a map of the form
\begin{equation} \label{eq:normal-hyperbolicity-loc} F(x,y,z) = (L_1x,L_2y,z) + (f_1(x,y,z),f_2(x,y,z),f_3(x,y,z)), \end{equation}
where $L_1$ is a linear transformation such that $\norm{L_1^{-1}} <  1$, $L_2$ is a linear transformation such that $\norm{L_2} < 1$ and each $f_i : \R^{\dim X} \to \R^{\dim E^*_a}$ (where $* = s,u,0$ appropriately) satisfy $df_i(0) = 0$ and $f_i(0) = 0$. Notice that if a point $(x,y,z)$ is fixed by $F$, then $(L_1 - \id)x = -f_1(x,y,z)$, $(L_2 - \id)y = -f_2(x,y,z)$ and $f_3(x,y,z) = 0$. Therefore, if \[g(x,y,z) = ((\id - L_1)^{-1} f_1(x,y,z)-x, (\id -L_2)^{-1} f_2(x,y,z)-y),\] then the fixed point set lies inside $g^{-1}(0)$.

Near 0, $g^{-1}(0)$ is a submanifold by the submersion theorem, since $g'(0) = \begin{pmatrix} -\id &0 & 0\\0 & -\id & 0  \end{pmatrix}$ in block form. Furthermore, its dimension is exactly $\dim E^0_a = k+2$  (or $k+1$ if $-\alpha \not\in \Delta$). We claim that near $0$, $g^{-1}(0) = \Fix(a)$. That $\Fix(a) \subset g^{-1}(0)$ is clear from  construction, so we must show the opposite. We know that $W^{\pm \alpha}$-leaves and $\R^k$-orbits lie inside $\Fix(a)$. Therefore, we can construct a locally injective map from $\R \times \R \times \R^k \to \Fix(a) \subset g^{-1}(0)$ (or from $\R \times \R^k$ if $-\alpha \not\in \Delta$). By invariance of domain, the map is a local homeomorphism, and $\Fix(a) = g^{-1}(0)$ locally. Since this argument can be done at every point of $\Fix(a)$, we get that $M^\alpha(p)$ is a $C^r$ submanifold of $X$. It is embedded since the size of the neighborhood on which the dynamics is described in coordinates \eqref{eq:normal-hyperbolicity-loc} is uniform in $p$, and the fixed point set $\Fix(a)$ intersects this neighborhood in a unique $k+2$- (or $k+1$-)dimensional disc. Hence, the connected components of $\Fix(a)$ are of the form $M^\alpha(p)$ and they are isolated (again, since the size of the neighborhood in which $\Fix(a)$ intersects in a disc is uniform), so there are finitely many. Hence they are compact, embedded submanifolds (they are compact since $\Fix(a)$ is always a closed set).
\end{proof}

\begin{lemma}
\label{lem:TNS}
If $\alpha \in \Delta$, but $-\alpha \not\in \Delta$, then $H$ has a dense orbit.
\end{lemma}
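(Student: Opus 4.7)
The plan is to argue by contradiction. Suppose $H$ admits no dense orbit. By Corollary \ref{cor:H-periodic}, there is a point $p \in X$ that is $H$-periodic, so $\Stab_H(p)$ is a lattice in $H$. Choose $a \in \Stab_H(p)$ regular, i.e.\ avoiding every other Lyapunov hyperplane---this is possible since each other Lyapunov hyperplane meets $H$ in a proper subspace. Proposition \ref{prop:fix-point-set} then gives $M := M^\alpha(p) \subset \Fix(a)$ as a compact $C^r$ submanifold of $X$; since $-\alpha \notin \Delta$, its dimension is $k+1$. The set $M$ is $\R^k$-invariant, and $TM$ splits continuously as $TM = T\mc{O} \oplus TW^\alpha$, where $T\mc{O}$ is the tangent to the $\R^k$-orbit.

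I would then pick a regular Anosov element $b \in \R^k$ with $\alpha(b) > 0$. Because $b$ commutes with the $\R^k$-action, its differential preserves the splitting and acts as the identity on $T\mc{O}$ in the trivialization by the fundamental vector fields $X_1, \dots, X_k$ of the action. Because $b$ is Anosov and the action is totally Cartan, the coarse Lyapunov bundle $TW^\alpha$ sits inside the uniformly expanded unstable bundle $E^u_b$, so there exist $C, \mu > 0$ with $\|(Db^n)|_{TW^\alpha}(x)\| \ge C e^{\mu n}$ for all $x \in M$ and $n \ge 0$. Define a continuous positive density $\omega$ on $M$ adapted to the splitting by declaring $\omega(X_1, \dots, X_k, e) = 1$ for a continuous unit section $e$ of $TW^\alpha$, passing to the orientation double cover of $M$ if necessary to handle the non-orientability of $TW^\alpha$ (the lifted action remains totally Cartan with the same weights).

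For this $\omega$, the Jacobian of $b^n$ with respect to $\omega$ equals $\|(Db^n)|_{TW^\alpha}(x)\|$, so that $(b^n)^* \omega = \|(Db^n)|_{TW^\alpha}\|\,\omega$ pointwise. Since $b^n : M \to M$ is a diffeomorphism of a compact manifold, change of variables yields
\[
\omega(M) = \int_M (b^n)^* \omega = \int_M \|(Db^n)|_{TW^\alpha}\|\,\omega \ge C e^{\mu n}\, \omega(M),
\]
which forces $C e^{\mu n} \le 1$, a contradiction as $n \to \infty$. Thus $H$ must have a dense orbit. The main technical point is the uniform lower bound on $\|(Db^n)|_{TW^\alpha}\|$, which follows from the uniform expansion of the Anosov element $b$ on its unstable bundle; the geometric intuition is that the absence of the opposite weight $-\alpha$ would force $b$ to uniformly expand the total volume of the compact submanifold $M$, which is impossible.
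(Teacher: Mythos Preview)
Your proof is correct, but it takes a different route from the paper. Both arguments start identically: assume no dense $H$-orbit, invoke Corollary~\ref{cor:H-periodic} to obtain an $H$-periodic point $p$, and use Proposition~\ref{prop:fix-point-set} to produce the compact $(k+1)$-dimensional submanifold $M^\alpha(p)$, which is $\R^k$-invariant and whose tangent bundle splits as $T\mc O \oplus TW^\alpha$.

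From there the paper argues via a Lyapunov-type distance function: it defines $\delta(x)$ as the shortest $W^\alpha$-leaf distance from $x$ to the compact $\R^k$-orbit of $p$, observes that $\delta$ is continuous on $M^\alpha(p)$, and that any element with $\alpha(a)<0$ strictly decreases $\delta$ where $\delta>0$. Since $\delta$ attains its maximum on the compact manifold, one is forced to $\delta\equiv 0$, collapsing $M^\alpha(p)$ to the $k$-dimensional $\R^k$-orbit --- a dimension contradiction. Your approach instead builds a continuous density $\omega$ adapted to the splitting, observes that an Anosov element $b$ with $\alpha(b)>0$ has Jacobian $\norm{(Db^n)|_{TW^\alpha}}\ge Ce^{\mu n}$ with respect to $\omega$ (since $(b^n)_*X_i=X_i$ on the orbit factor), and derives a contradiction from $\omega(M)=\int_M (b^n)^*\omega\ge Ce^{\mu n}\omega(M)$.

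Both arguments exploit the same geometric obstruction: a flow on a compact manifold cannot uniformly expand (or contract) a complementary line bundle to an invariant codimension-one foliation. The paper's version is slightly more elementary, needing only a continuous function and a maximum principle; yours trades that for a clean global volume computation. A minor remark: you can avoid the orientation double cover entirely by defining $\omega$ directly as a density (taking absolute values), since densities do not require orientability.
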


\begin{proof}
Assume otherwise. Then by Corollary \ref{cor:H-periodic}, there is an $\R^k$- and $H$-periodic point, $p$. Since the set of elements of $H$ fixing $p$ must be a lattice in $H$, we may pick some $a \in H$ not contained in any other Lyapunov hyperplane for which $a \cdot p = p$. By Proposition \ref{prop:fix-point-set}, the connected component of $\Fix(a)$ containing $p$ is a  compact, $(k+1)$-dimensional manifold $M^\alpha(p)$ which is saturated by $\R^k$-orbits. Then pick any one-parameter subgroup $\set{tb} \subset \R^k$ such that $\alpha(b) < 0$. This flow must preserve the compact manifold $M^\alpha(p)$, contract  the $W^\alpha$-leaves, and be isometric on $\R^k$-orbits.

We claim that such flows do not exist on compact manifolds.  Recall that $p$ is  $\R^k$- and $H$-periodic, and hence its orbit under each group is compact. Recall that each $x \in M^\alpha(p)$ is reached from $p$ by a path using $\R^k$-legs, $W^\alpha$-legs and $W^{-\alpha}$-legs. As there are no $W^{-\alpha}$-legs by assumption, $x$ is connected to the $\R^k$-orbit of $p$ by a $W^\alpha$-leaf, since the $\R^k$-orbits and $W^\alpha$-leaves are transverse and complementary, and the $\R^k$-action preserves the $W^\alpha$-foliation (which allows one to exchange the order of $W^\alpha$-legs and $\R^k$-legs). Let $\delta : M^\alpha(p) \to \R$ be defined by letting $\delta(x)$ be the shortest distance along the $W^\alpha$-leaf connecting $x$ to the $\R^k$ orbit of $p$. Since such a connection exists, varies continuously, and the intersections are discrete in the leaf topology of $W^\alpha$ by transversality, $\delta$ is a continuous function. Furthermore, if $\delta(x) \not= 0$, $\delta((na) \cdot x) < \delta(x)$ for sufficiently large $n$ whenever $\alpha(a) < 0$. Thus, the maximum of $\delta$ cannot be positive, since applying $-na$ will yield a larger value. Hence $\delta \equiv 0$, and $M^\alpha(p)$ consists of a single $\R^k$-orbit. This is a contradiction.
\end{proof}

\begin{remark}
Lemma \ref{lem:TNS} is reminiscent of arguments used to study {\it totally nonsymplectic (TNS) actions}. The TNS condition is formulated using Lyapunov exponents with respect to an invariant measure, asking them to never be negatively proportional. One can conclude from this that the corresponding  Weyl chamber wall is ergodic. 
The proof is more complicated and uses Pinsker partitions (this condition and argument is now widely used and first appeared in \cite{katok_spatzier1996}). Our proof is a spiritual sibling in the topological category.  
\end{remark}

From now on, we assume that $\pm \alpha \in \Delta$, and that $H$ does not have a dense orbit.

\begin{proposition}
\label{prop:factor-by-H}
Let $\R^k \curvearrowright X$ be a $C^r$, totally Cartan action, with $r = (1,\theta)$ or $r=\infty$. Suppose that $p$ is both $H$- and $\R^k$-periodic. Then the $H$ action on $M^\alpha(p)$ factors through a free torus action. If $Y$ denotes the quotient of $M^\alpha(p)$ by $H$-orbits, then $Y$ has a canonical $C^r$ manifold structure, 
and the projection from $M^\alpha(p)$ to $Y$ is $C^r$.
\end{proposition}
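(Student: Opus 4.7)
The plan is to identify the kernel of the $H$-action on $M^\alpha(p)$ with a lattice $\Lambda_0 \subset H$, show the induced torus action is free, and then invoke the slice theorem for free compact group actions. The main work is in the second step, propagating the fact that $\Lambda_0$ fixes $p$ to the statement that it fixes all of $M^\alpha(p)$ pointwise.

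First, set $\Lambda_0 := \Stab_H(p)$. Since the $\R^k$-action is locally free, $\Lambda_0$ is a discrete subgroup of $H$; since $p$ is $H$-periodic, $H \cdot p$ is compact, so $\Lambda_0$ is cocompact in $H \cong \R^{k-1}$, hence a lattice $\cong \Z^{k-1}$. Next, I would show that every $a \in \Lambda_0$ acts as the identity on $M^\alpha(p)$. The key input is Lemma~\ref{lem:identity-return}(a): taking the constant sequence $b_n = a$, the hypothesis $b_n \cdot p \to p$ is trivial and the conclusion yields $a \cdot y = \lim_n b_n \cdot y = y$ for every $y \in W^\alpha(p)$. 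The identical argument applied to the $-\alpha$-foliation (whose Lyapunov hyperplane is also $H$) shows $a$ fixes $W^{-\alpha}(p)$ pointwise. Because $a \in \Stab_{\R^k}(p)$, the element $a$ also fixes the entire $\R^k$-orbit of $p$ pointwise. Since $M^\alpha(p)$ is swept out from $p$ by alternately following $\R^k$-orbits and $W^{\pm\alpha}$-legs, iterating the same Lemma~\ref{lem:identity-return}(a) argument at each new fixed point propagates the identity action to all of $M^\alpha(p)$.

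For freeness of $K := H/\Lambda_0 \cong \mathbb{T}^{k-1}$: fix $q \in M^\alpha(p)$ and $h \in \Stab_H(q)$. Applying the previous step with $q$ in place of $p$ shows that $h$ fixes $M^\alpha(q) = M^\alpha(p)$ pointwise, hence in particular fixes $p$, so $\Stab_H(q) \subset \Lambda_0$; the reverse inclusion is from the previous step. Thus $\Stab_H(q) = \Lambda_0$ for every $q \in M^\alpha(p)$, so the induced $K$-action on $M^\alpha(p)$ is free. Since the $\R^k$-action on $X$ is $C^r$, so is the $K$-action on $M^\alpha(p)$; a free action of a compact Lie group on a $C^r$ manifold is automatically proper, and the standard slice theorem (equivalently, the principal bundle theorem for free compact group actions) endows $Y = M^\alpha(p)/K$ with a unique $C^r$ manifold structure, of dimension $(k+2) - (k-1) = 3$, for which $\pi : M^\alpha(p) \to Y$ is a $C^r$ principal $K$-bundle, and in particular a $C^r$ submersion.

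The main obstacle is the second step: without Lemma~\ref{lem:identity-return}(a), one only knows a priori from the normal forms argument of Proposition~\ref{prop:fix-point-set} that an arbitrary $a \in \Lambda_0$ acts on each $W^{\pm\alpha}$-leaf through $p$ with derivative of modulus $1$, which leaves open the orientation-reversing possibility $t \mapsto -t$ in normal-forms coordinates. Using Lemma~\ref{lem:identity-return}(a) bypasses this sign ambiguity by producing the identity directly through a limit argument, after which the extension to all of $M^\alpha(p)$ becomes straightforward by iteration.
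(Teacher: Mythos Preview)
Your proof is correct and reaches the same conclusion, but it takes a slightly more roundabout route than the paper. The paper's argument is simply: choose generators of $\Lambda_0 = \Stab_H(p)$ that do not lie on any other Lyapunov hyperplane (possible since these hyperplanes meet $H$ in finitely many proper subspaces and a lattice basis can be pushed off them by elementary operations), and then invoke Proposition~\ref{prop:fix-point-set} directly: for each such generator $a$, one has $M^\alpha(p) = \Fix(a)$, so $\Lambda_0 \subset \Stab_H(q)$ for every $q \in M^\alpha(p)$. The symmetric argument at $q$ gives the reverse inclusion, and one finishes exactly as you do with the quotient by a free compact group action. No leaf-by-leaf iteration is needed, since Proposition~\ref{prop:fix-point-set} already identifies the whole of $M^\alpha(p)$ as the fixed-point set in one step.

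Your stated ``main obstacle'' is not actually an obstacle: the orientation-reversing possibility $t \mapsto -t$ cannot occur, because $\R^k$ is connected and the derivative of $ta$ on the one-dimensional bundle $E^{\pm\alpha}$ is a continuous path in $\R\setminus\{0\}$ starting at $1$ when $t=0$; hence it is always positive. So the normal-forms argument in Proposition~\ref{prop:fix-point-set} already gives derivative exactly $1$, not merely modulus $1$, and there is no need to route through Lemma~\ref{lem:identity-return}(a) to resolve a sign. (Separately, the proof of Lemma~\ref{lem:identity-return}(a) invokes Proposition~\ref{lem:kalinin-lem}, which has a ``far from other hyperplanes'' hypothesis; for a constant sequence $b_n = a$ with $a \in \Lambda_0$ lying on another hyperplane this would need a word of justification---most easily by first choosing $a$ off the other hyperplanes, or by appealing to Lemma~\ref{lem:uniformly bounded derivative} instead.)
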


\begin{proof}
If $p$ is $H$-periodic, then $\Stab_H(p)$ is a lattice in $H$. One may choose a generating set for $\Stab_H(p)$ which consists only of elements which are not contained in any other Lyapunov hyperplane. By Proposition \ref{prop:fix-point-set}, $M^\alpha(p) = \Fix(a)$ for any such generator, so $\Stab_H(p) \subset \Stab_H(q)$ for all $q \in M^\alpha(p)$. Since $\Stab_H(q)$ contains a lattice, it is itself a lattice. Therefore, a symmetric argument shows that $\Stab_H(q) \subset \Stab_H(p)$. That is, the $H$-action factors through $H / \Stab_H(p) \cong \mathbb{T}^{k-1}$ as a free action. That the quotient space of a free, $C^r$, compact group action is a $C^r$-manifold is classical, see e.g., \cite[Theorem 21.10]{lee13}.
\end{proof}

\section{$W^H$-Holonomy Returns to Lyapunov central manifolds}

The following set will be crucial to our construction of a rank one factor, { which is well-defined in the setting of Anosov actions. We therefore work briefly in that generality, as we expect these tools to be useful later.}

\begin{definition}
{  Let $\R^k \curvearrowright X$ be Anosov.} If $x \in X$, let $W^H(x)$ be the set of points $y$ such that there exists a path $\rho$ based at $x$  along the $W^\beta$-foliations for $\beta \not= \pm \alpha$, and $H$-orbit foliations which ends at $y$. That is, $\rho$ is a sequence $x = x_0,x_1,\dots,x_n = y$ such that $x_i \in W^{\beta_i}(x_{i-1})$ or $x_i \in H \cdot x_{i-1}$ for every $i = 1,\dots,n$ (recall Remark \ref{rem:paths-no-action}).  Such a path is called a $W^H$-path (at $x$). Let $e(\rho) = y$ denote the endpoint of $\rho$ and let $\ell(\rho)$  be the sum of the lengths of the legs. We say a $W^H$-path at $x$ is a {\it cycle} if $e(\rho) = x$.
\end{definition}

{ 
Consider a totally Anosov action $\R^k \curvearrowright X$. As indicated in Section \ref{sec:summary}, if $\alpha \in \Delta$ and $H = \ker \alpha$, the existence of a dense $H$-orbit is intricately related to the existence of a rank one factor, which is important in Parts III and IV. We therefore wish to establish an easier way to detect whether $H$ has a dense orbit. Fix some $a \in H$ which is regular (so that $\beta(a) \not= 0$ for all $\beta \not= \pm \alpha$), and let $R_0$ denote the set of points for which there exist sequences $n_k, m_k \to +\infty$ such that $a^{n_k} \cdot x, a^{-m_k} \cdot x \to x$. Notice that by Lemma \ref{lem:residual-recurrence}, $R_0$ is residual. Then let $R \subset X$ denote the set of points for which $R_0 \cap W^s_a(x)$ and $R_0 \cap W^u_a(x)$ is residual. Notice that by Lemma \ref{lem:kur-ulam}, $R$ is also residual.

\begin{lemma}
\label{lem:WH-closures}
If $\R^k \curvearrowright X$ is a totally Anosov, cone transitive action, with the associated set $R$ described above, then for every $x \in R$, $W^H(x) \subset \overline{H \cdot x}$.
\end{lemma}

\begin{proof}

Fix $x \in R$, and let $y \in W^s_a(x) \cap R$. Then there exists $n_k,m_k$ such that $a^{n_k} \cdot x \to x$ and $a^{m_k} \cdot y \to y$. Notice that since $x$ and $y$ are stably related, $a^{n_k} \cdot y \to x$ and $a^{m_k} \cdot x \to y$. Therefore, $y \in \overline{H \cdot x}$. Since $R \cap W^s_a(x)$ is residual in $W^s_a(x)$ (by definition of $R$), $W^s_a(x) \subset \overline{H \cdot x}$. Similarly, $W^u_a(x) \subset \overline{H \cdot x}$. By induction, for any broken path whose break points all belong to $R$, the endpoint of such a path is in $\overline{H \cdot x}$. Since any path can be accumulated by such a path, the claim follows.
\end{proof}

\begin{corollary}
\label{cor:WH-sufficient}
 Under the assumptions of Lemma \ref{lem:WH-closures}, if there exists a point $x$ such that $W^H(x)$ is dense, then there exists a point $x$ such that $H \cdot x$ is dense.
\end{corollary}
}

\subsection{Local Integrability of $M^\alpha$}

{  We return to working with a cone transitive, totally Cartan action $\R^k \curvearrowright X$ with oriented coarse Lyapunov foliations, and assume that a fixed coarse Lyapunov hyperplane $H = \ker \alpha$ does not have a dense orbit  (and hence $-\alpha$ is also a weight by Lemma \ref{lem:TNS}).  We also fix an arbitrary smooth Riemannian metric on $X$ such that the action of $\R^k$ is isometric along its orbits. Choose a line $L$ transverse to $H \subset \R^k$, and an $H$- and $\R^k$-periodic point $p \in X$ which exists by Lemma \ref{cor:H-periodic}. Notice that $L$ preserves $M^\alpha(p)$, where $M^\alpha(p)$ is as in Proposition \ref{prop:fix-point-set}. Let $Y$ be as defined in Proposition \ref{prop:factor-by-H}.}

\label{sec:malpha-integrability}
\begin{lemma}
\label{lem:anosov-factor}
The action of $L$ descends to $Y$ as an Anosov flow.
\end{lemma}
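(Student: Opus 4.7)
My plan is to first show that the $L$-action descends to a well-defined flow on $Y$, then identify the splitting of $TY$ as coming from the $L$-orbit direction together with the projections of $TW^\alpha$ and $TW^{-\alpha}$, and finally transport the hyperbolicity of a generator $b \in L$ from $M^\alpha(p)$ down to $Y$ using the uniform boundedness of $H$-derivatives established in Lemma \ref{lem:uniformly bounded derivative}.

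For the descent: since $\R^k$ is abelian, the $L$-action commutes with $H$, so it preserves the $H$-orbit foliation on $M^\alpha(p)$ and descends to a $C^r$ flow $\psi_t$ on $Y$ via the $C^r$ quotient map $\pi : M^\alpha(p) \to Y$ of Proposition \ref{prop:factor-by-H}. Since the $\R^k$-action on $M^\alpha(p)$ is locally free and $L$ is chosen transverse to $H$, the generator $X_L$ of $L$ has image nowhere tangent to $H$-orbits, so $\psi_t$ has no fixed points.

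For the splitting: by Proposition \ref{prop:fix-point-set} and Corollary \ref{cor:coarse-lyapunov} applied to the restricted action on $M^\alpha(p)$, the tangent bundle decomposes as
\[ TM^\alpha(p) = TL \oplus TH \oplus TW^\alpha \oplus TW^{-\alpha}, \]
where $TL$ and $TH$ denote the tangent bundles to the $L$- and $H$-orbit foliations. Each summand is $\R^k$-invariant, hence $H$-invariant, and $\pi$ collapses exactly the $TH$ factor. Thus
\[ TY = \R X_L \oplus \pi_*(TW^\alpha) \oplus \pi_*(TW^{-\alpha}), \]
giving three one-dimensional summands on the $3$-manifold $Y$.

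For the hyperbolicity: choose $L$ generically so that a nonzero $b \in L$ avoids every Lyapunov hyperplane other than those parallel to itself; in particular $b$ is an Anosov element of $\R^k$, uniformly expanding $W^\alpha$ and uniformly contracting $W^{-\alpha}$. By Lemma \ref{lem:uniformly bounded derivative} there is a uniform constant $A > 0$ with $\norm{h_*|_{TW^{\pm\alpha}}} \le A$ for every $h \in H$. Averaging an ambient Riemannian metric over the compact quotient $H / \Stab_H(p) \cong \mathbb{T}^{k-1}$ yields an $H$-invariant metric on $M^\alpha(p)$ whose restriction to $TW^{\pm\alpha}$ is comparable (within a factor of $A$) to the original. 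This metric descends to a Riemannian metric on $Y$, and the induced norms on $\pi_*(TW^{\pm\alpha})$ inherit the uniform expansion and contraction of $\psi_b$: commuting $b$ past $H$-elements costs at most a factor of $A$, so $\norm{\psi_b^n|_{\pi_*(TW^\alpha)}} \ge C A^{-1} e^{\lambda n}$ and $\norm{\psi_b^n|_{\pi_*(TW^{-\alpha})}} \le CA e^{-\lambda n}$ for constants $C,\lambda > 0$. Hence $\psi_t$ is Anosov on $Y$.

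The main obstacle is ensuring that the hyperbolic rates of $b$ survive the quotient. Without the $H$-invariance of the coarse Lyapunov distributions (which is automatic because they are dynamically canonical) and the uniform bound from Lemma \ref{lem:uniformly bounded derivative}, the pushed-forward norms on $\pi_*(TW^{\pm\alpha})$ could degenerate or vary unboundedly, destroying both the well-definedness of the splitting and the uniformity of the expansion/contraction. The combination of these two inputs is exactly what makes the quotient flow genuinely Anosov rather than merely partially hyperbolic.
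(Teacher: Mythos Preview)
Your proof is correct and follows the same three steps as the paper's (descent by commutativity, identification of the tangent splitting on $Y$, verification of hyperbolicity). However, the ``main obstacle'' you flag is illusory, and your appeal to Lemma~\ref{lem:uniformly bounded derivative} together with the $H$-averaging of the metric is unnecessary: since $\pi:M^\alpha(p)\to Y$ is a $C^r$ submersion between compact manifolds, $\pi_*$ restricted to any continuous subbundle transverse to $TH$ automatically has uniformly bounded norm and conorm, so uniform expansion of $TW^{\pm\alpha}$ by $b$ upstairs passes directly to $Y$ for \emph{any} Riemannian metric there. Likewise the generic choice of $L$ is not needed: any nonzero $b\in L$ has $\alpha(b)\ne 0$ by transversality to $H$, and such $b$ already uniformly expands one of $W^{\pm\alpha}$ and contracts the other on all of $X$ (write $b$ as the midpoint of two nearby regular elements on the same side of $H$). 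The paper's proof is accordingly two sentences.
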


\begin{proof}
That the $L$-action descends to $Y$ follows from the fact that $L$-commutes with $H$, and that $Y$ is the factor of $M^\alpha(p)$ by $H$-orbits. The Anosov property is also clear: $Y$ is a 3-dimensional manifolds with complementary foliations given by $L$-orbits, $W^\alpha$ and $W^{-\alpha}$. Since $\alpha$ and $-\alpha$ are expanded and contracted, respectively, by elements of $L$, the flow is Anosov.
\end{proof}



Let $a_t$ denote the Anosov flow on $Y$ as in Lemma \ref{lem:anosov-factor}. 
Fix some point $\bar{y_0} \in Y = M^\alpha(p) / H$, let $y_0 \in M^\alpha(p)$ be any lift of $\bar{y_0}$, and consider $W^H(y_0)$. 
Let $\rho$ denote a $W^H$-path based at $y_0$, and $\eta$ denote a path in the $\alpha$,$-\alpha$ and $\R^k$-orbit foliations (with $e(\eta)$ and $\ell(\eta)$ denoting its endpoint and length, respectively).

Recall the normal forms coordinates described in Theorem \ref{thm:normal-forms-foliation}, and for $\pm \alpha$, let $\psi_x^{\pm \alpha} : \R \to W^{\pm \alpha}(x)$ denote the corresponding family of charts.  Fix $x \in X$, and construct a function $\psi_x : \R \times \R \times \R^k \to X$ in the following way: given $t,s \in \R$ and $u \in \R^k$, let $x_1 = \psi_x^\alpha(t)$, the point at (signed) distance $t$ from $x$ along $W^\alpha$ in the normal forms coordinates, $x_2 = \psi_{x_1}^{-\alpha}(s)$, the point at (signed) distance $s$ from $x_1$ along $W^{-\alpha}$ in the normal forms coordinates and $\psi_x(t,s,u) = u \cdot x_2$.  This corresponds to the evaluation map of the path group along a fixed combinatorial pattern (see Remark \ref{rem:paths-no-action}). The following lemma can be thought of as proving $M^\alpha$ to be ``locally topologically integrable.'' That is, local $C^0$ Euclidean structures can be constructed using $\psi$.

\begin{lemma}
\label{lem:psi-coords}
There exists $\ve_0 > 0$ such that for every $x \in X$, if $y \in M^\alpha(x)$ is reached from $x$ by an $(\alpha,-\alpha,\R^k)$-path $\eta$ with $\ell(\eta) < \ve_0$ (with an arbitrary number of legs), then $y = \psi_x(t,s,u)$ for some $(t,s,u) \in \R^{k+2}$ with $\abs{t} + \abs{s} + \norm{u} \le C\ell(\eta)$ for some constant $C$.
\end{lemma}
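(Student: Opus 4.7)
The plan is to reduce an arbitrary path with many legs to a three-leg path of the form $\psi_x(t,s,u)$ in two stages: first, commuting all $\R^k$-legs to the right end of the path using $\R^k$-invariance of $W^{\pm\alpha}$; second, collapsing the remaining alternating $W^{-\alpha}, W^\alpha$-pairs using a quantitative commutator identity whose residual lies in the $\R^k$-orbit direction.

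\emph{Stage 1.} The $\R^k$-invariance $a \cdot W^{\pm\alpha}(y) = W^{\pm\alpha}(a\cdot y)$, combined with normal forms (Theorem \ref{thm:normal-forms}), gives a leafwise identity $a \circ \phi^{\pm\alpha}_t(y) = \phi^{\pm\alpha}_{\lambda^{\pm}(a,y)\,t} \circ a(y)$, where the scaling cocycle $\lambda^{\pm}(a,y)$ has Lyapunov exponent $\pm\alpha(a)$ and is uniformly bounded for $a$ in a bounded set via Lemma \ref{lem:uniformly bounded derivative} (for the $H$-component) and hyperbolicity (for the transverse component). Pushing each $\R^k$-leg rightward past every subsequent $W^{\pm\alpha}$-leg yields a path of the form $\phi^{-\alpha}_{s_n} \circ \phi^\alpha_{t_n} \circ \cdots \circ \phi^{-\alpha}_{s_1} \circ \phi^\alpha_{t_1}$ followed by a single $\R^k$-shift $u$, with $\norm{u} \le \ell(\eta) < L$ and $\sum(|t_i| + |s_i|) \le C_1 \ell(\eta)$ for a constant $C_1$ depending only on the action and on $L$.

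\emph{Stage 2.} I will establish, for some $K > 0$ and sufficiently small $s,t$, the quasi-commutation identity
\[
\phi^{-\alpha}_s \circ \phi^\alpha_t(y) = c(y,s,t) \cdot \phi^\alpha_{t'} \circ \phi^{-\alpha}_{s'}(y),
\]
with $c(y,s,t) \in \R^k$ and $|t-t'|, |s-s'|, \norm{c(y,s,t)} \le K|st|$ uniformly in $y$. The key point is that the $\R^k$-equivariance forces the commutator to be confined to $E^\alpha \oplus E^{-\alpha} \oplus T\R^k$-orbit: Taylor-expanding the commutator $\phi^{-\alpha}_s\circ\phi^\alpha_t - \phi^\alpha_t\circ\phi^{-\alpha}_s$ at $(s,t)=(0,0)$, its leading coefficient $B(y)$ in each coarse Lyapunov direction $E^\beta$ must satisfy $B_\beta(a_0 y) = e^{\beta(a_0)} B_\beta(y)$ (obtained by conjugating by an Anosov $a_0 \in L$ not on any other Lyapunov hyperplane and using that $(s,t)$ rescales as $(e^{-\alpha(a_0)}s, e^{\alpha(a_0)}t)$ to leading order while outputs in $E^\beta$ rescale by $e^{\beta(a_0)}$). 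Since $B_\beta$ is bounded on the compact manifold $X$ and $a_0$-recurrent points are dense (Corollary \ref{lem:residual-recurrence}), this forces $B_\beta \equiv 0$ for every $\beta \ne 0$; the $E^{\pm\alpha}$-components at higher order are absorbed into the corrections $t'$ and $s'$, and analogous equivariance at each higher order rules out other directions. Iterating this identity on the alternating sequence, each application swaps one $W^{-\alpha}, W^\alpha$-pair, produces a small $\R^k$-leg (which is pushed rightward via Stage 1), and merges adjacent same-type legs, reducing the number of pairs by one. The cumulative error over all iterations is at most $O(L\cdot\ell(\eta))$, preserving the linear bound, and yielding $y = \psi_x(t,s,u)$ with $|t|+|s|+\norm{u} \le C\ell(\eta)$.

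The main obstacle is establishing the quasi-commutation identity of Stage 2 with uniform-in-$y$ constants, given that the normal forms scaling cocycle $\lambda^{\pm}(a,y)$ is merely bounded rather than constant and the coarse Lyapunov distributions are only H\"older continuous. I expect this to be resolved by working with the normal forms cocycle of Theorem \ref{thm:normal-forms} along $W^{\pm\alpha}$, exploiting that its Lyapunov exponent under $a_0$ is exactly $\pm\alpha(a_0)$, and combining this with the density of recurrent points and continuity of the commutator Taylor coefficients to conclude vanishing on all of $X$. The constant $L$ will be chosen small enough that every intermediate break point of every iteration lies in a uniform local product neighborhood for $a_0$, whose existence is guaranteed by compactness of $X$.
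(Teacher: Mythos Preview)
Your Stage 2 has a genuine gap. The quasi-commutation identity
\[
\phi^{-\alpha}_s \circ \phi^\alpha_t(y) = c(y,s,t)\cdot \phi^\alpha_{t'}\circ\phi^{-\alpha}_{s'}(y)
\]
is essentially the statement of the lemma for the single $2$-leg path $\phi^{-\alpha}_s\circ\phi^\alpha_t$; it asserts that this point lies in the image of $\psi_y$, i.e.\ that no $W^\beta$-component with $\beta\neq\pm\alpha$ is produced. Your proposed proof of it via Taylor expansion does not work in this regularity class. The coarse Lyapunov foliations are only H\"older transversally, so $z\mapsto\phi^{-\alpha}_s(z)$ is merely H\"older in $z$; hence $t\mapsto \phi^{-\alpha}_s(\phi^\alpha_t(y))$ is only H\"older in $t$ and has no well-defined Taylor coefficient $B_\beta(y)$. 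Even granting a leading term, the equivariance you invoke, namely that $(s,t)$ rescales under $a_0$ as $(e^{-\alpha(a_0)}s,e^{\alpha(a_0)}t)$, is not available here: this exact scaling is the content of the H\"older metrics in Theorem~\ref{thm:main-cartan} (the ``higher rank assumptions'' of Part~II), which are established only \emph{after} the analysis of Part~I, and indeed can fail in the presence of rank-one factors. In Part~I one only has Lemma~\ref{lem:uniformly bounded derivative}, which bounds derivatives along $H=\ker\alpha$, not along a transverse $a_0\in L$.

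The paper avoids all of this by a much more direct route: by the local product structure of the Anosov action one writes $e(\eta)$ via a path through \emph{all} coarse Lyapunov foliations in a fixed order --- first $W^\alpha$, then $W^{-\alpha}$, then $\R^k$, then the remaining stable weights $\beta_i$ and unstable weights $\gamma_j$ for some $a\in H$ not on any other hyperplane (cf.\ Lemma~\ref{lem:extending-charts}). If any $\beta_i$- or $\gamma_j$-leg were nontrivial, iterating $a$ would expand the unstable ones while the $\pm\alpha$- and $\R^k$-legs stay uniformly bounded by Lemma~\ref{lem:uniformly bounded derivative}; the endpoints would separate beyond the injectivity scale, contradicting $\ell(\eta)<L$. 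Thus only the $\alpha,-\alpha,\R^k$-legs survive, and a perturbation of $a$ to a regular element gives uniqueness and the Lipschitz bound. This is a single dynamical contradiction rather than an inductive commutator reduction, and it uses only transversality and the bounded-derivative lemma --- tools that are already in hand at this point of the paper.
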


We introduce the following notation: $M_\ve^\alpha(x)$ denotes $\psi(B(0,\ve))$, the local $M^\alpha$-leaf through $x$.

\begin{proof}
Fix some $a \in H$ which does not belong to any other Lyapunov hyperplane, and let $A$ be as in Lemma \ref{lem:uniformly bounded derivative}. Let $\ve_0$ be less than the injectivity radius of $X$ divided by $A$ and be small enough such that if two points on an unstable or stable leaf have distance  $\delta < 2\ve_0\cdot A$ in the metric on $X$, their distance on the manifold is between $\frac{9}{10}\delta$ and $\frac{11}{10}\delta$. Let $\eta$ be an arbitrary path in the $W^{\alpha}$, $W^{-\alpha}$ and $\R^k$-orbit foliations with $\ell(\eta) < \ve_0$. Note that there may be an arbitrary number of switches between each of the foliations. Put the stable and unstable coarse exponents for $a$ in a circular order, $\set{\beta_1,\dots,\beta_r}$ and $\set{\gamma_1,\dots,\gamma_s}$. Since each of the coarse Lyapunov foliations are transverse and complementary when put together with the $\R^k$-orbit foliations, we may choose a path beginning at $x$ and ending at $e({ \eta})$ which first moves along $\alpha$, then $-\alpha$, then $\R^k$, then $\beta_1$ through $\beta_r$ and finally $\gamma_1$ through $\gamma_s$ (see, eg, \cite[Lemma 3.2]{Schmidt-thesis}), 
shrinking $\ve_0$ as needed.

Assume, for a contradiction, that one of the legs in the $\beta_i$ foliations or $\gamma_i$ foliations is nontrivial. Suppose without loss of generality, that one of the $\set{\gamma_i}$ is nontrivial. Since the $\gamma_i$ are listed in a circular $\eta \subset H$ if anf only if , moving along each in sequence parameterizes the expanding foliation by iteratively applying Lemma \ref{lem:extending-charts}. In particular, once one of them is nontrivial, the points at the start and end of the $\gamma_i$-legs will be distinct points on the same unstable leaf. Then iterating $a$ the unstable leaf will expand, and the stable leaf will contract. Since $a \in H$, the endpoint after moving along the $\alpha$-legs and $\R^k$-orbit will stay within the ball of radius ${ \ell(\eta)}\cdot A$. But a pair of distinct points in the unstable manifold of $a$ will eventually grow to a distance larger than ${ \ell(\eta)}\cdot A$, a contradiction. This implies that only the $\alpha$ and $-\alpha$-legs, and $\R^k$-orbits have nontrivial contribution. That is,that $y$ is in the image of $\psi_x$.

Finally, we show that $(t,s,u)$ is unique and depends in a Lipschitz way on $\ell(\eta)$. Perturb $a$ to a regular element which now expands $\alpha$ and contracts $-\alpha$. Observe that any pair of points can be locally reached uniquely by first moving along the the unstable foliation, then the stable foliation, then the $\R^k$-orbit foliation. This immediately implies uniqueness. The lengths of each leg are {  uniformly Lipschitz} controlled by the distance on the manifold { (see, eg, \cite[Lemma 2]{brin03})}, so we get the bound on the length of the path as described.
\end{proof}

{ 
Fix $x \in X$, and let $\eta$ be an $(\alpha,-\alpha,\R^k)$-path based at $x$. 
Recall that this means that $\eta$ is a sequence of points $x = x_0, x_1,\dots, x_n$ such that $x_{i+1} \in W^{\pm \alpha}(x_i)$ or $x_{i+1} = a_i \cdot x_i$ for some $a_i \in \R^k$. If each $a_i \in L$, we say that it is an $(\alpha,-\alpha,L)$-path.

\begin{lemma}
\label{lem:eta-LPS}
    Let $\eta_1,\eta_2$ denote sufficiently short $(\alpha,-\alpha,L)$-paths based at some $x \in M^\alpha(p)$. Assume that each $\eta_i$ has a single $\alpha$, a single $-\alpha$ and a single $L$-leg, appearing in the same order. Then if $e(\eta_1) \in H e(\eta_2)$, $\eta_1 = \eta_2$. In particular, if $e(\eta_1) = x$, $\eta_1$ is the trivial path. 
\end{lemma}

\begin{proof}
Consider the projection of $\eta_1,\eta_2$ to $M^\alpha(p)/H$, which has the structure of an Anosov flow by Lemma \ref{lem:anosov-factor}. Since the $\alpha$-leg, $-\alpha$-leg and $L$-leg project to the unstable, stable, and orbit leg for the Anosov flow on $M^\alpha(p)/H$, the local product structure implies that the projected paths coincide. But since we restrict the orbit to $L$, the legs must coincide on $M^\alpha(p)$ as well. 
\end{proof}
}

\subsection{The $W^H$-Holonomy Action}
\label{sec:holonomy-action}

Recall Section \ref{sec:slow-foliations} and the notion of a slow foliation within a  foliation $\mc F$ tangent to some sum of bundles $E^\beta$, where $\beta$ ranges over some $\Omega \subset \Delta^+(a)$ for some regular $a \in \R^k$ (each such foliation 
{  is the} slow foliation for {  at least one} Anosov element by  Lemma \ref{lem:slow-foliations}).

\begin{lemma}
\label{lem:smooth-holonomy}
Let $a \in \R^k$ be an Anosov element, $\Omega \subset \Delta^+(a)$ be a collection of weights whose sum integrates to a foliation $\mc F$, and assume that $W^\alpha$ is the slow foliation for $a$ in $\mc F$. If $y \in \widehat{\mc W^\alpha}(x)$, then the induced holonomy map $\mc H^\alpha_{x,y} :  W^\alpha(x) \to  W^\alpha(y)$ is $C^r$, where $r = (1,\theta)$ or $\infty$ if the dynamics is $C^r$.
\end{lemma}

\begin{proof}  The holonomy is well-defined by the transverse intersection property of Lemma \ref{lem:slow-foliations}. $\widehat{\mc W^\alpha}$ is $C^r$ along $\mc F$ by the $C^r$-section theorem of \cite{HPS1970} and the fact that the growth of the slow foliation can be made arbitrarily slow by Lemma \ref{lem:uniformly bounded derivative} (its application to this setting can be found, for instance, in \cite[Proposition 3.9]{KalSad07}). 
The regularity of  $\widehat{\mc W^\alpha}$  within $\mc F$ implies that the holonomies have the desired differentiability. 
\end{proof}

We know even more about the structure of the holonomy maps, namely that they are linear in the system of local forms coordinates $\psi^\alpha_x$ for the foliation $W^\alpha$ defined in Section \ref{sec:normal-forms}. Let $m_\lambda : \R \to \R$ denote multiplication by $\lambda$.

\begin{lemma}
\label{lem:linear-holonomy}
Under the same assumptions as Lemma \ref{lem:smooth-holonomy}, $(\psi_y^\alpha)^{-1} \of \mc H^\alpha_{x,y} \of \psi_x^{\alpha} = m_\lambda$ for some $\lambda \in [A^{-2},A^2]$, where $A$ is as in Lemma \ref{lem:uniformly bounded derivative}.
\end{lemma}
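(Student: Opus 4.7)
The plan is to show linearity of $F_{x,y} := (\psi_y^\alpha)^{-1} \of \mc H^\alpha_{x,y} \of \psi_x^\alpha$ via an iterative argument exploiting the dominated splitting $E^u_a = E^\alpha \oplus \widehat{E^\alpha}$, and then extract the bound on the slope from Lemma \ref{lem:uniformly bounded derivative}.

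First I would set up the functional equation. By Theorem \ref{thm:normal-forms}(c), every element $g \in \R^k$ acts linearly in the normal forms for $a$: $g \of \psi^\alpha_x(t) = \psi^\alpha_{gx}(c_g(x)\, t)$, where $c_g(x) = \norm{g_*|_{E^\alpha}(x)}$. Since both $\mc W^\alpha$ and $\widehat{\mc W^\alpha}$ are $\R^k$-invariant, the holonomy intertwines: $g \of \mc H^\alpha_{x,y} = \mc H^\alpha_{gx,gy} \of g$. In normal forms this becomes
\[ F_{x,y}(t) = c_g(y)^{-1} F_{gx,gy}\bigl(c_g(x)\, t\bigr), \]
which iterated $n$ times reads $F_{x,y}(t) = c_{g^n}(y)^{-1} F_{g^n x,\, g^n y}(c_{g^n}(x)\, t)$.

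Next I would take $g = a^{-1}$, which contracts $\mc W^\alpha$ at some rate $\mu_1 < 1$ and, because $\alpha$ is slow, contracts $\widehat{\mc W^\alpha}$ at a strictly faster rate $\mu_2 < \mu_1$. Thus the $\widehat{\mc W^\alpha}$-distance between $g^n x$ and $g^n y$ decays faster than $|c_{g^n}(x)|$, so the smooth holonomy $\mc H^\alpha_{g^n x, g^n y}$ tends to the identity in the $C^2$-topology (using smoothness and continuity of normal forms from Lemma \ref{lem:smooth-holonomy} and Theorem \ref{thm:normal-forms}). Writing $F_n(s) := F_{g^n x, g^n y}(s) = F_n'(0) s + R_n(s)$ with $|R_n(s)| \le M s^2$ uniformly in $n$ (by compactness and continuous dependence), substitution gives
\[ F_{x,y}(t) = F_n'(0)\, \frac{c_{g^n}(x)}{c_{g^n}(y)}\, t + \frac{R_n(c_{g^n}(x)t)}{c_{g^n}(y)}, \]
with the error term bounded by $M t^2 |c_{g^n}(x)|^2/|c_{g^n}(y)| \le M t^2 C \mu_1^n \to 0$. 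Since the left-hand side does not depend on $n$, taking $n \to \infty$ forces both existence of $\lambda := \lim_n c_{g^n}(x)/c_{g^n}(y)$ and the identity $F_{x,y}(t) = \lambda\, t$, giving linearity.

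For the bound $\lambda \in [A^{-2}, A^2]$, the plan is to apply the same functional equation with $h \in H$ instead: one obtains $F'_{x,y}(0) = (c_h(x)/c_h(y))\, F'_{hx,hy}(0)$, and by Lemma \ref{lem:uniformly bounded derivative} the ratio $c_h(x)/c_h(y)$ lies in $[A^{-2}, A^2]$ for every $h \in H$. Iterating with a sequence $h_n \in H$ chosen by recurrence so that $h_n \cdot x \to x$ (using Corollary \ref{lem:residual-recurrence} and genericity), Lemma \ref{lem:identity-return}(a) forces $c_{h_n}(x) \to 1$ on the $\mc W^\alpha(x)$-leaf, and after passing to a subsequence so that $h_n \cdot y$ converges in $\widehat{\mc W^\alpha}$, the normal forms derivative $F'_{h_n x, h_n y}(0)$ tends to a comparable limit. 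Combining these with the uniform bound on $c_h$ yields $\lambda \in [A^{-2}, A^2]$. The main obstacle is precisely this last step: controlling the limiting holonomy derivative after $H$-iteration, since the foliations $W^\alpha$ and $\widehat{W^\alpha}$ are not jointly $H$-invariant in a way that synchronizes convergence on both leaves, so compactness arguments and continuity of $F'_{\cdot,\cdot}(0)$ must be used delicately.
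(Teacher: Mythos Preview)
Your linearity argument via the dominated splitting and the second-order Taylor remainder is correct, but the argument for the bound $\lambda \in [A^{-2},A^2]$ has a real gap, and you have correctly identified it: for generic $h_n \in H$ chosen only by recurrence at $x$, you have no control over the $\widehat{\mc W^\alpha}$-distance between $h_n x$ and $h_n y$, so there is no reason $F'_{h_n x,\,h_n y}(0)$ should tend to $1$ (or to anything useful).

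The paper resolves both linearity and the bound with a single choice that you are missing: rather than iterating $a^{-1}$ (which contracts $E^\alpha$) or arbitrary $h \in H$ (which need not contract $\widehat{E^\alpha}$), it uses an element $a_1 \in H = \ker\alpha$ that still contracts every $\gamma \in \Delta^+(a)\setminus\{\alpha\}$. Such an $a_1$ exists precisely because $\alpha$ is the slow foliation: the element $a$ was obtained in Lemma~\ref{lem:smooth-holonomy} as a small perturbation of some $a_0 \in \ker\alpha$ into the open Weyl chamber, and one simply perturbs back. With this $a_1$, the $\widehat{\mc W^\alpha}$-distance between $a_1^n x$ and $a_1^n y$ goes to zero, so along a subsequence both points converge to a common $z$, the holonomy $\mc H_{a_1^{n_k}x,\,a_1^{n_k}y}$ converges to the identity, and the normal-form charts at both points converge to $\psi_z^\alpha$. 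Meanwhile, since $a_1 \in H$, Lemma~\ref{lem:uniformly bounded derivative} gives $L_{a_1^{n_k}}(x), L_{a_1^{n_k}}(y) \in [A^{-1},A]$ for all $k$, so their subsequential limits $\lambda_1,\lambda_2$ lie in $[A^{-1},A]$. The displayed computation
\[
(\psi_y^\alpha)^{-1} \of \mc H_{x,y} \of \psi_x^\alpha
= L_{a_1^{n_k}}(y)^{-1} \of (\psi_{a_1^{n_k}y}^\alpha)^{-1} \of \mc H_{a_1^{n_k}x,\,a_1^{n_k}y} \of \psi_{a_1^{n_k}x}^\alpha \of L_{a_1^{n_k}}(x)
\;\longrightarrow\; m_{\lambda_1/\lambda_2}
\]
then delivers linearity and the bound $\lambda = \lambda_1/\lambda_2 \in [A^{-2},A^2]$ simultaneously, with no recurrence argument needed. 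Your two-step route recovers linearity by a different (valid) mechanism, but the missing observation about the existence of $a_1 \in H$ contracting $\widehat{E^\alpha}$ is exactly what closes the gap in your bound.
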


\begin{proof}
We establish some basic identities relating the dynamics of $\R^k$, normal forms and holonomy maps $\mc H^\alpha_{x,y}$.  First, the normal forms coordinates give that for any $a \in \R^k,z \in X$, $a\psi_z^\alpha = \psi_{a\cdot z}^\alpha \of L_a(z)$, where $L_a(z) : \R \to \R$ is multiplication by the (norm of the) derivative of $a$ restricted to the $\alpha$-leaves at $z$, since { $\norm{(\psi_z^\alpha)'(0)} = 1$ and $\psi_z^\alpha$ preserves orientation}. Notice also that since the stable holonomies are defined through invariant foliations, for any $a \in \R^k$ and $x_1,x_2 \in X$, $a \mc H_{x_1,x_2} a^{-1} = \mc H_{a\cdot x_1,a \cdot x_2}$. Furthermore, if $x_n, y_n \in X$ satisfy $y_n \in \widehat{W^\alpha}(x_n)$, and $d(x_n,y_n) \to 0$, then $\mc H_{x_n,y_n} \to \id$.

Let $a \in \R^k$ be as in Lemma \ref{lem:smooth-holonomy}, and perturb $a$ to $a_0 \in \ker \alpha$ which still contracts every $\gamma \in \Delta^+(a) \setminus \alpha$ { (this $a_0$ is as in proof of Lemma \ref{lem:slow-foliations}, and we may assume $a$ came from this construction)}.
Notice that the sequences $L_{a_0^n}(x)$ and $L_{a_0^n}(y)$ are multiplications by constants bounded below and above by $A^{-1}$ and $A$ by Lemma \ref{lem:uniformly bounded derivative}.  By choosing subsequences, we may assume that $L_{a_0^{n_k}}(x) \to m_{\lambda_1}$, $L_{a_0^{n_k}}(y) \to m_{\lambda_2}$ and that ${a_0}^{n_k}\cdot x,{a_0}^{n_k} \cdot y \to z$ for some $z \in X$ (notice that the iterates of $x$ and $y$ must converge to the same point since they are in the same leaf of $\widehat{W^\alpha}(x)$). Notice that $A^{-1} \le \lambda_1,\lambda_2 \le A$. Putting this all together, we get that:

\begin{eqnarray*}
(\psi^\alpha_y)^{-1} \of \mc H_{x,y} \of \psi^\alpha_x & = & (\psi^\alpha_y)^{-1} \of a_0^{-n_k}\mc H_{a_0^{n_k} \cdot x,a_0^{n_k} \cdot y} a_0^{n_k}\of \psi^\alpha_x \\
& = & L_{{a_0}^{n_k}}(y)^{-1} \of (\psi_{{a_0}^{n_k}\cdot y}^\alpha)^{-1} \of \mc H_{a_0^{n_k}\cdot x, a_0^{n_k} \cdot y} \of \psi_{{a_0}^{n_k} \cdot x}^\alpha \of L_{{a_0}^{n_k}}(x) \\
 & \to & m_{\lambda_1/\lambda_2}
\end{eqnarray*}

\noindent since the holonomy maps $\mc H_{a_0^{n_k}\cdot x, a_0^{n_k} \cdot y}$ converge to $\id$ and the normal forms charts $\psi^\alpha_{{a_0}^{n_k} \cdot x}$ and $\psi^\alpha_{{a_0}^{n_k} \cdot y}$ both converge to $\psi^\alpha_z$ by continuous dependence. The bounds on $\lambda_1$ and $\lambda_2$ imply the bounds in the statement of the Lemma.
\end{proof}

\begin{remark}
Lemma \ref{lem:linear-holonomy} is not a consequence of the usual centralizer theorem for the normal forms of dynamical systems since  there is usually no guarantee that the holonomy maps can be extended globally to a fibered extension commuting with the dynamics. It is also crucial to the remaining arguments that the bound on its derivative does not depend on the closeness of $x$ and $y$ along $\widehat{W^\alpha}$.
\end{remark}

Fix $x \in X$, and let $\eta$ be an $(\alpha,-\alpha,\R^k)$-path based at $x$. 
If $\beta \in \Delta$ is not equal to $\pm \alpha$, let $\tilde{D}(\alpha,\beta) = D(\alpha,\beta) \cup D(-\alpha,\beta) \setminus \set{\pm \alpha} \subset \Delta$ be the set of weights which can be expressed in the form $t\beta + s\alpha$, for $t > 0$, $s \in \R$. Notice that by picking $a \in \ker \alpha$ such that $\beta(a) < 0$, we may perturb $a$ to $a_1$ so that $\tilde{D}(\alpha,\beta) \cup \set{-\alpha}$ is stable for $a_1$ or to $a_2$ so that $\tilde{D}(\alpha,\beta) \cup \set{\alpha}$ is stable for $a_2$. Therefore, $W^{\pm \alpha}$ can be made the slow foliation for $a_1$ or $a_2$, respectively, implying that $\tilde{D}(\alpha,\beta)$ integrates to a H\"older foliation with $C^r$ leaves which we denote $\mc F^{\alpha,\beta}$, and that $\mc F^{\alpha,\beta}$-holonomy between two $\alpha$-leaves or two $-\alpha$-leaves is $C^r$  (by Lemmas \ref{lem:slow-foliations} and \ref{lem:smooth-holonomy}).


\begin{lemma}
\label{lem:path-holonomy}
If $\beta \in \Delta$ satisfies $\beta \not= \pm \alpha$ and $y \in W^\beta(x)$ and $\eta$ is an $(\alpha,-\alpha,\R^k)$-path based at $x$ as described above, then there exists an $(\alpha,-\alpha,\R^k)$-path $\eta'$ based at $y$ with the same number of legs $y = y_0,\dots,y_n$ such that $y_i \in \mc F^{\alpha,\beta}(x_i)$.
\end{lemma}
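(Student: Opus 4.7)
My plan is to induct on $n$, the number of legs of $\eta$, building the path $\eta'$ leg by leg so that its combinatorial pattern agrees with that of $\eta$ and each break point $y_i$ lies on $\mc F^{\alpha,\beta}(x_i)$. The base case $n=0$ is immediate: set $y_0 = y$; since $\beta \in \tilde D(\alpha,\beta)$, one has $W^\beta(x) \subset \mc F^{\alpha,\beta}(x)$, hence $y_0 \in \mc F^{\alpha,\beta}(x_0)$.

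For the inductive step, I will assume $y_0, \ldots, y_{i-1}$ have been built with $y_j \in \mc F^{\alpha,\beta}(x_j)$ and split into two cases according to the type of the $i\tth$ leg of $\eta$. If $x_i = a_{i-1} \cdot x_{i-1}$ is an $\R^k$-orbit leg, I define $y_i := a_{i-1} \cdot y_{i-1}$. Since $\tilde D(\alpha,\beta)$ is an $\R^k$-invariant set of Lyapunov functionals, the foliation $\mc F^{\alpha,\beta}$ is $\R^k$-invariant, and so $y_i \in \mc F^{\alpha,\beta}(x_i)$. If instead $x_i \in W^{\pm\alpha}(x_{i-1})$, I will invoke the observation just preceding the statement of the lemma: by choosing a perturbation $a_1$ (resp.\ $a_2$) of an element of $\ker\alpha$ contracting $\beta$, the set $\tilde D(\alpha,\beta)\cup\set{-\alpha}$ (resp.\ $\tilde D(\alpha,\beta)\cup\set{\alpha}$) is stable and so integrates to a H\"older foliation $\mc G^{\pm}$ with $C^r$ leaves which contains both $\mc F^{\alpha,\beta}$ and $W^{\pm\alpha}$ as smooth transverse subfoliations. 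Then $x_i$ and $y_{i-1}$ both lie in $\mc G^{\pm}(x_{i-1})$, and I define $y_i$ as the image of $x_i$ under the $\mc F^{\alpha,\beta}$-holonomy from $W^{\pm\alpha}(x_{i-1})$ to $W^{\pm\alpha}(y_{i-1})$, so that $y_i \in \mc F^{\alpha,\beta}(x_i) \cap W^{\pm\alpha}(y_{i-1})$ by construction.

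The only subtle point is that this holonomy is a priori only local, whereas $x_{i-1}$ and $y_{i-1}$ can be far apart inside their shared $\mc G^{\pm}$-leaf. To handle this I will fix any path from $x_{i-1}$ to $y_{i-1}$ inside the $\mc F^{\alpha,\beta}$-leaf and extend the local holonomies along it: since the $\mc G^{\pm}$-leaves are smooth manifolds with two smooth transverse subfoliations, the composition of local holonomies along such a path is well-defined and depends only on the leaf. This yields the required $y_i$ and completes the induction.
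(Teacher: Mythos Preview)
Your inductive holonomy argument is correct and is the same strategy the paper uses: lift $\eta$ one leg at a time, push $\R^k$-legs forward by the action, and for $\pm\alpha$-legs use that $\mc F^{\alpha,\beta}$ and $W^{\pm\alpha}$ sit as complementary smooth subfoliations inside a common stable leaf. Two remarks are worth making.

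First, the paper keeps track of strictly more data during the induction. Rather than only recording $y_i\in\mc F^{\alpha,\beta}(x_i)$, it maintains an explicit broken path
\[
x_i=x_{i,0}\xrightarrow{W^{\gamma_1}}x_{i,1}\xrightarrow{W^{\gamma_2}}\cdots\xrightarrow{W^{\gamma_m}}x_{i,m}=y_i
\]
in individual coarse Lyapunov foliations (coming from the geometric commutator Lemma~\ref{lem:geom-commutator1}) and transports each leg separately using Lemma~\ref{lem:smooth-holonomy}, noting that the new legs live in $W^{[\alpha,\gamma_j]}$. Your version suppresses this and just invokes the $\mc F^{\alpha,\beta}$-holonomy as a single map, which is perfectly adequate for the statement of the lemma. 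The paper's extra bookkeeping is not idle, though: it is exactly what is needed in the next step (Corollary~\ref{cor:holonomy-action}) to define $\pi_\eta(\rho)$ as a genuine $W^H$-path and to verify that the $a$-switching number is preserved, since one must know which coarse Lyapunov foliations occur in the connection, not merely that it lies in $\mc F^{\alpha,\beta}$.

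Second, your justification that the holonomy extends globally is a bit loose: having two smooth transverse subfoliations of a smooth leaf does not by itself guarantee that holonomy along an arbitrary path in one leaf stays defined (one needs completeness, not just smoothness). Here this is in fact fine, and the cleanest way to see it is either to cite Lemma~\ref{lem:smooth-holonomy} directly, which already asserts a global holonomy map $\mc H^\alpha_{x,y}:\mc W^\alpha(x)\to\mc W^\alpha(y)$ for any $y\in\widehat{\mc W^\alpha}(x)$, or to invoke the global product charts of Lemma~\ref{lem:extending-charts} on the ambient stable leaf. That the holonomy is independent of the chosen path then follows, as you say, from simple connectedness of the $\mc F^{\alpha,\beta}$-leaves.
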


\begin{remark}
Lemma \ref{lem:path-holonomy} can be almost immediately deduced from Lemma \ref{lem:smooth-holonomy} and the preceding discussion. However we provide a proof in which we connect two points of the $\mc F^{\alpha,\beta}$ with some path in the 1-dimensional foliations $W^\gamma_i$, $\gamma_i \in \tilde{D}(\alpha,\beta)$, as the ideas in this proof will be used again later (for instance, in Corollary \ref{cor:holonomy-action}).
\end{remark}

\begin{proof}
We build the legs of $\eta'$ one at a time, proceeding by induction. Assume we have found the points $x_0,\dots,x_i$ to $y_0,\dots,y_i$. We now show that we can continue to $x_{i+1}$ and $y_{i+1}$. By Lemma \ref{lem:geom-commutator1}, the points have the following structure: at $x_i$, we have a distinguished path in the foliations $W^\gamma$, $\gamma \in \tilde{D}(\alpha,\beta)$ such that 

\begin{equation}\label{eq:connections}
 x_i = x_{i,0} \xrightarrow{W^{\gamma_1}} x_{i,1} \xrightarrow{W^{\gamma_2}} \dots \xrightarrow{W^{\gamma_m}} x_{i,m} = y_i.
\end{equation}

\noindent where $x \xrightarrow{\mc F} y$ means that $y \in \mc F(x)$ for the foliation $\mc F$. First, suppose that $x_{i+1} = a \cdot x_i$ for some $a \in \R^k$. Then for each leg connecting $x_i$ to $y_i$, the $a$-holonomy is clear:

\[ x_{i+1} = a\cdot x_i = a \cdot x_{i,0} \xrightarrow{W^{\gamma_1}} (a\cdot x_{i,1}) \xrightarrow{W^{\gamma_2}} \dots \xrightarrow{W^{\gamma_m}} (a \cdot x_{i,m}) = a \cdot y_i = y_{i+1}.\] 

Now, suppose $x_{i+1} \in W^\alpha(x_i)$ (the proof for $-\alpha$ is identical). One may make $\alpha$ the slow foliation in a stable leaf containing $\gamma_i$ for each $\gamma_i$ in the connection \eqref{eq:connections}  (as discussed before the statement of the lemma). So by Lemma \ref{lem:smooth-holonomy}, one may use stable holonomy to transport the connections in \eqref{eq:connections}. In particular, we get:

\begin{equation}\label{eq:switching-num}
x_{i+1} = \tilde{x}_{i,0} \xrightarrow{W^{[\alpha,\gamma_1}]} \tilde{x}_{i,1} \xrightarrow{W^{[\alpha,\gamma_2}]} \dots \xrightarrow{W^{[\alpha,\gamma_m]}} \tilde{x}_{i,m} =: y_{i+1}.
\end{equation}

Here, each $\tilde{x}_{i,j}$ is obtained as the image of stable holonomy from $\tilde{x}_{i,j-1}$. Notice that $[\alpha,\gamma_i] \subset \tilde{D}(\alpha,\beta)$, by Lemma \ref{lem:extending-charts}, each connection of the form $W^{[\alpha,\gamma_m]}$ above can be placed in a circular ordering, and therefore a new connection (possibly with more legs) can be made between $x_{i+1}$ and $y_{i+1}$.
 We then iterate the procedure and get the result by induction.
\end{proof}

\begin{lemma}
Let $\eta$ be any $(\alpha,-\alpha,\R^k)$-path based at $x$, and $a \in \R^k$. Define $a * \eta$ to be the path which starts at $x$, moves along $\eta$, then moves along $a$. Then if $\eta'$ and $(a * \eta)'$ are the paths obtained through the holonomy defined in Lemma \ref{lem:path-holonomy}, $e((a * \eta)') = a \cdot e(\eta')$.
\end{lemma}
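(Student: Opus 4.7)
The statement is essentially an equivariance/compatibility property, and my plan is to prove it by unwinding the inductive construction given in the proof of Lemma \ref{lem:path-holonomy}. Concretely, write $\eta$ as a sequence of intermediate points $x = x_0, x_1, \dots, x_n$ with corresponding lift $\eta' = (y_0, y_1, \dots, y_n)$, where each $y_i \in \mc F^{\alpha,\beta}(x_i)$ was produced leg-by-leg. The path $a * \eta$ has break points $x_0, x_1, \dots, x_n, x_{n+1}$ with $x_{n+1} = a \cdot x_n$, and its lift $(a * \eta)'$ is constructed by the same inductive procedure.

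The first observation is that the lift at step $i$ depends only on the data $(x_{i-1}, x_i, y_{i-1})$, together with the type of leg (a $W^{\pm\alpha}$-leg handled by smooth holonomy from Lemma \ref{lem:smooth-holonomy}, or an $\R^k$-leg handled by the action itself). Therefore, the first $n$ steps of the inductive construction applied to $a * \eta$ produce exactly $y_0, y_1, \dots, y_n$, the same points as $\eta'$. In particular, the point $y_n = e(\eta')$ is reached at step $n$ for both lifts.

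For the final step, the $(n+1)^{\mathrm{st}}$ leg of $a * \eta$ is an $\R^k$-leg. By the first case of the proof of Lemma \ref{lem:path-holonomy} (the case $x_{i+1} = a \cdot x_i$), the lift is defined by applying $a$ directly to the previously constructed point: $y_{n+1} = a \cdot y_n$. The justification there is that the $\R^k$-action preserves each coarse Lyapunov foliation, so it carries the connection \eqref{eq:connections} at $x_n$ to the corresponding connection at $a \cdot x_n$ through the point $a \cdot y_n \in \mc F^{\alpha,\beta}(a \cdot x_n)$, which is exactly the requirement to serve as the lift. Putting these together gives $e((a*\eta)') = y_{n+1} = a \cdot y_n = a \cdot e(\eta')$.

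The only step requiring any care at all is verifying that the first-$n$-step lifts of $\eta$ and $a * \eta$ genuinely agree: this is truly just an observation about the inductive definition, since no choices in the stable-holonomy step of Lemma \ref{lem:path-holonomy} depend on what happens later in the path. There is no real obstacle — the statement is a compatibility check asserting that the construction commutes with left concatenation by the $\R^k$-action, which is built into the $\R^k$-leg case of the lift itself.
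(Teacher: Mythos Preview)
Your proof is correct and follows exactly the paper's approach: both argue that the inductive lift of Lemma \ref{lem:path-holonomy} processes the first $n$ legs of $a*\eta$ identically to those of $\eta$, and then the final $\R^k$-leg is lifted by applying $a$ to the previous endpoint, yielding $e((a*\eta)') = a \cdot e(\eta')$. The paper's proof is terser (three sentences), but your expanded version makes explicit the point that the leg-by-leg construction depends only on prior data, which is implicit in the paper.
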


\begin{proof}
Notice that in the proof of Lemma \ref{lem:path-holonomy}, any connection $x \xrightarrow{\R^k} y$ corresponds to $y = a \cdot x$. Since this connection is the last type, it is lifted to a connection of the form $y' = a \cdot x'$. Since this is the last connection of the path $\eta$, the last connection of $\eta'$ also takes this form.
\end{proof}

The following definition is crucial in the subsequent analysis.  Notice that when $W^\alpha$ is the slow foliation in a foliation $\mc F$, Lemma \ref{lem:smooth-holonomy} shows that the holonomy along the fast foliation is smooth. In general, there exists an $\alpha$-holonomy between two different fast leaves, but the holonomy may not be smooth. We call these $\alpha$-holonomies, which can be extended to paths as in Lemma \ref{lem:path-holonomy}. In general, the number of legs for a given $W^H$-path may increase after applying a $\pm \alpha$-holonomy. We shall see that another notion, the {\it $a$-switching number} remains constant.

\begin{definition}
Fix $a \in H$ such that $a \not\in \ker \beta$ for $\beta \not= \pm \alpha$, and let $\rho$ be a $W^H$-path in the foliations $W^\beta$. Then $\rho$ has some combinatorial pattern of weights $(\beta_1,\dots,\beta_n)$ (where we omit appearances of $H$ in the pattern). Define the {\em $a$-switching number} of $\rho$, $s(\rho)$, to be 1 plus the number of weights $\beta_i$ such that $\beta_i(a) \beta_{i+1}(a) < 0$. We say that the switching number is 0 if $\rho$ is only an $H$-leg.
\end{definition}

\begin{corollary}
\label{cor:holonomy-action}
Let $\eta$ be an 
$(\alpha,-\alpha,\R^k)$-path based at $x$, and $\rho$ be a path based at $x$ in $W^H$. Then there exists an $(\alpha,-\alpha,\R^k)$-path $\pi_{\rho}( \eta)$ based at $e(\rho)$ and a $W^H$-path $\pi_\eta(\rho)$ based at $e(\eta)$ such that $e(\pi_{\rho}(\eta)) = e(\pi_{\eta}(\rho))$ and $s(\rho) = s(\pi_{\eta}(\rho))$. The functions $\pi_\eta$ and $\pi_\rho$ are continuous. {  Furthermore, $\pi_\eta$ is bijective on the space of $W^H$-paths, and $\pi_\rho$ is bijective on the space of $(\alpha,-\alpha,\R^k)$-paths and $(\alpha,-\alpha,L)-path$.}
\end{corollary}

\begin{proof}
Apply Lemma \ref{lem:path-holonomy} inductively along each leg of the $W^H$-path. The path $\pi_\rho(\eta)$ is the corresponding $(\alpha,-\alpha,\R^k)$-path at the end of the process, and the path $\pi_\eta(\rho)$ is the connection made along the $\beta$-foliations. Finally, observe the switching number is preserved by \eqref{eq:switching-num}, since $\alpha(a) = 0$ and if $\beta(a) < 0$, every $\gamma \in [\alpha,\beta]$ has $\gamma(a) < 0$ (similarly for $\beta(a) > 0$).

{ Invertibility follows because each has an inverse function, namely $\pi_{\rho^{-1}}$ and $\pi_{\eta^{-1}}$, where the inverse of a path denotes its reversal.}
\end{proof}

\begin{lemma}
\label{lem:holonomy-bounded-der}
If $\rho$ is a $W^H$-path with $s(\rho) \le n$ and $\eta$ is an $(\alpha,-\alpha,\R^k)$-path, then $A^{-(2n+1)}\ell(\eta) \le \ell(\pi_\rho(\eta)) \le A^{2n+1} \ell(\eta)$, where $A$ is as in Lemma \ref{lem:uniformly bounded derivative}.
\end{lemma}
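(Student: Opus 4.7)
The proof reduces to bounding the derivative of the holonomy induced by $\rho$ on the $W^{\pm\alpha}$ foliations, since legs of $\eta$ along $\R^k$-orbits are transported unchanged (the holonomy lifts of an $\R^k$-leg in Lemma \ref{lem:path-holonomy} is by the same element of $\R^k$), and therefore contribute trivially to the length ratio. For $W^{\pm\alpha}$-legs, the holonomy rescales length by the multiplier of the induced affine map in the normal forms coordinates $\psi^{\pm\alpha}_x$ guaranteed by Theorem \ref{thm:normal-forms}, up to a uniformly bounded distortion constant. So it suffices to bound such multipliers.

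My plan is to decompose $\rho$ into $s(\rho) \le n$ maximal sign-stable sub-paths $\rho_1, \ldots, \rho_n$: within each $\rho_i$, every $\beta$-leg has $\beta(a)$ of a fixed sign $\epsilon_i \in \{\pm\}$, while $H$-orbit legs are assigned to whichever block contains them. Since the holonomy of $\rho$ factors as the composition of the holonomies of the $\rho_i$ in order, the chain rule reduces the estimate to showing that each block's contribution to the $W^{\pm\alpha}$-multiplier lies in $[A^{-2}, A^2]$.

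For the per-block estimate, I will adapt the argument of Lemma \ref{lem:linear-holonomy}. Let $a_i := a^{\epsilon_i}$, so that $a_i$ contracts every $\beta$-foliation appearing in $\rho_i$. Conjugating the block's holonomy by $a_i^N$ and letting $N \to \infty$: each $\beta$-leg contracts exponentially in the leaf direction and so the $\beta$-contribution of the conjugated holonomy $\mc H_{a_i^N \cdot x_i \to a_i^N \cdot y_i}$ degenerates to the identity, while $H$-orbit legs commute with $a_i$ by abelianness of $\R^k$ and are preserved. The only surviving contribution in the limit comes from the product $h \in H$ of the $H$-legs of $\rho_i$, whose derivative on $W^{\pm\alpha}$ is uniformly bounded by $A$ via Lemma \ref{lem:uniformly bounded derivative}. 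Unwinding the conjugation using the normal-forms equivariance $a_i^N\, \psi^\alpha_z = \psi^\alpha_{a_i^N \cdot z} \circ L_{a_i^N}(z)$, together with the bounds $L_{a_i^N}(z), L_h(z) \in [A^{-1}, A]$ that follow because both $a_i^N$ and $h$ lie in $H$, pins down the block multiplier to lie in $[A^{-2}, A^2]$. Multiplying across the $n$ blocks, and using that $\rho^{-1}$ has the same switching number, gives a two-sided bound $A^{\pm 2n}$ on the holonomy multiplier, which then translates leg-by-leg into the length inequality for $\pi_\rho(\eta)$.

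The main obstacle will be making the per-block estimate rigorous when $H$-orbit legs are interleaved with the $\beta$-legs of a block. One must commute $H$-elements past $\beta$-holonomies (using that $H$ preserves each $W^\beta$-foliation and rescales it linearly in normal forms), and carefully exploit the cancellations between the $L_{a_i^N}$ factors and the $L_h$ factor so that the block multiplier does not blow up with the number of $H$-legs. This is where the uniform boundedness of hyperplane derivatives (Lemma \ref{lem:uniformly bounded derivative}) is used in an essential way: since $a_i^N h$ again lies in $H$, its derivative on $W^{\pm\alpha}$ is bounded by $A$ independently of $N$, which is exactly what keeps the block-level bound from compounding.
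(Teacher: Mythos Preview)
Your plan is essentially the same as the paper's: induct on the switching number by decomposing $\rho$ into sign-stable blocks, and for each block pick a contracting $a\in H$ and use the uniform bound from Lemma~\ref{lem:uniformly bounded derivative}. The paper's execution of the per-block step is more direct than yours, however. Rather than going through normal-forms multipliers, the paper simply tracks Riemannian arc-lengths: for any $b\in H$, applying $b$ to a curve in $W^{\pm\alpha}$ distorts its arc-length by a factor in $[A^{-1},A]$ (immediately from $\|b_*|_{E^{\pm\alpha}}\|\in[A^{-1},A]$). Since $a^N\in H$ for all $N$, both $\ell(a^N\eta)$ and $\ell(a^N\pi_\rho(\eta))$ stay within $[A^{-1},A]$ of the originals, while $a^N\eta$ and $a^N\pi_\rho(\eta)$ converge to the same limit path (the $\beta$-legs of $a^N\rho$ shrink). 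This gives $A^{-2}\le \ell(\pi_\rho(\eta))/\ell(\eta)\le A^2$ directly.

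Your detour through normal-forms multipliers works but requires justifying the step where the multiplier (which is the Riemannian derivative of the block holonomy only at the \emph{base} point, since $(\psi_x^{\pm\alpha})'(0)=1$) controls the length ratio of the whole leg; your phrase ``up to a uniformly bounded distortion constant'' hides this, and the constant is not obviously independent of the leg's length. It can be fixed (e.g.\ by re-basing the argument of Lemma~\ref{lem:linear-holonomy} at every interior point of the leg), but the paper's length-based argument avoids the issue entirely.

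Your handling of $H$-orbit legs is correct and in fact more careful than the paper's terse ``converge to one another'': the paths literally converge only up to the combined $H$-shift $h$, and the resolution is exactly your observation that $ha^N\in H$ (so $h^{-1}a^N$ applied to $\pi_\rho(\eta)$ still has derivative in $[A^{-1},A]$ on $W^{\pm\alpha}$, and now the two iterated paths do converge to the same limit).
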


\begin{proof}
By induction, it suffices to show that $\pi_\rho(\eta) \le A^2 \ell(\eta)$ when $s(\rho) = 1$. That is, when there exists $a \in H$ such that $a$ contracts every weight in $\rho$. Recall that $\pi_\rho(\eta)$ is defined through stable holonomy, so the paths $\pi_\rho(\eta)$ and $\eta$ converge to one another under iterations of $a$. But since the derivative of $a$ is bounded above by $A$ and below by $A^{-1}$ when restricted to each $\alpha$ and $-\alpha$ leaf, the lengths of $a\cdot \pi_\rho(\eta)$ and $a\cdot \eta$ are each distorted by at most $A$. Thus, since they converge to one another, the lengths of $\eta$ and $\pi_\rho(\eta)$ differ by at most $A^2$. Since we may assume that there is only one $H$-leg appearing at the beginning of the $W^H$-path, this contributes one more factor of $A$ in the estimate.
\end{proof}

\subsection{Controlling $W^H$-returns to $M^\alpha$}

Lemma \ref{lem:holonomy-bounded-der} gives us Lipschitz control on $W^H$-holonomies, which we will use to build an equicontinuous action of holonomies. The difficulty in doing this directly is that the derivative bound depends on $N$, the $a$-switching number. Therefore, if $x \in X$, we define $W^H_N(x)$ to be the set of endpoints of $W^H$-paths $\rho$ with $s(\rho) \le N$ starting from $x$.

\hspace{1cm}

\hypertarget{cases}
\noindent{{\bf The Return Dichotomy.}} Let $\R^k \curvearrowright X$ be a totally Cartan, cone transitive action such that there does not exist a dense $H$-orbit, with associated invariant set $M^\alpha(p)$ constructed above for some well-chosen $H$-periodic point $p$. Then 
 we have the following dichotomy:

\begin{enumerate}[label=(D\arabic*)]
\item \label{dich:1}  For every $N \in \N$ and $x \in M^\alpha(p)$, $W^H_N(x) \cap M^\alpha(p)$ is a finite {  union of closed $H$-orbits}.
\item  \label{dich:2} There exists $x \in M^\alpha(p)$ and $N \in \N$ such that $W^H_N(x) \cap M^\alpha(p)$ is an infinite union of $H$-orbits.
\end{enumerate}

In case (D1) of the dichotomy, we will construct a rank one factor of the action (see Section \ref{sec:case1}). Otherwise, assuming case (D2) for some $N$ and periodic point $p$, we will show that $W^H(x)$ is dense in $X$ for some $x \in X$ (and therefore $H$ has a dense orbit by Corollary \ref{cor:WH-sufficient}, see Section \ref{sec:case2}).

Before going into the two cases of the dichotomy, we establish a lemma which gives a bound on the switching number required to saturate $X$ by starting from all points of $M^\alpha(p)$.

\begin{lemma}
\label{lem:WHN-transitivity}
Let $p$ be $H$-periodic. There exists $N \in \N$ such that $W^H_N(M^\alpha(p))$ (the saturation of $M^\alpha(p)$ by $W^H_N$) is all of $X$.
\end{lemma}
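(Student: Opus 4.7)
The plan is to combine three ingredients: local accessibility of a tubular neighborhood of $M^\alpha(p)$ by $W^H$-paths of uniformly bounded switching, $\R^k$-invariance of $W^H_N(M^\alpha(p))$, and the switching-preserving holonomy action of Corollary \ref{cor:holonomy-action}, in order to transfer bounded-switching accessibility near $M^\alpha(p)$ to all of $X$ via approximation through a dense $\R^k$-orbit.

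First I would show that for some $N_0$ depending only on the weight data, a uniform $\ve$-tubular neighborhood of $M^\alpha(p)$ lies inside $W^H_{N_0}(M^\alpha(p))$. The splitting $T_xX = T_xM^\alpha \oplus \bigoplus_{\beta \ne \pm\alpha} E^\beta$ from Corollary \ref{cor:coarse-lyapunov} shows that a transversal slice to $M^\alpha(p)$ is parameterized by following the foliations $W^\beta$ with $\beta \ne \pm\alpha$ in sequence; grouping these legs by the sign of $\beta(a)$ for the chosen regular $a \in H$, this chart uses at most two sign classes, yielding the bound $N_0$. Next I would verify $\R^k$-invariance of $W^H_N(M^\alpha(p))$: $M^\alpha(p) = \Fix(a)$ is $\R^k$-invariant, and translation by $b \in \R^k$ sends a $W^H$-path to a $W^H$-path with identical combinatorial pattern and switching number. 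Transitivity provides $x_0$ with dense $\R^k$-orbit, which must meet the open tube from the previous step, so $\R^k \cdot x_0 \subset W^H_{N_0}(M^\alpha(p))$.

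Finally, for an arbitrary $y \in X$, I would approximate $y$ by a point $y'$ of this dense orbit and use the splitting above to decompose the short displacement from $y'$ to $y$ as $y' \to y_1 \to y$, where $y_1 \in M^\alpha(y')$ is reached from $y'$ by a small $\alpha,-\alpha,\R^k$-path $\eta$ and $y$ is reached from $y_1$ by a small $W^H$-path $\sigma$ of switching number at most a local constant $N_1$ (by the same argument as in the first step). Writing $y' = e(\rho)$ for a $W^H$-path $\rho$ from some $x' \in M^\alpha(p)$ with $s(\rho) \le N_0$, I would apply Corollary \ref{cor:holonomy-action} to the pair $(\rho^{-1},\eta)$ based at $y'$: this produces an $\alpha,-\alpha,\R^k$-path from $x'$ ending at some $x''$, and simultaneously a $W^H$-path $\tau$ from $y_1$ to $x''$ with $s(\tau) = s(\rho^{-1}) \le N_0$. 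Since $M^\alpha(p)$ is saturated by $\alpha,-\alpha,\R^k$-paths, $x'' \in M^\alpha(x') = M^\alpha(p)$; reversing $\tau$ and concatenating with $\sigma$ then exhibits $y$ as the endpoint of a $W^H$-path from $M^\alpha(p)$ with switching number at most $N := N_0 + N_1 + 1$, a bound independent of $y$.

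The main obstacle I expect is not the combinatorics of the switching counts, which are handled cleanly by Corollary \ref{cor:holonomy-action}, but rather ensuring that the holonomy image $\pi_{\rho^{-1}}(\eta)$ genuinely terminates on $M^\alpha(p)$ and not merely near it. By Lemma \ref{lem:holonomy-bounded-der} its length expands by a factor of at most $A^{2N_0}$ relative to $\ell(\eta)$, so $y'$ must be chosen close enough to $y$ (in a manner depending on the fixed constant $N_0$ but not on $y$) that $\pi_{\rho^{-1}}(\eta)$ remains within the Lemma \ref{lem:psi-coords} chart at $x'$. Once this uniform closeness is arranged, the $\alpha,-\alpha,\R^k$-saturation of $M^\alpha(p)$ gives $x'' \in M^\alpha(p)$ and the argument closes with $N$ independent of $y$.
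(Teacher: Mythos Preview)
Your proof is correct and follows essentially the same route as the paper: show a uniform tube around $M^\alpha(p)$ lies in $W^H_{N_0}(M^\alpha(p))$, use $\R^k$-invariance and transitivity to make this dense, then use holonomy to push the residual $\alpha,-\alpha,\R^k$-displacement back into $M^\alpha(p)$; the paper carries out the same steps and obtains the explicit value $N=4$. Your identified ``main obstacle'' is not actually an obstacle: $M^\alpha(p)$ is by definition the global saturation of $p$ under $\alpha,-\alpha,\R^k$-paths (indeed it is a component of $\Fix(a)$ by Proposition~\ref{prop:fix-point-set}), so any $\alpha,-\alpha,\R^k$-path starting at $x' \in M^\alpha(p)$ automatically terminates in $M^\alpha(p)$ regardless of its length---no appeal to Lemma~\ref{lem:holonomy-bounded-der} or the local chart of Lemma~\ref{lem:psi-coords} is needed here.
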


\begin{proof}
In fact, we show that $W^H_4(M^\alpha(p)) = X$. Fix $a_0 \in H$ which is not in any other Lyapunov hyperplane, and let $a_1$ be a perturbation of $a_0$ which is {  Anosov}. First, notice that $W^H_2(M^\alpha(p))$ contains an open neighborhood of $M^\alpha(p)$. Indeed, using local product structure there exists $\ve > 0$ such that any $y \in X$ $\ve$-close to $x \in M^\alpha(p)$ is reached by applying some element $b \in \R^k$, moving along $W^u_{a_1}(b\cdot x)$ to some $z$, then arriving at $y \in W^s_{a_1}(z)$. Now, by Lemma \ref{lem:smooth-holonomy}, one may connect $b\cdot x$ to $y$ by first moving along the $\alpha$-leaf, then along the weights in $\Delta^+(a_1) \setminus \alpha$. { Let $\rho^+ * \eta^+$ denote the path starting from $b \cdot x$ and ending at $y$, where $\rho^+$ is a $W^H$-path and $\eta^+$ is a single $\alpha$-leg.} Similarly, one may isolate the $-\alpha$-leg in the move from $y$ to $z$, { denote the path by $\rho^- * \eta^-$}. 
{ Using the projections in Corollary \ref{cor:holonomy-action}, we note that the original path given by $\rho_- * \eta_- * \rho_+ * \eta_+ * b$, can be rearranged to 
\begin{equation}\label{eq:rho-eta-swap}\rho_- * \pi_{\eta_-}({\rho_+}^{-1})^{-1} * \pi_{{\rho_+}^{-1}}(\eta_-) * \eta_+ * b,\end{equation}} where $\rho^{-1}$ denotes the reversal of the path $\rho$. This corresponds to pushing the path $\eta_-$ along ${\rho_+}^{-1}$ via holonomies. We consider the nearby point $x' = e(\pi_{{\rho_+}^{-1}}(\eta_-) * \eta_+ * b) \in M^\alpha(p)$. 
 From this, {  since the last two connections of \eqref{eq:rho-eta-swap} are in $W^H$ and the switching number is preserved by $\pi_{\eta^-}$}, it is clear that $y \in W^H_2(M^\alpha(p))$.

Now, $W^H_2(M^\alpha(p))$ is invariant under the $\R^k$-action (since $M^\alpha(p)$ is invariant under the $\R^k$-action), and contains an open set, so is therefore dense by transitivity of the $\R^k$-action. By density, $B(W^H_2(M^\alpha(p)),\ve) = X$ for every $\ve > 0$. Repeating the argument above proves that after saturating again by local $(\alpha,-\alpha,\R^k)$-paths and local $W^{s/u}_{a_0}$ leaves, we must get all of $X$ since such a saturation at a point $x$ contains $B(x,\ve)$ for some $\ve > 0$ independent of $x$. Since we may again push the $(\alpha,-\alpha,\R^k)$-path to the start of the path using holonomy projections, we conclude that $W^H_4(M^\alpha(p)) = X$.
\end{proof}

\section{Finite returns: Constructing a rank one factor}
\label{sec:case1}

In this section, we assume throughout that $p$ is a fixed, $H$-periodic point for which \ref{dich:1} holds for every $N$. Under this assumption,  we will construct a $C^r$ rank one factor of the transitive, totally Cartan, $C^r$ action. We recall that $W^H_N$ and the number of switches is defined by fixing some $a \in H$ such that $a \not\in \ker \beta$ for every $\beta \not= \pm \alpha$, so that $a$ acts partially hyperbolically on $X$. We may perturb $a$ to $a' \in \R^k$ which is regular, so that $a'$ acts normally hyperbolically with respect to the orbits of the $\R^k$-actions.

We recall some facts from the theory of partially hyperbolic transformations. Recall that $a'$ acts normally hyperbolically  with respect to the $\R^k$-orbit foliation, and let $W^*_{a',\delta}$ denote the local manifolds of $a'$ where $* = s,u,cs,cu$ stands for stable, unstable, center-stable, center-unstable, respectively and $\delta$ denotes the size of the local neighborhood. Since $a'$ acts normally hyperbolically, there exists $\delta > 0$ such that the map $g {  = g_x} : W^u_{a',\delta}(x) \times W^{cs}_{a',\delta}(x) \to X$ defined by $g(y,z) = W^{cs}_{a',\delta}(y) \cap W^u_{a',\delta}(z)$ is well defined and a homeomorphism onto a neighborhood of $x$. Choose $\ve_0$ to be such that $B(x,\ve_0)$ is contained in this neighborhood {  for every $x \in X$}.

\begin{lemma}[Local integrability of $W^H$]
\label{lem:locally-integrable}
Assume \ref{dich:1} holds. 
There exists $\ve_0$ with the following property:
suppose that { $x \in X$ and $\rho \in W^H_2(x)$ begins with its $a$-stable legs, then moves along $a$-unstable legs, each written in a circular ordering, and possibly $H$-legs}. Then for every $\ve_0 > \ve > 0$, there exists $\delta > 0$ such that if the legs of $\rho$ all have lengths less than $\delta$, there exists a $\rho' \in W^H_2(x)$ with the same base and endpoint as $\rho$, which begins with its $a$-unstable legs, then moves along $a$-stable legs, written in a circular ordering, and possibly $H$-legs. Every leg of $\rho'$ has length at most $\ve$ (for both paths, the position and lengths of the $H$-legs do not matter).
\end{lemma}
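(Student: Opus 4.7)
We argue by contradiction. Assume the conclusion fails; then there exist $\ve>0$ and a sequence of $W^H_2$-paths $\rho_n$ of stable-then-unstable form, based at $x_n$ and ending at $y_n$, with all leg lengths $\delta_n\to 0$, yet admitting no $W^H_2$-path of unstable-then-stable form with legs at most $\ve$ connecting $x_n$ to $y_n$. By compactness $x_n,y_n\to x$. Using the density of $\R^k$- and $H$-periodic orbits (Theorem~\ref{thm:dense-periodic} and Corollary~\ref{cor:H-periodic}), we may reduce (after a small $\R^k$-adjustment and passing to a subsequence) to the case in which $x$ is simultaneously $H$- and $\R^k$-periodic with $x_n\in M^\alpha(x)$.

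Pick a regular perturbation $a'\in\R^k$ of $a$ with $\alpha(a')>0$, so that $E^s_{a'}=E^s_a\oplus E^{-\alpha}$ integrates to $W^s_{a'}$ with $W^{-\alpha}$ as its slow subfoliation and symmetrically $E^u_{a'}=E^u_a\oplus E^{\alpha}$ for $W^u_{a'}$. Iterated application of Lemmas~\ref{lem:smooth-holonomy} and~\ref{lem:extending-charts} then gives, for $n$ large, a unique local product decomposition
\[
y_n \;=\; \sigma_n^{s} \star \sigma_n^{-\alpha} \star \sigma_n^{L} \star \sigma_n^{H} \star \sigma_n^{\alpha} \star \sigma_n^{u}\,(x_n),
\]
where $\sigma_n^{u}$ (resp.\ $\sigma_n^{s}$) is a possibly multi-leg path in the $W^H$-weight foliations $W^\beta$ with $\beta(a)>0$ (resp.\ $\beta(a)<0$), $\sigma_n^{H}$ is an $H$-orbit move, $\sigma_n^{L}$ is a translation along a line $L\subset\R^k$ transverse to $H$, and $\sigma_n^{\pm\alpha}$ are legs along $W^{\pm\alpha}$. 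Each leg has length $O(\delta_n)$.

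If the three ``forbidden'' components $\sigma_n^{\alpha},\sigma_n^{-\alpha},\sigma_n^{L}$ were all trivial for some large $n$, then the concatenation $\sigma_n^{s}\star\sigma_n^{H}\star\sigma_n^{u}$ would itself be an unstable-then-stable $W^H_2$-path of leg length at most $\ve$ connecting $x_n$ to $y_n$, contradicting the failure assumption. Hence for every large $n$ at least one of $\sigma_n^{\alpha},\sigma_n^{-\alpha},\sigma_n^{L}$ is nontrivial. Let $z_n$ be the unique point of $M^\alpha(x)$ obtained from $y_n$ by the $W^H$-local-product projection along $W^s_a$ and $W^u_a$ (absorbing any $H$-orbit correction); equivalently, $z_n$ is the endpoint of $\sigma_n^{-\alpha}\star\sigma_n^{L}\star\sigma_n^{H}\star\sigma_n^{\alpha}$ applied to $x_n$. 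Because $M^\alpha(x)$ is saturated by $\R^k$-orbits and $W^{\pm\alpha}$-leaves (Proposition~\ref{prop:fix-point-set}), $z_n\in M^\alpha(x)$; by nontriviality of a forbidden leg, $z_n\ne x_n$; and as all legs vanish, $z_n\to x$. Composing $\rho_n$ (which has $a$-switching number $2$) with the reverses of $\sigma_n^{u}$ and $\sigma_n^{s}$ (each a single-switch $W^H$-path) and tracking Corollary~\ref{cor:holonomy-action} exhibits a $W^H$-path from $x_n$ to $z_n$ of $a$-switching number at most $4$; hence $z_n\in W^H_4(x_n)\cap M^\alpha(x)$.

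By Case~(1) of the Return Dichotomy, $W^H_4(x)\cap M^\alpha(x)$ is finite. Lemma~\ref{lem:holonomy-bounded-der} furnishes equicontinuous dependence on the base point of endpoints of $W^H$-paths with bounded switching number, and this propagates the finiteness uniformly to a neighborhood of $x$. But $\{z_n\}$ consists of infinitely many distinct points in $W^H_4(x_n)\cap M^\alpha(x)$ accumulating at $x$, a contradiction. The main obstacle will be the dynamical reduction to a fixed periodic Lyapunov central manifold $M^\alpha(x)$ so that Case~(1) applies, together with careful verification that the $W^H$-projection of $y_n$ back to $M^\alpha(x)$ keeps the $a$-switching number bounded by $4$; both rely on the smooth integral structure of $M^\alpha(x)$ and the $H$-equivariance of the coarse Lyapunov foliations.
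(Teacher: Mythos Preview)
Your overall architecture matches the paper's: decompose the endpoint via local product structure for a regular perturbation $a'$ of $a$, isolate the ``forbidden'' $\alpha,-\alpha,L$-legs, and contradict Case~(1). However, two steps are genuine gaps.

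\textbf{The reduction to $x_n\in M^\alpha(x)$ with $x$ periodic is unjustified.} The failing sequence $(\rho_n)$ is handed to you at arbitrary base points $x_n$; you cannot move these into a fixed Lyapunov central manifold without altering the $W^H_2$-structure and leg lengths of $\rho_n$. Density of $H$-periodic orbits does not help: a limit of non-periodic points need not be periodic, and even if it were, $x_n$ need not lie on $M^\alpha(x)$. The paper avoids this entirely: it fixes a single (arbitrary) base point $x$, connects $x$ to some $x'\in M^\alpha(p)$ by a $W^H$-path $\rho$ with $s(\rho)=N$ (Lemma~\ref{lem:WHN-transitivity}), and works with a defect function $q$ defined at $x$.

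\textbf{The final contradiction with Case~(1) is incomplete.} Even granting your reduction, you obtain $z_n\in W^H_4(x_n)\cap M^\alpha(x)$ with $z_n\ne x_n$, but the base points $x_n$ vary. Case~(1) gives only \emph{pointwise} finiteness of $W^H_N(x)\cap M^\alpha(p)$, with no uniform bound; the claim that Lemma~\ref{lem:holonomy-bounded-der} ``propagates the finiteness uniformly'' is precisely what later lemmas (e.g.\ Lemma~\ref{lem:r-constant}) work to establish. If you project the $W^H_4$-path from $x_n$ to $z_n$ along a short $\eta_n$ from $x_n$ to $x$, you get $z_n'\in W^H_4(x)$, but nothing prevents $z_n'=x$: the defect length $d(x_n,z_n)$ and the drift $d(x_n,x)$ are both small with no relation between them, so the projected defect can cancel. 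The paper's key device here is to define a \emph{continuous} defect function $q(w)=d(x,x_1)$ on the stable-then-unstable reachable set of $x$, then use the intermediate value theorem along a path from $x$ to a point where $q>0$ to select $w_n$ with $q(w_n)=1/n$ exactly. Projecting through $\rho$ produces points in $W^H_{2N+4}(x')\cap M^\alpha(p)$ at distance in $[A^{-2N}/n,\,A^{2N}/n]$ from $x'$ by Lemma~\ref{lem:holonomy-bounded-der}---nonzero and tending to zero---so $x'$ is a non-isolated point of a set that Case~(1) declares finite.
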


\begin{proof}
Let $w \in B(x,\ve)$ be reached by first moving along some $a$-stable legs, then some $a$-unstable legs (we may without loss of generality assume that there are no $H$-legs). Note that anything which is $a$-(un)stable is automatically $a'$-(un)stable. Then we may reverse the order of the stable and unstable legs by writing $w = g(y,z)$ for some $y \in W^u_{a',\delta}(x)$ and $z \in W^{cs}_{a',\delta}(x)$. Notice also that since the center foliation of $a'$ is given by an abelian group action of $\R^k$, we may reach $w$ from $y$ by first moving along the $W^s_{a'}(y)$ to get to $y_1$, then applying an element $b$ of the $\R^k$ action to get to $w = b \cdot y_1$. By choosing a subgroup $L \subset \R^k$ transverse to $H$, we may further write $b = h + \ell$, where $h \in H$ and $\ell \in L$.

The plan is as follows: We wish to show that the $\ell$-component, and the $W^{\pm \alpha}$-components of the $W^{s/u}_{a'}$-paths are trivial. So we turn them into continuous functions depending on the points $y$ and $z$. If the components are not trivial at some point, they can be made arbitrarily small by continuity and the fact that they are trivial for the trivial path. Together, these components can be pushed to appear as the last legs of the path using Corollary \ref{cor:holonomy-action}. We may further control their distances using Lemma \ref{lem:holonomy-bounded-der}, which gives rise to a new intersection point of $W^H(x)$ with $M^\alpha(x)$ if nontrivial. By the discreteness assumption,  we will conclude that the components must be trivial at all points.

We now provide the details of the proof, which is depicted in Figure \ref{fig:su-comm}. Assume that $a'$ is chosen sufficiently close to $a$ so that $\alpha$ and $-\alpha$ are the slow foliations in $W^u_{a',\delta}$ and $W^s_{a',\delta}$, respectively. Thus, by Lemmas \ref{lem:extending-charts} and \ref{lem:smooth-holonomy}, we may break off the $\alpha$ and $-\alpha$ pieces to get points $y' \in W^\alpha(x)$ and $w' \in W^{s}_{a,\delta}(y_1)$ such that $y \in W^u_{a,\delta}(y')$, and $y_1 \in W^s_{a,\delta}(w')$. { We write the paths as follows: $\rho_+ * \eta_+$ is the path starting at $x$ and ending at $y$, where $\rho_+$ is an unstable path in $W^H$ and $\eta_+$ is a single $\alpha$-leg. $\rho _- * \eta_-$ is the path starting at $y$ and ending at $y_1$, where $\rho_-$ is a stable path in $W^H$ and $\eta_-$ is a single $-\alpha$-leg. Finally, $b = h+ \ell$ is the $\R^k$-element connecting $y_1$ and $w$, so that $w$ is the endpoint of
\[  (h+\ell) * \rho_- * \eta_- * \rho_+ * \eta_+  .\]

   We abuse notation to freely move the element $b$ around the product, noting that it preserves the combinatorics of the legs appearing as well as whether the leg is trivial or not. Since it is short it also does not expand or contract the legs a significant amount In particular, we  continue to write $\rho_-$ for $\pi_\ell(\rho_-)$ after moving $\ell$ to the right. } Notice that there are now only three legs which are not permitted in $W^H$: the $\alpha$ leg connecting $x$ and $y'$ { (ie, $\eta_+$)}, the $-\alpha$ leg connecting $y$ and $w'$ { (ie, $\eta_-)$}, and the $L$-component of the $\R^k$-connection between $w$ and $y$. One may (in a way which assigns a unique path) use the holonomy projections to arrange all three of these legs to appear at $x$ to arrive at a point $x_1 \in M^\alpha(x)$ (by commuting the $W^{-\alpha}$-leg with the $W^u_a$-leg, and using the fact that $L$ preserves all foliations), then move along $W^u_a$, then along $W^s_a$, then along the $H$-orbit { (this is exactly the rearrangement done previously to achieve \eqref{eq:rho-eta-swap}).}
 
 Since the holonomies are continuous, and the transverse intersection must vary continuously, the point $x_1$ is a continuous function of $y$. Call $\eta$ the $(\alpha,-\alpha,L)$-path used to get from $x$ to $x_1$ { (ie, $\eta = \pi_{{\rho_+}^{-1}}(\eta_-) * \eta_+ * \ell$)}. Let $q(w) = d(x,x_1)$ (where the distance is measured in $X$). Then $q$ is a continuous function whose domain is the set of points of the form $w$ (those reached from $x$ by an $a$-stable leaf, then an $a$-unstable leaf, whose lengths are each at most $\ve$), and $q(x) = 0$.

\begin{figure}[!ht]
\scalebox{.75}{
	\begin{tikzpicture}
	\draw[blue, thick] (0,0) .. controls (3,1) .. (4,4);
	\node at (3,1) {$W^s_a$};
	\draw[red, thick] (4,4) .. controls (5,1) .. (8,0);
	\node at (5,1) {$W^u_a$};
	\draw[magenta, thick] (0,0) .. controls (.25,-.5) .. (1,-1);
	\node at (0,-.65) {$W^\alpha$};
	\draw[red,thick] (1,-1) .. controls (2,-2.5) .. (3,-3);
	\node at (1.5,-2.5) {$W^u_a$};
	\draw[cyan,thick] (3,-3) .. controls (3.5,-2.75) .. (4,-2);
	\node at (4,-2.75) {$W^{-\alpha}$};
	\draw[blue,thick] (4,-2) .. controls (5.5,-2.25) .. (7,-1);
	\node at (6,-2.25) {$W^s_a$};
	\draw[green,thick] (7,-1) -- (8,0);
	\node at (7.75,-.75) {$\R^k$};

	\draw[ultra thick, ->] (9,0) -- (11,0);
	\draw[blue, thick] (12,0) .. controls (15,1) .. (16,4);
	\node at (15,1) {$W^s_a$};
	\draw[red, thick] (16,4) .. controls (17,1) .. (20,0);
	\node at (17,1) {$W^u_a$};
	\draw[green, thick] (12,0) -- (13,-1);
	\node at (12.25,-.65) {$L$};
	\draw[magenta, thick] (13,-1) .. controls (13.25,-1.5) .. (14,-2);
	\node at (13,-1.65) {$W^\alpha$};
	\draw[cyan,thick] (14,-2) .. controls (14.5,-1.75) .. (15,-1);
	
	\node at (14.25,-1.25) {$W^{-\alpha}$};
	\draw[red,thick] (15,-1) .. controls (16,-2.5) .. (17,-3);
	\node at (15.5,-2.5) {$W^u_a$};
	\draw[blue,thick] (17,-3) .. controls (18,-2.5) .. (19,-1);
	\node at (18.5,-2.5) {$W^s_a$};
	\draw[green,thick] (19,-1) -- (20,0);
	\node at (19.5,-.75) {$H$};
	
	\filldraw[black] (0,0) circle (2pt) node[anchor=east]{$x$};
	\filldraw[black] (12,0) circle (2pt) node[anchor=east]{$x$};
	\filldraw[black] (8,0) circle (2pt) node[anchor=west]{$w$};
	\filldraw[black] (20,0) circle (2pt) node[anchor=west]{$w$};
	\filldraw[black] (15,-1) circle (2pt) node[anchor=west]{$x_1$};
	\filldraw[black] (3,-3) circle (2pt) node[anchor=north]{$y$};
	\filldraw[black] (7,-1) circle (2pt) node[anchor=north]{$y_1$};
	\filldraw[black] (1,-1) circle (2pt) node[anchor=west]{$y'$};
	\filldraw[black] (4,-2) circle (2pt) node[anchor=south]{$w'$};
	\end{tikzpicture}
}
\caption{Constructing a $W^s_a$, $W^u_a$-commutator}
\label{fig:su-comm}
\end{figure}

The lemma, stated in this language, exactly claims that $q \equiv 0$ where defined (ie, on the points reached by a local $a$-stable leaf, then a local $a$-unstable leaf). {  Indeed, if $q > 0$ at some point, then $x$ and $x_1$ do not lie in the same $H$-orbit by Lemma \ref{lem:eta-LPS}.}

Suppose that $q \not\equiv 0$, ie that there exists $w$ such that $q(w) > 0$. Choose any path $\gamma : [0,1] \to B(x,\ve)$ such that $\gamma(0) = x$ and $\gamma(1) = w$, and $q$ is defined on $\gamma(t)$ for every $t \in [0,1]$. This is possible simply by retracting the $W^{s/u}_a$ connections from $a$ to $w$. Then $q \of \gamma : [0,1] \to \R_{\ge 0}$ is continuous, so by the intermediate value theorem, for sufficiently large $n$, there exists $t_n \in [0,1]$ such that $q(\gamma(t_n)) = \frac{1}{n}$. 

For each $n$, we use the notations developed above, replacing $w$ by $w_n = \gamma(t_n)$.  For the distinguished $H$-periodic orbit $p$, choose any $W^H$-path $\rho$ from $x$ to $M^\alpha(p)$, and let $x' = e(\rho)$ denote its endpoint (such a path exists by Lemma \ref{lem:WHN-transitivity}). Let $N = s(\rho)$. See Figure \ref{fig:project-return}, in which the blue curves represent collections of $W^H$-legs and the red curves are collections of $M^\alpha$-legs. Let $\eta_n$ denote the $(\alpha,-\alpha,L)$-path connecting $x$ and $x_n$, which corresponds to the point $x_1$ in the construction above.

\begin{figure}[!ht]
	\begin{tikzpicture}
	\draw[blue,thick] (-.5,-2) -- (-.5,0);
	\draw[blue,thick] (.5,-2) -- (.5,0);
	\draw[blue,thick] (-.5,0) .. controls (-1,1) .. (0,1.5);
	\draw[blue,thick] (.5,0) .. controls (1,1) .. (0,1.5);
	\draw[red,thick] (-.5,0) -- (.5,0);
	\draw[red,thick] (-.5,-2) -- (.5,-2);
	
	\filldraw[black] (-.5,0) circle (2pt) node[anchor=east]{$x$};
	\filldraw[black] (.5,0) circle (2pt) node[anchor=west]{$x_n$};
	\filldraw[black] (0,1.5) circle (2pt) node[anchor=south]{$w_n = \gamma(t_n)$};
	\filldraw[black] (-.5,-2) circle (2pt) node[anchor=east]{$x'$};
	\filldraw[black] (.5,-2) circle (2pt) node[anchor=west]{$z_n$};
	
	\node at (-.75,-1) {$\rho$};
	\node at (0,.25) {$\eta_n$};
	\node at (1.1,-1) {$\pi_{\eta_n}(\rho)$};
	\end{tikzpicture}
	\caption{Projecting a close $W^H$-return to $M^\alpha(p)$}
	\label{fig:project-return}
\end{figure}

Then one may construct paths of length $2N+4$ starting from $x'$  and ending at a point  $z_n \in M^\alpha(p)$ close to $x'$ in the following way: first follow the reverse of $\rho$ to end at $x$. Then follow the ``commutator path''  which is defined in the following way: first, following the $a$-stable/unstable path to get  from $x$ to $w_n = \gamma(t_n)$ which the constructions above began with, then the $a$-stable/unstable path to get to $x_n$. Recall that the point $x_n$ is connected to $x$ by a short $(\alpha,-\alpha,L)$-path $\eta_n$. Project the long path $\rho$ along $\eta_n$ to obtain $\pi_{\eta_n}(\rho)$ (see Corollary \ref{cor:holonomy-action}), which will start at $x_n$ and end at a point $z_n$ whose distance is at most $A^{  2N+1}q(\gamma(t_n)) = A^{  2N+1}/n$ from $x'$ (by Lemma \ref{lem:holonomy-bounded-der}). { Finally, if $\eta_n$ is nontrivial, so is its projection, and $z_n$ cannot lie in the same $H$-orbit as $x'$ (by Lemma \ref{lem:eta-LPS}).} Since this holds for every $n$, we have contradicted that we are in  Case \ref{dich:1} of the Return Dichotomy, so we conclude $q \equiv 0$.
\end{proof}

Recall the definition of $M^\alpha_\ve(x)$ at the end of Section \ref{sec:malpha-integrability}.

\begin{lemma}
\label{lem:Malpha-intersections}
For every $N \in \N$ and sufficiently small $\ve > 0$, there exists $\delta = \delta(N,\ve)$ such that if $\rho$ is a $W^H_N$-path based at a point $x \in X$ with $e(\rho) \in M_\ve^\alpha(x)$, and every leg in $\rho$ has length less than $\delta$, then $e(\rho) \in H x$.
\end{lemma}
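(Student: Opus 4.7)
The plan is to view the endpoint of $\rho$ as a continuous function of its leg parameters, valued in the transversal directions to $H$ within $M^\alpha_\ve(x)$, and to show this function takes values in a discrete set so that it must be locally constant near the trivial path. For each combinatorial pattern, parameterize the $W^H_N$-path $\rho$ by its leg lengths in the $\beta$-foliations and its $H$-translations. By Lemma \ref{lem:psi-coords}, write $e(\rho) = \psi_x(s_1,s_2,u)$ with $(s_1,s_2,u) \in \R \times \R \times \R^k$, and fix a line $L$ transverse to $H$ so that $u$ decomposes as $u_H + u_L$. The assertion $e(\rho) \in H\cdot x$ is equivalent to $\Phi(\rho) := (s_1,s_2,u_L) = 0$. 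Since $\Phi$ is continuous in the leg parameters and $\Phi$ vanishes at the trivial path, it will suffice to show $\Phi$ takes values in a discrete subset of $\R \times \R \times L$.

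To apply \hyperlink{cases}{Case (1)}, pick an $H$-periodic point $p$ close to $x$ (using Corollary \ref{cor:H-periodic}), and choose a short $\pm\alpha,\R^k$-path $\eta$ from $x$ to some $y \in M^\alpha(p)$; for generic $x$ one can arrange $x \in M^\alpha(p)$ directly, while for the remaining $x$ one approximates by such points and passes to the limit using continuity and compactness. By Corollary \ref{cor:holonomy-action}, the transported path $\pi_\eta(\rho)$ is a $W^H_N$-path at $y$ (with preserved switching number $N$), and its endpoint $e(\pi_\eta(\rho)) = e(\pi_\rho(\eta))$ lies in $W^H_N(y) \cap M^\alpha(p)$. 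By \hyperlink{cases}{Case (1)} this intersection is a discrete union of $H$-orbits.

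The crucial linking step is that, by Lemmas \ref{lem:linear-holonomy} and \ref{lem:holonomy-bounded-der}, the holonomies relating $M^\alpha_\ve(x)/H$ to $M^\alpha_\ve(y)/H$ are affine with nondegenerate derivatives uniformly bounded in terms of $A$ and $N$; moreover the $H$-action and $\R^k$-translation commute both with each other and with these holonomies. Thus the analogous defect $\Phi'$ evaluated at $\pi_\eta(\rho)$ is an affine isomorphic image of $\Phi(\rho)$, and discreteness of $\Phi'$ transfers to discreteness of $\Phi$. Combined with the continuity of $\Phi$ and its vanishing at the trivial path, this forces $\Phi \equiv 0$ on a neighborhood of the trivial legs, producing a $\delta(N,x) > 0$ that verifies the conclusion at $x$.

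The main obstacle is upgrading to a uniform $\delta(N)$ independent of $x$, since the spacing of $W^H_N(y) \cap M^\alpha(p)$ a priori depends on the choice of periodic orbit $p$. This is resolved via compactness of $X$ together with the uniform distortion estimates from Lemma \ref{lem:uniformly bounded derivative} and the controlled switching-number distortions in Corollary \ref{cor:holonomy-action} and Lemma \ref{lem:holonomy-bounded-der}; $\R^k$-invariance of the whole construction makes the locally defined $\delta(N,x)$ lower semicontinuous in $x$, hence bounded below on the compact manifold $X$.
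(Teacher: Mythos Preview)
Your approach is genuinely different from the paper's.  The paper does \emph{not} attack Lemma~\ref{lem:Malpha-intersections} directly via a discreteness/continuity argument; instead it first proves the local integrability Lemma~\ref{lem:locally-integrable} (whose proof already absorbs the subtle use of Case~(1) through a carefully defined function $q$ and an intermediate--value argument), then proves the present lemma by a purely combinatorial reduction: one checks the case $N=2$ via the local product structure map $f:W^u_a(x)\times W^s_a(x)\times H\times\R^3\to X$, and for general $N$ iterates Lemma~\ref{lem:locally-integrable} to push all stable legs to one side and all unstable legs to the other, landing back in the $N=2$ case.  The uniformity in $x$ of the constant $\delta(N)$ is inherited from the uniform $\ve_0$ produced by Lemma~\ref{lem:locally-integrable}.

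Your proposal has two substantive gaps.  First, the short $\pm\alpha,\R^k$-path $\eta$ from an arbitrary $x$ to some $y\in M^\alpha(p)$ does not exist in general: any $\pm\alpha,\R^k$-path starting at $x$ stays in the $(k+2)$-dimensional disk $M^\alpha_\ve(x)$, which for typical $x$ is disjoint from the fixed $(k+2)$-dimensional submanifold $M^\alpha(p)$.  The fallback ``approximate by such $x$ and pass to the limit'' does not rescue the argument, because the object you need to transfer is the \emph{minimum gap} of $W^H_N(y)\cap M^\alpha(p)$ around $Hy$, and limits of discrete (even finite) sets need not stay discrete.

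Second---and this is the core issue---even when $x\in M^\alpha(p)$, your $\delta(N,x)$ depends on the spacing of the finite set $W^H_N(x)\cap M^\alpha(p)$ near $Hx$, and Case~(1) gives \emph{finiteness only}, with no lower bound on this spacing.  Nothing you cite (Lemma~\ref{lem:uniformly bounded derivative}, Corollary~\ref{cor:holonomy-action}, Lemma~\ref{lem:holonomy-bounded-der}) controls how close a nontrivial return can be to $x$; those results bound distortion of holonomies, not gaps of return sets.  The asserted lower semicontinuity of $\delta(N,x)$ is exactly what needs proof, and the paper's Lemma~\ref{lem:locally-integrable} is precisely the device that supplies a uniform $\ve_0$ independent of $x$.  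Without an analogue of that lemma, your argument does not yield the uniform $\delta(N)$ the statement requires.
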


\begin{proof}
We first prove the lemma for $N  = 2$. Define the following map $f : W^u_a(x) \times W^s_a(x) \times H \times L \times \R^2 \to X$. Pick $(y,z,h,b,t,s)$ in the domain. Let $w = W^s_{a'}(y) \cap W^{cu}_{a'}(z)$. Then define $w' = (h+b)w$. Let $w_1$ be the point of $W^\alpha(w')$ at signed distance $t$ from $w'$ and $w_2$ be the point of $W^{-\alpha}(w_1)$ at signed distance $s$ from $w_1$. Setting $f(y,z,h,b,t,s) = w_2$, we get that the map $f$ is a local homeomorphism onto its image (one can rearrange the $\alpha$ and $-\alpha$ legs as in the proof of Lemma \ref{lem:locally-integrable} and use the standard local product structure of Anosov actions). In particular, one cannot have $f(y,z,h,0,0,0) = f(x,x,0,b,t,s)$ unless all components of $(y,z,h,b,t,s)$ are trivial. 

Now we proceed in general. Let $0 < \ve < \ve_0$, where $\ve_0$ is as in Lemma \ref{lem:locally-integrable}. We may apply Lemma \ref{lem:locally-integrable} to arrive at some $\delta = \ve_1$ for which any $W^H_2$-path which begins with $W^H$-stable legs and ends with $W^H$-unstable legs can have its stable/unstable order reversed, and after reversal, the new path has its legs less than $\ve$. Then again apply Lemma \ref{lem:locally-integrable} to $\ve_1$ to arrive at a new $\delta = \ve_2 < \ve_1/2$ for which rearrangements of paths of length at most $\ve_2$ has its lengths at most $\ve_1/2$ after rearrangement. Repeat this process $N$ times to arrive at a sequence of $\ve_i$ such that $\ve_i < \ve_{i-1}/2$ and after rearranging  a stable/unstable path to an unstable/stable path with legs of length at most $\ve_i$, the new legs have length at most $\ve_{i-1}/2$. Then, if every leg of $\rho$ has length at most $\ve_N$, we may apply Lemma \ref{lem:locally-integrable} $N$ times to rearrange the path to begin with an unstable leg, then move along a stable leg (or vice-versa). Since the lemma holds for $N = 2$, we have finished the proof.
\end{proof}

\begin{lemma}
\label{lem:WH-localmanifold}
Fix $x \in X$. For every $y \in W^H_N(x)$, there exists $\delta_0 = \delta_0(y) > 0$ such that if $z$ is reached from $y$ by a $W^H_2$-path whose legs have length at most $\delta_0$, then $z \in W^H_N(x)$. The constant $\delta_0$ depends only on the length of the final group of $a$-stable or $a$-unstable legs of a $W^H$-path connecting $x$ and $y$, and is bounded below in a local $W^H_2$-leaf of $x$. 
\end{lemma}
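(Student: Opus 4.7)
The plan is to take a $W^H_N$-path $\rho$ from $x$ to $y$ of minimal switching number $M \le N$ and to incorporate a small $W^H_2$-path $\rho'$ from $y$ to $z$ into $\rho$ without increasing the switching number above $N$. I will let $G_M$ denote the final group of weight-legs of $\rho$; without loss of generality these legs are $a$-stable, and I write $L$ for the total length of $G_M$. The constant $\delta_0$ will depend on $L$ and the universal constants from Lemma \ref{lem:locally-integrable}.

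The reduction proceeds in two stages. First, all $H$-legs in $\rho \cdot \rho'$ can be collected at the end without affecting the analysis (their positions are immaterial by the final remark of Lemma \ref{lem:locally-integrable}). If the weight-legs of $\rho'$ have the pattern ``unstable then stable,'' I would apply Lemma \ref{lem:locally-integrable} in its symmetric direction to $\rho'$ to rearrange it into stable-then-unstable form $S_1 U_2$ (either part possibly empty). The initial stable portion $S_1$ then merges with $G_M$ to form a single long stable group $S^{\mathrm{tot}}$ of length at most $L + \delta_0$, so the terminal pattern of $\rho \cdot \rho'$ becomes $G_{M-1}\, S^{\mathrm{tot}}\, U_2$, with at most $M+1$ groups.

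The core step is then a single stable/unstable swap: applying Lemma \ref{lem:locally-integrable} to $S^{\mathrm{tot}} U_2$ yields a $W^H_2$-path $U_2'\, (S^{\mathrm{tot}})'$ with the same endpoints. Since $G_{M-1}$ is unstable (groups alternate in sign), the new $U_2'$ would merge with $G_{M-1}$, giving a path from $x$ to $z$ with $M \le N$ groups. When $L$ is small enough that $L + \delta_0 \le \ve_0$, one application of Lemma \ref{lem:locally-integrable} suffices after first chunking the legs of $S^{\mathrm{tot}}$ so that each has individual length at most the relevant threshold (which does not change the switching number). For larger $L$, I plan to replace the single swap by an iterated ``bubble'' procedure: chunk $S^{\mathrm{tot}}$ into legs $s_1, \ldots, s_k$ of size at most $\delta_0$, and repeatedly swap the rightmost stable chunk with the adjacent unstable portion, one chunk at a time. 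Each elementary swap operates on a $W^H_2$-path of base-to-endpoint distance at most $2\delta_0$, so Lemma \ref{lem:locally-integrable} applies; the accumulated stable legs to the right merge into a single group after each swap, so the local structure $\cdots s_j\, u^{(j)}\, s^{*}$ persists throughout the iteration and finally collapses into $M$ groups when the unstable piece becomes adjacent to $G_{M-1}$.

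The hard part will be controlling leg lengths through this iteration, since each application of Lemma \ref{lem:locally-integrable} can enlarge legs. I plan to handle this by inductively defining a decreasing sequence $\ve_0 > \ve_1 > \ve_2 > \cdots$, where $\ve_{j+1}$ is the threshold $\delta$ produced by Lemma \ref{lem:locally-integrable} for target $\ve_j$, and setting $\delta_0 = \ve_K$ for $K$ an upper bound on the number of swaps required (which scales like $L/\delta_0$ and is thus ultimately a function of $L$). This explicit construction yields the claimed dependence of $\delta_0$ on $L$. For the final assertion, if $y$ lies in a local $W^H_2$-leaf of $x$ then one may take $\rho$ itself to be a short $W^H_2$-path from $x$ to $y$, whose final-group length is bounded by the diameter of that $W^H_2$-leaf; hence $\delta_0(y)$ will be bounded below uniformly on such leaves, completing the proof.
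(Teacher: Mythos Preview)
Your bubble procedure has a genuine circularity that makes the argument fail for large $L$. You set $\delta_0 = \ve_K$, where $K$ is the number of swaps you need, and you need $K$ to be at least the number of chunks of $S^{\mathrm{tot}}$, which is roughly $L/\delta_0 = L/\ve_K$. So you need $K \ve_K \gtrsim L$. But Lemma~\ref{lem:locally-integrable} gives no quantitative relation between $\delta$ and $\ve$: the sequence $\ve_0 > \ve_1 > \cdots$ could decrease as fast as you like (say $\ve_K = 2^{-K}$), so $K\ve_K$ need not be unbounded, and for large $L$ there may be no admissible $K$ at all. Relatedly, your claim that ``each elementary swap operates on a $W^H_2$-path of base-to-endpoint distance at most $2\delta_0$'' is not the hypothesis of Lemma~\ref{lem:locally-integrable}: that lemma requires all \emph{legs} to be short, and after the first swap the unstable block $U_2^{(1)}$ already has legs of size up to $\ve_{K-1}$, not $\delta_0$. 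This is precisely what forces the cascading thresholds and the circularity.

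The paper sidesteps this entirely by using the hyperbolic dynamics, which you do not invoke. Writing $\rho = \rho_2 * \rho_1$ with $\rho_2$ the final stable block of length $L$, one applies $(ta)$ for $t$ large enough to contract $\rho_2$ below the threshold $\delta$ of Lemma~\ref{lem:locally-integrable}; setting $\delta_0 = \delta / \norm{d(ta)}$ ensures that $(ta)\cdot \rho_3$ also has all legs below $\delta$. Now $(ta)\cdot(\rho_3 * \rho_2)$ is a short $W^H_{\le 4}$-path, so a \emph{bounded} number of applications of Lemma~\ref{lem:locally-integrable} (independent of $L$) rearranges it into unstable-then-stable form; pulling back by $(-ta)$ and merging the leading unstable group into the end of $\rho_1$ keeps the switching number at most $N$. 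The dependence $\delta_0 = \delta_0(L)$ comes from the size of $t$ needed to shrink $\rho_2$, which is exactly the statement of the lemma. The missing idea in your approach is this use of the flow to bring $\rho_2$ down to local scale.
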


\begin{proof}
Fix any $W^H_N$-path $\rho$ connecting $x$ and $y$, and suppose that $\rho$ ends with $a$-stable legs, so that $\rho = \rho_2 * \rho_1$, where $s(\rho_1) \le N-1$, $\rho_1$ ends with an $a$-unstable leaf and $\rho_2$ is contained in a single $a$-stable leaf. Then let $\ve_0$ denote the length of $\rho_2$. Choose $t > 0$ such that the length of $(ta) \cdot \rho_2$ is smaller than $\delta$ as in Lemma \ref{lem:locally-integrable} applied to some fixed $\ve$. Let $\delta_0 = \delta/ \norm{(ta)_*}$. Then if $\rho_3$ is any $W^H_2$-path starting from $y$ whose legs have length at most $\delta_0$, $(ta) \cdot (\rho_3 * \rho_2)$ is a $W^H_4$-path whose legs all have length at most $\delta$. By Lemma \ref{lem:locally-integrable}, we may find a $W^H_2$ path which shares the same start and endpoint, begins with an $a$-unstable leaf and ends with an $a$-stable leaf. Applying $-ta$ to this path yields the desired path and absorbing the $a$-unstable leaf into the last leaf of $\rho_1$ gives the desired path.
\end{proof}

\begin{lemma}
\label{lem:foliation1}
Let $\R^k \curvearrowright X$ be a $C^r$ action, with $r = (1,\theta)$ or $r = \infty$. For sufficiently large $N$, the collection $\set{W^H_N(x) : x \in M^\alpha(p)}$ is a $C^r$ foliation of $X$.
\end{lemma}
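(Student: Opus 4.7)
The plan is to show that, for $N$ chosen a sufficiently large multiple of the surjectivity constant $N_0$ of Lemma~\ref{lem:WHN-transitivity}, each $W^H_N(x)$ is a topological submanifold of $X$ of codimension $3$, and that two such leaves either coincide or are disjoint. The model for the leaves is obtained by combining the local integrability of $M^\alpha(p)$ (Lemma~\ref{lem:psi-coords}) with the $W^H_2$-flexibility of Lemmas~\ref{lem:locally-integrable}--\ref{lem:WH-localmanifold}.

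First I would build the chart at a point $x \in M^\alpha(p)$: choose $a' \in \R^k$ a small regular perturbation of the fixed $a \in H$, and split $TX$ near $x$ into the $H$-orbit direction, the $\widehat{E^\alpha}$-unstable, the $\widehat{E^{-\alpha}}$-stable, and the $3$-dimensional transverse piece spanned by $E^\alpha$, $E^{-\alpha}$ and a line $L$ transverse to $H$ inside the $\R^k$-orbit. By Lemmas~\ref{lem:extending-charts} and~\ref{lem:smooth-holonomy}, a neighborhood of $x$ is homeomorphic to $V_x^H \times D_x$, where $V_x^H$ parameterizes the first three pieces and $D_x = \psi_x(B(0,\ve))$ parameterizes the transverse $M^\alpha$-disc. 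Lemma~\ref{lem:locally-integrable} then shows that the short $W^H_2$-trajectories from $x$ recover exactly the slice $V_x^H \times \{x\}$, so this slice lies inside $W^H_N(x)$.

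Next I would propagate the chart to an arbitrary $y \in X$. By Lemma~\ref{lem:WHN-transitivity} there is a $W^H_{N_0}$-path $\rho$ from some $x \in M^\alpha(p)$ to $y$, and by Lemma~\ref{lem:path-holonomy} and Corollary~\ref{cor:holonomy-action} this $\rho$ induces a continuous holonomy that transports $V_x^H \times D_x$ to a local chart $V_y^H \times D_y$ at $y$. The holonomy bound of Lemma~\ref{lem:holonomy-bounded-der} controls how many switches are introduced when pulling a short $W^H_2$-leg through $\rho$. Choosing $N$ large enough in terms of $N_0$, $|\Delta|$, and the constant $A$ from Lemma~\ref{lem:uniformly bounded derivative}, Lemma~\ref{lem:WH-localmanifold} guarantees that the whole slice $V_y^H \times \{y\}$ remains inside $W^H_N(x)$. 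This shows $W^H_N(x)$ is a topological submanifold of codimension $3$ near every one of its points.

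The heart of the argument is disjointness. If $y \in W^H_N(x_1) \cap W^H_N(x_2)$ for $x_1, x_2 \in M^\alpha(p)$, then the concatenated path realizes $x_2$ as the endpoint of a $W^H$-path based at $x_1$ with switching number at most $2N$. To force $x_2 \in Hx_1$ I would localize: use Case~(1) and Lemma~\ref{lem:locally-integrable} to repeatedly reverse stable/unstable portions, and after applying a sufficiently contracting element of the center, push the resulting path to have every leg shorter than the $\delta(2N)$ of Lemma~\ref{lem:Malpha-intersections}. The conclusion of that lemma then forces $x_2 \in H x_1$, so the two candidate leaves coincide, and the sets $W^H_N(x)$ partition $X$. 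Regularity is inherited from the building blocks: the leaves are swept out by $C^r$ coarse Lyapunov foliations and smooth $\R^k$-orbits, while the transverse family $\{D_y\}$ varies H\"older continuously because it is transported by H\"older $W^H$-holonomy (Lemma~\ref{lem:linear-holonomy}), giving the stated $C^{1,\theta}$ or $C^\infty$ foliation. The principal obstacle I anticipate is the last localization step: a priori the concatenated path need not have short legs, and making all legs uniformly shorter than $\delta(2N)$, which itself depends on $N$, requires $N$ to be chosen with the local models of the first two stages already in hand.
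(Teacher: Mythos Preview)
Your proposal has the right ingredients but a genuine gap in the disjointness step, and the paper's approach avoids exactly the obstacle you flag.

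The issue is your ``localization'' for disjointness. If $y\in W^H_N(x_1)\cap W^H_N(x_2)$, concatenation only gives $x_2\in W^H_{2N}(x_1)$, a path that may be very long and has both $a$-stable and $a$-unstable legs. There is no element of $\R^k$ (``the center'' is not the right object here) that contracts all such legs simultaneously, so you cannot make every leg shorter than $\delta(2N)$ by applying a single group element. Moreover, even if you could invoke Lemma~\ref{lem:Malpha-intersections}, its conclusion is only $x_2\in Hx_1$ for \emph{short} paths; Case~(1) permits $W^H_N(x)\cap M^\alpha(p)$ to have several $H$-orbits, so two distinct $x_1,x_2\in M^\alpha(p)$ can give the same leaf. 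The global partition you are aiming for is not what one should prove first.

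The paper sidesteps this by never arguing disjointness directly. Instead it builds, at each $x\in X$, a map $f:B(x,\delta)\to M^\alpha(p)/H$: write a nearby point via local product structure as a short $\alpha,-\alpha,L$-path followed by a short $W^H_2$-path, then push the $\alpha,-\alpha,L$-part along the fixed $W^H_N$-path $\rho$ connecting $x$ to $x'\in M^\alpha(p)$. The key point is that the local fibers $f^{-1}(f(y))$ coincide with the \emph{local} $W^H_2$-slices $F_y$; this uses Lemma~\ref{lem:Malpha-intersections} only with the fixed small $N=6$ (for two local $W^H_2$-paths and one local commutator), together with discreteness from Case~(1) applied at $x'$ to $W^H_{2N+6}$. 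No global shortening is needed.

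For regularity you also need more than ``inherited from the building blocks.'' The paper shows $f$ is $C^{r'}$ along each $W^{\pm\alpha}$-leaf (via the smooth holonomies of Lemma~\ref{lem:smooth-holonomy}), constant along the complementary $\widehat{W^{\pm\alpha}}$-leaves, hence $C^{r'}$ along $W^{s}_b$ and $W^{u}_b$ separately, and then applies Journ\'e's theorem twice (first to $W^s_b$ and the $\R^k$-orbits, then to $W^{cs}_b$ and $W^u_b$) to conclude $f$ is a $C^{r'}$ submersion. That step is where the $C^{1,\theta}$ versus $C^\infty$ dichotomy actually enters, and your sketch omits it.
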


\begin{proof}
Fix $N$ as in Lemma \ref{lem:WHN-transitivity}, and choose $x \in X$. Then there exists a $W^H$-path $\rho$ connecting $x$ to some $x' \in M^\alpha(p)$ with $s(\rho) \le N$. 

Using that we are in case \ref{dich:1} of the return dichotomy and  applying it to $W^H_{2N+6}(x')$, there exists $\ve > 0$ such that if $y' \in M^\alpha(p) \cap W^H_{2N+6}(x')$ and $d(x',y') < \ve$, then $x' { \in H}y'$. Choose $\delta'$ such that if $d_X(x,y) < \delta'$ and $y \in M^\alpha_\ve(x)$, then $y$ is connected to $x$ by an $(\alpha,-\alpha,\R^k)$-path whose length is at most $\ve$ (this is possible by the continuity of the map from Lemma \ref{lem:psi-coords}). Let $0 < \delta < A^{-N}\delta'$ be chosen so that Lemma \ref{lem:Malpha-intersections} applies with $N = 6$. Then define the map $f : B(x,\delta) \to M^\alpha(p) / H$ as follows:

 Recall that $a \in H$ does not belong to any other Lyapunov hyperplane. If $z \in B(x,\delta)$, one may use local product structure to find a unique path connecting $x$ and $z$ which moves along $W^\alpha(x)$ to a point $y$, then along $W^{-\alpha}(y)$ to a point $y'$, then along the $L$-orbit to a point $w$ (recall that $L$ is some fixed subgroup of $\R^k$ complementary to $H$), then along a short $W^H_2$-path to arrive at $z$ { (this path looks like \eqref{eq:rho-eta-swap}). Let $\eta$ denote the $(\alpha,-\alpha,L)$-component of this connection which connects $x$ and $w$, and  $z'  \in M^\alpha(p)$ denote the endpoint of $\pi_\rho(\eta)$}. Then define $f(z)$ to be $Hz' \in M^\alpha(p) / H$.

Let $F_y$ be the set of points in $B(x,\delta)$ reached from $y$ by $W^H_2$-paths whose legs have length at most $\ve$. We claim that if $y \in B(x,\delta)$, then $F_y = f^{-1}(f(y))$. Indeed, if $y,z \in B(x,\delta)$ and there is a $W^H_2$-path $\hat{\rho}$ connecting $y$ to $z$, then when one connects $x$ to $y$ and $x$ to $z$, one obtains intermediate points $\hat{y}, \hat{z} \in M^\alpha(x)$  which are reached from $x$ by short $(\alpha,-\alpha,L)$-paths $\eta_1$ and $\eta_2$, respectively, and $W^H_2$-paths $\rho_1$ and $\rho_2$ starting from  $\hat{y}$ and $\hat{z}$ and ending at  $y$ and $z$, respectively.  By definition, $f(y) = e(\pi_\rho(\eta_1))$ and $f(z) = e(\pi_\rho(\eta_2))$. Notice that if $\hat{y} \not= \hat{z}$, then we may concatenate the paths $\rho_1$, then $\hat{\rho}$, then $\rho_2^{-1}$ and get a $W^H_6$-path connecting $\hat{y}$ and $\hat{z}$. By choice of $\delta$ and Lemma \ref{lem:Malpha-intersections}, we conclude that $\hat{y} = \hat{z}$ {  by Lemma \ref{lem:eta-LPS}. 
Therefore, $F_y\subset f^{-1}(f(y))$.}

Now suppose that $f(y) = f(z)$.  Again let the paths $\eta_1$ and $\eta_2$ denote the $(\alpha,-\alpha,L)$-paths from $x$ to $\hat{y}$ and $\hat{z}$, as in the proof of the previous inclusion. $\eta_1$ and $\eta_2$ project to the same path on $M^\alpha(p)$ by Lemma \ref{lem:psi-coords}, so $\eta_1 = \eta_2$. Therefore, the path $\rho_2\rho_1^{-1}$ is a $W^H_4$-path connecting $y$ and $z$. Applying Lemma \ref{lem:locally-integrable} to the middle switch implies that $z \in W^H_2(y)$, so $f^{-1}(f(y)) \subset F_y$.

To see that $W^H_N$ has the structure of a $C^{r}$ foliation, we claim it suffices to show that the map $f$ is a $C^{r}$ submersion. Indeed, in this case one can see that since the chosen path connecting points of $B(x,\delta)$ to $M^\alpha(p)$ varies continuously, the $\delta_0$ of Lemma \ref{lem:WH-localmanifold} can be chosen uniformly in a neighborhood of $x$. Therefore, in a sufficiently small neighborhood of $x$, if $f$ is a $C^{r
'}$ submersion, the preimages $f^{-1}(y')$ form a foliation, which we have just shown are the local leaves of $W^H_N(y)$ by Lemma \ref{lem:WH-localmanifold}.

We now prove that $f$ is a $C^{r}$ submersion. Fix $y \in B(x,\delta)$, and consider some $z \in W^\alpha(y)$. Then $f(z)$ is defined by first moving from $x$ to some point $\hat{z}$ along a local $(\alpha,-\alpha,L)$-path $\eta$, then along a local $W^H_2$-path $\rho_1$. $f(z)$ is defined by projecting the local $M^\alpha$ path $\eta$ along the distinguished connection $\rho$ from $x$ to $x' = f(x)$. That is, $f(z)$ is the endpoint of $\pi_\rho(\eta)$.

{  For notational convenience, let $[z,y]$ denote the path with a single leg along the foliation $W^\alpha$.} Notice that since $z \in W^\alpha(y)$,  the local paths defining $f(y)$ are $\pi_{\rho_1}([z,y])* \eta$ and $\pi_{[y,z]}(\rho_1)$. Therefore,  $f(y) = \pi_\rho( \pi_{\rho_1}([z,y])* \eta)$, which differs from $f(z)$ exactly by $\pi_\rho(\pi_{\rho_1}([z,y]))$, so the projection is defined by applying the holonomies of $\rho_1$ and $\rho$. Therefore, we may iterate Lemma \ref{lem:smooth-holonomy} to obtain that $f$ is $C^{r}$ along $W^\alpha(y)$ and $W^{-\alpha}(y)$ for every $y \in B(x,\delta)$. Fix a perturbation $b\in \R^k$ of $a$ which is not contained in any  Lyapunov hyperplane and for which $\pm \alpha$ are both slow foliations (recall the start of Section \ref{sec:holonomy-action}). This determines foliations $\widehat{W^{\pm \alpha}} \subset W^{u/s}_b$ by Lemma \ref{lem:smooth-holonomy}. Notice that $\widehat{W^{\alpha}}(x)$ and $\widehat{W^{-\alpha}}(x)$ are both H\"older foliations with smooth leaves, and that $f$ collapses each to a point. Since $W^\alpha$ and $\widehat{W^\alpha}$ are complementary foliations inside $W^u_b(x)$, $f$ is smooth along $W^u_b(x)$. Similarly, $f$ is smooth along $W^s_b(x)$. It is also clear that $f$ is $C^{r}$ along $\R^k$-orbits. Applying Journ\'{e}'s theorem (Theorem \ref{thm:journe}) to the foliations $W^s_b$ and $\R^k$ gives that $f$ is $C^r$ along $W^{cs}_b$. Then apply Journ\'{e}'s theorem again to the foliations $W^{cs}_b$ and $W^u_b$ to see that $f$ is $C^{r}$ on $B(x,\delta)$.
\end{proof}


\begin{lemma}
\label{lem:WH-WHN}
If $N$ is as in Lemma \ref{lem:WHN-transitivity} $W^H_N(x) = W^H(x)$ for every $x \in X$.
\end{lemma}

\begin{proof}
By Lemma \ref{lem:foliation1}, $W^H_N$ is a $C^{r}$ foliation containing local $W^\beta$-leaves and $H$-orbits, {  $\beta \not= \pm \alpha$}. Since its leaves intersect $M^\alpha(p)$ in finitely many $H$-orbits, it must be codimension 3. Therefore, $T(W^H_N) = TH \oplus \bigoplus_{\beta \in \Delta \setminus \set{\pm \alpha}} E^\beta$. Therefore, any $W^H$-path, regardless of its structure, is tangent to $W^H_N$, so $W^H_N(x) = W^H(x)$.
\end{proof}

\begin{lemma}
\label{lem:compact-leaves}
Each leaf $W^H_N(x)$, $x \in M^\alpha(p)$ is compact. 
\end{lemma}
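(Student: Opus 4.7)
The plan is to show that the leaf $L := W^H_N(x)$ is closed in $X$; since $X$ is compact, this will give compactness of $L$. Write $L \cap M^\alpha(p) = O_1 \cup \cdots \cup O_k$ for the finite union of compact $H$-orbits guaranteed by Case (1) of the Return Dichotomy. By Lemmas \ref{lem:foliation1} and \ref{lem:WH-WHN}, $W^H_N$ is a $C^{1,\theta}$ foliation of codimension $3$ and $W^H_N(y) = W^H(y)$ for every $y$; by Lemma \ref{lem:WHN-transitivity} every leaf of $W^H_N$ meets $M^\alpha(p)$.

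Suppose for contradiction that $y \in \overline{L} \setminus L$, and let $L' = W^H_N(y)$, a leaf distinct from $L$. Choose a foliated chart $V_y$ around $y$ with a $3$-dimensional transversal disc $T_y$ through $y$, so that points of $L \cap T_y$ parameterize the local branches of $L$ inside $V_y$. Since $y \in \overline{L} \setminus L$, the set $L \cap T_y$ accumulates at $y$ but does not contain it, and in particular has infinitely many points in every neighborhood of $y$. The idea is to transport this accumulation into $M^\alpha(p)$, where Case (1) forbids it.

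Using Lemma \ref{lem:WHN-transitivity}, pick a point $x' \in L' \cap M^\alpha(p)$ and a $W^H$-path $\rho$ in $L'$ from $x'$ to $y$. Because every leg of $\rho$ (a piece of an $H$-orbit, or a piece of a $W^\beta$-leaf with $\beta \neq \pm\alpha$) is tangent to $W^H_N$, the path stays in $L'$ and each leg carries a well-defined local $W^H_N$-holonomy between transversals at its endpoints. As discussed after Lemma \ref{lem:foliation1}, the tangent spaces of $M^\alpha(p)$ and of $W^H_N$ meet exactly in $TH$, so the remaining $3$-dimensional directions in $M^\alpha(p)$ at $x'$ form a disc $T_{x'} \subset M^\alpha(p)$ which is a transversal to $W^H_N$ at $x'$. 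Composing the local holonomies along the (finitely many) legs of $\rho$ yields a homeomorphism $h_\rho$ from a neighborhood $U$ of $y$ in $T_y$ onto a neighborhood of $x'$ in $T_{x'}$; by construction $h_\rho(t)$ and $t$ parameterize the same leaf of $W^H_N$, so $h_\rho(L \cap U) = L \cap h_\rho(U) \subset L \cap T_{x'}$.

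Since $T_{x'} \subset M^\alpha(p)$, we have $L \cap T_{x'} \subset (O_1 \cup \cdots \cup O_k) \cap T_{x'}$, and each $H$-orbit $O_i$ (of dimension $k-1$) meets the transversal disc $T_{x'}$ (complementary to the $H$-direction in the $(k+2)$-dimensional $M^\alpha(p)$) in a discrete, hence finite, set. Thus $L \cap U$ is finite and does not contain $y$, contradicting the assumption that $L \cap T_y$ accumulates at $y$. The main obstacle, and the place where the hypotheses really enter, is the construction of $h_\rho$ for a path based at $y \notin L$: this depends on knowing that $\rho$ truly lies in the single leaf $L'$ (so that leg-by-leg holonomy of $W^H_N$ is defined), on the $C^{1,\theta}$ regularity of the foliation from Lemma \ref{lem:foliation1}, and on Lemma \ref{lem:locally-integrable} to guarantee that the composition of local holonomies is defined on a uniform neighborhood of $y$.
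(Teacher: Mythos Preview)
Your proof is correct and takes a genuinely different route from the paper. The paper argues directly with sequences: given $x_k \in L$ converging to $y$, it connects each $x_k$ to $y$ by a short $W^H_2$-path followed by a short $\alpha,-\alpha,L$-path $\eta_k$, then uses the concrete holonomy projections $\pi_\rho,\pi_\eta$ from Corollary~\ref{cor:holonomy-action} and Lemma~\ref{lem:holonomy-bounded-der} to push the discrepancy $\eta_\ell^{-1}*\eta_k$ back to $x$. Case~(1) discreteness then forces $\eta_k$ to be eventually trivial, so $y \in W^H_{N+2}(x) = W^H_N(x)$ via Lemma~\ref{lem:WH-WHN}. By contrast, you treat $W^H_N$ as an abstract $C^{1,\theta}$ foliation (granted by Lemma~\ref{lem:foliation1}) and invoke ordinary foliation holonomy along a path in the limit leaf $L'$ to transport the accumulation at $y$ into a transversal sitting inside $M^\alpha(p)$, where finiteness of $L\cap M^\alpha(p)$ modulo $H$ gives the contradiction.

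Your approach is shorter and more conceptual once Lemmas~\ref{lem:foliation1} and~\ref{lem:WH-WHN} are in hand; the paper's approach stays closer to the dynamical holonomy machinery it built and is more self-contained in that sense. One small comment: your appeal to Lemma~\ref{lem:locally-integrable} at the end is unnecessary---for any $C^{1,\theta}$ (indeed $C^0$) foliation, the holonomy along a compact path in a leaf is automatically defined on some neighborhood of the initial point in the transversal, by standard foliation theory. The rest of your argument (that a $3$-disc in $M^\alpha(p)$ complementary to $TH$ is a genuine transversal to $W^H_N$, and that each compact $H$-orbit meets it in finitely many points) is correct.
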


\begin{proof}
Suppose that $x_k \in W^H_N(x)$ converges to some $y \in X$, and let $\rho_k$ be a $W^H_N$-path connecting $x_k$ and $x$. If $x_k$ enters a fixed, small neighborhood of $y$ (where local product structure applies) we may connect $x_k$ to $y$ by first moving along a short $W^H_2$-path $\rho_k'$, then a short $(\alpha,-\alpha,L)$-path $\eta_k$, so that $\eta_k * \rho_k'$ is a path connecting $x_k$ and $y$. Hence $\eta_k * \rho_k' * \rho_k$ and $\eta_\ell * \rho_\ell' * \rho_\ell$ are paths connecting $x$ and $y$. So $\rho_\ell^{-1} * {\rho_\ell'}^{-1} * \eta_\ell^{-1} * \eta_k * \rho_k' *\rho_k$ is a path beginning and ending at $x$. Applying the holonomies of $\rho_k'$ and $\rho_k$ to $
\eta_\ell^{-1} * \eta_k$ yields a $W^H_{2N+4}$-path $\rho_{k,\ell}$ and a short $(\alpha,-\alpha,L)$-path $\eta_{k,\ell}$, both based at $x$, with the same endpoints. { That is,

\begin{eqnarray*}
    \eta_{k,\ell} & = & \pi_{(\rho_k'*\rho_k)^{-1}}(\eta_{\ell}^{-1} * \eta_k), \mbox{ and} \\
    \rho_{k,\ell} & = & \pi_{{\eta_\ell}^{-1} * \eta_k}((\rho_k' * \rho_k)^{-1}) * \rho_\ell' * \rho_{\ell}.
\end{eqnarray*}
}

Since the length of $\eta_{k,\ell}$ is at most $A^{2(N+2)+1}$ times the length of $\eta_\ell^{-1} * \eta_k$, and the lengths of $\eta_k$ and $\eta_\ell$ both tend to 0, from the assumption of Case \ref{dich:1} of the return dichotomy applied to $W^H_{2N+4}$, we conclude that $\eta_k = \eta_\ell$ for sufficiently large $k,\ell$ { by Lemma \ref{lem:eta-LPS}.} 
Since the sequence $\eta_k$ also converges to the trivial path, we conclude that $\eta_k$ is eventually the trivial path. Therefore, by Lemma \ref{lem:WH-WHN}, $y \in W^H_{N+2}(x) \subset W^H_N(x)$. {  Now, since $y$ was an arbitrary point of $\overline{W^H_N(x)}$, we conclude that $W^H_N(x) = \overline{W^H_N(x)}$. So $W^H_N(x)$ is closed and hence compact.}
\end{proof}

Recall that $M^\alpha_\ve(x)$ is the image of $B(0,\ve)$ under the charts $\psi$ defined in Lemma \ref{lem:psi-coords} at the point $x$. Given $x \in X$, let $r(x)$ denote the cardinality of $(W^H(x) \cap M^\alpha(p))/H$.

\begin{lemma}
There exists $r_0$ such that $r(x) \le r_0$ for all $x \in X$, and there exists an open, dense set $U \subset M^\alpha(p)$ (in the topology of $M^\alpha(p)$) such that $r(x) = r_0$ on $U$.
\end{lemma}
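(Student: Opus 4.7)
My plan is to show that $r$ is in fact continuous on $M^\alpha(p)$, hence locally constant, hence constant on the connected component (which $M^\alpha(p)$ is, by Proposition \ref{prop:fix-point-set}), and then extend to $X$ using the fact that $r$ is constant along $W^H$-leaves together with the saturation property of Lemma \ref{lem:WHN-transitivity}. Concretely, I will take $U = M^\alpha(p)$, which is (trivially) open and dense in itself, and the constant value $r_0$ will be the number of $H$-orbits in $W^H(y) \cap M^\alpha(p)$ for any $y \in M^\alpha(p)$.

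The first key input is transversality of $W^H$ and $M^\alpha(p)$ at every intersection point $z$. This follows from the explicit descriptions $TW^H = TH \oplus \bigoplus_{\beta \ne \pm \alpha} E^\beta$ (from Lemma \ref{lem:foliation1}) and $TM^\alpha = T\R^k \oplus E^\alpha \oplus E^{-\alpha}$, so $TW^H + TM^\alpha = TX$ with $TW^H \cap TM^\alpha = TH$. Each connected component of the intersection $W^H(y) \cap M^\alpha(p)$ is therefore a submanifold of dimension $\dim H = k-1$, namely a single $H$-orbit, and in Case (1) this intersection consists of finitely many isolated $H$-orbits $H y_1, \dots, H y_{r(y)}$.

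For lower semi-continuity $r(y') \ge r(y)$ on a neighborhood of $y$ in $M^\alpha(p)$: near each $y_i$ the implicit function theorem (applied in a tubular neighborhood of $H y_i$, using the $C^{1,\theta}$ regularity of $W^H$) yields a unique nearby intersection $H$-orbit of $W^H(y') \cap M^\alpha(p)$ for all $y'$ close to $y$ in $M^\alpha(p)$. For upper semi-continuity $r(y') \le r(y)$ for $y'$ close to $y$: I would use the compactness of the leaves $W^H(y)$ (Lemma \ref{lem:compact-leaves}) together with Reeb-type stability for foliations with compact leaves to force $W^H(y')$ into an arbitrarily small tubular neighborhood of $W^H(y)$, so that any intersection of $W^H(y')$ with $M^\alpha(p)$ lies near $W^H(y) \cap M^\alpha(p)$; the local uniqueness established above then shows no ``extra'' orbits can appear.

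Thus $r$ is continuous and integer-valued on the connected set $M^\alpha(p)$, so constant; call this value $r_0$. For arbitrary $x \in X$, Lemma \ref{lem:WHN-transitivity} gives some $y \in W^H(x) \cap M^\alpha(p)$, and since $r$ is constant along $W^H$-leaves (by definition), $r(x) = r(y) = r_0$. The main obstacle is the upper semi-continuity step: one needs to rule out the scenario where intersection points of $W^H(y') \cap M^\alpha(p)$ accumulate along the compact leaf $W^H(y)$ as $y' \to y$. This is where we crucially use both the compactness of leaves (Lemma \ref{lem:compact-leaves}) and the transversality of the intersections; if it turns out that a direct Reeb-stability invocation is delicate at $C^{1,\theta}$ regularity, an alternative is to extract a subsequence of putative extra intersections, use compactness to get a limit in $M^\alpha(p)$ that necessarily lies on $W^H(y)$ by continuity of the foliation chart, and then contradict the local uniqueness of transverse intersections near the $H y_i$.
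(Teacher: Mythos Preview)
Your lower semi-continuity argument is essentially fine and matches the paper's approach (the paper transports the $W^H_N$-paths witnessing the $r(x)$ intersections to nearby points via the holonomy of Corollary \ref{cor:holonomy-action}, which is a concrete version of your transversality/implicit function argument). From lower semi-continuity alone the paper immediately gets the lemma: the integer-valued lower semicontinuous $r$ attains a maximum $r_0$ on the compact $X$, the set $\{r=r_0\}$ is open, it is $\R^k$-invariant hence dense in $X$, and it is $W^H$-saturated, so its trace on $M^\alpha(p)$ is open and dense.

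The gap in your proposal is the upper semi-continuity step. You are in effect trying to prove Lemma \ref{lem:r-constant} (constancy of $r$) at the same time, and your argument for $r(y')\le r(y)$ relies on a Reeb-type stability statement: that for $y'$ near $y$ the compact leaf $W^H(y')$ stays in a small tubular neighborhood of $W^H(y)$. This does \emph{not} follow from compactness of all leaves plus $C^{1,\theta}$ regularity; Sullivan's examples of smooth foliations with all leaves compact but unbounded volume show that nearby compact leaves can fail to be Hausdorff close. Your fallback argument has the same problem: if $z_n\in W^H(y'_n)\cap M^\alpha(p)$ and $z_n\to z$, a local foliation chart at $z$ only tells you $z\in W^H(z)$; to conclude $z\in W^H(y)$ you would need $W^H(z)=W^H(y)$, which is exactly the leaf-continuity you have not established.

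The paper does \emph{not} prove upper semi-continuity at all. Constancy of $r$ is obtained in the subsequent Lemma \ref{lem:r-constant} by a different mechanism: one first shows (Lemma \ref{lem:WH-cycles-trivial}) that the $\pi_\eta$-holonomy along an $\alpha,-\alpha,\R^k$-path takes $W^H$-cycles to $W^H$-cycles, using the normal-forms linearity of the induced map on $W^\alpha(x)$ together with the Case~(1) finiteness assumption. Then if $r(x)<r_0$, pick a nearby $x'$ with $r(x')=r_0$ and a short $\eta$ from $x$ to $x'$; two of the $r_0$ return paths at $x'$ must project under $\pi_\eta$ to the same endpoint at $x$, producing a $W^H$-cycle at $x$ whose $\pi_{\eta^{-1}}$-image is \emph{not} a cycle at $x'$, contradicting the cycle-preservation lemma. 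So the paper's route to constancy is combinatorial/holonomy-theoretic rather than a stability argument about compact leaves.
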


\begin{proof}
First, notice that $r$ is always finite, by Lemma \ref{lem:WH-WHN} and the assumption that we are in case \ref{dich:1} of the return dichotomy for every $N$. We claim that $r$ is lower semicontinuous. Indeed, suppose that $r(x) = c$. Then one may choose $W^H_N$-paths $\rho_1,\dots,\rho_c$ starting from $x$ and ending at $M^\alpha(p)$ and ending at distinct points $y_1,\dots,y_c \in M^\alpha(p)$.  Let $\ve$ be such that $M^\alpha_{\ve}(y_i) \cap M^\alpha_{\ve}(y_j) = \emptyset$ for all $i\not= j$, and consider the saturation of $M^\alpha_{A^{-N}\ve}(x)$ by local $W^H$-leaves, call this set $V$. Given $x' \in V$, there exists an $(\alpha,-\alpha,\R^k)$-path $\eta$ starting from $x$ and ending at some $x_1$ such that $x' \in W^H(x_1)$. Then all $\pi_\eta(\rho_i)$ are connections from $x_1$ to $M^\alpha(p)$ which end at a point in $M^\alpha_{\ve}(y_i)$ and are therefore distinct. Concatenating $\pi_\eta(\rho_i)$ with the $W^H$-path connecting $x'$ and $x_1$ yields a $W^H$-path connecting $x'$ and a point of $M^\alpha_{\ve}(y_i)$. Therefore, $r(x') \ge r(x)$ whenever $x' \in V$, and $r$ is lower semicontinuous.

 Consider the set $A_c = r^{-1}([c,\infty)) = r^{-1}((c-1/2,\infty))$ for $c \in \N$. Notice that since $r$ is lower semicontinuous, $A_c$ is open. Furthermore, if $\rho$ is any $W^H$-path starting from $x$ and ending at $y \in M^\alpha(p)$ and $a \in \R^k$, $a \cdot \rho$ connects $a \cdot x$ and $a \cdot e(\rho) \in M^\alpha(p)$. Therefore, $A_c$ is $\R^k$-invariant. Since there is a dense $\R^k$-orbit, if $A_c$ is nonempty it is open and dense. Therefore, if $A_c \not= \emptyset$ for all $c \in \N$, $A_\infty = \bigcap_{c \in \N} A_c$ is nonempty by the Baire category theorem. This contradicts that we are in case \ref{dich:1} of the return dichotomy, so there exists some $r_0 \in \N$ such that $r(x) \le r_0$ for all $x \in X$. Choosing the maximal such $r_0$ yields that $A_{r_0}$ is an open dense subset of $X$.

Since $A_{r_0}$ is saturated by $W^H$-leaves (by concatenating any connecting path), $r = r_0$ on an open and dense subset of $M^\alpha(p)$, as well.
\end{proof}

Recall that a $W^H$-cycle is a $W^H$-path whose end and start points coincide (see Definition \ref{def:cycles} and Remark \ref{rem:paths-no-action}).

\begin{lemma}
\label{lem:WH-cycles-trivial}
Let $\rho$ be a $W^H$-cycle based at a point $x \in M^\alpha(p)$, and $\eta$ be an $(\alpha,-\alpha,\R^k)$-path. Then $\pi_\eta(\rho)$ is also a cycle.
\end{lemma}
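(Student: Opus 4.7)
The plan is to transport the picture to the $3$-manifold $Y = M^\alpha(p)/H$ and argue by path-lifting in a finite covering. By Corollary~\ref{cor:holonomy-action}, $\pi_\rho(\eta)$ is an $\alpha,-\alpha,\R^k$-path based at $e(\rho)=x$ and $\pi_\eta(\rho)$ is a $W^H$-path based at $e(\eta)$, with $e(\pi_\rho(\eta))=e(\pi_\eta(\rho))$. Since $\pi_\rho(\eta)$ starts at $x\in M^\alpha(p)$ and every $\alpha,-\alpha,\R^k$-move preserves $M^\alpha(p)$, its endpoint lies in $M^\alpha(p)$. Hence $e(\pi_\eta(\rho))\in W^H(e(\eta))\cap M^\alpha(p)$, and to prove that $\pi_\eta(\rho)$ is a cycle (after possibly appending an $H$-leg, which is itself a $W^H$-move) it suffices to show $e(\pi_\eta(\rho))\in H\cdot e(\eta)$.

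Let $\pi\colon M^\alpha(p)\to Y$ be the quotient map (Proposition~\ref{prop:factor-by-H}), and form the correspondence
\[
\bar G = \{(\bar y_1,\bar y_2)\in Y\times Y \;:\; y_2\in W^H(y_1)\text{ for some lifts } y_1,y_2\in M^\alpha(p)\}.
\]
Two facts are essential. First, $M^\alpha(p)$ and $W^H$ are transverse in $X$: indeed $TM^\alpha(p) = T\R^k\oplus E^\alpha\oplus E^{-\alpha}$ and $TW^H = TH\oplus\bigoplus_{\beta\neq\pm\alpha}E^\beta$, so their sum is $TX$ and their intersection is $TH$, tangent to an $H$-orbit. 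Second, by Lemma~\ref{lem:compact-leaves} each $W^H$-leaf is compact, so leaves vary continuously in the Hausdorff topology and $\bar G$ is closed in $Y\times Y$. Combining transversality with the freeness of the $H$-action on $M^\alpha(p)$ shows that $\bar G$ is a smooth $3$-submanifold of $Y\times Y$, and the first projection $p_1\colon\bar G\to Y$ is a local diffeomorphism realized by $W^H$-holonomy. As a proper local diffeomorphism between compact $3$-manifolds, $p_1$ is a finite covering map.

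The diagonal $\Delta=\{(\bar y,\bar y):\bar y\in Y\}\subset\bar G$ is a connected component: it is closed as the intersection of $\bar G$ with the closed diagonal of $Y\times Y$, and it is open because Case (1) of the return dichotomy ensures that for each $y\in M^\alpha(p)$, $H\cdot y$ is an isolated component of $W^H(y)\cap M^\alpha(p)$, so any element of $\bar G$ sufficiently close to the diagonal must itself lie on the diagonal.

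Parameterize $\eta$ by $t\in[0,1]$, with $\eta_0$ trivial and $\eta_1=\eta$, and set $\Phi(t)=e(\pi_{\eta_t}(\rho))\in M^\alpha(p)$. Continuity of the holonomy makes $t\mapsto\Phi(t)$ continuous; let $\bar\Phi=\pi\of\Phi$ and $\bar\eta(t)=\pi(e(\eta_t))$. The continuous path $t\mapsto(\bar\eta(t),\bar\Phi(t))$ lies entirely in $\bar G$. At $t=0$, $\pi_{\eta_0}(\rho)=\rho$ is a cycle at $x$, so $\Phi(0)=x=e(\eta_0)$ and $(\bar\eta(0),\bar\Phi(0))=(\bar x,\bar x)\in\Delta$. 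Since $\Delta$ is a connected component of $\bar G$, the entire path remains in $\Delta$, giving $\bar\Phi(1)=\bar\eta(1)$, i.e.\ $e(\pi_\eta(\rho))\in H\cdot e(\eta)$. Thus $\pi_\eta(\rho)$ is a $W^H$-cycle, as required.

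The main obstacle is verifying that $\bar G$ is a smooth $3$-manifold with $\Delta$ as a clopen component: this rests on compactness of $W^H$-leaves for closedness of $\bar G$, on the transversality computation $TM^\alpha(p)\cap TW^H=TH$ for the smooth/covering structure, and crucially on the finiteness assumption of Case~(1) of the return dichotomy to make $\Delta$ open in $\bar G$. Once these ingredients are assembled, the conclusion follows from the uniqueness of path-lifting in covering spaces.
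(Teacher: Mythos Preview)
Your argument establishes only that $e(\pi_\eta(\rho))\in H\cdot e(\eta)$, not the exact equality $e(\pi_\eta(\rho))=e(\eta)$ that the lemma requires. The parenthetical ``after possibly appending an $H$-leg'' does not rescue this: $\pi_\eta(\rho)$ is the specific $W^H$-path produced by Corollary~\ref{cor:holonomy-action}, and the statement asserts that this path closes up exactly. By passing to $Y=M^\alpha(p)/H$ you have quotiented out precisely the information needed to distinguish $e(\eta)$ from a nearby point on the same $H$-orbit, and an $H$-orbit can meet a given $W^\alpha$-leaf in several points, so the covering argument in $\bar G$ alone cannot recover the exact conclusion.

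The paper's proof is shorter and avoids this difficulty. It reduces to the case where $\eta$ is a single leg. The $\R^k$ case is immediate (the action carries cycles to cycles). For a single $\alpha$-leg, the key observation is that the holonomy $\pi_\rho$ restricts to a self-map of $W^\alpha(x)$ which fixes $x$ and is \emph{affine} in the normal-forms coordinates (Lemma~\ref{lem:linear-holonomy}). If this affine map were not the identity, iterating it would produce infinitely many distinct points of $W^H(y)\cap M^\alpha(p)$ for any $y\in W^\alpha(x)$, contradicting Case~(1) of the return dichotomy (via Lemma~\ref{lem:WH-WHN}). This affine rigidity is exactly what your topological argument lacks.

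Your gap can in fact be closed, but it takes an extra step you did not include: after reducing to a single $\alpha$-leg, observe that both $\Phi(t)$ and $e(\eta_t)$ lie in $W^\alpha(x)$, and the projection $W^\alpha(x)\to Y$ is a one-dimensional immersion (since $E^\alpha\cap TH=0$), hence locally injective. Your equality $\bar\Phi(t)=\bar\eta(t)$ then upgrades to $\Phi(t)=e(\eta_t)$ by a clopen argument on $[0,1]$. Even with this fix, the route through compact leaves, transversality, and the covering structure of $\bar G$ is considerably heavier than the two-line affine-map argument the paper uses.
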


\begin{proof}
Notice that it suffices to prove the lemma when $\eta$ consists of a single leg, since by definition, $\pi_\eta$ is the composition of the projections of the legs of $\eta$. If $\eta$ is only a piece of an $\R^k$-orbit, the lemma follows from the fact that the $\R^k$-action takes $W^H$-cycles to $W^H$-cycles. We prove it here when $\eta$ is a single $\alpha$ leg (the proof for $-\alpha$ legs is identical). Notice that $\pi_\eta(\rho)$ is a cycle if and only if $e(\pi_\rho(\eta)) = e(\eta)$. That is, it suffices to show that $\pi_\rho : W^\alpha(x) \to W^\alpha(x)$ is the identity map. Notice that from Lemma \ref{lem:linear-holonomy}, $\pi_\rho$ is an orientation-preserving affine map  in normal forms coordinates  of Theorem \ref{thm:normal-forms-foliation} which fixes $x$. If $\pi_\rho \not= \id$, then the orbits $\set{\pi_\rho^n(y) : n \in \Z}$ are infinite. Since every $\pi_\rho^n(y) \in W^H(y)$, and $W^H(y) = W^H_N(y)$ by Lemma \ref{lem:WH-WHN}, we contradict that we are in case \ref{dich:1} of the return dichotomy. This finishes the proof.
\end{proof}

\begin{lemma}
\label{lem:r-constant}
The function $r$ is constant on $M^\alpha(p)$.
\end{lemma}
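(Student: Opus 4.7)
My plan is to show that $r$ is both lower and upper semicontinuous on $M^\alpha(p)$, hence locally constant, and then invoke connectedness of $M^\alpha(p)$. Lower semicontinuity was essentially proved already. For upper semicontinuity, the key is that Lemma \ref{lem:WH-cycles-trivial} says that the $W^H$-foliation has \emph{trivial holonomy} along every leaf passing through $M^\alpha(p)$, which combined with Lemma \ref{lem:compact-leaves} lets me apply Reeb's local stability theorem.

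More precisely, fix $x \in M^\alpha(p)$. First I will observe that the local transversal to $W^H$ at $x$ may be taken to lie inside $M^\alpha(p)$: since $TM^\alpha \cap TW^H = TH$ and $\dim(TM^\alpha/TH) = 3$ equals the codimension of $W^H$, any $3$-dimensional submanifold $T \subset M^\alpha(p)$ through $x$ transverse to $H$-orbits is a full transversal to $W^H$ at $x$. Second, for any $W^H$-cycle $\rho$ based at $x$ and any $\alpha,-\alpha,\R^k$-path $\eta$ starting at $x$ (which traces out a neighborhood of $x$ in $M^\alpha(p)$), Lemma \ref{lem:WH-cycles-trivial} asserts $\pi_\eta(\rho)$ is a cycle, equivalently $e(\pi_\rho(\eta)) = e(\eta)$. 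This means the holonomy germ of $\rho$ is the identity on $T$, so the holonomy of $W^H(x)$ is trivial. Together with compactness of $W^H(x)$ from Lemma \ref{lem:compact-leaves}, Reeb's local stability theorem yields a saturated neighborhood $U$ of $W^H(x)$ together with a foliated diffeomorphism $\Phi: W^H(x) \times T \to U$, with leaves $\Phi(W^H(x) \times \{t\}) = W^H(\Phi(x,t))$.

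Now, let $x_1,\dots,x_{r(x)}$ be representatives of the $H$-orbits in $W^H(x) \cap M^\alpha(p)$. Taking $T$ inside $M^\alpha(p)$, the holonomy of $W^H$ along paths in $T$ preserves $M^\alpha(p)$ (since $M^\alpha(p)$ is saturated by $\alpha,-\alpha,\R^k$-paths), so $\Phi(H\cdot x_i \times T) \subset M^\alpha(p)$ for each $i$. Conversely, any $y \in W^H(t) \cap M^\alpha(p) \cap U$ has $\Phi^{-1}(y) = (y_0, t)$ with $y_0 \in W^H(x) \cap M^\alpha(p)$ by inverting the holonomy. Hence
\[
M^\alpha(p) \cap U \;=\; \bigsqcup_{i=1}^{r(x)} \Phi(H\cdot x_i \times T),
\]
and for each $t \in T$ sufficiently close to $x$ (so that $W^H(\Phi(x,t)) \subset U$), the intersection $W^H(\Phi(x,t)) \cap M^\alpha(p)$ consists of exactly $r(x)$ distinct $H$-orbits. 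Therefore $r(\Phi(x,t)) = r(x)$, so $r$ is locally constant on $M^\alpha(p)$. Since $M^\alpha(p)$ is connected, being a connected component of $\Fix(a)$ by Proposition \ref{prop:fix-point-set}, and $r$ takes integer values, $r$ is constant on $M^\alpha(p)$. The only step requiring care is the identification of the transversal with a slice in $M^\alpha(p)/H$, which is merely a dimension count, so no serious obstacles remain.
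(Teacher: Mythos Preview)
Your proof is correct, and the key dynamical input---Lemma \ref{lem:WH-cycles-trivial}---is exactly the one the paper uses. However, your route is genuinely different from the paper's.

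The paper argues directly by contradiction, without invoking Reeb stability. If $r(x)<r_0$ and $x'$ is a nearby point in $M^\alpha(p)$ with $r(x')=r_0$, one takes the $r_0$ $W^H$-paths from $x'$ hitting $M^\alpha(p)$ and projects them to $x$ along a short $\alpha,-\alpha,\R^k$-path $\eta$. By pigeonhole, some nontrivial path $\rho'$ from $x'$ to $y'\neq x'$ projects to a \emph{cycle} $\rho=\pi_\eta(\rho')$ at $x$. Then $\pi_{\eta^{-1}}$ sends the cycle $\rho$ back to the non-cycle $\rho'$, contradicting Lemma \ref{lem:WH-cycles-trivial}. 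This is a two-line argument once the lemma is in hand.

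Your approach repackages the same content in foliation-theoretic language: trivial holonomy plus compact leaves gives a local product via Reeb, hence $r$ is locally constant. This is more conceptual and makes transparent \emph{why} Lemma \ref{lem:WH-cycles-trivial} is exactly what is needed---it is the statement that the holonomy of $W^H$ along the $M^\alpha$-transversal vanishes. The price is importing Reeb stability and checking that the foliation-theoretic holonomy of $W^H$ really is computed by the $\pi_\rho$ maps (which is true, since the foliation charts of Lemma \ref{lem:foliation1} are built from exactly those holonomies), and that $W^H$-cycles represent all homotopy classes in the leaf (which holds since the leaf is generated by the $W^\beta$- and $H$-subfoliations). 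These are routine but deserve a sentence. The paper's argument is effectively an unpacked, bare-hands version of yours, avoiding the appeal to Reeb by running the holonomy comparison at a single point.
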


\begin{proof}
Suppose that $r(x) < r_0$ for some $x \in M^\alpha(p)$. Then since $r = r_0$ on an open dense subset of $M^\alpha(p)$, there is a nearby point $x'$ such that $r(x') = r_0$. Choose a short $(\alpha,-\alpha,\R^k)$-path $\eta$ connecting $x$ and $x'$. Then project the set of paths giving the intersection point of $W^H(x') \cap M^\alpha(p)$ to $x$ along $\eta$. Since the cardinality is strictly greater, there must be some $W^H$-path $\rho'$ starting from $x'$ and returning to some $y' \not= x'$ such that $\rho = \pi_\eta(\rho')$ is a cycle at $x$. Then $\pi_{\eta^{-1}}$ maps a $W^H$-cycle $\rho$ to a non-cycle, contradicting Lemma \ref{lem:WH-cycles-trivial}. So $r$ is constant on $M^\alpha(p)$.
\end{proof}

\begin{proposition}
\label{prop:case1-finished}
If $\R^k \curvearrowright X$ is totally Cartan, cone transitive and $C^r$, $r = (1,\theta)$ or $r = \infty$ and satisfies case \ref{dich:1} of the return dichotomy, then the space $X / W^H$ is a $C^{r}$ manifold, and the projection $\pi : X \to X/ W^H$ determines a $C^{r}$, non-Kronecker rank one factor of $\R^k \curvearrowright X$.
\end{proposition}

\begin{proof}
We model the space $X/W^H$ locally on $M^\alpha(p)/H$. Notice that by Lemma \ref{lem:WHN-transitivity}, every $W^H$-leaf intersects $M^\alpha(p)$, and by Lemma \ref{lem:r-constant} the number of intersections is constant. We construct manifold charts as follows: given $x := x_1 \in M^\alpha(p)$, let $\set{x_1,\dots,x_r} = W^H(x) \cap M^\alpha(p)$. Choose $\ve > 0$ such that $M^\alpha_\ve(x_i) \cap M^\alpha_\ve(x_j) = \emptyset$ if $i \not= j$. Let $\delta = A^{-N}\ve$ and let $U = B_\delta(x) \cap M^\alpha(p)/H$. Then $U$ is an open set in a 3-manifold, and has a chart $\psi : V \to U$, where $V \subset \R^3$. Define the chart for $X / W^H$ by $\tilde{\psi}(t) = W^H(\psi(t))$.

We first prove that $\tilde{\psi}$ is injective. Any $W^H$-path connecting $x$ to $x_i$ can be achieved in $W^H_N$ by Lemma \ref{lem:WH-WHN}. Notice that by choice of $\delta$, each point $y := y_1 \in U$ must intersect $M^\alpha(p)$ in points $\set{y_1,\dots,y_r}$ near $\set{x_1,\dots,x_r}$, so $y_1$ is the only $y_i$ in $U$. Therefore, the map is injective.

 The transition maps between charts are $C^{r}$, since $W^H$ is a $C^{r}$ foliation. Therefore, $X /W^H$ has an induced $C^{r}$ manifold structure, and the projection $X \to X / W^H$ is $C^{r}$ by construction. Finally, since the $\R^k$-action takes $W^H$-leaves to $W^H$-leaves, and $H$ fixes the $W^H$-leaves, the action descends to an Anosov flow on $X / W^H$.
\end{proof}

\section{Infinite returns: Finding a dense $H$-orbit}
\label{sec:case2}

Throughout this section, we assume that we are in  case \ref{dich:2} of the return dichotomy, giving some $N \in \N$ and $x \in M^\alpha(p)$ such that $W^H_N(x) \cap M^\alpha(p)$ is an infinite union of $H$-orbits. While in case \ref{dich:1}, we showed the existence of a rank one factor  (and in particular, that no $H$-orbit is dense). In this section we show that we have one of the other two conclusions of Theorem \ref{thm:main-anosov}: that there is a dense $H$-orbit or that $H$-orbit closures are dense in the fiber of some circle factor (Proposition \ref{prop:case2-finished}). In particular, this is a proof by contradiction, as we have assumed in establishing the return dichotomy that no $H$-orbit is dense.

Recall the holonomy action of $W^H$-paths on $M^\alpha$ paths (and vice versa) given by Corollary \ref{cor:holonomy-action}: if $\rho$ is a $W^H$-path and $\eta$ is an $(\alpha,-\alpha,\R^k)$-path which share the same base point, $\pi_\rho(\eta)$ is the path $\eta$ ``slid along'' $\rho$, beginning at $e(\rho)$, and similarly $\pi_\eta(\rho)$ is the path $\rho$ ``slid along'' $\eta$, beginning at $e(\eta)$. We first prove the following:

\begin{lemma}
\label{lem:return-dichotomy}
For any $x \in M^\alpha(p)$  such that $W^H_N(x) \cap M^\alpha(p)$ is an infinite union of $H$-orbits, $x \in \overline{W^H_{2N}(x) \cap M^\alpha(p) \setminus (H\cdot x)}$.
\end{lemma}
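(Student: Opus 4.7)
The plan is to produce a sequence of distinct points $z_k \in W^H_{2N}(x) \cap M^\alpha(p)$ accumulating at $x$ by using the holonomy action of Corollary \ref{cor:holonomy-action} to transport short $M^\alpha$-paths near a limit point in $M^\alpha(p)$ back to short $M^\alpha$-paths based at $x$.

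First, using compactness of $M^\alpha(p)$ (which equals $\Fix(a)$ for an appropriate $a \in H$ by Proposition \ref{prop:fix-point-set}) together with the hypothesis that $W^H_N(x) \cap M^\alpha(p)$ is infinite, I would extract a Cauchy subsequence of distinct points $y_1, y_2, \ldots$ in that intersection with $d(y_k, y_{k+1}) \to 0$. For each $k$, fix a $W^H_N$-path $\rho_k$ from $x$ to $y_k$. By Lemma \ref{lem:psi-coords}, for $k$ large the point $y_{k+1}$ sits in a local $M^\alpha$-chart at $y_k$, giving a canonical short $\alpha, -\alpha, \R^k$-path $\eta_k$ consisting of an $\alpha$-leg of length $t_k$, then a $-\alpha$-leg of length $s_k$, then an $\R^k$-translation by $u_k$, from $y_k$ to $y_{k+1}$, with $(t_k, s_k, u_k) \neq 0$ and $\abs{t_k} + \abs{s_k} + \norm{u_k} \to 0$.

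Next I would apply Corollary \ref{cor:holonomy-action} to $\eta_k$ and $\rho_k^{-1}$ (both based at $y_k$), setting $\tilde\eta_k := \pi_{\rho_k^{-1}}(\eta_k)$ and $z_k := e(\tilde\eta_k)$. Lemma \ref{lem:holonomy-bounded-der} gives $\ell(\tilde\eta_k) \le A^{2N}\ell(\eta_k) \to 0$, so $z_k$ lies in the local $M^\alpha$-chart at $x$ (hence in $M^\alpha(p)$), and $z_k \to x$. Moreover, $z_k = e(\pi_{\eta_k}(\rho_k^{-1}))$, and $\pi_{\eta_k}(\rho_k^{-1})$ is a $W^H$-path at $y_{k+1}$ of switching number at most $s(\rho_k) \le N$; concatenating with $\rho_{k+1}$ then exhibits $z_k$ as the endpoint of a $W^H$-path based at $x$ with switching number at most $2N$, so $z_k \in W^H_{2N}(x)$.

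The main obstacle, and final step, is verifying that $z_k \neq x$. I expect this to follow by applying Lemma \ref{lem:path-holonomy} inductively along each leg of $\rho_k^{-1}$: the lifted path $\tilde\eta_k$ preserves the combinatorial structure of $\eta_k$ and takes the form ($\alpha$-leg of length $\lambda_k t_k$, then $-\alpha$-leg of length $\mu_k s_k$, then $\R^k$-translation by $u_k$). Here $\lambda_k, \mu_k \in [A^{-2N}, A^{2N}]$ are positive products of the holonomy scalars from iterating Lemma \ref{lem:linear-holonomy}, and the $\R^k$-element is transported unchanged because the $\R^k$-action commutes with each $W^\beta$-foliation (sliding an $\R^k$-leg along $W^\beta$ yields the same group element at the new base point). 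Therefore $z_k = \psi_x(\lambda_k t_k, \mu_k s_k, u_k)$, and since $\lambda_k, \mu_k \neq 0$ and $(t_k, s_k, u_k) \neq 0$, the injectivity of $\psi_x$ from Lemma \ref{lem:psi-coords} yields $z_k \neq x$, completing the argument.
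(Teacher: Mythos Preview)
Your argument is correct and follows the same route as the paper: pick close distinct points of $W^H_N(x)\cap M^\alpha(p)$, connect them by a short $\alpha,-\alpha,\R^k$-path, push this path back to $x$ along one of the $W^H_N$-paths using Corollary~\ref{cor:holonomy-action}, and concatenate with the other $W^H_N$-path to obtain a $W^H_{2N}$-path from $x$ to a nearby point of $M^\alpha(p)$.

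Where your write-up differs from the paper's is that you explicitly check $z_k\neq x$; the paper simply asserts that the concatenated path ``connects $x$ to a point at distance at most $A^N\ell_{ij}$ from $x$'' and leaves it at that. Your verification is sound: the holonomy of Lemma~\ref{lem:path-holonomy} preserves the leg types, the $\R^k$-leg is transported with the \emph{same} group element (this is exactly what the proof of Lemma~\ref{lem:path-holonomy} does for $\R^k$-legs), and the $\alpha$- and $-\alpha$-legs are carried to nontrivial legs because each individual holonomy is linear with nonzero coefficient (Lemma~\ref{lem:linear-holonomy}, and for $H$-legs the normal-forms centralizer property together with Lemma~\ref{lem:uniformly bounded derivative}). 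Injectivity of $\psi_x$ from Lemma~\ref{lem:psi-coords} then gives $z_k\neq x$.

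Two minor imprecisions worth tightening: the $\R^k$-action does not literally commute with the $W^\beta$-flows (see \eqref{eq:renormalization}); what you need, and what holds, is that pushing an $\R^k$-leg along a $W^\beta$- or $H$-leg yields an $\R^k$-leg with the same element. Also, the formula $z_k=\psi_x(\lambda_k t_k,\mu_k s_k,u_k)$ conflates the signed-distance parameters used to define $\psi_x$ with the normal-forms parameters in which the holonomies are linear; what survives, and is all you need, is that each leg of $\tilde\eta_k$ is nontrivial exactly when the corresponding leg of $\eta_k$ is.
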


\begin{proof}
By assumption, there are infinitely many $W^H_N$-paths $\rho_1,\rho_2,\dots$ based at $x$ such that $x_i = e(\rho_i)$ all belong to distinct $H$-orbits. 
Since $\{x_i\}$ are infinitely many distinct points in a compact space, they accumulate somewhere, and for every $\ve > 0$, there exist $i$ and $j$ such that $0 < d(x_i,x_j) < \ve$. By adding an $H$-leg as necessary, we may assume that each $x_i$ can be reached from $x$ by a short $W^\alpha$, $W^{-\alpha}$, $L$-path (where $L$ is some fixed subgroup transverse to $H$).  

Choose a very short $(\alpha,-\alpha,\R^k)$-path $\eta_{ij}$ connecting $x_i$ and $x_j$, and let $\ell_{ij} = \ell_{ij}(\ve)$ denote its length. Notice that $\ell_{ij} \to 0$ as $\ve \to 0$. Recall $\rho_i^{-1}$ is the opposite $W^H$-path connecting $x_i$ with $x$. Then $\rho_i' = \pi_{\eta_{ij}}(\rho_i^{-1})$ is a $W^H$-path connecting $x_j$ with some point whose distance from $x$ is at most $A^N \ell_{ij}$ by Lemma \ref{lem:holonomy-bounded-der}. Furthermore, $s(\rho_i') = s(\rho_i) \le N$, so the concatenation $\rho_i' * \rho_j$ connects $x$ to a point at distance at most $A^N \ell_{ij}$ from $x$, and has $s(\rho_i' * \rho_j^{-1}) \le 2N$ and $e(\rho_i' * \rho_j^{-1}) \not \in H \cdot x$.
\end{proof}

 Fix $\ve_0$ as in Lemma \ref{lem:psi-coords}. We say that a $W^H$-path $\rho$ is a {\it local $M^\alpha$-return at $x$} if it is a $W^H$-path based at $x$, and there exists an $(\alpha,-\alpha,\R^k)$-path $\eta$ of length at most $\ve_0$ such that $e(\rho) = e(\eta)$. 
 
 \begin{definition}
 Let $S_N$ be the set of points $x\in X$ such that there exists a sequence of $W^H_N$-paths $\rho_n$, which begin at $x$ and end at $M^\alpha_{\ve_0}(x)$ such that $e(\rho_n) \to x$ and $e(\rho_n) \not\in B_H(0,\ve_0) \cdot x$, where $B_H(0,\ve_0)$ is the ball of radius $\ve_0$ in $H$ (ie, points which are accumulated by local $M^\alpha$-returns with at most $N$ switches). We call $S_N$ the {\rm  $N$th self-accumulating set.}
 \end{definition}






Lemma \ref{lem:return-dichotomy} exactly implies that there exists $x \in S_N \cap M^\alpha(p)$ for some $\R^k$- and $H$-periodic orbit $p$ and some $N \in \N$. Notice that for any point $y \in W^H(x) \cap M^{\alpha}(p)$ sufficiently close to $x$, $y$ is in the image of $\psi_x$, where $\psi_x$ is as in Lemma \ref{lem:psi-coords}. Let $A_x \subset \R^{k+2}$ be defined by $A_x = \psi_x^{-1}(W^H(x) \cap M^{\alpha}(p) \cap B(x,\ve))$. Notice that since we assume that $x \in \overline{W^H(x) \cap M^{\alpha}(p)}$, $0 \in \overline{A_x \setminus \set{0}}$. That is there exists a sequence $(t_n,s_n,v_n) \to 0$ such that $t_n,s_n \in \R$, $v_n \in \R^k$ and ${t_n}^2 + {s_n}^2 + \norm{v_n}^2 \to 0$. We describe types of accumulation for elements of $S_N$, which we call Type I-V (in each case, we allow the ability to take a subsequence):

\vspace{2.7cm}

\begin{center}
{\hypertarget{acc-modes}{\bf Table of Accumulation Modes}} 
\vspace{.8em}

\begin{tabular}{|c|r|l|}
\hline
I. & Generic accumulation & $t_k,s_k \not= 0$ and $v_k \not\in H$ for all $k$ \\
\hline
II. & Center-stable accumulation & For every $k$, $s_k = 0$, $v_k \not\in H$ and $t_k \not= 0$ \\
& (or center-unstable accumulation) & \qquad (or $t_k = 0$, $v_k \not\in H$ and $s_k \not= 0$) \\
\hline
III. & Orbit accumulation & For every $k$, $s_k = 0$ and $t_k = 0$ \\
\hline
IV. & Non-$L$ accumulation & For every $k$, $v_k \in H$ and $s_k,t_k \not= 0$ \\
\hline
V. & Strong-stable accumulation & For every $k$, $s_k = 0$ and $v_k \in H$ \\
 & (or strong-unstable accumulation) &  \qquad (or $t_k = 0$ and $v_k \in H$)\\
\hline
\end{tabular}
\end{center}

\vspace{.2cm}




In each accumulation type, we think of $x_n$ as accumulating nontrivially in the quotient of $M^\alpha(p)$ by $H$ (see Proposition \ref{prop:factor-by-H}). This is natural since we may add any $H$-leg to the end of a $W^H$-path without increasing the switching number. Thus, any $H$-component can be ignored from an arbitrary self-accumulation. We will prove the following Lemma for each of the cases above:

\begin{lemma}
\label{lem:distinct-alpha}
If $x \in S_N \cap M^\alpha(p)$, and there exists sequence $x_n \to x$ such that $x_n \in W^H_N(x) \cap M^\alpha(p)$ and  $x_n \not\in B_H(0,\ve_0) \cdot W^\alpha_{\ve_0}(x)$ for sufficiently large $n$, then for every $y \in W^{\alpha}(x)$, $y \in S_{2N}$.
\end{lemma}

\begin{proof}
Suppose $x_n \to x$ is a sequence converging to $x$ as described in the statement of the Lemma, so that $x_n \not\in B_H(0,\ve_0)\cdot W^\alpha_{\ve_0}(x)$ for every $n$. We claim that we may assume that $x_n \not\in B_H(0,\ve_0)\cdot W^\alpha_{\ve_0}(x_\ell)$ for all $n \not= \ell$.  Indeed, if the local $B_H(0,\ve) \cdot W^\alpha$-leaf did not change infinitely often, it would eventually stabilize. But then the limit of the $x_n$ would lie in that local $H\cdot W^\alpha$-leaf, and the accumulation would be of type V. Let $\eta$ denote the path in $W^{\alpha}(x)$ from $x$ to $y$ with only one leg.  Let $\rho_n$ be the $W^H_N$-path which connects $x$ to $x_n$, and set $\rho_n' = \pi_\eta(\rho_n)$. Define $y_n = e(\pi_\eta(\rho_n))$.

Notice that by Lemma \ref{lem:holonomy-bounded-der}, $y_n \in W^\alpha_{A^N\ve_0}(x_n)$ since $y \in W^\alpha_{\ve_0}(x)$  and $x_n \in W^H_N(x)$. Hence, since $x_n \not\in B_H(0,\ve_0) \cdot W^\alpha_{\ve_0}(x_\ell)$, $y_n \not\in B_H(0,\ve_0) \cdot W^\alpha_{A^{-N}\ve_0}(y_\ell)$ for every $\ell \not= n$. In particular, $y_n \not= y_\ell$. Notice also that if $y$ is sufficiently close to $x$, then $y_n$ stays within a bounded distance of $y$, and therefore has a convergent subsequence, $y_k \to z$. As in the proof of Lemma \ref{lem:return-dichotomy}, we now choose $y_n$ and $y_\ell$ such that $d(y_n,y_\ell) < \delta$. Then connect them by a short $(\alpha,-\alpha,\R^k)$-path $\eta'_{n,\ell}$ which begins at $y_n$ and ends at $y_\ell$. Set $\tilde{\rho}_{n,\ell} = \pi_{\eta'_{n,\ell}}({\rho_n'}^{-1}) * \rho_n'$. Then $\tilde{\rho}_{n,\ell}$ is a path which starts at $y$, and ends at $e(\pi_{\eta'_{n,\ell}}((\rho_n')^{-1})) = e(\pi_{{\rho_n'}^{-1}}(\eta'_{\ell}))$, which has length less than $A^N\delta$. Furthermore, the endpoint cannot be equal to $y$ since it lies on a distinct $\alpha$ leaf by construction. Thus, since $\delta$ was arbitrarily small, $y \in S_{2N}$.
\end{proof}

Since $S_N$ is $\R^k$-invariant, and the assumptions of Lemma \ref{lem:distinct-alpha} fail only for type V. accumulation,  the following is immediate from Lemma \ref{lem:distinct-alpha}:

\begin{corollary}
\label{cor:extending-SN}
For every $N \in \N$, there exists $\ve_1 > 0$ such that if $x \in S_N \cap M^\alpha(p)$, $x$ has an accumulation of type I,II,III or IV, and $y \in W_{\ve_1}^\alpha(x) \cup W^{-\alpha}_{\ve_1}(x) \cup \R^kx$, then $y \in S_{2N}$.
\end{corollary}

We now turn our attention to Type V accumulation. Let $\Aff(\R) = \set{ax +b : a,b \in \R, a\not= 0}$ be the group of affine transformations of $\R$. We think of $\Aff(\R)$ as the matrix group $\set{\begin{pmatrix} a & b \\ 0 & 1 \end{pmatrix} : a,b \in \R, a\not= 0}$, whose Lie algebra is $\Lie(\Aff(\R)) = \set{\begin{pmatrix} a & b \\ 0 & 0 \end{pmatrix} : a,b \in \R}$. The following lemma is a standard result.

\begin{lemma}\label{lem:aff-conjugacy}
For every $X \in \Lie(\Aff(\R))$, there exists $g \in \Aff(\R)$ such that $\Ad(g)X$ is a multiple of either $\begin{pmatrix} 1 & 0 \\ 0 & 0 \end{pmatrix}$ or $\begin{pmatrix}0 &1 \\ 0 & 0 \end{pmatrix}$.
Any one-parameter subgroup of $\Aff(\R)$ is either the additive group $\set{x+b : b \in \R}$, or conjugate to the multiplicative group $\set{ax : a \in \R_+}$.
\end{lemma}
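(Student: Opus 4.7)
The plan is to exhibit the conjugating element directly. Parameterizing a general element as $X = \begin{pmatrix} a & b \\ 0 & 0 \end{pmatrix} \in \Lie(\Aff(\R))$ and a general group element as $g = \begin{pmatrix} p & q \\ 0 & 1 \end{pmatrix} \in \Aff(\R)$ with $p \neq 0$, a short matrix calculation gives $\Ad(g)X = gXg^{-1} = \begin{pmatrix} a & pb-aq \\ 0 & 0 \end{pmatrix}$. In particular the $(1,1)$-entry is a conjugation invariant, and the only freedom is in the $(1,2)$-entry.

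From here the first assertion splits into two cases. If $a = 0$, then $X$ is already the scalar multiple $b \begin{pmatrix} 0 & 1 \\ 0 & 0\end{pmatrix}$ and one may take $g = e$. If $a \neq 0$, then choosing $p = 1$ and $q = b/a$ kills the $(1,2)$-entry, yielding $\Ad(g)X = a\begin{pmatrix}1 & 0 \\ 0 & 0\end{pmatrix}$, which is the desired scalar multiple.

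For the classification of one-parameter subgroups, every such subgroup has the form $t \mapsto \exp(tX)$ for some $X \in \Lie(\Aff(\R))$. When $a = 0$ the matrix $X$ is nilpotent, so $\exp(tX) = I + tX = \begin{pmatrix}1 & tb \\ 0 & 1\end{pmatrix}$, which acts on $\R$ by the translation $x \mapsto x + tb$, giving the additive subgroup. When $a \neq 0$ the first part conjugates $X$ to $a\begin{pmatrix}1 & 0 \\ 0 & 0\end{pmatrix}$, whose exponential is $\begin{pmatrix} e^{ta} & 0 \\ 0 & 1\end{pmatrix}$; this acts on $\R$ as $x \mapsto e^{ta}x$, so the original one-parameter subgroup is conjugate to the multiplicative group $\set{ax : a \in \R_+}$.

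There is essentially no obstacle here: the entire lemma reduces to the single matrix identity above, and the dichotomy between the additive and multiplicative one-parameter subgroups corresponds precisely to whether or not the affine action has a fixed point on $\R$ (equivalently, whether the diagonal entry of $X$ vanishes).
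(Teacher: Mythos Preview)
Your proof is correct. The paper itself does not supply a proof, simply noting that the lemma is a standard result; your direct computation of $\Ad(g)X$ and the resulting case split on the diagonal entry is exactly the standard argument one would give.
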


\begin{lemma}
\label{lem:typeV}
Let $x \in S_N \cap M^\alpha(p)$ and have a type V accumulation (so that there exists $x_k \in W^H_N(x) \cap W^\alpha(x)$ such that $x_k \to x$, $x_k \not= x$). There exists $\ve_2>0$, $M \in \N$ and $z \in W^\alpha(x) \setminus \set{x}$ such that if $y \in W^{-\alpha}_{\ve_2}(x) \cup (W^\alpha_{\ve_2}(x) \setminus \set{z}) \cup \R^kx$, $y \in S_M$. Furthermore, each $y \in W^\alpha_{\ve_2}(x)\setminus \set{z}$ also has a {self-accumulation} of type V along the $\alpha$-leaf (ie, there exists $y_n \in W^H_{2N}(y) \cap W^\alpha(y)$ such that $y_n \to y$).
\end{lemma}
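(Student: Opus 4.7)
The plan is to push the accumulation $x_k\to x$ forward to returns at each nearby $y$ using the holonomy framework of Section~\ref{sec:holonomy-action}, and then classify the resulting limit in $\Aff(\R)$ via Lemma~\ref{lem:aff-conjugacy}. First I would take $\rho_k$ to be $W^H_N$-paths from $x$ to $x_k\in W^\alpha(x)\setminus\set{x}$ and apply Lemma~\ref{lem:linear-holonomy} leg by leg (using Theorem~\ref{thm:normal-forms}(c) to handle the $H$-legs) to obtain, in normal forms coordinates, an affine holonomy map $f_k(t)=\lambda_k t+b_k$ on $W^\alpha(x)$ with $\lambda_k\in[A^{-2N},A^{2N}]$ (via Lemma~\ref{lem:holonomy-bounded-der}) and $b_k=\psi_x^{-1}(x_k)\to 0$. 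After passing to a subsequence I would arrange $\lambda_k\to\lambda$. The orbit case $\R^k\cdot x\subset S_N$ follows from $\R^k$-invariance.

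For $y\in W^\alpha_{\ve_0}(x)$, I would consider the composed $W^H$-path
\[ \sigma_{k,\ell}(y):=\pi_{\eta_{x\to y}}(\rho_k)\ast\pi_{\eta_{x_\ell\to f_k(y)}}(\rho_\ell^{-1}) \]
based at $y$, of switching number at most $2N$, ending at $h_{k,\ell}(y):=f_\ell^{-1}\circ f_k(y)$. Since $h_{k,\ell}\to \id$ in $\Aff(\R)$ along the subsequence but is non-identity for infinitely many indices (else $x_k=x_\ell$), the closed subgroup $G\subset\Aff(\R)$ generated by $\set{f_k}$ has a non-trivial identity component $G^0$. Lemma~\ref{lem:aff-conjugacy} then forces $G^0$ to be either a translation subgroup or conjugate to a scaling subgroup fixing some $z\in W^\alpha(x)$. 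A short calculation rules out $z=x$: if $z=x$, then $h_{k,\ell}\in G^0$ would fix $0$, forcing $b_k\lambda_\ell=\lambda_k b_\ell$ and hence $b_k/\lambda_k$ eventually constant; combined with $b_k\to 0$ and $|\lambda_k|$ bounded below, this yields $b_k\equiv 0$, contradicting $x_k\neq x$. Every non-identity element of $G^0$ shares the fixed point $z$ (or has none, in the translation case), so for $y\neq z$ the small $h_{k,\ell}$ move $y$ to nearby distinct points on $W^\alpha(y)=W^\alpha(x)\subset M^\alpha_L(y)$, producing the required returns with switching bounded by $2N$. The \emph{furthermore} then follows immediately: the returns $h_{k,\ell}(y)$ themselves accumulate at $y$ along $W^\alpha(y)$, giving $y$ a type V accumulation along its $\alpha$-leaf.

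For $y\in W^{-\alpha}_{\ve_0}(x)$ I would carry out the analogous construction. The projection $\pi_{\eta_{x\to y}}(\rho_k)$ lands at $y_k\in W^{-\alpha}(x_k)$, which lies in $M^\alpha_L(y)$ via $y\to x\to x_k\to y_k$ and is distinct from $y$ because $W^{-\alpha}(x_k)\neq W^{-\alpha}(x)$. The key structural difference from the $\alpha$-case is that both $x$ and $x_k$ sit at coordinate $0$ in their respective normal forms on $W^{-\alpha}$, so the leaf-to-leaf holonomies $g_k$ are linear rather than affine. Bridging $W^{-\alpha}(x_k)$ and $W^{-\alpha}(x_\ell)$ via the $W^H$-path $\rho_k^{-1}\ast\rho_\ell$ (which has switching number at most $2N$ and whose $-\alpha$-holonomy is $g_\ell\circ g_k^{-1}$) and composing with $\pi_{\eta_{x\to y}}(\rho_k)$ yields a $W^H$-path at $y$ whose endpoint converges to $y$ as $k,\ell\to\infty$. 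The corresponding identity-component subgroup of $\Aff(\R)$ consists of scalings fixing $x\in W^{-\alpha}(x)$; since $x\in S_N$ by hypothesis, no additional exceptional point arises on $W^{-\alpha}_{\ve_0}(x)$, so the entire neighborhood embeds into $S_M$ for some $M$ depending only on $N$.

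The main obstacle I expect is the careful affine-group analysis required to isolate the single exceptional point $z$ on the $\alpha$-side, together with verifying that the $-\alpha$-side bridging between distinct $W^{-\alpha}$-leaves can be realized entirely within $W^H$-paths of uniformly bounded switching number so that the final $M$ does not blow up.
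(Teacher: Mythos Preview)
Your $\alpha$-leaf argument is essentially the paper's: both pass from the affine holonomies $f_k(t)=\lambda_kt+b_k$ to the near-identity compositions $f_\ell^{-1}\circ f_k$, invoke the one-parameter subgroup dichotomy of Lemma~\ref{lem:aff-conjugacy}, and read off the (at most one) exceptional fixed point $z$. Your exclusion of $z=x$ is a nice extra observation.

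The $-\alpha$-leaf argument, however, has a genuine gap. Your bridged path $y\to y_k\to\cdots$ built from $\pi_{\eta_{x\to y}}(\rho_k)$ followed by the projection of $\rho_k^{-1}*\rho_\ell$ simply terminates at $y_\ell=g_\ell(y)\in W^{-\alpha}(x_\ell)$: the intermediate $\rho_k^{-1}$-segment returns you to $y$ (since $g_k^{-1}(y_k)=y$), and then $\rho_\ell$ carries you to $y_\ell$. But $y_\ell$ sits at normal-forms coordinate $c_\ell\,t_y$ on $W^{-\alpha}(x_\ell)$ with $c_\ell\in[A^{-2N},A^{2N}]$, and there is no reason for $c_\ell\to 1$; so $y_\ell\not\to y$ in general, and your ``identity-component of scalings fixing $x$'' picture does not arise, because the $g_k$ are leaf-to-leaf maps, not self-maps of $W^{-\alpha}(x)$.

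The paper's fix is much simpler and avoids this entirely: since the $x_k$ are distinct points on the same local $W^\alpha$-leaf, they lie on \emph{distinct} local $W^{-\alpha}$-leaves, so after projecting to $y\in W^{-\alpha}_{\ve_0}(x)$ the points $y_k$ lie on distinct local $-\alpha$-leaves as well. This is exactly the hypothesis that drives the Type~I case of Lemma~\ref{lem:extending-SN}, and that argument (pair up nearby $y_k,y_\ell$, connect by a short $\alpha,-\alpha,\R^k$-path, push back along one $\rho$) immediately yields $y\in S_{2N}$. No affine-group analysis is needed on the $-\alpha$ side.
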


\begin{proof}
Since $W^H$ is saturated by $H$-orbits, we may without loss of generality assume that if $x_k = \psi_x(t_n,s_n,v_n)$, then $v_k = 0$. Notice that accumulations of Type V correspond to accumulating along an $\alpha$ or $-\alpha$ leaf. We without loss of generality assume that $x_k$ accumulates to $x$ along an $\alpha$-leaf. In this case, notice that each $x_k$ belongs to a distinct local $-\alpha$ leaf, so we may apply {Lemma \ref{lem:distinct-alpha} (replacing $\alpha$ by $-\alpha$)} to get $y \in S_{2N}$ for all $y \in W^{-\alpha}_{\ve_2}(x)$.

So we must show $y \in S_{2N}$ for all but one $y \in W^\alpha(x)$. The previous argument fails exactly because the $\alpha$ leaf fails to change. Notice that since $x \in S_N$, there exists a sequence of paths $\rho_n$ in $W^H$ with $s(\rho_n) \le N$ such that $e(\rho_n) \in W^\alpha(x)$ and $e(\rho_n) \to x$. We may extend each $\rho_n$ to a holonomy {(from $W^\alpha(x)$ to $W^\alpha(x_n) = W^\alpha(x)$)} as described in Lemmas \ref{lem:smooth-holonomy} and \ref{lem:linear-holonomy}. Since $e(\rho_n) \in W^\alpha(x)$, we may use the normal forms charts of Theorem \ref{thm:normal-forms-foliation} (and part \ref{enum:nf-consistency} of that theorem to  center the charts at $x$) to get that the endpoints of the path translated along $\alpha$ are given by affine maps $\varphi_n \in \Aff(\R)$ such that $\varphi_n(t) = a_nt + b_n$, $A^{-2N} \le a_n \le A^{2N}$, $b_n \not= 0$ and $b_n \to 0$. 

Notice that $\varphi_n$ is a precompact family of affine maps. Therefore, the collection $\varphi_n \of \varphi_m^{-1}$ accumulates at $\id \in \Aff(\R)$. Furthermore, $\varphi_n \of \varphi_m^{-1}$ is a map which associates to a point $y$, the endpoint of a path in $W^H$ based at $y$ with at most $2N$ switches. Since these maps accumulate nontrivially at the identity, evaluation of these maps at $y \in W^\alpha(x)$ gives a sequence of points converging to $y$ which lie in $W^H_{2N}(y)$. If this accumulation occurs along a sequence without a common fixed point (ie, they do not lie in a multiplicative one-parameter subgroup as described in Lemma \ref{lem:aff-conjugacy}), one can use the corresponding returns to get that $y \in S_{2N}$ for all $y \in W^\alpha(x)$. Otherwise, there is a common fixed point of the transformations converging to $\id$ (this is the point $z$ as in the statement), and all $y \in W^\alpha(x) \setminus \set{z}$, $y \in S_{2N}$ and has accumulation of type V along the $\alpha$-leaf. 
\end{proof}

\begin{lemma}
\label{lem:periodic-accum}
If $S_N \cap M^\alpha(p) \not= \emptyset$, then $S_{N'}$ contains an open set in $X$ for some $N' \in \N$.
\end{lemma}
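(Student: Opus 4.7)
Starting from $x \in S_N \cap M^\alpha(p)$, pass to a subsequence of the returns at $x$ so the accumulation falls into one of Types I--V. In Types I--IV, Lemma \ref{lem:extending-SN} yields $W^\alpha_{\ve_0}(x) \cup W^{-\alpha}_{\ve_0}(x) \cup \R^k \cdot x \subset S_M$. For each $y$ in the open arc $W^\alpha_{\ve_0}(x)$, since $y \in M^\alpha(p) \cap S_M$, choose an accumulating subsequence at $y$ and apply Lemma \ref{lem:extending-SN} or Lemma \ref{lem:typeV}, obtaining $W^{-\alpha}_{\ve_0}(y) \subset S_{M'}$ minus at most one excluded point. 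Taking the union over $y$ and using $\R^k$-invariance of $S_{M'}$, the at most one-parameter family of excluded points cannot fill an open subset of the two-parameter $(\alpha,-\alpha)$-family of leaves, so a relatively open $U \subset M^\alpha(p)$ remains inside $S_{M'}$. The pure Type V alternative is even cleaner: the iteration is self-bootstrapping, since each non-excluded $y \in W^\alpha_{\ve_0}(x)$ again carries a Type V accumulation along $\alpha$, and then Lemma \ref{lem:typeV} applied at $y$ produces no exception along $W^{-\alpha}(y)$, giving $U$ directly.

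\textbf{Stage 2 ($W^H$-propagation).} The key technical claim is: \emph{if $y \in S_M$ and $z \in W^H_k(y)$, then $z \in S_{M+2k}$}. Fix a $W^H$-path $\sigma$ from $y$ to $z$ with $s(\sigma) \le k$, and let $\rho_n$ be local $M^\alpha$-returns at $y$ realized by $e(\rho_n) = e(\eta_n)$ for short $\alpha,-\alpha,\R^k$-paths $\eta_n$ at $y$ with $\ell(\eta_n) \to 0$ and $e(\eta_n) \ne y$. The concatenation
\[ \tau_n := \sigma^{-1} * \rho_n * \pi_{\eta_n}(\sigma) \]
is a $W^H$-path at $z$ with $s(\tau_n) \le M + 2k$ by Corollary \ref{cor:holonomy-action}. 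Its endpoint $e(\pi_\sigma(\eta_n))$ is reached from $z$ by the $\alpha,-\alpha,\R^k$-path $\eta_n' := \pi_\sigma(\eta_n)$, which has length at most $A^{2k}\ell(\eta_n) \to 0$ by Lemma \ref{lem:holonomy-bounded-der}. For fixed $\sigma$, the endpoint map $e(\eta) \mapsto e(\pi_\sigma(\eta))$ is a composition of the leg-by-leg holonomies described in Lemma \ref{lem:path-holonomy}, each a local homeomorphism between local $M^\alpha$-transversals; hence it is itself a local homeomorphism, and $e(\eta_n) \ne y$ forces $e(\eta_n') \ne z$. Thus $\tau_n$ is a genuine nontrivial local $M^\alpha$-return at $z$, proving the claim.

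\textbf{Combining.} By Stage 1, fix a nonempty relatively open subset $U \subset M^\alpha(p) \cap S_{M_0}$. By the local product structure of the totally Cartan action, every point in a sufficiently small neighborhood of $U$ in $X$ is reached from some $y' \in U$ by a $W^H$-path whose $s$-value is bounded by a uniform constant $K$ depending only on the number of coarse Lyapunov foliations. The $W^H$-saturation of $U$ is open in $X$, since the $W^H$-distribution together with $TM^\alpha$ spans $TX$, and Stage 2 places this saturation inside $S_{M_0 + 2K}$, giving the desired $N' = M_0 + 2K$. The hard part is the Stage 1 bookkeeping when iterating at intermediate points whose accumulation type is \emph{a priori} unknown; this is resolved by the dimension-counting observation that isolated exceptional points on each $W^\alpha$-leaf cannot fill an open subset of the two-parameter family of $(\alpha,-\alpha)$-leaves.
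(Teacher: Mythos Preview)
Your two-stage structure matches the paper's, and Stage~2 (the $W^H$-propagation) is correct and in fact more carefully argued than the paper's single sentence about ``saturating with $\beta$-leaves by applying $\beta$-holonomies.'' The local-homeomorphism argument for nontriviality of the pushed-forward return is right.

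The gap is in Stage~1, specifically the dimension-counting step in the Types I--IV branch. You argue that the excluded points form the graph of a partial function $y \mapsto z(y)$ from $W^\alpha_{\ve_0}(x)$ into the $-\alpha$ coordinate, and that such a graph ``cannot fill an open subset'' of the two-parameter chart, so the complement contains a relatively open set. The last inference is unjustified: the graph of an arbitrary function $W^\alpha \to W^{-\alpha}$ can be \emph{dense} in the product (take any function whose range on every subinterval is dense in $\R$), so its complement, while itself dense, need not contain any open set. You have no regularity whatsoever on $z(\cdot)$, and the $\R^k$-saturation you invoke does not help, since it only adds directions already accounted for in $M^\alpha(p)$.

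The correct fix---and this is how the paper actually organizes its dichotomy---is to notice that an exclusion on $W^{-\alpha}_{\ve_0}(y)$ arises \emph{only} when $y$ has Type~V accumulation along $-\alpha$. But the moment you encounter such a $y$, you are in the ``pure Type~V'' situation at $y$ (with the roles of $\alpha$ and $-\alpha$ interchanged), and your own bootstrapping paragraph then produces an open set in $M^\alpha(p)$ directly and cleanly. If no such $y$ ever occurs during the saturation, then Lemma~\ref{lem:extending-SN} applies at every intermediate point with no exclusions at all, and the saturation by $W^\alpha$-, $W^{-\alpha}$-, and $\R^k$-leaves yields an open subset of $M^\alpha(p)$ outright. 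So replace the dimension-counting observation by this meta-dichotomy: either at some stage a Type~V point appears (finish via bootstrapping there), or it never does (finish by exclusion-free saturation).
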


\begin{proof}
We prove the Lemma in two distinct cases: since $S_N \cap M^\alpha(p)$ is nonempty, we have either that every $x \in S_N \cap M^\alpha(p)$ has an accumulation of type I-IV, or there exists $x \in S_N \cap M^\alpha(p)$ which has a type V accumulation. If every point has an accumulation of type I-IV, then for some $N'$, one can build an open subset of $S_{N'-2} \cap M^\alpha(p)$ by iteratively saturating with local $W^\alpha$-leaves, $W^{-\alpha}$-leaves and $\R^k$-orbits. 
 Notice that we can also saturate with $\beta$-leaves by applying $\beta$-holonomies to get that for some open set $U \subset X$ every point in $U$ is self-accumulated by a $W^H$-path with at most $N'$ switches whose endpoints are connected by arbitrarily short, nontrivial, $\alpha$, $-\alpha$, $\R^k$-paths. This proves the Lemma in this case. 

Now assume that $x \in S_N \cap M^\alpha(p)$ has an accumulation of type V, and without loss of generality, assume it is along the $W^\alpha$-leaf. Then by Lemma \ref{lem:typeV}, there is a neighborhood $U_1 \subset W^\alpha(x)$ containing $x$ such that every $y \in U_1$ also has a type V accumulation. Therefore, again by Lemma \ref{lem:typeV}, we may saturate $U_1$ with $-\alpha$ leaves and still lie in $S_{N'-2}$. Since $S_{N'-2}$ is saturated by $\R^k$-orbits, we have constructed an open set in $M^\alpha(p) \cap S_M$. Repeating the argument above obtains an open set of points in $X$ which has the desired self-accumulation property.
\end{proof}

As an immediate corollary of Lemma \ref{lem:periodic-accum}, we get the following:

\begin{lemma}
\label{lem:eps-dense-accum}
There exists $N > 0$ and $\ve_3 > 0$ such that if $p$ is an $\R^k$-periodic point whose $\R^k$-orbit is $\ve_3$-dense, then 
$p \in S_N$. 
\end{lemma}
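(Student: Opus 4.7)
The plan is to invoke Lemma \ref{lem:periodic-accum} to produce a nonempty open set $U \subset S_N$ for some $N$, and then use density of periodic orbits together with the $\R^k$-invariance of $S_N$ to force every $\ve_1$-dense periodic orbit into $S_N$. To start the argument: we are in case (2) of the Return Dichotomy, so there exist an $H$- and $\R^k$-periodic point $p$ and $N_0 \in \N$ such that $W^H_{N_0}(x) \cap M^\alpha(p)$ is infinite for some $x \in M^\alpha(p)$; Lemma \ref{lem:return-dichotomy} upgrades this to $x \in S_{2N_0} \cap M^\alpha(p)$, and Lemma \ref{lem:periodic-accum} then provides $N \in \N$ and a nonempty open $U \subset S_N$. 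Choose $x_0 \in X$ and $\ve_1 > 0$ with $B(x_0,\ve_1) \subset U$, so that any $\ve_1$-dense subset of $X$ intersects $U$.

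The core step is to show that $S_N$ is invariant under the $\R^k$-action. Given $y \in S_N$ with local $M^\alpha$-returns $\rho_n$ satisfying $s(\rho_n) \le N$ and $e(\rho_n) \to y$ (with $e(\rho_n) \ne y$), and given $a \in \R^k$, the translated path $a \cdot \rho_n$ is still a $W^H$-path based at $a\cdot y$ with the same combinatorial pattern. In particular $s(a\cdot \rho_n) = s(\rho_n) \le N$, and its endpoint $a\cdot e(\rho_n)$ is a sequence of distinct points in $M^\alpha(a\cdot y)$ converging to $a\cdot y$. The subtlety here is that the uniform length bound $L$ on the connecting $\alpha,-\alpha,\R^k$-path from the definition of ``local $M^\alpha$-return'' does not transport directly under $a_*$, since $a_*$ may expand sharply on coarse Lyapunov foliations. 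This is the main (though mild) obstacle. It is circumvented by restricting to the tail: for $n$ sufficiently large, $a\cdot e(\rho_n)$ lies arbitrarily close to $a\cdot y$ in $M^\alpha(a\cdot y)$, so Lemma \ref{lem:psi-coords} furnishes new $\alpha,-\alpha,\R^k$-connections of arbitrarily small length, in particular well below $L$.

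With the $\R^k$-invariance of $S_N$ established, suppose $p$ is an $\ve_1$-dense $\R^k$-periodic point. By definition of $\ve_1$-density, some point $q$ of the orbit $\R^k \cdot p$ lies in $B(x_0,\ve_1) \subset U \subset S_N$; $\R^k$-invariance of $S_N$ then yields $p \in S_N$, as required. The value of $N$ is precisely the integer supplied by Lemma \ref{lem:periodic-accum}, and $\ve_1$ is any radius of a ball contained in $U$.
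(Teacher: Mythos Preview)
Your proof is correct and takes essentially the same route as the paper, which states the lemma as an immediate corollary of Lemma~\ref{lem:periodic-accum} (together with a reference to Theorem~\ref{thm:anosov-closing}). The paper treats the $\R^k$-invariance of $S_N$ as understood (it is noted in the proofs of Lemmas~\ref{lem:extending-SN} and~\ref{lem:periodic-accum}), whereas you spell out that step carefully; otherwise the arguments coincide.
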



Let $p$ be an $\R^k$- and $H$-periodic point in $S_N$, which is guaranteed to exist by Lemma \ref{lem:eps-dense-accum} and Corollary \ref{cor:eps-dense}. 
 Notice that for any point $y \in W^H(p) \cap M^{\alpha}(p)$ sufficiently close to $p$, $y$ is in the image of $\psi$, where $\psi$ is as in Lemma \ref{lem:psi-coords}. Let $A_p \subset \R^{k+2}$ be defined by $A_p = \psi_p^{-1}(W^H(p) \cap M^{\alpha}(p) \cap B(p,\ve))$. Notice that since we assume that $p \in \overline{W^H_N(p) \cap M^{\alpha}(p)  \setminus \set{p}}$, $0 \in \overline{A_p  \setminus \set{0}}$.

\begin{lemma}
\label{lem:closure-types}
For every $\R^k$-periodic orbit $p$ belonging to $S_N$, at least one of the following holds:

\begin{enumerate}
\item[(a)] $\overline{W^H(p)} \supset \R^k \cdot p$,
\item[(b)] $\overline{W^H(p)} \supset W^\alpha(p)$, or
\item[(c)] $\overline{W^H(p)} \supset W^{-\alpha}(p)$.
\end{enumerate}
\end{lemma}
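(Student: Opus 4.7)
Fix the sequence $x_k = \psi(t_k, s_k, v_k) \to p$ in $W^H(p) \setminus \set{p}$ guaranteed by $p \in S_N \cap M^\alpha(p)$, and pass to a subsequence of fixed accumulation type (I--V). Since $p$ is both $\R^k$-periodic and $H$-periodic, $\Stab_H(p)$ has codimension one in $\Stab_{\R^k}(p)$, so there is $a \in \Stab_{\R^k}(p) \setminus H$, WLOG with $\alpha(a) > 0$. As $a \cdot p = p$ we have $a \cdot W^H(p) = W^H(p)$, and in normal forms $a \cdot \psi(t, s, v) = \psi(e^{\alpha(a)} t, e^{-\alpha(a)} s, v)$. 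The plan is to use the $a$-action to extract, in the limit, points of $\overline{W^H(p)}$ of the form $v \cdot p$, $\psi^\alpha_p(c)$, or $\psi^{-\alpha}_p(c)$, and then to iterate (for (a)) or invoke an affine-group argument (for (b) or (c)) to upgrade these to a full orbit or leaf.

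\textbf{Types II and III yield (a).} In Type III, $x_k = v_k \cdot p \in W^H(p)$ already. In Type II (WLOG $s_k = 0$), for each fixed $k$ the orbit $a^n x_k = \psi(e^{n\alpha(a)} t_k, 0, v_k) \in W^H(p)$ converges to $v_k \cdot p$ as $n \to -\infty$, so $v_k \cdot p \in \overline{W^H(p)}$. By continuous dependence of $W^H$-leaves on basepoints, $v_k \cdot W^H(p) = W^H(v_k \cdot p) \subset \overline{W^H(p)}$, so $\overline{W^H(p)}$ is $v_k$-invariant and $\set{n v_k \cdot p : n \ge 0} \subset \overline{W^H(p)}$. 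In the circle $S^1 := \R^k / (\Stab_{\R^k}(p) + H)$, the projections $\bar v_k$ are nonzero (for small $v_k \notin H$, no nontrivial integer multiple of a generator of $\Stab_{\R^k}(p) \setminus H$ fits in $B(0, \norm{v_k})$) and tend to $0$; hence the positive cyclic orbits $\set{n \bar v_k}$ become arbitrarily dense in $S^1$, giving $\R^k \cdot p \subset \overline{W^H(p)}$.

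\textbf{Type V yields (b) or (c), and Types I, IV reduce to Type V.} In Type V, WLOG $s_k = 0$, $v_k \in H$ (absorbed into $H \cdot p \subset W^H(p)$), so $x_k = \psi^\alpha_p(t_k)$. By Lemmas \ref{lem:path-holonomy} and \ref{lem:linear-holonomy}, each $W^H_N$-path $\rho_k: p \to x_k$ yields an affine holonomy $\tilde\varphi_k \in \Aff(\R)$ with $\tilde\varphi_k(0) = t_k$ and slope in $[A^{-2N}, A^{2N}]$; compositions $\tilde\varphi_k \of \tilde\varphi_l^{-1}$ correspond to $W^H$-paths with switching $\le 2N$ and, along subsequences with aligned slopes, converge non-trivially to $\id$. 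The subgroup $G \subset \Aff(\R)$ they generate is therefore non-discrete, and every positive-dimensional closed subgroup of $\Aff(\R)$ (translations, a conjugate of the multiplicative group, or $\Aff(\R)$) has orbit of $0$ dense in $\R$ (omitting at most a single isolated fixed point, still in the closure). Hence $(\psi^\alpha_p)^{-1}(W^\alpha(p) \cap W^H(p)) \supset G \cdot 0$ is dense, and $W^\alpha(p) \subset \overline{W^H(p)}$. For Types I and IV, choose $n_k$ so that $e^{n_k\alpha(a)} t_k \in [1, e^{\alpha(a)}]$ (adjusting signs); then $e^{-n_k\alpha(a)} s_k = s_k t_k / (e^{n_k\alpha(a)} t_k) \to 0$, and in Type I also $v_k \to 0$, so along a subsequence $a^{n_k} x_k \to \psi^\alpha_p(c)$ for some $c \neq 0$, giving $\psi^\alpha_p(c) \in \overline{W^H(p)}$. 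The paths $a^{n_k} \rho_k$ have uniformly bounded switching, and iterating under $a^{-m}$ yields affine holonomies on $W^\alpha(p)$ with values $c e^{-m\alpha(a)} \to 0$, generating a non-discrete subgroup of $\Aff(\R)$ as in Type V; the same conclusion gives $W^\alpha(p) \subset \overline{W^H(p)}$.

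\textbf{Main obstacle.} The hardest step is the affine-subgroup density analysis (Type V and the propagation to Types I, IV): one must extract convergent subsequences of slopes, verify that the generated subgroup is positive-dimensional (i.e., the convergents to $\id$ are non-trivial), and carefully track switching numbers through the $a$-iterations so that the holonomies remain uniformly controlled. This relies crucially on the uniform derivative bound of Lemma \ref{lem:holonomy-bounded-der} and the continuity of the holonomy action of Lemma \ref{lem:path-holonomy}, together with standard facts on closed subgroups of $\Aff(\R)$.
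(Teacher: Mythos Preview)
Your overall architecture matches the paper's: split by accumulation type, exploit an element $a \in \Stab_{\R^k}(p)$ with $\alpha(a)>0$, and in the $\alpha$-leaf cases use the affine holonomy action on $W^\alpha(p)$ in normal forms. Your treatment of Types II and III is essentially correct (indeed slightly cleaner than the paper's group-theoretic argument for $\Gamma$); the ``circle'' picture tacitly assumes $p$ is $H$-periodic, but if it is not, $H\cdot p$ is already dense in $\R^k\cdot p$ and (a) holds trivially, so this is harmless.

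The real gap is in Type V. Your claim that ``every positive-dimensional closed subgroup of $\Aff(\R)$ has orbit of $0$ dense in $\R$ (omitting at most a single isolated fixed point)'' is false. If $\overline{G}$ is a conjugate of the multiplicative group with fixed point $c\neq 0$, then the orbit of $0$ is the open ray on the $0$-side of $c$ (e.g.\ $(-\infty,c)$ if $c>0$), and its closure is a half-line, not $\R\setminus\{c\}$. The paper uses exactly the ingredient you set up but then drop: since $a\cdot p=p$ with $\alpha(a)>0$, the set $W^H(p)\cap W^\alpha(p)$ (in normal forms) is invariant under the dilation $t\mapsto \lambda t$ with $\lambda>1$. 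A closed half-line containing $0$ in its interior is not $\lambda$-invariant; iterating the dilation on the ray produced by $\overline{G}\cdot 0$ gives all of $\R$, hence $W^\alpha(p)\subset\overline{W^H(p)}$. You need to insert this step.

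A second, smaller issue is your reduction of Types I and IV to Type V. You obtain $\psi^\alpha_p(c)\in\overline{W^H(p)}$ as a limit of $a^{n_k}x_k\in W^H(p)$, but those points are not on $W^\alpha(p)$ (they have residual $-\alpha$ and $\R^k$ components), so you do not immediately have $W^H$-paths from $p$ landing on $W^\alpha(p)$ to which the holonomy argument applies. The paper handles this via ``controlled returns'' (their Lemma on limits of bounded-switching paths whose endpoints converge into $W^\alpha(p)$), showing that such limits still yield a group of affine maps on $W^\alpha(p)$. Your sketch (``paths $a^{n_k}\rho_k$ have uniformly bounded switching, iterate under $a^{-m}$'') is on the right track but needs this passage-to-the-limit made precise.
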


Since $p \in S_N$, we know that $p$ has accumulation of some type I-V described above. We prove that at least one of (a), (b) or (c) holds in each of the cases. 
We prove Lemma \ref{lem:closure-types} for each type of accumulation, in the following order: III, V, II, I, IV.

\begin{proof}[Proof of Lemma \ref{lem:closure-types}, Type III]
In type III, we know that $W^H(p)$ accumulates to $p$ only along the $\R^k$-orbit of $p$. We claim that $\Gamma = \set{ a \in \R^k : a\cdot p \in W^H(p)}$ is a subgroup. Indeed, if $a \in \Gamma$, there exists a $W^H$-path $\rho$ starting from $p$ such that $e(\rho) = a\cdot p$. Then $(-a) \cdot \rho$ is a $W^H$-path starting from $a^{-1} \cdot p$ and ending at $p$. Reversing the path results in a path from $p$ to $(-a) \cdot p$. That is, $(-a) \in \Gamma$.

Now suppose that $a,b \in \Gamma$. Then there exist paths $\rho_1$ and $\rho_2$ beginning at $p$ and ending at $a\cdot p$ and $b \cdot p$, respectively. Then $a \cdot \rho_2$ is a $W^H$-path that begins at $a \cdot p$ and ends at $(a+b) \cdot p$. Concatenating $\rho_1$ and $a \cdot \rho_2$ gives a path from $p$ to $(a+b) \cdot p$, so $a+b \in \Gamma$.

Now, $\Gamma$ is a subgroup of $\R^k$ containing the codimension one subgroup $H$ and which accumulates on 0 transversally to $H$. Therefore, it must be dense. So we are in case (a) of Lemma \ref{lem:closure-types}.
\end{proof}

\begin{proof}[Proof of Lemma \ref{lem:closure-types}, Type V]
In this case, $W^H_N(p)$ accumulates to $p$ only along an $\alpha$ or $-\alpha$ leaf. Assume without loss of generality that it is along the $\alpha$ leaf, and set $C = W^H(p) \cap W^\alpha(p)$. Fix the normal forms parameterization of $W^\alpha(p)$ as described in Section \ref{sec:normal-forms}. Let $\rho$ be a $W^H$-path with endpoint $e(\rho) \in W^\alpha(p)$. Recall from the proof of  Lemma \ref{lem:typeV} that $\rho$ induces a map $\varphi_\rho : W^\alpha(p) \to W^\alpha(p)$ such that the endpoint of $\pi_\rho([p,x])$ is $\varphi_\rho(x)$, where $[p,x]$ is the path with one leg along $W^\alpha$ connecting $p$ and $x$. Furthermore, the map $\varphi_\rho$ is affine in the normal forms coordinates at $p$ (using Theorem \ref{thm:normal-forms-foliation}). We have that there exist paths $\rho_n$ such that $\varphi_n(x) = \varphi_{\rho_n}(x) = a_nx + b_n$ is a sequence of affine maps such that $A^{-2N} \le a_n \le A^{2N}$ and $b_n \to 0$.

Let $\Gamma$ be the subgroup generated by $\varphi_n$. Notice that $\overline{\Gamma}$ is a Lie group and it cannot be discrete since it accumulates at 0. If $\overline{\Gamma}$ is 2-dimensional, it must be the identity component of $\Aff(\R)$, and therefore acts transitively. In particular, we know we are in case (b) of the Lemma.

If $\overline{\Gamma}$ is 1-dimensional, its identity component is a one-parameter subgroup, and by Lemma \ref{lem:aff-conjugacy} is therefore either the additive group $\set{x+ b : b \in \R}$ or conjugate to the multiplicative group $\set{ax : a \in \R_+}$. If it is the additive group, 0 has a dense orbit and we are again in case (b) of the Lemma. If it is the muliplicative group, $p$ cannot be the common fixed point since $\varphi_n(p) \not= p$ for every $n$. Therefore, the orbit closure of $p$ under $\overline{\Gamma}$ is a ray $(t,\infty)$, where $t < 0$ if the coordinates are centered at $p$  (or, symmetrically, a ray $(-\infty,t)$ for some $t > 0$). Choose an element $a \in \R^k$ such that $a \cdot p = p$ and $\alpha(a) > 0$. Then the orbit $\Gamma \cdot p$ is also invariant under multiplication by $\norm{da|_{W^\alpha(p)}}$. Therefore, the $\Gamma$-orbit of $p$ cannot be a ray, so we know that $\overline{\Gamma}$ contains the additive group. So we are in case (b) of the lemma.
\end{proof}

\begin{proof}[Proof of Lemma \ref{lem:closure-types}, Type II]
The proof is similar to Type III, but with an additional twist. Let $\Gamma = \set{ a \in \R^k : W^\alpha(a\cdot p) \cap W^H(p) \not= \emptyset}$. Notice that $\Gamma$ accumulates at 0 by construction, so it is not discrete. We claim that $\Gamma$ is a subgroup. Indeed, if $\rho$ is a $W^H$-path that connects $p$ to some $x \in W^\alpha(ap)$, then $a^{-1}\rho$ is a path that connects $a^{-1}\cdot p$ to some $y \in W^\alpha(p)$. Applying an $\alpha$ holonomy to this path connects some point $z \in W^\alpha(a^{-1}p)$ to $p$. Reversing the path gives that $a^{-1} \in \Gamma$.

Now, assume that $a,b \in \Gamma$. Then there exist $W^H$-paths $\rho_1$ and $\rho_2$ starting at $p$ and terminating at some points $x_1 \in W^\alpha(ap)$ and $x_2 \in W^\alpha(bp)$, respectively. Then $a \cdot \rho_2$ is a path that begins at $ap$ and ends at some $y \in W^\alpha((a+b)p)$, and again applying an appropriate $\alpha$-holonomy, we find a path $\rho_1'$ beginning at $x_2$ and ending at some $z \in W^\alpha((a+b)p)$. Concatenating $\rho_2$ and $\rho_1'$ gives a desired path.

We claim that for every $a \in \Gamma$, $ap \in \overline{W^H(p)}$. Indeed, notice that if $a \in \Gamma$, there exists a $W^H$-path $\rho$ such that $e(\rho) \in W^\alpha(ap)$. Choose some $a_0$ such that $\alpha(a_0) < 0$ and $a_0 p = p$. Then ${a_0}^n\rho$ is still a path that begins at $p$ and ends at some point in $W^\alpha(ap)$. But $\lim_{n\to\infty} e({a_0}^n\rho) \to ap$. Therefore, $ap \in \overline{W^H(p)}$, and since $\Gamma$ is a dense subgroup, we are in case (a) of Lemma \ref{lem:closure-types}.
\end{proof}

The last two cases are the most difficult, and we adapt the argument used in Type II to the case when $\overline{W^H(p)}$, but not $W^H(p)$, accumulates to $p$ along $W^\alpha(p)$. We develop a lemma which aids us in the proof. Recall that if $\rho$ is any $W^H$-path based at $p$, $\rho$ determines a holonomy map $\pi_{\rho} : W^\alpha(p) \to W^\alpha(e(\rho))$ (see Lemmas \ref{lem:smooth-holonomy} and \ref{lem:linear-holonomy}). Say that a sequence of $W^H$-paths $(\rho_k)$ based at $p$ is a {\it controlled return} if:

\begin{itemize}
\item $s(\rho_k)$ is uniformly bounded in $k$,
\item $e(\rho_k)$ converges to some point $x \in W^\alpha(p)$, and
\item the derivatives of the holonomy maps $\pi_{\rho_k}$ on $W^\alpha$ converge.
\end{itemize}

 Then let $\Gamma \subset \Homeo(W^\alpha(p))$ be the set of transformations which are limits of the maps $\pi_{\rho_k}$, where $(\rho_k)$ is a controlled return. Note that $\Gamma$ has a grading in the following sense: we let $\Gamma_j = \set{ f\in \Gamma : f = \lim \pi_{\rho_k}, s(\rho_k) \le j \mbox{ for all }k}$. Crucially, we allow the bound $j$ to go to $\infty$ in the definition of $\Gamma$, as we will not be able to show that each $\Gamma_j$ is a subgroup.

\begin{lemma}
\label{lem:controlled-return}
If $f \in \Gamma$, then $f$ is affine in the normal forms coordinates at $p$. Furthermore, $\Gamma$ is a subgroup of $\Aff(\R)$.
\end{lemma}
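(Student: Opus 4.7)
The first assertion I would prove by a direct computation in the normal forms charts. By iterating Lemmas \ref{lem:smooth-holonomy} and \ref{lem:linear-holonomy} along each leg of $\rho_k$, the holonomy $\pi_{\rho_k} : W^\alpha(p) \to W^\alpha(e(\rho_k))$ is a composition of stable/unstable holonomies $\mc H_{x_i,y_i}^\alpha$, each of which is multiplication by a constant in the normal forms charts at source and target. Since $e(\rho_k) \in W^\alpha(p)$, the charts $\psi_p^\alpha$ and $\psi_{e(\rho_k)}^\alpha$ parametrize the same leaf; as two normal forms charts on a single leaf, they are related by an affine change of variables. Hence $\pi_{\rho_k}$ takes the form $t \mapsto a_k t + b_k$ in the chart $\psi_p^\alpha$, with $a_k \in [A^{-2M},A^{2M}]$ uniformly (by Lemma \ref{lem:holonomy-bounded-der} and the uniform bound on $s(\rho_k)$) and $b_k = (\psi_p^\alpha)^{-1}(e(\rho_k))$. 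By the controlled-return hypothesis, $a_k$ converges and $b_k \to (\psi_p^\alpha)^{-1}(x)$, so the limit $f$ is an affine map with slope in $[A^{-2M},A^{2M}]\setminus\{0\}$, i.e.\ $f \in \Aff(\R)$.

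For the second assertion, $\Gamma \subset \Aff(\R)$ contains the identity (realized by the trivial return) and is closed by construction; I must show it is closed under composition and inversion.

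For composition: given $f,g \in \Gamma$ realized by controlled returns $\sigma_k,\tau_k$, I construct a controlled return realizing $f\of g$. Because $e(\tau_k) \to g(p) \in W^\alpha(p)$, one may connect $p$ to $e(\tau_k)$ by a short $\alpha$-leg $\eta_k$. Apply Corollary \ref{cor:holonomy-action} with $\rho = \sigma_k$ and $\eta = \eta_k$ to produce a $W^H$-path $\tilde\sigma_k := \pi_{\eta_k}(\sigma_k)$ based at $e(\tau_k)$ with $s(\tilde\sigma_k) = s(\sigma_k)$ and endpoint on $W^\alpha(p)$. The concatenation $\mu_k := \tilde\sigma_k * \tau_k$ is then a $W^H$-path based at $p$, ending in $W^\alpha(p)$, with switching number uniformly bounded by $s(\sigma_k)+s(\tau_k)+O(1)$. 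Its induced $W^\alpha$-holonomy equals $\pi_{\tilde\sigma_k}\of\pi_{\tau_k}$; the key point is that the $\alpha$-transport of a $W^H$-path via $\pi_{\eta_k}$ preserves the multiplication constants of the constituent stable holonomies (by Lemma \ref{lem:linear-holonomy}, these constants are intrinsic invariants of the $\widehat{W^\alpha}$-holonomy class, not of the basepoint). Passing to the limit, $\pi_{\mu_k} \to f\of g$, so $fg \in \Gamma$.

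For inversion: given $f \in \Gamma$ realized by $\sigma_k$ with $e(\sigma_k)\to x \in W^\alpha(p)$, pick an $\alpha$-leg $\eta_k$ from $e(\sigma_k)$ back to $p$ and let $\sigma_k' := \pi_{\eta_k}(\sigma_k^{-1})$. This is a $W^H$-path based at $p$, still with uniformly bounded switching number $s(\sigma_k)$, and by the same normal-forms computation as above one checks $\pi_{\sigma_k'} \to f^{-1}$. Thus $f^{-1}\in\Gamma$ and $\Gamma$ is a subgroup of $\Aff(\R)$.

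The main obstacle is the bookkeeping required in the composition and inversion constructions: one must simultaneously control the switching number (to stay in the realm of controlled returns), verify that the transport via Corollary \ref{cor:holonomy-action} does not alter the underlying affine holonomy invariants, and check convergence of both slopes and translation parts in the $\psi_p^\alpha$-chart. All of this ultimately rests on the intrinsic nature of the normal forms constant $\lambda$ appearing in Lemma \ref{lem:linear-holonomy}, which is invariant under the $\alpha$-transport $\pi_\eta$.
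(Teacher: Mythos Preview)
Your proof is correct and follows essentially the same approach as the paper: affineness from normal forms, and closure under composition/inversion by $\alpha$-transporting one controlled return to the endpoint of the other and concatenating (resp.\ reversing). Two minor points: your $\eta_k$ is not ``short'' but of bounded length converging to $[p,g(p)]$ (the paper splits it as a fixed leg $[p,x]$ plus a genuinely short correction $\eta_k'$), and you implicitly assume $e(\tau_k)\in W^\alpha(p)$, which the paper first arranges by a reduction to $e(\rho_k),e(\sigma_k)\in M^\alpha(p)$ via local product structure; on the other hand, your explicit remark that $\alpha$-transport preserves the affine holonomy constants is precisely what the paper's ``easily checked'' is hiding.
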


\begin{proof}
That each $f$ is affine follows from the continuity of the normal forms coordinates and convergence of derivatives (see Section \ref{sec:normal-forms}). So we wish to show that $\Gamma$ is a group. Suppose that $(\rho_k)$ and $(\sigma_k)$ are two controlled returns, and that $f$ and $g$ are their corresponding affine maps. Using the local product structure of the coarse Lyapunov foliations and $\alpha$-holonomies, we may assume without loss of generality that $e(\rho_k), e(\sigma_k) \in M^\alpha(p)$. Since $e(\rho_k) \to x = f(p) \in W^\alpha(p)$, we may construct an $(\alpha,-\alpha,\R^k)$-path $\eta_k = \eta_k' * [p,x]$ as follows: begin with an $\alpha$-leg connecting $p$ to $x$ (this is the leg $[p,x]$), then connect $x$ to $e(\rho_k)$ by a very short $(\alpha,-\alpha,\R^k)$-path (the path $\eta_k'$). Since $e(\rho_k)$ converges to $x$ in $M^\alpha(p)$, we may do this using the local chart $\psi$ at $x$ as described in Lemma \ref{lem:psi-coords}.

Let $\tilde{\sigma}_k = \pi_{\eta_k}(\sigma_k)$ be the projection of the path $\sigma_k$ along $\eta_k$ as built in Corollary \ref{cor:holonomy-action}. Then $\tilde{\sigma}_k$ is a path which begins at $e(\eta_k) = e(\rho_k)$. Notice that if $\sigma_{k}' = \pi_{[p,x]}(\sigma_k)$, then $e(\sigma_k') = e(\pi_{[p,x]}(\sigma_k)) = e(\pi_{\sigma_k}([p,x])) \to g(x) = g(f(p))$ as $k \to \infty$ by definition. Furthermore, since $\eta_k'$ can be made arbitrarily short and $s(\sigma_k) \le N$ the length of $\pi_{\sigma_k}(\eta_k')$ can also be made arbitrarily short by Lemma \ref{lem:holonomy-bounded-der}. Therefore, $e(\tilde{\sigma}_k * \rho_{k}) \to g(f(p))$ so $\tilde{\sigma}_k * \rho_k$ is a controlled return. One can easily check that the derivative of the corresponding limiting holonomy map is $f'(p)\cdot g'(p)$. Combining these facts exactly implies that the limiting holonomy map is $f \of g$, so $\Gamma$ is closed under composition.

We now check that if $f \in \Gamma$, then $f^{-1} \in \Gamma$. Indeed, suppose that $\rho_k$ is a controlled return whose limiting holonomy map is $f$ with $f(p) = x$. Build a path $\eta_k = \eta_k'*[p,x]$ as before, which begins at $p$, moves along the $\alpha$-leaf to $x$, then moves from $x$ to $e(\rho_k)$ with a very short $(\alpha,-\alpha,\R^k)$-path. Let $\bar{\rho}_k$ and $\bar{\eta}_k$ denote the reversal of the paths $\rho_k$ and $\eta_k$, respectively, so they are both paths connecting $e(\rho_k)$ to $p$. Let $\sigma_k = \pi_{\bar{\eta}_k}(\bar{\rho}_k)$, so that $\sigma_k$ is a path based at $p$. Again notice that since $\rho_k$ has $s(\rho_k)$ uniformly bounded, the length of $\pi_{\rho_k}(\eta_k')$ can be made arbitrarily small by choosing $k$ very large. In particular, $\sigma_k$ is a controlled return. We claim that the limiting map of $\sigma_k$ is $f^{-1}$. Indeed, we have shown above that $\tilde{\sigma}_k * \rho_k$ is the controlled return which determines the composition. But by construction,  $\tilde{\sigma}_k$ is the reversal of $\rho_k$. Therefore, their corresponding holonomy maps are inverses, and the sequence of maps induced by $\tilde{\sigma}_k * \rho_k$ is constantly equal to the identity.
\end{proof}

\begin{proof}[Proof Lemma \ref{lem:closure-types}, Types I and IV]
In this case we will show that we are in case (b) (in fact, we will be able to show both (b) and (c)). We use the local chart near $p$ determined by $\psi$ used to describe \hyperlink{acc-modes}{cases I-V} (this was established in Lemma \ref{lem:psi-coords}). We assume that our accumulating sequence $x_n = (t_n,s_n,v_n)$ of endpoints of paths in $W^H_N$ has $v_n$ chosen in some fixed one-parameter subgroup $L$ transverse to $H$, so that we may think of $(t_n,s_n,v_n)$ as a sequence in $\R^3$. Recall that by construction, $t_n$ and $s_n$ are the normal forms parameterizations of the leaves $W^\alpha(p)$ and $W^{-\alpha}(p)$. Define the following cones:

\[ C_K = \set{ (t,s,v) : \abs{t} > K(\abs{s} + \abs{v})} \]

We claim that for every $K > 0$,  we may construct a new sequence $x_n^K = (t_n^K,s_n^K,v_n^K) \in C_K \cap W^H_N(p)$ such that $x_n^K \to 0$. Indeed, fix some $a_0$ such that $\alpha(a_0) > 0$ and $a_0 p = p$. Then apply $a_0$ $k$ times to a path $\rho_n$ starting at $p$ and ending at $x_n = (t_n,s_n,v_n)$ in $W^H_N$ so that the new path ends at $(\lambda^k t_n,\mu^{(k)}s_n,v_n)$, where $\lambda > 1$ is the derivative of $a_0$ at $p$ restricted to $W^\alpha(p)$, and $\mu^{(k)} < 1$ is the derivative of $a_0^k$ at $\psi_{t_n}^\alpha(p)$ (notice that $\lambda^k$ is a power since $p$ is fixed, but $\mu^{(k)}$ is only a cocycle). Let $x_n^K$ be $(\lambda^kt_n,\mu^{(k)}s_n,v_n)$, where $k$ is the smallest integer such that $x_n^K \in C_K$.

We claim that $x_n^K$ still converges to 0 as $n \to \infty$. Indeed, notice that for $(\lambda^kt_n,\mu^{(k)}s_n,v_n) \in C_K$, we must have that $\lambda^k t_n > K(\mu^{(k)}s_n + v_n)$. The smallest such $k$ will therefore result in $\lambda^kt_n \le \lambda K(\mu^{(k)} s_n+ v_n)$. Since $\mu^{(k)} < 1$, and since $s_n,v_n \to 0$, we conclude that $\lambda^k t_n \to 0$ as well.

Define a new set $B$, which is the set of all points $x \in W^\alpha(p)$ such that there exists $N \in \N$ and $W^H$-paths $\rho_k$ starting at $p$ such that $s(\rho_k) \le N$ with $e(\rho_k) \in M^\alpha(p)$, $e(\rho_k) \to x$ (ie, the set of points reached by controlled returns). 
We claim that $p \in \overline{B \setminus \set{p}}$. Indeed, using the normal forms coordinates for $W^\alpha(p)$ centered at $p$, decompose the segment $(0,1]$ as $(0,1] = \bigcup_{k=0}^\infty (\lambda^{-(k+1)},\lambda^{-k}]$. By the existence of the points $x_n^K$, there exists infinitely many $k$ with some $x_k = (t_k,s_k,v_k) \in C_K$ and $t_k \in (\lambda^{-(k+1)},\lambda^{-k}]$. We claim it is true for all $k$. Fix $k_0$, and notice that if $k > k_0$, $a_0^{k-k_0} x_k$ is a point of $C_K$ with $t_k \in (\lambda^{-(k+1)},\lambda^{-k}]$. Letting $K \to \infty$ shows that there is some point of $B \cap (\lambda^{-(k_0+1)},\lambda^{-k_0}]$. Therefore, $B$ accumulates at $p$.

Notice that $B$ is contained in the orbit of the group $\Gamma$ defined in Lemma \ref{lem:controlled-return}, so $\Gamma$ is not discrete. Proceeding exactly as in Type V shows that $\bar{\Gamma}$ must be either all of $\Aff(\R)$ or the additive group. In either case, $W^\alpha(p)$ is contained in its orbit, and we are in case (b).
\end{proof}



Choose a sequence of $H$- and $\R^k$-periodic points $p_n$ such that $p_n$ is $\frac{1}{n}$-dense (which exists by Corollary \ref{cor:eps-dense}). Notice that $p_n \in S_N$ by Lemma \ref{lem:eps-dense-accum}. Then at least one of the cases (a), (b) or (c) of Lemma \ref{lem:closure-types} occur infinitely often. Clearly, if a case occurs infinitely often, it must occur on a dense set of periodic points  since by construction, it will occur on each point of a $\frac{1}{n}$-dense orbit for arbitrarily large $n$. 
Since (b) and (c) are symmetric we treat only cases (a) and (b).

\begin{lemma}
If (a) occurs on a dense set of periodic orbits, there is a point $x \in X$ such that $\overline{W^H(x)} = X$.
\end{lemma}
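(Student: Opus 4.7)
The plan is to establish the stronger statement that the set of $x \in X$ with $\overline{W^H(x)} = X$ is residual in $X$, via the Baire category theorem and Lemma \ref{lem:dense-is-gdelta}. Fix a countable base $\{V_k\}_{k=1}^\infty$ for the topology of $X$, and for each $k$ set $S_k = \{x \in X : W^H(x) \cap V_k \neq \emptyset\}$. I would first observe that each $S_k$ is open: if $x' \in W^H(x) \cap V_k$ is reached from $x$ by a $W^H$-path with a fixed combinatorial pattern of legs and fixed leg parameters, then applying the same pattern and parameters at any $\tilde x$ close to $x$ yields an endpoint close to $x'$, still in $V_k$ by openness. Since the set of $x$ with $\overline{W^H(x)} = X$ equals $\bigcap_k S_k$, Baire reduces the problem to showing each $S_k$ is dense.

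The heart of the argument is density of $S_k$. Given an arbitrary nonempty open $U \subset X$, I would choose $\ve > 0$ small enough that both $U$ and $V_k$ contain balls of radius $\ve$. From the hypothesis together with the sequence $p_n$ of $\frac{1}{n}$-dense periodic orbits discussed just before the lemma, I extract a periodic orbit $p$ that is simultaneously $\ve$-dense and satisfies $\overline{W^H(p)} \supset \R^k \cdot p$. Since $\R^k \cdot p$ is $\ve$-dense, there exist $a_U, a_V \in \R^k$ with $a_U \cdot p \in U$ and $a_V \cdot p \in V_k$. By case (a), both $a_U \cdot p$ and $a_V \cdot p$ are limits of endpoints of $W^H$-paths based at $p$, say $e(\rho_n) \to a_U \cdot p$ and $e(\sigma_n) \to a_V \cdot p$. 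Using that each leg of a $W^H$-path (along a coarse Lyapunov foliation or an $H$-translate) is reversible, the concatenation $\sigma_n * \rho_n^{-1}$ is a $W^H$-path from $e(\rho_n)$ to $e(\sigma_n)$. For large $n$, $e(\rho_n) \in U$ and $e(\sigma_n) \in V_k$, placing $e(\rho_n)$ in $U \cap S_k$ and proving density.

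The only place to pay attention is the continuity needed to guarantee openness of $S_k$ and stability of $W^H$-paths under small perturbation of the base point. This follows directly from continuous dependence of the coarse Lyapunov foliations and of the $H$-action on base points, once the combinatorial pattern and leg lengths are fixed; no subtle surgery on the paths is required. The key leverage of case (a) — that the full $\R^k$-orbit of $p$, not merely a single point, lies in $\overline{W^H(p)}$ — is what permits simultaneous approach to points in $U$ and $V_k$, and it is this combined with the reversibility of $W^H$-paths that drives density of $S_k$ and hence the conclusion.
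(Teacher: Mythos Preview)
Your proof is correct and, like the paper's, rests on a Baire category argument, but the organization differs. The paper shows that the set $Y=\{y:\R^k\cdot y\subset\overline{W^H(y)}\}$ is a $G_\delta$ (by writing it as $\bigcap_{a\in Q}\bigcap_m U_m(a)$ for a countable dense $Q\subset\R^k$), observes that $Y$ contains the dense set of periodic orbits satisfying (a), and then intersects $Y$ with the residual set of points having dense $\R^k$-orbit. You instead prove density of each $S_k$ directly, using a single $\varepsilon$-dense periodic orbit satisfying (a) to link $U$ and $V_k$ through $W^H(p)$ via path reversal and concatenation. Your route avoids the detour through the abstract set $Y$ and the separate appeal to transitivity of $\R^k$; the paper's route has the advantage of isolating ``condition (a) is $G_\delta$'' as a reusable fact. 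Both are short and either would serve.
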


\begin{proof}
Let $Y \subset X$ denote the set of points $y$ such that $\overline{W^H(y)} \supset \R^k \cdot y$. Our assumption implies that $Y$ is dense. We claim that it is also a $G_\delta$ set, in which case it must contain a point such that $\R^k \cdot y$ is dense. This will imply the result.

So we must show that it is a $G_\delta$ set. Fix $a \in \R^k$, and let $U_m(a)$ be the set of all points $x \in X$ such that there exists a $W^H$-path $\rho$ based at $x$ such that $d(e(\rho),a\cdot x) < \frac{1}{m}$. We claim that $U_m(a)$ is open. Indeed, if $x'$ is sufficiently close to $x$, we may follow a path $\rho'$ based at $x'$ with the same lengths of the legs of the paths of $\rho$ for some fixed Riemannian metric. Since the foliations are continuous, $e(\rho')$ can be made arbitrarily close to $e(\rho)$. Similarly, by choosing $x'$ sufficiently close to $x$, $a\cdot x'$ can be made arbitrarily close to $a \cdot x$. Therefore, the set $U_m(a)$ is open.

Finally, observe that the set of points such that $a \cdot x \in \overline{W^H(x)}$ is exactly $\bigcap_{m \in \N} U_m(a)$. Since $\R^k$ has a countable dense subset, the set of points such that the $\R^k$-orbit is contained in $\overline{W^H(x)}$ is also a $G_\delta$ set.
\end{proof}

We now assume that case (b) holds for the remainder of the section. Given $p,q \in X$ periodic, let $\Lambda_p = \Stab_{\R^k}(p)$ denote the periods of $p$ (the elements $a \in \R^k$ which fix $p$). 
Given a periodic point $p$, let $B_p \subset \R^k$ denote a  compact fundamental domain for $\R^k / \Lambda_p$. By this we mean a compact subset $B_p \subset \R^k$ whose $\Lambda_p$-translates cover $\R^k$ and such that $(\gamma \cdot B_p) \cap B_p$ is either empty, or on the boundary of $B_p$. More explicitly, we assume $B_p$ is the image of $[0,1]^k$ under some isomorphism, between $\Z^k$ and $\Lambda_p$, extended to $\R^k$.

\begin{lemma}
\label{lem:caseb-density}
If (b) holds at an $\R^k$-periodic point $p$ and $q$ is another $\R^k$-periodic point, then there exists $b \in B_p$ such that $b\cdot q \in \overline{W^H(p)}$. In particular, $B_p \cdot \overline{W^H(p)} = X$.
\end{lemma}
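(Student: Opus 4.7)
The plan is to reduce the ``in particular'' statement to the main claim via density of $\R^k$-periodic orbits, then to prove the main claim by transferring the structure of case (b) -- namely $W^\alpha(p) \subset \overline{W^H(p)}$ -- along $\R^k$-translates to the orbit of an arbitrary periodic point $q$, using the transitivity provided by Lemma~\ref{lem:WHN-transitivity}.

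First I would verify that $B_p \cdot \overline{W^H(p)}$ is closed and $\R^k$-invariant. Closedness follows because $B_p$ is compact. For invariance, observe that $\Lambda_p = \Stab_{\R^k}(p)$ preserves $\overline{W^H(p)}$: any $\lambda \in \Lambda_p$ fixes $p$ and the $\R^k$-action permutes $W^H$-leaves equivariantly, so $\lambda \cdot W^H(p) = W^H(\lambda \cdot p) = W^H(p)$. Decomposing any $a \in \R^k$ as $a = b + \lambda$ with $b \in B_p$ and $\lambda \in \Lambda_p$ then yields $a \cdot (B_p \cdot \overline{W^H(p)}) = B_p \cdot \overline{W^H(p)}$. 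Assuming the main claim, Theorem~\ref{thm:dense-periodic} combined with closedness gives the equality $B_p \cdot \overline{W^H(p)} = X$.

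For the main claim, I would apply Lemma~\ref{lem:WHN-transitivity} at $p$ to obtain $N \in \N$ with $W^H_N(M^\alpha(p)) = X$. Applied to $q$, this yields $y \in M^\alpha(p)$ and a $W^H_N$-path $\rho$ from $y$ to $q$. Parameterize $y = \psi_p(t,s,u)$ via Lemma~\ref{lem:psi-coords} and write $u = b + \lambda$ with $b \in B_p$ and $\lambda \in \Lambda_p$. Translating $\rho$ by $\lambda^{-1}b^{-1}$ produces a $W^H_N$-path from $\psi_p(t,s,0)$ to $\lambda^{-1}b^{-1} \cdot q$. The set $\overline{W^H(p)}$ is $W^H$-saturated (by continuity of the coarse Lyapunov foliations and of the $H$-action, exactly as in the reasoning leading to Lemma~\ref{lem:foliation1}) and is $\Lambda_p$-invariant, so it suffices to show $\psi_p(t,s,0) \in \overline{W^H(p)}$: granted this, $\lambda^{-1}b^{-1} \cdot q \in \overline{W^H(p)}$, hence $b \cdot q \in \lambda \cdot \overline{W^H(p)} = \overline{W^H(p)}$ after replacing $b$ by its appropriate representative in $B_p$.

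When $s = 0$ this is immediate from case (b), since $\psi_p(t,0,0) \in W^\alpha(p) \subset \overline{W^H(p)}$. The hard part is the case $s \neq 0$, where one must show $\overline{W^H(p)}$ contains $W^{-\alpha}$-translates of points in $W^\alpha(p)$. To handle this I would exploit the structure of case (b) more fully: the sequence of $W^H$-paths forcing $p \in S_N$ in Case~(2) induces, through Lemma~\ref{lem:controlled-return}, a subgroup of affine holonomies of $W^\alpha(p)$ with dense orbit on $W^\alpha(p)$ (as in the Type~V argument in the proof of Lemma~\ref{lem:closure-types}). By the local product structure of the Anosov action, these holonomies extend continuously to nearby transversals, and by continuity and by iterating with the $\R^k$-action (using Lemma~\ref{lem:uniformly bounded derivative} to control derivative distortion along Lyapunov hyperplanes), one propagates the inclusion $W^\alpha(p) \subset \overline{W^H(p)}$ into a full $W^{-\alpha}$-neighborhood of $W^\alpha(p)$ inside $M^\alpha(p)$. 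The $\R^k$-invariance of $B_p \cdot \overline{W^H(p)}$ then extends this to all $s$, completing the reduction and the proof.
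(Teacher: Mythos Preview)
Your reduction of the ``in particular'' to the main claim is fine, and the observation that $B_p \cdot \overline{W^H(p)}$ is closed and $\R^k$-invariant is correct. The real difficulty is the main claim, and here your argument has a genuine gap.

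After invoking Lemma~\ref{lem:WHN-transitivity} you need $\psi_p(t,s,0) \in \overline{W^H(p)}$ for an \emph{arbitrary} $s$, but case~(b) only gives this when $s=0$. Your proposed fix---using the affine holonomy group $\Gamma$ from Lemma~\ref{lem:controlled-return} and ``propagating'' the inclusion into a $W^{-\alpha}$-neighborhood---does not work as stated. The group $\Gamma$ acts on $W^\alpha(p)$, not on $M^\alpha(p)$: a controlled return $\rho$ with $e(\rho) \in W^\alpha(p)$, when pushed via $\pi_\eta$ along an $\alpha,-\alpha$-path $\eta$ to a point $\psi_p(t,s,0)$, yields a $W^H$-path based at $\psi_p(t,s,0)$ whose endpoint lies somewhere in $M^\alpha_{A^N\ell(\eta)}(e(\rho))$, \emph{not} on a predictable $W^{-\alpha}$-translate of the original endpoint. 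There is no mechanism here that forces the $s$-coordinate to be swept out; you are in effect trying to deduce case~(c) from case~(b), which is not established anywhere.

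The paper's argument is quite different and avoids this problem entirely. It fixes a regular $a$ with $\alpha(a)>0$ and a point $x$ whose forward $a$-orbit is dense (Corollary~\ref{cor:dense-1-param}). Using local product structure one finds $y \in W^\alpha(p)$, a $W^H$-path $\rho$ from $y$ to some $y'$, and a short $W^{-\alpha}$-leg from $y'$ to $x$. Now iterate by $t_na$ with $(t_na)\cdot x \to q$: since $\alpha(a)>0$ the $W^{-\alpha}$-leg \emph{contracts}, so $(t_na)\cdot y' \to q$; meanwhile $(t_na)\cdot y$ stays on $W^\alpha((t_na)\cdot p)$, which by case~(b) lies in $\overline{W^H((t_na)\cdot p)}$. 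The switching number of $(t_na)\cdot\rho$ is constant, so Lemma~\ref{lem:holonomy-bounded-der} lets one chain these approximations together to conclude $q \in \overline{W^H(b\cdot p)}$ for the limiting $b \in B_p$. The point is that the $-\alpha$ direction is handled dynamically (by contraction under $a$), never by trying to show $\overline{W^H(p)}$ is $W^{-\alpha}$-saturated.
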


\begin{proof}
Fix a regular element $a \in \R^k$ such that $-\alpha(a) < 0 < \alpha(a)$. We may choose $a$ such that there exists $x \in X$ with $\set{(ta) \cdot x : t \in \R_+}$ dense in $X$ (this is possible by Lemma \ref{lem:cone-transitive}(5)).  Choose some intersection $z \in W^u_a(p) \cap W^{cs}_a(x)$. We may connect $p$ to $z$ by first moving along an $\alpha$ leg to arrive at a point $y$, then along the remaining $\beta$-legs expanded by $a$. We then connect $z$ to $x$ by first moving along the $\beta$-legs contracted by $a$ (other than $-\alpha$) to a point $y'$, then a $-\alpha$ leg, then the $\R^k$-orbit. We may assume that the $\R^k$-orbit piece is trivial, since any other point on the $\R^k$-orbit of $x$ also has a dense forward orbit under $a$. Call $\rho$ the $W^H$-path connecting $y$ and $y'$.


Now apply $ta$ to this picture, $t > 0$. By taking a subsequence, we may assume $(t_na) \cdot x \to q$ and $(t_na) \cdot p \to b \cdot p$ for some $b \in B_p$.  Also, since (b) holds for $p$, it also holds for $(t_na) \cdot p$, so $(t_n a) \cdot y \in \overline{W^H(t_na \cdot p)}$. Note that $(t_na) \cdot y$ and $(t_na) \cdot y'$ are connected by the path $(t_na) \cdot \rho$, and that $s((t_na) \cdot \rho) = s(\rho)$. We may approximate $(t_na) \cdot y$ arbitrarily well by endpoints of $W^H$-paths based at $(t_n a) \cdot p$ by the assumption (b). Choose such a path, whose closeness is to be determined later, and let $z_n$ denote its endpoint. Choose a very short $(\alpha,-\alpha,\R^k)$-path $\eta_n$ connecting $z_n$ and $(t_na) \cdot y$. Then we may push the path $(t_na) \cdot \rho$ along $\eta_n$ using holonomies, and since the number of switches of $(t_na) \cdot \rho$ remains constant, the distance between the new and old endpoints is Lipschitz controlled by the length of $\eta_n$. Finally apply some $s_n \in \R^k$ so that $(s_n+t_na)\cdot p = b\cdot p$ to get a path based at $b\cdot p$ (since $(t_na)\cdot p \to b\cdot  p$, we may assume$s_n \to 0$). Notice that by picking all such choices sufficiently small, we get that $y'' = \lim_{n \to \infty} (t_nv) \cdot y' = q$ belongs to $\overline{W^H(b \cdot p)}$. Since the $W^H$-sets are $\R^k$-equivariant, $b^{-1} q \in \overline{W^H(p)}$.
\end{proof}



Fix a periodic point $p$ for which Lemma \ref{lem:caseb-density} holds, and let $B = B_p$ be the corresponding compact fundamental domain for the $\R^k$-orbit through $p$. Call a point $x$ {\it good} if $B \cdot \overline{W^H(x)} = X$, and call $G \subset X$ the set of good points. Notice that the set of good points is nonempty since $p \in G$, and therefore dense, since if $y \in B \cdot W^H(p)$, then $y$ is also good.

If $U \subset X$ is any set, let $U_1= \bigcup_{x \in U}\left( H\cdot x \cup \bigcup_{\beta \not=\pm \alpha} W^\beta(x)\right)$ be the {\it first $W^H$-saturation} of $U$. We inductively define the $n$th $W^H$-saturation of $U$, $U_n$, to be the first $W^H$ saturation of $U_{n-1}$. The {\it full $W^H$-saturation} of $U$ is defined to be $U_\infty = \bigcup_{N=1}^\infty U_N$.

\begin{lemma}
$G$ is a $G_\delta$ set.
\end{lemma}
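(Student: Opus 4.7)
The plan is to exhibit $G$ explicitly as a countable intersection of open sets. First I would observe that since $B$ is compact and $\overline{W^H(x)}$ is closed, the set $B\cdot \overline{W^H(x)}$ is closed in $X$: if $b_i\cdot z_i \to w$ with $b_i\in B$ and $z_i\in\overline{W^H(x)}$, pass to a subsequence so that $b_i\to b\in B$; then $z_i = b_i^{-1}\cdot(b_i\cdot z_i)\to b^{-1}\cdot w$, which must lie in $\overline{W^H(x)}$, so $w\in B\cdot\overline{W^H(x)}$. Hence $x\in G$ if and only if $B\cdot\overline{W^H(x)}$ is dense in $X$.

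Fix a countable dense subset $\{y_m:m\in\N\}\subset X$. For $m,n\in\N$, define
\[ V(m,n) = \set{ x\in X : \exists\, b\in B \mbox{ and a } W^H\mbox{-path } \rho \mbox{ based at }x \mbox{ with } d(b\cdot e(\rho),y_m)<1/n }. \]
The density criterion together with the equivalence $z\in\overline{W^H(x)} \iff z$ is the limit of endpoints of $W^H$-paths based at $x$ gives immediately
\[ G = \bigcap_{m,n\in\N} V(m,n). \]
So it suffices to show each $V(m,n)$ is open.

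Fix $x_0\in V(m,n)$ with witnessing $b\in B$ and $W^H$-path $\rho = (x_0=z_0,z_1,\dots,z_N=e(\rho))$ whose legs are either in an $H$-orbit or in a $W^\beta$-leaf for some $\beta\neq \pm\alpha$; choose some slack $\delta>0$ with $d(b\cdot e(\rho),y_m) < 1/n-\delta$. Given $x'$ close to $x_0$, I build a companion path $\rho'=(x'=z_0',\dots,z_N')$ with the same combinatorial pattern as $\rho$ and the same leg parameters (lengths along the corresponding $W^\beta$-leaf, or the same element of $H$ for the orbit legs), using the chosen continuous family of parameterizations of the $W^\beta$-foliations and the continuous $\R^k$-action. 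Continuity of the coarse Lyapunov foliations and of the $\R^k$-action (Corollary \ref{cor:coarse-lyapunov}) propagates through the finitely many legs: $z_i'\to z_i$ as $x'\to x_0$, and in particular $e(\rho')\to e(\rho)$. Since $b$ acts as a homeomorphism, $b\cdot e(\rho')\to b\cdot e(\rho)$, so for $x'$ sufficiently close to $x_0$ we have $d(b\cdot e(\rho'),y_m)<1/n$, showing $x'\in V(m,n)$. Hence $V(m,n)$ is open, and $G$ is a $G_\delta$ set. The step requiring the most care is the continuity of the endpoint $e(\rho')$ in the base point $x'$, but since $\rho$ has only finitely many legs and each foliation and the $\R^k$-action are continuous, this reduces to the continuity of composing finitely many continuous maps, which is automatic.
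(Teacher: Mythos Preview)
Your proof is correct and follows essentially the same approach as the paper. Both fix a countable dense subset of $X$, characterize $G$ as the intersection over $m,n$ of the sets of points $x$ for which some $b\cdot e(\rho)$ (with $\rho$ a $W^H$-path from $x$ and $b\in B$) lands $1/n$-close to the $m\tth$ point, and then argue these sets are open via continuity of the foliations. The paper phrases the openness step by noting that the $W^H$-saturation of an open set is open and then writing $G=\bigcap_{m,n} B^{-1}\cdot U_{m,n}$, whereas you argue directly by constructing the companion path; these are equivalent. Your explicit verification that $B\cdot\overline{W^H(x)}$ is closed (hence dense implies all of $X$) is a detail the paper uses without spelling out.
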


\begin{proof}
Recall that $W^H(x) = \bigcup_{N =1}^\infty W^H_N(x)$ (it is exactly the full saturation of $\set{x}$). Fix a countable dense subset $\set{x_n}_{n\in\N} \subset X$. Then the set of points such that $B \cdot \overline{W^H(x)} = X$ is exactly the set of points such that for every $m,n \in \N$ there exists a path $\rho$ with legs in $W^H$ based at $x$ and $b \in B$ such that $d(b \cdot e(\rho),x_n) < 1/m$. Notice that this is equivalent to saying that there exists $b \in B$ such that $b^{-1} \cdot x$ is in the full $W^H$-saturation of $B(x_n,1/m)$. Notice also that the first $W^H$-saturation of any open set is open, so the full $W^H$-saturation is as well. Let $U_{m,n}$ be the full $W^H$-saturation of $B(x_n,1/m)$, so $U_{m,n}$ is open. Then the set of good points is:

\[ \bigcap_{m,n =1}^\infty B^{-1}\cdot U_{m,n} .\]

\noindent In particular, it is a $G_\delta$ set.
\end{proof}

We know from Lemma \ref{lem:caseb-density} that $G$ is nonempty and dense, and therefore residual. By Lemma \ref{lem:WH-closures}, the set $R$ of points $x$ such that $\overline{H \cdot x} = \overline{W^H(x)}$ is also residual, as is the set $D$ of points with a dense $\R^k$ orbit. Therefore, we may choose $x_0 \in G \cap R \cap D$. If $x \in X$, let $A_x \subset \R^k$ be defined by:

\[ A_x = \set{ a \in \R^k : a \cdot x \in \overline{H \cdot x}} .\]

\begin{lemma}
\label{lem:gamma-subgroup}
$A_{x_0}$ is a closed subgroup containing $H$ as a proper subgroup.
\end{lemma}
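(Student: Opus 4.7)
The plan is to establish the lemma in three stages: first show that $A_{x_0}$ is a closed additive submonoid of $\R^k$ containing $H$, then extract from the hypothesis $x_0 \in G \cap R$ that $\R^k = B + A_{x_0}$, and finally use this covering identity to upgrade $A_{x_0}$ to a closed subgroup strictly containing $H$. The routine work is in the first stage; the key observation is in the second, and the main wrinkle to verify is the elementary semigroup-to-group claim in the third stage.

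For the submonoid structure, note that $0 \in A_{x_0}$ and $H \subseteq A_{x_0}$ hold tautologically, and closedness of $A_{x_0}$ follows from continuity of the action applied to the closed set $\overline{H \cdot x_0}$. For closure under addition, given $a, b \in A_{x_0}$ I would pick $h_n \in H$ with $h_n \cdot x_0 \to a \cdot x_0$ and use commutativity of $\R^k$ to write $(a+b) \cdot x_0 = \lim_n h_n \cdot (b \cdot x_0)$. Since $b \cdot x_0 \in \overline{H \cdot x_0}$ and the latter is $H$-invariant (closedness plus continuity of the $H$-action), every term $h_n \cdot (b \cdot x_0)$ lies in $\overline{H \cdot x_0}$, and so does the limit, giving $a+b \in A_{x_0}$. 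Next, $x_0 \in R$ together with Lemma~\ref{lem:WH-closures} gives $\overline{W^H(x_0)} = \overline{H \cdot x_0}$, so the goodness of $x_0$ translates to $X = B \cdot \overline{H \cdot x_0}$. Applying this to the point $a \cdot x_0$ for arbitrary $a \in \R^k$, I write $a \cdot x_0 = b \cdot y$ with $b \in B$, $y \in \overline{H \cdot x_0}$, which yields $a - b \in A_{x_0}$ and hence
\[ \R^k = B + A_{x_0}. \]

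Projecting via $\pi : \R^k \to \R^k / H \cong \R$ gives $\pi(B) + \pi(A_{x_0}) = \R$ with $\pi(B)$ compact, so $\pi(A_{x_0})$ is relatively dense in $\R$ and in particular unbounded in both directions, containing some $t_+ > 0$ and some $t_- < 0$. The main remaining step is the elementary claim that a closed subsemigroup $S$ of $\R$ containing $0$ and elements of both signs is automatically a closed subgroup, i.e., equals $\R$ or $c\Z$ for some $c > 0$; I would prove this by noting that if $t_+/|t_-|$ is irrational then $\{m t_+ + n t_- : m, n \ge 0\}$ is dense in $\R$ and closure forces $S = \R$, while if $t_+/|t_-| = p/q$ in lowest terms then the arithmetic identity $\{mp - nq : m, n \in \Z_{\ge 0}\} = \Z$ (take $m$ large with $mp \equiv k \pmod{q}$) shows $S \supset c\Z$ for $c = t_+/p$, and any element of $S$ outside $c\Z$ either shrinks $c$ further or fills a dense orbit. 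Applied to $\pi(A_{x_0})$, this shows $\pi(A_{x_0})$ is a nontrivial closed subgroup of $\R$, so $A_{x_0}$ itself is a closed subgroup of $\R^k$ that strictly contains $H$, as required.
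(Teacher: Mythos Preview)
Your proof is correct, and it takes a genuinely different route from the paper's. The paper establishes closure under inversion directly by a dynamical argument: given $h_n \cdot x_0 \to a \cdot x_0$, it uses local product structure to connect $h_n \cdot x_0$ to $a \cdot x_0$ by a short $W^H$-path followed by a short $\alpha,-\alpha,\R^k$-path, rearranges via $\beta$-holonomies so the $W^H$-part sits at the end, applies $a^{-1}$, and reverses to exhibit $a^{-1}\cdot x_0 \in \overline{W^H(x_0)} = \overline{H\cdot x_0}$. Only after proving $A_{x_0}$ is a group does the paper invoke $B \cdot \overline{H\cdot x_0} = X$ to conclude cocompactness and hence proper containment of $H$.

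You instead use the goodness hypothesis earlier and more heavily: from $B \cdot \overline{H\cdot x_0} = X$ you extract $\R^k = B + A_{x_0}$, project to $\R^k/H \cong \R$, and reduce the group property to the elementary fact that a closed subsemigroup of $\R$ containing $0$ and elements of both signs is a group. This completely sidesteps the $W^H$-holonomy machinery that the paper leans on, at the cost of the small arithmetic descent you sketch (the ``shrinks $c$ further'' step deserves one more sentence of iteration to be airtight, but the idea is sound). One remark: to conclude that $A_{x_0}$ itself, not just its image in $\R$, is a group, you implicitly use that $H \subset A_{x_0}$ is already a subgroup---for $a \in A_{x_0}$, pick $a' \in A_{x_0}$ with $\pi(a') = -\pi(a)$, so $a + a' \in H$, and then $-a = a' + (-(a+a')) \in A_{x_0}$. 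This is routine but worth making explicit.
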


\begin{proof}
We first show that if $a \in A_{x_0}$, then $a^{-1} \in A_{x_0}$. Since $a \in A_{x_0}$, there exists $h_n \in H$ such that $h_n \cdot x_0 \to a \cdot x_0$. Using local coordinates we can connect $h_n \cdot x_0$ to $a \cdot x_0$ using a $W^H$-path with at most 2 switches, then an $(\alpha,-\alpha,\R^k)$-path such that the length of every leg tends to 0. Using $\beta$-holonomies and recalling Lemma \ref{lem:uniformly bounded derivative}, we can rearrange this connection as a path from $x_0$ to $a\cdot x_0$ as follows: the path begins with an arbitrarily short $(\alpha,-\alpha,\R^k)$-path, then moves along a possibly very long $h_n \in H$, then along a short $W^H$-path with two switches. Apply $a^{-1}$ to this path. The derivative of $a$ is fixed, so we get a path from $a^{-1} \cdot x_0$ to $x_0$ which begins with an arbitrarily short $(\alpha,-\alpha,\R^k)$-path, then moves along a possibly very long $h_n \in H$, then along some $W^H$-path. Reversing the start and end point of this path shows that $a^{-1} \cdot x_0 \in \overline{W^H(x_0)} = \overline{H \cdot x_0}$, as claimed.

Now, let $a,b \in A_{x_0}$. To see that $a + b \in A_{x_0}$, let $h_n \cdot x_0 \to a \cdot x_0$ and $k_n \cdot x_0 \to b \cdot x_0$. Since $b$ is uniformly continuous, $(b + h_n) \cdot x_0 \to (a+b)\cdot x_0$. Fixing $n$, we get that $(k_m + h_n) \cdot x_0 \to (b + h_n) \cdot x_0$. Since $(b+h_n) \cdot x_0$ can be made arbitrarily close to $(a+b) x_0$, one may choose $m$ and $n$ so that $k_m + h_n$ is arbitrarily close to $(a+b) \cdot x_0$. Therefore, $(a+b) \cdot x_0 \in A_{x_0}$.

Finally, notice that it obviously contains $H$ since $H \cdot x_0 \subset \overline{H \cdot x_0}.$ Furthermore, by choice of $x_0$, $B \cdot \overline{W^H(x_0)} = B \cdot \overline{H \cdot x_0} = X$, so $\R^k \cdot x_0$ is contained in $B \cdot \overline{H \cdot x_0}$. Therefore, $\bigcup_{b \in B} bA_{x_0} = \R^k$. This implies that $A_{x_0}$ must be cocompact so it must contain some element transverse to $H$.
\end{proof}

We shall see the following case of $A_{x_0}$ is important in the proof. Recall that our totally Cartan action is $C^r$, where $r = (1,\theta)$ or $r = \infty$.

\begin{lemma}
\label{lem:circle-factor}
If $A_{x_0} = H \oplus \Z \ell$ for some $\ell \not\in H$, then there is a $C^{r}$ factor $\pi : X \to \R^k / A_{x_0} \cong \mathbb{T}$ of the $\R^k$-action such that $\overline{H x_0} = \pi^{-1}(0)$. 
\end{lemma}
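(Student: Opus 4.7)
The plan is to construct $\pi$ as the continuous extension of the natural orbit projection $a\cdot x_0 \mapsto [a] \in \mathbb{T} := \R^k/A_{x_0}$ from the dense $\R^k$-orbit of $x_0$ to all of $X$, and then to upgrade its regularity using normal forms and $\R^k$-equivariance.

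First, for $x\in X$, I would define $\pi(x) := \lim_n [a_n]$ where $a_n\cdot x_0\to x$ (such a sequence exists because $x_0\in D$). The crucial well-definedness step is to show: if $c_n\cdot x_0 \to x_0$ then $[c_n]\to 0$ in $\mathbb{T}$. To prove this I would decompose $c_n = \tau_n + \beta_n$ with $\tau_n$ in a fixed compact cross-section of $\R^k\to\mathbb{T}$ and $\beta_n\in A_{x_0}$, extract a subsequence with $\tau_n\to\tau$, and use that $\beta_n\cdot x_0 = (-\tau_n)\cdot (c_n\cdot x_0) \to (-\tau)\cdot x_0$, together with the $A_{x_0}$-invariance of $\overline{Hx_0}$ (immediate from $\R^k$ being abelian together with Lemma \ref{lem:gamma-subgroup}), to force $-\tau \in A_{x_0}$ and hence $[\tau]=0$. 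Continuity of $\pi$ will then follow by the same scheme applied diagonally, and the $\R^k$-equivariance $\pi(a\cdot x)=[a]+\pi(x)$ is immediate from the construction. The identification $\pi^{-1}(0) = \overline{Hx_0}$ follows by writing any $x$ in $\pi^{-1}(0)$ as a limit $a_n\cdot x_0$ with $[a_n]\to 0$, adjusting by elements of $A_{x_0}$ so the adjusted sequence converges to $0$ in $\R^k$, and noting that $A_{x_0}$-translates of $x_0$ lie in $\overline{Hx_0}$.

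For smoothness, I would first observe that $\pi$ is constant on each $H$-orbit and on each $W^\beta$-leaf for $\beta\neq\pm\alpha$ (these all sit inside $W^H$, which maps to a single value by equivariance together with Lemma \ref{lem:WH-closures}), and also on each $W^\alpha$-leaf (by case (b) of Lemma \ref{lem:closure-types}, which holds densely in every fiber after translation by $\R^k$ and then passes to all fibers by continuity). Thus $\pi$ can only vary along the $W^{-\alpha}$ direction. Writing $F_x(t) := \pi(\psi^{-\alpha}_x(t))$ using the normal forms charts of Section \ref{sec:normal-forms}, the covariance of normal forms together with equivariance of $\pi$ gives
\[
F_{a\cdot x}(t) \;=\; [a] + F_x\bigl((a'(x))^{-1}t\bigr),
\]
where $a'(x)$ is the scalar leafwise derivative of $a$ restricted to $W^{-\alpha}$. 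Picking $a\in A_{x_0}\setminus H$ with $\alpha(a)\neq 0$ (swapping $\alpha\leftrightarrow -\alpha$ if necessary), iterates of $a$ rescale the source exponentially while keeping $[a^n]=0$. Combining this scaling covariance with the equicontinuity of the $H$-action along $W^{-\alpha}$ (Lemma \ref{lem:uniformly bounded derivative}) and the density of the $\R^k$-orbit of $x_0$, I expect to conclude that $F_x$ must be affine in $t$, of the form $F_x(t) = c_x\, t + \pi(x)$ with $c_x$ continuous, nonvanishing, and compatible with the leafwise derivative cocycle of $-\alpha$.

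Finally, since $\pi$ is then constant along the complementary foliation $W^H\oplus W^\alpha$ and affine-linear along each $W^{-\alpha}$-leaf, a Journ\'e-type assembly as in Lemma \ref{lem:foliation1}, together with the $C^{r'}$-regularity of the normal forms charts, will promote this leafwise regularity to global $C^{r'}$-regularity of $\pi$. The hard part will be the affine rigidity of $F_x$: establishing that the iterated scaling covariance forces $F_x$ to be affine rather than merely monotone continuous. This is where the structural hypothesis $A_{x_0} = H\oplus \Z\ell$ is used in an essential way (to produce nontrivial non-$H$ elements acting by pure rescaling in the source), and where the equicontinuity estimates from Part I must be exploited most delicately, in close analogy with Lemmas \ref{lem:locally-integrable} and \ref{lem:foliation1}.
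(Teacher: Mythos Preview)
Your construction of $\pi$ on the dense orbit and the argument at $x_0$ are correct, but there is a genuine gap in extending to arbitrary $y\in X$. Showing ``$c_n\cdot x_0\to x_0\Rightarrow [c_n]\to 0$'' only yields continuity at $x_0$; it does not by itself give uniform continuity on the orbit, and equivariance alone does not make the modulus uniform (the Lipschitz constant of $(-a)$ is unbounded in $a$). What is actually needed is that for every $y$, the set $B_y=\{a:\ y\in \overline{(a+H)x_0}\}$ is a single coset of $A_{x_0}$. This is exactly what the paper proves, and it requires the same $W^H$-path machinery as in Lemma~\ref{lem:gamma-subgroup}: given $y\in a_1\cdot\overline{Hx_0}\cap a_2\cdot\overline{Hx_0}$, one builds two long-$H$/short-$W^H$/short-$(\alpha,-\alpha,\R^k)$ paths from $x_0$ to $a_1^{-1}y$ and from $(a_2-a_1)x_0$ to $a_1^{-1}y$, concatenates them, pushes the short $\alpha,-\alpha,\R^k$-legs to the end using the bounded switching control, and concludes $(a_2-a_1)\cdot x_0\in\overline{W^H(x_0)}=\overline{Hx_0}$. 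Your ``same scheme applied diagonally'' does not supply this step.

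Your smoothness strategy is based on a misconception. Once $\pi$ is continuous and intertwines the $\R^k$-actions, it is constant on \emph{every} coarse Lyapunov leaf, including $W^{-\alpha}$: if $z\in W^\beta(y)$, choose a regular $a$ with $\beta(a)<0$; then $\pi(z)-\pi(y)=\pi(a^n z)-\pi(a^n y)\to 0$. So $\pi$ is constant along $W^s_a$ and $W^u_a$ for any regular $a$, and $\pi$ varies only along the $\R^k$-orbit (linearly, by equivariance). The paper then simply applies Journ\'e twice: constant on $W^s_a$ plus smooth on $\R^k$-orbits gives $C^{r'}$ on $W^{cs}_a$, then constant on $W^u_a$ gives $C^{r'}$ on $X$. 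Your normal-forms/affine-rigidity program for $F_x$ is unnecessary, and the premise that $\pi$ can vary along $W^{-\alpha}$ is false; case~(b) of Lemma~\ref{lem:closure-types} plays no role in the regularity upgrade.
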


\begin{proof}
Fix $y \in X$, and let $B_y \subset \R^k$ be the set of all $a \in \R^k$ such that $\overline{(a + H)x_0} \ni y$. We claim that $B_y$ is a coset of $A_{x_0}$. Suppose that $a_1,a_2 \in B_y$, then $y \in \overline{(a_1 + H)x_0} \cap \overline{(a_2 + H)x_0}$. Then $a_1^{-1}y \in \overline{Hx_0} \cap \overline{((a_2 -a_1) + H) x_0}$. As in the proof of Lemma \ref{lem:gamma-subgroup}, we may build a path from $x_0$ to $a_1^{-1}y$ by choosing a long $H$-orbit segment, then a short path whose legs lie in $W^H$ with a fixed number of switches and a short $(\alpha,-\alpha,\R^k)$-path. We may find a similar path from $(a_2-a_1)x_0$ to $y$. Concatenating these paths gives a path from $x_0$ to $(a_2-a_1)x_0$. Since the number of switches is uniformly bounded, we may push the $\alpha,-\alpha,\R^k$-legs to the end of the path keeping them very short using control on the number of switches. This shows that $(a_2-a_1)x_0 \in \overline{W^H(x_0)} = \overline{H x_0}$, so that $a_2 - a_1 \in A_{x_0}$. Hence, $B_y$ is a coset.

Now define $\pi : X \to \R^k / A_{x_0} \cong \mathbb{T}$ by $\pi(y) = B_y$. It is clear that the map determines a factor of the $\R^k$-action, and by construction, $\overline{H x_0} = \pi^{-1}(0)$. We claim that $\pi$ is continuous. Indeed, it suffices to show that the preimage of a closed set is closed. Let $C \subset \mathbb{T}$ be closed. Then $\pi^{-1}(C)$ is homeomorphic to $C \times \pi^{-1}(0)$ which is compact and hence closed. The homeomorphism is exactly given by $(c,x) \mapsto c \cdot x$.

Choose a regular element $a \in \R^k$. Since $\pi$ is continuous and determines a factor of the $\R^k$ action, it is constant on the leaves of $W^s_a$ and $W^u_a$. Then $\pi$ is constant on the leaves of $W^s_a$ and smooth along the $\R^k$-orbits (since it is a factor), so by Theorem \ref{thm:journe}, $\pi$ is smooth along the leaves of $W^{cs}_a$. Notice that it is also constant along $W^u_a$, so again applying Theorem \ref{thm:journe} to the complementary foliations $W^{cs}_a$ and $W^u_a$, we get that $\pi$ is $C^{r}$ globally.
\end{proof}


\begin{proposition}
\label{prop:case2-finished}
If there is a periodic orbit $p$ for which Case \ref{dich:2} of the return dichotomy  holds, Theorem \ref{thm:main-anosov} holds.
\end{proposition}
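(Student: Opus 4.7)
The plan is to stitch together the lemmas of Section \ref{sec:case2} into one of the two desired conclusions of Theorem \ref{thm:main-anosov}: either a dense $H$-orbit exists, or there is a Kronecker rank one factor onto $\mathbb T$ whose fiber is an $H$-orbit closure. First, from the hypothesis that some periodic orbit $p$ is in \hyperlink{cases}{Case (2)} with some $N_0 \in \N$, Lemma \ref{lem:periodic-accum} (producing an open set inside $S_{N'}$) together with Theorem \ref{thm:anosov-closing} upgrade this to Lemma \ref{lem:eps-dense-accum}: there exist $N \in \N$ and $\ve_1 > 0$ such that every $\ve_1$-dense $\R^k$-periodic orbit $p$ lies in $S_N$. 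Since $\R^k$-periodic orbits are dense by Theorem \ref{thm:dense-periodic}, we may select a sequence of periodic orbits $p_n$ with $p_n$ being $\frac{1}{n}$-dense and each $p_n \in S_N$.

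Next, apply Lemma \ref{lem:closure-types} to each $p_n$: one of the three cases (a), (b) or (c) must occur. By pigeonhole, one of them must occur along a dense subset of the sequence $\{p_n\}$ and hence on a dense set of periodic orbits. We handle case (a) first: the set $Y$ of points $y$ with $\overline{W^H(y)} \supset \R^k \cdot y$ is dense by hypothesis, and one verifies it is $G_\delta$ (using that the continuity of the coarse Lyapunov foliations makes each set $U_m(a) = \{x : \exists \rho \in W^H, d(e(\rho), a\cdot x) < 1/m\}$ open, as in the lemma immediately following Lemma \ref{lem:closure-types}). Intersecting $Y$ with the residual set of points having a dense $\R^k$-orbit yields $y$ with $\overline{W^H(y)} = X$, and then Corollary \ref{cor:WH-sufficient} gives a dense $H$-orbit, placing us in the third bullet of Theorem \ref{thm:main-anosov}.

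Now suppose (b) (the case (c) argument is identical by symmetry) occurs on a dense set of periodic orbits. Fix a periodic orbit $p$ to which Lemma \ref{lem:caseb-density} applies, so that $B_p \cdot \overline{W^H(p)} = X$ for the compact fundamental domain $B = B_p$. The set $G$ of good points (those $x$ with $B \cdot \overline{W^H(x)} = X$) is nonempty (it contains $p$), $\R^k$-invariant in a suitable sense, and $G_\delta$ by the argument given after Lemma \ref{lem:caseb-density}. The set $R$ of points for which $\overline{H\cdot x} = \overline{W^H(x)}$ is residual by Lemma \ref{lem:WH-closures}, and the set $D$ of points with dense $\R^k$-orbit is also residual. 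Hence we may pick $x_0 \in G \cap R \cap D$. Now Lemma \ref{lem:gamma-subgroup} shows that $A_{x_0} = \{a \in \R^k : a \cdot x_0 \in \overline{H \cdot x_0}\}$ is a closed subgroup of $\R^k$ strictly containing $H$.

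The dichotomy for $A_{x_0}$ is then: either $A_{x_0} = \R^k$, or $A_{x_0} = H \oplus \Z\ell$ for some $\ell \notin H$ (since $A_{x_0}$ is closed, contains $H$ as a codimension-one subgroup, and must project to a closed subgroup of the line $\R^k / H$). In the first case, $\overline{H\cdot x_0} \supset \R^k \cdot x_0$, which is dense since $x_0 \in D$; hence $\overline{H \cdot x_0} = X$, giving the third bullet of Theorem \ref{thm:main-anosov}. In the second case, Lemma \ref{lem:circle-factor} directly produces a $C^{r'}$ factor $\pi : X \to \R^k/A_{x_0} \cong \mathbb T$ of the $\R^k$-action with $\overline{H \cdot x_0} = \pi^{-1}(0)$, which is precisely the second bullet of Theorem \ref{thm:main-anosov}. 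The main conceptual obstacle, already absorbed into Lemmas \ref{lem:closure-types} and \ref{lem:gamma-subgroup}, is controlling holonomies along $W^H$-paths with bounded switching number so that one can promote self-accumulations of $W^H(p)$ into genuine subgroup structure on $\R^k$; once this algebraic structure is in place, the three alternative conclusions follow cleanly.
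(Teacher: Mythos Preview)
Your proposal is correct and follows the paper's approach: after the preparatory lemmas of Section \ref{sec:case2} reduce to the dichotomy on $A_{x_0}$, the two alternatives $A_{x_0} = \R^k$ (yielding a dense $H$-orbit via $x_0 \in D$) and $A_{x_0} = H \oplus \Z\ell$ (yielding the Kronecker circle factor via Lemma \ref{lem:circle-factor}) correspond exactly to the last two bullets of Theorem \ref{thm:main-anosov}. Your write-up is more detailed than the paper's, since you explicitly recapitulate the chain from Lemma \ref{lem:eps-dense-accum} through the case (a)/(b) split, whereas the paper treats these as already established and records only the final $A_{x_0}$ dichotomy.
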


\begin{proof}
$A_{x_0}$ contains $H$ and $H$ has codimension one, so either $A_{x_0} = \R^k$ or $A_{x_0} = H \oplus \Z \ell$. Since $x_0$ has a dense $\R^k$-orbit, if $A_{x_0} = \R^k$, $\overline{H \cdot x_0} = X$, we arrive at the last conclusion of Theorem \ref{thm:main-anosov}. Lemma \ref{lem:circle-factor} shows that if $A_{x_0} = H \oplus \Z\ell$, we have the second conclusion of Theorem \ref{thm:main-anosov}. 
\end{proof}

\section{Proofs of Theorems \ref{thm:main-anosov} and \ref{thm:main-cartan}}
\label{sec:part1-proofs}

First, we note that Theorem \ref{thm:main-anosov} follows immediately from Propositions \ref{prop:case1-finished} and \ref{prop:case2-finished} and the \hyperlink{cases}{return dichotomy}. In this section, we use Theorem \ref{thm:main-anosov} to prove Theorem \ref{thm:main-cartan}.

\begin{proof}[Proof of Theorem \ref{thm:main-cartan}]
If there is a non-Kronecker rank one factor the theorem holds, so assume that we either have the second or last conclusion of Theorem \ref{thm:main-anosov}. By Lemma \ref{lem:susp}, we may without loss of generality assume that we have an $\R^k$ action (rather than an $\R^k \times \Z^\ell$ action). When $H$ has a dense orbit, we may use Proposition \ref{lem:kalinin-lem} and the Livsic argument, to obtain the desired metric (as was done in \cite{KaSp04}, see Section \ref{subsec:metrics-prelim}). 

Next, suppose we have a circle factor $\pi : X \to \mathbb{T}$ of $\R^k \curvearrowright X$, and that there exists $x \in X_0 = \pi^{-1}(0)$ such that $H\cdot x$ is dense in $X_0$. Define a metric on $E^\alpha_{h\cdot x}$ by pushing forward any metric on $E^\alpha_x$. Then using Proposition \ref{lem:kalinin-lem}, one may again show that the metric defined on the $H$-orbit of $x$ extends continuously to an $H$-invariant metric on $X_0$. This argument also shows the metric is unique up to muliplicative constant. Choose any $a \in \R^k$ such that $\pi(a \cdot x) = 0$, but $\pi(ta \cdot x) \not= 0$ for $t \in [0,1)$. Then pushing the metric forward by $a$ yields another $H$-invariant metric on $X_0$, which must be a scalar multiple of the original one, so that $\norm{a_*v} = \lambda\norm{v}$ for some $\lambda > 0$ and every $v \in E^\alpha$ based at some point in $X_0$. Notice that $a\not\in H$ since if this were the case, we would have $\pi(ta \cdot x) \equiv 0$. Therefore, $\R^k = H \oplus \R a$. So we may define a functional $\alpha : \R^k \to \R$ by $\alpha(h,t) = t\lambda$.
Finally, we define a metric on $X$ in the following way: choose $x \in X$, so that $x = ta \cdot y$ for some $y \in X_0$ and $t \in \R$. Then if $v \in E^\alpha_x$, define $\norm{v} = \lambda^t\norm{(ta)^{-1}_*v}$. One may easily check that this metric and the functional $\alpha$ are well-defined and satisfy the conclusions of Theorem \ref{thm:main-cartan}. Uniqueness follows immediately from the construction.
\end{proof}

\part{\Large Constructing a Homogeneous Structure}

Throughout Part IV, unless stated otherwise, we assume that $\R ^k \curvearrowright X$ is a $C^r$ ($r = (1,\theta)$ or $r = \infty$), cone transitive, totally Cartan action for which every coarse Lyapunov foliation is orientable and has a distinguished orientation, and that no finite cover of the action has a non-Kronecker rank one factor (we reduce rigidity of $\R^k \times \Z^\ell$ actions to rigidity of $\R^{k+\ell}$ action case in Section \ref{sec:proofs} by passing to a suspension).  We can always pass to a finite cover of $X$ on which the coarse Lyapunov foliations are oriented.  By Theorem \ref{thm:main-anosov} and \ref{thm:main-cartan}, we may and therefore do assume the following throughout Part IV:

\vspace{.2cm}

\noindent{\hypertarget{h-r-a}{\bf Higher Rank Assumptions.}}

\begin{enumerate}[label=(HR\arabic*)]
\item \label{HR:a}Each functional $\beta \in \Delta$ is chosen together with metrics $\norm{\cdot}_\beta$ such that $\norm{a_*v}_\beta = e^{\beta(a)}\norm{v}_\beta$ for every $a \in \R^k$ and $v \in W^\beta$. The functional $\beta$ is unique, and the $\norm{\cdot}_\beta$ is unique up to global scalar.
\item \label{HR:b} For every $\beta \in \Delta$, $\ker \beta$ either has a dense orbit, or has orbits dense in every fiber of some circle factor of the $\R^k$-action.
\end{enumerate}

\begin{partremark}
The second case of \ref{HR:b} is mysterious, but we will still  be able to prove that systems of this form are algebraic. However, we do not know if any such examples exist, we expect that they do not. Their existence is related to whether, on a nilmanifold, having no rank one factors implies the same feature for the actions on its center. It will be an annoyance, but not a real obstruction throughout the proofs in this part.
\end{partremark}


\begin{partremark}
\label{rem:lyap-coefficients2}
Throughout this section, each functional $\beta$ is now defined uniquely, whereas in Part III (and again in Part V), it is only defined up to multiplicative constant. Therefore, unlike in those sections, we will take special care in considering the coefficients of the weights (so rather than saying the case of a pair of weights $\alpha$ and $-\alpha$, we will need to consider the case of the pair $\alpha$ and $-c\alpha$). Compare with Remark \ref{rem:lyap-coefficients1}.

We also note that \ref{HR:a} implies that the Lyapunov exponent for $E^\alpha$ with respect to any invariant measure is equal to $\alpha$ (and not just some scalar multiple of $\alpha$).
\end{partremark}

\begin{partremark}
\label{rem:homogeneous-weights}
Assumptions \ref{HR:a} and \ref{HR:b} will need to be relaxed in Part V. In particular, we will identify a factor of the action on which the weights \[\Delta_{\Rig} := \set{ \beta \in \Delta : \ker \beta \mbox{ has a dense orbit or is dense in the fiber of some circle factor}}\] are collapsed, leaving only the weights corresponding to rank one factors $\Delta_1 = \Delta \setminus \Delta_{\Rig}$. By the results of Part III, every weight in $\Delta_{\Rig}$ satisfies the conclusions of \ref{HR:a} and \ref{HR:b}. The arguments here will be written for the case when $\Delta_{\Rig} = \Delta$ (which is the case of Theorem \ref{thm:big-main}). We encourage readers interested in the general case to look at Section \ref{sec:fiber-homogeneous} before proceeding.
\end{partremark}





\section{Topological Cartan actions and geometric brackets}

For $\chi \in \Delta$, we first define flows $\eta^\chi$ along the  oriented foliations $W^\chi$ which will be critical to our analysis:

\begin{definition}
For each $\chi \in \Delta$, let $\eta^\chi_t$ denote the positively oriented translation flow along $W^\chi$ (with respect to the norm $\norm{\cdot}_\chi$), which satisfies

\begin{equation}
\label{eq:renormalization}
 a\cdot \eta^\chi_t(x) = \eta^\chi_{e^{\chi(a)}t}(a \cdot x).
\end{equation}


\end{definition}

For homogeneous examples, these are exactly the unipotent one parameter subgroups which parameterize the coarse Lyapunov foliations. In our setting, we do not know that they have good regularity properties (in fact, for now, we only know that they are $C^1$ along their orbits and H\"older transversally). Therefore, they are locally H\"older, and each evaluation map $t \mapsto \eta^\chi_t(x)$ is locally bi-Lipschitz onto its image, according to the following

\begin{definition}
We say that a flow $\psi_t$ on a metric space $Y$ is a {\normalfont H\"older flow} if the evaluation map $(t,y) \mapsto \psi_t(y)$ is locally H\"older. A function $f : X \to Y$ between metric spaces $X$ and $Y$ is called {\normalfont locally bi-Lipschitz} if for every $x \in X$, there exists a neighborhood $U$ such that $f : U \to f(U)$ is invertible, and both $f$ and $f^{-1}$ are Lipschitz on $U$ and $f(U)$, respectively.
\end{definition}

Because the flows $\eta^\chi_t$ are only locally H\"older, it is natural for us to ignore smooth structures and work in the setting of metric spaces, rather than smooth manifolds. We therefore introduce a notion of a {\it topological Cartan action}, which distills the output of the smooth dynamical assumptions into coarser topological data. Remarkably, we will still obtain a classification result for such actions, of which Theorem \ref{thm:big-main} will be a corollary, see Theorem \ref{thm:technical}. 

We introduce this notion for two reasons:


First, it illustrates the versatility of the method for working with actions that may a priori fail to have any smoothness properties. Indeed, even for $C^\infty$ actions, the coarse Lyapunov foliations  transversely are only H\"older, so it is not clear how to take Lie brackets of vector fields tangent to these foliations. We therefore replace standard tools of differential geometry with ``geometric brackets''.  These are motivated by the usual geometric interpretation of the Lie bracket but explicitly use the rigid structure of the coarse Lyapunov foliations coming from a higher-rank action. 

{Second}, it clarifies and emphasizes which structures of the smooth Cartan actions we will be using and is a useful reference for definitions in subsequent arguments.

\begin{remark}
The next definition is very technical and imitates the structures one naturally gets from a smooth totally Cartan actions. 
 We intentionally use the same notation for objects in the topological category that are constructed in the smooth one to emphasize their roles and properties, but would like to emphasize that no smooth structures are assumed. In particular, stable, unstable, center-stable and center-unstable manifolds will be introduced, which have similar properties and play similar roles to their smooth counterparts.
\end{remark}

If $Y$ is a metric space, let $B_Y(y,\ve)$ denote the ball of radius $\ve$ along $y$. 

\begin{definition}
\label{def:top-cartan}
{Consider a  continuous, transitive} action of $\R^k$ on a compact 
metric space $(X,d)$ with the following structures: a set $\Delta \subset (\R^k)^*$ (called the weights of the action) and a collection of H\"older flows $\set{\eta^\chi : \chi \in \Delta}$. { We will call the elements of $\R^k \setminus \bigcup_{\chi \in \Delta} \ker \chi$ the {\normalfont Anosov elements} of the action, and the orbit foliations of the $\eta^\chi$ the {\normalfont coarse Lyapunov foliations}.} We say that the action is  a {\normalfont topological Cartan action} if satisfies the following additional properties:

\begin{enumerate}[label=(TC-\arabic*)]
\item \label{tc0} {For every $x \in X$, $a \mapsto a \cdot x$ is locally bi-Lipschitz from $\R^k$ onto its image.}
\item \label{tc1} For any $\beta \in \Delta$, $c\beta \not\in \Delta$ for any $c > 0$, $c \not= 1$.
\item \label{tc2} For every $x \in X$ and $\chi \in \Delta$, $t \mapsto \eta^\chi_t(x)$ is locally bi-Lipschitz {from $\R$ onto its image}.
\item \label{tc3} For every $a \in \R^k$, $t \in \R$ and $x \in X$, $a \cdot \eta^\chi_t(x) = \eta^\chi_{e^{\chi(a)}t}(a \cdot x)$.
\item \label{tc4} 
If $a_1,\dots,a_m \in \R^k$ is a list of Anosov elements, and 
$\set{\chi_1,\dots,\chi_r}$ is a circular ordering of $\Delta^-(\set{a_1,\dots,a_m})$, then for every $x \in X$, the evaluation map for $\tilde{\eta}|_{C_{(\chi_1,\dots,\chi_r)}}$ at $x$ is injective (recall Definition \ref{def:comb-cells}). If $W^s_{(a_i)}(x) := \tilde{\eta}(C_{(\chi_1,\dots,\chi_r)})x$, then for any combinatorial pattern $\bar{\beta}$ whose letters are all from {$\Delta^-(\set{a_1,\dots,a_m})$}, $\tilde{\eta}(C_{\bar{\beta}})x \subset W^s_{(a_i)}(x)$ for every $x \in X$. 
\end{enumerate}

{
\noindent Keeping the notation from \ref{tc4}, we introduce local stable and unstable, center-stable and center-unstable, and local center-stable and local center-unstable manifolds, in the following fashion (we define them here for the stable versions, the unstable versions are defined, for instance by setting $W^{cu}_{(a_i)} := W^{cs}_{(-a_i)}$, etc.): 

\begin{eqnarray*}
W^s_{(a_i)}(x,\delta) & := & \tilde{\eta}(C_{(\chi_1,\dots,\chi_r)} \cap B_{\R^r}(0,\delta)) \\
W^{cs}_{(a_i)}(x) & := & \R^k \cdot W^s_{(a_i)}(x) \\
W^{cs}_{(a_i)}(x,\delta) & := & B_{\R^k}(0,\delta) \cdot W^s_{(a_i)}(x,\delta).
\end{eqnarray*}
}
\begin{enumerate}[label=(TC-\arabic*)]
 \setcounter{enumi}{5}
\item \label{tc5} If $\hat{\eta}$ is the corresponding action of $\hat{\mc P}$ (see Definition \ref{def:P-groups}), then $\hat{\eta}$ is transitive in the sense that for every $x,y \in X$, there exists $\rho \in \hat{\mc P}$ such that $\hat{\eta}(\rho) x = y$.
\item \label{tc6}  There exists $\delta,\ve > 0$, such that if $a$ is Anosov, $y \in W^s_a(x,\delta)$ and $z \in W^{cu}_a(x,\delta)$, then the map $(y,z) \mapsto W^{cu}_a(y,\ve) \cap W^s_a(z,\ve)$ is well-defined from $W^s_a(x,\delta) \times W^{cu}_a(x,\delta)$ and maps homeomorphically onto a neighborhood of $x$. \\

\noindent We call the action {\normalfont cone transitive} if there exists an open cone  whose closure is contained in the set of Anosov elements which has a dense orbit.
\end{enumerate}



\end{definition}

{
\begin{remark}
Notice that \ref{tc4} immediately gives the stable and unstable manifolds the structure of topological manifolds. It is then not difficult to see that the center-stable and center-unstable manifolds are also topological manifolds using \ref{tc2}.
\end{remark}
}

\begin{remark}
Several of the properties of cone transitive totally Cartan actions can be deduced for cone transitive topological Cartan actions, including the Anosov closing lemma (Theorem \ref{thm:anosov-closing})  and density of periodic orbits (Lemma \ref{lem:cone-transitive}(2)). We will use them freely for in the topological setting.  Notice also that the \hyperlink{h-r-a}{higher rank assumptions} still make sense, after replacing \ref{HR:a} by \ref{tc2} and \ref{tc3}. See Remark \ref{rem:top-cart-ok}.
\end{remark}

\begin{remark}
One may introduce the notion of a topological Cartan action of $\R^k \times \Z^\ell$ in a similar way, and prove similar results about them. They are related to $\R^{k+\ell}$-actions by suspensions. 
\end{remark}

\begin{proposition}
\label{prop:smooth-is-topological}
Let $\R^k \curvearrowright X$ be a $C^{1,\theta}$ cone transitive, totally Cartan action on a compact manifold without a rank one factor. 
Then if every coarse Lyapunov foliation is {oriented}, this data defines a topological Cartan action.
\end{proposition}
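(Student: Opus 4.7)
The plan is to verify each of the axioms (1)--(7) of Definition \ref{def:top-cartan} using the structural results of Part I together with standard facts about smooth Anosov actions. The flows $\eta^\chi$ are the unit-speed translation flows along $W^\chi$ with respect to the H\"older metrics $\norm{\cdot}_\chi$ produced by Theorem \ref{thm:main-cartan}, which apply because by hypothesis no finite cover of the action has a non-Kronecker rank one factor and we have passed to the cover on which every $W^\chi$ is oriented. Property (1) is then immediate from the \hyperlink{h-r-a}{higher rank assumption} (a), since each $\chi \in \Delta$ is uniquely determined by the coarse Lyapunov foliation $W^\chi$ (and we have already collapsed positively proportional exponents into one representative). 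Property (2) follows from the fact that the H\"older metric $\norm{\cdot}_\chi$ is bounded above and below on $TW^\chi$, so unit-speed parametrizations of the $C^{1,\theta}$ leaves of $W^\chi$ are locally bi-Lipschitz. Property (3) is exactly the scaling identity \eqref{eq:renormalization}, which is equivalent to the equivariance \eqref{eq:equivariance} of $\norm{\cdot}_\chi$ under $a_*$.

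For (4), I first fix Anosov elements $a_1,\dots,a_m$ and the induced foliation $W^s_{\set{a_i}} = \bigcap_i W^s_{a_i}$, which by Corollary~\ref{cor:coarse-lyapunov} is a H\"older foliation with $C^{1,\theta}$ leaves whose tangent bundle is $\bigoplus_{\chi \in \Delta^-(\set{a_i})} E^\chi$. Choosing any circular ordering $(\chi_1,\dots,\chi_r)$ of $\Delta^-(\set{a_i})$ (Definition~\ref{def:circular-ordering}), iterated application of Lemma~\ref{lem:extending-charts} builds a continuous map $\R^r \to W^s_{\set{a_i}}(x)$ by concatenating legs in the prescribed order; this is precisely the evaluation map on $C_{(\chi_1,\dots,\chi_r)}$, and the proof of Lemma~\ref{lem:extending-charts} shows it is a homeomorphism on compact neighborhoods (using the slow/dominated splittings controlled by elements on the boundary of the cone). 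The containment $\tilde{\eta}(C_{\bar\beta})x \subset W^s_{\set{a_i}}(x)$ for an arbitrary combinatorial pattern $\bar\beta$ with letters in $\Delta^-(\set{a_i})$ is then immediate, since $W^s_{\set{a_i}}$ is invariant under each $\eta^\chi$ for $\chi \in \Delta^-(\set{a_i})$. Property (6) is the standard local product structure for Anosov $\R^k$-actions, available because $a$ is normally hyperbolic with respect to the $\R^k$-orbit foliation and the stable and center-unstable distributions are H\"older with $C^{1,\theta}$ leaves; this appeared implicitly already in the proof of Theorem~\ref{thm:anosov-closing}.

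For (7), I apply Lemma~\ref{lem:local-transitivity} directly: the coarse Lyapunov foliations $W^{\beta_1},\dots,W^{\beta_n}$ together with the orbit foliation of $\R^k$ are uniquely integrable continuous foliations with smooth leaves whose tangent distributions give a direct sum decomposition $T_xX = T_x\R^k \oplus \bigoplus_i E^{\beta_i}$ by Corollary~\ref{cor:coarse-lyapunov}. The lemma then yields an open neighborhood $U \subset C_{\bar\beta} \times \R^k$ of the trivial path on which the evaluation map is onto a neighborhood of $x$, uniformly in $x$ by compactness. Finally, (5) reduces to (7) and transitivity: if $x_0 \in X$ has dense $\R^k$-orbit, then the locally transverse laminations property shows that $\hat{\eta}(\hat{\mc P})x_0$ contains a neighborhood of $x_0$, hence contains every $a \cdot x_0$ for $a \in \R^k$, hence is dense; but $\hat{\eta}(\hat{\mc P})x_0$ is also easily checked to be closed under the accumulation argument of Lemma~\ref{lem:WH-closures} (using property (6) to absorb short connectors), and therefore equals $X$. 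Since $\hat{\eta}(\hat{\mc P})$ acts by a group, transitivity at one point yields transitivity everywhere.

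The only step requiring real care is (4): one must check that the global evaluation map $\tilde\eta|_{C_{(\chi_1,\dots,\chi_r)}}:\R^r \to W^s_{\set{a_i}}(x)$ is surjective and is a homeomorphism on compact subsets, and that its image absorbs $\tilde\eta(C_{\bar\beta})x$ for arbitrary $\bar\beta$ with letters in $\Delta^-(\set{a_i})$. Local surjectivity and local injectivity come from Lemma~\ref{lem:extending-charts} applied inductively in circular order; the passage from local to global is by intertwining with the hyperbolic dynamics: one conjugates a large piece of $W^s_{\set{a_i}}(x)$ back into a small neighborhood using $a^n$ for $a$ regular with $\chi_i(a)<0$ for every $i$, applies the local statement there, and pushes back by $a^{-n}$, using equivariance (3) of each $\eta^{\chi_i}$ to track how each coordinate rescales. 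This is the one place where I expect to spend nontrivial effort, essentially reproducing the proof of Lemma~\ref{lem:extending-charts} in globalized form; once it is established, all remaining axioms are either reformulations of Theorem~\ref{thm:main-cartan} or standard consequences of Anosov theory.
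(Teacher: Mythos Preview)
Your proposal is correct and follows essentially the same route as the paper: properties (1)--(3) from the H\"older metrics of Theorem~\ref{thm:main-cartan}, property (4) from Lemma~\ref{lem:coarse-lyapunov}/Corollary~\ref{cor:coarse-lyapunov} together with iterated applications of Lemma~\ref{lem:extending-charts}, property (7) from Lemma~\ref{lem:local-transitivity}, and (6) from standard Anosov local product structure. The only divergence is your argument for (5): you argue density plus closedness of the $\hat{\mc P}$-orbit, invoking Lemma~\ref{lem:WH-closures} for the latter, but that lemma concerns a different inclusion and does not yield closedness here. The paper's argument is cleaner: by (7) every $\hat{\mc P}$-orbit is open, so the orbits form an open partition of the connected space $X$, hence there is only one orbit. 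Your detour is unnecessary but harmless once replaced by this one-line observation.
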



\begin{proof}
The special H\"older metrics from Theorem \ref{thm:cocycle rigidity} determine unit vector fields, positively oriented, and thus flows which satisfy properties \ref{tc1}-\ref{tc3}.  

We now show \ref{tc4}. Indeed, by Lemma \ref{lem:coarse-lyapunov}, 
 $W^s_{\set{a_i}}$ foliates the space and by Lemma \ref{lem:extending-charts}, we have the corresponding parameterizations. 
\ref{tc6} is immediate from local product structure of Anosov actions. \ref{tc5} follows from \ref{tc6} and connectedness of $X$.
\end{proof}

{
One may prove Theorem \ref{thm:big-main} for topological Cartan actions, which we now formulate. For the rest of Part IV, we will not rely on the smoothness properties, only the structures obtained in this section, { and one additional structure provided from regularity (see Lemma \ref{lem:at-least-one}).}  so we aim to prove the following:

\begin{theorem}
\label{thm:technical}
Let $\R^k \curvearrowright M$ be a cone transitive, topological Cartan action satisfying \ref{HR:b}. { Further assume that if $u\alpha + v\beta \in [\alpha,\beta]$, then either $u$ or $v \ge 1$ (see Definition \ref{def:weight-bracket})}.  Then there exists a homogeneous Cartan action $\R^k \curvearrowright G /\Gamma$ which is topologically conjugate to $\R^k \curvearrowright M$. 
\end{theorem}
}

\subsection{The geometric commutator}
\label{subsec:geom-comm}
We can now establish the main result in this section, the principal technical tool in our analysis of topological Cartan actions: the {\it geometric commutator}. Fix $\alpha,\beta \in \Delta$, and pick some regular $a \in \R^k$ such that $D(\alpha,\beta) \subset \Delta^-(a)$. Recall that $D(\alpha,\beta)$ comes equipped with a canonical circular ordering $\set{\alpha,\gamma_1,\dots,\gamma_n,\beta}$ (see Definition \ref{def:canonical-order}). We use the following convention for group commutators: if $\mc G$ is a group and $g,h \in \mc G$, then

\begin{equation}
\label{eq:comm-convention}
[g,h] = h^{-1}g^{-1}hg.
\end{equation}

\begin{lemma}
\label{lem:comm-relation}
If $t,s\in \R$, $\alpha,\beta \in \Delta$ are non-proportional
, there exists a unique $\rho^{\alpha,\beta} : \R \times \R \times X \to \mc P_{D(\alpha,\beta) \setminus \set{\alpha,\beta}}$ such that:

\begin{equation}\label{eq:rho-characterization}
 \rho^{\alpha,\beta}(s,t,x)* [s^{(\alpha)},t^{(\beta)}]  \in \mc C(x), {\mbox{ and}}
\end{equation}

\[ \rho^{\alpha,\beta}(s,t,x) = \rho^{\alpha,\beta}_{\chi_n}(s,t,x)^{(\chi_n)} * \dots *  \rho^{\alpha,\beta}_{\chi_1}(s,t,x)^{(\chi_1)}. \]

\noindent for a unique collection of functions $\rho^{\alpha,\beta}_{\chi_i} : \R \times \R \times X \to \R$, $\chi_i \in D(\alpha,\beta) \setminus \set{\alpha,\beta}$.
Furthermore, $\rho^{\alpha,\beta}$ and $\rho^{\alpha,\beta}_{\chi_i}$ are continuous functions in $t,s,x$, and

\begin{equation}
\label{eq:rho-equivariance}
e^{\chi_i(a)}\rho^{\alpha,\beta}_{\chi_i}(s,t,x) = \rho^{\alpha,\beta}_{\chi_i}(e^{\alpha(a)}s,e^{\beta(a)}t,a\cdot x).
\end{equation}
\end{lemma}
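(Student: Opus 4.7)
The plan is to read off $\rho^{\alpha,\beta}(s,t,x)$ as the unique $\gamma$-leg decomposition arising from the chart structure of the $W^{|\alpha,\beta|}$-foliation. I would first iterate Lemma \ref{lem:extending-charts} to build, for each $x \in X$, a local homeomorphism
\[ \Psi_x : U \subset \R^{n+2} \to W^{|\alpha,\beta|}(x), \qquad \Psi_x(u_1,\dots,u_n,p,q) = \eta^\beta_q \circ \eta^\alpha_p \circ \eta^{\gamma_n}_{u_n} \circ \dots \circ \eta^{\gamma_1}_{u_1}(x), \]
and, for small $s,t$, record the unique chart coordinates $(u_i(s,t,x), p(s,t,x), q(s,t,x))$ of the endpoint $[s^{(\alpha)},t^{(\beta)}] \cdot x \in W^{|\alpha,\beta|}(x)$. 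For arbitrary $s,t$, I would extend by the equivariance \eqref{eq:renormalization}: choosing $a \in \R^k$ with $\alpha(a), \beta(a), \gamma_i(a) < 0$ sufficiently negative, the construction at $a^N \cdot x$ with parameters $e^{N\alpha(a)} s,\, e^{N\beta(a)} t$ is local, and one pushes the result back to $x$ using the flow renormalization.

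The crux --- and the main obstacle --- is to show that the residual $\alpha$- and $\beta$-coordinates of the commutator vanish: $p(s,t,x) \equiv q(s,t,x) \equiv 0$. Morally this is the analogue of the Lie-algebraic fact that $[E^\alpha, E^\beta]$ lies in $\bigoplus_{\gamma \in D(\alpha,\beta) \setminus \{\alpha,\beta\}} E^\gamma$; but without smoothness this must be integrated by hand. I would argue by repeated application of Lemma \ref{lem:geom-commutator1}: first rewrite
\[ \eta^\beta_t \eta^\alpha_s(x) \;\equiv\; G_1 \cdot \eta^\alpha_{s_1} \eta^\beta_{t_1}(x) \pmod{\mc C(x)}, \]
and similarly rewrite the inverse pair $\eta^\beta_{-t}\eta^\alpha_{-s}$ at the point $y = \eta^\beta_t \eta^\alpha_s(x)$. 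Substituting and iteratively swapping the residual $\alpha, \beta$-legs past each of the $\gamma_i$-legs --- each such swap being governed by Lemma \ref{lem:geom-commutator1} applied to the sub-cones $D(\alpha,\gamma_i)$ or $D(\beta,\gamma_i)$, which have strictly fewer intermediate weights --- one reduces inductively to the base case $D(\alpha,\beta) = \{\alpha,\beta\}$, where the two flows commute modulo $\mc C(x)$ directly by chart uniqueness, forcing $p = q = 0$. As a complementary check, the vanishing also follows from scaling: the equivariance below would give $p(s,t,x) = e^{-N\alpha(a)} p(e^{N\alpha(a)}s, e^{N\beta(a)}t, a^N \cdot x)$, and combined with the boundary vanishings $p(s,0,\cdot) \equiv 0 \equiv p(0,t,\cdot)$ (the commutator is trivial when either parameter vanishes) and continuity, one rules out a nonzero limit.

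Given $p \equiv q \equiv 0$, I set $\rho^{\alpha,\beta}_{\chi_i}(s,t,x) := u_i(s,t,x)$ and $\rho^{\alpha,\beta}(s,t,x) := u_n(s,t,x)^{(\chi_n)} * \dots * u_1(s,t,x)^{(\chi_1)}$. Uniqueness of $\rho^{\alpha,\beta}$ in $\mc P_{D(\alpha,\beta) \setminus \{\alpha,\beta\}}$, in the prescribed canonical ordering, follows from the local homeomorphism property of $\Psi_x$; and continuity of $\rho^{\alpha,\beta}$ and of each $\rho^{\alpha,\beta}_{\chi_i}$ in $(s,t,x)$ is inherited from continuity of the flows $\eta^\chi$ together with the continuity of $\Psi_x^{-1}$. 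The equivariance \eqref{eq:rho-equivariance} is then immediate: conjugating the defining relation by $a \in \R^k$ and applying \eqref{eq:renormalization} leg-by-leg produces the analogous relation at $a \cdot x$ with parameters $(e^{\alpha(a)}s, e^{\beta(a)}t)$ and $\gamma_i$-leg lengths $e^{\gamma_i(a)} u_i$; uniqueness of the chart decomposition at $a \cdot x$ then yields the claimed scaling law for each $\rho^{\alpha,\beta}_{\chi_i}$.
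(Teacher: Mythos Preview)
Your overall framework---build the $(\gamma_1,\dots,\gamma_n,\alpha,\beta)$-chart from Lemma~\ref{lem:extending-charts}/property~(4), read off unique coordinates, and deduce equivariance from~\eqref{eq:renormalization}---matches the paper exactly. The discrepancy is at the crux, the vanishing of the residual $\alpha$- and $\beta$-coordinates $p,q$.

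Your scaling check has a gap. With $a$ chosen so that $\alpha(a),\beta(a)<0$, the relation
\[
p(s,t,x)=e^{-N\alpha(a)}\,p\!\left(e^{N\alpha(a)}s,\,e^{N\beta(a)}t,\,a^N\cdot x\right)
\]
is an $\infty\cdot 0$ indeterminate: the second factor tends to $p(0,0,z)=0$ only by continuity, with no rate, while the prefactor blows up. The boundary vanishings $p(s,0,\cdot)=p(0,t,\cdot)=0$ do not help here because both arguments go to $0$ simultaneously. Your inductive swapping argument does not rescue this: each application of Lemma~\ref{lem:geom-commutator1} rewrites $\eta^\beta_t\eta^\alpha_s$ as $(\gamma\text{-path})\cdot\eta^\alpha_{s'}\eta^\beta_{t'}$ but does \emph{not} assert $s'=s$, $t'=t$; and in the base case $D(\alpha,\beta)=\{\alpha,\beta\}$, the equality $\eta^\alpha_s\eta^\beta_t(x)=\eta^\beta_t\eta^\alpha_s(x)$ is exactly what is at issue---chart uniqueness in one ordering says nothing about the other.

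The paper resolves this with a sharper choice of $a$: take $a\in\ker\alpha$ with $\beta(a)<0$ (possible since $\alpha,\beta$ are independent). Then every $\chi_i\in D(\alpha,\beta)\setminus\{\alpha,\beta\}$ also has $\chi_i(a)<0$, so under $\psi_{na}$ all $\chi_i$- and $\beta$-legs of $\rho_x$ decay and the conjugated commutator $[s^{(\alpha)},e^{n\beta(a)}t^{(\beta)}]$ becomes trivial, while the $\alpha$-leg of $\rho_x$ is \emph{unchanged}. Passing to a limit along a convergent subsequence of $a^n\cdot x$ and using injectivity of the chart forces the $\alpha$-leg to be zero. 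A symmetric argument with $a\in\ker\beta$, $\alpha(a)<0$, kills the $\beta$-leg. Equivalently, in your notation: with $a\in\ker\alpha$ one has $p(s,t,x)=p(s,e^{N\beta(a)}t,a^N\cdot x)\to p(s,0,z)=0$, no indeterminate prefactor. Once you make this adjustment, the rest of your proof goes through as written.
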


\begin{figure}[!ht]
\begin{tabular}{cc}
\includegraphics[width=3in]{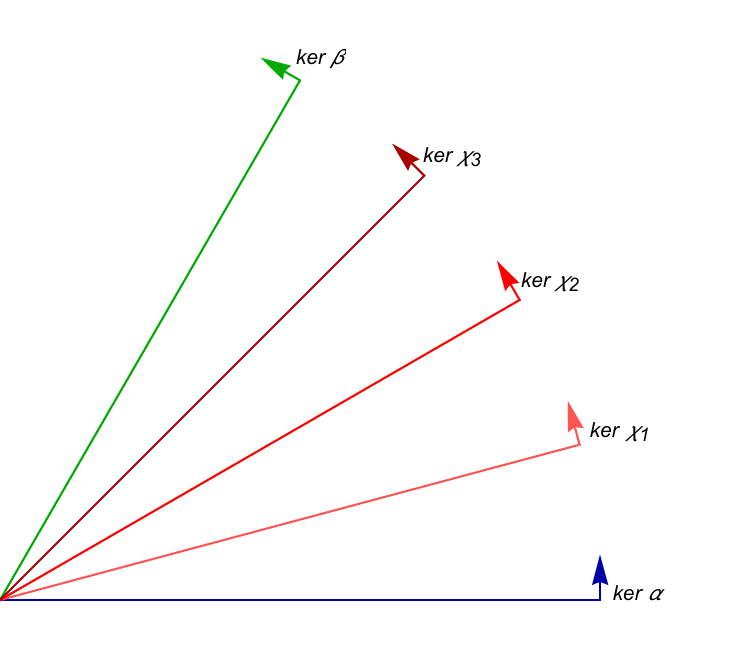} & \includegraphics[width=3in]{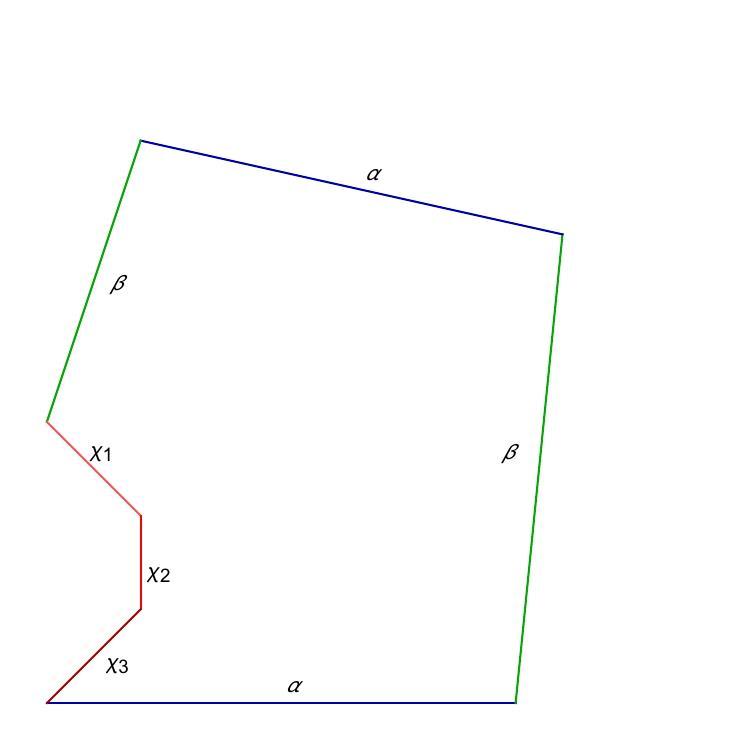}
\end{tabular}
\caption{Canonical Commutator Relations}
\label{fig:rho-def}
\end{figure}

\begin{proof}
Since $\alpha$ and $\beta$ are not proportional, if follows from \ref{tc4} in the definition of a topological Cartan action that 
there exists some unique path $\rho_x =  u_2^{(\beta)} * v_n^{(\chi_n)} * \dots * v_1^{(\chi_1)} * u_1^{(\alpha)} \in \mc P_{D(\alpha,\beta)}$ satisfying $\rho_x * [t^{(\beta)},s^{(\alpha)}] \in \mc C(x)$. {Indeed, the endpoint of $[t^{(\beta)},s^{(\alpha)}]$ belongs to $W^s_{(a_i)}$ for some collection of $a_i$ with $\Delta^-(\set{a_i}) = D(\alpha,\beta)$ (see Lemma \ref{lem:root-comb}). It is therefore reached by a path of the described form.}  We wish to show that $\rho_x$ has trivial $\alpha$ and $\beta$ components.  Notice that by linear independence of $\alpha,\beta$, we may choose $a$ such that $\alpha(a) = 0$ and $\beta(a) < 0$. Then notice that  by \eqref{eq:renormalization}, for any $x \in X$:

\begin{equation}
\label{eq:comm-contraction}
(na)\left[s^{(\alpha)},t^{(\beta)}\right](-na)\cdot x = \left[s^{(\alpha)},e^{n\beta(a)}t^{(\beta)}\right]x \xrightarrow{n\to\infty} x.
\end{equation}

Since the action of $\tilde{\eta}$ is H\"older, the convergence is exponential since $\beta(a) < 0$. But if $\rho_x$ has a nontrivial $\alpha$ component, that component does not decay, since it is isometric. Therefore it must be trivial. By a completely symmetric argument, the $\beta$ component is trivial.
\end{proof}

\begin{definition}
\label{def:weight-bracket}
If $\alpha,\beta \in \Delta$ are linearly independent weights, let \[[\alpha,\beta] = \set{\chi \in \Delta : \rho^{\alpha,\beta}_\chi(s,t,x)
 \not= 0 \mbox{ for some }t,s \in \R, x \in X}.\]
\end{definition}

It is clear that $[\alpha,\beta] = [\beta,\alpha]$, since $\rho^{\beta,\alpha}(t,s,\rho^{\alpha,\beta}(s,t,x) \cdot x) = \left(\rho^{\alpha,\beta}(s,t,x)\right)^{-1}$. Notice also that, a priori, each $\rho^{\alpha,\beta}_\chi$ may vanish at some $x \in X$, but not every $x$. We will see later that the no rank-one factor assumption implies that $\rho^{\alpha,\beta}_\chi$ is actually a polynomial {in $s$ and $t$} independent of $x$.

{ 

\subsection{Coefficients of nontrivial commutators}
In this section, we prove a lemma which will be crucial in the next section. We begin with a technical observation. If $\mc F_i,\dots,\mc F_m$ are continuous foliations of a manifold $Y$, we say that they {\it induce continuous local coordinates} if for every $x \in X$, there is a map $\phi_x$ defined on an open subset $U \subset \R^{d_1} \times \dots \R^{d_m}$, such that

\[\phi_x(u_1,\dots,u_\ell,u_{\ell+1},0,\dots,0) \in \mc F_{\ell+1}(\phi_x(u_1,\dots,u_\ell,0,0,\dots,0))\]
for every $u_1,\dots,u_{\ell+1} \in \R^{d_1} \times \dots \times \R^{d_m}$, and $\phi_x$ is onto a neighborhood of $x$. We think of this as using the foliations to produce coordinates around $x$ (even though we do not ask that it be injective).

\begin{lemma}
\label{lem:local-transitivity}
Let $Y$ be a smooth manifold, and $\mc F_1,\dots,\mc F_m$ be uniquely integrable continuous foliations with smooth leaves such that $T_yY = \bigoplus_{i=1}^m T_y\mc F_i$ for every $y \in Y$. Then the foliations $\mc F_i$ induce continuous local coordinates.
\end{lemma}

Lemma \ref{lem:local-transitivity} can be deduced in two ways: one may use a standard trick of approximating the vector fields tangent to the foliations $\mc F_i$ by smooth ones, proving the continuous local coodinates property using the inverse function theorem, and showing persistence of the transitivity using a degree argument. Alternatively, this is a more general phenomenon which does not require unique integrability, see \cite[Lemma 3.2]{Schmidt-thesis}.

\begin{lemma}
\label{lem:at-least-one}
If $\R^k \curvearrowright X$ is a $C^2$ totally Cartan action on a smooth manifold with no non-Kronecker rank one factors, $\alpha,\beta \in \Delta$ are linearly independent, and $\chi = u \alpha + v \beta$ with $u < 1$ and $v < 1$, then $\chi \not\in [\alpha,\beta]$.
\end{lemma}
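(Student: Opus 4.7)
My plan is to derive a contradiction from \eqref{eq:rho-equivariance} by combining it with a sharp quadratic bound on $|\rho^{\alpha,\beta}_\chi|$ near the origin. Suppose, for contradiction, that $\chi = u\alpha + v\beta \in [\alpha,\beta]$ with $u,v<1$, so that $\rho^{\alpha,\beta}_\chi(s_0,t_0,x_0)\neq 0$ for some $(s_0,t_0,x_0)$.

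First I would record that $\rho^{\alpha,\beta}_\chi$ vanishes on each coordinate axis. When $s=0$ or $t=0$, the formal commutator $[s^{(\alpha)},t^{(\beta)}]$ is the identity of $\mc P$, so by \eqref{eq:rho-characterization} the element $\rho^{\alpha,\beta}(0,t,x)$ (resp.\ $\rho^{\alpha,\beta}(s,0,x)$) lies in $\mc C(x)$. The uniqueness part of Lemma \ref{lem:comm-relation}, which comes from axiom (4) of Definition \ref{def:top-cartan} applied to the fixed ordering of $D(\alpha,\beta)\setminus\set{\alpha,\beta}$, then forces these to be trivial, so every component vanishes on the axes.

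The crucial step is the quadratic bound
\begin{equation*}
 |\rho^{\alpha,\beta}_\chi(s,t,x)| \;\le\; C\,|s\,t| \qquad \text{for } |s|,|t|<\delta, \text{ uniformly in } x\in X.
\end{equation*}
The commutator map $\Phi(s,t,x):=[\eta^\alpha_s,\eta^\beta_t](x)=\eta^\beta_{-t}\of\eta^\alpha_{-s}\of\eta^\beta_t\of\eta^\alpha_s(x)$ satisfies $\Phi(s,0,x)=\Phi(0,t,x)=x$, and Baker--Campbell--Hausdorff heuristically gives $\Phi(s,t,x)\approx x + st\,[X_\beta,X_\alpha](x)$ to leading order, where $X_\alpha,X_\beta$ are the unit vector fields generating $\eta^\alpha,\eta^\beta$. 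To make this rigorous I would use the $C^2$ hypothesis in the form flagged in the remark before the lemma: the fast-foliation holonomy of a $C^2$ diffeomorphism is Lipschitz. Concretely, choosing $a\in\R^k$ so that $\alpha$ is the slow foliation inside a stable manifold containing $\beta$ (as in Lemma \ref{lem:smooth-holonomy}), Lipschitz regularity of the $\widehat{W^\alpha}$-holonomy implies that $\eta^\beta_t(y)$ depends Lipschitz on $y$ as $y$ ranges over a $W^\alpha$-leaf, with Lipschitz constant itself bounded by $O(|t|)$ (since at $t=0$ the dependence is the identity). Two applications of the mean value inequality to the four-fold composition then give $d_X(\Phi(s,t,x),x)\le C|st|$, and transferring this estimate to the individual components $\rho^{\alpha,\beta}_\chi$ uses the bi-Lipschitz character of the parametrization of the cell $C_{(\chi_1,\dots,\chi_n)}$ guaranteed by Definition \ref{def:top-cartan}(2) and (4).

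Granted the quadratic bound, the conclusion is a short renormalization. Pick $a\in\R^k$ with $\alpha(a)<0$ and $\beta(a)<0$ (possible since $\alpha,\beta$ are linearly independent). Iterating \eqref{eq:rho-equivariance} gives
\[ \rho^{\alpha,\beta}_\chi(s_0,t_0,x_0) \;=\; e^{-n\chi(a)}\,\rho^{\alpha,\beta}_\chi\bigl(e^{n\alpha(a)}s_0,\,e^{n\beta(a)}t_0,\,na\cdot x_0\bigr), \]
and for $n$ large the arguments on the right are small, so the quadratic bound yields
\[ |\rho^{\alpha,\beta}_\chi(s_0,t_0,x_0)| \;\le\; C\,|s_0 t_0|\,\exp\!\bigl(n\bigl[(1-u)\alpha(a)+(1-v)\beta(a)\bigr]\bigr). \]
Since $u,v<1$ and $\alpha(a),\beta(a)<0$ the exponent is strictly negative, so letting $n\to\infty$ forces $\rho^{\alpha,\beta}_\chi(s_0,t_0,x_0)=0$, contradicting the assumption. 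Thus $\chi\notin[\alpha,\beta]$. The main obstacle is the quadratic bound: mere Lipschitz continuity of each flow only yields $|\rho^{\alpha,\beta}_\chi|\le C\min(|s|,|t|)\le C\sqrt{|st|}$, which when fed into the renormalization closes only the sub-case where $(\tfrac{1}{2}-u)\alpha(a)+(\tfrac{1}{2}-v)\beta(a)$ can be made negative and misses part of the rectangle $u,v<1$. Upgrading to the full $|st|$ estimate is exactly what forces the use of the $C^2$ hypothesis.
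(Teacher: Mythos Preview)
Your renormalization endgame is correct: given the quadratic bound $|\rho^{\alpha,\beta}_\chi(s,t,x)|\le C|st|$ uniformly for small $s,t$, the equivariance \eqref{eq:rho-equivariance} forces $\rho^{\alpha,\beta}_\chi\equiv 0$ exactly when $u,v<1$. The gap is in the quadratic bound itself, and it is a real one.

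First, the identification you make between $y\mapsto\eta^\beta_t(y)$ (for $y$ along $W^\alpha(x)$) and a $\widehat{W^\alpha}$-holonomy is not correct. The $\widehat{W^\alpha}$-holonomy sends $y$ to $\widehat{W^\alpha}(y)\cap W^\alpha(\eta^\beta_t(x))$, whereas $\eta^\beta_t(y)$ lies on $W^\beta(y)\subset\widehat{W^\alpha}(y)$ but has no reason to land on $W^\alpha(\eta^\beta_t(x))$. So the Lipschitz holonomy statement does not speak about $\eta^\beta_t$ at all. Second, even if it did, ``Lipschitz holonomy'' means the holonomy map between two fixed $W^\alpha$-plaques is bi-Lipschitz with a \emph{uniform} constant; it does not say this constant is $1+O(|t|)$ as the plaques approach each other. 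That improvement would require $C^1$ dependence of the holonomy on the transverse parameter, which is strictly more than what the $C^2$ hypothesis buys here. Finally, the ``mean value'' step implicitly treats $\eta^\alpha,\eta^\beta$ as $C^1$ flows on $X$; they are not---the generating vector fields are only H\"older transversally---so the usual BCH/commutator estimate producing an $O(|st|)$ displacement simply does not apply. You acknowledge this at the end, but the proposed upgrade via Lipschitz holonomy does not close the gap.

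The paper avoids any pointwise bound on $\rho^{\alpha,\beta}_\chi$. Instead it partitions $D(\alpha,\beta)=A\cup B$ by whether the $\alpha$-coefficient is $\ge 1$ or $<1$, and runs an induction on the circular ordering showing $[\chi_i,\chi_j]\subset A\cup B'$ whenever $\chi_i,\chi_j\in A\cup B'$. The $C^2$ hypothesis enters through the uniform Lipschitz holonomy along the \emph{fast} foliation $\mc W^A$ between arbitrary transversals. If a weight $\delta\in B\setminus B'$ (both coefficients $<1$) occurred with nontrivial leg, one chooses $a\in\R^k$ near $\ker\alpha$ with $\beta(a)=-1$ so that $\delta(a)>\chi_i(a)$; iterating $a$ makes the $\delta$-leg contract strictly slower than the separation of the holonomy-related points $x,y\in W^{\chi_i}$, contradicting the uniform bi-Lipschitz bound on $\mc W^A$-holonomy between freshly chosen transversals $D_{1,k},D_{2,k}$. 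This argument uses Lipschitz holonomy only as a uniform comparison of distances under holonomy---not as a small-$t$ expansion---which is precisely what the $C^2$ assumption delivers.
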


\begin{proof}
 Divide $D(\alpha,\beta) = A \cup B$, where $A = \set{ u\alpha + v \beta : u \ge 1} \cap \Delta$ and $B = \set{ u\alpha + v\beta : u < 1} \cap \Delta$. Let $B' = \set{ u\alpha + v\beta :  u< 1,v \ge 1} \cap \Delta$. Give the weights of $D(\alpha,\beta)$ the canonical circular ordering $\set{\alpha = \chi_1,\dots,\chi_n = \beta}$. We will show that if $\chi_i,\chi_j \in A \cup B'$, then $[\chi_i,\chi_j] \subset A \cup B'$ by induction on $\abs{i-j}$. Then the result follows since $\alpha = \chi_1$ and $\beta = \chi_n$.

The base case is trivial: if $\abs{i-j} = 1$, then $\chi_i$ and $\chi_j$ are adjacent in the circular ordering, and therefore commute. We now try to commute $\chi_i,\chi_j \in A \cup B'$, $\abs{i-j} > 1$. Notice that choosing $a \in \ker \beta$, and perturbing by a very small amount will yield an element $a'$ close to $a$ for which $\left(\bigoplus_{\chi \in A} T\mc W^\chi\right) \oplus \left(\bigoplus_{\chi \in B} T\mc W^\chi \right)$ is  a dominated splitting (cf. slow foliations, Section \ref{sec:holonomy-action}). Therefore, $\bigoplus_{\chi \in A} T\mc W^\chi$ is tangent to a foliation $\mc W^A$. Notice that each leaf $\mc W^A$ has local $C^0$ charts given by motion along each $W^\chi$, $\chi \in A$ (the proof goes as in Lemma \ref{lem:extending-charts}). In particular, if both $\chi_i$ and $\chi_j \in A$, $[\chi_i,\chi_j] \subset A$. Similarly, if both $\chi_i$ and $\chi_j$ belong to $B'$, then they both belong to $C = \set{u\alpha + v\beta : v \ge 1} \cap \Delta$. So for similar reasons, choosing a perturbation of $b \in \ker \alpha$, gives $[B',B'] \subset [C,C] \subset C \subset A \cup B'$.

We now consider the case when $\chi_i \in B'$ and $\chi_j \in A$ with $\abs{i-j} > 1$  (the last case follows from a symmetric argument). Let $x \in X$, $y = t^{(\chi_i)} \cdot x$, $x' = s^{(\chi_j)} \cdot x$, $y' = s^{(\chi_j)} \cdot y$ and $w = t^{(\chi_i)} \cdot x'$. Notice that $y' = [(-s)^{(\chi_j)},(-t)^{(\chi_i)}] \cdot w$.

Let $D_{ij} = \set{\chi_{i+1},\dots,\chi_{j-1}}$ be the set of weights strictly between $\chi_i$ and $\chi_j$. Decompose $D_{ij}$ into $\set{\gamma_1,\dots,\gamma_{m_1},\delta_1,\dots,\delta_{m_2},\epsilon_1,\dots,\epsilon_{m_3}}$, where $\set{\gamma_k}$, $\set{\delta_k}$ and $\set{\epsilon_k}$ are the weights of $A\cap D_{ij}$, $(B \setminus B') \cap D_{ij}$ and $B'\cap D_{ij}$, respectively, each listed with the induced circular ordering. We assume $y$ and $w$ are sufficiently close, to be determined later (if we show it for sufficiently small $s,t$, we may use the dynamics of $\R^k \curvearrowright X$ and \eqref{eq:renormalization} to conclude it for arbitrary $s,t$). 

Notice that the distribution $\bigoplus_{k=i+1}^{j-1} T\mc W^{\chi_k}$ is uniquely integrable to a foliation $\mc W^{ij}$ with $C^2$ leaves since it is the intersection of stable manifolds for the action. Since $y' = [(-s)^{(\chi_j)},(-t)^{(\chi_i)}] \cdot w$, $y' \in \mc W^{ij}(w)$ by Lemma \ref{lem:comm-relation}. Therefore, by Lemma \ref{lem:local-transitivity} applied to the splitting $T\mc W^{ij}$ into the bundles $E^{\gamma_k}$, $E^{\delta_k}$ and $E^{\epsilon_k}$, there exists a path moving from $w$ to $y'$ which first moves along the leaves of the 1-dimensional foliations corresponding to $B' \cap D_{ij}$, then the 1-dimensional foliations corresponding to $(B \setminus B')\cap D_{ij}$, then the 1-dimensional foliations corresponding to $A \cap D_{ij}$, in the given orderings. Let $p$ denote the point obtained after moving along the $B'$ foliations from $w$, and $q$ denote the point obtained by moving from $p$ along the $B \setminus B'$ foliations. Then $q$ is also connected to $y'$ via the $A$ foliations. See Figure \ref{fig:sub-res}.

\begin{figure}[!ht]
\begin{center}
\begin{tikzpicture}[scale=.75]
\node [below left] at (0,0) {$x$};
\draw [thick] (0,0) -- (12,0);
\node [below] at (6,0) {\tiny $t^{(\chi_i)}$};
\node [below right] at (12,0) {$y$};
\draw [thick] (0,0) -- (0,8);
\node [left] at (0,4) {\tiny $s^{(\chi_j)}$};
\node [above left] at (0,8) {$x'$};
\draw (12,0) to [out=90,in=240] (14,7);
\node [right] at (12.5,4) {\tiny $s^{(\chi_j)}$};
\draw (0,8) to [out=0, in=220] (11,10);
\node [above] at (5.5,8) {\tiny $t^{(\chi_i)}$};
\node [above left] at (11,10) {$w$}; 
\draw [red] (11,10) -- (13,10);
\node [above right] at (13,10) {$p$};
\node [below] at (12,10) {\tiny $B'$};
\draw [blue] (13,10) -- (14,9);
\node [right] at (14,9) {$q$};
\node [below right] at (14,7) {$y'$};
\draw [green] (14,9) -- (14,7);
\node [rotate=-45,below] at (13.5,9.5) {\tiny $B \setminus B'$};
\node [left] at (14,8) {\tiny $A$};
\draw[fill] (0,0) circle [radius=0.075];
\draw[fill] (12,0) circle [radius=0.075];
\draw[fill] (0,8) circle [radius=0.075];
\draw[fill] (14,9) circle [radius=0.075];
\draw[fill] (14,7) circle [radius=0.075];
\draw[fill] (11,10) circle [radius=0.075];
\draw[fill] (13,10) circle [radius=0.075];
\end{tikzpicture}
\caption{A geometric commutator}
\label{fig:sub-res}
\end{center}
\end{figure}

Choose any pair of $C^{2}$, $\abs{B}$-dimensional embedded discs $D_1 \ni x,y$, $D_2 \ni x',q$ transverse to $\mc W^A$ inside of $W^{\abs{\alpha,\beta}}(x)$. This is possible since $x$ and $y$ are connected via $\chi_i \in B'$ and $x'$ and $q$ are connected via only weights coming from $B$. Therefore, $x'$ and $q$ are the images of $x$ and $y$ under the $\mc W^A$ holonomy from $D_1$ to $D_2$. By \cite[Section 8.3, Lemma 8.3.1]{entropyII-annals1985}, there exist bi-Lipschitz coordinates for which the leaves of the foliation $\mc W^A$ are parallel Euclidean hyperspaces. In particular, the holonomy along $\mc W^A$ is uniformly Lipschitz independent of the choice of $D_1$ and $D_2$, given sufficiently good transversality conditions, so $d(x,y)/d(x',q)$ is bounded above and below by a constant.

We claim that $p = q$ (ie, that no weights of $(B \setminus B') \cap D_{ij}$ appear). Roughly, the reason is that such weights contract too slowly. Indeed, pick an element $a'\in \ker \alpha$ such that $\beta(a') = -1$. We may perturb $a'$ to an element $a$ which is regular and such that $\alpha(a) < 0$, and such that if $\delta \in B\setminus B'$, $ \chi_i(a) < \delta(a) < 0$. This is possible because if $\delta = u\alpha + v\beta \in B \setminus B'$, then $v< 1$, so $\delta(a') = -v > -1 = \beta(a') \ge \chi_i(a')$, since when $\chi_i \in B'$, the $\beta$ coefficient is at least 1. This is clearly an open condition for each $\delta$, so we may choose $a$ as indicated. We may also assume, {  by rescaling $a$ as necessary}, that $\beta(a) = -1$.

Since $y \in W^{\chi_i}(x)$, we can estimate distance between iterates of $x$ and $y$ using the H\"older metric along the coarse Lyapunov leaf $W^{\chi_i}(x)$. Recall that since $\chi_i \in B'$, $\chi_i(a) < \beta(a) = -1$. Therefore $d_{W^{\chi_i}}(a^k\cdot x, a^k \cdot y) = e^{k\chi_i(a)}d_{W^{\chi_i}}(x,y) < e^{-k}d_{W^{\chi_i}}(x,y)$ using the H\"older metric on the manifold. Now, suppose $p \not= q$. Recall that $p$ and $q$ are connected by legs in $B \setminus B' = \set{\delta_1,\dots,\delta_m}$, so that there exist $p = x_0,x_1,\dots,x_m = q$ such that $x_l \in W^{\delta_l}(x_{l-1})$. Since the foliations $W^{\delta_l}$ are transverse, if $p \not= q$, there exists some $l$ for which $x_l \not= x_{l-1}$. Without loss of generality, we assume that $\set{\delta_l}$ are ordered such that $0 > \delta_1(a) > \delta_2(a) > \dots > \delta_m(a) > \chi_i(a)$. Then let $l_0$ be the minimal $l$ for which $x_l \not= x_{l-1}$, and $c_1 = \delta_{l_0}(a)$, $c_2 = \delta_{l_0+1}(a)$. Notice that $0 > c_1  > c_2 > -1$. By minimality, we get $x_{l_0-1} = p$.

Let $d$ denote the Riemannian distance on the manifold. Since for any $\gamma \in \Delta$, the distance along each $W^{\gamma}$ leaf is locally Lipschitz equivalent to the distance on the manifold, there exists $L > 0$ such that for all $\gamma \in \Delta$ and sufficiently close points $z \in W^{\gamma}(z')$, we have $L^{-1}d_{W^{\gamma}}(z,z') \le d(z,z') \le Ld_{W^{\gamma}}(z,z')$. Then after applying the triangle inequality, we get:

\begin{eqnarray*} d(a^k \cdot x',a^k \cdot q) & \ge & d(a^k \cdot x_{l_0},a^k \cdot p) - d(a^k \cdot x',a^k \cdot p) - d(a^k\cdot x_{l_0},a^k q)  \\
 & \ge & L^{-1}d_{W^{\delta_{l_0}}}(a^k \cdot x_{l_0},a^k \cdot p) - d(a^k \cdot x',a^k \cdot p) - d(a^k\cdot x_{l_0},a^k q) \\
& \ge & L^{-2}e^{c_1k}d(x_{l_0},p) - L^2Ce^{c_2k} \ge C'e^{c_1k}
\end{eqnarray*}

\noindent since by construction, we may iteratively apply the triangle inequality to all legs connecting $x_{l_0}$ and $q$ and $x'$ and $p$, which contract faster than $e^{c_2k}$ and $e^{-k}$, respectively, since $c_1 > c_2, -1$. We may construct new disks $D_{1,k}$ and $D_{2,k}$ with sufficient transversality conditions to $\mc W^A$ connecting $a^k\cdot x$ and $a^k \cdot y$, and $a^k \cdot x'$ and $a^k \cdot q$ (note that we may not simply iterate the disks $D_1$ and $D_2$ forward, {  since $\mc W^A$ is not the fast foliation for $a$}, and the transversality may degenerate). Therefore, since each of the foliations along weights of $B$ are uniformly transverse to those of $A$, and the holonomies are Lipschitz with a uniform Lipschitz constant on sufficiently transverse discs, we arrive at a contradiction, so $p = q$.

Therefore, the connection between $w$ and $y'$ only involves weights of $A \cup B'$. By the induction hypothesis, the commutator of two weights $ \chi_k,\chi_\ell \in (A \cup B') \cap D_{ij}$ produces only weights in $ (A \cup B') \cap D_{k\ell}$. {  Hence, for any $x$, 

\[ s^{(\chi_k}) * t^{(\chi_\ell)} \cdot x = \rho_1 * t^{(\chi_\ell)} * s^{(\chi_k)} \cdot x = t^{(\chi_\ell)} * s^{(\chi_k)} * \rho_2 \cdot x\] for some paths $\rho_1$ and $\rho_2$ which only involve weights in $A \cup B'$, but may depend on $x$, $s$ and $t$. Using such relations, for any word in the weights $A \cup B'$,} we may put it in a desired circular ordering without weights in $B \setminus B'$. This proves the inductive step, and hence the lemma, since $B \setminus B' = \set{u \alpha + v\beta : u,v < 1} \cap \Delta$. 
\end{proof}
}

\subsection{Further Technical Tools}

In the remainder of this section we establish some further technical results related to the geometric commutators which will be useful in future sections.

\begin{lemma}
\label{lem:group-integrality}
Let $\Omega \subset \Delta^-(a)$ for some Anosov $a \in \R^k$. Suppose that the action of $\mc P_\Omega$ factors through a Lie group action $H$. Then

\begin{enumerate}
\item $H$ is nilpotent {and simply connected,}
\item if $\alpha,\beta \in \Omega$ and $\chi \in [\alpha,\beta]$, then $\chi = u\alpha + v \beta$ with $u,v \in \Z_+$, and
\item if $\alpha,\beta \in \Omega$, then $\rho^{\alpha,\beta}_{u\alpha + v\beta}(s,t,x) = cs^ut^v$ for some $c \in \R$.
\end{enumerate}
\end{lemma}

\begin{proof}
By Proposition \ref{prop:integrality}, the automorphism $\psi_a$ descends to an automorphism $\bar{\psi}_a$ of $H$. Since the eigenvalues $d\bar{\psi}_a$ are all less than 1, $\bar{\psi}_a$ is a contracting automorphism of $H$, and $H$ is nilpotent and simply connected. If $\rho^{\alpha,\beta}_\chi(s,t,x) \not\equiv 0$, then the subgroups of $H$ corresponding to $\alpha$ and $\beta$ do not commute. Furthermore, the Baker-Campbell-Hausdorff formula, together with the end of Proposition \ref{prop:integrality}(3), implies that the sum of the 1-dimensional subalgebras corresponding to $\set{ u \alpha + v \beta : u, v \ge 0, u,v \in \Z}$ is a closed subalgebra. In particular, if a weight $\chi$ satisfies $\rho^{\alpha,\beta}_{\chi}(s,t,x) \not\equiv 0$, then $\chi$ must have integral coefficients, as claimed. Finally, the Baker-Campbell-Hausdorff formula also implies that each $\rho^{\alpha,\beta}_\chi$ is a polynomial as $H$ is nilpotent, and \eqref{eq:rho-equivariance} implies that this polynomial is $u$-homogeneous in $s$ and $v$-homogeneous in $t$.
\end{proof}

\section{Polynomial forms of commutators}
\label{sec:fibers}

In the remainder of Part IV, we will prove that certain canonical cycles are all constant, and that  constancy of such cycles are sufficient for homogeneity.  For this we first prove that geometric brackets are constant.

\begin{proposition}
\label{prop:base base relations}
Let $\R^k \curvearrowright X$ be a topological Cartan action satisfying the \hyperlink{h-r-a}{higher rank assumptions} { and assumptions of Theorem \ref{thm:technical}}. If $\alpha,\beta \in \Delta$ are not proportional, then $\rho^{\alpha,\beta}(s,t,x)$ is independent of $x$.
\end{proposition}

\subsection{A simple setting}
{The proof of Proposition \ref{prop:base base relations} is long and complicated, and will occupy the rest of this section. Here, we first provide a proof under a significant strengthening of assumptions \ref{HR:a} and \ref{HR:b}, namely that we have \ref{HR:a} and 
\begin{itemize}
\item[(HR2+)] \hypertarget{HR:c}{} for every pair of linearly independent $\alpha,\beta \in \Delta$, $\ker \alpha \cap \ker \beta$ has a dense orbit on $X$.
\end{itemize} 
  These assumptions are very restrictive. Indeed, it is impossible for them to hold for $\R^2$-actions and for $\R^3$-actions is similar to the assumption that every one-parameter subgroup has a dense orbit. Among the homogeneous  $\R^k$-actions, it does not allow for any of the standard irreducible rank two actions as factors for any $\R^k$-action. One may compare this assumption with the assumptions of \cite{KaSp04}. We provide this alternate proof in the special case because it is remarkably simpler.

\begin{proof}[Proof under \ref{HR:a} and \hyperlink{HR:c}{(HR2+)}]
Notice that if $a \in \ker \alpha \cap \ker \beta$, then $a$ commutes with $\eta^\gamma$ for any $\gamma = s\alpha + t\beta$, $s,t\in\R$. In particular:

\[ [t^{(\alpha)},s^{(\beta)}] \rho^{\alpha,\beta}(s,t,a\cdot x)\cdot x =  a^{-1} [t^{(\alpha)},s^{(\beta)}] \rho^{\alpha,\beta}(s,t,a\cdot x) a\cdot x = x. \] 

\noindent Therefore, $\rho^{\alpha,\beta}(s,t,a\cdot x) = \rho^{\alpha,\beta}(s,t,x)$ for all $a \in \ker \alpha \cap \ker \beta$. So $\rho^{\alpha,\beta}(s,t,\cdot)$ is constant on the orbits of $\ker \alpha \cap \ker \beta$, and hence constant everywhere by our dense orbit assumption.
\end{proof}

\subsection{Polynomial forms in the general setting}
The remainder of this section is the proof of Proposition \ref{prop:base base relations} under the weaker assumptions \ref{HR:a} and \ref{HR:b}.} We will show that for every $\alpha,\beta \in \Delta$ and $\chi \in D(\alpha,\beta)$, $\rho^{\alpha,\beta}_\chi(s,t,x)$ is independent of $x$. The proof of this case involves two inductions. In each step of the inductions we will show that

\begin{eqnarray}
&& \label{74out-induction1}\mbox{if $\chi \in [\alpha,\beta]$, then $\chi = k \alpha + l \beta$ for some  $k,l \in \Z_+$}, \mbox{ and} \\
&& \label{74out-induction2} \mbox{if $\chi = k\alpha + l \beta \in [\alpha,\beta]$, then $\rho^{\alpha,\beta}_\chi(s,t,x) = c_\chi s^kt^l$}.
\end{eqnarray}

{\color{olive}



}
{
The outer induction is on $\#D(\alpha,\beta)$, in which case we will show \eqref{74out-induction1} and \eqref{74out-induction2} for all $\chi \in D(\alpha,\beta)$ (at the base case when $\#D(\alpha,\beta) = 0$, $\eta^\alpha$ and $\eta^\beta$ commute and there is nothing to prove). The inner induction varies $\chi$ with fixed $\alpha,\beta$: Let $\Omega_l = \set{ u \alpha + v\beta : u + v = l, u,v  \in \R_+} \cap \Delta$. Then there are finitely many values $l_0 < l_1 < \dots < l_m$ such that \[ [\alpha,\beta] \cap \Delta \subset D(\alpha,\beta) \cap \Delta = \bigcup_{i=0}^m \Omega_{l_i}.\]

\noindent With $\alpha$ and $\beta$ fixed, we will show properties \eqref{74out-induction1} and \eqref{74out-induction2} by induction on $i$ (first proving the statements for $\chi \in \Omega_{l_0}$, then $\chi \in \Omega_{l_1}$, etc). 
}
Given a subset $S$ of weights, and a weight $\chi \in \Delta$, let $[\chi,S] = \bigcup_{\chi' \in S} [\chi,\chi']$.


\begin{lemma}
\label{lem:comm-omegas}
If $\chi \in [\alpha,\Omega_l]$ or $[\beta,\Omega_l]$, then $\chi \in \Omega_{l+t}$ for some $t \ge 1$.
\end{lemma}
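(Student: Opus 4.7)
The plan is to reduce this to the base--fiber analysis already carried out in Section \ref{subsec:bf}, in particular Corollary \ref{cor:polynomial-form}. By symmetry it suffices to treat $\chi \in [\alpha, \Omega_l]$; the case $\chi \in [\beta,\Omega_l]$ follows by interchanging the roles of $\alpha$ and $\beta$ in the argument below.

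First I would fix $\chi' \in \Omega_l$, so that $\chi' = u'\alpha + v'\beta$ with $u' + v' = l$, $u',v' \geq 0$, and $\chi' \in \Delta_f$. Since $\alpha \in \Delta_b$ and $\chi' \in \Delta_f$, the two weights are not proportional (indeed $\alpha \notin E_0$ while $\chi' \in E_0$), so the hypothesis of Corollary \ref{cor:polynomial-form} applies to the pair $(\alpha,\chi')$. This yields that any $\chi \in [\alpha,\chi']$ is of the form $\chi = u\alpha + \chi'$ with $u \in \mathbb{Z}_+$, and $u \geq 1$ since the elements of $[\alpha,\chi']$ are strictly between $\alpha$ and $\chi'$ in the canonical circular order. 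Substituting the expression for $\chi'$ gives $\chi = (u+u')\alpha + v'\beta$, so the coefficients of $\alpha$ and $\beta$ are both nonnegative integers, and their sum equals $u + u' + v' = l + u$.

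The one remaining point is to confirm that $\chi$ actually lies in $\Delta_f$, and not merely in the larger set $E_0$. Since $\chi' \in E_0$ and $E_0$ is an ideal, Lemma \ref{lem:ideal} gives $\chi \in E_0$, so the only way to fail the conclusion is $\chi \in E$. But on the ambient ideal quotient $X^E$, where we are computing the geometric commutators, every flow $\eta^\gamma$ with $\gamma \in E$ acts trivially; hence, in the canonical presentation of $\rho^{\alpha,\chi'}(s,t,\cdot)$ on $X^E$ provided by Lemma \ref{lem:rho-descends}, any $\gamma \in E$ does not appear as a nontrivial leg, and the defining relation $\rho^{\alpha,\chi'}_\chi \not\equiv 0$ on $X^E$ forces $\chi \notin E$. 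Therefore $\chi \in \Delta_f$, and setting $t := u \geq 1$ we obtain $\chi \in \Omega_{l+t}$.

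The main obstacle in writing this out rigorously is the last bookkeeping step: being precise about the fact that the geometric commutator $\rho^{\alpha,\chi'}$ on $X^E$ ignores $E$-valued legs, so that the weights genuinely appearing in $[\alpha,\chi']$ (in the sense of the quotient action) are exactly the ones in $\Delta_f$ satisfying the sub-resonant relation $\chi = u\alpha + \chi'$. Once this is established, the combinatorics of coefficient sums is immediate, and the lemma follows for brackets with $\beta$ by the obvious symmetric argument using Corollary \ref{cor:polynomial-form} applied to the pair $(\beta,\chi')$, yielding $\chi = u'\alpha + (v+v')\beta$ with $v \geq 1$ and hence $\chi \in \Omega_{l+v}$.
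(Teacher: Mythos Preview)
Your proposal is correct and follows essentially the same approach as the paper. The paper's proof is just two sentences: it notes that $\chi\in\Delta_f$ because $[\alpha,\Delta_f]\subset\Delta_f$, and then invokes Claim~\eqref{out-induction1} of Section~\ref{subsec:bf}, which is precisely the statement $[\alpha,\chi']\subset\{\chi'+k\alpha:k\ge 1\}$ that you extract from Corollary~\ref{cor:polynomial-form}; your more careful justification that $\chi\notin E$ (because the $\rho$-functions on $X^E$ ignore $E$-legs via Lemma~\ref{lem:rho-descends}) is exactly what underlies the paper's terse assertion.
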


\begin{proof}
If $\gamma \in \Omega_l$ and $\chi \in [\alpha,\gamma]$, we know that $\#D(\alpha,\gamma) < \#D(\alpha,\beta)$, since $D(\alpha,\gamma)$ is strictly contained in $D(\alpha,\beta)$. Therefore, this follows from the induction hypothesis  \eqref{74out-induction1} applied to $\alpha$ and $\gamma$.
\end{proof}

Since $[\alpha,\beta] \cap \Delta \subset \bigcup \Omega_{l_i}$, it suffices to show \eqref{74out-induction1} and \eqref{74out-induction2} for each weight $\chi \in \Omega_{l_i}$. 
Assume \eqref{74out-induction1} and \eqref{74out-induction2} hold for $\chi \in \Omega_{l_j}$, $j < i$. We use this induction hypothesis together with the induction hypothesis on the forms of commutators of $\alpha$ and $\beta$ with $\gamma \in D(\alpha,\beta)$ (the outer induction hypothesis) and $\chi \in \Omega_{l_j}$ with $j < i$ (the inner induction hypothesis).

For simplicity of notation, with fixed $\alpha$ and $\beta$, we defined $\varphi_\chi(s,x) = \rho^{\alpha,\beta}_\chi(s,1,x)$. Notice that if $\chi = u\alpha + v\beta$ and $a \in \ker \beta$, by \eqref{eq:rho-equivariance}:

\begin{equation}\label{eq:phi-pullout}
\varphi_\chi(s,x) =  e^{-u\alpha(a)}\varphi_\chi(e^{\alpha(a)}s,a \cdot x).
\end{equation}
 While the proof of the following requires checking some complicated details, the following lemma follows from two simple ideas: splitting a commutator into a sum of two commutators requires a conjugation and reordering, and with careful bookkeeping, the reordering and conjugation can be shown to contribute polynomial terms only. {Recall that we assume that $\chi \in \Omega_{l_i}$ and that \eqref{74out-induction1} and \eqref{74out-induction2} hold for every weight of $\Omega_{l_j}$, $j < i$.}

\begin{lemma}[{The cocycle-like property}]
\label{eq:cocycle-like-bb}
$\varphi_{\chi}(s_1+s_2,x) = \varphi_{\chi}(s_1,x) + \varphi_{\chi}(s_2,\eta^{\alpha}_{s_1}x) + s_2p(s_1,s_2)$  for some polynomial $p$ whose coefficients are independent of $x$.
\end{lemma}

\begin{proof}

We assume that $\chi \in \Omega_{l_i}$. Recall that $\varphi_\chi(s,x) = \rho^{\alpha,\beta}_\chi(s,1,x)$ is the length of the $\chi$-component of the unique path $\rho^{\alpha,\beta}(s,1,x)$, written in circular ordering of the weights in $D(\alpha,\beta)$, which connects $[s^{(\alpha)}, 1^{(\beta)}] \cdot x$ and $x$. Notice that using only the free group relations, we get that:

\begin{eqnarray*}
[(s_1+s_2)^{(\alpha)},t^{(\beta)}] & = & (-t)^{(\beta)} * (-s_1-s_2)^{(\alpha)} * t^{(\beta)} * (s_1+s_2)^{(\alpha)} \\
 & = &  (-t)^{(\beta)}*  ( -s_1-s_2)^{(\alpha)} *  t^{(\beta)} * s_2^{(\alpha)}   \\
 & & \qquad * \; \big ( ( (-t)^{(\beta)} *s_1^{(\alpha)} *  t^{(\beta)} ) *  ( (-t)^{(\beta)} *  (-s_1)^{(\alpha)} * t^{(\beta)})\big) *  s_1^{(\alpha)} \\
 & = & (-t)^{(\beta)} * (-s_1)^{(\alpha)} *\big( t^{(\beta)} *  (-t)^{(\beta)} \big) * (-s_2)^{(\alpha)} * t^{(\beta)} * s_2^{(\alpha)} *   \\
 & & \qquad * \;  \big((-t)^{(\beta)}  * s_1^{(\alpha)} * t^{(\beta)}\big) * [s_1^{(\alpha)},t^{(\beta)}] \\
 & = &  \big( (-t)^{(\beta)}  * (-s_1)^{(\alpha)} * t^{(\beta)} \big) * [s_2^{(\alpha)},t^{(\beta)}] *  \big( (-t)^{(\beta)} * s_1^{(\alpha)} *  t^{(\beta)}\big) * [s_1^{(\alpha)},t^{(\beta)}] .
\end{eqnarray*}

\begin{figure}[!ht]
\includegraphics[width=5in]{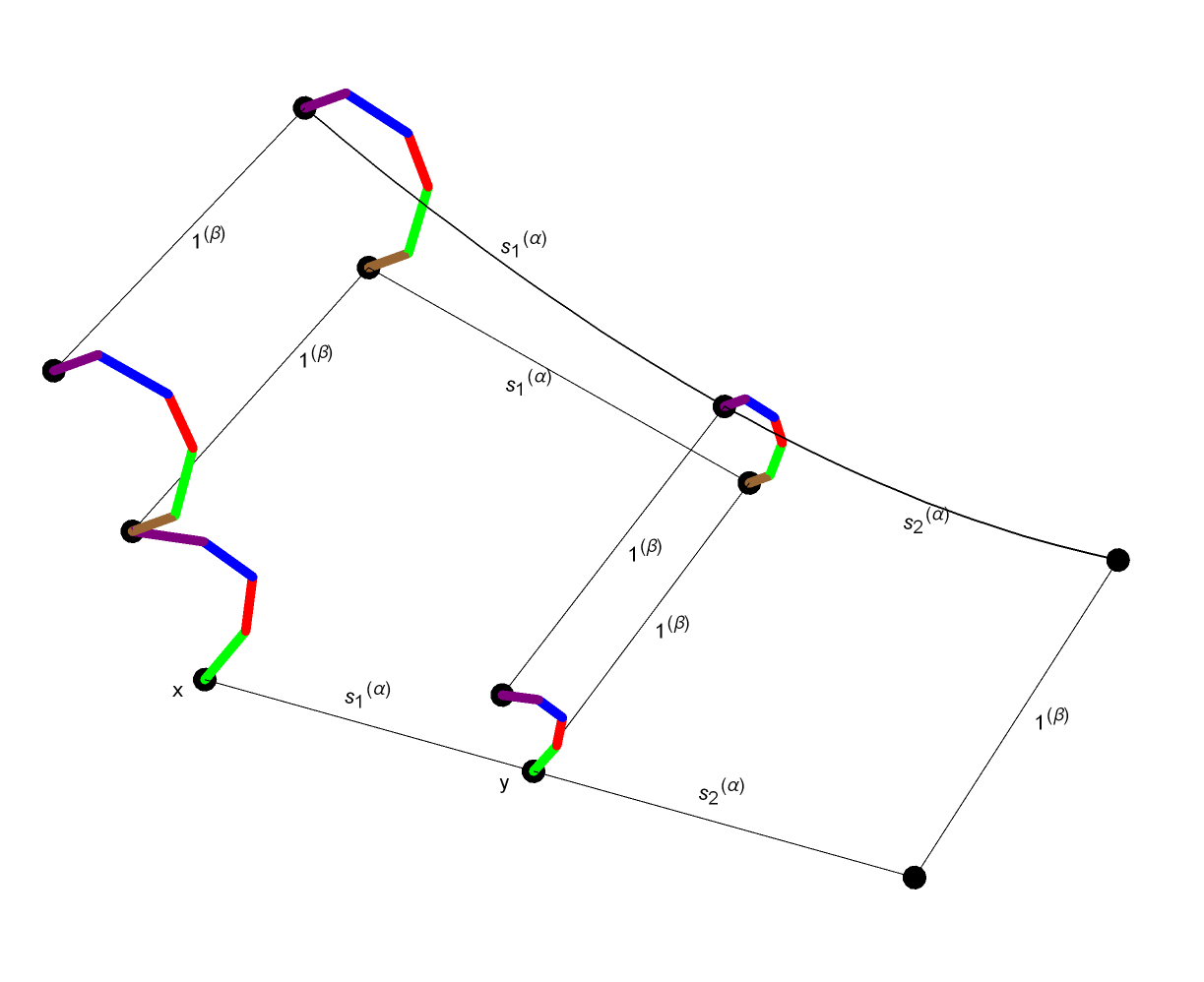}
\caption{The cocycle-like property}
\label{fig:cocycle-like}
\end{figure}

Figure \ref{fig:cocycle-like} illustrates the equality of $[(s_1+s_2)^{(\alpha)},t^{(\beta)}]$ and the last term of the string of equalities above. We may replace the last line with

\[
 \Big( \big((-t)^{(\beta)} * s_1^{(\alpha)} * t^{(\beta)}\big)^{-1} * \rho^{\alpha,\beta}(s_2,t,y)^{-1} * \big((-t)^{(\beta)} * s_1^{(\alpha)} * t^{(\beta)}\big) \Big) *  \rho^{\alpha,\beta}(s_1,t,x)^{-1}
\]

where $y = \eta^{\alpha}_{s_1}(x)$. Therefore, as group elements acting on $[(s_1+s_2)^{(\alpha)},t^{(\beta)}] \cdot x$,

\begin{equation}
\label{eq:fig-explain} 
\rho^{\alpha,\beta}(s_1+s_2,t,x) =  \rho^{\alpha,\beta}(s_1,t,x) *  \Big( \big((-t)^{(\beta)} * s_1^{(\alpha)} * t^{(\beta)}\big)^{-1} * \rho^{\alpha,\beta}(s_2,t,y) * \big((-t)^{(\beta)} * s_1^{(\alpha)} * t^{(\beta)}\big) \Big).
\end{equation}

We now use the induction hypothesis. Notice that we would like to compute the $\chi$ term, so we must try to use the group relations known to put the expression into its canonical circular ordering. In our computations, we will carefully reorder the weights appearing in $\rho^{\alpha,\beta}(s_1+s_2,t,x)$, which in each case may introduce polynomial expressions in the $\chi$ term. This does not affect our result because we only wish to obtain the cocycle property up to a polynomial term. We first make certain geometric arguments which correspond to the formal proof manipulating expression \eqref{eq:fig-explain} which follows.

 Let us summarize the figure. Notice that in Figure \ref{fig:cocycle-like}, there are five curves consisting of up to 5 colors each representing the (conjugates of the) $\rho^{\alpha,\beta}$-terms in \eqref{eq:fig-explain}. 
The blue legs consist of weights from the collection $\gamma \in \Omega_{l_j}$ with $j < i$, the red legs correspond to the weight $\chi \in \Omega_{l_i}$ (which we isolate in the current inductive step), the green legs consist of the other weights in $\Omega_{l_i}$, together with any weights of $\Omega_{l_j}$, $j > i$. The brown legs consist of the weights in $[\alpha,[\alpha,\beta]] \setminus [\alpha,\beta]$ (while this set will be empty by the Baker-Campbell-Hausdorff formula once the we conclude that the action is homogeneous, we cannot assume this now).


We make some observations to justify the picture. 
Conjugating the  $\rho^{\alpha,\beta}(s_2,t,y)$ in \eqref{eq:fig-explain} corresponds to ``sliding'' it along the $t^{(\beta)}$, $s_1^{(\alpha)}$ and in the opposite direction $t^{(\beta)}$ legs.  If $\gamma$ is a green leg, then $\rho^{\alpha,\gamma}_\chi = \rho^{\beta,\gamma}_\chi \equiv 0$ by Lemma \ref{lem:comm-omegas}, since $\chi \in \Omega_{l_i}$ and the green legs are in $\Omega_{l_j}$ with $j \ge i$. Therefore, the nonlinear parts of the $\chi$ terms come from commuting the blue legs $(\Omega_{l_j}, \; j < i)$ and brown legs. The $\chi$-contributions from commuting with the blue  legs are known to have polynomial form by the outer induction, respectively. The new brown color curves correspond to the possible weights $\lambda \in [\alpha,\gamma]$ or $[\beta,\gamma]$, where $\gamma \in [\alpha,\beta]$. If $\gamma \in \Delta_f$, $\gamma$  appears in some $\Omega_{l_j}$, and we may use Lemma \ref{lem:comm-omegas} if $j \ge i$ or the induction hypothesis if $j < i$ to give a polynomial form of $\lambda$, which guarantees a polynomial contribution to $\chi$. If $\gamma \in [\alpha,\beta]$, the $\lambda$ terms take polynomial form by induction. In summary, at each conjugating step when conjugating $\rho^{\alpha,\beta}(s_2,t,y)$, we obtain polynomial forms for all legs which feed into $\chi$, yielding the desired cocycle-like property.

We also provide a formal, algebraic argument, using \eqref{eq:fig-explain}. We must commute the $\rho^{\alpha,\beta}(s_2,t,x)$ term with the $\big(t^{(\beta)} * s_1^{(\alpha)} * (-t)^{(\beta)}\big)$ term. By the outer induction, if $\gamma \in [\alpha,\beta]$, since $\#D(\alpha,\gamma) < \#D(\alpha,\beta)$, we know that commuting $(-t)^{(\beta)}$ past any such $\gamma$ gives polynomials, possibly in other base weights or fiber weights, but all of which lie strictly between $\alpha$ and $\beta$. 

Now suppose that $\chi' \in D(\alpha,\beta)$ satisfies $\chi \in [\alpha,\chi']$ or $\chi \in [\beta,\chi']$, so that $\chi' \in \Omega_{l_j}$, $j < i$ by Lemma \ref{lem:comm-omegas}. Rearranging with the fiber weights, we know also by the inner induction that 
$\chi$ contributions from such rearrangements are polynomial form. Thus, commuting $(-t)^{(\beta)}$ past each leg of $\rho^{\alpha,\beta}(s_2,t,x)$ in the fiber also has $\chi$-term which is a polynomial in $s_2$ and $t$. We may similarly pass $s_1^{(\alpha)}$ and $t^{(\beta)}$ through to arrive at a product of terms such that every contribution from some $\chi' \in \Omega_{l_j}$ with $j < i$ is a polynomial. Any $\chi$ legs obtained by commuting polynomial terms with $\alpha$ or $\beta$ are therefore also polynomial by induction, and these are the only possible weights that can feed into $\chi$ by Lemma \ref{lem:comm-omegas}. The $\chi$ terms directly from $\rho^{\alpha,\beta}(s_i,t,x)$ are therefore the only nonpolynomial terms which could appear after putting the weights in a circular ordering, and must coincide with the original $\chi$ terms $\rho^{\alpha,\beta}(s_1,t,x)$ and $\rho^{\alpha,\beta}(s_2,t,y)$.

Thus we have shown that $\varphi_\chi(s_1+s_2,x) = \varphi_\chi(s_1,x) + \varphi_\chi(s_2,\eta^{\alpha}_{s_1}x) + P(s_1,s_2)$ for some polynomial $P$. Divisibility of $P$ by $s_2$ follows from taking $s_2 = 0$ in this equality, giving $\varphi_\chi(s_1,x) = \varphi_\chi(s_1,x) + P(s_1,0)$, so $P(s_1,0) = 0$ for every $s_1$. This occurs if and only if $P$ is divisible by $s_2$.
\end{proof}


{ 
By Lemma \ref{lem:at-least-one}, if $\rho^{\alpha,\beta}_\chi \not\equiv 0$, either $u$ or $v$ is at least 1. Without loss of generality, assume that $u \ge 1$. 
}

\begin{corollary}
\label{cor:phi-polynomial}
$\varphi_{\chi}(s,x) = c_xs^u$ for some $c_x \in \R$ which is either independent of $x$, or constant along the fibers of a circle factor on which $\ker \beta$ has dense orbits.
\end{corollary}

\begin{proof}
We claim that for every $x \in X$ and Lebesgue almost every $s_1 \in \R$, $\left.\frac{\partial}{\partial s}\right|_{s = s_1} \varphi_\chi(s,x)$ exists. It suffices to show that $\varphi_\chi(s,x)$ is locally Lipschitz in $s$. By Lemma \ref{eq:cocycle-like-bb}, 

\begin{eqnarray*}
\abs{\varphi_\chi(s,x) - \varphi_\chi(s_1,x)} & = & \abs{\varphi_\chi(s-s_1,\eta^{\alpha}_{s_1}x) + (s-s_1) \cdot p(s,s-s_1)} \\
 & \le & \abs{s-s_1}^u\abs{\varphi_\chi(1,a \cdot \eta^{\alpha}_{s_1}x)} + \abs{s-s_1}\cdot \abs{p(s,s-s_1)} \\
\end{eqnarray*}

\noindent for a suitable choice of $a \in \ker\beta$ (using \eqref{eq:phi-pullout}). Since $p$ is a polynomial and $u \ge 1$, $\varphi_\chi(s,x)$ has a Lipschitz constant in any neighborhood of $s$. Therefore, for almost every $s_1 \in \R$, $\varphi$ is differentiable in $s$ at $s_1$. By Lemma \ref{eq:cocycle-like-bb}, for every $x \in X$, Lebesgue almost every $y \in W^\alpha(x)$ has $\varphi_\chi(s,y)$ differentiable at 0. Therefore, $f(x) = \left.\frac{\partial}{\partial s}\right|_{s = 0} \varphi_\chi(s,x)$ exists on a dense subset of $X$, since it holds at Lebesgue almost every point of every leaf $W^\alpha(x)$. The set on which $f$ is defined is also $\ker \beta$-saturated, since by \eqref{eq:phi-pullout}, if $a \in \ker \beta$,

\begin{equation}
\label{eq:f-equivariance}
 f(a \cdot x) = \lim_{s \to 0} \frac{1}{s}\rho^{\alpha,\beta}_\chi(s,1,a \cdot x) = \lim_{s \to 0} \frac{1}{s} \cdot e^{u\alpha(a)} \rho^{\alpha,\beta}_\chi(e^{-\alpha(a)}s,x) = e^{(u-1)\alpha(a)}f(x) .
\end{equation}

 We claim that $\abs{f(x)} \le B$ for some $B \in \R$ whenever it exists. Indeed:

\[ \abs{f(x)} = \lim_{s \to 0} \frac{1}{s}\abs{\varphi(s,x)} = \lim_{s \to 0} \frac{1}{s} s^u \abs{\varphi_\chi(1,a_s \cdot x)} \le \sup_{y \in X} \abs{\varphi_\chi(1,y)} \]

\noindent where $a_s \in \ker\beta$ is chosen appropriately (using 
\eqref{eq:phi-pullout}), since we assumed $u \ge 1$.


Therefore, either $u = 1$ or $f \equiv 0$, since otherwise one may apply an element $a$ with $\alpha(a)$ arbitrarily large to \eqref{eq:f-equivariance} to contradict the boundedness of $f$. Since $u = 1$ or $f \equiv 0$, $f$ is constant along $\ker \beta$ orbits. We claim that $f$ is also constant along $W^\alpha$ leaves whenever it exists. { Assume it is not identically 0, otherwise the claim follows immediately. In this case $u = 1$, and $\varphi$ must be a cocycle over the $\eta^\alpha_s$-flow by comparing Lemma \ref{eq:cocycle-like-bb} and \eqref{eq:phi-pullout}. Then pick a Weyl chamber $\mc W$ for which $\ker \beta$ bounds $\mc W$ and $\alpha(\mc W) > 0$. We may then build the SRB measure $\mu_{\mc W}$ as in Section \ref{sec:SRB}, and disintegrate it into ergodic components for the $\ker \beta$ action. By Proposition \ref{prop:SRB}, it follows that for $\mu_{\mc W}$-almost every point $x$, $f$ is constant at Lebesgue-almost every point of $W^\alpha$ (since it is a $\ker \beta$-invariant function, and defined at Lebesgue almost every point of {\it every} $W^\alpha$ leaf). Since $f$ is the derivative of $\varphi$, and $\varphi$ is a cocycle over the $\eta^\alpha_s$-flow, it follows that

\[ \varphi(s,x) = \int_0^s f(\eta^\alpha_t(x)) \, dt = ms\]
where $m_x$ is the commmon value of $f$ at almost every point of $W^\alpha(x)$. Then $\varphi$ is linear on a dense set of $W^\alpha$ leaves, and has slope invariant under $\ker \beta$. Since $\varphi$ is continuous, it follows that $\varphi(s,x) = m_xs$, where $m_x$ is a slope which depends only on a circle factor. This concludes the case when $u =1$.}   


 Now assume that $f \equiv 0$. Then

\begin{multline}
\left.\frac{\partial}{\partial s}\right|_{s = s_1} \varphi(s,x) = \left.\frac{\partial}{\partial s}\right|_{s = 0} \varphi(s+s_1,x) \\ = \left.\frac{\partial}{\partial s}\right|_{s = 0} \left[\varphi(s_1,x) + \varphi(s,\eta^{\alpha}_{s_1}x) + (s-s_1)p(s_1,s) \right] = q(s_1)
\end{multline}

\noindent where $q$ is some polynomial independent of $x$ (or depends only on the value of $x$ is some circle factor). Since $\varphi(0,x) = 0$, one may integrate $ q(s_1)$ to a get a polynomial form for $\varphi$ at each $x$. 
Finally, $\varphi(s,x) = cs^u$, since by \eqref{eq:phi-pullout}, it must be $u$-homogeneous.
\end{proof}

{ 
Now, we analyze the second variable, $t$. Notice that if $\psi(t,x) = \rho^{\alpha,\beta}_\chi(1,t,x)$ and $a \in \R^k$, then

\[ e^{\chi(a)}\psi(t,a\cdot x) = e^{\chi(a)} \rho^{\alpha,\beta}(1,t,a\cdot x) = \rho^{\alpha,\beta}(e^{\alpha(a)},e^{\beta(a)}t, x) = e^{v\beta(a)}\psi(e^{\beta(a)}t,x).\]

Here, we may use an arbitrary element of $\R^k$ instead of just $\ker \alpha$ since we have already established the polynomial form in Corollary \ref{cor:phi-polynomial}. Hence, for fixed $t$, $\psi(t,x)$ is a continuous function invariant under $\ker \alpha$, which has a dense orbit (or dense in the fiber of some circle factor). With $\psi$ independent of $x$, the following finishes the proof:

\begin{lemma}
\label{lem:cocycle-like-implies-polynomial}
Let $d \in \R_+$ and $f : \R \to \R$ be a continuous function such that $f(\lambda t) = \lambda^df(t)$ for all $t, \lambda \in \R$ and $f(t_1+t_2) = f(t_1) + f(t_2) + q(t_1,t_2)$ for some polynomial $q$. Then $d \in \Z_+$ and $f$ is a polynomial.
\end{lemma}

\begin{proof}
If $d = 1$, then $f(t) = t\cdot f(1)$, so $f$ is linear. Now assume that $d \not= 1$ . Then  $f(2t) = 2^df(t) = f(t) + f(t) + q(t,t)$, so $(2^d-2)f(t) = q(t,t)$, so $f(t) = (2^d-2)^{-1}q(t,t)$ and $f$ is a polynomial. Since $f$ is a polynomial, $d \in \Z$.
\end{proof}
}

\begin{proof}[Proof of  \eqref{74out-induction2}]
{We have shown that $\varphi_{\chi}(s,x) := \rho^{\alpha,\beta}_\chi(s,1,x)$ is a monic polynomial of degree $u$, which is either independent of $x$, or depends only on the fiber over the circle factor over which $x$ lies. Define $f(x) := \rho^{\alpha,\beta}_\chi(1,1,x) = \varphi_\chi(1,x)$. Then if $a \in \ker \beta$, by \eqref{eq:rho-equivariance}, Corollary \ref{cor:phi-polynomial}, and since $\chi = u\alpha + v\beta$:

\begin{multline*} f(a \cdot x) = \rho^{\alpha,\beta}_\chi(1,1,a\cdot x) = e^{\chi(a)} \rho^{\alpha,\beta}_\chi(e^{-\alpha(a)},1,x) = e^{u\alpha(a)}\varphi_\chi(e^{-\alpha(a)},x) \\ = e^{u\alpha(a)} \cdot c_x(e^{-\alpha(a)})^u = c_x = \varphi_\chi(1,x) = f(x).
\end{multline*}

{  We have also shown that $\psi : t \mapsto \rho^{\alpha,\beta}_\chi(1,t,x)$ is a monomial with coefficient depending only on the fiber of a circle factor} (except the circle factor is now determined by $\ker \alpha$-orbit closures rather than $\ker \beta$-orbit closures), so that the map $t \mapsto \rho^{\alpha,\beta}_\chi(1,t,x)$ is a monic polynomial of degree $v$. From this, we may conclude that $f(a\cdot x) = f(x)$ for all $a \in \ker \alpha$. Since $\ker \alpha$ and $\ker \beta$ span $\R^k$, $f$ is $\R^k$-invariant, and hence $C := \rho^{\alpha,\beta}_\chi(1,1,x)$ is independent of $x$. Therefore, we get \eqref{74out-induction2}, since for any $s,t \in \R_+$, we may choose $a \in \R^k$ such that $e^{\alpha(a)} = s$ and $e^{\beta(a)} = t$. Then by \eqref{eq:rho-equivariance},

\begin{equation}\label{eq:poly-from-const} \rho^{\alpha,\beta}_\chi(s,t,x) = e^{\chi(a)} \rho^{\alpha,\beta}_\chi(1,1,-a \cdot x) = Ce^{u\alpha(a) + v\beta(a)} = Cs^ut^v. 
\end{equation}

Now, from Corollary \ref{cor:phi-polynomial} (and its counterpart for $t$), we get that $\rho^{\alpha,\beta}_\chi((-1)^{m_1},(-1)^{m_2},x) = (-1)^{m_1u+m_2v}C$. Therefore, we may repeat \eqref{eq:poly-from-const} to get the polynomial forms for negative values of $s,t$. This completes the proof.}
\end{proof}

\section{Symplectic Cycles}
\label{sec:symplectic}

\begin{definition}
\label{def:sub-paths-spaces}
Fix a topological Cartan action $\R^k \curvearrowright X$. Given a subset $\Phi = \set{\beta_1,\dots,\beta_n} \subset \Delta$, let $\tilde{\eta}_\Phi$ denote the induced action of $\mc P_\Phi = \R^{*\abs{\Phi}}$ on $X$, and $\mc C_\Phi(x) \subset \mc C(x)$ denote the stabilizer of $x$ for $\tilde{\eta}_\Phi$, and $\mc C(x) = \mc C_\Delta(x)$. Let  $\hat{\mc P}_\Phi = \R^k \ltimes \mc P_\Phi$, considered as a subgroup of $\hat{\mc P} = \R^k \ltimes \mc P$ (Definition \ref{def:P-groups}).
\end{definition}

In this section, we prove another class of cycles are independent of $x$ by studying the stabilizer of the action of $\mc P_{\set{\alpha,-c\alpha}}$ for $\alpha \in \Delta$.
 Since our endgoal is understanding the structure of $\Stab_{\hat{\mc P}}(x)$ for some $x$, the following lemmas provide important structures for paths coming from symplectic pairs.

\begin{lemma}
\label{lem:symplectic2}
Let $\R^k \curvearrowright X$ is a topological Cartan action, 
and suppose that $\alpha, -c\alpha \in \Delta$ are negatively proportional weights. Then if {$x_0 \in X$ is such that $\overline{(\ker \alpha) \cdot x_0} \supset \mc P_{\set{\alpha,-c\alpha}}\cdot x_0$}, $\mc C_{\set{\alpha,-c\alpha}}(x_0) \subset \mc C_{\set{\alpha,-c\alpha}}(x)$ for all {$x \in \overline{(\ker \alpha) \cdot x_0}$} and $\mc P_{\set{\alpha,-c\alpha}} / \mc C_{\set{\alpha,-c\alpha}}(x_0)$ is a Lie group.
\end{lemma}

\begin{proof}
Notice that in the group $\hat{\mc P}_{\set{\alpha,-c\alpha}}$, if $a \in \R^k$, $a \mc C_{\set{\alpha,-c\alpha}}(x)a^{-1} = \mc C_{\set{\alpha,-c\alpha}}(a \cdot x)$, and if $a \in \ker \alpha$, $a \mc C_{\set{\alpha,-c\alpha}}(x)a^{-1} = \mc C_{\set{\alpha,-c\alpha}}(x)$ by \eqref{eq:renormalization}. In particular, $\mc C_{\set{\alpha,-c\alpha}}(\cdot)$ is constant on $\ker \alpha$ orbits, and hence if $x \in \overline{(\ker \alpha)\cdot x_0}$, $\mc C_{\set{\alpha,-c\alpha}}(x_0) \subset \mc C_{\set{\alpha,-c\alpha}}(x)$. {Since the $\ker \alpha$-orbit closure of $x_0$ contains the $\mc P_{\set{\alpha,-c\alpha}}$-orbit of $x_0$, $\mc C_{\set{\alpha,-c\alpha}}(x_0) \subset \mc C_{\set{\alpha,-c\alpha}}(x)$ for every $x \in \mc P_{\set{\alpha,-c\alpha}}$.
Hence, the quotient $\mc P_{\set{\alpha,-c\alpha}} / \mc C_{\set{\alpha,-c\alpha}}(x_0)$ is a Lie group by Corollary \ref{cor:lie-from-const}. }
\end{proof}

\begin{remark}
{Lemma \ref{lem:symplectic2} applies immediately when $\ker \alpha$ has a dense orbit, since this clearly implies the conditions. If $\ker \alpha$ only has orbits dense in the fiber of some circle factor, see Lemma \ref{lem:symplectic}.}
\end{remark}

\begin{definition}
\label{def:stand-gens}
Let $\mf g = \mf{sl}(2,\R)$. We call $\begin{pmatrix} 0 & 1 \\ 0 & 0 \end{pmatrix}$ and $\begin{pmatrix}0 & 0 \\ 1 & 0 \end{pmatrix}$ the {\normalfont standard unipotent generators}, and $\begin{pmatrix} 1 & 0 \\ 0 & -1 \end{pmatrix}$ the {\normalfont corresponding neutral element.}
Let $\mf h = \Lie(\Heis)$, where $\Heis$ is the standard 3-dimensional Heiseberg group, so that $\mf h = \set{\begin{pmatrix}0 & x & z \\ 0 & 0 & y \\0 & 0 & 0 \end{pmatrix} : x,y,z \in \R}.$ We call $\begin{pmatrix} 0 & 1 & 0 \\ 0 & 0 & 0 \\ 0 & 0  & 0 \end{pmatrix}$ and $\begin{pmatrix} 0 & 0 & 0 \\ 0 & 0 & 1 \\ 0 & 0 & 0 \end{pmatrix}$ the {\normalfont standard unipotent generators} and $\begin{pmatrix} 0 & 0 & 1 \\ 0  & 0 & 0 \\ 0 & 0 & 0 \end{pmatrix}$ the {\normalfont corresponding neutral element. }
\end{definition}

Notice that in both $\mf{sl}(2,\R)$, and $\mf h$, the map which multiplies one of the standard unipotent generators by $\lambda$, the other by $\lambda^{-1}$ and fixes the corresponding neutral element is an automorphism of the Lie algebra. Let $\mc G_{\set{\alpha,-c\alpha}}$ be the group $\mc P_{\set{\alpha,-c\alpha}} / \mc C_{\set{\alpha,-c\alpha}}$.

{
The following proposition classifies which groups can appear as $\mc G_{\set{\alpha,-c\alpha}}$. It works extremely generally, requiring only the topological Cartan condition (together with the assumption that $\mc G_{\set{\alpha,-c\alpha}}$ is a Lie group).
}

\begin{proposition}
\label{prop:symplectic-possibilities}
{If $\R^k \curvearrowright X$ is a topological Cartan action and $\chi,-c\chi \in \Delta$ satisfy that $\mc G_{\set{\chi,-c\chi}} := \mc P_{\set{\chi,-c\chi}} / \mc C_{\set{\chi,-c\chi}}(x_0)$ is a Lie group for some $x_0 \in X$, then $\mc G_{\set{\chi,-c\chi}}$} is either locally isomorphic to $\R^2$, $PSL(2,\R)$ or the Heisenberg group. Furthermore, in the last two cases, the flows $\eta^\chi$ and $\eta^{-c\chi}$ are the one-parameter subgroups of the standard unipotent generators and the corresponding neutral element generates a one-parameter subgroup of the topological Cartan action. 
\end{proposition}

\begin{proof}
$\mc G_{\set{\chi,-c\chi}}$ is a Lie group by Lemma \ref{lem:symplectic2}, which is generated by two one-parameter subgroups corresponding to the flows $\eta^\chi$ and $\eta^{-c\chi}$. Let $v_\pm$ denote the elements of $\Lie(\mc G_{\set{\chi,-c\chi}})$ which generate $\eta^{\chi}$ and $\eta^{-c\chi}$, respectively, and assume they don't commute (if they commute we are done). By Proposition \ref{prop:integrality}, the action of $a \in \R^k$ intertwines the action of $\mc G_{\set{\chi,-c\chi}}$ by automorphisms, and $v_0 = [v_+,v_-]$ is an eigenspace with eigenvalue $e^{(1-c)\chi(a)}$. 
By Lemma \ref{lem:lip-to-holder}, the action of the one-parameter subgroup generated by $v_0$ is H\"older, so the distance in the Lie group is distorted in only a H\"older way. So for each element $a \in \R^k$, if $c > 1$ and $\chi(a) > 0$, each point in the orbit of the one parameter subgroup generated by $v_0$ will diverge at an exponential rate. Similarly, one may see exponential expansion or decay properties for different choices of $a$ and different possibilities for $c$. Therefore, if $c\not= 1$, the orbit of this one-parameter subgroup is contained in the coarse Lyapunov submanifold for either $\chi$ or $-c\chi$. Since Cartan implies that each coarse Lyapunov foliation is one-dimensional, $c = 1$ and $v_0$ is neither expanded nor contracted. Similarly, since $[v_0,v_\pm]$ must be an eigenvalue of the automorphism corresponding to $a$ with eigenvalue $e^{\pm \chi(a)}$, $[v_0,v_{\pm}] \subset \R v_\pm$. In particular, as a vector space $\Lie(\mc G_{\set{\chi,-c\chi}})$ is generated by $\set{v_-,v_0,v_+}$. One quickly sees that the only possibilities for the 3-dimensional Lie algebras are the ones listed.

Assume $c = 1$ and let $f^\chi$ denote the flow generated by $v_0$. We claim there exists $b \in \R^k$ such that $f^\chi_t(x) = (tb) \cdot x$.  We first show this for a sufficiently small, fixed $t$. Therefore, let $f = f^\chi_t$ and $t$ be small enough so that $d(f(x),x) < \delta$ for every $x \in X$.  Notice that $f(a \cdot x) = a \cdot f(x)$, since by Proposition \ref{prop:integrality}, $v_0$ is an eigenvector of eigenvalue 0 for the resulting automorphism of $\mc G_{\set{\chi,-\chi}}$. Fix a regular element $a_1 \in \R^k$. For each $x$, there exists a path of the form $\rho_x = t_1^{(\alpha_1)} * \dots * t_m^{(\alpha_m)} * s_1^{(\beta_1)} * \dots * s_l^{(\beta_l)} * b$ such that $f(x) = \rho_x * x$, and $\alpha_1,\dots,\alpha_m, \beta_1,\dots,\beta_l$ is a circular ordering on the negative weights and positive weights, respectively (see Definition \ref{def:circular-ordering}). 
 We claim that $t_i = 0$ for every $i$. Indeed, by applying $a_1$ to $\rho$, we know that the $\alpha_i$ components will all expand, and the $b$ and $s_j$ components remain small. Thus, if $\delta$ is sufficiently small (relative to the injectivity radius of $X$), we may conclude that $d(f(x),x) > \delta$ for some $x \in X$, a contradiction. A similar argument shows that $s_j = 0$ for every $j$.

Therefore, $f(x) = b_x \cdot x$ for some $b_x \in \R^k$. Notice that if $\Stab_{\R^k}(x) = \set{0}$, $b_x$ is unique and continuously varying. Since $f$ commutes with the $\R^k$ action, $f(a \cdot x) = b_{a\cdot x}a \cdot x = ab_x \cdot x$, thus $b_x$ is constant on free orbits. Since there is a dense $\R^k$ orbit, $b_x$ is constant, and $f(x) = bx$ for all $x \in X$, as claimed.

Thus we have shown that each $f^\chi_t$ has an associated $b(t) \in \R^k$ such that $f^\chi_t(x) = b(t) \cdot x$ for sufficiently small $t$. It is easy to see that $b(t)$ varies continuously with $t$ from the construction above, and that the map $t \mapsto b(t)$ is a homomorphism from $\R$ to $\R^k$. Hence, $f^\chi_t(x) = (tb) \cdot x$ for some unique $b \in \R^k$.
\end{proof}

\begin{lemma}
\label{lem:symplectic}
Let $\R^k \curvearrowright X$ be a {topological} Cartan action satisfying the \hyperlink{h-r-a}{higher rank assumptions}, and suppose that $\alpha, -c\alpha \in \Delta$ are negatively proportional weights. Then the action of $\mc P_{\set{\alpha,-c\alpha}}$ factors through the action of a Lie group locally isomorphic to $\R^2$, some cover of $PSL(2,\R)$ or $\Heis$. 
\end{lemma}

\begin{proof}
We combine the techniques of the previous arguments. Assumption \ref{HR:b} of the \hyperlink{h-r-a}{higher rank assumptions} implies that either $\ker \alpha$ has a dense orbit (in which case we allude to Lemma \ref{lem:symplectic2}), or $\ker \alpha$ has a dense orbit on a fiber of a circle factor. In the latter case, it must have a dense orbit on every fiber, since $\R^k$ can take one fiber to any other, and the action intertwines the action of $\ker \alpha$. {Since the fibers of the circle factor are saturated by all coarse Lyapunov exponents (and hence contain $\mc P_{\alpha,-c\alpha}$-orbits), we may use Lemma \ref{lem:symplectic2} and Proposition \ref{prop:symplectic-possibilities}} to get that each fiber has an action of a group locally isomorphic to $\R^2$, $\Heis$ or a cover of $PSL(2,\R)$, but that the group and action may, a priori, depend on the basepoint.

Now, simply notice again that $\mc C_{\alpha,-c\alpha}(ax) = \psi_a(\mc C_{\alpha,-c\alpha}(x))$ by \eqref{eq:renormalization} and definition of $\psi_a$ (see Definition \ref{def:P-groups}) for every $a$. But since $\psi_a$ descends to automorphisms of $\R^2$, $\Heis$ and any cover of $SL(2,\R)$, we conclude that $\psi_a(\mc C_{\alpha,-c\alpha}(x)) = \mc C_{\alpha,-c\alpha}(x)$, and therefore the result.
\end{proof}

\section{Homogeneity from Pairwise Cycle Structures}
\label{sec:pairwise-sufficient}
Assuming constant commutator relations,  we now construct a homogeneous structure of a Lie group for totally Cartan actions.  
Propositions \ref{prop:base base relations} and \ref{prop:symplectic-possibilities}  give partial algebraic structures corresponding to certain dynamically defined objects (commutators in stable manifolds and symplectic sub-accessibility classes, respectively). We carefully piece together such structures to build one on $X$. In particular, we  use specific, computable relations between the flows of negatively proportional weights provided by the classification in Proposition \ref{prop:symplectic-possibilities}. We will see that such relations yield a canonical presentation of paths in an open neighborhood of the identity in a group $\hat{\mc P}$ (recall Definition \ref{def:P-groups}).  This procedure is analagous to a K-theoretic argument that has appeared in works on local rigidity \cite{DamjanovicKatok2011}, and was generalized to other settings in \cite{MR2672298,vinhageJMD2015,Vinhage:2015aa}.

Our approach is to find canonical presentations for words in $\hat{\mc P}$ using only commutator relations and symplectic relations which we assume are constant and well-defined. By fixing a regular element, we will be able to use such relations to rearrange the terms in an open set of words to write them using only stable legs, then only unstable legs, then the action (Proposition \ref{stable unstable cycle decomposition}). This will imply that the quotient group of $\hat{\mc P}$ by the commutator  is locally Euclidean, which allows for the application of classical Lie criteria. The core of the approach is Lemma \ref{open dense commutation}, which gives the ability to commute stable and unstable paths.

\begin{definition}
\label{def:const-pairwise}
Fix a topological Cartan action $\R^k \curvearrowright X$. We say that the action has {\normalfont constant pairwise cycle structure} if 

\begin{enumerate}[label=(CPCS\arabic*)]
\item \label{CPCS1} for each nonproportional pair $\alpha,\beta \in \Delta$, {  $\gamma \in D(\alpha,\beta)$,} and fixed $s,t \in \R$, $\rho^{\alpha,\beta}_\gamma(s,t,x)$ (as defined in Lemma \ref{lem:comm-relation}) is independent of $x$ and 
\item \label{CPCS2} for each $\alpha \in \Delta$ such that $-c\alpha \in \Delta$ for some $c > 0$, the action of $\mc P_{\set{\alpha,-c\alpha}}$ factors through a Lie group action.
\end{enumerate}
\end{definition}

The main goal of this section is to prove the following.

\begin{theorem}
\label{thm:pairwise-to-homo}
If $\R^k \curvearrowright X$ is a topological Cartan action with constant pairwise cycle structure, then the action is topologically conjugate to a homogeneous action.
\end{theorem}

{
\begin{remark}
Theorem \ref{thm:pairwise-to-homo} does {\it not} require the higher-rank assumptions \ref{HR:a} and \ref{HR:b}. In particular, it still holds in the presence of rank-one factors. Notice that \ref{CPCS1} and \ref{CPCS2} are implied by \ref{HR:a} and \ref{HR:b}, by  Lemma \ref{lem:symplectic} and Proposition \ref{prop:base base relations}.
\end{remark}
}

\subsection{Stable-unstable-neutral presentations}
\label{subsec:sun-presentations}


For the remainder of Section \ref{sec:pairwise-sufficient}, we will assume that $\R^k \curvearrowright X$ is a topological Cartan action with constant pairwise cycle structures (ie, that conditions \ref{CPCS1} and \ref{CPCS2} hold). We let $\rho^{\alpha,\beta}_\gamma(s,t)$ denote the common value of $\rho^{\alpha,\beta}_\gamma(s,t,x)$ for $\alpha,\beta \in \Delta$ and $\gamma \in D(\alpha,\beta)$ (which is guaranteed to be constant by constant pairwise cycle structures). 
Recall Definition \ref{def:sub-paths-spaces}, and let $\mc P = \mc P_{\Delta}$. 

\begin{definition}\label{def:scriptG} Let $\mc C'$ be the smallest closed normal subgroup containing all cycles of the form $[s^{(\alpha)},t^{(\beta)}] * \rho^{\alpha,\beta}(s,t)$ as described in Lemma \ref{lem:comm-relation} and any element of $\mc P_{\set{\alpha,-c\alpha}}$ which factors through the identity of the Lie group action provided by  \ref{CPCS2}. Since such cycles are cycles at every point by assumption, $\mc C' \subset \mc C(x)$ for every $x \in X$ and $\mc C'$ is normal. Let $\mc G = \mc P / \mc C'$ be the quotient of $\mc P$ by $\mc C'$.
\end{definition}



Fix $a_0 \in \R^k$, a regular element. The goal of this subsection is to show that any $\rho \in \mc G$ can be reduced (via the relations in $\mc C'$) to some $\rho_+ * \rho_- * \rho_0$, with $\rho_+$ having only terms $t^{(\chi)}$ with $\chi(a_0) > 0$, $\rho_-$ having only terms $t^{(\chi)}$ with $\chi(a_0) < 0$, and $\rho_0$ being products of neutral elements generated by symplectic pairs (see Definition \ref{def:stand-gens}). {We will do so by first embedding $\mc G$ in a larger group that incorporates the Cartan action, and proving this decomposition for the larger group (see Proposition \ref{stable unstable cycle decomposition}).} We begin by identifying well-behaved subgroups of $\mc G$. Given a subset $\Omega \subset \Delta$, let $\mc G_\Omega$ denote the subgroup of $\mc G$ generated by the flows $\eta^\chi$, $\chi \in \Omega$. {Call $\Omega$ a {\it closed stable subset} if there exists $a_0 \in \R^k$ such that $\alpha(a_0) < 0$ for all $\alpha \in \Omega$, and if $\alpha_1,\alpha_2 \in \Omega$, then $D(\alpha_1,\alpha_2) \subset \Omega$.}

\begin{lemma}
\label{lem:stable-lie-factor}
If {$\Omega$ is a closed stable subset of weights}, then $\mc G_{\Omega}$ is a nilpotent Lie group.
\end{lemma}

\begin{proof}
Write $\Omega = \set{\beta_1,\beta_2,\dots,\beta_r}$ in a circular ordering, so that $[\beta_i,\beta_j] \subset \set{\beta_{i+1},\dots,\beta_{j-1}}$. 
We first claim that every $\rho \in \mc G_{\Omega}$ can be written as

\begin{equation}
\label{eq:nil-presentation}u_1^{(\beta_1)} * \dots * u_r^{(\beta_r)}
\end{equation}

\noindent and that the elements $u_i$ are unique. Indeed, any $\rho \in \mc G_{\Omega}$ can be written as $\rho = v_1^{(\beta_{i_1})} * \dots * v_k^{(\beta_{i_k})}$. We may begin by pushing all of the terms from the $\beta_1$ component to the left. {We do this by looking at the first term to appear with $\beta_1$, call it $v_j^{(\beta_{i_j})} = v_j^{(\beta_1)}$. We may commute it with $v_{j-1}^{(\beta_{i_{j-1}})}$ using the commutator relations which are constant from \ref{CPCS1}. In doing so, we may accumulate some $\rho(v_j^{\beta_1},v_{j-1}^{(\beta_{i_{j-1}})})$ which consists of terms without $\beta_1$, since we have quotiented by the commutator relations \eqref{eq:rho-characterization} (see Definition \ref{def:scriptG}). We may continue to push each $\beta_1$-leg to the left of the expression, so that in $\mc G$, $\rho$ is equal to $u_1^{(\beta_1)} * \rho'$, where $\rho'$ consists only of terms without $\beta_1$.}

We now proceed inductively. We may in the same way push all $\beta_2$ terms to the left. Notice now that each time we pass through, the ``commutator'' $\rho(u^{\beta_2},v^{\beta_j})$, $j \ge 3$ has no $\beta_1$ or $\beta_2$ terms.  Iterating this process yields the desired presentation of $\rho$.

Uniqueness will follow from an argument similar to Lemma \ref{lem:extending-charts}. 
Suppose that $u_1^{(\beta_1)} * \dots * u_r^{(\beta_r)} = v_1^{(\beta_1)} * \dots * v_r^{(\beta_r)}$, so that

\[ u_1^{(\beta_1)} * \dots * u_r^{(\beta_r)} * (-v_r)^{(\beta_r)} * \dots * (-v_1)^{(\beta_1)} \]

\noindent stabilizes every point of $X$. Picking some $a \in \ker \beta_r$ such that $\beta_i(a) < 0$ for all $i = 1,\dots,r-1$ implies that

\[ (e^{\beta_1(a)}u_1)^{(\beta_1)} * \dots * u_r^{(\beta_r)} * (-v_r)^{(\beta_r)} * \dots * (-e^{\beta_1}(a)v_1)^{(\beta_1)} \]

\noindent also stabilizes every point of $X$. Letting $a \to \infty$ implies that $(u_r-v_r)^{(\beta_r)}$ stabilizes every point of $X$. If $u_r \not= v_r$, applying an element which contracts and expands $\beta_r$ yields an open set of $t$ for which $t^{(\beta_r)}$ stabilizes every point of $X$. But this implies that $\eta^{\beta_r}$ is a trivial flow, a contradiction, so we conclude that $u_r = v_r$. This allows cancellation of the innermost term, and one may inductively conclude that $u_i = v_i$ for every $i= 1,2,\dots,r$.

Thus, every element of $\mc G_{\Omega}$ has a unique presentation of the form \eqref{eq:nil-presentation}. The map which assigns an element $\rho$ to such a presentation gives a an injective map from $\mc G$ to $\R^r$. By Lemma \ref{lem:continuity-criterion}, it will be continuous once its lift to $\mc P_{\Omega}$ is continuous. In each combinatorial cell $C_{\overline{\beta}}$, the map is given by composition of addition of the cell coordinates and functions $\rho^{\alpha,\beta}(\cdot,\cdot)$ evaluated on cell coordinates, which are continuous. Therefore, the lift is continuous, so the map from $\mc G_{\Omega}$ is continuous. 

So there is an injective continuous map from $\mc G_{\Omega}$ to a finite-dimensional space, and $\mc G_{\Omega}$ is a Lie group by Theorem \ref{lem:gleason-palais}. Fix $a$ which contracts every $\beta_i$. The fact that $\mc G_\Omega$ is nilpotent follows from Lemma \ref{lem:group-integrality}.
\end{proof}

Let $\chi \in \Delta$ be a weight such that $-c\chi \in \Delta$ for some $c$, and $\beta \in \Delta$ be any linearly independent weight. Let $\Omega = \set{ t\beta + s \chi : t \ge 0, s \in \R} \cap \Delta$, and $\Omega' = \set{ t\beta + s\chi : t > 0, s\in \R} \cap \Delta = \Omega \setminus \set{\chi,-c\chi}$.

\begin{proposition}
\label{prop:semi-structure}
If $\Omega$ is as above and $\rho \in \mc G_\Omega$ is any element, then $\rho = \rho_\chi * \rho_{\Omega'}$, where $\rho_\chi \in \mc G_{\set{\chi,-c\chi}}$, $\rho_{\Omega'} \in \mc G_{\Omega'}$. Furthermore, such a decomposition is unique.
\end{proposition}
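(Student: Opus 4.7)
\textbf{Proof proposal for Proposition \ref{prop:semi-structure}.}

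The plan is to treat $\mc G_{\Omega'}$ as a ``normal subgroup'' of $\mc G_\Omega$ in the presence of the symplectic generators $\{\chi,-c\chi\}$, and then perform an algebraic push argument to move all $\chi$ and $-c\chi$ legs to the left. The decomposition is then established by existence (via this push) and uniqueness (via a dynamical contraction argument).

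The first step is a combinatorial closure statement: if $\gamma_1,\gamma_2\in\Omega$ with at least one of them in $\Omega'$, then every weight appearing in $\rho^{\gamma_1,\gamma_2}$ lies in $\Omega'$. Indeed, by Lemma \ref{lem:group-integrality} together with the constant pairwise cycle structure assumption, the weights occurring in $\rho^{\gamma_1,\gamma_2}$ are of the form $u\gamma_1+v\gamma_2$ with $u,v\in\Z_{\ge0}$, $u+v\ge2$. Writing $\gamma_i=t_i\beta+s_i\chi$ with $t_i\ge0$, the $\beta$-coefficient of the resulting weight is $ut_1+vt_2$, which is strictly positive whenever $t_1>0$ or $t_2>0$. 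In particular, $[\mc G_{\{\chi,-c\chi\}},\mc G_{\Omega'}]\subset \mc G_{\Omega'}$ and $[\mc G_{\Omega'},\mc G_{\Omega'}]\subset \mc G_{\Omega'}$.

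For existence, I will define an ordering on words: declare a letter to be of \emph{symplectic type} if it lies in $\{\chi,-c\chi\}$ and of \emph{complementary type} if it lies in $\Omega'$. Every element of $\mc G_\Omega$ is represented by a word in such letters. Given such a word, I will argue by induction on the number of adjacent pairs (complementary letter, symplectic letter) that one can reduce the word to the desired form $\rho_\chi*\rho_{\Omega'}$. The inductive step uses the commutator relation of Lemma \ref{lem:comm-relation}: swapping an $\Omega'$-letter past a $\{\chi,-c\chi\}$-letter produces, via the relation in $\mc C'$, a finite number of new letters whose weights are commutators $[s^{(\alpha)},t^{(\gamma)}]$ with $\gamma\in\Omega'$. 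By the first step, all such commutator letters lie in $\Omega'$, so although the length of the word may grow, the new $\Omega'$-letters are appended to the right portion of the word, and the total number of inversions strictly decreases. Thus the process terminates in a word of the prescribed form.

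For uniqueness, suppose $\rho_\chi*\rho_{\Omega'} = \rho_\chi'*\rho_{\Omega'}'$ in $\mc G$. Then $(\rho_\chi')^{-1}*\rho_\chi = \rho_{\Omega'}'*\rho_{\Omega'}^{-1}$ is an element $h\in \mc G_{\{\chi,-c\chi\}}\cap \mc G_{\Omega'}$, and it suffices to show that this intersection is trivial. Choose $a\in\R^k$ with $\chi(a)=0$ and $\beta(a)=-1$; such an $a$ exists because $\chi$ and $\beta$ are linearly independent. The automorphism $\psi_a$ of $\hat{\mc P}$ descends to an automorphism $\bar\psi_a$ of $\mc G$ (it preserves the commutator and symplectic relations by construction). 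On $\mc G_{\{\chi,-c\chi\}}$, the automorphism $\bar\psi_a$ acts trivially on each generating one-parameter subgroup (since their weights vanish on $a$), so $\bar\psi_a(h)=h$. On $\mc G_{\Omega'}$, every weight satisfies $\gamma(a)=-t<0$ for $\gamma=t\beta+s\chi\in\Omega'$; iterating Lemma \ref{lem:group-integrality} shows that $\mc G_{\Omega'}$ is a topological group on which $\bar\psi_a^n$ acts as an exponential contraction towards the identity. Applying $\bar\psi_a^n$ to $h$ therefore gives $h=\bar\psi_a^n(h)\to e$, so $h=e$. This gives $\rho_\chi=\rho_\chi'$ and $\rho_{\Omega'}=\rho_{\Omega'}'$.

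The main obstacle I anticipate is formalizing the contraction argument in $\mc G_{\Omega'}$: one must verify that the quotient topology on $\mc G_{\Omega'}$ is Hausdorff enough that ``$\bar\psi_a^n(h)\to e$'' forces $h=e$. The cleanest route is to apply the constant pairwise cycle structure assumption together with an adaptation of Lemma \ref{lem:stable-lie-factor} to the ``half-space'' set $\Omega'$: since all weights in $\Omega'$ strictly contract under $a$, one obtains a canonical ordered presentation of $\mc G_{\Omega'}$ as in the proof of Lemma \ref{lem:stable-lie-factor}, and hence an injection of $\mc G_{\Omega'}$ into a finite-dimensional Euclidean space on which $\bar\psi_a$ is a genuine contraction.
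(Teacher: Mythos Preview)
Your proposal is correct and follows the same push-to-the-left approach as the paper, with your contraction argument for uniqueness supplying the detail behind the paper's terse ``it is clear that $\mc G_{\Omega'}\cap\mc G_{\set{\chi,-c\chi}}=\set{e}$.'' One small fix: your inversion count need not strictly decrease (new $\Omega'$-commutator letters may sit to the left of later symplectic letters), so induct instead on the number of symplectic letters, pushing the leftmost one fully to the left first; also, the closure $[\set{\chi,-c\chi},\Omega']\subset\Omega'$ follows directly from the description of $D(\gamma_1,\gamma_2)$ in Lemma~\ref{lem:comm-relation} without invoking Lemma~\ref{lem:group-integrality}.
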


\begin{proof}
The proof technique is the same as that of Lemma \ref{lem:stable-lie-factor}. Using constancy of commutator relations, we may push any elements $u^{(\chi)}$ or $u^{(-c\chi)}$ to the left, accumulating elements of $\mc G_{\Omega'}$ as the commutator, as well as cycles on the right (on the right because they are cycles at every point, so we may conjugate them by whatever appears to their right). If $\rho_\chi * \rho_{\Omega'} = \rho_\chi' * \rho_{\Omega'}'$, then $(\rho_\chi')^{-1} * \rho_\chi = \rho_{\Omega'}'  * \rho_{\Omega'}^{-1}$. But $\mc G_{\Omega'}$ is a a subgroup of $\mc G_\Omega$ and it is clear that $\mc G_{\Omega'} \cap \mc G_{\set{\chi,-c\chi}} = \set{e}$. Therefore, $\rho_\chi' = \rho_\chi$ and $\rho_{\Omega'} = \rho_{\Omega'}'$, and the decomposition is unique.
\end{proof}

\begin{corollary}
\label{cor:semi-stabilizer}
If $\Omega$ is as above, $\mc G_\Omega$ is a Lie group. Furthermore, $\mc G_\Omega$ has the semidirect product structure $\mc G_{\set{\chi,-c\chi}} \ltimes \mc G_{\Omega'}$, with $\mc G_{\set{\chi,-c\chi}}$ locally isomorphic to $\R^2$, the Heisenberg group, or (some cover of) $PSL(2,\R)$, and $\mc G_{\Omega'}$ a nilpotent group.
\end{corollary}

\begin{proof}
Notice that in the proof of Proposition \ref{prop:semi-structure}, we get a unique expression by moving the elements of $\mc G_{\set{\chi,-c\chi}}$ to the left, and doing so changes only the $\mc G_{\Omega'}$ element. Therefore, the decomposition gives $\mc G_\Omega$ the structure of a semidirect product of $\mc G_{\set{\chi,-c\chi}}$ and $\mc G_{\Omega'}$. The action of $\mc G_{\set{\chi,-c\chi}}$ on $\mc G_{\Omega'}$ is continuous since the action of its generating subgroups corresponding to $\chi$ and $-c\chi$ are given by commutators, which are continuous. Therefore, $\mc G_\Omega$ is the semidirect product of the Lie group $\mc G_{\set{\chi,-\chi}}$ with the Lie group $\mc G_{\Omega'}$, with a continuous representation, and is hence a Lie group. The possibilities for $\mc G_{\set{\chi,-c\chi}}$ follow from Proposition \ref{prop:symplectic-possibilities}.
\end{proof}

The crucial tool in showing that $\mc P / \mc C'$ is Lie is to show that it is locally Euclidean. To that end, the crucial result is Lemma \ref{open dense commutation}.
{Fix an Anosov element $a_0 \in \R^k$, and recall that}

\[ \Delta^+(a_0) = \set{\chi \in \Delta : \chi(a_0) > 0}, \qquad \Delta^-(a_0) = \set{ \chi \in \Delta : \chi(a_0) < 0},\] 

\noindent and set $\mc G_+ = \mc G_{\Delta^+(a_0)}$ and $\mc G_- = \mc G_{\Delta^-(a_0)}$.
Consider a $\mc{G}_{\set{\chi, -c\chi}}$ locally isomorphic to $\Heis$ or $SL(2, \R)$.  Let $\mc{D}_{\chi} $ be the subgroup of $\mc{G}_{\{\chi, -c\chi\}}$ generated by the neutral element corresponding to the generators $\eta^\chi$ and $\eta^{-\chi}$ (see Definition \ref{def:stand-gens}). 
Let $\mc{D} \subset \mc{G}$ be the group generated by all such $\mc{D}_{\chi} $ as a subgroup of $\mc G$. Note that the action of each element of $\mc D$ coincides with the action of some element of the $\R^k$ action by Proposition \ref{prop:symplectic-possibilities}. However, the action of $\mc D$ may not be faithful, as is the case for the Weyl chamber flow on $SL(3,\R)$ where there are 3 symplectic pairs of weights, each generating one-parameter subgroups of the diagonal subgroup, which is isomorphic to  $\R^2$.

\begin{lemma}
\label{lem:CD-normal}
Suppose the $\R^k$ orbit of $x_0$ is dense.  Then 
if $d \in \mc{D}$ is a cycle at $x_0$, then $d$ is a cycle everywhere. 
\end {lemma}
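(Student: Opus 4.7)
The plan is to reduce the lemma to a standard fact about fixed sets of abelian group actions, after first observing that every element of $\mc D$ acts on $X^E$ as an element of the ambient $\R^k$-action.

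The first step is to invoke Proposition \ref{prop:symplectic-possibilities}. Whenever $\mc G_{\set{\chi,-c\chi}}$ is isomorphic to $\Heis$ or a cover of $PSL(2,\R)$, the proposition says that the one-parameter subgroup generated by the neutral element coincides with a one-parameter subgroup $t \mapsto t b_\chi$ of the Cartan action on $X^E$. Hence each generator of $\mc D$ acts on $X^E$ as translation by some element of $\R^k$. Writing any $d \in \mc D$ as a product $d = d_1 \cdots d_n$ of such generators, with $d_i$ acting as $a_i \in \R^k$, and using that $\R^k$ is abelian, the action of $d$ on $X^E$ coincides with translation by $a := a_1 + \cdots + a_n \in \R^k$. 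This element $a$ is determined by $d$ alone (not by the chosen word representation), because it is specified by the continuous action via any point with locally free $\R^k$-orbit.

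The second step is pure abelian commutation. The assumption that $d$ is a cycle at $x_0$ becomes $a \cdot x_0 = x_0$, and commutativity of $\R^k$ gives
\[
a \cdot (b \cdot x_0) = b \cdot (a \cdot x_0) = b \cdot x_0 \qquad \text{for every } b \in \R^k.
\]
Thus the closed set $\Fix(a) \subset X^E$ contains the orbit $\R^k \cdot x_0$, which is dense by hypothesis, so $\Fix(a) = X^E$. Therefore $d \cdot x = a \cdot x = x$ for all $x \in X^E$, i.e., $d$ is a cycle everywhere.

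I do not expect a serious obstacle. The only mildly subtle point is checking that $a$ is well defined at the level of the $\mc G$-action on $X^E$ rather than only at the level of words in the generators, but this is automatic once Proposition \ref{prop:symplectic-possibilities} identifies each generator's action with a fixed element of the abelian group $\R^k$.
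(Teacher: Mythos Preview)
Your proof is correct and follows essentially the same approach as the paper's: both observe (via Proposition~\ref{prop:symplectic-possibilities}) that the action of $\mc D$ coincides with that of the Cartan action, hence commutes with it, so a cycle at $x_0$ is a cycle along the dense orbit $\R^k\cdot x_0$ and therefore everywhere. Your version simply makes the identification of $d$ with a specific element $a\in\R^k$ more explicit than the paper does.
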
 

\begin{proof}
{  The action of each generator of $\mc D$ coincides with that of some element of the Cartan action by definition of $\mc D_\chi$ and $\mc D$}, therefore the action of $\mc D$ commutes with the Cartan action. Then if $d$ is a cycle at  $x_0$, $d$ is a cycle at any point in $\R ^k (x_0)$, hence everywhere as  the $\R^k$ orbit of $x_0$ is dense.  
 \end{proof}

 Denote by $\mc{C}_D$ the group of cycles in $\mc{D}$ at a point $x_0$ with a dense $\R^k$-orbit,  and set $G = \mc{G}/\mc{C}_D$ and $D= \mc{D}/\mc{C}_D$. Notice that $G$ and $D$ are topological groups since $\mc C_D$ is a closed normal subgroup by Lemma \ref{lem:CD-normal}. Furthermore, let $G_\pm$ denote the projections of the groups $\mc G_\pm$ to $G$. The following lemma is immediate from the fact that the action of $D$ coincides with {some subgroup} of the Cartan action and Corollary \ref{cor:semi-stabilizer}:
 


 \begin{lemma}
 \label{lem:D-commutators}
 Let $\rho ^{\pm} \in G _{\pm}$ and $\rho ^0 \in D$.  Then 
$$\rho ^0 \rho ^{\pm} ( \rho ^0) ^{-1}\in G_{\pm}.$$
 \end{lemma}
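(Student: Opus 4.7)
The plan is to reduce to checking the statement on generators: take $\rho^+$ of the form $t^{(\beta)}$ for some $\beta \in \Delta^+(a_0)$ and $\rho^0$ lying in the image of a single factor $\mc D_\chi$ inside $D$; the general case then follows by expressing $\rho^+ \in G_+$ as a product, and the symmetric argument handles $\rho^- \in G_-$. By Proposition \ref{prop:symplectic-possibilities}, in the relevant non-abelian cases the neutral element $\rho^0$ corresponds, via the Cartan action, to a one-parameter subgroup of $\R^k$ along some $b \in \R^k$, and its infinitesimal generator $v_0$ has weight zero under $\bar\psi_a$ for every $a \in \R^k$ (in both the $\Heis$ and the $PSL(2,\R)$-cover cases, $v_0 = [v_+,v_-]$ with $c=1$).

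First I would handle the case where $\beta$ is linearly independent from $\chi$: choose $\Omega = \{t\beta + s\chi : t \ge 0,\ s \in \R\} \cap (\Delta \setminus E)$ (or the $t \le 0$ analogue if needed) so that $\chi,-c\chi,\beta \in \Omega$, and apply Corollary \ref{cor:semi-stabilizer} to realize $\mc G_\Omega$ as the Lie group $\mc G_{\{\chi,-c\chi\}} \ltimes \mc G_{\Omega'}$, which contains both $\rho^0$ and $t^{(\beta)}$. Since $v_0$ is fixed by each $\bar\psi_a$, $\Ad(\rho^0)$ commutes with $\bar\psi_a$ on $\Lie(\mc G_{\Omega'})$, so it preserves the weight-space decomposition provided by Proposition \ref{prop:integrality}. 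Each weight space in $\Lie(\mc G_{\Omega'})$ is one-dimensional by the Cartan condition, so $\Ad(\rho^0) v_\beta = \lambda v_\beta$ for some nonzero $\lambda \in \R$; exponentiating gives $\rho^0 \cdot t^{(\beta)} \cdot (\rho^0)^{-1} = (\lambda t)^{(\beta)}$ inside $\mc G_\Omega \subset \mc G$, an identity that descends to $G = \mc G / \mc C_D$ and places the conjugate in $G_+$ because $\beta \in \Delta^+(a_0)$.

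The remaining case $\beta = \pm c\chi$ takes place entirely within the Lie group $\mc G_{\{\chi,-c\chi\}}$, where the neutral one-parameter subgroup scales each unipotent generator by an explicit positive eigenvalue coming from the standard structure of $\Heis$ or the cover of $PSL(2,\R)$ (see Definition \ref{def:stand-gens}), again keeping the conjugate inside $G_+$. The main delicate step I expect is proving that $\Ad(\rho^0)$ preserves the weight-space decomposition of $\Lie(\mc G_{\Omega'})$ rather than introducing cross-weight terms; this is precisely where the $\bar\psi_a$-invariance of $v_0$ from Proposition \ref{prop:symplectic-possibilities} together with one-dimensionality of the coarse Lyapunov directions is indispensable, since higher-dimensional weight spaces would in principle allow off-diagonal contributions that could push the conjugate outside $G_+$.
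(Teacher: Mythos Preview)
Your proof is correct and follows essentially the same approach the paper intends. The paper's justification is a single line: ``immediate from the fact that the action of $D$ coincides with that of the Cartan action and Corollary~\ref{cor:semi-stabilizer}.'' Your argument unpacks exactly these two ingredients: Corollary~\ref{cor:semi-stabilizer} supplies the Lie group $\mc G_\Omega = \mc G_{\{\chi,-c\chi\}}\ltimes \mc G_{\Omega'}$ in which the computation takes place, and ``the action of $D$ coincides with the Cartan action'' is your observation that $v_0$ has weight zero under every $\bar\psi_a$, so $\Ad(\rho^0)$ commutes with $d\bar\psi_a$ and therefore preserves each one-dimensional weight space. Your case split (linearly independent versus $\beta=\pm c\chi$) and reduction to generators are the natural way to make the one-liner precise.

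One small remark: your appeal to ``the Cartan condition'' for one-dimensionality of the weight spaces in $\Lie(\mc G_{\Omega'})$ is slightly elliptical. What really forces it is that the commutator relations defining $\mc C'$ express each $[v_{\gamma_1},v_{\gamma_2}]$ as a linear combination of the generators $v_\gamma$ themselves (via $\rho^{\gamma_1,\gamma_2}$), so $\Lie(\mc G_{\Omega'})$ is spanned as a vector space by $\{v_\gamma:\gamma\in\Omega'\}$, each of which is a weight vector by Proposition~\ref{prop:integrality}(1). The Cartan condition enters only in that each $v_\gamma$ is one-dimensional to begin with. This is a minor point of exposition, not a gap.
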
 
 
 {We require the following computational lemma in the subsequent result:

\begin{lemma} \label{horocycle commutators}
Let $H$ cover $PSL(2, \R)$ and  denote by $h^+ _s$ and $h^- _s$ the unipotent flows in $H$ covering $\begin{pmatrix} 1 & s \\ 0 & 1 \end{pmatrix}$ and $\begin{pmatrix} 1 & 0 \\t & 1 \end{pmatrix} $ respectively.   Also let $a_t = \begin{pmatrix} t & 0 \\ 0 & \frac{1}{t} \end{pmatrix}$.  

If $s  t \neq -1$ then  
$$ h^+ _s   h ^-  _t = h^- _{\frac{t}{1+st}} h^+ _{s(1+st)} a_{1+st} z(s,t)$$
where $z(s,t) $ belongs to the center of $H$. If $s$ and $t$ are sufficiently small, $z(s,t) = e$.
\end{lemma}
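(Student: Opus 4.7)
The plan is a direct matrix calculation in $SL(2,\R)$, followed by a lifting argument to $H$.

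First I would verify the identity in $PSL(2,\R)$ itself by straightforward matrix multiplication. Using the standard representatives
\[ h^+_s = \begin{pmatrix} 1 & s \\ 0 & 1 \end{pmatrix}, \quad h^-_t = \begin{pmatrix} 1 & 0 \\ t & 1 \end{pmatrix}, \quad a_r = \begin{pmatrix} r & 0 \\ 0 & 1/r \end{pmatrix}, \]
one computes $h^+_s h^-_t = \begin{pmatrix} 1+st & s \\ t & 1 \end{pmatrix}$, and on the other side
\[ h^-_{t/(1+st)} \, h^+_{s(1+st)} \, a_{1+st} = \begin{pmatrix} 1+st & s \\ t & (s(1+st) \cdot t/(1+st) + 1)/(1+st) \end{pmatrix} = \begin{pmatrix} 1+st & s \\ t & 1 \end{pmatrix}, \]
valid whenever $1+st \neq 0$. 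Thus the identity holds in $PSL(2,\R)$ exactly (with no central factor).

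Next I would pull this back to the cover $H$. Since the covering map $\pi : H \to PSL(2,\R)$ is a homomorphism and the one-parameter subgroups $h^\pm_s$ and $a_r$ in $H$ are defined as lifts of the corresponding subgroups in $PSL(2,\R)$, the quotient
\[ z(s,t) := \bigl(h^-_{t/(1+st)} \, h^+_{s(1+st)} \, a_{1+st}\bigr)^{-1} \, h^+_s \, h^-_t \]
lies in $\ker \pi$ for all $(s,t)$ with $1+st \neq 0$. Any discrete normal subgroup of a connected Lie group is central, so $z(s,t)$ belongs to $Z(H)$, as required.

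For the final assertion, I would argue by connectedness and discreteness. The map $z : \{(s,t) : 1+st \neq 0\} \to Z(H)$ is continuous since both factors in its definition are continuous one-parameter subgroups. Since $\ker \pi$ is discrete, $z$ is locally constant. Moreover, direct substitution gives $z(0,0) = e$ (indeed both sides trivially collapse to the identity at $s = t = 0$). The set $\{(s,t) : 1+st > 0\}$ is a connected neighborhood of the origin in $\R^2$, so $z \equiv e$ on this set, and in particular for $(s,t)$ sufficiently small. The routine calculation in the first step is the only real work; the main conceptual point, which I do not expect to be an obstacle, is simply the observation that central discrete kernels force $z$ to be locally constant.
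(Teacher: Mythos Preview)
Your proof is correct and follows essentially the same approach as the paper: a direct matrix computation in $PSL(2,\R)$ verifying the identity exactly, followed by the observation that the discrepancy in the cover $H$ lies in $\ker\pi \subset Z(H)$. Your continuity/discreteness argument for $z(s,t)=e$ near the origin is a slightly more detailed version of the paper's one-line remark that ``all calculations take place in a neighborhood of $e$,'' but the content is identical.
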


\begin{proof} If $s  t \neq -1$ then 
\[\begin{pmatrix} 1 & s \\ 0 & 1 \end{pmatrix} \begin{pmatrix} 1 & 0 \\t & 1 \end{pmatrix} = \begin{pmatrix}1 & 0 \\ t/(1+st) & 1 \end{pmatrix} \begin{pmatrix} 1 & s(1+st) \\0 & 1 \end{pmatrix} \begin{pmatrix} 1 + st & 0 \\ 0 & 1/(1+st) \end{pmatrix} .\]
Therefore the claim holds in $PSL(2,\R)$, and thus in $H$ up to some central element $z(s,t)$.  If $s$ and $t$ are sufficiently small, all calculations take place in a neighborhood of $e$, so the central element is $z(s,t) = e$.
\end{proof} 

}

  \begin{lemma} \label{open dense commutation}
 For an open set of elements  $\rho ^+ \in G _+$, $\rho ^-  \in G _{-}$ containing $\set{e} \times \set{e}$ there exist 
 $(\rho ^+) ' \in G_+$, $(\rho ^- )' \in G_-$
 and $\rho ^0 \in D$ such that 
 $$ \rho ^+ * \rho ^-  = (\rho ^-) ' * (\rho ^+ )'  * \rho ^0 .$$

\noindent Furthermore, $(\rho^+)'$, $(\rho^-)'$ and $\rho^0$ depend continuously on $\rho^+$ and $\rho^-$.
  \end{lemma}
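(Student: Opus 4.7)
The plan is to build the decomposition by an inductive swap procedure that commutes the legs of $\rho^-$ leftward through $\rho^+$ one at a time, accumulating corrections in $D$. Under the constant pairwise cycle structure hypothesis, applying the reasoning of Proposition \ref{prop:semi-structure} to the stable sets $\Delta^+(a_0)$ (contracted by $-a_0$) and $\Delta^-(a_0)$ (contracted by $a_0$) shows that $G_+$ and $G_-$ are nilpotent Lie groups, so every element of $G_\pm$ has a unique canonical presentation as an ordered product along a fixed circular ordering of $\Delta^\pm(a_0)$. I fix such presentations $\rho^+ = u_1^{(\chi_1^+)} * \cdots * u_m^{(\chi_m^+)}$ and $\rho^- = s_1^{(\chi_1^-)} * \cdots * s_n^{(\chi_n^-)}$.

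The elementary swap will rewrite an adjacent pair $u^{(\alpha)} * t^{(\beta)}$ with $\alpha \in \Delta^+(a_0)$ and $\beta \in \Delta^-(a_0)$. When $\alpha$ and $\beta$ are linearly independent, constancy of the pairwise cycle structure together with Lemma \ref{lem:comm-relation} gives
\[
u^{(\alpha)} * t^{(\beta)} = t^{(\beta)} * u^{(\alpha)} * \rho^{\alpha,\beta}(u,t),
\]
and by Lemmas \ref{lem:group-integrality} and \ref{lem:at-least-one} together with axiom (1) of Definition \ref{def:top-cartan}, every leg of $\rho^{\alpha,\beta}(u,t)$ is of the form $c u^p t^q$ in a weight $\chi = p\alpha + q\beta$ with $p,q \ge 1$. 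When $\beta = -c\alpha$, the pair generates an $\R^2$, Heisenberg, or cover of $PSL(2,\R)$ subgroup of $\mc G$ by Proposition \ref{prop:symplectic-possibilities}, and Lemma \ref{horocycle commutators} provides an explicit swap whose new positive and negative legs depend continuously on $(u,t)$ and whose central correction sits in $D$.

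The iterative procedure will push each leg of $\rho^-$ leftward through the current positive block by repeated elementary swaps. Legs newly generated from linearly-independent commutators are sorted by the sign of their evaluation on $a_0$ and absorbed into the updated canonical forms of $G_+$ or $G_-$; legs generated by symplectic swaps are absorbed similarly, while the $D$-corrections are collected into a running element of $D$ and pushed to the right end using Lemma \ref{lem:D-commutators}, which conjugates $G_\pm$ into itself. The crucial estimate is the bilinear bound $p, q \ge 1$: every newly injected leg is at least first order in both a positive and a negative input, so after each full pass the total size of the remaining mixed legs is bounded by a constant times the product of the maximum current positive and negative leg sizes. For $(\rho^+,\rho^-)$ in a small enough neighborhood of $(e,e)$ this yields a geometrically convergent procedure with a well-defined limit triple $((\rho^-)',(\rho^+)',\rho^0)$.

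The hard part will be executing this iteration while controlling where each newly generated leg is inserted into the evolving word: a single elementary swap may inject legs of either sign at multiple positions inside the word, and I must verify that these can be consistently re-sorted into the unique canonical forms of $G_+$ and $G_-$ without destroying the quadratic shrinkage estimate that drives convergence. Once termination is established, continuity of $((\rho^-)',(\rho^+)',\rho^0)$ in $(\rho^+,\rho^-)$ will follow from continuity of each elementary swap together with uniformity of the convergence on the chosen small neighborhood of $(e,e)$.
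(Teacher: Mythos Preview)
Your approach is genuinely different from the paper's, and while it might be salvageable, it is both more complicated and has a real gap where the paper's argument is clean.

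The paper does \emph{not} set up a convergent iteration. Instead it runs a \emph{finite} induction on the index $k$ of the positive weights $\alpha_1,\dots,\alpha_n$ (in a fixed circular order), moving one positive leg $t_k^{(\alpha_k)}$ across the current negative block at each step. The key organizational device you are missing is a \emph{quadrant decomposition}: at step $k$ one splits $\Delta$ into $\{\alpha_k\}$, $\{-c_k\alpha_k\}$, and four sets $\Delta_1,\dots,\Delta_4$ determined by the two hyperplanes $\ker\alpha_k$ and $\{\chi(a_0)=0\}$. The point is that $\{\alpha_k\}\cup\Delta_2\cup\Delta_3$ and $\{\alpha_k\}\cup\Delta_1\cup\Delta_4$ are each \emph{stable} sets, so their associated subgroups are nilpotent and one can commute $t_k^{(\alpha_k)}$ past the $\Delta_3$-part (resp.\ $\Delta_4$-part) of $(\rho^-)'$ entirely inside a nilpotent group, with the generated commutator legs already cleanly sorted by sign. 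A single application of Lemma~\ref{horocycle commutators} handles the symplectic pair $\alpha_k,-c_k\alpha_k$, and the resulting $D$-element is pushed to the right by Lemma~\ref{lem:D-commutators}. After $n$ such steps the procedure terminates; openness comes solely from the condition $st\neq -1$ in the symplectic swap, which holds for small parameters.

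By contrast, your elementary swap $u^{(\alpha)}*t^{(\beta)}=t^{(\beta)}*u^{(\alpha)}*\rho^{\alpha,\beta}(u,t)$ with $\alpha\in\Delta^+(a_0)$, $\beta\in\Delta^-(a_0)$ produces legs in $D(\alpha,\beta)\setminus\{\alpha,\beta\}$ of \emph{both} signs, inserted to the right of $u^{(\alpha)}$. To absorb the newly created negative legs into the left $G_-$ block you must commute them back past positive legs of size $\sim\epsilon$ (not $\sim\epsilon^2$), generating further mixed legs; you then have to track a growing tree of insertions while maintaining your shrinkage estimate. You correctly identify this as ``the hard part,'' but you do not carry it out, and it is not obvious that the bookkeeping closes up: the number of legs can grow while their sizes shrink, and you would need uniform control of both to get convergence in $G_\pm$ and in $D$. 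The paper's quadrant trick sidesteps all of this by arranging that every commutation needed at step $k$ takes place inside a nilpotent subgroup where canonical forms are already available.
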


 \begin{proof}
Order the weights of $\Delta^+(a_0)$ and $\Delta^-(a_0)$ using a fixed circular ordering as $\Delta^+(a_0) = \set{\alpha_1,\dots,\alpha_n}$ and $\Delta^-(a_0) = \set{\beta_1,\dots,\beta_m}$. Since $\Delta^+(a_0)$ is a {closed} stable subset, $G_+$ is a nilpotent group. Therefore, we may write $\rho_+ = t_n^{(\alpha_n)} * \dots * t_1^{(\alpha_1)}$ for some $t_1,\dots,t_n \in \R$ {by Lemma \ref{lem:stable-lie-factor}}. 
We will inductively show that we may write the product $\rho^+ * \rho^-$ as $t_n^{(\alpha_n)} * \dots * t_k^{(\alpha_k)} * (\rho^-)' * s_{k-1}^{(\alpha_{k-1})} * \dots * s_1^{(\alpha_1)} * \rho^0$ for some $t_i,s_i \in \R$, $(\rho^-)' \in \mc G_-$ and $\rho^0 \in D$ (all of which depend on $k$). Our given expression is the base case $k = 1$.
 
 Suppose we have this for $k$. If $-c_k\alpha_k \in \Delta$, then it must be in $\Delta_-$. Let $l(k)$  denote the index for which $\beta_{l(k)} = -c_k \alpha_k$  if $- c_k \alpha _k$  is a weight.  Otherwise, since there is no weight negatively proportional to $\alpha_k$, we set $\beta_{l(k)} = -\alpha_k$ with $l(k)$ a  half integer 
  so that $-\alpha_k$ appears between $\beta_{l(k) - 1/2}$ and $\beta_{l(k) + 1/2}$. Then decompose $\Delta$ into six (possibly empty) subsets: $\set{\alpha_k}$, $\set{-c_k\alpha_k}$, $\Delta_1 = \set{ \alpha_l : l < k}$, $\Delta_2 = \set{\alpha_l : l > k}$, $\Delta_3 =  \set{ \beta_l : l < l(k)}$ and $\Delta_4 = \set{\beta_l : l > l(k)}$. See Figure \ref{fig:quadrants}.

\begin{figure}[!ht]
\begin{center}
\begin{tikzpicture}[scale=.75]
\draw [thick,<->] (-4,-2) --(4,2);
\draw [ultra thick,blue,dashed] (0,4) --(0,-4);
\node [right] at (4,2) {$\alpha_k$}; 
\node [left] at (-4,-2) {$-c_k\alpha_k$};
\node [above right] at (0,4) {$\set{\chi : \chi(a_0) = 0}$};
\node at (2,-1) {$\Delta_1$};
\node at (2,2) {$\Delta_2$};
\node at (-2,1) {$\Delta_3$};
\node at (-2,-2) {$\Delta_4$};
\end{tikzpicture}
\caption{Decomposing $(\R^k)^*$ into quadrants}
\label{fig:quadrants}
\end{center}
\end{figure}

We let $G_{\Delta_i}$ denote the subgroup of $G$ generated by the $\Delta_i$. Notice that $\Delta_- = \Delta_3 \cup \set{-c_k\alpha_k} \cup \Delta_4$ (with $\set{-c_k\alpha_k}$ omitted if there is no weight of this form) is stable, so again, since $G_-$ is nilpotent, $(\rho^-)' $ may be expressed uniquely as $q_3 * u^{(-c_k\alpha_k)} * q_4$ with $q_3 \in G _{\Delta _3}$ and $q_4 \in G _{\Delta _4}$
    (if $-c_k\alpha_k$ is not a weight, we omit this term). Now, $\set{\alpha_k} \cup \Delta_2 \cup \Delta_3$ is a closed stable subset, {since we may choose an open half-space which contains $\alpha_k$, no element of $\Delta_1$ and all elements of $\Delta_2,\Delta_3$ after perturbing the open half space cut from $(\R^2)^* \setminus \R\alpha$ containing $\Delta_2$ and $\Delta_3$}. Therefore, its associated group is nilpotent. So $t_k^{(\alpha_k)} * q_3 = q_2 * (q_3)' * s^{(\alpha_k)}$ for some $q_2 \in G_{\Delta_2}$, $(q_3)' \in G_{\Delta_3}$ and $s \in \R$. Notice that by iterating some $a \in \R^k$ for which $\alpha_k(a) = 0$, and $\beta(a) < 0$ for all $\beta \in \Delta_2 \cup \Delta_3$, we actually know that $s = t_k$. 
Thus, we have put our expression in the form:
 
\begin{multline*}
t_n^{(\alpha_n)} * \dots * t_k^{(\alpha_k)} * (\rho^-)' * s_{k-1}^{(\alpha_{k-1})} * \dots * s_1^{(\alpha_1)} * \rho^0  \\=  t_n^{(\alpha_n)} * \dots * t_k^{(\alpha_k)} * (q_3 * u^{(-c_k\alpha_k)} * q_4) * s_{k-1}^{(\alpha_{k-1})} * \dots * s_1^{(\alpha_1)} * \rho^0\\
  =  t_n^{(\alpha_n)} * \dots * t_{k+1}^{(\alpha_{k+1})} * q_2 * (q_3)' * t_k^{(\alpha_k)} * u^{(-c_k\alpha_k)} * q_4 * s_{k-1}^{(\alpha_{k-1})} * \dots * s_1^{(\alpha_1)} * \rho^0.
\end{multline*}
 
 Now, there are three cases: there is no weight of the form $- c_k \alpha_k$ in which case $u ^{(- c_k \alpha_k)}$ does not exist.  Or this weight exists but commutes with $t_k^{(\alpha_k)}$ and we simply switch the order.  Otherwise,  $\alpha_k$ and $- c_k \alpha_k$ generate a cover of $PSL(2, \R)$ or $\Heis$.  In the case of $PSL(2,\R)$, 
  if $t_k$ and $u$ are sufficiently small, which is an open property, we may use Lemma \ref{horocycle commutators} to commute $t_k^{(\alpha_k)}$ and $u^{(-c_k\alpha_k)}$ up to an element of $D$. In the case of $\Heis$, we use the standard commutator relations in $\Heis$ to commute the elements up to an element of $D$. Furthermore, notice that $q_2$ and the terms appearing before $q_2$ all belong to $\Delta_2$, so we may combine them to reduce the expression to:
 
 \[ (t_n')^{(\alpha_n)} * \dots * (t_{k+1}')^{(\alpha_{k+1})} * (q_3)' * (u')^{(-c_k\alpha_k)} * (s_k')^{(\alpha_k)} * d * q_4 * s_{k-1}^{(\alpha_{k-1})} * \dots * s_1^{(\alpha_1)} * \rho^0 \]
  for some collection of $t_i',u',s_k' \in \R$, and $d \in D$. But by Lemma \ref{lem:D-commutators}, $d$ may be pushed to the right preserving the form of the expression and being absorbed into $\rho^0$. {We abuse notation,} do not change these terms and drop $d$ from the expression.
 
 Now, we do the final commutation by commuting $(s_k')^{(\alpha_k)}$ and $q_4$. Notice that $\set{\alpha_k} \cup \Delta_1 \cup \Delta_4$ is a closed stable subset {(for the same reason that $\set{\alpha_k} \cup \Delta_2 \cup \Delta_3$ is a closed stable subset)}. Therefore, we may write $(s_k')^{(\alpha_k)} * q_4$ as $(q_4)' * (s_k'')^{(\alpha_k)} * q_1$ with $q_1 \in G_{\Delta _1}, (q_4)' \in G _{\Delta _4}$ and $s_k '' \in \R$. Inserting this into the previous expression, we see that the $q_1$ term can be absorbed into the remaining product of the $s_i^{(\alpha_i)}$ terms. This yields the desired form.
 \end{proof}

Recall each the action of each element $D$ coincides with an element of the Cartan action. { Then if $\tilde{D}$ is the universal cover of $D$, there exists a homomorphism  $f : \tilde{D} \to \R^k$ which associates an element of $\tilde{D}$ with the corresponding element of the $\R^k$ action.} Then let $\hat{G}$ be the quotient of $\hat{\mc P}$ (see Definition \ref{def:P-groups}) by the group generated by $\ker(\mc P \to G)$ and elements of the form $f(d)\pi_{\tilde{D}}(d)^{-1}$, { where $d \in \tilde{D}$ and $\pi_{\tilde{D}} : \tilde{D} \to D$ is the covering homomorphism}. {Note that $\R^k$  naturally embeds as a subgroup of $\hat{G}$, and that the restriction of the $\hat{G}$-action to this copy of $\R^k$ is exactly the totally Cartan action}.

Recall that $\mc P$ is the free product of copies of $\R$, and has a canonical CW-complex structure as described in Section \ref{subsec:free-prods}. The cell structure can be seen by considering subcomplexes corresponding to a sequence of weights $\bar{\chi}= (\chi_1,\dots,\chi_n)$ and letting $C_{\bar{\chi}} = \set{ t_1^{(\chi_1)} * \dots * t_n^{(\chi_n)} : t_i \in \R} \cong \R^n$. Then a neighborhood of the identity is a union of neighborhoods in each cell $C_{\bar{\chi}}$ containing 0.


\begin{proposition} \label{stable unstable cycle decomposition}
There exists an open neighborhood $U$ of $e \in \hat{\mc P}$ and a continuous map $\Phi : U \to G_+ \times G_- \times \R^k$ such that $\pi(\Phi(u)) = \pi(u)$, where $\pi : \hat{\mc P} \to \hat{G}$ is the canonical projection. 
\end{proposition}

\begin{proof}
We describe the map $\Phi$, whose domain will become clear from the definition.  Let $\Delta^+(a_0) = \set{\alpha_1,\dots,\alpha_n}$ and $\Delta^-(a_0) = \set{\beta_1,\dots,\beta_m}$ be the weights as described in the proof of Lemma \ref{open dense commutation}. Given a word $\rho = t_1^{(\chi_1)} * \dots * t_n^{(\chi_n)}$, we begin by taking all occurrences of $\alpha_n$ in $\rho$ and pushing them to the left, starting with the leftmost term.  When we commute it past another $\alpha_i$, we accumulate only other $\alpha_j$, $i+1 \le j \le n-1$, in $\rho^{\alpha_i,\alpha_n}$, which we may canonically present in increasing order on the right of the commutation. A similar statement holds for the commutation of $\alpha_n$ with $\beta_i$. We iterate this procedure as in the proof of Lemma \ref{open dense commutation} to obtain the desired presentation. Since the commutation operations involved are determined by the combinatorial type, the resulting presentation is continuous from the cell $C_{\bar{\chi}}$. Furthermore, if one of the terms $t_i^{(\chi_i)}$ happens to be 0, the procedure yields the same result whether it is considered there or not. Thus, it is a well-defined continuous map from $\mc P \to G_+ \times G_- \times \R^k$ (it is continuous from $\hat{\mc P}$ because it is continuous from each $C_{\bar{\chi}}$).

Notice that in the application of Lemma \ref{open dense commutation}, we require that $st \not= 1$ whenever we try to pass $t^{(\chi)}$ by $s^{(-\chi)}$. This is possible if $s$ and $t$ are sufficiently small. Thus, in each combinatorial pattern, since the algorithm is guaranteed to have a finite number of steps and swaps appearing, and each term appearing will depend continuously on the initial values of $t_i^{(\chi_i)}$, we know that for each $\bar{\chi}$, some neighborhood of 0 will be in the domain of $\Phi$, by the neighborhood structure described above.

Notice that the reduction of a word $u$ to a word of the form $u_+ * u_- * a \in G_+ \times G_- \times \R^k$ uses only relations in $\hat{G}$. Therefore, if after the reductions, the same form is obtained, the original words must represent the same element of $\hat G$.
 \end{proof}
 
{
\begin{corollary}
\label{cor:GhatLie}
$\hat{G}$ is a Lie group, and acts locally freely on $X$.
\end{corollary}

\begin{proof}
We claim that if $V_+$, $V_-$ and $V_0$ are sufficiently small neighborhoods of $G_+$, $G_-$ and $\R^k$, respectively, then the map $\sigma : U_+ \times V_- \times V_0 \to \hat{G}$ is a homeomorphism onto its image, and that its image is a neighborhood of $e \in \hat{G}$. To see that it is onto a neighborhood, notice that by Proposition \ref{stable unstable cycle decomposition}, there exists a neighborhood $U$ of $\hat{\mc P}$ whose elements can be reduced to this form. Then $\pi(U)$ is contained in the image of $\sigma$, and open in $\hat{G}$ by definition of the quotient topology. 

To see that $\sigma$ is injective, notice that by condition \ref{tc6}, if $v_+v_-v_0 \cdot x = v_+'v_-'v_0' \cdot x$, then $v_+ = v_+'$, $v_- = v_-'$ and $v_0 = v_0'$. Therefore, $\sigma$ is injective, since if $\sigma(v_+,v_-,v_0) = \sigma(v_+,v_-,v_0)$, then the images in $\hat{G}$ act in the same way on $X$.

Once $\sigma$ is injective, it is a homeomorphism onto its image when restricted to compact sets. Choosing such a compact set with interior in $V_+ \times V_- \times V_0$ shows that $\hat{G}$ is locally Euclidean, and hence Lie (by, for instance, Theorem \ref{lem:gleason-palais}). The previous paragraph further shows that the action of $\hat{G}$ is locally free.
\end{proof}
 }
{\begin{proof}[Proof of Theorem \ref{thm:pairwise-to-homo}]
The topological Cartan action $\R^k \curvearrowright X$ is embedded in the action of $\hat{G}$, and the $\hat{G}$-action on $X$ is locally free by Corollary \ref{cor:GhatLie}. The action of $\hat{G}$ is transitive on $X$ by \ref{tc5}, giving $X$ the structure of a $\hat{G}$-homogeneous space, and the action of $\R^k$ is by translations.
\end{proof}
}

\section{Proofs of Theorems \ref{thm:big-main} and \ref{thm:technical}}
\label{sec:proofs}

We prove all results for $\R^k$-actions, since given an $\R^k \times \Z^\ell$ action, one may build a suspension space $\tilde{X}$ with an $\R^{k+\ell}$ action into which the $\R^k \times \Z^\ell$ action embeds, with a $\mathbb{T}^\ell$ factor. This action has a non-Kronecker rank one factor if and only if the original action has a non-Kronecker rank one factor by Lemma \ref{lem:susp}. The resulting homogeneous action will have a $\mathbb{T}^\ell$-factor, and the fibers of this factor will be conjugate to the original $\R^k \times \Z^\ell$ action.

\begin{proof}[Proof of Theorems \ref{thm:big-main} and \ref{thm:technical}]
Let $\R^k \curvearrowright X$ be a Cartan action satisfying the assumptions of Theorem \ref{thm:big-main} or \ref{thm:technical}. {Then by Theorem \ref{thm:main-anosov} and \ref{thm:main-cartan}, it follows that $\R^k \curvearrowright X$ satsifes the higher-rank assumptions \ref{HR:a} and \ref{HR:b} (recall that these were proven at the end of part III in section \ref{sec:part1-proofs}). Hence, by Lemma \ref{lem:symplectic} and Theorem \ref{prop:base base relations}, the action satisfies conditions \ref{CPCS1} and \ref{CPCS2}. Thus, by Theorem \ref{thm:pairwise-to-homo}, the action is topologically conjugate to a homogeneous action. This completes the proof of Theorem \ref{thm:technical}.}

In the case of Theorem \ref{thm:big-main}, we show that the conjugating map $h : X \to G / \Gamma$ is $C^{1,\theta}$. 
  For any $\alpha \in \Delta$, $h$ intertwines the actions of $\eta^\alpha_t$ with some the one parameter groups of $G / \Gamma$ by {construction}.
{ Thus if $V(x) = \left.\frac{d}{dt}\right|_{t=0} \eta^\alpha_t(x)$, $dh(V)$ exists and is equal to some element of  $\Lie(G)$ (thought of as right-invariant vector fields). Hence, $dh(V)$ is smooth and  $V$} is {$\beta$-H\"older since $W^\alpha$ is a $C^{\beta}$ foliation  with $C^{1,\theta}$-leaves for some $\beta$}, and the generator is chosen to be unit length according to the  $\beta$-H\"older metric from Section \ref{subsec:metrics-prelim} {(without loss of generality we may choose a H\"older constant $\beta$ which applies to both the metric and the foliation, see Proposition \ref{lem:kalinin-lem})}.  
In consequence, $h$  is $C^{1,\beta}$ along all $\eta ^{\alpha} _t$ orbits.  Similarly, $h$ is $C^{1,\theta}$ along the $\R ^k$ orbit foliation.

We in fact claim that $h$ {  has the desired regularity} ($C^{1,\theta}$ or $C^\infty$) along $\eta^{\beta}_t$ orbits. We let $r = {(1,\theta)}$ or $\infty$ depending on the version of the theorem. Fix a Lyapunov exponent $\alpha$ and some smooth Riemannian metric on $X$ (not necessarily the one of Section \ref{subsec:metrics-prelim}). Let $\varphi_a : X \to \R$ be defined to be the norm derivative of $a$ restricted to $E^\alpha$ with respect to the chosen Riemannian metric. That is, $\varphi_a(x) = \norm{da|_{E^{\alpha}}(x)}$, {so that} $\varphi_a$ is H\"older. Notice that if the ratio of the Riemannian norm to the dynamical norm of Section \ref{subsec:metrics-prelim} is given by $\psi(x)$, then $\psi(x)$ is H\"older (since the dynamical metric is H\"older) and 

\begin{equation}
\label{eq:psi-transfer}
 \varphi_a(x) = e^{\alpha(a)}  \psi(a \cdot x) \psi(x)^{-1}.
\end{equation}

Take $\mc L = X \times \R$ to be the trivial line bundle over the manifold $X$, and let $\pi(x,t)$ be the point which has (signed) distance $t$ from $x$ in the smooth Riemannian metric on $X$. For each $a \in \R^k$, there exists a unique lift $\tilde{a}$ to $\mc L$ such that $\pi (\tilde a(x,t)) = a \cdot \pi(x,t)$. Notice that since each fiber corresponds to a $C^r$ submanifold, $\tilde{a}$ is a $C^r$ extension as in Section \ref{sec:normal-forms}. Let $H : \mc L \to \mc L$ be the map $H(x,t) = (x,H_x(t))$, with $H_x$ uniquely satisfying $\eta^\alpha_{H_x(t)}(x) = \pi(x,t)$. Then observe that $H\tilde{a}H^{-1}(x,t) = (a \cdot x, e^{\alpha(a)}t)$ by \eqref{eq:renormalization}. 

Notice that the derivative of $\pi(x,t)$ with respect to $t$ is the unit vector of $E^\alpha$ with respect to the smooth Riemannian metric, and the derivative of $\eta^\alpha_{H_x(t)}(x)$ is the derivative of $H_x$ times the unit vector of $E^\alpha$ with respect to the dynamical norm. Therefore, $H_x'(t) = \psi(\pi(x,t))$, so $H_x \in C^1(\R,\R)$ varies continuously with $x$. Now, finally we modify $H$ slightly by defining $G(x,t) = H(x,\psi(x)^{-1}t)$. Then $G_x'(0) = 1$ and $G$ is still a linearization by \eqref{eq:psi-transfer}. Therefore, $G$ is a system of $C^r$ normal form coordinates {by the uniqueness of the normal forms provided by} Theorem \ref{thm:normal-forms}, and $H_x(t)$ is $C^r$ in $t$. This implies that $h$ is $C^r$ along $W^\alpha$-leaves.


Choose some Anosov element $a_0 \in \R ^k$ which determines the positive and negative weights $\Delta _+$ and $\Delta _-$.
Order the weights $\lambda \in \Delta _-$ cyclically: $ \lambda _1, \ldots \lambda _l$.  Then inductively we see from Theorem \ref{thm:journe} that $h$ is $C^{1,\theta}$ along the foliations tangent to $E^{\lambda _1}, E^{\lambda _1} \oplus E^{\lambda _2}, \ldots, \oplus _{I=1, \ldots, l} E^{\lambda _i}$.
Clearly, the last foliation is nothing but the stable foliation of $a_0$, ${W}^s _{a_0}$.  Hence, $h$ is $C^{1,\theta}$ along ${W}^s _{a_0}$, and similarly along the unstable foliation ${W}^u _{a_0}$  of $a_0$.  
Using Theorem \ref{thm:journe} again, we get that $\pi$ is $C^{1,\theta}$ along the weak stable foliation, by combining stable and orbit foliations, and then on $X$ combining weak stable and unstable foliations. 

In the $C^\infty$ setting, one may repeat the arguments with the $C^\infty$ version of Theorem \ref{thm:journe} to obtain that $h$ is $C^\infty$.
\end{proof}

\part{\Large Structure of Rank One Factors and Applications}
\label{part:3}

In Part V, we return to {the setting of $C^r$ totally Cartan $\R^k$-actions, $r = (1,\theta)$ or $r = \infty$}, and prove a structure theorem for them. {As usual, we deduce theorems about $\R^k \times \Z^\ell$ actions by passing to suspensions.} In particular, since we allow rank one factors, we no longer have the H\"older metrics from b) of the \hyperlink{h-r-a}{higher rank assumptions}. Instead, we will again rely on Lemma \ref{lem:uniformly bounded derivative}, which only requires the totally Cartan assumption to hold. 

From Parts III and IV, we have established a dichotomy: either there exists a non-Kronecker rank one factor for the action which is an Anosov 3-flow, or the action is homogeneous. {In this part, we establish structure for totally Cartan actions which have rank one factors. We begin by identifying the a factor of the action by the 0-entropy elements, the Starkov component. After passing to such a quotient, we build a homogeneous structure on the common fiber of every rank one factor in Section \ref{sec:main-structure}.

We also some observations about the centralizer of a totally Anosov $\R^k$-action (Section \ref{sec:centralizer}), and using this to identify when a rank one factor splits as a direct product (Section \ref{sec:prod-struct}). Actions in which rank one factors exist but do not form a direct product with a homogeneous action were not known to exist until recently, see Sections \ref{sec:nonproduct-ex} and \ref{sec:DWX}. We conclude the paper by proving some applications of the main theorems (Section \ref{sec:corollaries}).}  

\section{The Starkov component}
\label{sec:starkov}

Fix a $C^r$, totally Cartan $\R^k$ action on a manifold $X$, and let $S = \bigcap_{\beta \in \Delta} \ker \beta$ denote its Starkov component (recall the definition made directly before Remark \ref{rem:starkov}). 

\begin{proposition}
\label{prop:starkov-torus}
The action of $S$ factors through a torus action.
\end{proposition}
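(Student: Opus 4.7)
The plan is to show that the $S$-action is equicontinuous and then appeal to the classical structure theory of compact connected abelian groups. The key input will be the uniform bound on hyperplane derivatives already established in Lemma \ref{lem:uniformly bounded derivative}.

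First, I would control the derivatives of elements of $S$ on each piece of the tangent bundle. By Lemma \ref{lem:uniformly bounded derivative}, for each $\beta \in \Delta$ there is a constant $A_\beta > 0$ such that $\|h_*|_{T\mc W^\beta}(x)\| \le A_\beta$ uniformly in $x \in X$ and $h \in \ker \beta$. Since $S \subseteq \ker \beta$ for every weight $\beta$, these bounds apply simultaneously to every $s \in S$ and every coarse Lyapunov distribution $E^\beta$. The restriction of $s_*$ to the orbit distribution $T\mc O$ is by translation in the abelian group $\R^k$, hence isometric with respect to any $\R^k$-invariant metric on orbits. Combining this with the splitting $TX = T\mc O \oplus \bigoplus_{\beta \in \Delta} E^\beta$ provided by Corollary \ref{cor:coarse-lyapunov}, and replacing $s$ by $-s \in S$ to get matching lower bounds, the family $\{s_* : s \in S\}$ is uniformly bounded from above and below as a family of bundle isomorphisms of $TX$. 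Integrating along curves then shows that $\{s\cdot : s \in S\}$ is an equicontinuous family of homeomorphisms of $X$.

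Next I would invoke Arzelà--Ascoli: since $X$ is compact and the family is equicontinuous, the closure $\overline{S}$ of the image of $S$ in $\Homeo(X)$ (with the $C^0$ topology) is compact. The homomorphism $S \to \overline{S}$ is continuous with dense image, and because $S \cong \R^{\dim S}$ is connected and abelian, $\overline{S}$ is a compact connected abelian Hausdorff topological group. By Pontryagin duality (or the standard classification of such groups), $\overline{S}$ is a torus $\mathbb{T}^m$. The original $S$-action is then the composition of the continuous homomorphism $S \to \mathbb{T}^m$ with an effective torus action $\mathbb{T}^m \curvearrowright X$, which is precisely the factorization claimed.

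The only real content beyond standard topological group theory is the equicontinuity step, and this is essentially already established by Lemma \ref{lem:uniformly bounded derivative}; so no genuine obstacle remains, and the proposition is in effect a corollary of that lemma together with the coarse Lyapunov splitting.
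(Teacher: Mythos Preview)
Your equicontinuity argument and the passage to the $C^0$-closure $\overline{S}$ via Arzel\`a--Ascoli are fine and match the paper. The gap is in the final step: the assertion that a compact connected abelian Hausdorff group is automatically a torus is false. Pontryagin duality identifies such groups with duals of discrete torsion-free abelian groups, and only finitely generated free duals give tori; solenoids and infinite products $\mathbb{T}^{\N}$ are compact, connected, abelian, and even admit dense one-parameter subgroups, yet are not tori. So the existence of a dense homomorphic image of $\R^{\dim S}$ does not by itself force $\overline{S}$ to be a finite-dimensional Lie group.

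The paper closes this gap by using the hyperbolic structure a second time. Any $h \in \overline{S}$ commutes with the full $\R^k$-action, and equicontinuity gives that $d(a^n \cdot h(x), a^n \cdot x)$ is bounded for all $n$; for $h$ close to the identity this forces $h(x)$ to lie on the local $\R^k$-orbit of $x$, so $h(x) = \tau(x) \cdot x$ for a continuous $\tau : X \to \R^k$. Commutation with $\R^k$ makes $\tau$ constant on orbits, hence globally constant by transitivity. Thus a neighborhood of $e$ in $\overline{S}$ is identified with a subset of the $\R^k$-action itself, giving $\overline{S}$ a locally Euclidean structure and hence making it a Lie group; being compact, connected, and abelian, it is then a torus. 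You could alternatively rescue your argument by observing that the uniform derivative bounds make the $\overline{S}$-action bi-Lipschitz and invoking the Lipschitz case of the Hilbert--Smith conjecture (Repov\v{s}--\v{S}\v{c}epin), but that is a much heavier external input than the direct dynamical argument.
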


\begin{proof}
Fix some Riemannian metric on $X$. 
Recall that by Lemma \ref{lem:uniformly bounded derivative}, if $a \in \ker \beta$, then $\norm{da|_{E^\beta}}$ is uniformly bounded above and below by constants $L$ and $1/L$, respectively. Since $a \in S$, we have this for every coarse exponent $\beta$ and since $da|_{T\mc O}$ is isometric (where $\mc O$ is the orbit foliation of the $\R^k$ action), the action of $S$ is equicontinuous. By the Arzel\`{a}-Ascoli theorem, the closure of $S$ in the group of homeomorphisms of $X$ is a compact group, call it $H$.

Observe that if $h \in H$, then $h$ is an equicontinuous homeomorphism that commutes with the $\R^k$-action. {Now, for every $a \in \R^k$, $d(a^n \cdot h(x),a^n \cdot x) = d(h(a^n \cdot x),a^n \cdot x)$ is uniformly bounded above and below. Let $h$ be close to the identity in $H$, and $h(x)$ be reached from $x$ by an $a$-stable leg, an  $a$-unstable leg, and then an $\R^k$-orbit leg. If either the unstable or stable leg was nontrivial, $h(x)$ and $x$ would separate at an intermediate distance for some iterate of $a$ or $-a$, respectively. Thus, it follows that  $h(x) \in \R^k \cdot x$}. Therefore, {if $\ve$ is sufficiently small and  $d_{C^0}(h,\id) < \ve$}, $h$ determines a continuous function $\tau : X \to \R^k$ by $h(x) = \tau(x) x$ {and $\norm{\tau(x)} \le \ve$}. A simple computation using the fact that $h$ commutes with the $\R^k$-action shows that $\tau$ is constant on $\R^k$-orbits and since there is a dense $\R^k$-orbit, $\tau$ is constant everywhere.

Therefore, the action of a neighborhood of the identity in $H$ coincides with a subgroup of the $\R^k$-action. Since $H$ is the closure of a connected group, $H$ is connected, and the action of $H$ and the action of a subspace of $\R^k$ coincide. But any $a \not\in S$ cannot act equicontinuously (since then $\beta(a) \not=0$ for some $\beta$), so we know that $S$ factors through $H$. Since $H$ is a compact factor of $S$, $H = \mathbb{T}^\ell$ for some $\ell$ and we obtain the result.
\end{proof}

{ 
We now characterize the Starkov component in terms of various dynamical properties, thereby justifying Remark \ref{rem:starkov}.

\begin{lemma}
\label{lem:starkov-conditions}
Let $\R^k \curvearrowright X$ be a $C^r$ totally Cartan action. Then the following are equivalent:

\begin{enumerate}
\item $a \in S$.
\item $a$ has zero topological entropy.
\item $\set{a^k : k \in \Z}$ is an equicontinuous family.
\item $a$ does not have sensitive dependence on initial conditions.
\item $a$ is not partially hyperbolic (with nontrivial stable and unstable bundles).
\end{enumerate}
\end{lemma}

\begin{proof}
It is clear that condition (1) implies all others by Proposition \ref{prop:starkov-torus}. Therefore, we need only show that the others imply (1). In each it is easier to prove the contrapositive. 

{\it (5) implies (1)}: Assume that $a \not\in S$. Then the bundles $E^s_a =\displaystyle \bigoplus_{\alpha(a) < 0} E^\alpha$, $E^u_a = \displaystyle\bigoplus_{\alpha(a) < 0} E^{\alpha}$, and $E^c_a = T\mc O \oplus \displaystyle\bigoplus_{\alpha(a) = 0} E^\alpha$ are all nontrivial. Therefore $a$ is partially hyperbolic (with nontrivial stable and unstable bundles), so (5) is not satisfied. Hence (5) implies (1).

{\it (4) implies (1):} Again, assume that $a \not\in S$. Recall that $a$ has sensitive dependence on initial conditions if there exists $\ve >0$ such that for every $x \in X$ and $\delta > 0$, there exists $y \in X$ and $n \in \N$ such that $d(x,y) < \delta$, but $d(T^n(x),T^n(y)) > \ve$. To find such a $y$, we use that $a \not\in S$, so there exists some $\alpha$ such that $\alpha(a)>0$. If $y \in W^\alpha_{\operatorname{loc}}(x) \setminus \set{x}$, then some sufficiently large iterate of $a$ will separate $x$ and $y$ by a fixed amount. Hence (4) implies (1).

{\it (3) implies (1):} Assume that $a \not\in S$. Then along some coarse Lyapunov leaf, the derivatives grow exponentially. This implies that $a^k$ cannot be equicontinuous.

{\it (2) implies (1):} This follows from the standard construction. Assume $a \not \in S$. Fix $x \in X$, and consider $\alpha$ such that $\alpha(a) > 0$. Then let $\lambda = \displaystyle \inf_{x \in W^\alpha_{\operatorname{loc}}} \norm{a_*|_{E^\alpha}(x)}$. Then we may choose a metric such that $\lambda > 1$. We claim that the topological entropy of $a$ is at least $\lambda$. Indeed, choose any subset of $W^\alpha$ which is spaced at distance $\ve/\lambda^n$. By 1-dimensionality, up to a constant depending on the curvature of the manifolds $W^\alpha(x)$, these points are $(n,\ve)$-separated. Since we may fit $C\ve \cdot \lambda^n$ such points, the topological entropy is at least $\lambda$.
\end{proof}
}

\begin{lemma}
\label{lem:starkov-factor}
The quotient $X / S$ is a manifold.
\end{lemma}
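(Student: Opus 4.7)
By the preceding proposition, the $S$-action factors through a faithful continuous action of a compact connected torus $H \cong \mathbb{T}^\ell$ on $X$, whose orbits agree with the $S$-orbits. Since $S \subset \R^k$ acts locally freely and $H$ is compact, the $H$-action is locally free with every stabilizer $\Stab_H(x)$ a finite subgroup of $H$. My strategy is to show the torus action is free, after which $X/S = X/H$ inherits a smooth manifold structure from the slice theorem for compact Lie group actions (equivalently, from the principal bundle theorem).

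The first, easy observation is that stabilizers are trivial on the dense $\R^k$-orbit. Pick $x_0$ with $\overline{\R^k \cdot x_0} = X$. For $h \in \Stab_H(x_0)$, commutativity of $H$ with $\R^k$ gives $h \cdot (a \cdot x_0) = a \cdot (h \cdot x_0) = a \cdot x_0$ for every $a \in \R^k$, so $h$ fixes the dense orbit and hence all of $X$ by continuity; faithfulness of the $H$-action forces $h = e$. Since $\Stab_H$ is constant along $\R^k$-orbits (abelian action), $\Stab_H(x) = \{e\}$ on the whole dense orbit. The harder step is to extend freeness to all of $X$: the preceding proposition describes the $H$-action near the identity as coming from a base-point-independent map $\tau$ into $\R^k$, so $\Stab_H(x) = (S \cap \Stab_{\R^k}(x))/\Lambda_0$, where $\Lambda_0 \subset S$ is the ineffective kernel (a lattice in $S$). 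For $x$ off the dense orbit — for instance on a periodic orbit — $\Stab_{\R^k}(x)$ may be enlarged, and one must check that any such enlargement can be removed by passing to a finite cover. Concretely, the finitely many finite groups $\Stab_H(x)$ sit inside the fixed finite group generated by all $H$-torsion elements arising from lattices $S \cap \Stab_{\R^k}(x)$ over periodic orbits, and passing to a finite cover of $X$ of appropriate degree kills these extra periods and produces a free torus action.

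\textbf{Main obstacle.} The crux is exactly the passage from freeness on the dense orbit to freeness everywhere, since a priori upper semicontinuity of stabilizers only gives an inclusion in the wrong direction: limits of stabilizer elements lie in the limit stabilizer, so stabilizers can jump up on closed invariant subsets like periodic orbits. Thus the proof must either show directly (via the explicit identification of $H$ with $S/\Lambda_0$ and a rigidity argument on $\Stab_S(x)$) that $\Stab_S(x) = \Lambda_0$ for all $x$, or replace $X$ with a finite equivariant cover on which this holds. Once a free action is secured, the slice theorem does the rest: $X \to X/H$ becomes a smooth principal $H$-bundle, and $X/S$ is a smooth manifold of dimension $\dim X - \ell$.
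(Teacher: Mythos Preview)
Your framework is right: show the torus $H\cong \mathbb{T}^\ell$ acts freely and then quote the slice theorem. You also correctly isolate the crux---extending freeness from the dense $\R^k$-orbit to all of $X$---but you do not actually close this gap. Your two suggested routes are left as sketches, and the finite-cover suggestion in particular is not justified: you have not explained why the stabilizers $\Stab_H(x)$ over all periodic orbits are controlled by a single finite group, nor how to produce a cover of $X$ that kills them simultaneously. Even granting finiteness of orbit types for compact group actions, passing to a cover of $X$ does not obviously reduce stabilizers for a torus action.

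The paper takes your first suggested route and carries it out directly, using normal forms (cf.\ Proposition~\ref{prop:factor-by-H}). The argument is: for $a\in S$ with $a\cdot x=x$, one has $a^n\cdot x=x$ for all $n$, and since $a\in\ker\beta$ for every $\beta$, Lemma~\ref{lem:uniformly bounded derivative} bounds $\|(a^n)_*|_{E^\beta}(x)\|$ uniformly in $n$. As $E^\beta$ is one-dimensional this forces $a_*|_{E^\beta}(x)=1$. In the normal-forms coordinates on $W^\beta(x)$ the map $a$ is linear with derivative $1$ at the fixed point, hence $a|_{W^\beta(x)}=\id$. Thus $\Stab_S(x)\subset\Stab_S(y)$ for every $y\in W^\beta(x)$, and trivially for $y\in\R^k\cdot x$ by commutativity. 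Iterating over all coarse Lyapunov leaves and $\R^k$-orbits propagates this along accessibility classes, which cover $X$, so $\Stab_S(x)$ is independent of $x$. This is precisely the ``rigidity argument on $\Stab_S(x)$'' you alluded to but did not supply; it is the missing ingredient in your proposal.
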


Recall the $X / S$ is called the {\it Starkov factor} of $X$. It is clear that {the action of any  $(k-\dim(S))$-dimensional subspace of $\R^k$ transverse to $S$} descends to a locally free action on ${X / S}$ and the coarse Lyapunov foliations $W^\beta$ descend for every $\beta \in \Delta$, giving $X / S$ a transitive, totally Cartan action.

\begin{proof}
{  Using Proposition \ref{prop:starkov-torus},} the proof is identical to Proposition \ref{prop:factor-by-H}, we briefly summarize the method. We must show that $\Stab_S(x)$ is independent of $x$. To this end, we show that for every $x,y \in X$, $\Stab_S(x) \subset \Stab_S(y)$. Then reversing the roles of $x$ and $y$ shows equality. It suffices to show that $\Stab_S(x) \subset \Stab_S(y)$ for every $y \in \R^k \cdot x$ and $y \in W^\beta(x)$ for any $\beta \in \Delta$. The proof for $\R^k$ is trivial by commutativity. One may use the preservation of the $\beta$ leaves together with normal forms {(Theorem \ref{thm:normal-forms-foliation})} to show that if $a \in S$ and $a \cdot x = x$, then $a|_{W^{\beta}(x)} = \id$. {Indeed, since $x$ is fixed by $a$, if $\norm{da|_{TW^\beta}} \not =1$, then it would grow or shrink exponentially, violating Lemma \ref{lem:uniformly bounded derivative}.}
\end{proof}

{
\begin{corollary}
\label{cor:factor-by-starkov}
Let $\R^k \curvearrowright X$ be a cone transitive $C^r$ totally Cartan action with $r = (1,\theta)$ or $r = \infty$, and Starkov component $S \subset \R^k$. Then $X$ is a principal $\mathbb{T}^\ell$ bundle over a $C^r$ manifold $Y$, where $\ell = \dim S$. Moreover, $Y$ carries a canonical cone transitive $C^r$ totally Cartan action of $\R^k / S$ with trivial Starkov component, and if $\pi : X \to Y$ is the corresponding factor map, then $\pi(a \cdot x ) = (aS) \cdot \pi(x)$. 
\end{corollary}}

\section{The main structure theorem}
\label{sec:main-structure}
{Corollary \ref{cor:factor-by-starkov} implies that given an action with a Starkov component, there is always a factor of the action with trivial Starkov component from which the original action is obtained by a principal bundle extension. These actions are described in the following theorem. { Recall the definitions of $\Delta_{\operatorname{rig}}$ and $\Delta_1$ in Remark \ref{rem:homogeneous-weights}.}

\begin{theorem}
\label{thm:big-headache}
Let $\R^k \curvearrowright X$ be a $C^r$, cone transitive, totally Cartan action, with { $r = 2$} or $r = \infty$ and trivial Starkov component. { We let $r' = (1,\theta)$ or $\infty$}, respectively. Then (after passing to a finite cover of $X$) there exists

\begin{itemize}
\item a collection of transitive $C^{r'}$ Anosov flows on closed 3-manifolds $\psi_i : \R \curvearrowright Y_i$, $i = 1,\dots,\ell$, 
\item a homomorphism $\sigma_{\Rig} : \R^k \to \R^\ell$ which is onto a rational subspace of $\R^\ell$ (in the usual $\Q$-structure),
\item a $C^{r'}$ map $\pi : X \to Y = Y_1 \times \dots \times Y_\ell$,
\item a Lie group $G_{\Rig}$ with a locally free $C^{r'}$  action $G_{\Rig} \curvearrowright X$,
\item a homomorphism from $ \R^k \to \Aut(G_{\Rig})$ (for which we denote the image of $a$ by $\Phi_a$), and
\item a subgroup $A \subset G_{\Rig}$ with an isomorphism $i_{\Rig} : \ker \sigma_{\Rig} \to A$ (which identifies $\ker \sigma$ as a subgroup of $G_{\Rig}$)
\end{itemize}

such that

\begin{enumerate}
\item the product action $\R^\ell \curvearrowright Y = Y_1 \times \dots \times Y_\ell$ defined by 

\begin{equation}\label{eq:sigma-intertwinig}(t_1,\dots,t_\ell) \cdot (y_1,\dots,y_\ell) = (\psi_1^{t_1}(y_1),\dots,\psi_\ell^{t_\ell}(y_\ell))\end{equation} is a totally Cartan action,
\item  for every $x \in X$, $G_{\Rig} \cdot x = \pi^{-1}(\pi(x))$,
\item $\pi$ is a $C^{r'}$ submersion onto its image, which is a $C^{r'}$ submanifold whose tangent space is given by the sum of the stable and unstable bundles on each $Y_i$, and the tangent space to the orbit foliation of $\sigma(\R^k)$,
\item for every $x \in X$, $g \in G_{\Rig}$, and $a \in \R^k$, $a \cdot (gx) = \Phi_a(g) \cdot ax$,
\item for every $x \in X$ and $a \in \ker \sigma_{\Rig}$, $a \cdot x = i_{\Rig}(a) \cdot x$, 
\item for every $x \in X$, $\pi(a \cdot x) = \sigma_{\Rig}(a) \cdot \pi(x)$, and
\item \label{enum:holonomy-affine} { for every $\beta \in \Delta_1$, $x \in X$ and $x' \in W^\beta(x)$  sufficiently close to $x$, there exists a H\"older continuous holonomy $f_{x,x'} : G_{\Rig} \cdot x \to G_{\Rig} \cdot x'$ defined by $f_{x,x'}(z) = W^\beta_{\loc}(z) \cap G_{\Rig} \cdot x'$.} 
\end{enumerate}
\end{theorem}

\begin{remark}
It may help to understand the theorem by seeing its features in an example, which is descibed in Section \ref{sec:big-headache-ex}.
\end{remark}

\begin{remark}
Theorem \ref{thm:big-headache} gives $X$ a structure very similar to that of a $G$-bundle over $Y$. The fibers are all $G$-homogeneous spaces with lattice stabilizers, but the transition maps from one fiber to another may not be {\it translations} in $G$. Instead, they will be affine maps in $G$. On the other hand, since the full affine group of $G$ does not act on $G / \Gamma$ (as many automorphisms of $G$ will move the lattice $\Gamma$), we may not make it an $\Aff(G)$-bundle, either.
\end{remark}


The rest of Section \ref{sec:main-structure} is devoted to the proof of Theorem \ref{thm:big-headache}. We begin in Section \ref{sec:fiber-homogeneous} by identifying a certain class of weights and corresponding flows along their corresponding coarse Lyapunov foliations which will generate the group $G_{\Rig}$. These will exactly be the weights $\beta$ for which conditions \ref{HR:a} and \ref{HR:b} can be deduced (for that specific $\beta$). We will then build the group action of $G_{\Rig}$ from these flows, which are all uniformly transverse to every rank one factor, and in fact parameterize their common fiber.

The usual intertwining properties with the $\R^k$-action, together with the description of $G_{\Rig}$ as the common fiber of all rank one factors imply most of Theorem \ref{thm:big-headache}. The only conclusion which will not be immediate is (\ref{enum:holonomy-affine}), which is shown in Section \ref{sec:holonomies-affine}.

{

\subsection{The Rigid Fiber}
\label{sec:fiber-homogeneous}

Recall Remark \ref{rem:homogeneous-weights}, and the sets  \[\Delta_{\Rig} := \set{ \beta \in \Delta : \ker \beta \mbox{ has a dense orbit or is dense in the fiber of some circle factor}}\] and $\Delta_1 = \Delta \setminus \Delta_{\Rig}$. By Lemma \ref{lem:TNS}, $\Delta_1$ consists of a union of symplectic pairs $\pm \alpha$. For each such pair, pick one such $\alpha$ and let $\Delta_1^+$ be the set of such choices. Then by Theorem \ref{thm:main-anosov}, it follows that after passing to a finite cover, for each $\alpha \in \Delta_1^+$, there exists a 3-manifold $Y_\alpha$ with Anosov flow $\psi_t^\alpha$, submersion $\pi_\alpha : X \to Y_\alpha$, and homomorphism $\sigma_\alpha : \R^k \to \R$ such that for every $x \in X$, $a \in \R^k$, $\pi_\alpha(a \cdot x) = \psi_{\sigma_\alpha(a)}^\alpha(\pi_\alpha(x))$. Consider $\pi = \prod_{\alpha \in \Delta_1^+} \pi_\alpha : X \to \prod_{\alpha \in \Delta_1^+} Y_\alpha$.

Recall Definition \ref{def:weight-bracket}.

\begin{lemma}
\label{lem:hom-ideal}
If $\beta_1,\beta_2 \in \Delta_{\Rig}$ are linearly independent, then $[\beta_1,\beta_2] \subset \Delta_{\Rig}$.
\end{lemma}

\begin{proof}
Recall that $\alpha \in [\beta_1,\beta_2]$ if and only if $\rho^{\beta_1,\beta_2}_\alpha(s,t,x) \not \equiv 0$. Furthermore, the functions $\rho^{\beta_1,\beta_2}_\alpha$ are defined via geometric commutators. If $\alpha \in \Delta_1$, then there exists an associated factor map $\pi_\alpha : X \to Y_\alpha$, and the foliations $W^{\beta_1},W^{\beta_2}$ are contained in the foliation $W^H$ that appears in Section \ref{sec:case1} (specifically, Lemma \ref{lem:foliation1}). In particular, it follows that $W^{\beta_1}$ and $W^{\beta_2}$ are contained in some $W^s_a \cap W^H$, and thus $\rho^{\beta_1,\beta_2}_\alpha \equiv 0$.
\end{proof}


Notice that if $\beta \in \Delta_{\Rig}$, the action of $\ker \beta$ either has a dense orbit or has orbits dense in the fibers of some circle factor by definition. This allows us to construct the metrics as in \ref{HR:a} using arguments identical to the proof of Theorem \ref{thm:main-cartan} (see Section \ref{sec:part1-proofs}). Let $\mc P_{\Rig} \curvearrowright X$ denote the action of the free product $\mc P_{\Rig} = \R^{\abs{\Delta_{\Rig}}}$ induced by the flows $\eta^\beta$.
Let $H_0 = \bigcap_{\alpha \in \Delta_1} \ker \alpha$, and $\hat{\mc P}_{\Rig} = H_0 \ltimes \mc P_{\Rig}$. Recall the terminology of Definition \ref{def:stand-gens} and Proposition \ref{prop:symplectic-possibilities}.

\begin{lemma}
\label{lem:H0-commutators}
If $\beta \in \Delta_{\Rig}$ is such that $-\beta \in \Delta$ and $\mc P_{\set{\beta,-\beta}}$ factors through the action of a Lie group $\mc G_{\set{\beta,-\beta}} \curvearrowright X$, then the neutral element of $\Lie(\mc G_{\set{\beta,-\beta}})$ (if it is nontrivial) acts in the same way as a one parameter subgroup of $H_0$. 
\end{lemma}

\begin{proof}
We will show that the neutral element of $\mc G_{\set{\beta,-\beta}}$ lies in $\ker \alpha$ for every $\alpha \in \Delta_1$. Fix such an $\alpha$ and notice that the orbits $\mc G_{\set{\beta,-\beta}} \cdot x$ are contained in $\pi_\alpha^{-1}(\pi_{\alpha}(x))$ (which is the leaf of the foliation $W^H$ as in Lemma \ref{lem:foliation1} for $H = \ker \alpha$). Therefore, since any $a \not\in \ker \alpha$ projects to some time-$t$ map of the Anosov flow on $Y_\alpha$, and the orbit of the neutral element must be contained in the fibers, it must be contained in $\ker \alpha$. Since $\alpha$ was an arbitrary element of $\Delta_1$, the neutral element must be contained in $H_0$. 
\end{proof}


\begin{proposition}
\label{prop:hom-fibers}
The action of $\hat{\mc P}_{\Rig} \curvearrowright X$ factors through the locally free action of a Lie group $G_{\Rig}$, and for each $a \in \R^k$, there exists an automorphism $\psi_a$ of $G_{\Rig}$ such that for every $g \in G_{\Rig}$, $ag \cdot x = \psi_a(g)a \cdot x$.
\end{proposition}

\begin{proof}
We claim that the conclusions of Sections \ref{sec:fibers} and \ref{sec:symplectic} apply. Indeed, we have the higher rank assumptions \ref{HR:a} and \ref{HR:b} for a fixed weight or pair of weights in $\Delta_{\Rig}$. The arguments of Section \ref{sec:fibers} apply verbatim using Lemma \ref{lem:hom-ideal} to verify the inductive procedure works using only commutators from $\Delta_{\Rig}$. Furthermore, by Lemma \ref{lem:H0-commutators}, we may follow the constructions around Lemma \ref{lem:CD-normal} to identify any element that appears as a neutral element of $\mc G_{\set{\beta,-\beta}}$ with an element of $H_0$ to build the group $G_{\Rig}$ after making such identifications.

 Therefore, the action of $\hat{\mc P}_{\Rig}$ has relations as in Definition \ref{def:const-pairwise}, and one may make the same algebraic manipulations to put an arbitrary word in $\hat{\mc P}_{\Rig}$ in the form of Proposition \ref{stable unstable cycle decomposition}. Then the proof of Corollary \ref{cor:GhatLie} works verbatim, as local product structure still implies that no small cycle consisting of a single stable, single unstable, and single $\R^k$-orbit foliation exists.
 
 To see the intertwining of the $G_{\Rig}$ action by the $\R^k$ action is by automorphisms, notice that if $\sigma \in \hat{\mc P}_{\Rig}$ is a cycle at $x$, then $a \cdot \sigma$ is a cycle at $a \cdot x$, and $a$ acts by automorphisms on each coarse Lyapunov generator. The automorphisms then come from Proposition \ref{prop:integrality}.
\end{proof}

A surprising feature of the map $\pi$ is that it may not be onto. This occurs when one takes the restriction of a product action to a generic subgroup. See Section \ref{sec:embedded-ex}. However, the following lemma shows that it is always a submersion onto its image.

\begin{lemma}
\label{lem:product-factor}
$Y := \pi(X)$ is a $C^{r'}$ submanifold of $\prod_{\alpha \in \Delta_1} Y_\alpha$ and $\pi : X \to Y$ is a $C^{r'}$ submersion, and the induced action of $\R^k$ on $Y$ is totally Cartan.
\end{lemma}

\begin{proof}
Since each $\pi_\alpha$ is a submersion, and $\ker d\pi_\alpha$ is exactly the sum of the tangent space to the orbits of $\ker \alpha$ with $\bigoplus_{\beta \not= \pm\alpha} TW^\beta$, it follows that $\pi$ is a map of constant rank, and $\ker d\pi$ is exactly the tangent space to the orbits of $H_0$ and the sum of $TW^\beta$, $\beta \in \Delta_{\Rig}$.

Note that even though $\pi$ has constant rank, its image may not be a submanifold. { Rather, it is at this point only an immersed submanifold, possibly with self-crossings.} It will be a submanifold if we can prove that every point has a unique family of manifold charts. We may produce such charts by pushing forward the image of an open neighborhood in $X$ and using the implicit function theorem. These will give a manifold structure as long as the following is satisfied:
if $x_1,x_2 \in X$ satisfy $\pi(x_1) = \pi(x_2)$, then for any neighborhood $U$ of $x_1$, 
there exists a neighborhood $V$ of $x_2$ such that $\pi(V) \subset \pi(U)$.

To see this, note that a neighborhood of $\pi(x_1)$ is exactly given by the saturation of $\pi(x_1)$ by a local stable and unstable manifold of $\pi(x_1)$ for some Anosov $a$, together with the image of the $\sigma(\R^k)$-orbit foliation. It is then clear that images of sufficiently small neighborhoods of $V$ are contained in such neighborhoods of $\pi(x_1)$.

It is clear that the induced action of $\R^k$ on $Y$ is totally Cartan since the stable and unstable foliations of each Anosov flow on $Y_\alpha$ are images of coarse Lyapunov foliations on $X$. Such foliations are intersections of stable manifolds for a collection of Anosov elements $a_i$ acting on $X$ by definition.. Such Anosov elements are Anosov on $Y$ and the intersections of their stable and unstable manifolds give the desired description of the stable and unstable manifolds of each Anosov flow as common stable manifolds of the $\sigma(\R^k)$-action.
\end{proof}

\begin{corollary}
\label{cor:full-fiber}
If $x \in X$ , then $\hat{\mc P}_{\Rig} \cdot x$ { is the connected component of} $\pi^{-1}(\pi(x))$.
\end{corollary}

\begin{proof}
Since $T(\pi^{-1}(x))$ is exactly the sum of the tangent spaces to $H_0$-orbits and $\bigoplus_{\beta \in \Delta_{\Hom}} TW^\beta$, which is exactly the tangent space of $G$-orbits, $\hat{\mc P}_{\Hom}$ are exactly the connected components of the fibers of $\pi$.
\end{proof}

\subsection{Intertwining properties of holonomies}
\label{sec:holonomies-affine}

In this section, we show condition (\ref{enum:holonomy-affine}) of Theorem \ref{thm:big-headache}, which completes its proof.

Fix a coarse Lyapunov foliation $W^\beta$, $\beta \in \Delta_1$ and some $y,y' \in Y_\beta$ such that $y \in W^u(y')$, where $W^u$ is the unstable foliation of the Anosov flow on $Y_\beta$ (we assume without loss of generality that $\beta$ corresponds to the unstable foliation and $-\beta$ to the stable). Then $y$ and $y'$ determine a holonomy $f_{y,y'} : \pi_{\beta}^{-1}(y) \to \pi_{\beta}^{-1}(y')$, by setting $f_{y,y'}(z)$ to be the unique (local) intersection of $W^\beta(z)$ with $\pi_\beta^{-1}(y')$.

\begin{lemma}
\label{lem:hom-holonomy}
For any $z \in \pi_{\beta}^{-1}(y)$, $f_{y,y'}(G_{\Rig} \cdot z) = G_{\Rig}\cdot f_{y,y'}(z)$.
\end{lemma}

\begin{proof}
First, notice that if $b \in \ker \beta$, then $f_{y,y'}(b\cdot z) = b \cdot f_{y,y'}(z)$, since $\ker \beta$ preserves the fibers of $\pi_\beta$ and leaves the foliation $W^\beta$ invariant.
We then claim that it suffices to show that if $\alpha \in \Delta_{\Rig}$, then $f_{y,y'}(W^\alpha(z)) \subset G_{\Rig} \cdot f_{y,y'}(z)$ for every $z \in X$. Indeed,  since $G_{\Rig}$ is generated by $H_0 \subset \ker \beta$ and all such $\alpha$, the claim follows inductively.

We now show that $f_{y,y'}(W^\alpha(z)) \subset G_{\Rig} \cdot f_{y,y'}(z)$ for every $z \in X$. Observe that if $z \in X$ and $z_1 \in W^\alpha(z)$, then $f_{y,y'}(z_1)$ is reached from $z$ by first moving along the $W^\alpha$ foliation to get to $z_1$, then along the $W^\beta$ foliation by definition. Therefore, we may use the weak geometric commutator of Lemma \ref{lem:geom-commutator1} to reach $f_{y,y'}(z_1)$ by first moving along the $W^\beta$ foliation, then along the $W^\alpha$ foliation, and finally along foliations in $D(\alpha,\beta)$. However, since the $W^\beta$ foliation on $Y_\beta$ can first be lifted to $Y$, and the other $W^{\beta'}$ foliations, $\beta' \in D(\alpha,\beta) \cap \Delta_1$, commute with $W^\beta$ on $Y$, it follows that any { of the weights in $\Delta_1 \cap D(\alpha,\beta)$} must have a trivial leg in the weak geometric commutator {  of the $\alpha$- and $\beta$-leaves}. This implies that $z_1 \in G_{\Rig} \cdot f_{y,y'}(z)$, as claimed. 
\end{proof}

\section{Centralizer Structure}
\label{sec:centralizer}

{The results of this section analyze the centralizer structure for an $\R^k$ action, and identifies an important finite subgroup. 
Let $\R^k \curvearrowright X$ be a cone transitive, totally Cartan action with trivial Starkov component. 
Fix an $\R^k$-periodic orbit $p$. Consider $V = \R^{\abs{\Delta}}$ as a vector space, whose entries represent the logarithms of eigenvalues of the derivatives of a map along the foliations $W^{\beta_1},\dots,W^{\beta_{\abs{\Delta}}}$ at $p$. 
Let $\Lambda_p \subset Z_{\Diff^1}(\R^k \curvearrowright X)$ denote the subgroup of the centralizer of the $\R^k$ action consisting of elements fixing $p$. Define a homomorphism $\psi : \Lambda_p \to V$ 
by setting $\psi(\lambda) = \left(\log {d\lambda}|_{TW^{\beta_1}}(p),\dots,\log {d\lambda}|_{TW^{\beta_{\abs{\Delta}}}}(p)\right)$.

\begin{lemma}
\label{lem:psi-injective}
The homomorphism $\psi$ is injective.
\end{lemma}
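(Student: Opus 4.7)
The plan is to show that $\lambda = \mathrm{id}$ whenever $\psi(\lambda) = 0$. First I would note that the hypothesis $\psi(\lambda)=0$ says $\lambda_*|_{E^{\beta_i}}(p) = 1$ for every $i$, and that $\lambda_*$ is the identity on the tangent space to the $\R^k$-orbit through $p$ (since $\lambda$ commutes with every element of the $\R^k$-action and therefore preserves its infinitesimal generators). Because $\lambda$ also respects the coarse Lyapunov splitting $T_pX = T\mc O(p) \oplus \bigoplus_i E^{\beta_i}(p)$, this gives $\lambda_*(p) = \mathrm{id}$.

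Next I would upgrade this infinitesimal vanishing to pointwise triviality on each leaf $W^{\beta_i}(p)$ via normal forms. Choose any Anosov $a \in \Stab_{\R^k}(p)$ with $\beta_i(a) < 0$; such $a$ exists because $\Stab_{\R^k}(p)$ is a rank-$k$ lattice and meets the open half-space $\{b : \beta_i(b) < 0\}$. Since $a$ contracts $W^{\beta_i}$ and $\lambda$ commutes with $a$ while fixing $p$, Theorem \ref{thm:normal-forms}(c) forces $\lambda|_{W^{\beta_i}(p)}$ to be linear in the normal-form coordinate $\psi^{\beta_i}_p$ with multiplier $\lambda_*|_{E^{\beta_i}}(p) = 1$, so $\lambda$ fixes $W^{\beta_i}(p)$ pointwise.

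The heart of the argument is an iterated saturation. I would introduce
\[
F_* = \bigl\{x \in X : \lambda(x) = x \text{ and } \lambda_*|_{E^{\beta_j}}(x) = 1 \text{ for all } j\bigr\},
\]
which is closed and contains $\R^k \cdot p$ (differentiating $\lambda \circ c = c \circ \lambda$ and restricting to the one-dimensional bundle $E^{\beta_j}$ yields, at any $\lambda$-fixed point $x$, the cocycle identity $\lambda_*|_{E^{\beta_j}}(c \cdot x) = \lambda_*|_{E^{\beta_j}}(x)$). I would then show by induction on the number of legs that $F_*$ contains every point reached from $\R^k \cdot p$ by a broken path in the coarse Lyapunov foliations. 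The inductive step uses the normal-form argument above to extend $\lambda = \mathrm{id}$ across a new $W^{\beta_i}$-leg, and uses the cocycle identity together with a contracting element $c \in \Stab_{\R^k}(p)$ to propagate the derivative condition. The main obstacle I anticipate is the case when the broken path visits two negatively proportional weights: no single $c$ then contracts every leg back to $p$. The resolution is that one need not return to $p$. Taking $c$ with only the last weight negative, the new endpoint $c^n \cdot z'$ approaches $c^n \cdot z$, which already lies in $F_*$ by induction; uniform continuity of $\lambda_*|_{E^{\beta_j}}$ on $X$ and its constancy along $\R^k$-orbits of fixed points then force $\lambda_*|_{E^{\beta_j}}(z') = 1$.

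Finally, I would conclude by accessibility. Local product structure for totally Cartan actions (Lemma \ref{lem:local-transitivity}) guarantees that the broken-path saturation $S$ of $\R^k \cdot p$ is open; it is also $\R^k$-invariant, and transitivity of the action provides a dense $\R^k$-orbit, so $S$ is dense in $X$. Since $F_*$ is closed and contains the dense set $S$, $F_* = X$, and in particular $\lambda(x) = x$ for every $x \in X$, whence $\lambda = \mathrm{id}$ and $\psi$ is injective.
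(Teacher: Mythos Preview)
Your proof is correct and follows essentially the same strategy as the paper's: use normal forms to show that $\psi(\lambda)=0$ forces $\lambda$ to fix each $W^{\beta_i}(p)$ pointwise, then propagate this through the $\R^k$-saturated accessibility class of $p$, which is all of $X$ by transitivity. The paper's proof is much terser---it simply points to the method of Proposition~\ref{prop:fix-point-set} and says ``We may inductively show that $\lambda$ also fixes the $\R^k$-saturated accessibility class of $p$''---while you make the inductive step explicit, carefully track the auxiliary derivative condition via the set $F_*$, and address the case of negatively proportional legs by converging to an arbitrary point of $F_*$ rather than back to $p$. These details are all sound and fill in exactly what the paper leaves to the reader.
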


\begin{proof}
The proof is very similar to that of Proposition \ref{prop:fix-point-set} and Lemma \ref{lem:starkov-factor}. Suppose that $\psi(\lambda) = 0$. 
Then ${d\lambda}(p) = \id$. Since $\lambda$ acts by linear transformations in the normal forms coordinates on the $W^\beta$-leaves, $\lambda$ also fixes the leaves $W^\beta(p)$ pointwise for every $\beta \in \Delta$. We may inductively show that $\lambda$ also fixes {any point reached by a path in the $W^\beta$, $\beta \in \Delta$, and $\R^k$-orbit foliations}, which is all of $X$. Therefore, $\lambda = \id$ and the map is injective.
\end{proof}

The following lemma is immediate by jointly diagonalizing the derivatives of elements $a \in \R^k$ such that $a \cdot q = q$ to obtain coefficients of the Lyapunov functionals, and pushing the metric around with the desired rescaling along its $\R^k$-orbit. The procedure is analogous to the metric constructed when there is a circle factor in the proof of Theorem \ref{thm:main-cartan} in Section \ref{sec:part1-proofs}.

\begin{lemma}
\label{lem:periodic-exact}
If $q$ is an $\R^k$-periodic point in $X$ and $\beta$ is a weight, there exists some $c_q \in \R_+$ and a norm $\norm{\cdot}_\beta = \norm{\cdot}_{\beta,aq}$ on $E^\beta_{aq}$, $a \in \R^k$ such that for every $b \in \R^k$ and $v \in E^\beta_{aq}$, $\norm{b_*(v)}_\beta = e^{c_q\beta(b)}\norm{v}_\beta$ .
\end{lemma}

Let $F$ be any finite collection of periodic points in $X$ with distinct $\R^k$-periodic {orbits}, one of which is $p$. Let $\Lambda_p^F \subset \Lambda_p$ be the subgroup of $\Lambda_p$ defined by:

\[ \Lambda_p^F = \set{ f \in \Lambda_p : f(\R^k \cdot q) = \R^k \cdot q \mbox{ for every }q \in F}.\]

Then $\Lambda_p^F$ has finite index in $\Lambda_p$, since for each $q \in F$, there are only finitely many other periodic orbits with the same $\R^k$-stabilizer. We now define a new homomorphism $\Psi_{F} : \Lambda_p^F \to V^{\abs{F}+1}$. To $p$, and each point in $F$, we associate an element of $V$. For $p$, we use $\psi$. For $q \in F$, associate the element $\left(\log \norm{\lambda_*|_{W^{\beta_1}}(q)}_{\beta_1},\dots,\log \norm{\lambda_*|_{W^{\beta_{\abs{\Delta}}}}(q)}_{\beta_{\abs{\Delta}}}\right)$.

\begin{proposition}
\label{prop:Psi-injective}
$\Psi_F$ is an injective homomorphism.
\end{proposition}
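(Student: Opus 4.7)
Injectivity is the easy part and comes for free: by construction the first component of $\Psi_F$ is exactly the homomorphism $\psi$, and $\psi$ is already known to be injective by Lemma \ref{lem:psi-injective}. So if $\Psi_F(\lambda)=0$, then $\psi(\lambda)=0$ and $\lambda=\id$. The real content is checking that each of the remaining $|F|$ coordinates defines a homomorphism $\Lambda_p^F\to V$, so that the combined map $\Psi_F$ is indeed a group homomorphism.

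Fix $q\in F$ and $\beta\in\Delta$, and define
\[
\mu_\beta(\lambda,q) \;=\; \log \bigl\| \lambda_*|_{E^\beta}(q)\bigr\|_\beta,
\]
using the orbit-rescaling norm $\|\cdot\|_\beta$ provided by the lemma preceding the proposition (recall $\|b_*v\|_\beta = e^{c_q\beta(b)}\|v\|_\beta$ for $b\in\R^k$, $v\in E^\beta_{aq}$). First I would verify that $\mu_\beta(\lambda,\cdot)$ is actually a function of the orbit $\R^kq$, i.e.\ that $\mu_\beta(\lambda,aq)=\mu_\beta(\lambda,q)$ for every $a\in\R^k$. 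Indeed, if $v\in E^\beta_q$ is a $\|\cdot\|_\beta$-unit vector, then $\tilde v := e^{-c_q\beta(a)} a_*v$ is a unit vector in $E^\beta_{aq}$, and since $\lambda$ commutes with the $\R^k$-action,
\[
\lambda_*\tilde v \;=\; e^{-c_q\beta(a)}\,\lambda_* a_*v \;=\; e^{-c_q\beta(a)}\,a_* \lambda_*v.
\]
Applying the rescaling identity to $a_*$ and taking logs gives $\mu_\beta(\lambda,aq)=\mu_\beta(\lambda,q)$, as claimed. This orbit-invariance is precisely what lets us use $\mu_\beta(\lambda,q)$ as a well-defined invariant of $\lambda\in\Lambda_p^F$.

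Next I would prove the homomorphism property $\mu_\beta(\lambda_1\lambda_2,q)=\mu_\beta(\lambda_1,q)+\mu_\beta(\lambda_2,q)$. Because $\lambda_2\in\Lambda_p^F$ preserves $\R^kq$, there exists $a_2\in\R^k$ with $\lambda_2(q)=a_2q$. Take $v\in E^\beta_q$ with $\|v\|_\beta=1$, so that $(\lambda_2)_*v = e^{\mu_\beta(\lambda_2,q)}\,\tilde v$ for some unit $\tilde v\in E^\beta_{a_2q}$. Then by the chain rule,
\[
(\lambda_1\lambda_2)_*v \;=\; e^{\mu_\beta(\lambda_2,q)}\,(\lambda_1)_*\tilde v,
\]
and, using the orbit-invariance established in the previous step,
\[
\|(\lambda_1)_*\tilde v\|_\beta \;=\; e^{\mu_\beta(\lambda_1,a_2q)} \;=\; e^{\mu_\beta(\lambda_1,q)}.
\]
Taking logs gives the desired additivity. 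Together with the fact that $\psi$ is a homomorphism on the $p$-coordinate (which is trivial at a fixed point, where the norms on $E^\beta_p$ factor out of the composition), this shows $\Psi_F$ is a group homomorphism. Combined with the injectivity observation, this completes the proof.

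The only real subtlety is the orbit-invariance computation for $\mu_\beta$: this is where we must use both the defining rescaling property of the norms $\|\cdot\|_\beta$ on $\R^kq$ and the fact that elements of the centralizer commute with the $\R^k$-action. Once that is in hand, the homomorphism property and injectivity are essentially bookkeeping.
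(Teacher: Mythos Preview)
Your proof is correct and follows essentially the same approach as the paper: injectivity is inherited from the first coordinate $\psi$ via Lemma \ref{lem:psi-injective}, and the homomorphism property reduces to showing that $\norm{\lambda_*|_{E^\beta}(\cdot)}_\beta$ is constant along the $\R^k$-orbit of each $q\in F$, which both you and the paper obtain from the commutation $\lambda a = a\lambda$ together with the rescaling identity for $\norm{\cdot}_\beta$. Your presentation is a bit more explicit in tracking base points and unit vectors, but the argument is the same.
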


\begin{proof}
That it is injective follows from Lemma \ref{lem:psi-injective}. To see that $\Psi_{F}$ is a homomorphism, suppose that $f,g \in \Lambda_p^F$. It suffices to show that $\Psi_{F}$ is a homomorphism in each of the $V$-components. Fix $q \in F$ and $\beta \in \Delta$. Then:

\[  (fg)_*|_{W^{\beta}}(q) = f_*|_{W^{\beta}}(g(q)) \cdot g_*|_{W^\beta}(q). \]

Upon taking norms and logarithms, this proves that $\Psi_{F}$ is a homomorphism, provided that we show that $f_*|_{W^\beta}(g(q)) = f_*|_{W^\beta}(q)$ for every $f,g \in \Lambda_p^F$. Notice that since $g \in \Lambda_p^F$, it suffices to show that $\norm{f_*|_{W^\beta}}_\beta$ is constant along the $\R^k$ orbit of $q$. To see this, notice that since $fa = af$, $(fa)_*|_{W^\beta} = (af)_*|_{W^\beta}$. Therefore, $\norm{f_*|_{W^\beta}(aq)}_\beta \cdot e^{c_q\beta(a)} = e^{c_q\beta(a)} \cdot \norm{f_*|_{W^\beta}(q)}_\beta$, and $\norm{f_*|_{W^\beta}(\cdot)}_\beta$ is constant along the $\R^k$-orbit of $q$ as claimed.
\end{proof}

{
\begin{corollary}
\label{cor:finite-order}
Assume that $\lambda \in Z_{\Diff^1}(\R^k \curvearrowright X)$, and that for every $i$, $\norm{d\lambda^n|_{TW^{\beta_i}}}$ is uniformly bounded above and below in $n$. Then $\lambda$ is a finite-order diffeomorphism.
\end{corollary}

\begin{proof}
Fix $p$ and $F$ as discussed after Lemma \ref{lem:periodic-exact}. Since for $p$ and each $q \in F$, there are only finitely many periodic orbits with the same period, there exists some $n$ such that $\lambda^n$ fixes the $\R^k$-orbit of $p$ and the $\R^k$-orbit of $q$ for every $q \in F$. Choose some $a \in \R^k$ such that $(a\lambda^n) \cdot p = p$, so that $a\lambda^n \in \Lambda^F_p$. Notice that by the assumption on derivatives, $\Psi_F(a\lambda^n) = \Psi_F(a)$, so $a\lambda^n = a$ by Proposition \ref{prop:Psi-injective}. It follows that $\lambda^n = \id$ and $\lambda$ is finite order.
\end{proof}
}




We now fix a choice of $F$. For each weight $\beta$, find an element $a$ such that $W^\beta$ is the slow foliation for $a$ (this is possible by Lemma \ref{lem:smooth-holonomy}), and again let $\widehat{W^\beta}$ denote the complementary foliation within $W^u_a(x)$ for a suitable choice of $a$. Since periodic points are dense by Lemma \ref{lem:cone-transitive}, for each $\beta \in \Delta$, we may pick another periodic point $q$ very close to some point $x \in W^\beta(p)$ distinct from $p$ at distance $\ve_0 > 0$ from $p$ (so that both $q$ and $x$ lie in some chart around $p$ in which local product structure holds). By local product structure,
 there is a unique local transverse intersection $W^{cs}_{a,\operatorname{loc}}(q) \cap W^u_{a,\operatorname{loc}}(p) = \set{y}$ and since $d(p,x) = \ve_0$, by choosing $q$ sufficiently close to $x$, we may assume that $\widehat{W^\beta}(y) \cap W^\beta(p) = \set{x'}$ has $x' \not= p$. (again, see Lemma \ref{lem:smooth-holonomy}). 
Pick points $x$ and $q$ for every $\beta \in \Delta$, and let $F$ denote the collection of such points $q$, together with $p$.

\begin{lemma}
\label{lem:discrete-centralizer}
{For the choice of $F$ established above,} $\Psi_F(\Lambda_p^F)$ is a discrete subgroup of $V^{\abs{F}+1}$.
\end{lemma}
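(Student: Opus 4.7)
The plan is to prove the contrapositive: if $(f_n) \subset \Lambda_p^F$ is a sequence with $\Psi_F(f_n) \to 0$ and $f_n \ne \mathrm{id}$, we derive a contradiction. Since $\Psi_F$ is a homomorphism (Proposition \ref{prop:Psi-injective}), showing that $0$ is isolated in the image suffices for discreteness.

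First I would argue that $f_n \to \mathrm{id}$ uniformly. Because $\Psi_F(f_n) \to 0$, the derivatives of $f_n$ along every coarse Lyapunov leaf at $p$ and at every $q \in F$ remain uniformly bounded. Combined with the commutation of $f_n$ with the $\R^k$-action and Lemma \ref{lem:uniformly bounded derivative} applied along hyperplanes that transport $p$-data to arbitrary points of dense $\R^k$-orbits, the sequence $(f_n)$ is equicontinuous. Arzel\`a-Ascoli produces a subsequential uniform limit $f_\infty \in \Lambda_p^F$ with $\Psi_F(f_\infty) = 0$. Injectivity of $\Psi_F$ yields $f_\infty = \mathrm{id}$, and passing to the original sequence, $f_n \to \mathrm{id}$ uniformly.

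The core step is to use the geometric choice of $F$ to upgrade this convergence to eventual equality $\Psi_F(f_n) = 0$. Fix $\beta \in \Delta$ and the corresponding $q \in F$ near $W^\beta(p)$. The element $f_n$ commutes with an Anosov element $a$ for which $W^\beta$ is the slow foliation (Lemma \ref{lem:smooth-holonomy}), and Theorem \ref{thm:normal-forms}(c) then implies that $f_n$ acts in the normal-form coordinates on $W^\beta(p)$ as multiplication by $\lambda_n^\beta = e^{v_n^\beta}$, where $v_n^\beta$ is the $(\beta,p)$-component of $\Psi_F(f_n)$. Equivariance of $f_n$ under the $\R^k$-action preserves the $\widehat{W^\beta}$-foliation, and by Lemma \ref{lem:linear-holonomy} the $\widehat{W^\beta}$-holonomy $W^\beta(p) \to W^\beta(y)$ is linear in normal forms; the fact that $x' = \widehat{W^\beta}(y) \cap W^\beta(p) \ne p$ is crucial here because it guarantees that this conjugation relation is non-degenerate rather than trivial. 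The holonomy intertwines the action of $f_n$ on $W^\beta(p)$ with its action on $W^\beta(y)$, so the multiplier on $W^\beta(y)$ is also $\lambda_n^\beta$. Composing with a local $W^s$-holonomy from $y$ to the periodic orbit of $q$ and using the $\R^k$-intertwining along this orbit, $\lambda_n^\beta$ is identified with the $(\beta,q)$-component of $\Psi_F(f_n)$, up to a bounded multiplicative cocycle determined by the periodic data of $q$.

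The contradiction now comes from an algebraic rigidity: since $f_n \to \mathrm{id}$, for large $n$ we have $f_n(q) = q$ (using that $\Stab_{\R^k}(q)$ is a lattice), so the multipliers $\lambda_n^\beta$ lie in the finitely generated abelian subgroup of $\R_{>0}^*$ spanned by the derivatives of $\Stab_{\R^k}(q)$ along $W^\beta$; this subgroup is discrete except possibly at $1$, but the holonomy relation rules out accumulation at $1$ with $\lambda_n^\beta \ne 1$, forcing $\lambda_n^\beta = 1$ and hence $\Psi_F(f_n) = 0$. The main obstacle I anticipate is the precise bookkeeping of the normal-form rescaling in the passage from $W^\beta(y)$ through $W^s$-holonomy to $W^\beta(q)$: the normal-form coordinates at different points differ by linear factors that depend on the base point, and one must verify that in the limit $f_n \to \mathrm{id}$ these rescalings are inert, so that the holonomy equation remains non-trivial and enforces the desired rigidity.
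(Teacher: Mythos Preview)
Your proposal has two genuine gaps, and it misses the simple geometric mechanism that the paper's construction of $F$ is designed to enable.

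\textbf{Gap 1: the equicontinuity step.} You assert that bounded derivatives at $p$ and at the points of $F$, together with Lemma~\ref{lem:uniformly bounded derivative}, yield equicontinuity of $(f_n)$. But Lemma~\ref{lem:uniformly bounded derivative} concerns elements of a Lyapunov hyperplane inside $\R^k$, not arbitrary centralizing diffeomorphisms; and knowing derivatives at finitely many periodic points says nothing a priori about derivatives elsewhere on $X$. The Arzel\`a--Ascoli argument is therefore not justified as written.

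\textbf{Gap 2: the algebraic rigidity step.} You claim that once $f_n(q)=q$, the multiplier $\lambda_n^\beta$ lies in the finitely generated subgroup of $\R_{>0}^*$ spanned by the derivatives of $\Stab_{\R^k}(q)$ along $W^\beta$, and that this subgroup is discrete. Neither claim holds. The diffeomorphism $f_n$ is not an element of $\R^k$, so there is no reason its derivative is constrained to the subgroup generated by periodic data of the $\R^k$-action. And a finitely generated subgroup of $\R_{>0}^*$ is typically dense (e.g.\ $\langle 2,3\rangle$, since $\log 2/\log 3$ is irrational), so even if the containment held it would not give discreteness.

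\textbf{What the paper does instead.} The paper's proof is a direct fixed-point argument with no sequences or limits. For $\lambda\in\Lambda_p^F$ with $\Psi_F(\lambda)$ close to $0$: since $\lambda$ fixes $p$ and preserves the $\R^k$-orbit of $q$, it preserves both $W^u_{\loc}(p)$ and $W^{cs}_{\loc}(q)$ as sets, hence fixes their locally unique intersection $y$. Then $\lambda$ preserves the dynamically defined foliations $\widehat{W^\beta}$ and $W^\beta$, so it preserves $\widehat{W^\beta}(y)$ and $W^\beta(p)$, hence fixes their locally unique intersection $x'\ne p$. Now $\lambda$ fixes two distinct points $p,x'$ on $W^\beta(p)$, and since $\lambda$ is linear in normal-form coordinates (Theorem~\ref{thm:normal-forms}(c)), it is the identity on $W^\beta(p)$; in particular $\lambda_*|_{W^\beta}(p)=1$. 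Running this over all $\beta$ gives $\psi(\lambda)=0$, and injectivity of $\psi$ finishes. You gesture at the role of $x'\ne p$ but never use it to produce a second fixed point---that is precisely the point of the construction of $F$.
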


\begin{proof}
Suppose that $\lambda \in \Lambda_p^F$ {satisfy that $\Psi_F(\lambda)$} is very close to $0 \in V^{\abs{F}+1}$, so that $\norm{\lambda_*|_{W^\gamma}(q)}_\gamma$ is very close to 1 for every $\gamma$ and at every $q \in F$. 

{Fix such a $q$. We claim that} for $\lambda$ such that $\Psi_{F}(\lambda)$ is sufficiently close to 0, $\lambda$ fixes the intersection of $W^{cs}_{a,\operatorname{loc}}(q) \cap W^u_{a,\operatorname{loc}}(p) = \set{y}$. {Indeed, $\lambda(y)$ will still belong to this intersection, as $p$ is fixed by $\lambda$ and the center-stable manifold of $q$ is fixed by $\lambda$. Furthermore, $y$ does not move much by the smallness of $\Psi_F(\lambda)$.} Since the intersection is locally unique we get that $\lambda$ fixes $y$. Then $y \in \widehat{W^\beta}(x')$ for some $x' \not= p$, and since $\widehat{W^\beta}(y)$ and $W^\beta(p)$ are both invariant and have derivatives very close to one, their local intersection is also unique and preserved. But then $\lambda(x') = x'$, so $\lambda$ fixes two points on the same $\beta$ leaf. This implies that $\lambda$ fixes the entire $\beta$ leaf since it must be linear in the normal forms coordinates. So ${d\lambda}|_{TW^\beta}(p) = 1$. 
Since this can be arranged for each $\beta$, we get that $\psi(\lambda) = 0$ and hence by Lemma \ref{lem:psi-injective}, we conclude that $\lambda = \id$. Therefore, $\Psi(\Lambda_p^F)$ is discrete in $V^{\abs{F}+1}$.
\end{proof}

{By  Lemma \ref{lem:psi-injective}, $\Psi_F|_{\Lambda_p^F}$ is an isomorphism onto its image, and $\Lambda_p^F$ is therefore a free abelian group of finite rank by Lemma \ref{lem:discrete-centralizer}.} Since $p \in F$ is an $\R^k$-periodic point, $\Lambda_p^F \cap \R^k \cong \Z^k$. Therefore, $\Lambda^{F} := \Lambda_p^F \cdot \R^k$ is a finite index subgroup of $Z_{\Diff^1}(\R^k \curvearrowright X)$, and is exactly the set of elements which fix the orbits of elements of $F$, not necessarily fixing $p$.

\begin{lemma}
\label{lem:injective-derivative}
$\Psi_F$ extends to an injective homomorphism from $\Lambda^{F}$ to $V^{\abs{F}+1}$.
\end{lemma}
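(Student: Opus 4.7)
The plan is to extend $\Psi_F$ by the same formula $\Psi_F(\mu)_{q,\beta} = \log\norm{\mu_*|_{W^{\beta}}(q)}_\beta$ for every $q \in F \cup \set{p}$ and $\beta \in \Delta$. This is well-defined on all of $\Lambda^F$ since every $\mu \in \Lambda^F$ preserves each orbit $\R^k \cdot q$, and the homomorphism property is obtained by repeating the computation in Proposition \ref{prop:Psi-injective} verbatim: because $\mu$ commutes with $\R^k$, the function $\norm{\mu_*|_{W^\beta}(\cdot)}_\beta$ is constant on $\R^k$-orbits, which yields additivity $\Psi_F(fg) = \Psi_F(f) + \Psi_F(g)$. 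For any $a \in \R^k$ the formula gives $\Psi_F(a)_{q,\beta} = c_q\beta(a)$, so the restriction $\Psi_F^{\R^k} : \R^k \to V^{\abs{F}+1}$ is linear with kernel $\bigcap_{\beta \in \Delta}\ker\beta = S$, which is trivial by our standing assumption; hence $\Psi_F^{\R^k}$ is injective, while the restriction of $\Psi_F$ to $\Lambda_p^F$ is injective by Proposition \ref{prop:Psi-injective}.

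To prove injectivity on $\Lambda^F$, I will suppose $\Psi_F(\mu) = 0$ and decompose $\mu = \lambda a$ with $\lambda \in \Lambda_p^F$ and $a \in \R^k$. Since $\lambda$ commutes with $\R^k$ and fixes $p$, we have $\mu \cdot p = ap$, so that $\sigma := a^{-1}\mu = \lambda \in \Lambda_p^F$ satisfies $\Psi_F(\sigma) = \Psi_F^{\R^k}(a^{-1})$. The goal is to show that $a \in \Stab_{\R^k}(p) = \Z^k$: in that case $a^{-1} \in \Lambda_p^F$ and the injectivity of $\Psi_F$ on $\Lambda_p^F$ will force $\sigma = a^{-1}$, yielding $\mu = \id$.

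The hard part is ruling out the possibility that $a \notin \Z^k$, which I plan to treat by adapting the local product structure argument of Lemma \ref{lem:discrete-centralizer} to the fact that $\mu$ translates basepoints rather than fixing them. The vanishing of $\Psi_F(\mu)$ combined with the rigidity of normal form coordinates (Theorem \ref{thm:normal-forms}(c)) forces $\mu$ to act by a norm-preserving reparameterization $\mu\of\psi^\beta_p(t) = \psi^\beta_{ap}(t)$ along every $W^\beta$-leaf through $p$ and through each $q \in F$. For each $\beta \in \Delta$, the collection $F$ was built so that $\widehat{W^\beta}(y) \cap W^\beta(p) = \set{x'}$ with $x' \ne p$, where $y = W^{cs}_{\loc}(q)\cap W^u_{\loc}(p)$. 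Writing $\mu q = a_q q$, the transverse intersection $y$ is sent to $W^{cs}_{\loc}(a_q q)\cap W^u_{\loc}(ap)$, and compatibility of the resulting configuration with the trivial normal form action on $W^\beta(p)$ forces $a$ and $a_q$ to agree modulo $\Stab_{\R^k}(q)$ and to coincide across different choices of $\beta$. The joint constraints imposed by the distinct periodic orbits in $F$ then pin $a$ down to lie in $\Stab_{\R^k}(p) = \Z^k$, completing the argument.
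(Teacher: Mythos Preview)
Your first two paragraphs are correct and actually flesh out what the paper leaves implicit: you extend $\Psi_F$ to $\Lambda^F$ by the same derivative formula, verify the homomorphism property via the constancy of $\norm{\mu_*|_{W^\beta}}_\beta$ along $\R^k$-orbits, and observe that $\Psi_F|_{\R^k}$ is linear with trivial kernel (since the Starkov component is trivial). The reduction in your third paragraph --- write $\mu=\lambda a$ and show $a\in\Z^k$ --- is also valid.

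The gap is in how you propose to rule out $a\notin\Z^k$. Your sketch invokes a translated version of the local product structure argument from Lemma~\ref{lem:discrete-centralizer}, but the conclusion that ``the joint constraints imposed by the distinct periodic orbits in $F$ then pin $a$ down to lie in $\Stab_{\R^k}(p)$'' is asserted rather than proved, and it is not clear the constraints you describe actually force this. The map $\mu$ carries the entire configuration near $p$ to the $a$-translated configuration near $ap$; nothing you have written distinguishes this from the action of $a$ itself in a way that yields $a\in\Z^k$.

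The paper sidesteps this entirely by using the \emph{discreteness} of $\Psi_F(\Lambda_p^F)$ established in Lemma~\ref{lem:discrete-centralizer}, which you never invoke. Since $\Psi_F:\Lambda_p^F\cong\Z^{k+\ell}\to V^{|F|+1}$ is injective with discrete image, the images of a $\Z$-basis are $\R$-linearly independent, so the $\R$-linear extension $\R^{k+\ell}\to V^{|F|+1}$ is injective. Your formula $\Psi_F(a)_{q,\beta}=c_q\beta(a)$ shows that $\Psi_F|_{\R^k}$ is exactly this linear extension restricted to the $\R^k$ factor (it is continuous and agrees with $\Psi_F$ on $\Z^k$). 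Hence $\Psi_F$ on $\Lambda^F=\R^k\cdot\Lambda_p^F\subset\R^{k+\ell}$ is the restriction of an injective linear map, and injectivity is immediate. This replaces your entire last paragraph with a one-line structural observation.
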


\begin{proof}
Notice that $\Lambda^F$ is an abelian group isomorphic to $\R^k \times \Z^\ell$ for some $\ell$, and that $\Lambda^F_p \cong \Z^k \times \Z^\ell$ is a lattice in $\Lambda^F$. Since $\Psi$ is now an injective homomorphism from a lattice to a lattice, it must extend to an injective homomorphism on the corresponding vector spaces.
\end{proof}

Define 

\begin{eqnarray*} Q := \big\lbrace \lambda \in Z_{\Diff^1}(\R^k \times \Z^\ell \curvearrowright X) : d\lambda^n(x)|_{TW^\beta} \mbox{ is uniformly bounded,}\hspace{1cm} \\ \hspace{1cm}\mbox{for all }n \in \N, 
 x \in X \mbox{ and } \beta \in\Delta\big\rbrace.
\end{eqnarray*}

\begin{corollary}
\label{cor:q-finite}
$Q$ is a finite group.
\end{corollary}

\begin{proof}
Let $\tilde{F}$ denote the set of periodic orbits $g \cdot (\R^k \cdot x)$, where $\R^k \cdot x \in F$ and $g \in Z_{\Diff^1}(\R^k \times \Z^\ell)$. Notice that this is still a finite set, since for each possible period, there are only finitely many orbits with that period. Let $S(\tilde{F})$ denote the set of permutations of the finite set $\tilde{F}$, which itself is a finite group. Then there exists a homomorphism $\phi : Q \to S(\tilde{F})$ which tracks how the orbits of $\tilde{F}$ are permuted. We claim that $\phi$ is injective, which implies that $Q$ is finite.

Indeed, by construction, if $\lambda \in \ker \phi$, then $\lambda$ fixes every orbit of $F \subset \tilde{G}$. Therefore, $\lambda \in \Lambda^F$. However, since $d\lambda^n$ is uniformly bounded, we get that $\Psi_F(\lambda) = 0$. By Lemma \ref{lem:injective-derivative}, $\lambda = \id$. Therefore, $\ker \phi = \set{\id}$ and $\phi$ is injective, as claimed.
\end{proof}

\section{Detecting direct products}
\label{sec:prod-struct}

Fix a cone transitive $C^r$ totally Cartan  action $\R^k \curvearrowright X$  with trivial Starkov component, with { $r = 2$} or $r = \infty$. { We continue assuming that $r' = (1,\theta)$ or $\infty$, respectively.} We assume throughout this section that $\pi : X \to Y$ is a $C^{r'}$ rank one factor, with homomorphism $\sigma : \R^k \to \R$ and Anosov flow $\psi_t : Y \to Y$ such that $\pi(a \cdot x) = \psi_{\sigma(a)}(\pi(x))$. Let $\pm \alpha$ denote the symplectic pair of weights whose corresponding foliations project to the stable and unstable manifolds of $\psi_t$, and 
 $H_0$ denote $\ker \alpha$, so that $H_0$ has a dense orbit in each $\pi^{-1}(y)$, $y \in Y$. We require the following definition to understand the behavior of $H_0$ on each fiber $\pi^{-1}(y)$. 

\begin{definition}
\label{def:splitting}
Let $\pm \alpha \in \Delta$ be a pair of weights such that the $W^{\pm \alpha}$-foliations  descend to the stable and unstable foliations of some $C^{r'}$ rank one factor $Y$, $r' = (1,\theta)$ or $\infty$. We say that $\alpha$ is a {\it splitting weight} if $L_\alpha := \displaystyle\bigcap_{\beta \not= \pm \alpha} \ker \beta$ is nontrivial.
\end{definition}

\begin{lemma}
\label{lem:direct-implies-splitting}
Let $\R^k = \R \times \R^{k-1} \curvearrowright Y \times \bar{X}$ be the direct product of an Anosov flow on $Y$ and a totally Cartan action $\R^{k-1} \curvearrowright \bar{X}$ with trivial Starkov component. Then if $\alpha(t,a) = t$ for $(t,a) \in { \R} \times \R^{k-1}$, $\alpha$ is a splitting weight.
\end{lemma}

\begin{proof}
Notice that if $\beta \in\Delta$, then either $\beta = \pm \alpha$ and $W^\beta$ is a stable or unstable foliation on $Y$, or $W^\beta$ is a foliation in $\bar{X}$. Since the action is a product action, the Anosov flow on $Y$ is isometric on $W^\beta$. That is, $\R \times \set{0} \subset \ker \beta$ for all $\beta \in \Delta$. Therefore, $\R \times \set{0} \subset L_\alpha$ is nontrivial.
\end{proof}

Much of the rest of this section is proving the converse to Lemma \ref{lem:direct-implies-splitting}, which is true up to finite cover (Theorem \ref{thm:splitting-implies-direct}).

\begin{lemma}
\label{lem:splitting-equivalence}
If $\R^k \curvearrowright X$ is a cone transitive action with trivial Starkov component, and $\pm \alpha$ is a pair of weights descending to some rank one factor $Y$, then the following are equivalent:

\begin{enumerate}
\item $\alpha$ is a splitting weight,
\item $\alpha$ is not a linear combination of weights from $\Delta \setminus \set{\pm \alpha}$, 
\item $\dim(L_\alpha) = 1$, {  and}
\item $\R^k = \ker \alpha \oplus L_\alpha$.
\end{enumerate}

Furthermore, if $\alpha$ is a splitting weight, the action of $\ker \alpha$ on $\pi^{-1}(y)$ is totally Cartan for every $y \in Y$.
\end{lemma}

\begin{proof}
Assume that $\alpha$ is a splitting weight (ie, (1) holds), so that $L_\alpha$ is nontrivial. We will show (2). 
Assume for a contradiction that $\alpha = \sum t_i\beta_i$ is a linear combination of the weights $\beta_i$. Then for any $a \in L_\alpha$, $\alpha(a) = \sum t_i\beta_i(a) = 0$. Therefore, $a \in \ker \beta$ for all $\beta \in \Delta$ (including $\beta = \alpha$), and is therefore part of the Starkov component. Since the Starkov component is assumed to be trivial, this is a contradiction. Therefore, we have (2).

If (2) holds, we claim that $\dim(L_\alpha) = 1$ (ie, (3) holds). Consider the subspace $V \subset (\R^k)^*$, which is the linear span of the weights $\beta \in \Delta \setminus \set{\pm \alpha}$. By (2), $\alpha \not\in V$, so $\dim V < k$. On the other hand, $V \oplus \R \alpha = (\R^k)^*$, since the set of $a \in \R^k$ killed by $V \oplus \R \alpha$ is exactly the Starkov component. Therefore, $\dim(V) = k-1$. But $L_\alpha = \set{ a \in \R^k : \beta(a) = 0 \mbox{ for all } \beta \in V}$, and the standard duality implies that $\dim(L_\alpha) + \dim(V) = k$. Therefore, $\dim(L_\alpha) = 1$.

The equivalence of (3) and (4) is immediate once we know that $\ker \alpha \cap L_\alpha = \emptyset$. This follows from the assumption that the action has trivial Starkov component. It is also immediate that (4) implies (1), since $\dim(\ker \alpha)= k - 1$.

We now show that if $\alpha$ is a splitting weight, then the $\ker \alpha$ action is totally Cartan. Indeed, notice that $T\pi^{-1}(y) = T(\ker \alpha) \oplus \bigoplus_{\beta \not= \pm \alpha} E^\beta$. Since the action of $\ker \alpha$ is the restriction of the $\R^k$ action, we recover the totally Cartan property immediately once we show that no two weights of the $\ker \alpha$ action are positively proportional. That is, the $\ker \alpha$ action is not totally Cartan if and only if there exist distinct weights $\beta_1,\beta_2 \in \Delta \setminus \set{\pm \alpha}$ such that $\beta_1|_{\ker \alpha} = \beta_2|_{\ker \alpha}$. This occurs if and only if $(\beta_1-\beta_2)|_{\ker \alpha} = 0$. But then $\beta_1 - \beta_2$ is proportional to $\alpha$ on $\R^k$. If $\beta_1 - \beta_2 = 0$ on $\R^k$, then they are the same weight, which is a contradiction. Otherwise, $\beta_1 - \beta_2 = c\alpha$, $c \not= 0$, and $\alpha$ is a linear combination of weights in $\Delta \setminus \set{\pm \alpha}$.
\end{proof}



\begin{lemma}
\label{lem:holonomies-smooth}
Let $\R^k \curvearrowright X$ be a cone transitive, totally Cartan action with rank one factor $\pi : X \to Y$ as described above, and assume the corresponding weight $\alpha$ is a splitting weight. Then if the $W^\alpha$-foliation on $X$ projects to the stable foliation on $Y$, and $y' \in W^s(y)$, then the holonomy maps $f_{y,y'}^\alpha : \pi^{-1}(y) \to \pi^{-1}(y')$ along the $W^\alpha$-foliations satisfy

\begin{itemize}
\item $f_{y,y'}^\alpha$ commutes with the $\ker \alpha$-action, 
\item $f_{y,y'}^\alpha$ is a $C^{r'}$ diffeomorphism, 
\item $f_{y,y'}^\alpha$ is $C^{r'}$ 
as $y'$ varies along $W^s(y)$,
\item $f_{y,y'}^\alpha$ varies continuously in the $C^{r'}$-topology in the variables $y$ and $y'$.
\end{itemize} 
\end{lemma}

\begin{proof}
The first claim is immediate from the fact that $\ker \alpha$ preserves the fibers of $\pi$, and intertwines the leaves of $W^\alpha$. Since the local holonomies are defined as the intersection of $\pi^{-1}(y')$ and $W^\alpha_{\loc}(y)$, we get that $\ker \alpha$ commutes with the holonomies.

To see that it is a $C^r$-diffeomorphism, notice that we have assumed that $\alpha$ is a splitting weight. Therefore, we may choose $a \in L_\alpha$ such that $\alpha(a) = -1$. Choose some $b \in \R^k$ such that $a+tb$ is Anosov 
 for sufficiently small $t \in \R$. Then consider a small number $\delta$ such that both $a+\delta b$ and $a - \delta b$ are Anosov, and the splitting

\[ TX = E^\alpha \oplus \left( T\mc O \oplus \bigoplus_{\beta \not= \pm \alpha} E^\beta \right) \oplus E^{-\alpha} \]
is an invariant partially hyperbolic splitting, where $E^\alpha$ is expanding, $E^{-\alpha}$ is contracting, and the remaining bundles are neutral for $a$. Write 

\[ T\pi^{-1}(y) = T\mc O' \oplus \bigoplus_{\substack{\beta(b) > 0 \\ \beta \not= \pm \alpha}} E^\beta \oplus \bigoplus_{\substack{\beta(b) < 0 \\ \beta \not= \pm \alpha}} E^\beta = T\mc O' \oplus \hat{E}^u \oplus \hat{E}^s,\] where $\mc O'$ is the tangent bundle to the $\ker \alpha$-orbit foliation. Notice that the second and third terms of the sum are integrable, and the leaves are exactly the unstable and stable manifolds of $a + \delta b$ in the fibers, considering $a +\delta b$ as a transformation of the fiber bundle. Furthermore, the element $a + \delta b$ has stable distribution $E^\alpha \oplus \hat{E}^s$ on $X$, and since $a \in \ker\alpha$, for $\delta$ sufficiently small, $\hat{E}^s$ is the slow foliation and $E^\alpha$ is the fast foliation for $a + \delta b$, as in Definition \ref{def:slow-foliation} and subsequent Remark \ref{rem:fast-distribution}. Therefore, $E^\alpha$ is uniformly $C^r$ along $\hat{W}^s$, the foliation tangent to $\hat{E}^s$, and the corresponding $W^\alpha$ holonomies are uniformly $C^r$ along $\hat{W}^s$. Considering $a - \delta b$,  by the same arguments, we conclude that the $W^\alpha$ holonomies are uniformly $C^r$ along $\hat{W}^u$. Since they commmute with the $\ker \alpha$-action, they are immediately $C^r$ along $\mc O'$.

Now, since $\hat{W}^s$ and $\mc O'$ are complementary foliations inside the weak stable manifolds subordinate to the fiber, Journ\'{e}'s Theorem \ref{thm:journe} implies that the $W^\alpha$-holonomies are smooth along the weak stable manifolds. Since the weak stable manifolds together with $\hat{W}^u$ are complementary in $X$, it follows again Theorem \ref{thm:journe} that the holonomies are uniformly $C^{r'}$ as maps from one fiber to another.

To see the smoothness of $f_{y,y'}^\alpha$ as $y'$ varies along $W^s(y)$, notice that we know that each individual $f_{y,y'}^\alpha$ is a $C^{r'}$ transformation, and that $\pi^{-1}(W^s(y))$ has a canonical  $C^{r'}$ manifold structure as an immersed submanifold of $X$. Therefore, we may pick a $C^{r'}$ metric on the leaf $W^s(y)$, which corresponds to a flow which parameterizes the leaf at unit speed. This flow lifts to a flow on $\pi^{-1}(W^s(y))$, and each time $t$ map is a $C^{r'}$ transformation by Theorem \ref{thm:journe}, as it is $C^{r'}$ along its orbits and each fiber, which form uniformly transverse H\"older foliations. Since each time $t$ map of the local flow is $C^{r'}$, it follows that the local flow is $C^{r'}$ (see, eg, \cite[Corollary, p. 202]{montgomery-zippin}). Hence the holonomies $f_{y,y'}^\alpha$, the time $t$ maps of the flow, are $C^{r'}$ as $y'$ varies along $W^s(y)$.

To see the global continuity of $f_{y,y'}^\alpha$ in the $C^{r'}$ topology, we note that by combining with the previous claim, it suffices to show that $f_{y,y'}^\alpha$ varies continuously as $y$ and $y'$ are moved along their $\psi_t$-orbits and $W^u$-holonomies. It is clear using the intertwining properties of the $W^\alpha$-leaves with the $\R^k$-action that $f_{\psi_t(y),\psi_t(y')}^\alpha = (ta) f_{y,y'}^\alpha (-ta)$, where $a \in L_\alpha$ is the unique element which maps to $\psi_1$. It is immediate from this equation that as $t \to 0$, $f_{\psi_t(y),\psi_t(y')}^\alpha \to f_{y,y'}^\alpha$ in the $C^{r'}$ topology.

So we must show that $f_{y,y'}^\alpha$ is continuous along $W^u(y)$. That is, we let $y_1 \in W^u(y)$, and $y_1' \in W^{cs}(y_1)$ be the unique local intersection with $W^u(y')$. Then since $y_1' \in W^{cs}(y_1)$, there exists a unique $y_2 \in W^s(y_1)$ such that $y_1'= \psi_t(y_2)$ for some small $t$. We wish to show that $f_{y_1,y_1'}^\alpha$ converges to $f_{y,y'}^\alpha$ in the $C^{r'}$ topology as $y_1 \to y$. To this end, notice that continuous variation in the $C^0$ topology follows immediately from the continuity of the $W^\alpha$-foliation on $X$. So we must show that the derivatives are close. Fix $z \in \pi^{-1}(y)$, and let 

\[z' = f_{y,y'}^\alpha(z), \qquad  z_1 = f_{y,y_1}^{-\alpha}(z), \qquad z_2 = f_{y_1,y_2}^\alpha(z_1), \quad \mbox{and} \quad z_1' = f_{y',y_1'}^{-\alpha}(z'). \]

We claim that there exists some $a \in \R^k$ covering $\psi_t$ such that $z_1' = a \cdot z_2$. Indeed, we may use local product structure to connect $z_1'$ and $z_2$ by first moving along a piece of $\R^k$ orbit, then along $W^\alpha$, then along $W^{-\alpha}$, then along the foliations $\hat{W}^s$ and $\hat{W}^u$. Because $y_1' = \psi_t(y_2)$, it follows that the $W^{\pm \alpha}$ legs are trivial and the $\R^k$-orbit leg  covers $\psi_t$. Finally, pick an element $b' \in \ker \alpha$ such that $\hat{W}^u = W^u_b$ and $\hat{W}^s = W^s_b$. Then $d((tb') \cdot z_1',(tb')\cdot z_2)$ is uniformly small, since both $z_1'$ and $z_2$ are connected to $z$ by a short ($\alpha$, $-\alpha$)-path. It follows that the $\hat{W}^u$-leg of the path connecting $z_1'$ and $z_2$ is trivial. By considering $-b'$, we conclude that the $\hat{W}^s$-leg is also trivial. Hence the only nontrivial leg is the $\R^k$-orbit, which must cover $\psi_t$.

Now, as $y_1 \to y$, we know that $z_1 \to z$, $a \to 0$ and $z_1' \to z'$. If we can show that
\begin{equation}
\label{eq:contractible-loop} f_{y_1,y_2}^\alpha = -a \of f_{y',y_1'}^{-\alpha} \of f_{y,y'}^\alpha \of  f_{y_1,y}^{-\alpha}, \end{equation}
we may conclude continuity in the $C^{r'}$ topology from the $C^{r'}$ variation along the leaves themselves. To see this, note that we may obtain \eqref{eq:contractible-loop} by showing that going along the loop of holonomies
\[ F = f_{y',y}^\alpha \of f_{y_1',y'}^{-\alpha} \of a \of f_{y_1,y_2}^\alpha \of f_{y,y_1}^{-\alpha} \]
 yields the identity action on the fiber. Since we know that each holonomy intertwines the $\ker \alpha$-action, going along a loop centralizes it. Since $\alpha$ is a splitting weight, the action on the fiber is totally Cartan by Lemma \ref{lem:splitting-equivalence}. Hence, its centralizer is a discrete extension of $\ker \alpha$ by Lemma \ref{lem:discrete-centralizer}. Since this loop of holonomies can be contracted, it follows that going around a loop of holonomies acts via an element of $\ker \alpha$ (ie , $F \in \ker \alpha$). But since we chose $a$ so that the loop closes up at $z$ (ie, $F(z) = z$), it follows that $F = \id$. Hence, the loop is trivial and \eqref{eq:contractible-loop} holds.
\end{proof}

\begin{lemma}
\label{lem:isometric-holonomies}
{ If $\alpha$ is a splitting weight, there exists a family of metrics $\norm{\cdot}_x : T_x[\pi^{-1}(\pi(x))] \to \R$ 
 such that $\norm{\cdot}_x$ 
  is H\"older as $x$ varies in $X$, and invariant under $L_\alpha$ and the $W^{\pm \alpha}$ holonomies.}
\end{lemma}

\begin{proof}
 The proof follows the proof of the Livsic theorem. Fix a point $y_0 \in Y$ such that the orbit of $y_0$ under the Anosov flow $\psi_t$ on $Y$ is dense, and an arbitrary $C^{r'}$ metric $\norm{\cdot}_0$ on $\pi^{-1}(y_0)$. Recall that by Lemma \ref{lem:splitting-equivalence}, the action of $\ker \alpha$ on $\pi^{-1}(y_0)$ is totally Cartan. Therefore, if we consider the group 

\begin{multline*}
Q = \{ \lambda \in Z_{\Diff^1}(\ker \alpha \curvearrowright { \pi^{-1}(y_0)}) : d\lambda^n(x)|_{TW^\beta} \mbox{ is uniformly bounded,}\\ \mbox{for all }n \in \N, x \in \pi^{-1}(y_0) \mbox{ and } \beta \in\Delta\setminus \set{\pm \alpha} \},
\end{multline*}
we know that $Q$ is finite by Corollary \ref{cor:q-finite}. We first prove the lemma when 
$Q = \set{e}$, then deduce the lemma in general.

 Denote $y_t = \psi_t(y_0)$ and $a \in L_\alpha$ to be the unique element which projects to $\psi_1$. Consider the metrics $\norm{v}_t = \norm{(-ta)_*v}_0$ on $\pi^{-1}(y_t)$. We claim that if there exists $t_k \to \infty$ such that $y_{t_k} \to y$, then $\norm{\cdot}_{t_k}$ converges to a well-defined metric on $\pi^{-1}(y)$, and that this family of metrics is unique and satisfies the properties described by the lemma.
 
Indeed, the maps $t_ka : \pi^{-1}(y_0) \to \pi^{-1}(y_{t_k})$ all have uniformly bounded derivatives by Lemma \ref{lem:uniformly bounded derivative} and the definition of $L_\alpha$. They therefore converge as uniformly Lipschitz maps to a map $\tau : \pi^{-1}(y_0) \to \pi^{-1}(y)$. We claim that $\tau$ is also $C^{r'}$. Indeed, notice that each $t_ka$ preserves each coarse Lyapunov foliation, and restricted to a given coarse Lyapunov leaf $W^\beta(z) \subset \pi^{-1}(y)$, is linear in the normal forms coordinates from $W^\beta(z)$ to $W^\beta(t_ka \cdot z)$ (see Section \ref{sec:normal-forms}). Hence if $t_ka \cdot z \to z'$, $\lim t_ka : W^\beta(z) \to W^\beta(z')$ is linear in the normal forms coordinates, and hence $C^{r'}$. Therefore, restricted to each coarse Lyapunov leaf contained in $\pi^{-1}(y_0)$, the map $\tau$ is $C^{r'}$. By iteratively applying Journ\'{e}'s theorem (Theorem \ref{thm:journe}) as in the end of Theorem \ref{lem:foliation1}, it follows that $\tau$ is $C^{r'}$ as a map from $\pi^{-1}(y_0)$.

We therefore define $\norm{v}_y = \norm{d\tau^{-1}(v)}_0$ when $v \in T\pi^{-1}(y)$. This is well-defined as long as $\tau$ is unique. But if $\tau$ and $\tau'$ are both obtained as limits of maps $t_ka$ and $s_ka$, respectively, then $\tau^{-1} \of \tau'$ is a map from $\pi^{-1}(y_0)$ to itself which commutes with the $\ker \alpha$ action. Furthermore, it is clearly an element of $Q$, since it a composition of limiting elements of $L_\alpha$. Since we have assumed $Q = \set{e}$, it follows that $\tau = \tau'$ and the norm is uniquely defined.
 
We claim that the metrics $\norm{\cdot}_y$ are H\"older continuous. It suffices to show that the metrics restricted to the bundles $E^\beta$, $\beta \not= \pm \alpha$ are H\"older continuous, since together with the $\ker \alpha$-orbit foliation, they span $T\pi^{-1}(y)$ for every $y$. Consider a reference norm $\norm{\cdot}_0$ on $X$. Since each $E^\beta$ is 1-dimensional, we may consider the functions $f_\beta : X \to \R$ defined by the relation $\norm{v}_y = f_\beta(y) \norm{v}_0$ for any $v \in E^\beta$.  We will show that $f_\beta$ is H\"older continuous.

Indeed, suppose that $x,x' \in X$ are two close points, and consider $y = \pi(x)$ and $y' = \pi(x')$. Then if $t_k$ and $s_k$ are sequences such that $y_k = \psi_{t_k}(y_0) \to y$ and $y_k' = \psi_{s_k}(y_0) \to y'$, $\psi_{s_k-t_k}(y_k) = y_k'$. Since $y_k$ and $y_k'$ are converging to $y$ and $y'$ which are close, the Anosov closing lemma applies and we may choose a periodic point $p_k$ with the following properties:

\begin{itemize}
\item $y_k$ is reached from $p_k$ using a short $W^s$- leg, then a short $W^u$-leg. The lengths of the $W^u$- and $W^s$-legs depends H\"older continuously on $d(y_k,y_k')$.
\item If $L_k$ is the period of $p_k$ then $\abs{L_k - (s_k-t_k)} < Cd(y_k,y_k')^{\theta_0}$ for some $\theta_0 \in (0,1)$.
\item There exists some $C_1 > 0$ and $\lambda > 1$ such that $d(\psi_t(y_k),\psi_t(p_k)) < C_1\lambda^{-\min\set{t,L_k-t}}d(y_k,y_k')$ for all $t \in [0,L_k]$.
\end{itemize}

Now, let $q_k \in \pi^{-1}(p_k)$ be the point which is obtained from $x_k$ by applying the $W^{\alpha}$- and $W^{-\alpha}$-holonomies of the fixed $W^s$- and $W^u$-path connecting $y_k$ and $p_k$. Then since $p_k$ is $\psi_t$-periodic, $q_k$ is $L_\alpha$-periodic, since $L_ka : \pi^{-1}(p) \to \pi^{-1}(p)$ has uniformly bounded derivatives and therefore belongs to $Q = \set{\id}$. Since the points $\psi_t(p_k)$ and $\psi_t(y_k)$ stay close on $Y$ for $t \in [0,L_k]$, and $W^\alpha$- and $W^{-\alpha}$-holonomies are intertwined by $L_\alpha \subset \R^k$, if follows that $q_k$ is connected to $L_ka \cdot x_k$ by a short path consisting of a single $W^\alpha$-leg and single $W^{-\alpha}$-leg (it is the pushforward of the connection between $x_k$ and $q_k$). 

Summarizing, if $\rho_k$ denotes the path in $Y$ connecting $p_k$ to $y_k$, and $\rho' = (\psi_{L_k})_*{\rho_k}$ denotes the pushforward of this path connecting $\psi_{L_k}(y_k)$ and $p_k$, then both $\rho_k$ and $\rho_k'$ are paths whose lengths have a H\"older estimate in $d(y_k,y_k')$, and $(L_ka) \of f_{\rho_k} = f_{\rho_k'} \of (L_ka)$, where $f_{\rho_k}$ and $f_{\rho_k'}$ are the holonomy actions. Since $L_ka$ acts as the identity on $\pi^{-1}(p_k)$, it follows that $L_ka : \pi^{-1}(y_k) \to \pi^{-1}(\psi_{L_k}(y_k))$ is exactly equal to $f_{\rho_k'} \of f_{\rho_k}^{-1}$, and its derivatives are H\"older close to the identity by Lemma \ref{lem:holonomies-smooth}. Finally, since $\abs{(s_k-t_k)-L_k} < Cd(y_k,y_k')^{\theta_0}$, it follows that the derivative of $(s_k-t_k)a$ on $\pi^{-1}(y_k)$ is close to the identity with H\"older estimates. In particular, we get that 

\begin{multline*}
 f_\beta(x) = \lim_{k \to \infty} \norm{d(t_ka)|_{E^\beta}(x_0)} = \lim_{ k \to \infty} \norm{d(t_k -s_k)a|_{E^\beta}(as_k \cdot x_0)}\cdot \norm{d(s_ka)|_{E^\beta}(x_0)} \\ \sim (1\pm Cd(x,x')^{\theta_0})\cdot f_\beta(x').
 \end{multline*}
Hence $f_\beta$ is H\"older and we have verified the desired properties of the metric.

To finish the construction of the metric in general, we remark that if $Q \not= \set{e}$, then while $Q$ does not have a global action on the manifold, it does induce an equivalence relation in the following way: we say that $x_1,x_2 \in X$ are related if $\pi(x_1) = \pi(x_2)$ and there exists some $x \in \pi^{-1}(y_0)$, $q \in Q$ and sequences $t_k,s_k \in \R$ such that $x_1 = \lim t_ka \cdot x$ and $x_2 = \lim s_k a \cdot (qx)$. This is an equivalence relation in which every equivalence class has cardinality $\abs{Q}$. We may therefore pass to the finite factor induced by the equivalence relation yielding a space in which the maps preserving $\pi^{-1}(y_0)$ obtained as limits of multiples of $a$ are all the identity (ie, on the factor, $Q = \set{e})$. We may therefore build a metric as above, and lift it to the covering as necessary.

We have therefore produced a family of metrics which are $L_\alpha$-invariant. Note that this immediately implies invariance under holonomies: if $x' \in W^\beta(x)$ for some $\beta$ linearly independent with $\alpha$ and $x,x'$ cover $y,y' \in Y$, respectively, choose some $a \in \ker \alpha$ such that $\beta(a) < 0.$ The holonomy satisfies
\[ f_{y,y'}^\alpha(x,x') =  a^{-n} f_{a^ny,a^ny'}^\alpha a^n. \]
Since $a^{\pm n}$ preserves the constructed metric, and the middle term  in the right hand side is a holonomy which converges to the identity transformation in $C^{r'}$ by Lemma \ref{lem:holonomies-smooth}, the transformation is an isometry.
\end{proof}

\begin{theorem}
\label{thm:splitting-implies-direct}
Let $\pi : X \to Y$ determine a $C^{r'}$ rank one factor, and $\alpha$ be the weight for which $W^{\pm \alpha}$ descends to the stable and unstable manifolds of the Anosov flow $\psi_t$ on $Y$. Assume that $\alpha$ is a splitting weight, $a \in L_\alpha$ is chosen so that $\pi(a\cdot x) = \psi_1(\pi(x))$ for all $x \in X$. Then if $y_0 \in Y$, and $\bar{X} = \pi^{-1}(y_0)$, there exists a finite cover $\tilde{Y}$ of $Y$ with corresponding lifted flow $\tilde{\psi}_t$, and a finite-to-one $C^{r'}$ covering map $H : \tilde{Y} \times \bar{X} \to X$ such that for all $t \in \R$ and $b \in \ker\alpha$:

\[ H(\tilde{\psi}_t(y),b\cdot x) = (ta+b) \cdot H(y,x). \]
\end{theorem}

\begin{proof}
We first construct $\tilde{Y}$ in the following way: let $\mc P_{y_0}$ denote the set of paths with $W^s$, $W^u$, and $\psi_t$-orbit legs based at $y_0$, and $\mc C_{y_0} \subset \mc P_{y_0}$ denote the set of cycles in such paths. Then $Y$ is canonically identified with the set $\mc P_{y_0} / \mc C_{y_0}$. Furthermore, $\mc C_{y_0}$ acts on $\bar{X} = \pi^{-1}(y_0)$ by compositions of the corresponding $\pm \alpha$-holonomies and the flow $L_\alpha$.

Notice that by Lemma \ref{lem:smooth-holonomy} and Lemma \ref{lem:isometric-holonomies}, the action of $\mc C_{y_0}$ is $C^{r'}$ and isometric in some H\"older norm on $\bar{X}$ and commutes with the $\ker \alpha$ action on $\bar{X}$. Hence it belongs to the finite group $Q$ of Corollary \ref{cor:q-finite}. That is, there exists a homomorphism $\tau : \mc C_{y_0} \to Q$ which associates to a cycle its holonomy action on the fiber. Notice that the space $\mc P_{y_0}$ carries a canonical topology by embedding each combinatorial pattern of paths with combinatorial length $n$ into $Y^n$, and making identifications with a pattern of lower complexity when a leg of a path is trivial. With this topology, the set $\mc C_{y_0}$ is closed, since the map associating a path to its endpoint is clearly continuous.

Furthermore, $\mc C_{y_0}$ is locally path connected. Indeed, if $\gamma$ is a cycle in $W^u$, $W^s$ and $\psi_t$-orbits on $Y$, let $\gamma_t$ denote its retract, so that $\gamma_0 = \gamma$, $\gamma_1$ is the trivial path and $\gamma_t$ is a path which goes up to time $t$ along $\gamma$, after parameterizing $\gamma$ as a cycle from $[0,1]$. Then while $\gamma_t$ is not a cycle, if it is contained in a sufficiently small neighborhood, using local product structure, there exists a uniquely defined, continuously varying path $\rho_t$ connecting the endpoint of $\gamma_t$ with $y_0$. Then the one-parameter family $\tilde{\gamma}_t = \rho_t * \gamma_t$ connects the path $\gamma$ to the trivial path in the space $\mc C_{y_0}$. 

 Finally, since the holonomies and maps $ta \in L_\alpha$ vary continuously with the basepoint, the homomorphism $\tau$ is continuous.

 Let $\bar{\mc C} = \ker \tau$, and notice that since $Q$ is finite, $\bar{\mc C}$ is a finite index subgroup of $\mc C_{y_0}$ containing the path component of $\mc C_{y_0}$. Hence $\tilde{Y} = \mc P_{y_0} / \bar{\mc C}$ is a finite extension of $Y$ by construction. Hence, $\tilde{Y}$ carries a canonical $C^{r'}$ manifold structure by using local charts from $Y$.

Now, we may build our map $H$. Indeed, if $y \in \tilde{Y}$ and $x \in \bar{X}$, then $y$ is represented by some path $\rho$ in the $W^u$, $W^s$ and $\psi_t$-orbit foliations based at $y_0 \in Y$. Let $f_\rho : \pi^{-1}(y_0) \to \pi^{-1}(e(\rho))$ denote the induced composition of holonomy maps. Then define $H(y,x) = f_\rho(x)$.

We claim that $H$ is well-defined and satisfies the conclusions of the theorem. We first show that it is well-defined. Indeed, if $\rho'$ is another path representing $y \in \tilde{Y}$, then $\rho' * \rho^{-1} \in \bar{\mc C}$. By definition, $\rho' * \rho^{-1}$ acts trivially on $\bar{X}$, and we conclude that $f_\rho = f_\rho'$.

We now check the intertwining properties. Indeed, if $\rho$ is a path on $Y$, let $\theta_{[0,t]}$ denote the path with one leg moving distance $t$ along the $\psi_t$-orbit foliation, which begins at the endpoint of $\rho$. Then $\theta_{[0,t]} * \rho$ represents the point $\tilde{\psi}_t(y)$ on $\tilde{Y}$. Hence:
\[ H(\tilde{\psi}_t(y),x) = f_{\theta_{[0,t]} * \rho}(x) = f_{\theta_{[0,t]}} \of f_\rho(x) = ta \cdot H(t,x). \]
Similarly, if $b \in \ker \alpha$, then by Lemma \ref{lem:smooth-holonomy} and the fact that $a$ commutes with $\ker \alpha$, $b \of f_\rho = f_\rho \of b$ for any path $\rho \in \mc P_{y_0}$. Therefore,
\[ H(y,b\cdot x) = f_\rho(b\cdot x) = b \cdot f_\rho(x) = b\cdot H(y,x). \]
These two intertwining properties imply the combined intertwining property.

We will now show that $H$ is finite-to-one. Indeed, observe that if $H(y_1,x_1) = H(y_2,x_2)$, then $f_{\rho_1}(x_1) = f_{\rho_2}(x_2)$ for some representatives $\rho_i$ of $y_i$, $i= 1,2$. This implies that $f_{\rho_2}^{-1} \of f_{\rho_1}(x_1) = x_2$, and that $\sigma = \rho_2^{-1} * \rho_1$ is a cycle on $Y$. The action on the fiber belongs to $\tau(\mc C_{y_0}) \subset Q$, a finite group. This finite group element must determine the point $x_2$ from $x_1$. Hence, $(y_2,x_2) = q \cdot (y_1,x_1)$ for some $q \in Q$, and the map $H$ is finite-to-one.

Finally, we show that $H$ is $C^{r'}$. We prove this using Journe's theorem \ref{thm:journe}, by showing that is is uniformly $C^{r'}$ along $\tilde{Y}$ and $\bar{X}$. That it is $C^{r'}$ along $\bar{X}$ follows because for a fixed $\rho$, the composition of holonomies $f_\rho$ is $C^{r'}$ by Lemma \ref{lem:holonomies-smooth}. That it is $C^{r'}$ along $\tilde{Y}$ follows again from iterating Theorem \ref{thm:journe}: restricted to each $W^{s/u}(y) \times \set{x}$, the map is $C^{r'}$ since the $W^{\pm \alpha}$ leaves are $C^{r'}$-embedded submanifolds, and the smooth structure on $Y$, and hence each $W^{s/u}(y)$-leaf, is inherited from the projection map $\pi : X \to Y$. Then since they jointly integrate with the $L_\alpha$-leaf foliation, which has $C^{r'}$-embedded leaves, it follows that the weak unstable manifolds are $C^{r'}$-embedded. Our arguments have shown that the $W^s$ and $W^{cu}$ leaves jointly integrate (since they are exactly the images of the $\tilde{Y}$-leaf in $\tilde{Y} \times \bar{X}$), so applying Theorem \ref{thm:journe} implies that each $\tilde{Y}$ leaf is $C^{r'}$ embedded.
\end{proof}

}

\section{Applications of the Main Theorems}
\label{sec:corollaries}

\begin{proof}[Proof of Corollary \ref{cor:Zk-actions}]
Let $\Z^\ell \curvearrowright X$ be a cone transitive, totally Cartan action. Let $\tilde{X}$ denote the suspension space of $X$, equipped with a transitive, totally Cartan action of $\R^\ell$. By Lemma \ref{lem:starkov-factor}, the Starkov component $S \subset \R^\ell$ acts through a free torus action. In particular, it must be a rational subtorus in the torus projection $p : \tilde{X} \to \mathbb{T}^\ell$ determining the suspsension. Let $k = \ell - \dim(S)$, and choose any rational $k$-dimensional subspace $V$ transverse to $S$. Then let $a_1,\dots,a_k \in \Z^\ell$ be generators of this subspace. Notice that since the first return action of $S$ is by a finite group, and we wish to embed only a finite index subgroup, it suffices to consider the case of trivial Starkov component.

Rather than suspending $\Z^\ell \curvearrowright X$, we suspend the action of $\langle a_1,\dots,a_k\rangle$. Call the resulting space $\bar{X}$, and notice that by construction, the $\R^k$ action on $\bar{X}$ has trivial Starkov component. 
The space $\bar{X}$ is still a suspension of an action on $X$ (since it is the suspension of a discrete action, on an existing suspension space) with a factor map $\bar{p} : \bar{X} \to \mathbb{T}^k$. 

We claim that the generators of each Anosov flow on a 3-manifold, and the span of the generators of the homogeneous action, are all rational subspaces in $\R^{m}$. Indeed, notice that by Theorem \ref{thm:big-headache}, there exists a factor $\pi : \bar{X} \to Y_1 \times \dots \times Y_n$, which projects to a product of Anosov flows. By Lemma \ref{lem:susp}, each $Y_i$ corresponds to a rank one factor of the action of $\Z^k$ on $X$, and there exists a projection $\pi_X : X \to Z_1 \times \dots \times Z_n$, where $Y_i$ is a suspension of an Anosov diffeomorphism on $Z_i$. The dynamics along the fibers of $\pi_X$ is algebraic in the sense of Theorem \ref{thm:big-headache}, and since there exists an Anosov element, the group parameterizing the fibers must be nilpotent. Finally, the spaces $Z_i$ are all diffeomorphic to $\mathbb{T}^2$, since they are 2-manifolds carrying an Anosov diffeomorphism.
\end{proof}

\begin{proof}[Proof of Corollary \ref{cor:global}] 
Suppose $G$ is a semisimple real linear Lie group of noncompact type without compact factors, and let $\Gamma \subset G$ be a cocompact lattice which projects densely onto any rank one factor of $G$, $M$ be a connected compact subgroup which intersects $\Gamma$ only at $\set{e}$, and $X = M \backslash G / \Gamma$ be the corresponding double homogeneous space.    Notice that if some finite cover of the action on $X$ is smoothly conjugate to a Weyl chamber flow, so is the original action on $X$ (since the action is determined by vector fields which commute with the deck transformations of the cover, so always lift and descend). So we allow ourselves to pass to finite covers throughout the proof.

Passing to a subgroup of finite index, we may assume that $\Gamma$ is torsion-free by Selberg's Lemma (see, eg, \cite[Theorem 4.8.2]{Morris-arithmetic}). 
If $K$ denotes a maximal compact subgroup of $G$ containing $M$, then $\Gamma$ acts freely and properly discontinuously on $K\setminus G$, the symmetric space attached to $G$. Through an application of the Borel density theorem, there is a decomposition of $G$ into an almost direct product of semisimple Lie groups, so that $G$ is finitely covered by $G_1 \times G_2 \times \dots \times G_n$ and if $\Gamma_i$ is the intersection of the image of $G_i$ in $G$ with $\Gamma$, then $\Gamma_i$ is a lattice in $G$, $\Gamma = \Gamma_1 \cdot \Gamma_2 \cdot \dots \cdot \Gamma_n$ and $\Gamma_i \cap \Gamma_j \subset Z(G)$ (see, for instance \cite[Proposition 4.3.3]{Morris-arithmetic} or \cite[Corollary 5.19]{raghunathan1972}). By assumption, the rank of each $G_i$ is at least 2. Let $M_i$ denote the intersection of $M$ with $G_i$, and $X_i$ be the symmetric space $K_i \backslash G_i$, where $K_i$ is a maximal compact subgroup of $G_i$. Since the rank of $G_i$ is at least 2 for every $i$, the dimension of $X_i$ is at least 4.  Furthermore, every $X_i$ is aspherical, and thus $\Gamma_i = \pi_1(K_i\setminus G_i /\Gamma_i)$.  By the Serre long exact sequence for a fibration, there is a surjection  $\alpha_i :  \pi _1(G_i/\Gamma_i) \mapsto \pi _1(K_i \setminus G_i/\Gamma_i) =\Gamma_i$ whose kernel is $\pi_1(K_i)$.  

Since we have assumed that $\Gamma$ intersects $M$ only at $\set{e}$ and $M \subset K$, we may do a similar analysis for the quotients $M_i \backslash G_i$, where $M_i \backslash G_i / \Gamma$ is seen as a $M_i \backslash K_i$-fiber bundle over $K_i \backslash G_i / \Gamma$. Accordingly, we have a short exact sequence associated to their fundamental groups:

\[ 1 \to \pi_1(K_i) / \pi_1(M_i) \to \pi_1(M_i \backslash G_i / \Gamma_i) \to \Gamma_i \to 1. \]

Consider a   $C^2$ cone transitive, totally Cartan $\R^k$-action on $M \backslash G / \Gamma$.  After passing to a finite cover, by Theorem \ref{thm:big-headache}, some finite cover of $X$ can be written as a homogeneous fiber bundle over a product of 3-manifolds $Y_1 \times \ldots \times Y_k$. 
Hence $\pi _1(M \backslash G/\Gamma)$ has $\pi _1 (Y_1) \times \ldots \times \pi _1 (Y_k)$ as a factor, and $\pi _1 (H/\Lambda)$ as a normal subgroup.  

We claim that $k = 0$, so that no such rank one factor exists. Indeed, if there was such a factor, $\pi_1(Y_1)$ would be a group with exponential growth, since $Y_1$ supports an Anosov flow \cite{plante-thurston}. In particular, it is nonabelian. Let $\bar{p}_i : \pi_1(M_i \backslash G_i/\Gamma_i) \to \pi_1(Y_1)$ be restriction of the map induced by the projection onto $Y_1$, and $p_i$ denote its pullback to $\pi_1(G_i / \Gamma_i)$. By the Margulis normal subgroup theorem for universal covers (see, eg, \cite[Part 3]{margulis-volume}), $\ker p_i$ is either central or cofinite in $\pi_1(M_i \backslash G_i/\Gamma_i)$, which is a lattice in $\tilde{G_i}$, the universal cover of $G_i$. 

If $\ker p_i$ is cofinite, $p_i$ has a finite group as its image. But $\pi_1(Y_1)$ has no finite subgroups, since by \cite{verjovsky}, the universal cover of $Y_1$ is $\R^3$ and any group acting properly discontinuously and cocompactly on $\R^3$ must have cohomological dimension equal to 3. Since any group with torsion has infinite cohomological dimension, we conclude that $p_i$ is the trivial homomorphism.

Now assume that $\ker p_i$ is central. 
Note that since $\pi_1(Y_1)$ has no torsion elements, any torsion elements of $\pi_1(G_i / \Gamma_i)$ are contained in $\ker p_i \cap \pi_1(K_i)$. Therefore, they form a subgroup, since they are contained in the abelian group $Z(\pi_1(G_i/\Gamma_i))$. If $\Upsilon_i$ is the torsion subgroup of $\pi_1(G_i / \Gamma_i)$, $\Upsilon_i$ is a normal subgroup contained in both $\ker p_i$ and $\pi_1(K_i)$. Let $\Lambda_i = \pi_1(G_i/\Gamma_i) / \Upsilon_i$, so that $\Lambda_i$ is still a central extension of $\Gamma_i$, but without torsion, and $\Lambda_i$ is also an extension of $\pi_1(Y_1)$. We therefore have the following exact sequences:

\begin{equation}
\label{eq:cd1}
1 \to \pi_1(K_i) / \Upsilon_i \to \Lambda_i \to \Gamma_i \to 1,
\end{equation}

and

\begin{equation}
\label{eq:cd2}
1 \to (\ker p_i / \Upsilon_i) \to \Lambda_i \to \pi_1(Y_1) \to 1.
\end{equation}

The cohomological dimension of $\Gamma_i$ is the dimension of the symmetric space associated to $G_i$, which is strictly larger than 3 by assumption. 
Furthermore, $\ker p_i \subset Z(\pi_1(G_i/\Gamma_i))$, since by assumption, $\ker p_i$ is central in $\pi_1(G_i / \Gamma_i)$, and $Z(\pi_1(G_i / \Gamma_i)) \subset \pi_1(K_i)$. Since $\ker p_i / \Upsilon_i$ and $\pi_1(K_i) / \Upsilon_i$ are torsion-free, countable abelian groups of finite rank, it follows that they are isomorphic to $\Z^{\ell'}$ and $\Z^\ell$, respectively, with $\ell' \le \ell$ (since $\ker p_i$ embeds into $\pi_1(K_i)$). Since $\pi_1(Y_1)$ acts freely and properly discontinuously on the aspherical 3-manifold, $\tilde{Y_1}$ (which is aspherical by \cite[Lemma 5.5, p.147]{bautista-morales}), we conclude that the cohomological dimension of $\pi_1(Y_1)$ is 3. In particular, by \cite[Theorem 5.5]{bieri-book}, the cohomological dimension of $\Lambda$ is $\ell + \dim(K\backslash G / \Gamma) > \ell + 3$ from \eqref{eq:cd1}, and $\ell' + 3$ from \eqref{eq:cd2}. Since $\ell' \le \ell$, this is not possible, and we have reached a contradiction. Hence $p_i$ is trivial for every $i$, and hence $\pi_1(Y_1)$ is the trivial group. This is a contradiction since $\pi_1(Y_1)$ must be a group with exponential growth by \cite{plante-thurston}.

In conclusion, we see that the $\R^k$ action on $M\backslash G/\Gamma$ does not have any rank 1 factors, and $M \backslash G/\Gamma $ is homeomorphic to $H/\Lambda$. We assume without loss of generality that $H$ is simply connected, and claim that $H$ is semisimple. Indeed, if not, then by Theorem \ref{thm:homo-classification}, we may consider the Levi factor $H^{L}$ of $H$ (which has no compact factors), and the solvradical $H^s$, and $\Lambda^s = H^s \cap \Lambda$ as a normal subgroup of $\Lambda$, and $\Lambda^L = \Lambda / \Lambda^s$. Since $\Lambda = \pi_1(H /\Lambda)$ is isomorphic to $\pi_1(M \backslash G / \Gamma)$, we may consider the isomorphic image of $Z(\pi_1(M \backslash G / \Gamma))$ in $\Lambda$.

We claim that the center of $\Lambda$ is exactly $Z(\Lambda^L) \cdot \Lambda^s$. Indeed, it is clear that the center must be contained in this group, so we must show that $Z(\Lambda^L) \cdot \pi_1(H^s)$ is central in $\Lambda$. This is immediate for $Z(\Lambda^L)$, since central elements of semisimple groups always act trivially in finite-dimensional representations. To see that $\Lambda^s$ is central, we consider its isomorphic image in $\pi_1(M \backslash G / \Gamma)$. By considering its factors onto each irreducible component, and again applying the Margulis normal subgroup theorem, it follows that it must be central as well.

Now, the quotient of $\pi_1(M \backslash G / \Gamma)$ by its center is exactly $\Gamma$, and the quotient of $\Lambda$ by its center is $\Lambda^L$. Therefore, by Mostow rigidity \cite[Theorem A']{mostow-book}, $G$ is locally isomorphic to $H^L$, so $\dim(G) = \dim(H^L)$.  Since
\[ \dim(G) \ge \dim(M \backslash G / \Gamma) = \dim(H) \ge \dim(H^L) = \dim(G), \]
with equalities holding exactly when $M = \set{e}$ and $H^s = \set{e}$, respectively, we conclude that $H$ is semisimple and $M$ is trivial.


Therefore, we know that there is an affine diffeomorphism between $H /\Lambda$ and $G / \Gamma \, (=M \backslash G / \Gamma)$. Conjugating the dynamics on $H/\Lambda$ by this diffeomorphism,  our $\R^k$ totally Cartan action is $C^{1,\theta}$-conjugate to a homogeneous 
subaction of $G$ on $G/\Gamma$ by a subgroup $A$ of $G$ (up to an automorphism of $\R^k$).  Since the $\R^k$-action is Anosov, $A$ is a split Cartan of $G$. Since the coarse Lyapunov spaces are 1-dimensional, so are the root spaces of $G$.  Hence $G$ itself is $\R$-split. 

Finally, if the action is $C^{\infty}$, so is the conjugacy by Theorem \ref{thm:big-main}.  
\end{proof}

\begin{proof}[Proof of Corollary \ref{cor:semisimple-actions}]
Let $G$ be a semisimple Lie group, and $G \curvearrowright X$ be a locally free action of $G$ such that the restriction of the action to a split Cartan subgroup $A \subset G$ is totally Cartan. Notice then that the bundle which is tangent to the $G$-orbits as an invariant sub-bundle yields an $A$-invariant subbundle of $TX$. Since the action is by a Cartan subgroup of $G$, the root splitting of $G$ with respect to $A$ must coincide with the coarse Lyapunov splitting, and the centralizer of the Cartan must be trivial. Furthermore, $G$ has no compact factors. Therefore, $G$ is $\R$-split, and the root subgroups of $G$ are coarse Lyapunov subgroups.

We claim that the action of $A$ on $X$ has no rank one factors (up to finite cover). Indeed, suppose that $\pi : X \to Y$ is a submersion onto a 3-manifold, $\psi_t : Y \to Y$ is an Anosov flow and $\sigma : A \to \R$ is a homomorphism satisfying $\pi(a \cdot x) = \psi_{\sigma(a)}(\pi(x))$ for every $a \in A$ and $x \in X$. Then there exists a unique symplectic pair of coarse Lyapunov exponents $\pm \alpha$ such that $E^\alpha$ and $E^{-\alpha}$ map to the stable and unstable distributions $E^s$ and $E^u$ for $\psi_t$, respectively. We claim that $\alpha$ is a root of $G$. Indeed, if $\alpha$ were not a root of $G$, then every root subgroup of $G$ would be contained in the fibers of $\pi$. Since the root subgroups of $G$ generate $A$, this would imply that the $A$-orbits were contained in the fiber as well. This is a contradiction to cone transitivity, so $\alpha$ must be a root of $G$.

Let $\hat{\Delta}$ denote the set of roots of $G$ which are not proportional to $\pm \alpha$. Then since they are contained in the fibers of the projection onto $Y$, they are normalized by the root subgroups corresponding to $\pm \alpha$. Hence the group generated by the root subgroups in $\hat{\Delta}$ is a normal subgroup of $G$. Since $G$ is semisimple, it has a transverse subgroup, so the root subgroups corresponding to $\pm \alpha$ generate a factor of $G$ locally isomorphic to $PSL(2,\R)$. Since we assume that no such factor exists, we have arrived at a contradiction, and there is no rank one factor of the $A$-action on $X$.


Finally, we apply Theorem \ref{thm:big-main}. Notice that the relations on the group $G$ automatically determine the commutator and symplectic relations in the path group $\hat{\mc P}$ used in the proofs of Part II (ie, they determine the pairwise cycle structures (Definition \ref{def:const-pairwise} for weights corresponding to roots). Therefore, the group $G$ embeds into the group giving the homogeneous structure provided by the dynamics, and the $G$-action, as well as the $A$-action, is homogeneous.
\end{proof}


\color{black}

\bibliographystyle{plain}
\bibliography{allbib}

\begin{thebibliography}{10}

\bibitem{AVW}
Artur Avila, Marcelo Viana, and Amie Wilkinson.
\newblock Absolute continuity, {L}yapunov exponents and rigidity {I}: geodesic
  flows.
\newblock {\em J. Eur. Math. Soc. (JEMS)}, 17(6):1435--1462, 2015.

\bibitem{MR808219}
Werner Ballmann, Misha Brin, and Ralf Spatzier.
\newblock Structure of manifolds of nonpositive curvature. {II}.
\newblock {\em Ann. of Math. (2)}, 122(2):205--235, 1985.

\bibitem{bautista-morales}
S.~Bautista and C.~A. Morales.
\newblock Lectures on sectional-{A}nosov flows.
\newblock {\em Preprint IMPA Série D 84}, 2011.

\bibitem{bieri-book}
Robert Bieri.
\newblock {\em Homological dimension of discrete groups}.
\newblock Queen Mary College Mathematics Notes. Queen Mary College, Department
  of Pure Mathematics, London, second edition, 1981.

\bibitem{BCW2}
Christian Bonatti, Sylvain Crovisier, and Amie Wilkinson.
\newblock The {$C^1$} generic diffeomorphism has trivial centralizer.
\newblock {\em Publ. Math. Inst. Hautes \'{E}tudes Sci.}, (109):185--244, 2009.

\bibitem{bonatti-gomez-martinez20}
Christian Bonatti, Xavier G\'{o}mez-Mont, and Matilde Mart\'{\i}nez.
\newblock Foliated hyperbolicity and foliations with hyperbolic leaves.
\newblock {\em Ergodic Theory Dynam. Systems}, 40(4):881--903, 2020.

\bibitem{BonGuiWei2021}
Yannick~Guedes Bonthonneau, Colin Guillarmou, and Tobias Weich.
\newblock {SRB} measures for {A}nosov actions, 2021.

\bibitem{bowen74}
Rufus Bowen.
\newblock Maximizing entropy for a hyperbolic flow.
\newblock {\em Math. Systems Theory}, 7(4):300--303, 1974.

\bibitem{brin03}
Michael Brin.
\newblock On dynamical coherence.
\newblock {\em Ergodic Theory and Dynamical Systems}, 23:395 -- 401, 2003.

\bibitem{Brin-Stuck}
Michael Brin and Garrett Stuck.
\newblock {\em Introduction to dynamical systems}.
\newblock Cambridge University Press, Cambridge, 2002.

\bibitem{brown19}
Aaron Brown.
\newblock {\em Entropy, {L}yapunov exponents, and rigidity of group actions},
  volume~33 of {\em Ensaios Matem\'{a}ticos [Mathematical Surveys]}.
\newblock Sociedade Brasileira de Matem\'{a}tica, Rio de Janeiro, 2019.
\newblock With appendices by Dominique Malicet, Davi Obata, Bruno Santiago,
  Michele Triestino, S\'{e}bastien Alvarez and Mario Rold\'{a}n, Edited by
  Michele Triestino.

\bibitem{Brown:2016aa}
Aaron Brown, Federico~Rodriguez Hertz, and Zhiren Wang.
\newblock Smooth ergodic theory of $\mathbb{Z}^d$-actions.
\newblock 2016.
\newblock arXiv:1610.09997.

\bibitem{Burns-Katok1985}
K.~Burns and A.~Katok.
\newblock Manifolds with nonpositive curvature.
\newblock {\em Ergodic Theory Dynam. Systems}, 5(2):307--317, 1985.

\bibitem{CRH2021}
Pablo~D. Carrasco and Federico Rodriguez-Hertz.
\newblock Equilibrium states for center isometries, 2021.
\newblock arXiv:2103.07323.

\bibitem{CPZ-PH}
Vaughn Climenhaga, Yakov Pesin, and Agnieszka Zelerowicz.
\newblock Equilibrium measures for some partially hyperbolic systems.
\newblock {\em J. Mod. Dyn.}, 16:155--205, 2020.

\bibitem{DamjanovicKatok2005}
Danijela Damjanovi\'{c} and Anatole Katok.
\newblock Periodic cycle functionals and cocycle rigidity for certain partially
  hyperbolic {$\Bbb R^k$} actions.
\newblock {\em Discrete Contin. Dyn. Syst.}, 13(4):985--1005, 2005.

\bibitem{damjanovic-katok10}
Danijela Damjanovi\'{c} and Anatole Katok.
\newblock Local rigidity of partially hyperbolic actions {I}. {KAM} method and
  {$\Bbb Z^k$} actions on the torus.
\newblock {\em Ann. of Math. (2)}, 172(3):1805--1858, 2010.

\bibitem{DamjanovicKatok2011}
Danijela Damjanovi\'{c} and Anatole Katok.
\newblock Local rigidity of partially hyperbolic actions. {II}: {T}he geometric
  method and restrictions of {W}eyl chamber flows on {$SL(n,\Bbb R)/\Gamma$}.
\newblock {\em Int. Math. Res. Not. IMRN}, (19):4405--4430, 2011.

\bibitem{DWX22}
Danijela Damjanovic, Amie Wilkinson, and Disheng Xu.
\newblock Transitive centralizers and fibered partially hyperbolic systems.
\newblock 2023.
\newblock arXiv:2303.17739.

\bibitem{Damjanovic:2018aa}
Danijela Damjanovi\'{c} and Disheng Xu.
\newblock On classification of higher rank {A}nosov actions on compact
  manifold.
\newblock {\em Israel J. Math.}, 238(2):745--806, 2020.

\bibitem{dantzig-vdw}
D.~van Dantzig and B.~L. van~der Waerden.
\newblock \"{U}ber metrisch homogene r\"{a}ume.
\newblock {\em Abh. Math. Sem. Univ. Hamburg}, 6(1):367--376, 1928.

\bibitem{einsiedler09}
Manfred Einsiedler.
\newblock What is {$\dots$} measure rigidity?
\newblock {\em Notices Amer. Math. Soc.}, 56(5):600--601, 2009.

\bibitem{EinLin2015}
Manfred Einsiedler and Elon Lindenstrauss.
\newblock On measures invariant under tori on quotients of semisimple groups.
\newblock {\em Ann. of Math. (2)}, 181(3):993--1031, 2015.

\bibitem{FarrellJones}
F.~T. Farrell and L.~E. Jones.
\newblock Anosov diffeomorphisms constructed from {$\pi _{1}\,\text{\rm
  Diff}\,(S^{n})$}.
\newblock {\em Topology}, 17(3):273--282, 1978.

\bibitem{FarrellJonesExoticExpanding}
F.~T. Farrell and L.~E. Jones.
\newblock Examples of expanding endomorphisms on exotic tori.
\newblock {\em Invent. Math.}, 45(2):175--179, 1978.

\bibitem{FarrellGogolev}
F.~Thomas Farrell and Andrey Gogolev.
\newblock Anosov diffeomorphisms constructed from {$\pi_k(\text{\rm
  Diff}(S^n))$}.
\newblock {\em J. Topol.}, 5(2):276--292, 2012.

\bibitem{Fisher-survey2011}
David Fisher.
\newblock Groups acting on manifolds: around the {Z}immer program.
\newblock In {\em Geometry, rigidity, and group actions}, Chicago Lectures in
  Math., pages 72--157. Univ. Chicago Press, Chicago, IL, 2011.

\bibitem{Fisher:2017aa}
David {Fisher}.
\newblock {Recent progress in the Zimmer program}.
\newblock 2019.
\newblock arXiv:1711.07089.

\bibitem{FKS}
David Fisher, Boris Kalinin, and Ralf Spatzier.
\newblock Global rigidity of higher rank {A}nosov actions on tori and
  nilmanifolds.
\newblock {\em J. Amer. Math. Soc.}, 26(1):167--198, 2013.
\newblock With an appendix by James F. Davis.

\bibitem{margulis-volume}
David Fisher, Dmitry Kleinbock, and Gregory Soifer, editors.
\newblock {\em Dynamics, geometry, number theory---the impact of {M}argulis on
  modern mathematics}.
\newblock University of Chicago Press, Chicago, IL, [2022] \copyright 2022.
\newblock The impact of Margulis on modern mathematics.

\bibitem{Franks-Williams1980}
John Franks and Bob Williams.
\newblock Anomalous {A}nosov flows.
\newblock In {\em Global theory of dynamical systems ({P}roc. {I}nternat.
  {C}onf., {N}orthwestern {U}niv., {E}vanston, {I}ll., 1979)}, volume 819 of
  {\em Lecture Notes in Math.}, pages 158--174. Springer, Berlin, 1980.

\bibitem{GS1}
Edward~R. Goetze and Ralf~J. Spatzier.
\newblock Smooth classification of {C}artan actions of higher rank semisimple
  {L}ie groups and their lattices.
\newblock {\em Ann. of Math. (2)}, 150(3):743--773, 1999.

\bibitem{graev1950}
M.~I. Graev.
\newblock On free products of topological groups.
\newblock {\em Izvestiya Akad. Nauk SSSR. Ser. Mat.}, 14:343--354, 1950.

\bibitem{GromovPolyGrowth}
Mikhael Gromov.
\newblock Groups of polynomial growth and expanding maps.
\newblock {\em Inst. Hautes {\'E}tudes Sci. Publ. Math.}, (53):53--73, 1981.

\bibitem{Guys}
M.~Guysinsky.
\newblock The theory of non-stationary normal forms.
\newblock {\em Ergodic Theory Dynam. Systems}, 22(3):845--862, 2002.

\bibitem{handel-thurston1980}
Michael Handel and William~P. Thurston.
\newblock Anosov flows on new three manifolds.
\newblock {\em Invent. Math.}, 59(2):95--103, 1980.

\bibitem{hasselblatt89}
Boris Hasselblatt.
\newblock A new construction of the {M}argulis measure for {A}nosov flows.
\newblock {\em Ergodic Theory Dynam. Systems}, 9(3):465--468, 1989.

\bibitem{problemsMSRI2007}
Boris Hasselblatt.
\newblock Problems in dynamical systems and related topics.
\newblock In {\em Dynamics, ergodic theory, and geometry}, volume~54 of {\em
  Math. Sci. Res. Inst. Publ.}, pages 273--324. Cambridge Univ. Press,
  Cambridge, 2007.

\bibitem{HPS1970}
M.~W. Hirsch, C.~C. Pugh, and M.~Shub.
\newblock Invariant manifolds.
\newblock {\em Bull. Amer. Math. Soc.}, 76:1015--1019, 1970.

\bibitem{Hurder}
Steven Hurder.
\newblock Rigidity for {A}nosov actions of higher rank lattices.
\newblock {\em Ann. of Math. (2)}, 135(2):361--410, 1992.

\bibitem{journe88}
J.-L. Journ\'{e}.
\newblock A regularity lemma for functions of several variables.
\newblock {\em Rev. Mat. Iberoamericana}, 4(2):187--193, 1988.

\bibitem{kal20}
Boris Kalinin.
\newblock Non-stationary normal forms for contracting extensions, 2020.
\newblock arXiv:2006.12662.

\bibitem{KalKatRod09}
Boris Kalinin, Anatole Katok, and Federico Rodriguez~Hertz.
\newblock Nonuniform measure rigidity.
\newblock {\em Ann. of Math. (2)}, 174(1):361--400, 2011.

\bibitem{KalSad06}
Boris Kalinin and Victoria Sadovskaya.
\newblock Global rigidity for totally nonsymplectic {A}nosov {$ Z^k$} actions.
\newblock {\em Geom. Topol.}, 10:929--954 (electronic), 2006.

\bibitem{KalSad07}
Boris Kalinin and Victoria Sadovskaya.
\newblock On the classification of resonance-free {A}nosov {$Z^k$} actions.
\newblock {\em Michigan Math. J.}, 55(3):651--670, 2007.

\bibitem{Kalinin2017341}
Boris Kalinin and Victoria Sadovskaya.
\newblock Normal forms for non-uniform contractions.
\newblock {\em Journal of Modern Dynamics}, 11(03):341--368, 2017.

\bibitem{KaSp04}
Boris Kalinin and Ralf Spatzier.
\newblock On the classification of {C}artan actions.
\newblock {\em Geom. Funct. Anal.}, 17(2):468--490, 2007.

\bibitem{KL1}
A.~Katok and J.~Lewis.
\newblock Local rigidity for certain groups of toral automorphisms.
\newblock {\em Israel J. Math.}, 75(2-3):203--241, 1991.

\bibitem{KL}
A.~Katok and J.~Lewis.
\newblock Global rigidity results for lattice actions on tori and new examples
  of volume-preserving actions.
\newblock {\em Israel J. Math.}, 93:253--280, 1996.

\bibitem{KS97}
A.~Katok and R.~J. Spatzier.
\newblock Differential rigidity of {A}nosov actions of higher rank abelian
  groups and algebraic lattice actions.
\newblock {\em Tr. Mat. Inst. Steklova}, 216(Din. Sist. i Smezhnye
  Vopr.):292--319, 1997.

\bibitem{KatokHasselBook}
Anatole Katok and Boris Hasselblatt.
\newblock {\em Introduction to the modern theory of dynamical systems},
  volume~54 of {\em Encyclopedia of Mathematics and its Applications}.
\newblock Cambridge University Press, Cambridge, 1995.
\newblock With a supplementary chapter by Katok and Leonardo Mendoza.

\bibitem{KS1}
Anatole Katok and Ralf~J. Spatzier.
\newblock First cohomology of {A}nosov actions of higher rank abelian groups
  and applications to rigidity.
\newblock {\em Inst. Hautes {\'E}tudes Sci. Publ. Math.}, (79):131--156, 1994.

\bibitem{katok_spatzier1996}
Anatole Katok and Ralf~J Spatzier.
\newblock Invariant measures for higher-rank hyperbolic abelian actions.
\newblock {\em Ergodic Theory and Dynamical Systems}, 16(04):751--778, 1996.

\bibitem{entropyII-annals1985}
F.~Ledrappier and L.-S. Young.
\newblock The metric entropy of diffeomorphisms. {II}. {R}elations between
  entropy, exponents and dimension.
\newblock {\em Ann. of Math. (2)}, 122(3):540--574, 1985.

\bibitem{lee13}
John~M. Lee.
\newblock {\em Introduction to smooth manifolds}, volume 218 of {\em Graduate
  Texts in Mathematics}.
\newblock Springer, New York, second edition, 2013.

\bibitem{margulisbook04}
Grigoriy~A. Margulis.
\newblock {\em On some aspects of the theory of {A}nosov systems}.
\newblock Springer Monographs in Mathematics. Springer-Verlag, Berlin, 2004.
\newblock With a survey by Richard Sharp: Periodic orbits of hyperbolic flows,
  Translated from the Russian by Valentina Vladimirovna Szulikowska.

\bibitem{melnick}
Karin Melnick.
\newblock Non-stationary smooth geometric structures for contracting measurable
  cocycles.
\newblock {\em Ergodic Theory Dynam. Systems}, 39(2):392--424, 2019.

\bibitem{montgomery-zippin}
Deane Montgomery and Leo Zippin.
\newblock {\em Topological transformation groups}.
\newblock Robert E. Krieger Publishing Co., Huntington, N.Y., 1974.
\newblock Reprint of the 1955 original.

\bibitem{Morris-arithmetic}
Dave~Witte Morris.
\newblock {\em Introduction to arithmetic groups}.
\newblock Deductive Press, Place of publication not identified, 2015.

\bibitem{mostow-book}
G.~D. Mostow.
\newblock {\em Strong rigidity of locally symmetric spaces}.
\newblock Princeton University Press, Princeton, N.J.; University of Tokyo
  Press, Tokyo, 1973.
\newblock Annals of Mathematics Studies, No. 78.

\bibitem{ordman1974}
Edward~T. Ordman.
\newblock Free products of topological groups which are {$k_{\omega }$}-spaces.
\newblock {\em Trans. Amer. Math. Soc.}, 191:61--73, 1974.

\bibitem{oxtoby80}
John~C. Oxtoby.
\newblock {\em Measure and category}, volume~2 of {\em Graduate Texts in
  Mathematics}.
\newblock Springer-Verlag, New York-Berlin, second edition, 1980.
\newblock A survey of the analogies between topological and measure spaces.

\bibitem{Palis-Yoccoz1}
J.~Palis and J.-C. Yoccoz.
\newblock Centralizers of {A}nosov diffeomorphisms on tori.
\newblock {\em Ann. Sci. {\'E}cole Norm. Sup. (4)}, 22(1):99--108, 1989.

\bibitem{Palis-Yoccoz2}
J.~Palis and J.-C. Yoccoz.
\newblock Rigidity of centralizers of diffeomorphisms.
\newblock {\em Ann. Sci. {\'E}cole Norm. Sup. (4)}, 22(1):81--98, 1989.

\bibitem{plante-thurston}
J.~F. Plante and W.~P. Thurston.
\newblock Anosov flows and the fundamental group.
\newblock {\em Topology}, 11:147--150, 1972.

\bibitem{pugh-shub1971}
Charles Pugh and Michael Shub.
\newblock Ergodic elements of ergodic actions.
\newblock {\em Compositio Math.}, 23:115--122, 1971.

\bibitem{psw04}
Charles Pugh, Michael Shub, and Amie Wilkinson.
\newblock Partial differentiability of invariant splittings.
\newblock {\em J. Statist. Phys.}, 114(3-4):891--921, 2004.

\bibitem{qian94}
Nantian Qian.
\newblock Dynamics and ergodic properties of {A}nosov {${\bf R}^n$} actions.
\newblock {\em Random Comput. Dynam.}, 2(1):21--40, 1994.

\bibitem{nantian95}
Nantian Qian.
\newblock Anosov automorphisms for nilmanifolds and rigidity of group actions.
\newblock {\em Ergodic Theory Dynam. Systems}, 15(2):341--359, 1995.

\bibitem{raghunathan1972}
Madabusi~Santanam Raghunathan.
\newblock {\em Discrete subgroups of Lie groups}, volume~29.
\newblock Springer New York, 1972.

\bibitem{Hertz}
Federico Rodriguez~Hertz.
\newblock Global rigidity of certain abelian actions by toral automorphisms.
\newblock {\em J. Mod. Dyn.}, 1(3):425--442, 2007.

\bibitem{HertzHertzUres2007}
Federico Rodriguez~Hertz, Maria~Alejandra Rodriguez~Hertz, and Raul Ures.
\newblock A survey of partially hyperbolic dynamics.
\newblock In {\em Partially hyperbolic dynamics, laminations, and
  {T}eichm\"{u}ller flow}, volume~51 of {\em Fields Inst. Commun.}, pages
  35--87. Amer. Math. Soc., Providence, RI, 2007.

\bibitem{RH-W}
Federico Rodriguez~Hertz and Zhiren Wang.
\newblock Global rigidity of higher rank abelian {A}nosov algebraic actions.
\newblock {\em Invent. Math.}, 198(1):165--209, 2014.

\bibitem{royden10}
H.L. Royden and P.~Fitzpatrick.
\newblock {\em Real Analysis}.
\newblock Prentice Hall, 2010.

\bibitem{Schmidt-thesis}
B.~Schmidt.
\newblock Weakly hyperbolic actions of {K}azhdan groups on tori.
\newblock {\em Geom. Funct. Anal.}, 16(5):1139--1156, 2006.

\bibitem{ShubExpandingMaps1970}
Michael Shub.
\newblock Expanding maps.
\newblock In {\em Global {A}nalysis ({P}roc. {S}ympos. {P}ure {M}ath., {V}ol.
  {XIV}, {B}erkeley, {C}alif., 1968)}, pages 273--276. Amer. Math. Soc.,
  Providence, R.I., 1970.

\bibitem{Smale}
S.~Smale.
\newblock Differentiable dynamical systems.
\newblock {\em Bull. Amer. Math. Soc.}, 73:747--817, 1967.

\bibitem{spatzier-invitation2004}
R.~J. Spatzier.
\newblock An invitation to rigidity theory.
\newblock In {\em Modern dynamical systems and applications}, pages 211--231.
  Cambridge Univ. Press, Cambridge, 2004.

\bibitem{Spatzier-Yang2017}
Ralf Spatzier and Lei Yang.
\newblock Exponential mixing and smooth classification of commuting expanding
  maps.
\newblock {\em J. Mod. Dyn.}, 11:263--312, 2017.

\bibitem{venkatesh08}
Akshay Venkatesh.
\newblock The work of {E}insiedler, {K}atok and {L}indenstrauss on the
  {L}ittlewood conjecture.
\newblock {\em Bull. Amer. Math. Soc. (N.S.)}, 45(1):117--134, 2008.

\bibitem{verjovsky}
Alberto Verjovsky.
\newblock Codimension one {A}nosov flows.
\newblock {\em Bol. Soc. Mat. Mexicana (2)}, 19(2):49--77, 1974.

\bibitem{vinhageJMD2015}
Kurt Vinhage.
\newblock On the rigidity of {W}eyl chamber flows and {S}chur multipliers as
  topological groups.
\newblock {\em J. Mod. Dyn.}, 9:25--49, 2015.

\bibitem{vinhage22}
Kurt Vinhage.
\newblock Instability for rank one factors of product actions, 2022.
\newblock arXiv:2203.14480.

\bibitem{Vinhage:2015aa}
Kurt Vinhage and Zhenqi~Jenny Wang.
\newblock Local rigidity of higher rank homogeneous abelian actions: a complete
  solution via the geometric method.
\newblock {\em Geom. Dedicata}, 200:385--439, 2019.

\bibitem{MR2672298}
Zhenqi~Jenny Wang.
\newblock Local rigidity of partially hyperbolic actions.
\newblock {\em J. Mod. Dyn.}, 4(2):271--327, 2010.

\end{thebibliography}

\end{document}